\DeclareMathAlphabet{\mathpzc}{OT1}{pzc}{m}{it} 
    \def\ps@copyright{\ps@empty
    \def\@oddfoot{\hfil\small\copyright 2012, \SMF}}
\newcommand{\SMF}{Société Ma\-thé\-ma\-ti\-que de France}
\newcommand{\BibTeX}{{\scshape Bib}\kern-.08em\TeX}
\newcommand{\T}{\S\kern .15em\relax }
\newcommand{\AMS}{$\mathcal{A}$\kern-.1667em\lower.5ex\hbox
        {$\mathcal{M}$}\kern-.125em$\mathcal{S}$}
\newtheorem*{notationsstar}{Notations}
\newtheorem{exo}[paragraph]{Exercice} 
\newtheorem{nota}[paragraph]{Notation}
\let\oldtheo\theo
\renewcommand{\theo}{%
\oldtheo\hypertarget{\csname@currentHref\endcsname}{}}
\let\oldprop\prop
\renewcommand{\prop}{%
\oldprop\hypertarget{\csname@currentHref\endcsname}{}}
\let\oldlemm\lemm
\renewcommand{\lemm}{%
\oldlemm\hypertarget{\csname@currentHref\endcsname}{}}
\let\oldexem\exem
\renewcommand{\exem}{%
\oldexem\hypertarget{\csname@currentHref\endcsname}{}}
\let\olddefi\defi
\renewcommand{\defi}{%
\olddefi\hypertarget{\csname@currentHref\endcsname}{}}
\let\oldexo\exo
\renewcommand{\exo}{%
\oldexo\hypertarget{\csname@currentHref\endcsname}{}}
\let\oldnota\nota
\renewcommand{\nota}{%
\oldnota\hypertarget{\csname@currentHref\endcsname}{}}
\let\oldrema\rema
\renewcommand{\rema}{%
\oldrema\hypertarget{\csname@currentHref\endcsname}{}}
\def\SGAtrois{\cite{sga3}} 
\newcommand{\tooby}[1]{\buildrel #1\over\longrightarrow}
\newcommand{\oto}[1]{\overset{#1}\to}
\newcommand{\isoto}{\buildrel \raisebox{-0.25ex}{\ensuremath{\sim}}\over\to}
\newcommand{\isoby}[1]{\buildrel #1\over\simeq}
\newcommand{\doublecell}[2][l]{\begin{tabular}[l]{@{}l@{}}#2\end{tabular}} 
\def\cf{{cf.}\kern.3em}
\def\loccit{\textit{loc.\kern2pt cit.}\xspace}
\def\ie{{i.e.}\kern.3em}
\newcommand{\ZZ}{\mathbb{Z}} 
\newcommand{\QQ}{\mathbb{Q}} 
\newcommand{\NN}{\mathbb{N}} 
\newcommand{\FF}{\mathbb{F}} 
\let\emptyset\varnothing
\DeclareMathOperator{\id}{id}
\DeclareMathOperator{\Hom}{Hom}
\DeclareMathOperator{\End}{End}
\DeclareMathOperator{\coker}{coker}
\newcommand{\CatFais}{{\mathscr F\kern-0.15em aisc}} 
\newcommand{\cC}{\mathcal{C}} 
\newcommand{\cD}{\mathcal{D}} 
\newcommand{\cCat}[1]{\mathrm{Cat}(#1)} 
\newcommand{\cProd}[1]{\mathrm{\Pi}(#1)} 
\newcommand{\grpd}{\simeq} 
\newcommand{\cG}{\mathcal{G}} 
\def\setGL{\mathrm{GL}} 
\def\setSL{\mathrm{SL}} 
\def\setorthO{\mathrm{O}} 
\def\setSO{\mathrm{SO}} 
\def\setEnd{\mathrm{End}} 
\def\setHom{\mathrm{Hom}} 
\def\setAut{\mathrm{A\kern-0.06em ut}} 
\def\setM{\mathrm{M}} 
\newcommand{\fppf}{{\textup{fppf}}}
\newcommand{\fpqc}{{\textup{fpqc}}}
\def\faisDual{I} 
\def\faisAut{\mathbf{A\kern-0.06em ut}} 
\def\faisAutExt{\mathbf{A\kern-0.06em utExt}} 
\def\faisIso{\mathbf{Iso}} 
\def\faisHom{\mathbf{Hom}} 
\def\faisUn{\mathbf{1}} 
\def\faisEnd{\mathbf{End}} 
\def\faisM{\mathbf{M}} 
\def\faisCentre{\mathbf{Centre}} 
\def\faisStab{\mathbf{Stab}} 
\def\faisNorm{\mathbf{Norm}}
\def\faisAut{\mathbf{A\kern-0.06em ut}} 
\def\faisAutExt{\mathbf{A\kern-0.06em utExt}} 
\def\faisGm{\mathbf{G}_m} 
\newcommand{\faisGmS}[1]{{\mathbf{G}_m}_{\hspace{-.1ex},#1}} 
\def\faismu{\pmb{\mu}} 
\def\faisGa{\mathbf{G}_a} 
\def\faisconst{\mathbf{\ZZ/2}} 
\def\faisO{\mathbf{O}} 
\def\faisGL{\mathbf{GL}} 
\def\faisGLh{\mathbf{GL}^h} 
\def\faisDiag{\mathbf{Diag}} 
\def\faisSL{\mathbf{SL}} 
\def\faisSDiag{\mathbf{PDiag}} 
\def\faisTriSup{\mathbf{TriSup}} 
\def\faisPGL{\mathbf{PGL}} 
\def\faisPDiag{\mathbf{PDiag}} 
\def\faisU{\mathbf{U}} 
\def\faisSU{\mathbf{SU}} 
\def\faisorthO{\mathbf{O}} 
\def\faisSO{\mathbf{SO}} 
\def\faisSOdiag{\mathbf{DiagSO}} 
\def\faisOplus{\mathbf{O}^+} 
\def\faisPGO{\mathbf{PGO}} 
\def\faisPGOplus{\mathbf{PGO}^+} 
\def\faisPGOplusdiag{\mathbf{DiagPGO}^+} 
\def\faisGO{\mathbf{GO}} 
\def\faisGOdiag{\mathbf{DiagGO}} 
\def\faisGamma{\mathbf{\Gamma}}
\def\faisSGamma{\mathbf{S\Gamma}}
\def\faisSGammadiag{\mathbf{DiagS\Gamma}} 
\def\faisPin{\mathbf{Pin}}
\def\faisSpin{\mathbf{Spin}}
\def\faisSpindiag{\mathbf{DiagSpin}} 
\def\faisSp{\mathbf{Sp}}
\def\faisSpdiag{\mathbf{DiagSp}} 
\def\faisPGSp{\mathbf{PGSp}} 
\def\faisPGSpdiag{\mathbf{DiagPGSp}}
\def\Xstruc{\mathfrak{X}} 
\def\Ystruc{\mathfrak{Y}} 
\def\struc{\mathfrak{struc}} 
\def\gr{\mathfrak{gr}} 
\def\alg{\mathfrak{alg}} 
\def\alginv{\mathfrak{alginv}} 
\newcommand\actg[1]{#1\text{-}\mathfrak{act}} 
\newcommand\actd[1]{\mathfrak{act}\text{-}#1} 
\newcommand{\Final}{\mathpzc{Final}} 
\newcommand{\Oper}[2]{\mathpzc{Oper}(#1,#2)} 
\newcommand{\Tors}[1]{\mathpzc{Tors}(#1)} 
\newcommand{\Formes}[1]{\mathpzc{Formes}(#1)} 
\newcommand{\Etn}[1]{\mathpzc{Et}_{#1}} 
\newcommand{\Vect}{\mathpzc{Vec}} 
\newcommand{\Vecn}[1]{\mathpzc{Vec}_{#1}} 
\newcommand{\Sympl}[1]{\mathpzc{Alt}_{#1}} 
\newcommand{\AVec}[1]{\text{$#1$-}\mathpzc{Vec}} 
\newcommand{\AVecn}[2]{\text{$#1$-}\mathpzc{Vec}_{#2}} 
\newcommand{\Vectrivdetn}[1]{\mathpzc{VecTrivDet}_{#1}} 
\newcommand{\nTriv}[1]{\text{#1-}\mathpzc{Triv}} 
\newcommand{\Azumaya}{\mathpzc{Azumaya}} 
\newcommand{\TrAz}{\mathpzc{TrAzumaya}} 
\newcommand{\NrAzn}[1]{\mathpzc{NrAzumaya_{#1}}} 
\newcommand{\Azumayan}[1]{\mathpzc{Azumaya}_{#1}} 
\newcommand{\nABan}[2]{\text{$#1$-$#2$-}\mathpzc{Ban}} 
\newcommand{\GrAzumayan}[1]{\mathpzc{GrAzumaya}_{#1}} 
\newcommand{\GrVecn}[1]{\mathpzc{GrVec}_{#1}} 
\newcommand{\ZeroUn}{\mathpzc{ZeroUn}} 
\newcommand{\GrZeroVecn}[1]{\mathpzc{GrVec}^0_{#1}} 
\newcommand{\ZGrAzumaya}{\mathpzc{ZGrAzumaya}} 
\newcommand{\PairesQuadn}[1]{\mathpzc{PairesQuad}_{#1}} 
\newcommand{\PairesQuadArfTrivn}[1]{\mathpzc{PairesQuad}_{#1}} 
\newcommand{\Quad}{\mathpzc{Quad}} 
\newcommand{\RegQuadn}[1]{\mathpzc{RegQuad}_{#1}} 
\newcommand{\RegQuadArfTrivn}[1]{\mathpzc{RegQuadArfTriv}_{#1}} 
\newcommand{\ModDet}{\mathpzc{ModDet}} 
\newcommand{\QuadDetTrivn}[1]{\mathpzc{QuadDetTriv}_{#1}} 
\newcommand{\DeplCliff}[1]{\mathpzc{DeplCliff}_{\hspace{-.2ex}#1}} 
\newcommand{\GrDeplCliff}[1]{\mathpzc{GrDeplCliff}_{\hspace{-.2ex}#1}} 
\newcommand{\DeplCliffSn}[1]{\mathpzc{DeplCliffSn}_{\hspace{-.2ex}#1}} 
\newcommand{\CliffFonc}{\mathrm{C}} 
\newcommand{\DetFonc}{\Delta} 
\newcommand{\qDetFonc}{\mathrm{D}} 
\newcommand{\qDemiDetFonc}{\tfrac{1}{2}D} 
\newcommand{\EndFonc}{\mathrm{End}} 
\newcommand{\DegLocFonc}{\mathrm{DegLoc}} 
\newcommand{\SnFonc}{\mathrm{Sn}} 
\newcommand{\ArfFonc}{\mathrm{Arf}} 
\newcommand{\XiFonc}{\Xi} 
\def\dual{\vee} 
\newcommand{\bid}{\varpi} 
\newcommand{\totExt}{\Lambda^{\hspace{-0.5ex}*}} 
\newcommand{\totExtPair}{\Lambda^{\hspace{-0.5ex}p}} 
\newcommand{\totExtImpair}{\Lambda^{\hspace{-0.5ex}i}} 
\def\opp{\mathrm{op}} 
\def\Symalg{\mathbb{S}} 
\def\trace{\mathrm{tr}} 
\def\trd{\mathrm{trd}} 
\def\nrd{\mathrm{nrd}} 
\def\sand{\mathrm{sand}} 
\def\rad{\mathrm{rad}} 
\def\qrad{\mathrm{qrad}} 
\def\hyps{b^h} 
\def\hypa{a^h} 
\def\hypq{q^h} 
\def\orth{\perp} 
\newcommand{\fq}[1]{\langle #1\rangle} 
\newcommand{\Det}[1]{\Delta_{#1}} 
\newcommand{\bDetMorph}[1]{\mathrm{d}_{#1}} 
\newcommand{\qDemiDetMorph}[1]{\tfrac{1}{2}\mathrm{d}_{#1}} 
\newcommand{\bDetMod}[1]{D_{#1}} 
\newcommand{\qDetMod}[1]{D_{#1}} 
\newcommand{\qDemiDetMod}[1]{\tfrac{1}{2}D_{#1}} 
\newcommand{\sn}{\mathrm{sn}} 
\def\setCliff{\mathrm{C}} 
\def\faisCliff{\mathbf{C}} 
\def\signe{\varepsilon} 
\def\faisCBim{\mathbf{C}} 
\def\transp{{^t}} 
\renewcommand{\int}{\mathrm{int}} 
\def\exch{\tau} 
\def\faisSym{\mathbf{Sym}} 
\def\faisAlt{\mathbf{Alt}} 
\def\invstd{\sigma} 
\def\invadj{\eta} 
\def\alL{\mathbf{\frak{L}}} 
\def\faisTan{\mathbf{\bf{T}}} 
\def\faisLie{\mathbf{\bf{Lie}}} 
\def\faisDer{\mathbf{\bf{D\acute{e}r}}} 
\DeclareMathOperator{\mLie}{\mathscr{L}\hspace{-2pt}\mathit{ie}} 
\DeclareMathOperator{\Der}{D\acute{e}r} 
\def\Roots{\Sigma} 
\def\cdual{\vee} 
\newcommand{\adjoint}{\mathrm{adj}} 
\newcommand{\semisim}{\mathrm{ss}} 
\newcommand{\simco}{\mathrm{sc}} 
\newcommand{\derive}{\mathrm{d\acute{e}r}} 
\def\Hcoh{\mathrm{H}} 
\def\Het{\Hcoh_{\text{ét}}} 
\def\Hfppf{\Hcoh_{\text{fppf}}} 
\def\Htau{\Hcoh_\Ttau} 
\def\deuxbord{\partial} 
\newcommand{\Agerban}[1]{\text{$#1$-}\mathrm{GerBan}} 
\newcommand{\germuban}[2]{\text{$#1$-}\mu_{#2}\mathrm{GerBan}} 
\def\Brauer{\mathrm{Br}} 
\newcommand{\muBrauer}[1]{\mu\mathrm{Br}_{#1}} 
\def\Ttau{\mathbb{T}} 
\newcommand{\contr}[1]{\wedge^{#1}} 
\DeclareMathOperator{\Spec}{Spec}
\DeclareMathOperator{\Pic}{Pic}
\newcommand{\cO}{\mathcal{O}} 
\newcommand{\cF}{\mathcal{F}} 
\newcommand{\cM}{\mathcal{M}} 
\newcommand{\cA}{\mathcal{A}} 
\newcommand{\wW}{\mathbf{W}} 
\newcommand{\vV}{\mathbf{V}} 
\title{Groupes classiques}
\date{\today}
\author{Baptiste Calmès et Jean Fasel}
\keywords{Groupe classique, schéma en groupes, algèbre centrale simple, algèbre à division, forme quadratique, groupe orthogonal, groupe linéaire, torseur.\\  $\hbox{\quad \enskip }$ {\bf   MSC 2010:\!} 20G07, 20G10, 14L15, 14L30.}
\begin{document}

\begin{abstract}
Nous introduisons les groupes algébriques linéaires dits ``classiques'' sur une base quelconque, puis nous les replaçons dans la classification des groupes réductifs établie dans \SGAtrois. Nous traitons les cas non déployés, et décrivons au passage plusieurs catégories de torseurs.
\end{abstract}

\begin{altabstract}
We introduce so-called ``classical'' linear algebraic groups over a general base, and we then place them where they belong in the classification of reductive groups established in \SGAtrois. We cover the non split cases, and we describe on the way several categories of torsors.
\end{altabstract}

\maketitle

\setcounter{tocdepth}{3}
\tableofcontents

\vskip-2cm

\section{Introduction}

L'objet principal de \SGAtrois\ est la classification des schémas en groupes réductifs sur une base quelconque. Rappelons tout d'abord qu'un groupe algébrique $G$ sur une base $S$ est réductif (au sens de \SGAtrois, voir Exp. XIX, déf. 2.7) s'il est lisse et affine sur $S$, et à fibres géométriques réductives et connexes au sens de la théorie sur les corps; sur les fibres géométriques, son radical est donc un tore (voir \cite[Exp. XIX, déf. 1.6.1]{sga3}). Il est de plus semi-simple si ces fibres sont semi-simples (radical trivial). Notons qu'être réductif ou semi-simple est en fait par descente une propriété locale pour toutes les topologies considérées dans cet article.

Le but de cette note est de faire le lien entre les groupes linéaires et quadratiques dits ``classiques'' et la classification de \SGAtrois. Cet exposé s'adressant en priorité aux lecteurs qui ne sont pas habitués à la terminologie employée dans l'ouvrage, de nombreux détails de calculs, exemples et traductions sont fournis. 

\begin{notationsstar}
Dans tout ce qui suit, $R$ désigne un anneau associatif commutatif et unitaire, $S$ un schéma de base quelconque, implicitement égal à $\Spec(R)$ lorsqu'il est affine.
\end{notationsstar}

Soit $X$ un schéma sur $S$, et soit $T\to S$ un morphisme de schémas. La notation $X_{T}$ désigne le produit fibré de $X$ sur $T$. Lorsque $T$ est le spectre d'un anneau $R'$, on s'autorisera également la notation $X_{R'}$ pour $X_{T}$. Un point $s\in S$ correspond à un morphisme de son corps résiduel $\kappa(s)$ vers $S$, et $X_s=X_{\kappa(s)}$ est alors appelé \emph{fibre} en $s$. Si $\bar s$ est le spectre d'une clôture algébrique de $\kappa(s)$, le schéma $X_{\bar s}$ est appelé \emph{fibre géométrique}.
\medskip

La méthode suivie dans ce texte comporte plusieurs étapes. Tout d'abord, il nous faut introduire et définir les groupes classiques que l'on veut replacer dans la classification. Il s'agit des groupes semi-simples adjoints ou simplement connexes, des séries $A_n$, $B_n$, $C_n$ et $D_n$, bien que nous en manipulions bien d'autres au passage. Conformément à la philosophie de \SGAtrois, les schémas (en groupes) sont vus comme des foncteurs de points représentables. Nos groupes classiques sont donc introduits comme des foncteurs de points, puis on montre qu'ils sont représentables par des schémas affines sur la base, ce qui ne pose pas de réelle difficulté pour deux raisons: beaucoup de nos groupes sont construits comme des produits fibrés de groupes obtenus précédemment, et cette opération préserve la représentabilité. Par ailleurs, la représentabilité par des schémas affines sur la base peut se tester localement pour la topologie \fppf\ ou étale. Ces deux topologies de Grothendieck sont d'ailleurs quasiment les seules que nous utiliserons, en dehors de la topologie de Zariski, bien entendu. 

Les groupes classiques se construisent à l'aide de structures algébriques comme des modules quadratiques, des algèbres d'Azumaya, ou des algèbres à involution. Nous discutons donc ces structures sur une base quelconque dans les sections \ref{AlgSepEtAz_sec}, \ref{modulesquadratiques_sec} et \ref{AlgInv_sec}. Les groupes sont alors introduits dans les parties \ref{schemasFond_sec} et \ref{An_sec} pour les groupes de type linéaire (liés au type $A_n$), ainsi que dans la partie \ref{Groupesquadratiques_sec} pour les groupes en rapport avec les modules quadratiques ou plus généralement les algèbres à involution. 

Une fois la plupart des groupes introduits, il faut identifier les groupes déployés, leur version sur $\ZZ$, dits groupes de Chevalley, et prouver qu'ils sont bien réductifs. Le principal problème est la lissité, qui doit essentiellement être obtenue à la main en vérifiant d'abord la lissité de chaque fibre géométrique, ce qui revient, parce qu'on est sur un corps, à comparer la dimension de Krull et celle de l'algèbre de Lie, puis on montre que la dimension de ces fibres ne varie pas. Les critères précis sont rappelés en section \ref{Lisse_sec}.

L'étape d'après, est de trouver la donnée radicielle de chacun des groupes. Cela consiste à décrire un tore maximal déployé, expliciter l'algèbre de Lie et la représentation adjointe, y trouver les racines, et constater qu'elles forment bien les données radicielles des types considérés. 

Ces deux dernières étapes forment les parties intitulées ``Groupe déployé adjoint'' et ``Groupe déployé simplement connexe'' de chacune des sections \ref{An_sec} (type $A_n$), \ref{Bn_sec} (type $B_n$), \ref{Cn_sec} (type $C_n$) et \ref{Dn_sec} (type $D_n$). 

Il reste alors le problème de la torsion. Si \SGAtrois\ nous dit que tous les groupes réductifs sont des formes étales des groupes déployés, il nous faut montrer pour être complet que les différents groupes classiques que nous avons introduits, en plus du cas déployé, constituent toutes les formes étales de ces groupes, qu'il n'y en a pas d'autres. Pour cela, nous avons recours au formalisme du produit contracté, décrit en section \ref{torseurs_sec}. Ce formalisme nous dit que toute forme est en fait un produit contracté du groupe par un torseur sous son groupe d'automorphismes. Il nous faut donc identifier les groupes d'automorphismes, ce qui est fait dans les sections du même nom dans les quatre parties \ref{An_sec}, \ref{Bn_sec}, \ref{Cn_sec} et \ref{Dn_sec}. 

Puis, il faut être capable de donner une description assez concrète des torseurs pour expliciter les produits contractés. C'est la raison pour laquelle nous utilisons des champs, car c'est un cadre très souple dans lequel on peut formaliser les produits contractés, et construire des catégories concrètes d'objets algébriques (par exemple les modules quadratiques), équivalentes à des catégories de torseurs. Le texte est donc parsemé de théorèmes qui comparent un champ bien concret (comme celui des modules quadratiques) avec une catégorie de torseurs (par exemple sous le groupe orthogonal). Comme nous savons que le mot ``champ'' provoque immanquablement chez certains un rhume des foins, nous nous empressons d'ajouter que nous n'utilisons aucune des subtilités qui font le bonheur des champistes. Le peu de choses que nous utilisons effectivement est rappelé dans les parties \ref{Structures_sec} et \ref{torseurs_sec}, et pourrait se résumer en quelque mots: un champ est une sorte de catégorie dans laquelle les objets et les morphismes peuvent se construire localement. Nous faisons un usage répétitif des produits fibrés de champs, pour en fabriquer de nouveaux sans peine. Les premiers champs que nous introduisons sont construits comme des structures sur les faisceaux, ce qui explique la présence de la partie \ref{Structures_sec}.
Nous avons également inclus à titre d'exemple quelques calculs de gerbes, qui correspondent aux morphisme de connexion vers le terme $H^2$ des suites exactes de cohomologie.

Faute de temps, nous avons laissé de côté un certain nombre de choses, qui pourraient probablement être traitées par les mêmes méthodes. Citons: la description systématique des catégories de torseurs sous tous les groupes mentionnés, y compris ceux qui ne sont pas déployés; la description des variétés projectives homogènes sous les groupes considérés; la description de types simples intermédiaires entre le cas simplement connexe et le cas adjoint, lorsqu'ils existent (par exemple en type $A_n$); la description de types mixtes; la description des types exceptionnels déjà connus sur les corps (ex: $G_2$, $F_4$); la description du type $D_4$; la description des isomorphismes exceptionnels de bas rang.

Les principales difficultés que nous avons rencontrées sont de deux natures: premièrement, bien souvent, dans la littérature, les constructions sont faites sur les corps, en distinguant la caractéristique $2$ des autres. Or, pour travailler sur une base quelconque, ces distinctions sont interdites, les constructions doivent être indépendantes d'une éventuelle caractéristique. Deuxièmement, sur un corps, tous les fibrés vectoriels sont triviaux, et certains invariants ne se voient donc pas. Nous avons donc dû en rajouter certains. Voir par exemple l'invariant $l_q$ qui intervient dans les formes strictement intérieures des types simplement connexes $B_n$ et $D_n$. 

Outre SGA3, les trois grandes références que nous avons utilisé sans vergogne sont le livre de Giraud sur les champs \cite{gir}, celui de Knus sur les formes quadratiques \cite{knus}, et le livre des involutions de Knus, Merkurjev, Rost et Tignol \cite{bookinv}, ce dernier principalement pour sa notion de paire quadratique, qui fonctionne en toute caractéristique, et tous les groupes qui en découlent. 

Vu le grand nombre de groupes et de champs considérés, pour que la lectrice ne soit pas perdue, nous avons ajouté des tables à la fin du texte qui listent les groupes les plus importants, ainsi que les équivalences entre certains champs et certaines catégories de torseurs, avec références aux endroits du texte concernés. 
\medskip

Nous remercions vivement Philippe Gille pour diverses explications sur des points techniques de SGA3, Cyril Demarche pour son aide sur les champs et les gerbes, Asher Auel pour ses précisions sur les algèbres de Clifford, et enfin Skip Garibaldi, Sylvain Brochard et les rapporteurs anonymes pour leurs suggestions d'amélioration à partir d'une version préliminaire de ce texte.
 
\section{Préliminaires}

\subsection{Structures} \label{Structures_sec}

\subsubsection{Structures élémentaires} \label{structureselem_sec}

Expliquons brièvement la notion de structure dans une catégorie $\cC$ munie de produits finis, puis la notion d'objet $\Xstruc$ en cette structure. Nous voulons que ce formalisme puisse contenir la notion d'objet en groupes dans une catégorie donnée, ou bien la notion de $\faisO_S$-modules dans la catgorie des $S$-foncteurs de points; voir les exemples \ref{structures_exem} ci-après. Dans tout les cas, il faut pouvoir spécifier des morphismes, comme la muliplication $G \times G \to G$ d'un objet en groupes, et des contraintes vérifiées par ces morphismes, comme l'associativité. Enfin, nous avons besoin d'objets et de morphismes que nous appellerons constants, comme par exemple l'objet neutre qui nous servira d'unité pour les groupes, ou bien l'objet en anneaux $\faisO_S$ et sa multiplication, dont on se sert pour définir chaque $\faisO_S$-modules. 

En termes précis, ces structures se formalisent à l'aide de deux constructions catégoriques classiques: \'Etant donné un graphe orienté $\gamma$, notons $\cCat{\gamma}$ la catégorie engendrée par ce graphe (la catégorie des chemins de ce graphe). Cette construction est fonctorielle et définit un adjoint à gauche du foncteur oubli de la catégorie des petites catégories vers celle des graphes (voir \cite[Ch. II, \S 7]{McL}). Notons également $\cProd{\cC}$ la complétion d'une petite catégorie $\cC$ par les produits finis. C'est également une construction fonctorielle, et qui définit un adjoint à gauche du foncteur d'oubli de la catégorie des petites catégories munies de produits finis (avec pour morphismes les foncteurs les respectant) vers la catégorie des petites catégories. Une autre manière de le dire est que pour toute petite catégorie $\cC$, il y a un foncteur canonique $\cC \to \cProd{\cC}$ et on a la propriété universelle suivante: tout foncteur de $\cC$ dans une catégorie $\cD$ munie de produits finis se factorise de manière unique par un foncteur $\cProd{\cC} \to \cD$ qui conserve les produits. La catégorie $\cProd{\cC}$ se construit aisément de manière combinatoire à partir de $\cC$.\footnote{Nous n'avons pas de bonne référence pour cette construction, mais on peut procéder comme suit: on considère la catégorie dont les objets sont des ensembles finis au-dessus de l'ensemble des objets de $\cC$, et avec pour morphismes de $\phi:A \to \mathrm{Ob}(\cC)$ vers $\psi: B \to \mathrm{Ob}(\cC)$ les applications $\theta:A\to B$ munies d'un étiquetage $a \mapsto f_a$ de chaque élément de $A$ par un morphisme $f_a: \phi(a) \to \psi(\theta(a))$ de $\cC$. On vérifie alors que la catégorie opposée de cette catégorie répond au problème.} 
\begin{defi}
Une \emph{structure dans $\cC$} est la donnée de: 
\begin{enumerate}
\item un graphe orienté fini $\gamma$ et un morphisme de graphes orientés $c:\gamma \to \cC$ (qui spécifie les constantes); 
\item un ensemble fini $I$ et un graphe $F$ contenant le graphe $\cProd{\cCat{I \sqcup \gamma}}$, ayant les mêmes sommets que celui-ci, et seulement un nombre fini d'arêtes supplémentaires (qui correspondent aux morphismes) 
\item une relation d'équivalence $R$ sur les morphismes de $\cProd{\cCat{F}}$ engendrée par un nombre fini de relations $f \sim g$ entre morphismes de $\cProd{\cCat{F}}$ (ce qui définit les contraintes sur les morphismes). 
\end{enumerate} 
\end{defi}
On obtient alors un morphisme de graphes $g$ donné par la composée 
$$\gamma \to I \sqcup \gamma \to \cCat{I \sqcup \gamma} \to \cProd{\cCat{I \sqcup \gamma}} \subseteq F \to \cProd{\cCat{F}}$$
(où les unités des deux adjonctions ont été utilisées) et on peut considérer la catégorie quotiente $\cProd{\cCat{F}}/R$, au sens de \cite[Ch. II, \S 8]{McL}.
\begin{defi}
Si $\struc$ est une structure dans $\cC$ au sens de la définition précédente, un \emph{objet en $\struc$} est un foncteur $\Xstruc:\cProd{\cCat{F}}/R \to \cC$ tel que $\Xstruc \circ g = c$. La catégorie $\cC^\struc$ des objets en $\struc$ est celle des foncteurs $\cProd{\cCat{F}}/R \to \cC$.
\end{defi}
Remarquons que la donnée d'un tel foncteur $\Xstruc$ est équivalente à la donnée d'un foncteur $\cProd{\cCat{F}}\to \cC$, compatible à la relation d'équivalence $R$, et que $c:\gamma \to \cC$ étant déjà fixé, par les propriétés universelles, il suffit de donner les objets images des $i\in I$ (qu'on peut noter $X_i$) puis les images des arêtes supplémentaires de $F$, en s'assurant qu'elles respectent les contraintes données par la relation d'équivalence.

\begin{defi} \label{strucpouss_defi}
Si $F:\cC \to \cD$ est foncteur respectant les produits finis, on peut pousser une structure $\struc$ dans $\cC$ vers une structure $F(\struc)$ dans $\cD$ de la manière évidente, et on obtient également un foncteur $F:\cC^{\struc} \to \cD^{F(\struc)}$. 
\end{defi}

\begin{exem} \label{structures_exem}
Tout d'abord, pour chaque ensemble $I$, la structure triviale sur un ensemble d'objets indicés par $I$ est celle pour laquelle les constantes, morphismes et relations sont vides. Mentionnons encore les structures suivantes (dans les $S$-foncteurs de points, ou dans les $S$-faisceaux pour une certaine topologie) que nous rencontrerons:
\begin{enumerate}
\item groupe $\gr$: le seul objet constant est $S$ (donc le graphe $\gamma$ a un sommet et aucune arête), les morphismes sont l'unité $S \to G$, la loi de groupe $G \times G \to G$ et l'inverse $G \to G$ (l'ensemble $I$ a donc un élément, et le graphe F a $3$ morphismes supplémentaires), et les relations imposent les contraintes bien connues, comme l'associativité. 
\item groupe abélien: une structure de groupe, avec la commutativité ajoutée aux relations;
\item \label{anneaustruc_item} anneau: une loi d'addition, de multiplication et les diagrammes évidents. En particulier, dans la catégorie des $S$-foncteurs, l'anneau que nous noterons $\faisO_S$. Il est construit comme tiré à $S$ de $\faisO=\Spec(\ZZ[x])$, et vérifie $\faisO_S(T)=\Gamma(T,\cO_{T})$, qu'on abrège en $\Gamma(T)$, pour tout schéma $T$ au-dessus de $S$. En particulier, $\faisO(\Spec(R))=R$ pour tout anneau $R$ (\cf \cite[Exp. I, 4.3.3]{sga3}). Les morphismes structuraux sont ceux qui induisent la structure d'anneau dessus; 
\item \label{OSmod_item} $\faisO_S$-module: les constantes sont l'anneau $\faisO_S$ et ses morphismes de structure, puis on se donne une structure de groupe abélien et d'action de $\faisO_S$ avec les relations habituelles;
\item $\faisO_S$-algèbre $\alg$: la structure de $\faisO_S$-module assorti d'une multiplication muni d'une unité avec les relations habituelles (y compris l'associativité);
\item $\faisO_S$-algèbre à involution $\alginv$: la structure de $\faisO_S$-algèbre avec un endomorphisme $\sigma$ du $\faisO_S$-module sous-jacent, qui anti-commute à la multiplication et qui vérifie $\sigma^2=\mathrm{Id}$ (et qui est donc un automorphisme de module);
\item \label{bilineaire_item} $\faisO_S$-forme bilinéaire (resp. symétrique, alternée), c'est-à-dire une structure de $\faisO_S$-module, avec un morphisme $M \times M \to \faisO_S$ qui est bilinéaire (resp. et symétrique ou alterné); 
\item \label{quadratique_item} $\faisO_S$-module quadratique: une structure de $\faisO_S$ module, à laquelle on rajoute un morphisme $q: M \to \faisO_S$ qui est quadratique en la multiplication par les scalaires de $\faisO_S$ et tel que le module polaire associé satisfait aux relations de bilinéarité de l'exemple précédent; 
\item \label{groupeaction_item} $G$-action à gauche $\actg{G}$ (ou à droite $\actd{G}$): étant donné un objet en groupes $G$, utilisé comme objet constant, un $G$-objet est un objet $X$ muni d'un morphisme $G\times X\to X$ satisfaisant les diagrammes d'associativité et de l'action triviale du neutre. Cette action peut être à gauche ou à droite, auquel cas le morphisme considéré est plutôt $X \times G \to X$.
\end{enumerate}
\end{exem}

\begin{rema}
Dans les exemples précédents, il est parfois utile d'ajouter des morphismes auxquels on ne pense pas immédiatement pour avoir les bonnes relations. Par exemple dans l'exemple des modules quadratiques, il faut construire la forme polaire. Pour cela, il est pratique d'utiliser le morphisme ``multiplication par $-1$'', endomorphisme de l'objet constant $\faisO_S$ et qu'on ajoutera donc aux constantes de la structure. 
\end{rema}

\subsubsection{Changement de base des structures} 

Afin de formaliser les changements de base, la première notion à introduire est celle de catégorie fibrée sur une catégorie de base, qui sera pour nous toujours celle des $S$-schémas, aussi nous nous limitons à ce cas.

\begin{defi} \label{catfib_defi}
Une \emph{catégorie fibrée scindée} sur les $S$-schémas, abrégé en \emph{$S$-catégorie fibrée} dans ce texte, est la donnée de:
\begin{itemize}
\item pour tout schéma $T$ sur $S$, une catégorie $\cC_{T}$ appelée fibre sur $T$;
\item pour tout $S$-morphisme $f:T' \to T$, un foncteur de changement de base $\phi_{f}:\cC_{T} \to \cC_{T'}$; 
\item pour tout $T$ sur $S$, un isomorphisme de foncteurs $\epsilon_T:\phi_{\id_T} \isoto \id_{\cC_T}$;
\item pour toute paire de morphismes composables $f$ et $g$, un isomorphisme de foncteurs $\iota_{f,g}:\phi_g \circ \phi_f \isoto \phi_{f \circ g}$.
\end{itemize}
Ces isomorphismes doivent vérifier une condition de compatibilité entre $\epsilon$ et $\iota$ lorsque l'un des deux morphismes est l'identité, et une relation d'associativité, toutes deux aisées à deviner. 
\end{defi}

Comme il est de coutume, si $X$ est un objet de $\cC_T$ et $T'$ un $T$-schéma, nous désignerons son changement de base $\phi_{T'\to T}(X)$ par $X_{T'}$.

\begin{defi} \label{fonctfib_defi}
Un foncteur $F$ d'une $S$-catégories fibrée $\cC$ vers une autre $\cD$ est la donnée, pour chaque $T$ sur $S$, d'un foncteur $F_{T}:\cC_{T} \to \cD_{T}$, de manière à commuter avec les foncteurs de changement de base. 
\end{defi}

\begin{defi} \label{produitfibre_defi}
Soient $F_1:\cC_1 \to \cD$ et $F_2:\cC_2 \to \cD$ deux foncteurs de $S$-catégories fibrées. Alors on définit le produit fibré de catégories fibrées $\cC_1 \times_\cD \cC_2$ comme la catégorie fibrée dont les objets sur $T$ sont les triplets $(x_1,x_2,\phi)$ avec $x_i \in (\cC_i)_T$, $\phi:F_1(x_1)\isoto F_2(x_2)$ et dont les morphismes $(x_1,x_2,\phi) \to (y_1,y_2,\psi)$ sont les paires de morphismes $f_1:x_1 \to y_1$ et $f_2:x_2 \to y_2$ tels que $\psi \circ F_1(f_1)=F_2(f_2)\circ \phi$.
\end{defi}

Notons $P_i$, $i=1,2$, les foncteurs de projection évidents $\cC_1 \times_\cD \cC_2 \to \cC_i$, et $c$ l'isomorphisme de foncteurs $F_1 P_1 \isoto F_2 P_2$ donné par $c_{(x_1,x_2,\phi)}=\phi$. Il est immédiat que le produit fibré $\cC_1 \times_\cD \cC_2$ satisfait à la propriété universelle suivante. 
\begin{prop} \label{univprodfib_prop}
\'Etant donné une $S$-catégorie fibrée $\cA$, deux foncteurs $G_1:\cA \to \cC_1$ et $G_2:\cA\to\cC_2$ et un isomorphisme de foncteurs $a:F_1 G_1 \isoto F_2 G_2$, il existe un unique foncteur $H:\cA \to \cC_1\times_\cD \cC_2$ tel que pour $i=1,2$, on ait $P_i H = G_i$ et l'égalité de morphismes de foncteurs $c H = a$.
\end{prop}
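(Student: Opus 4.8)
Le plan est de construire $H$ composante par composante, puis de constater que la condition de compatibilit\'e d\'efinissant les morphismes du produit fibr\'e n'est autre que la naturalit\'e de $a$. Concr\`etement, pour un objet $x$ de $\cA$ au-dessus d'un sch\'ema $T$, je poserais $H(x)=(G_1(x),G_2(x),a_x)$: comme $a:F_1G_1\isoto F_2G_2$ est un isomorphisme de foncteurs, $a_x:F_1(G_1(x))\isoto F_2(G_2(x))$ est bien un isomorphisme, et comme $G_1$, $G_2$, $F_1$ et $F_2$ respectent les fibres, il vit dans $\cD_T$, de sorte que le triplet est un objet l\'egitime de $(\cC_1\times_\cD\cC_2)_T$. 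Pour un morphisme $f:x\to y$ de $\cA$, je poserais $H(f)=(G_1(f),G_2(f))$.

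Le c\oe ur de la v\'erification est que $H(f)$ est effectivement un morphisme du produit fibr\'e, c'est-\`a-dire que $a_y\circ F_1(G_1(f))=F_2(G_2(f))\circ a_x$; mais c'est pr\'ecis\'ement le carr\'e de naturalit\'e de la transformation $a$ appliqu\'e \`a $f$. La fonctorialit\'e de $H$ (compatibilit\'e \`a la composition et aux identit\'es) se d\'eduit alors composante par composante de celle de $G_1$ et $G_2$. Les \'egalit\'es $P_iH=G_i$ sont imm\'ediates par d\'efinition des projections, et $cH=a$ l'est tout autant, puisque $c_{H(x)}=c_{(G_1(x),G_2(x),a_x)}=a_x$ d'apr\`es la description explicite de $c$.

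Il reste \`a v\'erifier que $H$ est bien un foncteur de $S$-cat\'egories fibr\'ees au sens de la d\'efinition \ref{fonctfib_defi}, c'est-\`a-dire qu'il commute aux foncteurs de changement de base. C'est le point le plus d\'elicat, car il faut faire intervenir les isomorphismes structuraux $\phi_f$, $\iota_{f,g}$ et $\epsilon_T$ des cat\'egories fibr\'ees en jeu; n\'eanmoins, chaque composante de $H$ \'etant construite \`a partir de $G_1$ et $G_2$ (qui commutent au changement de base par hypoth\`ese) et de $a$ (isomorphisme de foncteurs fibr\'es, donc lui aussi compatible au changement de base), la compatibilit\'e se propage formellement au triplet.

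Enfin, pour l'unicit\'e, je partirais d'un foncteur $H'$ v\'erifiant $P_iH'=G_i$ et $cH'=a$. Les deux premi\`eres conditions forcent, sur les objets, $H'(x)=(G_1(x),G_2(x),\phi_x)$ pour un certain isomorphisme $\phi_x$, et, sur les morphismes, $H'(f)=(G_1(f),G_2(f))$; la condition $cH'=a$ impose alors $\phi_x=c_{H'(x)}=a_x$, d'o\`u $H'=H$. L'essentiel de la preuve rel\`eve donc du pur formalisme, la seule subtilit\'e r\'eelle \'etant la compatibilit\'e au changement de base mentionn\'ee ci-dessus.
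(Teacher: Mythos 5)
Votre preuve est correcte et correspond exactement à la vérification que le texte déclare « immédiate » (le papier ne donne d'ailleurs aucune démonstration de cette proposition) : construction de $H$ composante par composante, identification de la condition de compatibilité avec le carré de naturalité de $a$, et unicité forcée par $P_iH=G_i$ et $cH=a$. Rien à redire.
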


Un \emph{groupoïde} est une catégorie dans laquelle tous les morphismes sont des isomorphismes. A toute catégorie $\cC$, on peut associer un groupoïde $\cC_\grpd$, qui a les mêmes objets, mais dont les morphismes sont les seuls isomorphismes de $\cC$. De même, le groupoïde $\cC_\grpd$ associé à une catégorie fibrée $\cC$ est la catégorie fibrée de fibres $(\cC_\grpd)_T=(\cC_T)_\grpd$.
\medskip

Nous voulons maintenant pouvoir manipuler des objets en une structure, mais sur une catégorie fibrée, et de manière compatible au changement de base. En d'autres termes, étant donné une structure $\struc$ dans la catégorie $\cC_S$, nous allons définir la $S$-catégorie fibrée des objets en $\struc$. Nous supposerons toujours dans ce contexte que les fibres de $\cC$ admettent des produits finis, et que les morphismes de changement de base les respectent. Une struture $\struc$ dans $\cC_S$ fournit par changement de base une structure $\struc_T$ sur $\cC_T$ par \ref{strucpouss_defi}: les objets et morphismes constants sont obtenus par changement de base de ceux de $\cC_S$ à $\cC_T$, et les objets en $\struc_T$ doivent être munis de morphismes satisfaisant aux ``mêmes'' contraintes. Le changement de base définit ainsi un foncteur $\cC_S^{\struc} \to \cC_T^{\struc_T}$.

\begin{defi} \label{catfibreestruc_defi}
\'Etant données une $S$-catégorie fibrée $\cC$ (munie de produits finis respectés par les changements de base) et une structure $\struc$ dans $\cC_S$, on définit la catégorie fibrée $\cC^{\struc}$ de fibre $(\cC^{\struc})_T=(\cC_T)^{\struc_T}$ et dont les changements de base sont induits par ceux de $\cC$. 
\end{defi}
Cela permet de construire de nouvelles catégories fibrées à partir de certaines structures.

Par exemple, la catégorie fibrée des $\faisO$-modules, dont la $T$-fibre sera constituée des $\faisO_T$-modules, peut être obtenue à partir de la catégorie fibrée $\cC$ des faisceaux, dont la $T$-fibre est constituée des $T$-faisceaux (voir page \pageref{Sfais_defi}), en considérant l'objet constant $\faisO_S$ pour définir la structure de $\faisO_S$-module dans $\cC_S$.

\subsubsection{Champs}

Pour pouvoir décrire concrètement les catégories de torseurs sous des groupes classiques, nous avons besoin d'un cadre dans lequel on puisse tordre un objet d'une catégorie par un torseur sous son groupe d'automorphismes, et obtenir ainsi un nouvel objet de cette catégorie. Le bon cadre pour ce type d'opérations est celui des champs, tel que développé dans \cite{gir}, dont nous n'aurons en fait besoin que d'une petite part. Aussi, le but de ce qui suit est d'introduire et de rendre palpable la partie de la théorie dont nous aurons besoin pour le lecteur qui ne connaît pas les champs.

\begin{defi}
Un \emph{$S$-foncteur de points} est un préfaisceau d'ensembles sur la catégorie des $S$-schémas, autrement dit un foncteur contravariant des $S$-schémas vers les ensembles.  
\end{defi} 
\begin{defi}
On dit qu'un foncteur de points est \emph{représentable par un schéma $X$} s'il est isomorphe au foncteur $T \mapsto X(T)$, où $X(T)=\Hom_{\text{$S$-schéma}}(T,X)$.
\end{defi}
On peut restreindre un foncteur à la sous-catégorie pleine des schémas affines sur $S$. Or, le foncteur qui envoie un $S$-schéma sur son foncteur de points restreint aux $S$-schémas affines sur $S$ est pleinement fidèle (voir \cite[th. de comparaison, p. 18]{dg}). En d'autres termes, pour connaître un schéma (ou un morphisme de schémas), il suffit de connaître ses points sur les schémas affines sur $S$. Nous définirons donc de nombreux schémas en donnant leurs $S$-points sur les schémas affines sur $S$, puis en montrant que le foncteur est représentable, ce qui montre que le foncteur de points s'étend de manière unique à tous les schémas sur $S$. 
Pour décrire un morphisme de schémas, il suffit alors de donner des morphismes de foncteurs de points. Autrement dit, lorsque $X_1$ et $X_2$ sont deux $S$-schémas, la donnée d'un morphisme de $S$-schémas $X_1 \to X_2$ revient à la donnée d'applications d'ensembles $X_1(Y) \to X_2(Y)$ pour tout $S$-schéma $Y$, de manière fonctorielle en $Y$. Il suffit même de donner ces morphismes pour les schémas $Y$ affines sur $S$, par ce qui précède. Ce dernier point est particulièrement utile pour définir des morphismes de schémas sur $\Spec(\ZZ)$, car il suffit alors de décrire le morphisme sur tous les $R$-points, $R$ étant un anneau.
Par exemple, pour donner une structure de schéma en groupes sur un $S$-schéma $X$, on peut décrire les morphismes donnant la structure de groupe sur chaque $X(Y)$, de manière fonctorielle en $Y$.
\medskip

L'ajout d'une \emph{topologie de Grothendieck} $\Ttau$ sur la catégorie des $S$-schémas permet de parler de propriétés locales d'objets d'une $S$-catégorie fibrée, et de définir la notion de faisceau, que nous rappelons ci-dessous. Nous ne pouvons rappeler ici la définition exacte d'une topologie de Grothendieck, et renvoyons le lecteur à \cite[Exp. IV, \S 4.2]{sga3}. Nous la supposerons toujours associée à une prétopologie, c'est-à-dire la spécification de recouvrements de chaque $S$-schémas $T$, un recouvrement de $T$ étant un ensemble de morphismes à but $T$. Ces recouvrements doivent bien entendu satisfaire certains axiomes (\loccit\ déf. 4.2.5), qui miment ceux des ouverts d'une topologie classique. 
Cette topologie sera souvent sous-entendue, et en pratique les topologies considérées dans le cadre qui nous occupe sont les topologies de Zariski, étale, \fppf\ et dans une moindre mesure \fpqc\ (dans l'ordre de la moins fine à la plus fine).
\begin{defi} \label{Sfais_defi}
Un $S$-foncteur de points $X$, \ie un foncteur contravariant des $S$-schémas vers les ensembles, sera dit un \emph{$S$-faisceau} (pour la topologie $\Ttau$) s'il satisfait aux conditions habituelles:
\begin{enumerate}
\item $X(\emptyset)$ est un singleton.
\item Pour tout $S$-schéma $T$, et pour tout recouvrement $(T_i \to T)_{i \in I}$ de $T$, si on note $T_{ij}=T_i \times_T T_j$, le diagramme d'ensembles
$$\xymatrix{X(T) \ar[r]^-{i} & \displaystyle \prod_{i\in I} X(T_i) \ar@<2pt>[r]^-{p_1} \ar@<-2pt>[r]_-{p_2} & \displaystyle \prod_{(i,j)\in I^2} X(T_{ij})}$$
est exact, au sens que le morphisme de gauche est injectif et identifie $X(T)$ avec l'égalisateur des deux morphismes de droite, où le morphisme $p_1$ est induit par les projections sur le premier facteur $T_{ij} \to T_i$ et $p_2$ par celles sur le deuxième facteur $T_{ij} \to T_j$.
\end{enumerate}
\end{defi}
(La première condition découle de la seconde, lorsque le schéma vide $\emptyset$ admet le recouvrement vide (à ne pas confondre avec le recouvrement par le vide, qui est toujours présent) dans la topologie $\Ttau$, ce qui sera le cas de toutes les topologies que nous considérerons par la suite.)
Pour $T \to S$ donné, on vérifie, voir \cite[Exp. IV, Prop. 4.5.2]{sga3}, que la restriction des $S$-foncteur de points vers les $T$-foncteurs de points, envoyant $X$ sur $X_T$ donné par $X_{T}(T')=X(T')$ pour tout $T'\to T$, envoie bien les $S$-faisceaux sur les $T$-faisceaux. On peut donc définir la catégorie fibrée des faisceaux (pour la topologie $\Ttau$ sur $S$), en utilisant les restrictions comme foncteurs de changement de base. Nous noterons cette catégorie $\CatFais_{/S,\Ttau}$, voire $\CatFais_{/S}$. 
\medskip
  
De manière imagée, un champ est une catégorie fibrée (sur une catégorie de base munie d'une topologie) dans laquelle on peut définir les objets et les morphismes localement par recollement. En étant encore plus imprécis, c'est une sorte de faisceau de catégories. 

\begin{defi} \label{SChamp_defi}
Un \emph{$S$-champ} (pour la topologie $\Ttau$) est une $S$-catégorie fibrée (voir déf. \ref{catfib_defi}) $\cC$ qui vérifie pour tout $S$-schéma $T$:
\begin{enumerate}
\item \label{faisceauchamp_item} Pour tous $x,y$ objets de $\cC_T$, le $T$-foncteur de points $T' \mapsto \setHom_{\cC_{T'}}(x_{T'},y_{T'})$ est un $\Ttau$-faisceau. 
\item \label{descente_item} La descente sur les objets: Soit $(T_i)_{i\in I}$ un recouvrement de $T$. Notons $T_{ij}$ le produit fibré de $T_i \times_T T_j$. Soient des objets $x_i \in \cC_{T_i}$, et des isomorphismes $\psi_{ij}: (x_i)_{T_{ij}} \isoto (x_j)_{T_{ij}}$ vérifiant la condition de compatibilité évidente sur les produits fibrés triples. Alors il existe un unique (à isomorphisme unique près) objet $x \in \cC_T$ muni d'isomorphismes $\lambda_i: x_i\isoto x_{T_i}$ tels que $\psi_{ij}=(\lambda_j)_{T_{ij}}^{-1}\circ(\lambda_i)_{T_{ij}}$ pour tout $i,j$. 
\end{enumerate}
Un morphisme de $S$-champs est un foncteur de $S$-catégories fibrées.
\end{defi}

\begin{defi} \label{donneedescente_defi}
\'Etant donnée un recouvrement $(T_i)_{i\in I}$ de $T$, la donnée d'objets $x_i \in \cC_{T_i}$ et d'isomorphismes compatibles $\psi_{ij}$ au sens du point \ref{faisceauchamp_item} précédent est appelée \emph{donnée de descente}, relativement au recouvrement fixé. 
\end{defi} 
On peut définir la catégorie des données de descente relativement à un recouvrement donné. Tout objet $x$ de $\cC_T$ définit canoniquement une donnée de descente sur n'importe quel recouvrement de $T$, en posant $x_i=x_{T_i}$ et en définissant les isomorphismes $\psi_{ij}$ canoniquement par la structure de catégorie fibrée. Cela définit un foncteur de $\cC_T$ vers la catégorie des données de descente relativement à $(T_i)_{i\in I}$, et la définition d'un champ peut alors se reformuler en disant que ce foncteur est une équivalence de catégories: le point \ref{faisceauchamp_item} donne la pleine fidélité, et le point \ref{descente_item} l'essentielle surjectivité. 

On dit alors qu'on construit un objet de $\cC_T$ (resp. un morphisme) ``par descente'' lorsqu'on en construit une occurrence sur chaque $T_i$ d'un recouvrement de $T$, ainsi que des isomorphismes de compatibilité sur les produits doubles (resp. rien), et qu'on applique le point \ref{descente_item} (resp. le point \ref{faisceauchamp_item}) de la définition d'un champ pour l'obtenir sur $T$. Autrement dit, on construit un objet (resp. un morphisme) de la catégorie des données de descente, et on utilise l'équivalence de catégories. 

On dit également qu'une condition sur les objets ou les morphismes \emph{descend} ou bien \emph{est locale} pour une certaine topologie, s'il suffit de la vérifier après changement de base à chacun des schémas d'un recouvrement pour cette topologie. 

\begin{exem} \label{champfaisceau_exem}
Le champ associé à un faisceau $X$ sur $S$, est la $S$-catégorie fibrée dont la fibre sur le $S$-schéma $T$ est la catégorie dont les objets sont les éléments de $X(T)$ avec pour morphismes $\setHom(x,x)=\{id_x\}$ et $\setHom(x,y)=\emptyset$ si $x \neq y$. C'est un exercice facile, qui utilise bien entendu que $X$ est un faisceau et pas un simple préfaisceau, dans la vérification des deux points de la définition. En particulier, le champ associé au faisceau final (constant, un point partout) n'a qu'un seul objet par fibre, et cet objet n'a qu'un seul morphisme, l'identité. Nous noterons ce champ $\Final$. 
\end{exem}

\begin{exem} \label{fibresdiscretes_exem}
En fait, on voit même facilement que si les fibres d'une catégorie fibrée sont discrètes et petites, c'est-à-dire que ce sont des catégories avec pour seuls morphismes les identités des objets, et ces objets forment un ensemble, comme dans l'exemple précédent, alors cette catégorie fibrée est un champ si et seulement si le foncteur de points qui à $T$ associe l'ensemble des objets de $\cC_T$ est un faisceau.
\end{exem}

\begin{exo}
La catégorie fibrée $\CatFais_{/S,\Ttau}$ est un champ. 
\end{exo}

\begin{prop}
Si $\cC$ est un champ, alors $\cC_{\grpd}$ est un champ.
\end{prop}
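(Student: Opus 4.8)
Le plan est de v\'erifier pour $\cC_\grpd$ les deux conditions de la d\'efinition \ref{SChamp_defi}, en d\'eduisant chacune de la propri\'et\'e correspondante de $\cC$. Le point de d\'epart est que $\cC_\grpd$ a les m\^emes objets que $\cC$ et que, pour $x,y$ dans $(\cC_T)_\grpd$, on a $\setHom_{(\cC_\grpd)_T}(x,y)=\setIso_{\cC_T}(x,y)$.

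Pour la descente des objets (point \ref{descente_item}), je m'attends \`a un argument essentiellement tautologique. En effet, les morphismes $\psi_{ij}$ d'une donn\'ee de descente au sens de \ref{descente_item} sont d\'ej\`a requis d'\^etre des isomorphismes, et les objets sont les m\^emes dans les deux cat\'egories fibr\'ees; les donn\'ees de descente pour $\cC$ et pour $\cC_\grpd$ co\"incident donc. L'objet $x$ et les isomorphismes $\lambda_i$ produits par la descente dans $\cC$ conviennent alors tels quels pour $\cC_\grpd$, les $\lambda_i$ \'etant des isomorphismes, et l'unicit\'e se transporte de la m\^eme fa\c{c}on.

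L'essentiel du travail porte sur le point \ref{faisceauchamp_item}. Posons $H_{xy}\colon T'\mapsto\setHom_{\cC_{T'}}(x_{T'},y_{T'})$, qui est un faisceau par hypoth\`ese, et notons $I_{xy}\subseteq H_{xy}$ le sous-foncteur des isomorphismes; c'en est bien un, car les foncteurs de changement de base pr\'eservent les isomorphismes. La s\'eparation de $I_{xy}$, d'o\`u l'injectivit\'e et l'unicit\'e, est h\'erit\'ee de celle de $H_{xy}$. Pour l'exactitude, je partirais d'isomorphismes $f_i\colon x_{T_i}\isoto y_{T_i}$ co\"incidant sur les $T_{ij}$: le faisceau $H_{xy}$ les recolle en un unique morphisme $f\colon x_{T'}\to y_{T'}$, et tout se ram\`ene \`a montrer que $f$ est inversible.

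L'id\'ee cl\'e, qui est le seul point non purement formel, est que l'inversibilit\'e est une propri\'et\'e locale. Je recollerais les inverses $f_i^{-1}\colon y_{T_i}\isoto x_{T_i}$, qui co\"incident encore sur les $T_{ij}$, gr\^ace au faisceau $H_{yx}$, ce qui fournit un morphisme $g\colon y_{T'}\to x_{T'}$. Les morphismes $g\circ f$ et $\id_{x_{T'}}$ ayant pour restriction commune $\id$ sur chaque $T_i$, la s\'eparation de $H_{xx}$ donne $g\circ f=\id_{x_{T'}}$, et sym\'etriquement $f\circ g=\id_{y_{T'}}$; ainsi $f\in I_{xy}(T')$. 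La condition sur le sch\'ema vide \'etant imm\'ediate, ceci ach\`everait la v\'erification. Le c\oe ur de la preuve est donc ce passage \`a l'inverse par recollement, qui repose de mani\`ere essentielle sur le fait que les $\setHom$ de $\cC$ sont des faisceaux et non de simples pr\'efaisceaux.
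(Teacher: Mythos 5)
Votre preuve est correcte et suit exactement la voie que l'article condense en une phrase (\og on peut construire l'inverse d'un morphisme localement par descente \fg) : la descente des objets est tautologique, et le point \ref{faisceauchamp_item} se ram\`ene au recollement des inverses locaux via les faisceaux $\setHom$, puis \`a la s\'eparation de $H_{xx}$ et $H_{yy}$ pour conclure \`a l'inversibilit\'e. Vous ne faites qu'expliciter soigneusement l'argument que le texte laisse au lecteur.
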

\begin{proof}
On peut construire l'inverse d'un morphisme localement par descente. 
\end{proof}

\begin{prop} \label{champstruc_prop}
Si $\struc$ est une structure dans $\cC_S$, où $\cC$ est un $S$-champ (muni de produits finis respectés par les changements de base), alors la catégorie fibrée $\cC^\struc$ est un champ. En particulier si $\struc$ est une structure sur les $S$-faisceaux, alors $\CatFais_{/S}^\struc$, au sens de la définition \ref{catfibreestruc_defi}, est un champ.
\end{prop}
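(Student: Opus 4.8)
The plan is to verify the two axioms of Definition~\ref{SChamp_defi} for $\cC^\struc$ by reducing each one to the same axiom for $\cC$. I use throughout the concrete description recalled just after the definition of an object in $\struc$: an object of $(\cC^\struc)_T$ amounts to a finite family $(X_i)_{i\in I}$ of objects of $\cC_T$ together with finitely many \emph{structure morphisms} (the extra edges of $F$) between finite products of the $X_i$ and the constants, subject to the relations $R$; and a morphism of $\struc$-objects is a family $(f_i)_{i\in I}$ of morphisms of $\cC_T$ making the evident diagrams commute and equal to the identity on the constants. The hypothesis that finite products in $\cC$ are respected by base change guarantees that all the products occurring as sources and targets of the structure morphisms base-change to the corresponding products, which is exactly what makes the reduction go through.

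For axiom~\ref{faisceauchamp_item}, fix objects $\Xstruc,\Ystruc$ of $(\cC^\struc)_T$ with underlying families $(X_i)$, $(Y_i)$. The functor $T'\mapsto \setHom_{(\cC^\struc)_{T'}}(\Xstruc_{T'},\Ystruc_{T'})$ is the subfunctor of the finite product $\prod_{i\in I}\bigl(T'\mapsto\setHom_{\cC_{T'}}((X_i)_{T'},(Y_i)_{T'})\bigr)$ cut out by one commutativity condition per structure morphism. Each factor is a sheaf by axiom~\ref{faisceauchamp_item} for $\cC$, and a finite product of sheaves is a sheaf; each commutativity condition is the equality of two sections of a single Hom-sheaf of $\cC$ (built from the $f_i$ by composition with the structure morphisms), hence a local condition. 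Being cut out as an equalizer of maps of sheaves, the resulting subfunctor is therefore a sheaf.

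For axiom~\ref{descente_item}, let $(T_\alpha)_\alpha$ be a covering of $T$ and $(\Xstruc_\alpha,\psi_{\alpha\beta})$ a descent datum in $\cC^\struc$, with underlying objects $(X_\alpha)_i$ and glueing isomorphisms of components $(\psi_{\alpha\beta})_i$. Applying axiom~\ref{descente_item} for $\cC$ to each $i\in I$ separately yields objects $X_i\in\cC_T$ with isomorphisms $\lambda_{\alpha,i}:(X_\alpha)_i\isoto(X_i)_{T_\alpha}$ realising the glueing. Since base change respects finite products, every finite product of the $X_i$ and the constants is glued from its restrictions, with glueing isomorphisms induced by the $\lambda_{\alpha,i}$; transporting the local structure morphisms $\mu_\alpha$ through these identifications, the compatibility of $\psi_{\alpha\beta}$ with the structure says precisely that the $\mu_\alpha$ agree on the double overlaps. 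By axiom~\ref{faisceauchamp_item} for $\cC$ (glueing of morphisms) they descend to morphisms $\mu$ on $T$, and the relations $R$ hold for $\mu$ because they hold on each $T_\alpha$ and equality of morphisms is local. Thus $\Xstruc:=((X_i),\mu)$ lies in $(\cC^\struc)_T$, the $\lambda_{\alpha,i}$ assemble into isomorphisms of $\struc$-objects $\Xstruc_\alpha\isoto\Xstruc_{T_\alpha}$ inducing the $\psi_{\alpha\beta}$, and uniqueness up to unique isomorphism follows from the uniqueness in $\cC$ together with the Hom-sheaf property of the previous paragraph.

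The one step demanding care is the descent of the structure morphisms: one must match the compatibility built into a descent datum of $\cC^\struc$ against the glueing isomorphisms $\lambda_{\alpha,i}$ produced by object-descent in $\cC$, so that the $\mu_\alpha$ really become matching sections of a single Hom-sheaf of $\cC$; everything else is formal. Finally, the particular case follows by applying the general statement to $\cC=\CatFais_{/S}$, which is a champ by the preceding exercise and whose fibres carry finite products (computed termwise on sections) respected by restriction.
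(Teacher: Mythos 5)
Votre preuve est correcte et suit essentiellement la m�me d�marche que celle du texte : la condition faisceautique des $\Hom$ s'obtient en voyant les morphismes d'objets en $\struc$ comme un sous-faisceau (d�coup� par des conditions locales) d'un produit fini de faisceaux $\Hom$ de $\cC$, et la descente des objets se fait composante par composante, les morphismes structuraux descendant ensuite par cette m�me condition faisceautique, les relations se v�rifiant localement. Vous ne faites qu'expliciter davantage les d�tails que le texte laisse au lecteur.
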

\begin{proof}
La condition faisceautique des $\Hom$ se vérifie par inclusion des morphismes d'objets en $\struc$ dans les morphismes de $\cC$ (ou de puissances de $\cC$ lorsque $I$ a plusieurs éléments). Pour la descente des objets en $\struc$, on construit les objets de $\cC$ nécessaires par descente, puis les morphismes structuraux dans $\cC$ par la condition faisceautique sur les $\Hom$ dans $\cC$, et on montre qu'ils vérifient les relations par cette même condition. 
\end{proof}

\begin{nota} \label{isoaut_nota}
Lorsque $\cC$ est un $S$-champ, et $X$ et $Y$ sont des objets de $\cC_S$, on note $\faisHom^\cC_{X,Y}$ le $S$-faisceau dont les $T$-points sont $\setHom_{\cC_{T}}(X_T,Y_T)$. Lorsque $\cC^\struc$ est le champ défini à partir d'une structure sur $\cC_S$, comme en \ref{champstruc_prop}, on utilise également la notation $\faisHom^{\struc}_{X,Y}$ au lieu de $\faisHom^{\cC^\struc}_{X,Y}$. De même, on définit les faisceaux $\faisEnd^\cC_{X}=\faisHom^\cC_{X,X}$, $\faisIso^\cC_{X,Y}=\faisHom^{\cC_{\grpd}}_{X,Y}$ et $\faisAut^\cC_{X}=\faisIso^\cC_{X,X}$.
\end{nota}

\begin{rema} \label{Homchgmt_rema}
Dans cette situation, si $T$ est un $S$-schéma, on a immédiatement la commutation au changement de base $\faisHom^{\cC^T}_{X_T,Y_T}=(\faisHom^\cC_{X,Y})_T$ où $\cC^T$ est le $T$-champ obtenu de $\cC$ par restriction. 
\end{rema}

\begin{exo}[produit fibré de champs] \label{prodfibchamps_exo}
Le produit fibré de deux champs au-dessus d'un troisième, au sens de la définition \ref{produitfibre_defi}, est un champ.
\end{exo}

Cette propriété va nous permettre de construire de nombreux champs à partir d'autres.

\subsection{Faisceaux en groupes et torseurs} \label{torseurs_sec}

Expliquons maintenant comment on tord un faisceau $X$ muni d'une action d'un faisceau en groupes $H$ par un torseur $P$ sous $H$. L'espace ainsi obtenu $P \contr{H} X$ est appelé produit contracté (déf. \ref{prodcontract_defi}). Lorsque $\Xstruc$ est un objet en une structure $\struc$ (ex: un faisceau en groupes) et que $H$ la respecte, on peut définir le produit contracté $P \contr{H} \Xstruc$, qui est également un objet en $\struc$. Sous des hypothèses raisonnables (prop. \ref{tordusformes_prop}), on obtient ainsi toutes les formes de $\Xstruc$, c'est-à-dire les objets isomorphes à $\Xstruc$ localement pour la topologie considérée. C'est ce qui nous servira à obtenir tous les groupes réductifs d'un type déployé donné à partir du groupe de Chevalley correspondant. Les questions de représentabilité de ces différents faisceaux par des schémas sont regroupées dans la section \ref{representabilite_sec} et en particulier dans la proposition \ref{reprschemas_prop}.  

\subsubsection{Actions de groupes}

Soit $H$ un $S$-faisceau en groupes et soit $X$ un $S$-faisceau, muni d'une action à gauche (resp. à droite) de $H$, donc un objet en $\actg{H}$ (resp. en $\actd{H}$) comme dans l'exemple \ref{structures_exem} \eqref{groupeaction_item} ci-dessus. Ceci est équivalent à la donnée d'un morphisme de faisceaux en groupes $H \to \faisAut_X$ (resp. $H^\opp \to \faisAut_X$) (un faisceau à groupes d'opérateurs $H$ dans la terminologie de \SGAtrois). 

Considérons alors la catégorie fibrée $\CatFais_{/S}^{\actg{H}}$ (resp. $\CatFais_{/S}^{\actd{H}}$), qui est un champ par la proposition \ref{champstruc_prop}. Les objets de sa $T$-fibre sont donc les $T$-faisceaux munis d'une action à gauche de $H_T$ (resp. à droite). 

\subsubsection{Torseurs}

Soit $\Ttau$ une topologie de Grothendieck sur les $S$-schémas. Sauf mention contraire, nous supposerons toujours que les faisceaux mentionnés dans cette partie sont des faisceaux au sens de cette topologie. Notons que $\CatFais_{/S}$ étant un champ, un morphisme $f$ de faisceaux est un isomorphisme si et seulement s'il en est un localement. 

Dans ce qui suit, $P$ désigne toujours un $S$-faisceau avec action à droite de $H$, donc un objet de la catégorie $(\CatFais_{/S}^{\actd{H}})_S$. La notion symétrique existe bien entendu avec une action à gauche.

\begin{defi}
On dit que $P$ est un \emph{pseudo-torseur} ou \emph{formellement principal homogène} 
(resp. \emph{formellement homogène})
sous $H$ si l'application $P \times H \to P \times P$ dont les composantes sont l'action et la projection est un isomorphisme (resp. un épimorphisme) de faisceaux. Voir \cite[Exp. IV, 5.1.0, et 6.7.1]{sga3}.
 
\end{defi}

\begin{defi} \label{torseur_defi}
On dit que $P$ est un torseur (resp. est homogène) sous $H$ pour la topologie $\Ttau$, ou un $\Ttau$-torseur, s'il est un pseudo-torseur (resp. formellement homogène) et que $\Ttau$-localement, $P$ a un point, i.e. $P(S_i)\neq \emptyset$ pour tout $S_i$ dans un certain recouvrement de $S$. 
\end{defi}

On notera que contrairement à la notion de pseudo-torseur, qui n'est qu'une notion de foncteurs de points, la notion de torseur dépend bien de la topologie $\Ttau$. Le fait que $P$ est localement non vide revient à ce que son morphisme structural $P \to S$ soit un épimorphisme de faisceau.

\begin{exem}
Lorsqu'on fait agir $H$ sur lui-même à droite par translations, on obtient évidemment un torseur, appelé torseur trivial.
\end{exem}

\begin{prop} \label{torseurloc_prop}
Un faisceau $P$ muni d'une action à droite de $H$ est un torseur si et seulement s'il est $\Ttau$-localement isomorphe au torseur trivial $H$. Autrement dit, un torseur est un faisceau avec action qui est localement trivial pour la topologie $\Ttau$.
De plus, on a alors $P_T \simeq H_T$ si et seulement si $P(T)\neq \emptyset$. 
\end{prop}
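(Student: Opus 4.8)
Le plan consiste à démontrer d'abord la dernière assertion (le « de plus »), qui sert de moteur à tout le reste. Observons au préalable que la propriété d'être un pseudo-torseur est stable par changement de base : l'isomorphisme $\theta:P \times H \isoto P \times P$ donné par $(x,h)\mapsto(x,x\cdot h)$ se restreint en un isomorphisme $P_T \times H_T \isoto P_T \times P_T$ pour tout $T \to S$, puisque les produits fibrés et les isomorphismes commutent à la restriction. On peut donc raisonner fibre par fibre.

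Montrons le point clé : pour un pseudo-torseur $P$, on a $P_T \simeq H_T$ (comme faisceaux munis d'une action à droite de $H_T$) si et seulement si $P(T)\neq\emptyset$. Le sens direct est immédiat : un isomorphisme équivariant $P_T \simeq H_T$ induit une bijection $P(T)=P_T(T)\simeq H_T(T)=H(T)$, et $H(T)$ contient l'élément neutre, donc est non vide. Pour la réciproque, on part d'un point $p\in P(T)$ et on considère le morphisme orbital $a_p:H_T \to P_T$ défini sur les $T'$-points par $h\mapsto p_{T'}\cdot h$ ; il est $H_T$-équivariant pour la translation à droite sur $H_T$, car $a_p(h h')=p\cdot(h h')=(p\cdot h)\cdot h'=a_p(h)\cdot h'$. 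Son inverse se lit sur la structure de pseudo-torseur : l'isomorphisme $\theta$ admet un inverse de la forme $(x,y)\mapsto(x,\delta(x,y))$, où $\delta(x,y)$ est l'unique section de $H$ vérifiant $x\cdot\delta(x,y)=y$. On pose alors $b:P_T \to H_T$, $y\mapsto \delta(p_{T'},y)$. Les identités $p\cdot\delta(p,y)=y$ et $\delta(p,p\cdot h)=h$ montrent que $a_p$ et $b$ sont inverses l'un de l'autre, d'où $P_T\simeq H_T$.

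La première assertion s'en déduit. Si $P$ est un torseur, il est pseudo-torseur et il existe un recouvrement $(S_i)$ avec $P(S_i)\neq\emptyset$ ; comme chaque $P_{S_i}$ reste un pseudo-torseur sous $H_{S_i}$, le point clé donne $P_{S_i}\simeq H_{S_i}$, c'est-à-dire la trivialité locale. Réciproquement, supposons qu'il existe un recouvrement $(S_i)$ avec $P_{S_i}\simeq H_{S_i}$ de façon équivariante. D'une part $P(S_i)=P_{S_i}(S_i)\simeq H(S_i)\neq\emptyset$, donc $P$ a localement un point. D'autre part, $P$ est un pseudo-torseur : le morphisme $\theta$ est un isomorphisme si et seulement s'il l'est localement, puisque $\CatFais_{/S}$ est un champ ; or au-dessus de chaque $S_i$, l'isomorphisme équivariant $P_{S_i}\simeq H_{S_i}$ transporte $\theta_{S_i}$ sur le morphisme analogue du torseur trivial $H_{S_i}$, lequel est un isomorphisme (d'inverse $(x,y)\mapsto(x,x^{-1}y)$). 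Donc $\theta$ est localement, puis globalement, un isomorphisme, et $P$ est un torseur.

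La principale difficulté se situe dans la réciproque du point clé : il faut extraire de la seule propriété abstraite de pseudo-torseur le morphisme de « division » $\delta$, et vérifier que le morphisme orbital $a_p$ qu'il inverse est bien équivariant. Le reste n'est qu'un jeu sur la localité de la notion d'isomorphisme de faisceaux, déjà acquise puisque $\CatFais_{/S}$ est un champ, combiné à la stabilité du caractère pseudo-torseur par changement de base.
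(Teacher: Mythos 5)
Votre preuve est correcte et suit essentiellement la même démarche que celle du texte : le point $p$ fournit le morphisme orbital $h\mapsto p\cdot h$, la réciproque est immédiate via l'élément neutre de $H(T)$, et la propriété de pseudo-torseur se vérifie localement parce qu'être un isomorphisme de faisceaux est local. Vous explicitez simplement un détail que le texte laisse implicite, à savoir la construction de l'inverse du morphisme orbital au moyen du morphisme de division $\delta$ extrait de l'inverse de $P\times H\isoto P\times P$.
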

\begin{proof}
Si $P(T)$ contient un point $p$, alors on construit immédiatement un isomorphisme $H_T \isoto P_T$ de faisceaux avec action à droite de $H$ en envoyant $h$ vers $p\cdot h$, et bien entendu, si $P_T \simeq H_T$, alors $P(T)\neq \emptyset$, puisque $H(T)$ contient toujours l'élément neutre. Réciproquement, si $P$ est muni d'une action à droite de $H$ et si $P_T \simeq H_T$, alors l'application $P_T \times H_T \to P_T \times P_T$ mentionnée plus haut est alors un isomorphisme. C'est donc un isomorphisme localement, donc un isomorphisme.
\end{proof}

\begin{defi} \label{deploye_defi}
Lorsqu'un torseur $P$ sous $H$ devient isomorphe au torseur trivial après extension à un schéma $T$ sur $S$, donc $P_T \simeq H_T$ ou de manière équivalente $P(T)\neq \emptyset$, on dit qu'il est \emph{déployé sur $T$}. On abrège déployé sur $S$ par \emph{déployé}.
\end{defi}

\begin{prop} \label{flecheIsoTors_prop}
Si $P_1$ et $P_2$ sont des torseurs sous $H$, et si $f: P_1 \to P_2$ est un morphisme de faisceaux qui est équivariant sous l'action de $H$, alors $f$ est un isomorphisme. 
\end{prop}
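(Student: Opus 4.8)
The plan is to reduce to the trivial torseur by working localement, exploiting that being an isomorphism of faisceaux is a local property. Indeed, as recalled just before Definition~\ref{torseur_defi}, since $\CatFais_{/S}$ is a champ, a morphism of faisceaux is an isomorphism if and only if it is one localement; so it suffices to exhibit a recouvrement $(S_i)_{i\in I}$ of $S$ on which each restriction $f_{S_i}$ is an isomorphism.

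First I would fix the covering. Since $P_1$ is a torseur under $H$, it is localement non vide, so there is a recouvrement $(S_i)$ of $S$ with $P_1(S_i)\neq\emptyset$; choose a point $p_i\in P_1(S_i)$. Applying $f$ gives $f(p_i)\in P_2(S_i)$, whence $P_2(S_i)\neq\emptyset$ as well, so the \emph{same} covering trivializes both torseurs. This is the crucial use of $f$ at the setup stage, and it is what lets us argue over a single common recouvrement rather than trivializing $P_1$ and $P_2$ independently.

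Next, by Proposition~\ref{torseurloc_prop}, the points $p_i$ and $f(p_i)$ furnish isomorphisms of faisceaux with action $\alpha_i: H_{S_i}\isoto (P_1)_{S_i}$, $h\mapsto p_i\cdot h$, and $\beta_i: H_{S_i}\isoto (P_2)_{S_i}$, $h\mapsto f(p_i)\cdot h$. I would then compute the composite $\beta_i^{-1}\circ f_{S_i}\circ\alpha_i: H_{S_i}\to H_{S_i}$: for a section $h$ of $H$ one has $f_{S_i}(\alpha_i(h))=f_{S_i}(p_i\cdot h)=f(p_i)\cdot h=\beta_i(h)$ by the $H$-equivariance of $f$, so this composite is the identity of $H_{S_i}$. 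Hence $f_{S_i}=\beta_i\circ\alpha_i^{-1}$ is an isomorphism for every $i$.

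Finally, $f$ is thus an isomorphism on each member of a recouvrement of $S$, so by the locality recalled above it is an isomorphism, as desired. There is no genuine obstacle in this argument; the one point deserving care is the very first step, namely that equivariance of $f$ must be invoked to see that $P_2$ is trivial on the same covering that trivializes $P_1$, after which everything reduces to the tautology that an equivariant self-map of the trivial torseur fixing the chosen point is the identity.
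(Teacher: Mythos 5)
Votre preuve est correcte et repose sur le m\^eme m\'ecanisme que celle du texte : trivialiser $P_1$ localement et utiliser l'\'equivariance pour constater que $f$ devient, \`a travers les trivialisations induites par $p_i$ et $f(p_i)$, l'identit\'e du torseur trivial. La seule diff\'erence est dans la globalisation : le texte montre d'abord que $f:P_1(T)\to P_2(T)$ est une bijection d\`es que $P_1(T)\neq\emptyset$ (transitivit\'e simple de l'action des deux c\^ot\'es), puis, partant d'une section $s$ de $P_2$ sur $T$, recolle les pr\'eimages $f^{-1}(s_{T_i})$ en un point de $P_1(T)$ pour se ramener au premier cas; vous invoquez directement le fait, rappel\'e juste avant la d\'efinition \ref{torseur_defi}, qu'un morphisme de faisceaux est un isomorphisme si et seulement s'il l'est localement. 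Votre r\'edaction est un peu plus \'economique; celle du texte donne au passage l'\'enonc\'e ponctuel (bijection sur les $T$-points pour tout $T$) qui peut servir ailleurs. Le point que vous soulignez \`a juste titre --- l'\'equivariance sert d\`es le d\'epart \`a garantir que le m\^eme recouvrement trivialise $P_2$ --- est bien le c\oe ur de l'argument dans les deux versions.
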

\begin{proof}
Si $P_1(T) \neq \emptyset$, alors $f:P_1(T)\to P_2(T)$ est une bijection. En effet, l'action de $H(T)$ est transitive des deux côtés et commute à $f$. Si $P_2(T)\ni s$, alors on utilise un recouvrement de $T$ par des $T_i$ tels que $(P_1)_{T_i}$ est trivial pour tout $i$ (et a donc un point). On a donc des bijection $f_{T_i}:P_1(T_i) \to P_2(T_i)$, ce qui permet de construire un point dans $P_1(T)$ par recollement des $f^{-1}(s_{T_i})$, ce qui ramène au cas précédent.
\end{proof}

\begin{prop} \label{stabilisateur_prop}
Soit $G$ un $S$-faisceau en groupes qui agit sur un $S$-faisceau $X$ de manière homogène. Soit $x \in X(S)$, et soit $H$ le sous-faisceau de $G$ des éléments fixant $x$. Alors $X$ muni du morphisme $G \to X$ d'action sur $x$ s'identifie au faisceau quotient $G/H$, muni du morphisme canonique $G \to G/H$. 
\end{prop}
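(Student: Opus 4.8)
The plan is to build the comparison morphism by hand and then show it is simultaneously a monomorphism and an epimorphism of $S$-faisceaux, which in a category of sheaves of sets forces it to be an isomorphism. First I would introduce the orbit morphism $a\colon G \to X$ given on $T$-points by $g \mapsto g\cdot x_T$, where $x_T \in X(T)$ denotes the restriction of the global section $x$. It is equivariant for the left translation action of $G$ on itself and the given action on $X$, since $a(g'g) = (g'g)\cdot x = g'\cdot a(g)$. As $H$ is the stabilizer of $x$, one has $a(gh) = g\cdot(h\cdot x) = g\cdot x = a(g)$ for every $h \in H(T)$, so $a$ is constant on the cosets $gH(T)$; it therefore factors through the presheaf $T \mapsto G(T)/H(T)$, and since $X$ is a sheaf, through its sheafification $G/H$. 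This produces a $G$-equivariant morphism $\bar a\colon G/H \to X$ compatible with the canonical projection $\pi\colon G \to G/H$, and it is $\bar a$ that I claim is an isomorphism.

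Next I would check that $\bar a$ is a monomorphism. Already at the presheaf level the map $G(T)/H(T) \to X(T)$ is injective: if $g_1\cdot x = g_2\cdot x$, then $g_2^{-1}g_1$ fixes $x_T$, hence lies in $H(T)$, so $g_1$ and $g_2$ represent the same coset. Since sheafification is left exact, a pointwise injective morphism of presheaves induces a monomorphism of sheaves, whence $\bar a$ is a monomorphism.

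The heart of the argument, and the step I expect to be the main obstacle, is showing that $a$ (hence $\bar a$) is an epimorphism; this is precisely where homogeneity enters. Homogeneity means the morphism $\theta\colon G \times X \to X \times X$, $(g,y)\mapsto (g\cdot y, y)$, is an epimorphism of sheaves. I would exploit the global point $x$ by pulling $\theta$ back along the section $j\colon X \to X\times X$, $y \mapsto (y, x_T)$. Epimorphisms of sheaves are stable under base change, so the projection $(G\times X)\times_{X\times X} X \to X$ is again an epimorphism. Inspecting $T$-points shows that this fibre product is canonically identified with $G$: a point $((g,y),y')$ must satisfy $(g\cdot y, y) = (y', x_T)$, forcing $y = x_T$ and $y' = g\cdot x_T$, so it is determined by $g$ alone. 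Under this identification the projection to $X$ is exactly $g \mapsto g\cdot x_T = a(g)$, so $a$ is an epimorphism; and because $a = \bar a\circ\pi$ with $\pi$ epimorphic, $\bar a$ is an epimorphism too.

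Finally I would conclude with the standard fact that a morphism of $S$-faisceaux that is both a monomorphism and an epimorphism is an isomorphism: objectwise injectivity of $\bar a$ together with its local surjectivity lets one glue, over any cover trivializing a given section of $X$, a unique preimage in $G/H$, using that both $G/H$ and $X$ are sheaves. Thus $\bar a\colon G/H \isoto X$ is the desired $G$-equivariant isomorphism intertwining $\pi\colon G \to G/H$ with the orbit morphism $G \to X$. The only genuinely delicate points are the interplay of sheafification with the mono and epi conditions and the bookkeeping identifying the fibre product with $G$; everything else is formal.
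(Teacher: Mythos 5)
Your proof is correct, and it is organized along a genuinely different route from the paper's. The paper never forms the sheafification explicitly: it verifies directly that $X$, equipped with the orbit map, satisfies the universal property of the sheaf associated to the presheaf $T \mapsto G(T)/H(T)$ --- any morphism from that presheaf to a sheaf $Y$ extends uniquely to $X$, the extension being forced on points lying in the image of $G \to X$ and then defined everywhere by descent. You instead build the comparison morphism $\bar a\colon G/H \to X$ out of the sheafification and show it is simultaneously a monomorphism (presheaf-level injectivity, which holds precisely because $H$ is the full stabilizer, combined with left-exactness of sheafification) and an epimorphism, concluding because a category of sheaves of sets is balanced. Both arguments rest on exactly the same two ingredients; what your version buys is a cleaner justification of the surjectivity step: where the paper dispatches it with the single clause that homogeneity makes the orbit map locally surjective, you exhibit $a\colon G \to X$ as the base change of the homogeneity epimorphism $X \times G \to X \times X$ along the section $y \mapsto (y,x)$ and invoke stability of epimorphisms under base change, which is both more precise and more robust. (Two minor remarks: the notion \emph{homog\`ene} of the d\'efinition \ref{torseur_defi} also requires $X$ to have a point locally, which is automatic here thanks to the global section $x$; and your final hand-made gluing argument is redundant once you have quoted that sheaf categories are balanced, though it does no harm.)
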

\begin{proof}
On vérifie la propriété universelle du faisceautisé associé au préfaisceau $G/H$ donné par $(G/H)(T)=G(T)/H(T)$. Il y a bien entendu une flèche canonique de préfaisceaux $G/H \to X$. Pour tout faisceau $Y$ muni d'une flèche $G/H \to Y$, il faut construire l'unique flèche $X \to Y$ l'étendant. Or si un $T$-point de $X$ est dans l'image de $G \to X$, il est clair qu'il ne peut s'envoyer que sur un seul élément, obtenu en le relevant à $G(T)$, puis en appliquant $G(T) \to (G/H)(T) \to Y(T)$. Puisque l'action de $G$ est homogène, c'est vrai localement, et par descente dans le faisceau $Y$, on construit donc de manière unique l'image de tout point de $X$. 
\end{proof}

Si $P$ est un $S$-torseur sous $H$, alors pour tout $S'\to S$, $P_{S'}$ est un $H_{S'}$-torseur, puisqu'être un pseudo-torseur et le fait d'avoir un point localement sont deux notions stables par changement de base. On peut donc définir une sous-catégorie fibrée (pleine) de $\CatFais_{/S}^{\actd{H}}$, donc les objets de la fibre en $S'$ sont les faisceaux avec action de $H_{S'}$ qui sont des torseurs. Cette sous-catégorie fibrée est un champ. En effet, les morphismes de torseurs sont simplement des isomorphismes d'objets avec action à droite, donc forment des faisceaux puisque $\CatFais_{/S}^{\actd{H}}$ est un champ, et on a donc également la descente des objets. De plus, étant donné un morphisme $T \to S$, on peut considérer la sous-catégorie fibrée dont les objets de la fibre en $S'$ sont les torseurs déployés par $T\times_S S'\to T$. Elle forme également un sous-champ de manière évidente. 
\begin{defi} \label{torschamp_defi}
On note $\Tors{H}$ le champ des torseurs (à droite) sous $H$, et $\Tors{H,T/S}$ le sous-champ des torseurs déployés par $T\to S$.
\end{defi}

\begin{defi} \label{prodcontract_defi}
Soient $X$ et $P$ des faisceaux avec action de $H$ respectivement à gauche et à droite. On note $P \contr{H} X$ le faisceau conoyau\footnote{Pour l'existence de ce faisceau, on a fait abstraction des problèmes de théorie des ensembles sur l'existence des faisceautisés d'un préfaisceau pour une topologie de Grothendieck. Pour y remédier, il faut soit savoir que ce faisceau existe dans les cas qui nous intéressent, soit se placer dans un univers fixé comme dans \cite{dg}.} des deux flèches $H \times P \times X \to P \times X$ données respectivement sur les points par $(h,p,x) \mapsto (ph,x)$ et $(h,p,x) \mapsto (p,hx)$. On appelle $P \contr{H} X$ le \emph{produit contracté} de $P$ et $X$. 
\end{defi}
Autrement dit, $P \contr{H} X$ est le faisceautisé du préfaisceau des orbites de $H$ agissant sur $P \times X$ par $(h,(p,x)) \mapsto (ph^{-1},hx)$. Il est immédiat qu'on définit ainsi un foncteur de la catégorie des $S$-faisceaux avec action de $H$ (avec morphismes respectant l'action) vers les $S$-faisceaux.

\begin{lemm} \label{assoccontr_lemm}
Cette construction est associative: lorsque $Q$ est muni d'une action à gauche de $H$ et à droite de $H'$ et qu'elles commutent, on a 
$$(P \contr{H} Q) \contr{H'} X \cong P \contr{H} (Q \contr{H'} X).$$ 
D'autre part, on a évidemment $H \contr{H} X \cong X$, d'où pour tout morphisme de groupes $\phi:H' \to H$, un isomorphisme $P \contr{H'} H \contr{H} X \cong P \contr{H'} X$ où $H'$ agit sur $X$ à travers $\phi$. 
\end{lemm}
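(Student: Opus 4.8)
The plan is to deduce all three assertions from a single universal property of the contracted product, in exact analogy with the associativity of tensor products of modules over a ring; the only genuine issue will be the sheafifications hidden in the symbol $\contr{}$. First I would record that universal property. By Definition \ref{prodcontract_defi}, $P \contr{H} X$ is the sheaf coequalizer of the two arrows $H \times P \times X \rightrightarrows P \times X$, namely $(h,p,x)\mapsto(ph,x)$ and $(h,p,x)\mapsto(p,hx)$; applying $\setHom_{\CatFais_{/S}}(-,Y)$ carries this coequalizer to an equalizer, so that for every $S$-faisceau $Y$ one has a natural bijection
$$\setHom(P \contr{H} X, Y) \cong \{\, f : P \times X \to Y \mid f(ph,x)=f(p,hx) \,\},$$
with the set of $H$-balanced morphisms (all identities of morphisms being read, as usual, on $T$-points functorially in $T$).

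Next I would prove associativity by checking that both sides corepresent the same functor. Since $\CatFais_{/S}$ is a topos, the functor $(-)\times X$ has a right adjoint $\faisHom(X,-)$; hence a morphism $(P\contr{H}Q)\times X \to Y$ is the same datum as a morphism $P\contr{H}Q \to \faisHom(X,Y)$, and by the universal property above the latter is an $H$-balanced morphism $P\times Q\times X \to Y$. Adding the requirement of $H'$-balancedness in the last two variables --- which, because the class $p\otimes q$ of $(p,q)$ satisfies $(p\otimes q)h'=p\otimes qh'$, is precisely the relation defining the outer $\contr{H'}$ --- yields
$$\setHom\bigl((P\contr{H}Q)\contr{H'}X,\,Y\bigr) \cong \bigl\{\, g:P\times Q\times X \to Y \ \big|\ g(ph,q,x)=g(p,hq,x),\ g(p,qh',x)=g(p,q,h'x) \,\bigr\}.$$
The right-hand side is symmetric in the two balancing conditions, so the identical computation applied to $P\contr{H}(Q\contr{H'}X)$ produces the same set; by Yoneda the two faisceaux are canonically isomorphic.

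The two remaining assertions are then short. For $H\contr{H}X\cong X$ I would check that $H\times X\to X$, $(h,x)\mapsto hx$, is $H$-balanced and that $x\mapsto[(e,x)]$ is a two-sided inverse of the induced map $H\contr{H}X\to X$ --- concretely, in the orbit presheaf every class has the unique representative $(e,hx)$. For the last isomorphism I would apply the associativity just proved to $P\contr{H'}H\contr{H}X$, reading $H$ as a left $H'$-faisceau through $\phi$ and a right $H$-faisceau by multiplication (two commuting actions), to get $(P\contr{H'}H)\contr{H}X\cong P\contr{H'}(H\contr{H}X)$, and then substitute $H\contr{H}X\cong X$; one only has to observe that the $H'$-action $h'\cdot(h\otimes x)=\phi(h')h\otimes x$ transports under this isomorphism to $\phi(h')hx$, i.e.\ to the action of $H'$ on $X$ through $\phi$.

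The hard part is purely the bookkeeping of sheafifications: were $P,Q,X$ mere presheaves, associativity would be the triviality that the quotient of the set $P\times Q\times X$ by the commuting $H$- and $H'$-actions may be formed in either order. What makes the argument legitimate for faisceaux is that the symbol $\contr{}$ already contains one associated-sheaf functor, so the displayed expressions nest three of them, and reducing to a single corepresentability statement requires that sheafification be exact (hence commute with the finite products $P\times Q\times X$) and cocontinuous, together with the cartesian closedness of $\CatFais_{/S}$ used above. Granting these standard facts about the topos of faisceaux, the proof is the formal associativity of a tensor product over $H$.
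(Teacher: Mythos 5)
Le texte laisse la preuve de ce lemme au lecteur, il n'y a donc pas d'argument du papier auquel comparer le v\^otre. Votre d\'emonstration par corepr\'esentabilit\'e (propri\'et\'e universelle du cono\-yau, adjonction cart\'esienne pour faire passer le facteur $X$ \`a travers le produit contract\'e, puis Yoneda) est correcte et compl\`ete, y compris le point d\'elicat que l'exactitude du foncteur faisceau associ\'e et la v\'erification locale des identit\'es sur les sections de la forme $p\otimes q$ suffisent; c'est exactement le type d'argument que les auteurs avaient en t\^ete en \'ecrivant \og Laiss\'ee au lecteur \fg.
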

\begin{proof}
Laissée au lecteur.
\end{proof}

Dans ce qui suit, $P$ désigne toujours un pseudo-torseur.

\begin{lemm} \label{tordprod_lemm}
Si $X_1$ et $X_2$ sont munis d'une action de $H$, alors l'application naturelle $P \contr{H} (X_1 \times X_2) \to (P \contr{H} X_1) \times (P \contr{H} X_2)$, obtenue par covariance de $P \contr{H} (-)$ et par propriété universelle du produit, est un isomorphisme. N.B. C'est l'application faisceautisée de $(p,(x_1,x_2)) \mapsto ((p,x_1),(p,x_2))$.
\end{lemm}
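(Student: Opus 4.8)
The plan is to show that the natural map, which I will write
$$\alpha \colon P \contr{H}(X_1 \times X_2) \longrightarrow (P \contr{H} X_1)\times(P\contr{H}X_2),$$
is both injective on sections and locally surjective, which forces it to be an isomorphism of sheaves. Throughout I work with the orbit presheaf of Definition \ref{prodcontract_defi}, writing $[p,x]$ for the class of $(p,x)$, so that on representatives $\alpha([p,(x_1,x_2)]) = ([p,x_1],[p,x_2])$ as stated in the N.B. The single input from the pseudo-torsor hypothesis that I will use repeatedly is the following: since $P\times H \to P\times P$ is an isomorphism, for any $S$-scheme $U$ and any two sections $p,p'\in P(U)$ there is a \emph{unique} $h\in H(U)$ with $p'=ph$; in particular the stabiliser of a section of $P$ is trivial. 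Note that no local triviality of $P$, hence no torsor hypothesis, will be needed.

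First I would treat surjectivity. Given a section $(a_1,a_2)$ of the target over some $T$, I refine to a cover on which $a_1=[p,x_1]$ and $a_2=[p',x_2]$ are represented with $p,p'$ sections of $P$ over the \emph{same} member of the cover. Using the pseudo-torsor property I write $p'=ph$ for the unique $h\in H$, whence $a_2=[ph,x_2]=[p,hx_2]$, and therefore $(a_1,a_2)=\alpha\big([p,(x_1,hx_2)]\big)$ locally. Thus every section of the target is locally in the image, so $\alpha$ is a local epimorphism.

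Next I would prove injectivity. Take $b,b'$ with $\alpha(b)=\alpha(b')$; since the source is a sheaf it suffices to show $b=b'$ after passing to a cover. I represent $b=[p,(x_1,x_2)]$ and $b'=[p',(x_1',x_2')]$ on a common cover. Comparing the first, resp. the second, components of $\alpha(b)=\alpha(b')$ produces, locally, sections $k,k'$ of $H$ with $p'=pk^{-1}$, $x_1'=kx_1$, resp. $p'=p(k')^{-1}$, $x_2'=k'x_2$. The crucial point is that, the stabiliser of $p$ being trivial, the relation $p'=pk^{-1}=p(k')^{-1}$ forces $k=k'$. Hence $(x_1',x_2')=k\cdot(x_1,x_2)$ while $p'=pk^{-1}$, so $b'=[pk^{-1},k\cdot(x_1,x_2)]=[p,(x_1,x_2)]=b$ on that cover, and $b=b'$ follows by the sheaf property.

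The only genuinely delicate point — the one I would be most careful about — is that $P\contr{H}X$ is a \emph{sheafification} of the orbit presheaf, so the relation $[p,x]=[p',x']$ holds only after passing to a cover and is witnessed by a section of $H$ that a priori depends both on the chosen cover and on the factor $1$ or $2$. The pseudo-torsor hypothesis is exactly what removes this ambiguity: the witnessing section is the unique $h$ with $p'=ph$, independent of the factor, and this is what allows the two components to be aligned and the classes to be glued. Everything else is routine bookkeeping with covers, legitimate because both source and target are sheaves and being an isomorphism is a local condition.
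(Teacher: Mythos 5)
Your proof is correct, and it rests on exactly the same input as the paper's one-line argument: the pseudo-torsor isomorphism $P \times H \to P \times P$, i.e.\ the existence and uniqueness of the translating section $h$ with $p'=ph$, with no local triviality of $P$ required. The paper packages this as an explicit inverse map, identifying $P \times X_1 \times P \times X_2$ with $P \times X_1 \times H \times X_2$ before quotienting by $H \times H$; your surjectivity step is precisely that inverse computed on local representatives, and your injectivity step spells out the well-definedness point (the witness $h$ is unique, hence independent of the factor) together with the sheafification bookkeeping that the paper leaves to the reader.
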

\begin{proof}
L'application inverse se définit aisément en utilisant que $P$ est un pseudo-torseur pour identifier $P \times X_1 \times P \times X_2$ avec $P \times X_1 \times H \times X_2$ avant de quotienter par l'action de $H \times H$. 
\end{proof}

\begin{prop} \label{fonctorialitegroupe_prop}
Soit $\phi:H_1 \to H_2$ un morphisme de $S$-faisceaux en groupes. On peut alors munir $H_2$ d'une action à gauche de $H_1$ par $\phi$. L'application 
$$P \mapsto P \contr{H_1} H_2$$
définit un foncteur de la catégorie des $H_1$-torseurs (à droite) vers les $H_2$ torseurs (à droite), et il y a un morphisme de foncteurs d'associativité en cas de composition de deux morphismes.
\end{prop}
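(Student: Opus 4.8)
The plan rests on the three tools already in hand: the associativity isomorphism of Lemma~\ref{assoccontr_lemm}, the local characterization of torsors in Proposition~\ref{torseurloc_prop}, and the automatic-isomorphism property of Proposition~\ref{flecheIsoTors_prop}. First I would equip $H_2$ with its right translation action on itself. Since left multiplication by $\phi(h_1)$ and right multiplication by $h_2'$ commute by associativity in $H_2$, this right $H_2$-action commutes with the left $H_1$-action induced by $\phi$; thus $H_2$ is an object carrying commuting left $H_1$- and right $H_2$-actions, exactly the situation of Lemma~\ref{assoccontr_lemm}. The right $H_2$-action therefore descends through the coequalizer ($\emph{conoyau}$) defining $P \contr{H_1} H_2$, so that $P \contr{H_1} H_2$ becomes a sheaf with right $H_2$-action.

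To check that it is actually an $H_2$-torsor, by Proposition~\ref{torseurloc_prop} it suffices to verify local triviality. Since $P$ is an $H_1$-torsor, there is a covering $(S_i)$ of $S$ with $H_1$-equivariant isomorphisms $P_{S_i} \isoto (H_1)_{S_i}$. Using that forming the contracted product commutes with restriction to $S_i$ (restriction being exact and commuting both with the orbit presheaf and with sheafification), I would obtain
$$(P \contr{H_1} H_2)_{S_i} \cong (H_1)_{S_i} \contr{(H_1)_{S_i}} (H_2)_{S_i} \cong (H_2)_{S_i},$$
the last isomorphism being the $H \contr{H} X \cong X$ of Lemma~\ref{assoccontr_lemm}, which sends the class of $(h_1,h_2)$ to $\phi(h_1)h_2$ and is therefore right $H_2$-equivariant. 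Hence $P \contr{H_1} H_2$ is $\Ttau$-locally isomorphic to the trivial torsor $H_2$, and is an $H_2$-torsor.

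For functoriality, an equivariant morphism $f:P \to P'$ of $H_1$-torsors induces $f \contr{H_1} \id_{H_2}$ by covariance of $(-)\contr{H_1} H_2$; this map is right $H_2$-equivariant and the assignment respects identities and composition, so $P \mapsto P \contr{H_1} H_2$ is a functor $\Tors{H_1} \to \Tors{H_2}$ (each such morphism being automatically an isomorphism, as Proposition~\ref{flecheIsoTors_prop} guarantees inside a groupoid of torsors). Given a second morphism $\psi: H_2 \to H_3$, I would apply Lemma~\ref{assoccontr_lemm} with $H'=H_1$, $H=H_2$ and $X=H_3$ (left $H_2$-action via $\psi$) to get a natural isomorphism
$$(P \contr{H_1} H_2) \contr{H_2} H_3 \;\cong\; P \contr{H_1} (H_2 \contr{H_2} H_3) \;\cong\; P \contr{H_1} H_3,$$
where $H_1$ acts on $H_3$ through $\psi \circ \phi$. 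It is $H_3$-equivariant and natural in $P$, hence an isomorphism of functors, and its coherence for three composable morphisms follows from the coherence of the associativity isomorphism of Lemma~\ref{assoccontr_lemm}.

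The main obstacle I expect is not the idea of the torsor step but its bookkeeping: one must check carefully that the local trivializations are genuinely right $H_2$-equivariant and that $(-)\contr{H_1} H_2$ really commutes with restriction to each member of the covering, so that the local isomorphisms to $(H_2)_{S_i}$ assemble consistently. Everything else is a routine consequence of the covariance and associativity already established.
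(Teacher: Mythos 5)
Your proof is correct and follows exactly the route of the paper's own (very terse) argument: the right $H_2$-action descends because it commutes with the left $H_1$-action, torsor-ness is checked by local trivialization via Proposition~\ref{torseurloc_prop} and $H \contr{H} X \cong X$, functoriality is covariance, and the associativity morphism comes from Lemma~\ref{assoccontr_lemm}. You have simply filled in the bookkeeping the paper leaves to the reader (``On v\'erifie qu'il en fait un torseur''), and done so correctly.
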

\begin{proof}
Le groupe $H_2$ agit bien à droite sur $P \contr{H_1} H_2$ car les actions de $H_1$ à gauche et de $H_2$ à droite sur $H_2$ commutent. On vérifie qu'il en fait un torseur. La fonctorialité en $P$ est évidente. L'associativité provient du lemme \ref{assoccontr_lemm}.
\end{proof}

Relions maintenant les produits fibrés de groupes et les produits fibrés de champs.
Soient $f_1:G_1 \to H$ et $f_2:G_2 \to H$ deux morphismes de $S$-faisceaux en groupes. Le faisceau en groupes $G=(G_1 \times_H G_2)$, défini par $(G_1 \times_H G_2)(T) =G_1(T)\times_{H(T)} G_2(T)$, est un faisceau par l'exemple \ref{fibresdiscretes_exem} et l'exercice \ref{prodfibchamps_exo}. Considérons le foncteur fibré
$$\begin{array}{lll}
\Tors{G} & \to &  \Tors{G_1} \times_{\Tors{H}} \Tors{G_2} \\
P & \mapsto & (P \contr{G} G_1, P\contr{G} G_2, i).
\end{array}
$$
où $i$ est la composée $(P \contr{G} G_1)\contr{G_1} H \isoto P \contr{G} H \isoto (P \contr{G} G_2)\contr{G_2} H$.
\begin{prop} \label{prodfibgroupechamp_prop}
Ce foncteur $\Tors{G}  \to   \Tors{G_1} \times_{\Tors{H}} \Tors{G_2}$
est une équivalence de catégories fibrées si l'une des des flèches $G_1 \to H$ ou $G_2 \to H$ est un épimorphisme de faisceaux.
\end{prop}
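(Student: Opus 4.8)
The plan is to exhibit $\Phi$ as a morphism of stacks which is fully faithful and locally essentially surjective; since both $\Tors{G}$ and $\Tors{G_1}\times_{\Tors{H}}\Tors{G_2}$ are stacks (the latter by Exercise~\ref{prodfibchamps_exo}), such a morphism is automatically an equivalence of fibred categories, the missing objects being reconstructed by descent. As contracted products, torsors and fibre products of stacks all commute with base change, it suffices to argue fibre by fibre, and on each fibre one may work $\Ttau$-locally, reducing every verification to the case of trivial torsors via Proposition~\ref{torseurloc_prop}.

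For full faithfulness, recall that morphisms of torsors are isomorphisms (Proposition~\ref{flecheIsoTors_prop}) and that the relevant $\faisIso$-presheaves are sheaves because both sides are stacks; hence it suffices to check that $\Phi$ induces an isomorphism of $\faisIso$-sheaves, which may be done $\Ttau$-locally. Localizing to trivialize two $G$-torsors reduces us to $P=P'=G$, where $\faisIso_{G,G}=\faisAut_{G}=G$. Using the canonical identifications $G\contr{G}G_i\cong G_i$ (Lemma~\ref{assoccontr_lemm}) and the fact that $f_1\circ\mathrm{pr}_1=f_2\circ\mathrm{pr}_2$, a direct computation shows that $\Phi(G)=(G_1,G_2,\mathrm{id})$, that $\Phi$ carries the automorphism ``left translation by $g$'' of the trivial torsor to the pair $(\mathrm{pr}_1(g),\mathrm{pr}_2(g))$, and that the automorphism sheaf of $\Phi(G)$ in the fibre product is $\{(g_1,g_2):f_1(g_1)=f_2(g_2)\}=G_1\times_H G_2=G$. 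Thus $\Phi$ induces the identity of $G$ on automorphisms of the trivial torsor, hence an isomorphism, and full faithfulness follows by the local reduction.

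The core of the argument is local essential surjectivity, and this is the only place the epimorphism hypothesis is used. Let $(P_1,P_2,\phi)$ be an object of the fibre product over $T$. After localizing I may assume $P_1\cong G_1$ and $P_2\cong G_2$ are trivial, so that, under $G_i\contr{G_i}H\cong H$, the isomorphism $\phi$ becomes an automorphism of the trivial right $H$-torsor, i.e. left translation $L_h$ by some $h\in H(T)$. By the computation above, an isomorphism $(G_1,G_2,\mathrm{id})\isoto(G_1,G_2,L_h)$ in the fibre product amounts to a pair $(g_1,g_2)$ with $h\,f_1(g_1)=f_2(g_2)$. If $f_2$ is an epimorphism, then $\Ttau$-locally $h=f_2(g_2)$, so $(e,g_2)$ yields such an isomorphism; the case of $f_1$ an epimorphism is symmetric, using $f_1(g_1)=h^{-1}$ and the pair $(g_1,e)$. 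Hence $(P_1,P_2,\phi)$ is $\Ttau$-locally isomorphic to $\Phi(G)$.

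To obtain global essential surjectivity and conclude, cover $T$ by $(T_i)$ on which $(P_1,P_2,\phi)$ is isomorphic to $\Phi(Q_i)$ with $Q_i=G_{T_i}$. Transporting the canonical descent isomorphisms of $(P_1,P_2,\phi)$ over the overlaps $T_{ij}$ through these isomorphisms produces isomorphisms $\Phi(Q_i)\isoto\Phi(Q_j)$; by full faithfulness these lift uniquely to isomorphisms $Q_i\isoto Q_j$ satisfying the cocycle condition. As $\Tors{G}$ is a stack they glue to a $G$-torsor $Q$ over $T$, and full faithfulness together with the sheaf property of $\faisIso$ gives $\Phi(Q)\cong(P_1,P_2,\phi)$. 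I expect the main obstacle to be purely bookkeeping: tracking how $\Phi$, the functors $(-)\contr{G}G_i$ and the identifications of Lemma~\ref{assoccontr_lemm} interact, and isolating the step ``$h$ lies locally in $\mathrm{Im}(f_1)$ or $\mathrm{Im}(f_2)$'' as the exact point where the epimorphism hypothesis is indispensable. An equivalent, more explicit route replaces the descent by the quasi-inverse $(P_1,P_2,\phi)\mapsto P_1\times_{P_2\contr{G_2}H}P_2$, which is a $G$-torsor precisely because the epimorphism hypothesis makes it $\Ttau$-locally non-empty.
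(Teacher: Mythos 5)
Your proof is correct, but it takes a genuinely different route from the paper's. You run the standard stack-theoretic argument: check that the functor is fully faithful by comparing $\faisIso$-sheaves after local trivialisation (where both automorphism sheaves identify with $G=G_1\times_H G_2$), prove local essential surjectivity by showing that any $(G_1,G_2,L_h)$ is locally isomorphic to $(G_1,G_2,\mathrm{id})$ once $h$ lifts locally through $f_1$ or $f_2$, and then reconstruct the global preimage by descent using that $\Tors{G}$ is a champ. The paper instead builds an explicit quasi-inverse: from $(P_1,P_2,\phi)$ it forms the fibre product $P_1\times_\phi P_2$ of the two maps to $P_2\contr{G_2}H$, equips it with the evident right $G$-action, checks it is a pseudo-torseur, and uses the epimorphism hypothesis only to exhibit a local point (reducing to $P_i=G_i$, where $\phi$ is left translation by some $h\in H$ that locally lifts to $G_1$ or $G_2$) -- exactly the construction you sketch in your final sentence. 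The two arguments use the hypothesis at the same spot and for the same reason; the paper's version is shorter and hands you the inverse functor directly (at the price of leaving the verification that it is a quasi-inverse to the reader), while yours externalises that verification into the general ``fully faithful plus locally essentially surjective'' criterion, whose descent step you do spell out. Both are complete; your identification of the lifting of $h$ as the unique point where the epimorphism hypothesis enters is exactly right.
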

\begin{proof}
On construit un foncteur dans l'autre sens de la manière suivante. \'Etant donné un triplet $(P_1,P_2, \phi: P_1 \contr{G_1} H \isoto P_2 \contr{G_2} H)$, on fabrique le produit fibré $P_1 \times_{\phi} P_2$, à l'aide des morphismes $P_1 \to P_2 \contr{G_2} H$ et $P_2 \to P_2 \contr{G_2} H$ définis respectivement sur les points par $p \mapsto \phi((p,1))$ et $p \mapsto (p,1)$. On le munit de l'action évidente de $G$ à droite par propriété universelle du produit fibré. Enfin, on vérifie que c'est un pseudo-torseur, grâce aux mêmes propriétés de $P_1$ et $P_2$. Reste à vérifier que localement, le pseudo-torseur obtenu a un point. On peut donc se ramener au cas où $P_1=G_1$, $P_2=G_2$, et $G_1 \contr{G_1} H = G_2 \contr{G_2} H=H$. Le morphisme $\phi$ étant $H$ équivariant à droite, il correspond alors à la multiplication à gauche par un point $h$ de $H$. Par hypothèse, quitte à localiser encore, on peut supposer qu'il est l'image d'un point point $g$ de $G_1$ (resp. de $G_2$). Le point $(g,1)$ (resp. $(1,g^{-1})$) est alors dans dans le produit fibré, qui est non vide. Cette construction est bien sûr fonctorielle, et le fait que le foncteur obtenu donne un inverse à isomorphisme près du précédent est laissé au lecteur.
\end{proof}

Cette équivalence nous permettra de décrire les torseurs sous des noyaux, ou sous des produits cartésiens de groupes, par exemple les torseur sous $\faisSL_n$ ou $\faismu_n$.

\subsubsection{Torsion et structure}

Expliquons maintenant comment un objet en une structure est tordu en un objet en la même structure. Soit $\cC$ une catégorie, soit $H$ un objet en groupes de $\cC$, i.e. un objet de la catégorie $\cC^\gr$, et soit $\struc$ une structure élémentaire dans $\cC$. Soit $\Xstruc \in \cC^{\struc}$, de famille $(X_i)_{i\in I}$ d'objets de $\cC$. Supposons maintenant donnée une action de $H$ sur chacun des $X_i, i\in I$ (exemple \ref{groupeaction_item} de \ref{structures_exem}) et faisons-le agir trivialement sur tous les $Y_i$, puis diagonalement sur tout objet source ou but des morphismes de la structure.
\begin{defi} \label{actiongroupeelem_defi}
On dit que l'action de $H$ sur $\Xstruc$ \emph{respecte la structure} $\struc$ si ces actions ``diagonales'' commutent à tous les morphismes de la structure. On dit alors que $H$ agit sur $X$ par automorphismes de $\struc$. 
\end{defi}

Soit $\Xstruc$ un $S$-faisceau en $\struc$, d'objets structuraux $(X_i), i\in I$. Supposons que chaque $X_i$ soit muni d'une action de $H$, de manière à ce que les morphismes structurels soient équivariants quand on fait agir $H$ trivialement sur les objets constants $Y_j$ de $\struc$, puis diagonalement sur les produits. Alors les objets $P \contr{H} X_i$ et les morphismes obtenus en tordant les morphismes structuraux de $X$ par $P$ (le produit contracté commutant aux produits, par \ref{tordprod_lemm}) forment un nouvel objet en $\struc$, qu'on notera $P \contr{H} \mathfrak{X}$. De plus, cette construction est fonctorielle: un morphisme d'objets en $\struc$ se tord en un morphisme d'objets en $\struc$. 

\begin{exem}
Tout les exemples de \ref{structures_exem} conservent donc leur structure lorsqu'on les tord par un pseudo-torseur sous un groupe qui agit en respectant la structure. 
\end{exem}

\begin{exem}
Si $\Xstruc=G$ et $H=\faisAut^\gr_G$ est le faisceau d'automorphismes d'objet en groupes (voir notation \ref{isoaut_nota}), alors $P \contr{H} G$ est naturellement muni d'une structure de groupe.
Si plutôt $\Xstruc=H=G$ muni de son action sur lui-même par conjugaison, c'est bien entendu également le cas, puisque $H$ agit à travers $\faisAut^\gr_G$.
\end{exem}

\begin{prop} \label{torsionmorphisme_prop}
Si $f:\Xstruc \to \Ystruc$ est un morphisme de $S$-faisceaux en $\struc$, que $\phi: H \to G$ est un morphisme de $S$-faisceaux en groupes, que $H$ (resp. $G$) agit sur $\Xstruc$ (resp. $\Ystruc$) en respectant la structure, et que $f$ est $H$-équivariant quand $H$ agit sur $\Ystruc$ à travers $\phi$, alors pour tout $H$-torseur $P$, le morphisme $f$ se tord en un morphisme $P \contr{H} \Xstruc \to (P \contr{H} G)\contr{G} \Ystruc$ d'objets en $\struc$. 
\end{prop}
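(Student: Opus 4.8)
The plan is to obtain the twisted morphism as the functorial image of $f$ under $P\contr{H}(-)$, composed with the associativity isomorphism of Lemma~\ref{assoccontr_lemm} identifying its target with the one in the statement. First I would make $\Ystruc$ into an $H$-object in $\struc$ by letting $H$ act through $\phi$, that is via $H \tooby{\phi} G \to \faisAut^\struc_\Ystruc$; since the $G$-action respects $\struc$, so does this one, and $P \contr{H} \Ystruc$ is therefore again an object in $\struc$ (discussion following Definition~\ref{actiongroupeelem_defi}). With this action the hypothesis says precisely that $f : \Xstruc \to \Ystruc$ is an $H$-equivariant morphism of objects in $\struc$.

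Granting this, the functoriality of the contracted product on objects in $\struc$ (noted right after Definition~\ref{actiongroupeelem_defi}) yields a morphism of objects in $\struc$
$$P \contr{H} f : P \contr{H} \Xstruc \longrightarrow P \contr{H} \Ystruc,$$
obtained componentwise as $P \contr{H} f_i : P \contr{H} X_i \to P \contr{H} Y_i$ on the structural objects $(X_i)_{i\in I}$ and $(Y_i)_{i\in I}$; it respects the twisted structural morphisms because $f$ respects the structural morphisms of $\Xstruc$ and $\Ystruc$ and because $P \contr{H}(-)$ commutes with finite products by Lemma~\ref{tordprod_lemm} (using that $P$ is a pseudo-torseur).

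Next I would identify the target. Applying Lemma~\ref{assoccontr_lemm} with $Q=G$, $H'=G$ and $X=\Ystruc$, together with the canonical identification $G \contr{G} \Ystruc \cong \Ystruc$ (where $H$ again acts through $\phi$), gives a canonical isomorphism
$$P \contr{H} \Ystruc \isoto (P \contr{H} G) \contr{G} \Ystruc,$$
in which $P \contr{H} G$ is the $G$-torsor produced by Proposition~\ref{fonctorialitegroupe_prop} and $G$ acts on $\Ystruc$ respecting $\struc$. Composing, the desired morphism is
$$P \contr{H} \Xstruc \tooby{P \contr{H} f} P \contr{H} \Ystruc \isoto (P \contr{H} G) \contr{G} \Ystruc,$$
which explicitly sends a local section $[p,x]$ to $[[p,1],f(x)]$.

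The hard part will be checking that the associativity isomorphism of Lemma~\ref{assoccontr_lemm} is a morphism of objects in $\struc$ and not merely of faisceaux, so that the composite is structural. This is essentially formal: one unwinds the sheafifications defining the two contracted products and observes that the identification is induced componentwise on the $(X_i)$ and $(Y_i)$ and is natural in $\Ystruc$, hence commutes with the structural morphisms on both sides. Once this compatibility is in hand, the composite is a morphism in $\struc$ by construction, completing the proof.
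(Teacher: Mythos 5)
Votre preuve est correcte et suit exactement la voie que l'article sous-entend (la preuve du texte se r�duit � \og{}c'est une cons�quence facile de la fonctorialit�\fg{}) : fonctorialit� de $P\contr{H}(-)$ appliqu�e � $f$ vu comme morphisme $H$-�quivariant via $\phi$, puis identification du but par l'isomorphisme d'associativit� du lemme \ref{assoccontr_lemm}. Vous ne faites qu'expliciter les d�tails que l'article laisse au lecteur.
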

\begin{proof}
C'est une conséquence facile de la fonctorialité.
\end{proof}

Jusqu'ici, nous avons donc vu le produit contracté, qui permet de tordre par un torseur un $S$-faisceau en $\struc$, pour en obtenir un autre. Cette construction se généralise aux champs de la manière suivante.

Soit $\cC$ un $S$-champ, et soit $H$ un $S$-faisceau en groupes. Un action (à gauche) de groupe de $H$ sur un objet $X$ de $\cC_T$, est un morphisme de faisceaux en groupes $\phi:H_T\to \faisAut_X$. 
Une action à droite est un morphisme $H_T^\opp \to \faisAut_X$ où $H^\opp$ est le faisceau en groupes opposé à $H$. 

Les paires $(X,\phi)$ forment donc une catégorie fibrée dite des objet à groupe d'opérateurs $H$, notée $\Oper{H}{\cC}$ (resp. $\Oper{\cC}{H}$ lorsque l'action est à droite). On vérifie immédiatement que cette catégorie fibrée est un champ. Notons que lorsque $\cC=\CatFais_{/S}$, on a par définition $\Oper{H}{\cC}=\CatFais_{/S}^{\actd{H}}$.

Par \cite[Ch. III, prop. 2.3.1]{gir}, il existe un morphisme de champs (donc un foncteur de catégories fibrées) 
$$Tw: \Tors{H} \times \Oper{H}{\cC} \to \cC$$
qui envoie un couple $(P,X)$ vers un objet noté $P\contr{H} X$ et tel que si $P=H$ le torseur trivial, alors $H \contr{H} X$ est canoniquement isomorphe (donc par un isomorphisme de foncteurs, disons $i$) à $X$, de manière à ce que lorsque $P=H$, l'opération de $H$ sur $X$ se retrouve naturellement de la manière suivante: Notons pour un instant $H_d$ le $H$-torseur trivial. On impose alors que l'opération $H \to \faisAut_X$ soit égale à la composée
$$H \to \faisAut_{H_d} \tooby{Tw(-,X)} \faisAut_{H_d \contr{H} X} \tooby{i} \faisAut_X.$$
où la première flèche envoie un élément sur son action par translation à gauche sur $H$.
\emph{Cette paire $(Tw,i)$ est alors unique à isomorphisme unique près.} Bien entendu, lorsque $\cC=\CatFais^{\struc}$, on retrouve le produit contracté défini plus haut, par unicité. 

Ce foncteur $Tw$ n'est pas mystérieux. On peut construire $P\contr{H} X$ à la main par descente: on choisit un recouvrement $(T_i)$ de $T$ pour lequel $P_{T_i}$ est trivial, et on choisit un point $p_i$ de $P(T_i)$. Ensuite, puisque $P_{T_{ij}}$ reste trivial, il existe un élément $h_{ij}$ de $H(T_{ij})$ qui envoie $(p_i)_{T_{ij}}$ vers $(p_j)_{T_{ij}}$. On peut donc considérer la donnée de descente $X_i=X_{T_i}$ et $(X_i)_{T_{ij}} =X_{T_{ij}} \simeq (X_j)_{T_{ij}}$ donnée par l'action de $h_{ij}$. Il est immédiat que c'est bien une donnée de descente, et cela définit donc un objet de $\cC$, puisque $\cC$ est un champ. 
\medskip

Si $X\in \Oper{H}{\cC}$ alors pour tout $Y \in \cC$, le faisceau $\faisHom_{X,Y}$ est naturellement muni d'une action à droite de $H$ par précomposition par l'action de $H$. C'est donc un objet de $\Oper{\CatFais_{/S}}{H}$. 

Si $P$ est un $H$-torseur, et $\psi: P \to \faisHom_{X,Y}$ est un morphisme de $\Oper{\CatFais_{/S}}{H}$, alors on obtient canoniquement un morphisme $\phi: P\contr{H}X \to Y$, de manière fonctorielle en $X$ et $Y$. Lorsque $P$ a un point $p$ et s'identifie donc à $H$ au moyen de ce point, on construit le morphisme comme $P\contr{H} X \isoto H \contr{H} X \isoby{i} X \tooby{\psi(p)} Y$. Puis, on vérifie facilement que ce morphisme ne dépend pas du choix de $p$, et est donc suffisamment canonique pour descendre par le deuxième point de la définition des champs.

Dans l'autre sens, étant donné un morphisme dans $\cC$ de $\phi:P \contr{H} X \to Y$, on construit un morphisme de faisceaux $P \to \faisHom_{X,Y}$ en envoyant un point $p$ de $P$ sur la composition $X \isoby{i} H \contr{H} X \isoto P \contr{H} X \tooby{\phi} Y$, où le deuxième isomorphisme est l'identification de $H$ et $P$ induit par $p$. On retiendra donc une sorte d'adjonction (qui n'en est pas tout à fait une) qui permet souvent de faciliter les constructions:

\begin{lemm} \label{adjtorsmorph_lemm}
Si $P$ est un $H$-torseur et si $X$ est un objet de $\Oper{H}{\cC}$, alors par les deux constructions ci-dessus, il est équivalent de se donner un morphisme de $P \to \faisHom_{X,Y}$ dans $\Oper{\CatFais_{/S}}{H}$ ou un morphisme $P \contr{H} X \to Y$ dans $\cC$. 
\end{lemm}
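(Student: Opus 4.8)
The plan is to show that the two constructions described just above the statement---one sending a morphism $\psi\colon P \to \faisHom_{X,Y}$ in $\Oper{\CatFais_{/S}}{H}$ to a morphism $\phi\colon P \contr{H} X \to Y$ in $\cC$, the other going back---are mutually inverse bijections of sets, natural in $X$ and $Y$. Since both maps are already defined, the only content is the verification that each composite is the identity.

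First I would reduce everything to a local computation. Because $\cC$ is a champ, the assignment $T' \mapsto \setHom_{\cC_{T'}}((P\contr{H}X)_{T'}, Y_{T'})$ is a $\Ttau$-faisceau by point \ref{faisceauchamp_item} of Definition \ref{SChamp_defi}, and likewise a morphism of faisceaux with $H$-action in $\Oper{\CatFais_{/S}}{H}$ is determined by, and can be reconstructed from, its restrictions to a covering. Hence it suffices to check the two required equalities after restriction to a covering $(T_i)$ of $S$ trivializing $P$; such a covering exists by Proposition \ref{torseurloc_prop}.

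Second, on each $T_i$ I would choose a point $p_i \in P(T_i)$, which induces an isomorphism $H_{T_i} \isoto P_{T_i}$ of right $H$-faisceaux, and thereby $P_{T_i} \contr{H} X \isoto H \contr{H} X \isoby{i} X$. Under these identifications a morphism $\psi$ corresponds locally to the point $\psi(p_i) \in \faisHom_{X,Y}(T_i) = \setHom_{\cC_{T_i}}(X_{T_i}, Y_{T_i})$, while the associated $\phi$ is by construction the composite $P_{T_i}\contr{H} X \isoto H \contr{H} X \isoby{i} X \tooby{\psi(p_i)} Y$. Running the reverse construction on this $\phi$ and evaluating at $p_i$ returns precisely $\psi(p_i)$, by unwinding the very same identifications. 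Since a morphism of $\Oper{\CatFais_{/S}}{H}$ out of $P$ is determined by its values on the local points $p_i$ (every point of $P_{T_i}$ is of the form $p_i\cdot h$, and equivariance propagates $\psi$ from $\psi(p_i)$), the reconstructed morphism agrees with $\psi$ on each $T_i$, hence globally. The opposite composite is treated symmetrically.

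The step I expect to carry the real content---and the only point requiring care---is the independence of the choice of $p_i$, equivalently the gluing of the locally defined $\phi$ into a global morphism of $\cC$. This is exactly where the hypothesis that $\psi$ lies in $\Oper{\CatFais_{/S}}{H}$, i.e. is $H$-equivariant for the precomposition action on $\faisHom_{X,Y}$, enters: replacing $p_i$ by $p_i\cdot h$ with $h \in H(T_i)$ changes $\psi(p_i)$ to $\psi(p_i)\cdot h$ while simultaneously twisting the trivialization $H \contr{H} X \isoby{i} X$ by the action of $h$, so the composite defining $\phi$ on $T_i$ is unchanged; the same equivariance gives agreement on the overlaps $T_{ij}$, so the local $\phi$'s form a descent datum and glue. \emph{Naturality} in $X$ and $Y$ is then immediate from the functoriality of $\contr{H}$ and of $\faisHom$, and no further construction is needed.
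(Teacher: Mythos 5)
Votre preuve est correcte et suit essentiellement la m\^eme d\'emarche que le texte : le papier ne donne pas de d\'emonstration s\'epar\'ee du lemme, les deux constructions d\'ecrites juste avant (trivialisation locale de $P$ par un point $p$, ind\'ependance du choix de $p$ gr\^ace \`a l'\'equivariance de $\psi$, puis descente par le point \ref{faisceauchamp_item} de la d\'efinition d'un champ) en tenant lieu. Vous ne faites qu'expliciter la v\'erification, laiss\'ee implicite dans le texte, que les deux compos\'es sont l'identit\'e.
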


On veut maintenant pouvoir comparer le faisceau en groupes d'automorphismes de $X=P\contr{H} X_0$ avec le groupe $P \contr{H} \faisAut_X$ où $H$ agit sur $\faisAut_X$ par automorphismes intérieurs. 
\begin{prop} \label{auttordus_prop}
Considérons le morphisme de faisceaux en groupes
$$P \contr{H} \faisAut_X \to \faisAut_{(P \contr{H}X)}$$
défini sur les points par 
$$(p,a) \mapsto \big((p',x) \mapsto (p,a h_{p,p'}(x))\big)$$
où $h_{p,p'} \in H$ est tel que $p h_{p,p'} = p'$.
C'est un isomorphisme. 
\end{prop}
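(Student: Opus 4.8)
$$P \contr{H} \faisAut_X \to \faisAut_{(P \contr{H}X)}$$
**est un isomorphisme.**

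Je dois d'abord vérifier que la formule proposée définit bien un morphisme de faisceaux, c'est-à-dire qu'elle ne dépend pas des choix impliqués. La difficulté principale est le caractère bien défini de l'application : l'expression $(p,a) \mapsto \big((p',x)\mapsto (p, a\, h_{p,p'}(x))\big)$ fait intervenir l'élément $h_{p,p'}\in H$ tel que $p\, h_{p,p'}=p'$, qui n'est déterminé de façon unique que parce que $P$ est un pseudo-torseur (l'application $P\times H \to P\times P$ étant un isomorphisme, cet $h_{p,p'}$ existe et est unique localement). Il faut aussi vérifier l'invariance sous l'action de $H$ servant à former $P\contr{H}\faisAut_X$ : si l'on remplace $(p,a)$ par $(ph^{-1}, \int(h)\circ a)$ où $h$ agit par automorphisme intérieur sur $\faisAut_X$, la formule doit produire le même endomorphisme de $P\contr{H}X$. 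Ce calcul repose sur la relation $h_{ph^{-1},p'} = h\, h_{p,p'}$, compte tenu de $ph^{-1}\cdot(h\,h_{p,p'}) = p\,h_{p,p'}=p'$, et sur le fait que $a$ agit sur le second facteur tandis que la classe de $(p',x)$ dans le produit contracté absorbe l'action diagonale de $h$.

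Ensuite je vérifie que l'image est bien un automorphisme de $P\contr{H}X$, c'est-à-dire compatible avec la relation d'équivalence définissant le produit contracté, et que l'application respecte la structure de groupe des deux côtés. Comme toutes ces vérifications sont de nature locale et que $\CatFais_{/S}$ est un champ, le plan est de se ramener par la proposition \ref{torseurloc_prop} à un recouvrement sur lequel $P$ est trivial, donc isomorphe à $H$. Sur un tel ouvert on peut choisir le point canonique $p=1$, et via l'isomorphisme $i:H\contr{H}X\isoto X$ le morphisme se lit simplement comme l'identité de $\faisAut_X$ (ou, plus précisément, comme l'identification de $\faisAut_X$ avec $\faisAut_{H\contr{H}X}$ fournie par $i$). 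C'est sur cette trivialisation que l'on voit immédiatement la bijectivité.

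Pour conclure à l'isomorphisme global, j'invoque le caractère local des isomorphismes rappelé au début de la section \ref{torseurs_sec} : un morphisme de faisceaux est un isomorphisme dès qu'il l'est localement, car $\CatFais_{/S}$ est un champ. Ayant montré que localement (après trivialisation de $P$) le morphisme s'identifie à l'identité de $\faisAut_X$, donc est un isomorphisme, on en déduit qu'il l'est globalement. L'obstacle véritable reste le tout premier point, le caractère bien défini et l'indépendance vis-à-vis des représentants $(p,a)$ et de $h_{p,p'}$ ; une fois celui-ci acquis, la bijectivité par trivialisation locale et recollement est formelle. On peut d'ailleurs formuler ce raisonnement de manière équivalente via le lemme \ref{adjtorsmorph_lemm}, en reconnaissant l'action de $P\contr{H}\faisAut_X$ comme provenant de l'action naturelle de $H$ sur $\faisAut_X$ par conjugaison, mais la trivialisation locale suffit.
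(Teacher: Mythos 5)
Votre démonstration est correcte et suit essentiellement la même stratégie que celle du texte : vérification du caractère bien défini (que le texte laisse au lecteur, et que vous détaillez à juste titre via la relation $h_{ph^{-1},p'}=h\,h_{p,p'}$), puis réduction au cas où $P$ est trivialisé, cas dans lequel le morphisme s'identifie à l'identité de $\faisAut_X$, l'isomorphisme global s'obtenant par localité des isomorphismes de faisceaux.
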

\begin{proof}
Tout d'abord, il faut vérifier que ce morphisme est bien défini et respecte les structures de groupes. Ce point est laissé au lecteur. 
Il suffit ensuite de prouver que c'est un isomorphisme après trivialisation de $P$. Or dans ce cas, à isomorphismes canoniques près, c'est l'application identité de $\faisAut_X$.
\end{proof}

\begin{rema}
Le cas particulier où $H=\faisAut_X$ est souvent utile.
\end{rema}

\begin{prop} \label{fonctorialiteTors_prop}
Si $H_1\to H_2$ est un morphisme de faisceaux en groupes, et $P$ est un torseur sous $H_1$, alors on a un isomorphisme canonique, fonctoriel en l'objet $X$ dans $\Oper{H_2}{\cC}$,
$$(P \contr{H_1} H_2)\contr{H_2} X \isoto P \contr{H_1} X $$
où dans le second terme, $H_1$ agit sur $X$ à travers $H_2$.
\end{prop}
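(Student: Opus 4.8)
The plan is to prove the isomorphism by descent, reducing to the local situation where $P$ is trivial, exactly as the functor $Tw$ itself is constructed. Both sides are objects of $\cC_S$, and since $\cC$ is a champ it suffices to produce the isomorphism locally and to check that the local versions are compatible on double overlaps, so that they glue.

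First I would choose a covering $(S_i)$ of $S$ together with a point $p_i \in P(S_i)$ for each $i$, trivializing $P$ over $S_i$ via $h \mapsto p_i h$, so that $P_{S_i} \cong (H_1)_{S_i}$. Over $S_i$ the image $q_i = (p_i,1)$ then trivializes the $H_2$-torsor $P \contr{H_1} H_2$ (which is indeed a torsor by Proposition \ref{fonctorialitegroupe_prop}), using the sheaf-level identity $(H_1)_{S_i} \contr{H_1} H_2 \cong (H_2)_{S_i}$ of Lemma \ref{assoccontr_lemm}. Applying the canonical isomorphism $i$ built into $Tw$ (namely $H \contr{H} X \cong X$ on the trivial torsor) gives $\big((P \contr{H_1} H_2)\contr{H_2} X\big)_{S_i} \cong X_{S_i}$. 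On the other side, the same isomorphism $i$ gives $(P \contr{H_1} X)_{S_i} \cong X_{S_i}$ directly, where $H_1$ acts on $X$ through $\phi$. I take the local comparison isomorphism to be the identity of $X_{S_i}$ under these two identifications.

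The key verification is that these local identities glue, i.e. that the two induced descent data on $X$ agree on each double overlap $S_{ij}$. Let $h_{ij} \in H_1(S_{ij})$ be the transition element defined by $p_i h_{ij} = p_j$. On the right-hand side the descent datum of $P \contr{H_1} X$ has transition given by the action of $h_{ij}$ on $X$, which, since $H_1$ acts through $\phi$, is the action of $\phi(h_{ij})$. On the left-hand side the $H_2$-torsor $P \contr{H_1} H_2$ has transition element $\phi(h_{ij})$, because $q_i \cdot \phi(h_{ij}) = (p_i, \phi(h_{ij})) = (p_i h_{ij}, 1) = (p_j, 1) = q_j$; contracting with $X$ then produces the transition given by the action of $\phi(h_{ij})$ on $X$ as well. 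The two cocycles coincide, so the local identities descend to a global isomorphism $(P \contr{H_1} H_2)\contr{H_2} X \isoto P \contr{H_1} X$, which one checks to be independent of the covering and of the chosen points $p_i$, hence canonical.

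Functoriality in $X$ is then immediate, since a morphism in $\Oper{H_2}{\cC}$ induces a compatible morphism of descent data over each $S_i$, commuting with the transition cocycles above. The only real point of care is that $X$ lives in a general champ rather than among sheaves, so Lemma \ref{assoccontr_lemm} cannot be cited verbatim; the descent construction of $Tw$ is precisely what transports the sheaf-level associativity to this setting, and the main (mild) obstacle is the bookkeeping of transition elements showing that both descent cocycles reduce to $\phi(h_{ij})$ acting on $X$.
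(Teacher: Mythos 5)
Your proof is correct and follows exactly the paper's route: the paper simply notes that the formula is trivial and canonical when $P \simeq H_1$ and that the general case follows by descent, which is precisely what you carry out, including the verification (left implicit in the paper) that both descent cocycles reduce to the action of $\phi(h_{ij})$ on $X$.
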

\begin{proof}
Lorsque $P \simeq H_1$, la formule est triviale et canonique. Le cas général s'obtient alors par descente.
\end{proof}

Soit $F:\cC\to \cD$ un foncteur de catégories fibrées. Par exemple, un $\faisO_S$-module quadratique est naturellement un $\faisO_S$-module tout court. Pour tout objet $X$ de $\cC$, on a donc un morphisme $\faisAut_X \to \faisAut_{F(X)}$.  
Dans cette situation, on a alors le résultat suivant.
\begin{prop} \label{foncttors_prop}
Soit $P$ un torseur sous $\faisAut_X$, et soit $P'$ le torseur sous $\faisAut_{F(X)}$ obtenu en poussant $P$ le long de $\faisAut_X \to \faisAut_{F(X)}$. 
\begin{enumerate}
\item \label{fonctobjet_item} On a un isomorphisme naturel  $P'\contr{\faisAut_{F(X)}} F(X) \isoto F(P \contr{\faisAut_{X}} X)$.
\item \label{fonctaut_item} Par la proposition \ref{torsionmorphisme_prop}, le morphisme $\faisAut_X \to \faisAut_{F(X)}$ se tord en le morphisme $\faisAut_{P\contr{\faisAut_X} X} \to \faisAut_{F(P\contr{\faisAut_X} X)}$, en utilisant les identifications de la proposition \ref{auttordus_prop} et du point précédent.
\end{enumerate}
\end{prop}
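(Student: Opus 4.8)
The plan is to deduce both points from the functoriality statements already in hand, reducing everything to one computation performed locally by descent, the key input being that $F$ commutes with base change. Throughout write $H=\faisAut_X$, $G=\faisAut_{F(X)}$, and let $\phi\colon H\to G$ denote the canonical map induced by $F$, so that $P'=P\contr{H}G$ in the notation of Proposition \ref{fonctorialitegroupe_prop}.

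For point \ref{fonctobjet_item}, I would first rewrite the left-hand side. Proposition \ref{fonctorialiteTors_prop}, applied to $\phi$ and to the object $F(X)\in\Oper{G}{\cD}$, furnishes a canonical isomorphism
$$P'\contr{G}F(X)=(P\contr{H}G)\contr{G}F(X)\isoto P\contr{H}F(X),$$
where in the right-hand term $H$ acts on $F(X)$ through $\phi$. It then remains to produce a natural isomorphism $P\contr{H}F(X)\isoto F(P\contr{H}X)$, that is, to show that $F$ commutes with twisting by $P$.

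This commutation is the real content of the proposition, and I would prove it using the explicit descent construction of the twist recalled just before Proposition \ref{auttordus_prop}. Choose a covering $(T_i)$ of $T$ trivialising $P$, points $p_i\in P(T_i)$, and transition elements $h_{ij}\in H(T_{ij})$ with $(p_i)_{T_{ij}}h_{ij}=(p_j)_{T_{ij}}$. Then $P\contr{H}X$ is obtained by descent from the datum whose pieces are $X_{T_i}$ and whose gluing on $T_{ij}$ is the action of $h_{ij}$. Applying $F$ and using that $F$ commutes with base change, $F(P\contr{H}X)$ is obtained by descent from the pieces $F(X)_{T_i}$ glued by $F(h_{ij})$; but by definition the action of $H$ on $F(X)$ is $\phi$, so $F(h_{ij})$ is exactly the action of $h_{ij}$ on $F(X)$. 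This is precisely the descent datum defining $P\contr{H}F(X)$, which yields the isomorphism. Its canonicity and independence of the choices follow from the uniqueness of descent (equivalently from the uniqueness of the pair $(Tw,i)$), and naturality in $X$ is read off the same local description. I expect this bookkeeping of the identifications to be the only delicate point.

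For point \ref{fonctaut_item}, I would apply Proposition \ref{torsionmorphisme_prop} to the morphism of group sheaves $\phi\colon H\to G$, viewed as a morphism of objects in the structure $\gr$, taking the structural group morphism to be $\phi$ itself and letting $H$ and $G$ act on themselves by inner automorphisms. The map $\phi$ is $H$-equivariant for these conjugation actions simply because it is a group homomorphism, so the hypotheses are met and one obtains a twisted morphism
$$P\contr{H}H\to(P\contr{H}G)\contr{G}G.$$
Proposition \ref{auttordus_prop}, applied to the $H$-torsor $P$ and to the $G$-torsor $P'$, identifies the source with $\faisAut_{P\contr{H}X}$ and the target with $\faisAut_{P'\contr{G}F(X)}$, and point \ref{fonctobjet_item} rewrites the latter as $\faisAut_{F(P\contr{H}X)}$. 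It remains to check that, read through these identifications, the twisted morphism is the canonical map $\faisAut_Y\to\faisAut_{F(Y)}$ attached to $Y=P\contr{H}X$. Both are morphisms of group sheaves, hence determined locally; and after trivialising $P$ over each $T_i$ all the identifications above become the canonical ones, under which both restrict to $\phi_{T_i}$. Since morphisms of sheaves that agree locally agree, the two coincide, which is the assertion.
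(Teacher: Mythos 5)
Your argument is correct, and both points ultimately reduce, as in the paper, to checking the assertion after trivialising $P$; but your construction of the isomorphism in point \ref{fonctobjet_item} goes by a different route. The paper does not build the map by descent from local pieces: it obtains the global morphism $P \contr{\faisAut_X} F(X) \to F(P \contr{\faisAut_X} X)$ in one stroke from the adjunction of Lemma \ref{adjtorsmorph_lemm}, by starting from the identity of $P \contr{\faisAut_X} X$, passing to the equivariant map $P \to \faisHom_{X,\,P \contr{\faisAut_X} X}$, composing with the map to $\faisHom_{F(X),\,F(P \contr{\faisAut_X} X)}$ induced by $F$, and applying the adjunction again; only the verification that this map is an isomorphism is done locally. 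This buys canonicity and naturality for free, with no choices of covering, points $p_i$ or transition elements $h_{ij}$ to keep track of. Your descent construction is equally valid (and arguably more concrete), but it shifts the burden onto showing that the locally defined identifications are independent of all choices and glue --- which you rightly flag as the delicate point and settle by uniqueness of the pair $(Tw,i)$. For point \ref{fonctaut_item} the paper gives no argument ("cons\'equence simple de la fonctorialit\'e, laiss\'ee au lecteur"); your application of Proposition \ref{torsionmorphisme_prop} with $\faisAut_X$ and $\faisAut_{F(X)}$ acting on themselves by conjugation, followed by the identifications of Proposition \ref{auttordus_prop} and a local comparison of the two group-sheaf morphisms, is a correct and complete way to fill that in.
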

\begin{proof}
\ref{fonctobjet_item}. On part de l'identité $P \contr{\faisAut_X} X \to P \contr{\faisAut_X} X$, pour obtenir un morphisme $P \to \faisHom_{X,P \contr{\faisAut_X} X}$, qu'on compose avec le morphisme vers $\faisHom_{F(X),F(P\contr{\faisAut_X} X)}$, induit par $F$. De nouveau par le lemme \ref{adjtorsmorph_lemm}, on obtient un morphisme $P \contr{\faisAut_X} F(X) \to F(P \contr{\faisAut_X} X)$, or le premier terme est canoniquement isomorphe à $P' \contr{\faisAut_{F(X)}} F(X)$ par la proposition \ref{fonctorialiteTors_prop}. On a donc obtenu le morphisme recherché, fonctoriellement, et on vérifie sans peine que c'est un isomorphisme lorsque $P=H$, et donc en général par descente.

Le point \ref{fonctaut_item} est une conséquence simple de la fonctorialité du produit contracté, et est laissé au lecteur.
\end{proof}

\begin{exem}
Regardons le cas où le foncteur fibré est donné par un oubli de structure, par exemple des modules quadratiques (voir \ref{O_defi}) vers les modules localement libres (voir \ref{VecChamp_prop}). La proposition \ref{foncttors_prop} dit alors que lorsqu'on tord $(M,q)$ par un torseur sous ses automorphismes, autrement dit sous le groupe orthogonal de $\faisorthO_{M,q}$, et qu'on oublie la structure quadratique sur le résultat, on obtient le même $\faisO_S$-module (à isomorphisme canonique près) qu'en poussant ce torseur à $\faisGL_M$ et en tordant $M$. Cela n'a évidemment rien de surprenant.
\end{exem}

\subsubsection{Formes}

Soit $\cC$ un $S$-champ, et soit $X_0$ un objet de $\cC_T$, pour un certain $T\to S$. 
\begin{defi} \label{formes_defi} 
On dit qu'un objet $X$ de $\cC_T$ est une \emph{forme} de $X_0$ s'il est $\Ttau$-localement isomorphe à $X_0$. C'est-à-dire qu'il existe un recouvrement $(T_i)_{i\in I}$ de $T$ tel que $X_{T_i}\simeq (X_0)_{S_i}$. On dit que $X$ est \emph{déployé} par un morphisme $T' \to T$ si $X_{T'} \simeq (X_0)_{T'}$. 
\end{defi}

Soit $X_0$ un objet de $\cC_S$. Une forme de $(X_0)_T$ s'envoie par changement de base $\cC_T \to \cC_{T'}$ sur une forme de $(X_0)_{T'}$, on obtient donc une ainsi une catégorie fibrée $\Formes{X_0}$. De même, $T \to S$ étant donné, on peut considérer les formes de $(X_0)_{S'}$ déployées par un morphisme $T\times_S S' \to S$, et cela forme une sous-catégorie fibrée de la précédente, notée $\Formes{X_0,T/S}$. 

\begin{prop}
Les catégories fibrées $\Formes{X_0}$ et $\Formes{X_0,T/S}$ sont des champs.
\end{prop}
\begin{proof}
Exercice.
\end{proof}

\begin{prop} \label{formesstruc_defi}
Si $X$ est un objet de $\cC_T$ et si $P$ est un torseur sous $H$ agissant sur $X$, le produit contracté $P \contr{H} X$ est une forme de $X$.
\end{prop}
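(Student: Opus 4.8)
The plan is to verify Definition \ref{formes_defi} directly: I must produce a $\Ttau$-covering $(T_i)_{i\in I}$ of $T$ over which $P\contr{H}X$ becomes isomorphic to $X$. The whole point is that being a form is a local condition, that a torsor is locally trivial, and that the contracted product is normalized so that twisting by the \emph{trivial} torsor changes nothing. So the strategy is simply to reduce, by passing to a trivializing covering, to the trivial-torsor case, where the answer is built into the definition of $Tw$.

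Concretely, first I would invoke Proposition \ref{torseurloc_prop}: since $P$ is a torsor under $H$, it is $\Ttau$-localement isomorphe to the trivial torsor, so there is a covering $(T_i)_{i\in I}$ of $T$ with $P_{T_i}\simeq H_{T_i}$ as faisceaux with right $H_{T_i}$-action. Next I would use that the contracted product is the value of the morphism of champs $Tw$; being a foncteur de cat\'egories fibr\'ees (Definition \ref{fonctfib_defi}), it commutes with the base-change functors, so restricting along $T_i\to T$ gives $(P\contr{H}X)_{T_i}\simeq P_{T_i}\contr{H_{T_i}}X_{T_i}$. Substituting $P_{T_i}\simeq H_{T_i}$ and invoking the defining normalization of $Tw$ (the canonical isomorphism $i\colon H\contr{H}Y\isoto Y$, here with $Y=X_{T_i}$) yields $(P\contr{H}X)_{T_i}\simeq H_{T_i}\contr{H_{T_i}}X_{T_i}\simeq X_{T_i}$. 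This is precisely a local isomorphism with $X$ over the covering $(T_i)$, so $P\contr{H}X$ is a form of $X$ in the sense of Definition \ref{formes_defi}.

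I do not expect a genuine obstacle here; the only point that deserves care is the compatibility of $\contr{H}$ with base change, but this is automatic because $Tw$ is by construction a morphism of $S$-champs and hence commutes with the change-of-base functors of the fibered categories involved. All the real content is already packaged in Proposition \ref{torseurloc_prop} (local triviality of torsors) and in the normalization clause of the definition of $Tw$; the proof is essentially the observation that these two facts fit together.
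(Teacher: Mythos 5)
Your proof is correct and follows exactly the paper's argument: the paper's own proof consists of the same two ingredients, namely Proposition \ref{torseurloc_prop} (local triviality of $P$) and the canonical isomorphism $H \contr{H} X \simeq X$. You have simply made explicit the base-change compatibility of the contracted product, which the paper leaves implicit.
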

\begin{proof}
Par la proposition \ref{torseurloc_prop}, le torseur $P$ est $\Ttau$-localement isomorphe à $H$, et on a $H \contr{H} X \simeq X$. 
\end{proof}

\begin{prop} \label{structfonctIsotors_prop}
Soit $X$ une forme de $X_0$ (resp. déployée sur $T'$). Alors $\faisIso_{X_0,X}$ est un torseur sous $\faisAut_{X_0}$ (resp. trivialisé sur $T'$). 
\end{prop}
\begin{proof}
Clair.
\end{proof}

On considère le champ $\Tors{\faisAut_{X_0}}$. Notons que c'est un champ en groupoïdes par la proposition \ref{flecheIsoTors_prop}. On considère également le champ en groupoïdes $\Formes{X_0}_\grpd$. 
Les foncteurs 
$$\begin{array}{cll}
\Tors{\faisAut_{X_0}} & \to & \Formes{X_0} \\
P & \mapsto & P \contr{\faisAut_{X_0}} X_0
\end{array} 
\hspace{1ex}\text{et}\hspace{1ex}
\begin{array}{cll}
\Formes{X_0} & \to & \Tors{\faisAut_{X_0}} \\
X & \mapsto & \faisIso_{X_0,X}
\end{array}$$ 
sont adjoints en utilisant pour unité et coünité les applications évidentes 
$$\begin{array}{cll}
\faisIso_{X_0,X}\contr{\faisAut_{X_0}}X_0 & \to & X \\
(\psi,x) & \mapsto & \psi(x)
\end{array}
et 
\begin{array}{cll}
P & \to & \faisIso_{X_0,(P \contr{\faisAut_{X_0}} X_0)} \\
p & \mapsto & (x \mapsto (p,x))
\end{array}
$$ 
(données ici quand $\faisIso_{X_0,X}$ a un point, et on les construit donc par descente à partir de ce cas).

\begin{prop} \label{tordusformes_prop}
Ces foncteurs sont des équivalences de catégories fibrées inverses l'une de l'autre, entre $\Tors{\faisAut_{X_0}}$ et $\Formes{X_0}$. Ils se restreignent en une équivalence de catégorie fibrées entre $\Tors{\faisAut_{X_0},T/S}$ et $\Formes{X_0,T/S}$.
\end{prop}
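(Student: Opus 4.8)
Le plan est d'exhiber l'unité et la counité de l'adjonction déjà décrite comme des isomorphismes: une adjonction dont l'unité et la counité sont inversibles est une équivalence de catégories, et comme tous les objets en jeu (foncteurs, unité, counité) sont compatibles au changement de base, cet argument fournit directement une équivalence de catégories fibrées. Je traiterais les deux côtés séparément, car ils reposent sur des arguments de nature différente.

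Pour l'unité $P \to \faisIso_{X_0,(P\contr{\faisAut_{X_0}}X_0)}$, j'invoquerais directement la proposition \ref{flecheIsoTors_prop}. La source $P$ est un torseur sous $\faisAut_{X_0}$ par hypothèse; le but en est également un, car $P\contr{\faisAut_{X_0}}X_0$ est une forme de $X_0$ par la proposition \ref{formesstruc_defi}, de sorte que la proposition \ref{structfonctIsotors_prop} s'y applique. Comme la formule $p \mapsto (x \mapsto (p,x))$ est visiblement $\faisAut_{X_0}$-équivariante, la proposition \ref{flecheIsoTors_prop} entraîne automatiquement que l'unité est un isomorphisme, sans qu'il soit besoin de se localiser.

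Pour la counité $\faisIso_{X_0,X}\contr{\faisAut_{X_0}}X_0 \to X$, qui est une flèche du champ $\cC$, j'utiliserais plutôt que, dans un champ, le fait d'être un isomorphisme se teste localement (c'est exactement ce qui permet d'y construire les inverses par descente). Puisque $X$ est une forme de $X_0$, le torseur $\faisIso_{X_0,X}$ est localement trivial par la proposition \ref{torseurloc_prop}, donc admet $\Ttau$-localement un point, c'est-à-dire un isomorphisme $\psi:(X_0)_{T'}\isoto X_{T'}$. Sur un tel $T'$, la proposition \ref{torseurloc_prop} identifie $\faisIso_{X_0,X}$ au torseur trivial et, via l'isomorphisme canonique $\faisAut_{X_0}\contr{\faisAut_{X_0}}X_0 \isoto X_0$ du lemme \ref{assoccontr_lemm}, la counité se réduit à $\psi$, qui est inversible. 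C'est précisément à partir de ce cas local que la counité a été construite par descente, d'où le résultat.

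Il resterait à vérifier que les deux foncteurs respectent le déploiement par $T$: si $P$ est déployé par $T$, le changement de base à $T$ de $P\contr{\faisAut_{X_0}}X_0$ est le produit contracté du torseur trivial, canoniquement isomorphe à $(X_0)_T$ par le lemme \ref{assoccontr_lemm}, donc une forme déployée par $T$; réciproquement, si $X$ est déployé par $T$, alors $\faisIso_{X_0,X}$ a un point sur $T$, donc est trivialisé par $T$. L'unité et la counité se restreignant à ces sous-champs, on obtient l'équivalence voulue entre $\Tors{\faisAut_{X_0},T/S}$ et $\Formes{X_0,T/S}$. Le seul point réellement délicat n'est pas tant un obstacle qu'une vérification de cohérence — l'indépendance de l'identification locale de la counité vis-à-vis du point $\psi$ choisi, afin qu'elle descende effectivement — mais celle-ci est déjà encapsulée dans la construction par descente rappelée avant l'énoncé, de sorte qu'aucun calcul supplémentaire n'est requis.
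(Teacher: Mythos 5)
Votre preuve est correcte et suit essentiellement la m\^eme d\'emarche que celle du texte : on v\'erifie que l'unit\'e et la co\"unit\'e de l'adjonction sont des isomorphismes, la co\"unit\'e parce qu'elle l'est localement apr\`es trivialisation du torseur (resp. apr\`es extension \`a $T$ pour le cas d\'eploy\'e), ce qui suffit dans un champ. La seule variante est cosm\'etique : pour l'unit\'e, vous invoquez la proposition \ref{flecheIsoTors_prop} (tout morphisme \'equivariant de torseurs est un isomorphisme) l\`a o\`u le texte se contente d'un \og on raisonne de m\^eme \fg{} par localisation, mais cette proposition est elle-m\^eme d\'emontr\'ee par le m\^eme argument local, donc les deux preuves co\"incident en substance.
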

\begin{proof}
Si $X$ est une forme de $X_0$ (resp. déployée par $T\to S$), le faisceau $\faisIso_{X_0,X}$ est un $\faisAut_{X_0}$-torseur (resp. trivialisé par $T\to S$). La counité de l'adjonction $\faisIso_{X_0,X}\contr{\faisAut_{X_0}}X_0 \to X$ est un isomorphisme localement (resp. après extension à $T$), donc c'est un isomorphisme.
On raisonne de même avec le morphisme naturel $P \to \faisIso_{X_0,(P \contr{\faisAut_{X_0}} X_0)}$.
\end{proof}

\begin{prop} \label{produitformes_prop}
Soient $F_1:\cC_1 \to \cD$ et $F_2:\cC_2\to \cD$ deux foncteurs de catégories fibrées entre $S$-champs. Soient $X_1 \in (\cC_1)_S$, $X_2 \in (\cC_2)_S$ et $\phi:F_1(X_1)\isoto F_2(X_2)$ un isomorphisme, donc un objet $X_0=(X_1,X_2,\phi)$ de $(\cC_1 \times_{\cD} \cC_2)_S$. Alors $\faisAut_{X_0} \simeq \faisAut_{X_1} \times_{\faisAut_{F_1(X_1)}} \faisAut_{X_2}$. Supposons de plus que 
$$\faisAut_{X_1} \times \faisAut_{X_2} \to \faisIso_{F(X_1),F(X_2)}$$
donné sur les points par $(f,g)\mapsto F_2(g) \circ \phi \circ F_2(f^{-1})$ est un épimorphisme de faisceaux (ce qui est en particulier le cas si 
 $\faisAut_{X_1} \to \faisAut_{F_1(X_1)}$ ou $\faisAut_{X_2}\to \faisAut_{F_2(X_2)}$ est un épimorphisme de faisceaux). Alors le foncteur fibré $\Formes{X_0} \to \Formes{X_1} \times_{\Formes{F_1(X_1)}} \Formes{X_2}$ obtenu par la propriété universelle du produit fibré est une équivalence de catégories.
\end{prop}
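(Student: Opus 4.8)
The plan is to prove the two assertions in turn: first the identity of automorphism sheaves, which is a matter of unwinding definitions, and then the equivalence of forms, which I would reduce to Proposition~\ref{prodfibgroupechamp_prop} via Proposition~\ref{tordusformes_prop}. For the identity $\faisAut_{X_0}\simeq\faisAut_{X_1}\times_{\faisAut_{F_1(X_1)}}\faisAut_{X_2}$, I would evaluate on a test scheme $T$ and use Definition~\ref{produitfibre_defi}. A point of $\faisAut_{X_0}(T)$ is an automorphism of $((X_1)_T,(X_2)_T,\phi_T)$, i.e. a pair $(f_1,f_2)$ with $f_i\in\faisAut_{X_i}(T)$ subject to $\phi_T\circ F_1(f_1)=F_2(f_2)\circ\phi_T$. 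Conjugation by $\phi$ identifies $\faisAut_{F_1(X_1)}$ with $\faisAut_{F_2(X_2)}$, and the relation rewrites as the equality, in $\faisAut_{F_1(X_1)}(T)$, of the image of $f_1$ under the map induced by $F_1$ and the image of $f_2$ under $\phi^{-1}F_2(-)\phi$. These are exactly the $T$-points of the sheaf fibre product; the identification is natural in $T$ and respects the group laws, hence is an isomorphism of $S$-sheaves of groups.

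For the equivalence of forms, I would transport both sides to torsors. By Proposition~\ref{tordusformes_prop} there are equivalences $\Formes{X_0}\simeq\Tors{\faisAut_{X_0}}$, $\Formes{X_i}\simeq\Tors{\faisAut_{X_i}}$, and $\Formes{F_1(X_1)}\simeq\Tors{\faisAut_{F_1(X_1)}}$ (using $\phi$ to identify $\Formes{F_1(X_1)}$ with $\Formes{F_2(X_2)}$). Under these, the two projection functors $\Formes{X_0}\to\Formes{X_i}$ induced by the projections of the fibred product of fibred categories correspond to the push-forward-of-torsors functors along $\faisAut_{X_i}\to\faisAut_{F_1(X_1)}$, so the functor of the statement is identified with $\Tors{\faisAut_{X_0}}\to\Tors{\faisAut_{X_1}}\times_{\Tors{\faisAut_{F_1(X_1)}}}\Tors{\faisAut_{X_2}}$. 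By the first part $\faisAut_{X_0}$ is the sheaf fibre product $\faisAut_{X_1}\times_{\faisAut_{F_1(X_1)}}\faisAut_{X_2}$, so this is precisely the comparison functor studied in Proposition~\ref{prodfibgroupechamp_prop}.

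It then remains to run the argument of Proposition~\ref{prodfibgroupechamp_prop}, whose content is the construction of a quasi-inverse sending a triple $(P_1,P_2,\phi)$ to the fibre product $P_1\times_\phi P_2$; morphisms on both sides are automatically isomorphisms of torsors by Proposition~\ref{flecheIsoTors_prop}. The main obstacle, as in that proof, is the single step showing that $P_1\times_\phi P_2$ is a genuine torsor, i.e. has a point $\Ttau$-locally: after trivialising $P_1$ and $P_2$ this amounts to lifting, locally, the element $h\in\faisAut_{F_1(X_1)}$ that measures the discrepancy $\phi$ into the image of $\faisAut_{X_1}\times\faisAut_{X_2}$. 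This is exactly what the stated hypothesis provides: after identifying $\faisIso_{F(X_1),F(X_2)}$ with $\faisAut_{F_1(X_1)}$ via $\phi$, the map $(f,g)\mapsto F_2(g)\circ\phi\circ F_1(f)^{-1}$ becomes $(f,g)\mapsto a_2(g)\,a_1(f)^{-1}$ where $a_1,a_2$ are the maps induced by $F_1$ and $\phi^{-1}F_2(-)\phi$, and its being an epimorphism of sheaves says precisely that every such $h$ is locally of the form $a_2(g)a_1(f)^{-1}$, allowing the local point to be built. The parenthetical sufficient conditions are the special cases where $a_1$ or $a_2$ is already an epimorphism, in which case Proposition~\ref{prodfibgroupechamp_prop} applies verbatim; under the weaker combined hypothesis the same proof goes through with this one lifting step substituted.
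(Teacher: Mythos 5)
Your proof is correct and follows essentially the same route as the paper, which likewise establishes the automorphism identity by inspection on $T$-points and then deduces the equivalence from Propositions~\ref{tordusformes_prop} and~\ref{prodfibgroupechamp_prop}. You are moreover right to flag, and to resolve, the one point the paper's one-line proof glosses over: Proposition~\ref{prodfibgroupechamp_prop} as stated assumes one of the two maps to $\faisAut_{F_1(X_1)}$ is individually an epimorphism, whereas here only the combined map is assumed surjective, and your observation that the sole use of that hypothesis is the local lifting of the discrepancy $h$ when producing a local point of $P_1\times_\phi P_2$ is exactly the required adjustment.
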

\begin{proof}
La propriété sur les groupes d'automorphismes est immédiate sur les points.
L'équivalence de catégories suit alors des propositions \ref{prodfibgroupechamp_prop} et \ref{tordusformes_prop}.
\end{proof}

\begin{prop} \label{isoFoncFib_prop}
Soient $F_1, F_2:\cC \to \cD$ deux foncteurs fibrés entre deux champs. Supposons que pour un objet $X_0 \in \cC$, on ait un isomorphisme $\phi:F_1(X_0)\isoto F_2(X_0)$ et que la conjugaison par cet isomorphisme $\int_\phi : \faisAut_{F_1(X_0)} \isoto \faisAut_{F_2(X_0)}$ fasse commuter le diagramme 
$$\xymatrix{
\faisAut_{X_0} \ar[r]^{F_1} \ar[dr]^{F_2} & \faisAut_{F_1(X_0)} \ar[d]^{\int_\phi} \\
 & \faisAut_{F_2(X_0)}.
}$$ 
Alors $F_1$ et $F_2$ se restreignent en des foncteurs $\Formes{X_0} \to \Formes{F_1(X_0)}=\Formes{F_2(X_0)}$, munis d'un isomorphisme de foncteurs canoniquement induit par $\phi$.
\end{prop}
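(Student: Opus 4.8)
The plan is to construct a natural isomorphism $\eta : F_1|_{\Formes{X_0}} \isoto F_2|_{\Formes{X_0}}$ lifting $\phi$, by defining it locally and gluing it by the stack axioms. First I would record two preliminary observations. Since each $F_i$ is a functor of fibered categories, it sends isomorphisms to isomorphisms and commutes with base change; hence it sends an object locally isomorphic to $X_0$ to one locally isomorphic to $F_i(X_0)$, so $F_1$ and $F_2$ do restrict to functors $\Formes{X_0} \to \Formes{F_i(X_0)}$. Moreover the existence of $\phi : F_1(X_0)\isoto F_2(X_0)$ shows that $F_1(X_0)$ and $F_2(X_0)$ are (even globally) isomorphic, so an object is a form of one exactly when it is a form of the other; thus $\Formes{F_1(X_0)} = \Formes{F_2(X_0)}$, as asserted.

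Now for the isomorphism itself. Let $X$ be a form of $X_0$ over some $T$. On a covering $(T_i)$ there exist isomorphisms $\alpha_i : (X_0)_{T_i} \isoto X_{T_i}$, and over $T_i$ I set
$$\eta_i = F_2(\alpha_i) \circ \phi_{T_i} \circ F_1(\alpha_i)^{-1} : F_1(X)_{T_i} \isoto F_2(X)_{T_i}.$$
The crux is that $\eta_i$ must not depend on the choice of $\alpha_i$. Two choices differ by an automorphism $g \in \faisAut_{X_0}(T_i)$, and replacing $\alpha_i$ by $\alpha_i \circ g$ multiplies $\eta_i$ on the right by the factor $F_2(g) \circ \phi \circ F_1(g)^{-1}$. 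This factor equals $\phi$ precisely because the commuting-triangle hypothesis says $\int_\phi \circ F_1 = F_2$ on $\faisAut_{X_0}$, i.e. $F_2(g) = \phi \circ F_1(g) \circ \phi^{-1}$. Hence each $\eta_i$ is canonical, the various $\eta_i$ agree on the double overlaps $T_{ij}$, and since the Hom-presheaf of $\cD$ is a sheaf (point (1) of the definition of a stack) they glue to a single isomorphism $\eta_X : F_1(X) \isoto F_2(X)$.

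It then remains to check naturality and normalization. Naturality in $X$, namely $\eta_Y \circ F_1(f) = F_2(f) \circ \eta_X$ for a morphism of forms $f : X \to Y$, can be verified locally, where it reduces to the functoriality of $F_1$ and $F_2$ and the very definition of $\eta$; compatibility with base change holds because the construction is local and canonical, so $\eta$ is a genuine isomorphism of fibered functors; and $\eta_{X_0} = \phi$ by taking $\alpha = \id$. The single delicate point is the independence of $\eta_i$ from the local trivialization, which is exactly where the hypothesis is used—everything else is formal descent. Alternatively, I would phrase the whole argument through the equivalence $\Formes{X_0} \simeq \Tors{\faisAut_{X_0}}$ of Proposition \ref{tordusformes_prop}, together with the description of $F_i$ as push-forward along $\faisAut_{X_0} \to \faisAut_{F_i(X_0)}$ from Proposition \ref{foncttors_prop}: the hypothesis then says the two push-forwards coincide after the identification $\int_\phi$, and $\eta$ is the induced isomorphism of contracted products.
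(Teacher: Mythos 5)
Votre preuve est correcte, mais votre argument principal emprunte une route diff\'erente de celle du texte. Vous construisez l'isomorphisme de foncteurs \emph{\`a la main} par descente : d\'efinition locale de $\eta_i = F_2(\alpha_i)\circ\phi\circ F_1(\alpha_i)^{-1}$ sur un recouvrement trivialisant, v\'erification que le changement de trivialisation $\alpha_i \mapsto \alpha_i\circ g$ remplace $\phi$ au milieu par $F_2(g)\circ\phi\circ F_1(g)^{-1}=\phi$ gr\^ace \`a l'hypoth\`ese $\int_\phi\circ F_1=F_2$, puis recollement par la condition faisceautique sur les $\faisHom$ du champ but. C'est exactement le bon endroit o\`u l'hypoth\`ese intervient, et le reste est de la descente formelle. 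Le texte, lui, passe directement par l'\'equivalence $\Formes{X_0}\simeq\Tors{\faisAut_{X_0}}$ de la proposition \ref{tordusformes_prop} et le point \ref{fonctobjet_item} de la proposition \ref{foncttors_prop}, et \'ecrit l'isomorphisme comme la cha\^ine $F_1(P\contr{\faisAut_{X_0}}X_0)\isoto P\contr{\faisAut_{F_1(X_0)}}F_1(X_0)\isoto P\contr{\faisAut_{F_2(X_0)}}F_2(X_0)\isoto F_2(P\contr{\faisAut_{X_0}}X_0)$, le maillon central utilisant $\phi$ et la commutativit\'e du triangle ; c'est pr\'ecis\'ement la variante que vous esquissez en derni\`ere phrase. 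Votre version explicite a le m\'erite de rendre visible le r\^ole de l'hypoth\`ese sans invoquer le formalisme des produits contract\'es ; la version du texte est plus courte car elle r\'eutilise les lemmes d\'ej\`a \'etablis. Seule remarque de forme : dire que le changement de $\alpha_i$ \emph{multiplie $\eta_i$ \`a droite} par $F_2(g)\circ\phi\circ F_1(g)^{-1}$ est un l\'eger abus (c'est le facteur central $\phi$ qui est remplac\'e), mais le calcul que vous faites ensuite est le bon.
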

\begin{proof}
Par la proposition \ref{tordusformes_prop}, il suffit de considérer les objets de la forme $P \contr{\faisAut_{X_0}} X_0$. Les foncteurs se restreignent par le point \ref{fonctobjet_item} de la proposition \ref{foncttors_prop} et l'isomorphisme de foncteurs est donné par 
\begin{multline*}
F_1(P \contr{\faisAut_{X_0}} X_0) \isoto P \contr{\faisAut_{F_1(X_0)}} F_1(X_0) \isoto P \contr{\faisAut_{F_2(X_0)}} F_2(X_0) \\
\isoto F_2(P \contr{\faisAut_{X_0}} X_0).\qedhere
\end{multline*}
\end{proof}

Si $\struc=\alg$ et si $A$ est un faisceau en $\faisO_S$-algèbres, on peut considérer son faisceau des éléments inversibles pour la multiplication, noté $\faisGL_{1,A}$, et qui est naturellement un faisceau en groupes. Soit maintenant $H$ un faisceau en groupes qui agit sur $A$ en respectant sa structure d'algèbre, il agit donc sur $\faisGL_{1,A}$ en respectant la structure $\gr$. Soit $P$ un torseur sous $H$. Dans le même esprit que la proposition \ref{auttordus_prop} et avec une preuve analogue, on obtient:
\begin{prop} \label{GLtordu_prop}
Le morphisme $P \contr{H} \faisGL_{1,A} \to \faisGL_{1,P \contr{H} A}$ est un isomorphisme.
\end{prop}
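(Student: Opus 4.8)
Le plan est de suivre fid\`element le sch\'ema de la preuve de la proposition \ref{auttordus_prop}. On commencerait par d\'ecrire explicitement le morphisme en jeu: l'inclusion des \'el\'ements inversibles $\faisGL_{1,A} \hookrightarrow A$ est $H$-\'equivariante, puisque $H$ agit sur $A$ par automorphismes d'alg\`ebre et pr\'eserve donc les \'el\'ements inversibles. En la tordant par $P$ (fonctorialit\'e du produit contract\'e), on obtient un morphisme $P \contr{H} \faisGL_{1,A} \to P \contr{H} A$ qui, sur les points, envoie la classe de $(p,u)$, avec $u$ inversible dans $A$, sur la classe de $(p,u)$ dans $P \contr{H} A$.

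On v\'erifierait ensuite que ce morphisme atterrit dans le sous-faisceau $\faisGL_{1,P \contr{H} A}$ des \'el\'ements inversibles et qu'il respecte la structure de groupe. La structure d'alg\`ebre sur $P \contr{H} A$ s'obtient en tordant celle de $A$, le produit contract\'e commutant aux produits par le lemme \ref{tordprod_lemm}; localement, l\`a o\`u $P$ est trivialis\'e par un point $p$, la multiplication s'\'ecrit $(p,a)\cdot(p,b)=(p,ab)$ et l'unit\'e est $(p,1)$. Il en r\'esulte aussit\^ot que l'image de $(p,u)$ est inversible, d'inverse $(p,u^{-1})$, et que $(p,u)\cdot(p,v)\mapsto(p,uv)$, de sorte qu'on a bien un morphisme de faisceaux en groupes. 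Ces v\'erifications, en tout point analogues \`a celles de \ref{auttordus_prop}, peuvent \^etre laiss\'ees au lecteur.

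Il resterait \`a montrer que c'est un isomorphisme. Comme $\CatFais_{/S}$ est un champ, \^etre un isomorphisme de faisceaux est une propri\'et\'e locale; puisque $P$ est un torseur, il suffit donc de conclure apr\`es trivialisation de $P$. Dans ce cas $P\cong H$, et l'on dispose des isomorphismes canoniques $H \contr{H} \faisGL_{1,A} \cong \faisGL_{1,A}$ et $H \contr{H} A \cong A$ fournis par le lemme \ref{assoccontr_lemm}, ce dernier induisant $\faisGL_{1,H \contr{H} A} \cong \faisGL_{1,A}$. Modulo ces identifications, le morphisme se r\'eduit \`a l'identit\'e de $\faisGL_{1,A}$, ce qui ach\`eve la preuve. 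Le seul point r\'eclamant un peu de soin, et donc l'unique obstacle (mineur) de l'argument, est de s'assurer que l'identification $H \contr{H} A \cong A$ est compatible \`a la structure d'alg\`ebre, afin que l'isomorphisme qu'elle induit sur les \'el\'ements inversibles soit bien l'identit\'e attendue.
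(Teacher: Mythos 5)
Votre preuve est correcte et suit essentiellement la m\^eme approche que le texte: le papier ne donne d'ailleurs aucune d\'emonstration s\'epar\'ee, se contentant d'annoncer le r\'esultat \og avec une preuve analogue \fg{} \`a la proposition \ref{auttordus_prop}, et c'est exactement le sch\'ema que vous d\'eroulez (construction du morphisme par \'equivariance, v\'erifications de routine laiss\'ees au lecteur, puis r\'eduction \`a l'identit\'e de $\faisGL_{1,A}$ apr\`es trivialisation locale de $P$, la propri\'et\'e d'\^etre un isomorphisme \'etant locale). Le seul point que vous signalez --- la compatibilit\'e de $H \contr{H} A \cong A$ avec la structure d'alg\`ebre --- est imm\'ediat par construction de la structure tordue (le produit contract\'e commutant aux produits, lemme \ref{tordprod_lemm}) et ne pose aucune difficult\'e.
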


Si $G$ est un faisceau en groupes, on définit son centre $\faisCentre_G$ par 
$$\faisCentre_G(T)=\{g \in G(T),\ gg'=g'g,\ \forall T'\to T\text{ et } g' \in G(T')\}.$$
On vérifie facilement que c'est bien un faisceau. Pour tout faisceau en groupes $G$, on a des morphismes canoniques de faisceaux $G \to G/\faisCentre_G \to \faisAut^\gr_G$.

\begin{defi} \label{intfortint_defi}
Si $G_0$ est un faisceau en groupes, on dit que $G$ est une forme \emph{intérieure} (resp. \emph{strictement intérieure}) de $G_0$ s'il est obtenu en tordant $G_0$ par un torseur provenant d'un torseur sous $G_0/\faisCentre_{G_0}$ (resp. sous $G_0$). Si $G_0$ est semi-simple, on dit que $G$ est une forme \emph{fortement intérieure} si elle est obtenue en tordant $G_0$ par un torseur sous $(G_0)_{\simco}$, le revêtement simplement connexe de $G_0$.
\end{defi}
En particulier, lorsque $G_0$ est semi-simple et adjoint, il est de centre trivial, et ses formes intérieures sont strictement intérieures. Si au contraire il est simplement connexe, alors ses formes strictement intérieures sont fortement intérieures.

Soit $G'$ un sous-faisceau en groupes normal d'un faisceau en groupes $G$, et soit $H$ un faisceau en groupes agissant sur $G$ et dont l'action préserve $G'$. L'action de $H$ sur $G$ se restreint à $G'$ et descend au quotient $G/G'$ en rendant les morphismes $G' \to G$ et $G \to G/G'$ équivariants.

\begin{prop} \label{torsionsec_prop}
Soit $P$ un torseur sous un faisceau en groupes $H$ agissant sur $G$ en préservant $G'$, par exemple si $H$ agit par automorphismes intérieurs de $G$.
La suite exacte de faisceaux en groupes
$$1 \to G' \to G \to G/G'\to 1$$
se tord en une suite exacte de faisceaux en groupes
$$1 \to P \contr{H} G' \to P \contr{H} G \to P \contr{H} (G/G')  \to 1.$$
\end{prop}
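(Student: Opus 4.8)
The plan is to obtain the twisted sequence simply by applying the functor $P\contr{H}(-)$ to the original one, and then to check short exactness locally, where the torsor becomes trivial.

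\textbf{Setting up the twisted maps.} Since $H$ acts on $G$ by group automorphisms and preserves $G'$, its action restricts to an action on $G'$ and descends to one on $G/G'$, both by group automorphisms, and the inclusion $G'\to G$ and the projection $G\to G/G'$ are $H$-equivariant homomorphisms. By the twisting-of-structures formalism (the discussion around Definition \ref{actiongroupeelem_defi}), each of $P\contr{H}G'$, $P\contr{H}G$ and $P\contr{H}(G/G')$ is again a sheaf of groups, and by Proposition \ref{torsionmorphisme_prop} the two equivariant homomorphisms twist into homomorphisms of sheaves of groups
\[
P\contr{H}G' \to P\contr{H}G \to P\contr{H}(G/G').
\]
It remains only to prove that this sequence is short exact.

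\textbf{Reducing to a local statement.} I would argue that each of the three exactness conditions is local for $\Ttau$. Indeed, injectivity of the first arrow and the assertion that it identifies its source with the kernel sheaf of the second arrow amount to an arrow being a monomorphism, resp. an isomorphism onto a kernel, both of which may be tested locally (recall that a morphism of sheaves is an isomorphism as soon as it is one locally); and surjectivity of the second arrow is, by the very definition of an epimorphism of sheaves, a local condition. Hence it suffices to verify exactness after restriction to the members of a $\Ttau$-covering.

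\textbf{Trivializing the torsor.} By Proposition \ref{torseurloc_prop} there is a $\Ttau$-covering $(S_i)$ of $S$ on which $P$ is trivial, i.e. $P_{S_i}\simeq H_{S_i}$. Using the canonical natural isomorphism $H\contr{H}X\cong X$ of Lemma \ref{assoccontr_lemm}, the restriction to $S_i$ of the entire twisted sequence becomes canonically isomorphic, as a sequence of group sheaves, to the restriction of the original sequence $1\to G'\to G\to G/G'\to 1$; the compatibility of these identifications with the two maps is precisely the functoriality of the contracted product that produced them. The restricted original sequence is exact, since restriction to $S_i$ preserves exactness of sheaf sequences. Thus the twisted sequence is locally exact, and therefore exact.

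\textbf{Main obstacle.} The only genuinely non-formal point is the reduction step: one must be certain that each exactness condition is local for $\Ttau$, the most delicate being that $P\contr{H}G'$ really is the \emph{kernel} (and not merely a subsheaf) of $P\contr{H}G\to P\contr{H}(G/G')$, equivalently that forming the contracted product commutes with passage to the quotient $G/G'$. This is exactly what the local triviality of $P$ secures, but it hinges on the naturality of the trivializing isomorphisms $H\contr{H}(-)\cong(-)$, so that the termwise local isomorphisms assemble into an isomorphism of sequences rather than of individual terms only.
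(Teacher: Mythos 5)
Your proof is correct and follows essentially the same route as the paper's: twist the groups and the equivariant morphisms, then verify exactness by noting that the natural map from the twist of a kernel (resp. image) to the kernel (resp. image) of the twists is an isomorphism because it is one locally after trivializing $P$. The paper states this in one sentence; your write-up merely makes the local-to-global reduction and the naturality of the trivializing isomorphisms explicit.
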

\begin{proof}
Il est évident que les groupes et les morphismes se tordent, et la suite reste exacte: utiliser que l'application naturelle du tordu d'un noyau vers le noyau des tordus est un isomorphisme puisque c'en est un localement après trivialisation de $P$, de même pour les images.
\end{proof}

\begin{rema} \label{secautext_rema}
La proposition qui précède s'applique en particulier au cas où $G$ est le groupe des automorphismes de $G'$ et où $G'$ s'inclut dedans comme les automorphismes intérieurs. On notera qu'il agit alors sur $G'$ par un automorphisme, et sur lui-même par automorphismes intérieurs, et que l'inclusion de $G'$ dans $G$ est compatible à ces deux actions. En d'autres termes, on peut voir un automorphisme extérieur de $G'$ comme un automorphisme intérieur, mais sur le groupe plus gros $G$.
\end{rema}

\begin{exem}
Supposons qu'on a une suite exacte de faisceaux en groupes
$$1 \to G \to H \oto{\phi} E \to 1$$
muni d'une section $s: E \to H$ (ce qui est équivalent à se donner un isomorphisme $G \rtimes E \simeq H$). Faisons agir $E$ sur $G$ à travers $s$ par conjugaison (dans $H$) et supposons que $Q$ est un sous-faisceau en groupes de $G$ préservé par cette action de $E$. Considérons l'action de $H$ sur lui-même donnée par $(h,g)\mapsto h\cdot g \cdot (s\circ \phi(h^{-1}))$. Alors
\begin{itemize}
\item Cette action préserve $G$ et les orbites de $G$ sous $Q$ (agissant à droite). Elle induit donc une action de $H$ sur $G/Q$. 
\item Faisons agir $H$ sur $G$ par conjugaison sur $G$. Avec l'action précédente, cela fournit une action de $H$ sur $(G,G/Q)$. Cette action est compatible à la structure d'action de groupe $(G,G/Q)$.
\end{itemize}
Pour tout $H$-torseur, le tordu $(P \contr{H} G, P \contr{H} G/Q)$ est donc bien toujours muni d'une structure d'action de groupe. 

Plus généralement, on peut tordre de même en utilisant un torseur sous tout groupe $H'$ muni d'un morphisme $f:H' \to H$. 

Cet exemple s'applique en particulier lorsque $G$ est un $S$-groupe semi-simple adjoint, que $H=\faisAut_G$ et que $Q$ est un sous-groupe parabolique standard 
défini par un sous-ensemble du diagramme de Dynkin préservé par tout automorphisme de celui-ci. Ces paraboliques sont dits adaptés à un épinglage dans la terminologie de \SGAtrois, voir \cite[Exp. XXVI, déf. 1.11]{sga3}. Lorsqu'on utilise $H'=E$ et $f=s$, on dit qu'on est dans la situation \emph{quasi-déployée}.
\end{exem}

\subsubsection{Suites exactes de cohomologie}

En présence d'une suite exacte courte de $\Ttau$-faisceaux en groupes 
$$1 \to G_1 \to G_2 \to G_3 \to 1,$$ 
on obtient une suite exacte longue en cohomologie
\begin{equation} \label{selcohomologie_eq}
1 \to G_1(S) \to G_2(S) \to G_3(S) \to \Htau^1(S,G_1) \to \Htau^1(S,G_2) \to \Htau^1(S,G_3) 
\end{equation}
où $\Htau^1(S,G)$ désigne l'ensemble des classes d'isomorphismes de $S$-torseurs sous $G$, pointé par $G$ lui-même. Les morphismes entre les points des groupes sont les morphismes évidents, les morphismes entre $\Htau^1$ sont induits par la fonctorialité \ref{fonctorialitegroupe_prop} et le morphisme $G_3(S)\to \Htau^1(S,G_1)$ envoie un point $s$ de $G_3(S)$ sur la fibre image réciproque de $s$ dans $G_2$, qui est naturellement un $G_1$-torseur. L'exactitude en $G_1(S)$ et $G_2(S)$ est en tant que groupes, alors que c'est en tant qu'ensembles pointés pour le reste de la suite. Voir \cite[Ch. III, prop. 3.3.1]{gir} pour la preuve de ces faits. 

Dans certains cas, la suite exacte longue se prolonge à un ensemble $\Htau^2$, défini à l'aide de gerbes. Survolons donc la définition et les quelques propriétés des gerbes qui nous serviront à définir et à utiliser l'ensemble $\Htau^2(S,A)$ d'un schéma en groupes abéliens $A$ sur $S$. La référence de base \cite{gir} traite ces questions dans un grand degré de généralité. Nous avons au contraire cherché à traduire autant que possible cette généralité dans les cas particuliers qui nous intéressent, ce qui nous conduit à énoncer des définitions dans un cadre beaucoup plus restreint, mais probablement plus familier au lecteur peu au fait de la cohomologie non abélienne.

\begin{defi}
Une \emph{$S$-gerbe} est un $S$-champ $\cG$ pour lequel toutes les catégories $\cG_T$ sont des groupoïdes, et qui est: 
\begin{itemize}
\item localement non vide: pour tout $T$ sur $S$, il existe un recouvrement $(T_i)_{i\in I}$ de $T$ tel que pour tout $i$, la catégorie $\cG_{T_i}$ a au moins un objet (il suffit de l'imposer pour $T=S$);
\item localement connexe: pour tout couple d'objets $x$ et $y$ de $\cG_T$, il existe un recouvrement $(T_i)_{i\in I}$ de $T$ tel que $\setHom_{\cG_{T_i}}(x_{T_i},y_{T_i})$ est non vide pour tout $i$.
\end{itemize}
Un morphisme de $S$-gerbes est un morphisme de champs, donc un foncteur de catégories fibrées. 
\end{defi}

Notons que ces conditions sont conservées par restriction de la catégorie fibrée de base des $S$-schémas aux $T$-schémas, ce qui définit un foncteur de changement de base des $S$-gerbes vers les $T$-gerbes.

\begin{exem}
Soit $G$ un faisceau en groupes sur $S$. Le champ des $G$-torseurs $\Tors{G}$ est une gerbe. En effet, la première condition est triviale, puisque le torseur trivial $G$ est présent dans toute fibre. La deuxième condition provient de la proposition \ref{torseurloc_prop}. 
\end{exem}

\begin{defi}
Une $S$-gerbe isomorphe à la gerbe $\Tors{G}$ d'un $S$-faisceaux en groupes $G$ est dite \emph{neutre}.
\end{defi}

\begin{prop}
Toute gerbe est localement neutre.
\end{prop}
\begin{proof}
Localement, une gerbe $\cG$ est non vide et on prend pour $G$ le groupe des automorphismes d'un objet $x$. Remarquons que les objets de $\cG_T$ sont des formes de $x \cG_T$, par connexité locale de la gerbe, et que toutes les formes de $x$ sont bien des objets de $\cG_T$, à isomorphisme près, par descente. On utilise alors une équivalence de catégories analogue à celle de la proposition \ref{tordusformes_prop} entre $\cG_T$ et les et les $\faisAut_x$-torseurs. 
\end{proof}
Remarquons que le groupe $G$ ci-dessus n'est pas unique. Il dépend du choix de $x$: les formes d'un groupe ne lui sont pas toutes isomorphes.

\begin{defi} \label{gerbeliée_defi}
Une $S$-gerbe \emph{liée\footnote{La notion de gerbe liée est définie de manière beaucoup plus générale dans \cite{gir} en utilisant la notion de lien, mais nous nous contenterons de cette situation simplifiée, qui correspond chez Giraud au cas où le lien est représenté par un faisceaux en groupes abéliens $A$.}
par un faisceau en groupes abéliens $A$} est une $S$-gerbe $\cG$ munie, pour tout objet $x \in \cG_T$, d'un isomorphisme de $T$-faisceaux $\iota_x: A_T \to \faisAut_x$. Ces isomorphismes doivent commuter aux changements de base dans $\cG$ et à tout morphisme $\faisAut_x \to \faisAut_y$ induit par un isomorphisme $x \to y$. 
Un morphisme $(\cG,\iota) \to (\cG',\iota')$ de gerbes liées par $A$ est un morphisme de gerbes, donc un foncteur de $S$-catégories fibrées $F$, tel que pour tout $x$ le morphisme $\phi_{F,x}:\faisAut_x \to \faisAut_{F(x)}$ induit par $F$ vérifie $\phi_{F,x} \circ \iota_x = \iota'_{F(x)}$. 
\end{defi}

Il suit de la définition que pour deux objets $x,y \in \cG_T$, le faisceau $\faisHom_{x,y}=\faisIso_{x,y}$ est un $A_T$-torseur à travers $\iota_x$.

\begin{exem}
La gerbe $\Tors{A}$ est naturellement liée par $A$, car pour tout $A$-torseur $P$, on a un isomorphisme canonique $A \isoto \faisAut_P$. Bien entendu, le fait que $A$ est abélien est essentiel; cela implique qu'il ne se tord pas par automorphismes intérieurs. Dans le cas contraire, $\faisAut_x$ serait un tordu de $A$, par \ref{auttordus_prop}, non nécessairement isomorphe à $A$.
\end{exem}

\begin{lemm}
Tout morphisme de gerbes liées par $A$ est une équivalence de catégories. 
\end{lemm}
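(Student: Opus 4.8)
Il s'agit de montrer que tout morphisme $F:(\cG,\iota)\to(\cG',\iota')$ de gerbes liées par $A$ est une équivalence de catégories. Le plan est de montrer les deux propriétés caractéristiques d'une équivalence : la pleine fidélité et l'essentielle surjectivité, en exploitant que $F$ respecte les identifications $\iota_x:A_T\isoto\faisAut_x$ et que l'on peut travailler localement grâce à la condition de champ.

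**Pleine fidélité.** Soient $x,y$ deux objets de $\cG_T$. Je veux montrer que $F$ induit une bijection $\setHom_{\cG_T}(x,y)\to\setHom_{\cG'_T}(F(x),F(y))$, et plus précisément un isomorphisme de $T$-faisceaux $\faisHom_{x,y}\isoto\faisHom_{F(x),F(y)}$. Comme il suffit de vérifier un isomorphisme de faisceaux localement (puisque $\CatFais_{/S}$ est un champ), et que la gerbe est localement non vide et localement connexe, je peux me ramener au cas où $x$ et $y$ admettent un isomorphisme, donc par translation au cas $x=y$. Dans ce cas $\faisHom_{x,x}=\faisAut_x$ et $\faisHom_{F(x),F(x)}=\faisAut_{F(x)}$, et le morphisme induit par $F$ est $\phi_{F,x}:\faisAut_x\to\faisAut_{F(x)}$. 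Or la définition d'un morphisme de gerbes liées par $A$ impose $\phi_{F,x}\circ\iota_x=\iota'_{F(x)}$, et comme $\iota_x$ et $\iota'_{F(x)}$ sont tous deux des isomorphismes avec source $A_T$, on en déduit que $\phi_{F,x}=\iota'_{F(x)}\circ\iota_x^{-1}$ est un isomorphisme. La pleine fidélité en découle après recollement.

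**Essentielle surjectivité.** Soit $x'$ un objet de $\cG'_T$. Comme $\cG$ est localement non vide, il existe un recouvrement $(T_i)$ de $T$ et des objets $x_i\in\cG_{T_i}$. Par connexité locale de $\cG'$ et quitte à raffiner le recouvrement, les images $F(x_i)$ et $x'_{T_i}$ sont isomorphes dans $\cG'_{T_i}$. Le plan est alors de recoller les $x_i$ en un objet global ayant pour image $x'$ : on choisit des isomorphismes $F(x_i)_{T_{ij}}\simeq x'_{T_{ij}}\simeq F(x_j)_{T_{ij}}$, d'où par pleine fidélité (déjà établie) des isomorphismes $\psi_{ij}:(x_i)_{T_{ij}}\isoto(x_j)_{T_{ij}}$, et l'on vérifie la condition de cocycle sur les triples produits fibrés, de nouveau par pleine fidélité. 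La descente des objets dans le champ $\cG$ (point \ref{descente_item} de la définition \ref{SChamp_defi}) fournit alors un objet $x\in\cG_T$, et comme $F$ commute aux changements de base et respecte les données de descente, $F(x)$ est muni d'une donnée de descente isomorphe à celle de $x'$ ; l'unicité à isomorphisme unique près dans la descente donne $F(x)\simeq x'$.

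**Principal obstacle.** La difficulté essentielle n'est pas dans la pleine fidélité, qui est presque immédiate une fois la réduction locale faite et l'hypothèse $\phi_{F,x}\circ\iota_x=\iota'_{F(x)}$ invoquée : c'est là que l'hypothèse de \emph{lien} par $A$ est cruciale, car sans elle $\phi_{F,x}$ ne serait pas forcément un isomorphisme. L'obstacle se situe plutôt dans la vérification soigneuse que les isomorphismes construits localement dans l'argument d'essentielle surjectivité forment bien une donnée de descente cohérente (condition de cocycle), de sorte que le formalisme de descente des champs s'applique proprement. Cette vérification est de nature routinière mais doit être menée avec attention au recollement et aux raffinements successifs de recouvrements.
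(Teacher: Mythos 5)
Votre démonstration est correcte et suit essentiellement la même voie que celle du texte : la pleine fidélité s'obtient parce que $F$ induit un morphisme entre les faisceaux $\faisHom_{x,y}$ et $\faisHom_{F(x),F(y)}$, qui sont des $A$-torseurs via les liens, et tout morphisme de $A$-torseurs est un isomorphisme (votre réduction locale au cas $x=y$ suivie d'un recollement est précisément la preuve de ce dernier fait, que le texte invoque directement via la proposition \ref{flecheIsoTors_prop}) ; l'essentielle surjectivité se déduit de la descente, que vous explicitez là où le texte se contente de l'évoquer.
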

\begin{proof}
La pleine fidélité vient du fait que le foncteur $F$ induit un morphisme de $A$-torseurs entre $\faisHom_{x,y}=\faisAut_{x,y}$ et $\faisHom_{F_T(x),F_T(y)}=\faisAut_{F_T(x),F_T(y)}$. Or tout morphisme de $A$-torseurs est un isomorphisme, voir la proposition \ref{flecheIsoTors_prop}.
Le caractère essentiellement surjectif de $F$ se déduit alors de la condition de descente.
\end{proof}

\begin{lemm}
Une gerbe $(\cG,\iota)$ liée par $A$ est équivalente par un morphisme de gerbes liées par $A$ à la gerbe $\Tors{A}$ si et seulement si $\cG_S \neq \emptyset$.
\end{lemm}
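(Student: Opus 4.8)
Le plan est de s�parer les deux implications. L'existence d'un objet global dans $\cG_S$ est clairement n�cessaire, et le lemme pr�c�dent (tout morphisme de gerbes li�es par $A$ est une �quivalence) la rend suffisante : il suffira de produire un \emph{morphisme} de gerbes li�es par $A$ de $\Tors{A}$ vers $\cG$, car un tel morphisme est automatiquement une �quivalence. Toute la difficult� se concentre alors sur la construction d'un morphisme respectant le lien.

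Pour le sens direct, supposons donn�e une �quivalence de gerbes li�es par $A$ entre $\cG$ et $\Tors{A}$. C'est en particulier un foncteur de cat�gories fibr�es essentiellement surjectif sur chaque fibre ; comme $(\Tors{A})_S$ contient le torseur trivial $A$, la fibre $\cG_S$ est non vide.

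R�ciproquement, supposons $\cG_S \neq \emptyset$ et fixons un objet $x_0 \in \cG_S$. Le lien fournit un isomorphisme $\iota_{x_0} : A \isoto \faisAut_{x_0}$, au moyen duquel je fais agir $A$ sur $x_0$ par automorphismes. Je consid�re alors le foncteur de $S$-cat�gories fibr�es
$$F : \Tors{A} \to \cG, \qquad P \mapsto P \contr{A} x_0,$$
o� $P \contr{A} x_0$ d�signe le produit contract� de champs (la construction $Tw$), qui envoie canoniquement le torseur trivial sur $x_0$. Il est bien d�fini sur chaque fibre, puisque $(x_0)_T$ et l'action $(\iota_{x_0})_T$ se d�duisent de $x_0$ par changement de base. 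Par le lemme pr�c�dent, d�s que $F$ respecte le lien, c'est une �quivalence de gerbes li�es par $A$, ce qui conclut ; on n'a donc pas m�me besoin de v�rifier s�par�ment que $F$ est une �quivalence.

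Il ne reste qu'� �tablir la compatibilit� aux liens, qui est le point d�licat. Il s'agit de montrer que pour tout $A$-torseur $P$, le morphisme $\phi_{F,P} : \faisAut_P \to \faisAut_{F(P)}$ induit par $F$ v�rifie $\phi_{F,P} \circ \iota_P = \iota_{F(P)}$, o� $\iota_P : A \isoto \faisAut_P$ est le lien canonique de $\Tors{A}$ et $\iota_{F(P)}$ celui de $\cG$ en $F(P) = P \contr{A} x_0$. Ces deux membres sont des morphismes de faisceaux commutant au changement de base (par les conditions d�finissant une gerbe li�e et par la fonctorialit� de $F$), et tout torseur $P$ �tant localement trivial, il suffit de v�rifier l'�galit� pour $P = A$. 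Dans ce cas, $F(A) = A \contr{A} x_0$ est canoniquement isomorphe � $x_0$, et la propri�t� caract�ristique de la construction $Tw$ rappel�e plus haut affirme pr�cis�ment que l'action $\iota_{x_0}$ co�ncide avec la compos�e
$$A \tooby{\iota_A} \faisAut_A \tooby{Tw(-,x_0)} \faisAut_{A \contr{A} x_0} \isoto \faisAut_{x_0},$$
dont la premi�re fl�che est la translation � gauche, c'est-�-dire le lien canonique $\iota_A$, la deuxi�me est $\phi_{F,A}$, et la derni�re est l'isomorphisme induit par $A \contr{A} x_0 \isoto x_0$, sous lequel $\iota_{x_0}$ correspond � $\iota_{F(A)}$ par compatibilit� du lien de $\cG$ aux isomorphismes. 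C'est exactement l'�galit� cherch�e. L'obstacle principal est donc ce d�vissage : faire correspondre terme � terme la propri�t� d�finissant $Tw$ aux conditions de compatibilit� du lien, le reste de la preuve en d�coulant formellement.
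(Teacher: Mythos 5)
Votre preuve est correcte, mais elle emprunte le chemin inverse de celui du texte. Le texte part lui aussi d'un objet $x \in \cG_S$, mais construit le morphisme dans l'autre sens, de $\cG$ vers $\Tors{A}$, en envoyant $y$ sur $\faisIso_{x_T,y}$ : la compatibilit\'e au lien y est alors imm\'ediate, puisqu'il a d\'ej\`a \'et\'e observ\'e juste apr\`es la d\'efinition des gerbes li\'ees que $\faisIso_{x,y}$ est un $A$-torseur \`a travers $\iota_x$, et aucune v\'erification suppl\'ementaire n'est n\'ecessaire. Vous construisez au contraire un foncteur $\Tors{A} \to \cG$ par le produit contract\'e $P \mapsto P \contr{A} x_0$, ce qui vous oblige \`a v\'erifier la compatibilit\'e au lien \`a la main ; votre r\'eduction au cas du torseur trivial puis l'invocation de la propri\'et\'e caract\'eristique de $Tw$ sont exactes (la premi\`ere fl\`eche de cette propri\'et\'e, la translation \`a gauche $A \to \faisAut_{A}$, co\"{\i}ncide bien avec le lien canonique de $\Tors{A}$ parce que $A$ est ab\'elien). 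Les deux foncteurs sont d'ailleurs quasi-inverses l'un de l'autre, dans l'esprit de la proposition \ref{tordusformes_prop}. Votre approche mobilise la machinerie du produit contract\'e dans un champ quelconque, celle du texte est plus \'econome ; dans les deux cas, c'est le lemme pr\'ec\'edent (tout morphisme de gerbes li\'ees par $A$ est une \'equivalence) qui permet de conclure.
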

\begin{proof}
Puisque $\Tors{A}_S$ contient l'objet $A$, un sens est évident. Pour l'autre, considérons $x \in \cG_S$. Alors le foncteur fibré $F$ donné par $F_T: y \mapsto \faisIso_{x_T,y}$ est bien un morphisme de gerbes liées par $A$.
\end{proof}

\begin{defi}
L'ensemble pointé $\Htau^2(S,A)$ est l'ensemble des classes d'équivalences (de catégories fibrées) de gerbes liées par $A$, pointé par la classe de la gerbe $\Tors{A}$.
\end{defi}

\begin{exem} \label{deuxbord_exem}
Soit $B \to C$ un morphisme de $S$-faisceaux en groupes de noyau abélien, et soit $Q$ un $C$-torseur. Un relèvement de $Q$ à $B$ est un couple $(P,p)$ où $P$ est un $B$-torseur, et $p$ est un morphisme de $C$-torseurs $P\contr{B} C \isoto Q$. Un morphisme de relèvements $(P_1,p_1) \to (P_2,p_2)$ est un morphisme de torseurs $\phi:P_1\to P_2$ tel que $p_2 \circ (\phi\contr{B} C) =p_1$. Le champ en groupoïdes dont la fibre sur $T$ est la catégorie des relèvements de $Q_T$ à $B_T$ est une gerbe liée par le noyau de $B \to C$. On la note $\deuxbord(P)$. 
\end{exem}

En présence d'une suite exacte de $S$-faisceaux en groupes
$$1 \to A \to B \to C\to 1$$
où $A$ est central dans $B$ (condition portant sur les points), on définit un foncteur de la catégorie des $C$-torseurs vers celle des gerbes liées par $A$, en envoyant  un $C$-torseur $P$ sur la gerbe $\deuxbord(P)$ des relèvements de $P$ à $B$.

\begin{defi} \label{deuxbord_defi}
Le foncteur précédent induit un morphisme d'ensembles pointés $\deuxbord: \Htau^1(G,C) \to \Htau^2(G,A)$ appelé \emph{second morphisme de bord}. 
\end{defi}

\begin{prop}
Si $A$ est central dans $B$, la suite exacte longue \eqref{selcohomologie_eq} associée à la suite exacte $1 \to A \to B \to C \to 1$ prolongée par le second morphisme de bord est encore exacte en $\Htau^1(C)$ en tant que suite d'ensembles pointés.
\end{prop}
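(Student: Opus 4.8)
Le plan est de d\'emontrer l'exactitude en $\Htau^1(C)$ en tant que suite d'ensembles point\'es, c'est-\`a-dire que l'image de $\Htau^1(B) \to \Htau^1(C)$ co\"incide avec la pr\'eimage du point de base de $\Htau^2(A)$ par $\deuxbord$. Le point de base \'etant la classe de la gerbe neutre $\Tors{A}$, et le lemme pr\'ec\'edent affirmant qu'une gerbe $(\cG,\iota)$ li\'ee par $A$ est \'equivalente (comme gerbe li\'ee par $A$) \`a $\Tors{A}$ si et seulement si $\cG_S \neq \emptyset$, toute la question se ram\`ene \`a traduire la condition $\deuxbord(P)_S \neq \emptyset$ pour un $C$-torseur $P$.

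D'abord je d\'eroulerais la d\'efinition de la gerbe $\deuxbord(P)$ donn\'ee \`a l'exemple \ref{deuxbord_exem}: ses objets au-dessus de $S$ sont les rel\`evements $(Q,q)$ de $P$ \`a $B$, o\`u $Q$ est un $B$-torseur et $q : Q\contr{B} C \isoto P$ un isomorphisme de $C$-torseurs. Ainsi $\deuxbord(P)_S$ est non vide si et seulement s'il existe un $B$-torseur $Q$ v\'erifiant $Q\contr{B} C \simeq P$. Or, par la fonctorialit\'e de la proposition \ref{fonctorialitegroupe_prop}, le morphisme $\Htau^1(B) \to \Htau^1(C)$ de la suite \eqref{selcohomologie_eq} envoie pr\'ecis\'ement $[Q]$ sur $[Q\contr{B} C]$. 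L'existence d'un rel\`evement de $P$ \`a $B$ \'equivaut donc exactement \`a l'appartenance de $[P]$ \`a l'image de $\Htau^1(B) \to \Htau^1(C)$.

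En recollant ces observations avec le lemme de neutralit\'e, on obtient la cha\^ine d'\'equivalences: $\deuxbord([P])$ est le point de base de $\Htau^2(A)$ si et seulement si $\deuxbord(P)_S \neq \emptyset$, si et seulement si $[P]$ appartient \`a l'image de $\Htau^1(B) \to \Htau^1(C)$; c'est exactement l'\'enonc\'e d'exactitude voulu. La principale pr\'ecaution \`a prendre — et le seul endroit o\`u la centralit\'e de $A$ intervient de fa\c{c}on essentielle — est de v\'erifier que $P \mapsto \deuxbord(P)$ est bien d\'efini sur les classes d'isomorphisme et fournit effectivement une gerbe \emph{li\'ee par $A$}: c'est pr\'ecis\'ement l'hypoth\`ese que le noyau $A$ est central qui identifie canoniquement $A$ aux automorphismes de chaque rel\`evement, ce qui l\'egitime l'emploi du lemme de neutralit\'e. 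Une fois les d\'efinitions d\'eroul\'ees, ces v\'erifications sont de pure routine, et c'est donc la traduction de $\deuxbord(P)_S \neq \emptyset$ via le lemme qui constitue le c\oe ur de l'argument.
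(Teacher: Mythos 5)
Votre argument est correct et suit exactement la m\^eme voie que la preuve du texte, qui se r\'esume \`a l'observation quasi tautologique qu'un $C$-torseur $P$ provient de $B$ si et seulement si $\deuxbord(P)_S$ est non vide, si et seulement si la gerbe $\deuxbord(P)$ est neutre. Le d\'eroulement des d\'efinitions et la remarque sur le r\^ole de la centralit\'e de $A$ sont des pr\'ecisions bienvenues mais ne changent rien \`a la substance de l'argument.
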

\begin{proof}
C'est presque tautologique: Un torseur $P$ sur $C$ provient de $B$ si et seulement si $\deuxbord(P)_S$ est non vide, si et seulement si la gerbe $\deuxbord(P)$ est neutre.
\end{proof}
On vérifie immédiatement que cette construction de deuxième cobord est fonctorielle en la suite exacte courte.
\medskip

Soient $\Ttau_1$ et $\Ttau_2$ deux topologies de Grothendieck, avec $\Ttau_2$ plus fine que $\Ttau_1$, \ie\ tout $\Ttau_1$-recouvrement est un $\Ttau_2$ recouvrement. 

\begin{exo}
Soit $F$ un $\Ttau_1$-faisceau qui est $\Ttau_1$-localement un faisceau pour la topologie $\Ttau_2$. Alors $F$ est un $\Ttau_2$-faisceau.
\end{exo}
Il suit de cet exercice que si $G$ est un $\Ttau_2$-faisceau en groupes, alors tout $\Ttau_1$-torseur sous $G$ est bien un $\Ttau_2$-faisceau, puisque localement isomorphe à $G$, et donc un $\Ttau_2$-torseur. On obtient ainsi une application
$$\Hcoh^1_{\Ttau_1}(S,G) \to \Hcoh^1_{\Ttau_2}(S,G)$$ 
qui est injective, puisque l'existence d'un isomorphisme entre deux torseurs ne dépend pas de la topologie considérée. 
C'est en particulier le cas si $G$ est un foncteur de points représentable, et si $\Ttau_2$ est moins fine que la topologie \emph{canonique} (la topologie la plus fine pour laquelle tout foncteur représentable est un faisceau). S'il est de plus affine sur $S$, et si la topologie $\Ttau_1$ est plus fine que la topologie de Zariski, tout torseur sous $G$ est alors également représentable, par la proposition \ref{reprLocal_prop} ci-après. 

Le même type de raisonnement est valable pour les gerbes: si $A$ est un $\Ttau_2$-faisceau en groupes abéliens, les gerbes pour la topologie $\Ttau_1$ liées par $A$ vu comme $\Ttau_1$-faisceau sont également des gerbes pour la topologie $\Ttau_2$ liées par $A$ vu comme $\Ttau_2$-faisceau. L'application de changement de topologie
$$\Hcoh^2_{\Ttau_1}(S,A) \to \Hcoh^2_{\Ttau_2}(S,A)$$
envoie la classe d'une gerbe sur elle-même, et est injective car une gerbe $\cG$ est triviale si et seulement si $\cG_S\neq \emptyset$, ce qui ne dépend pas non plus de la topologie considérée.  

Pour les groupes réductifs sur une base $S$, on a alors les résultats suivants.

\begin{theo} \label{reductifetale_theo}
Tout groupe réductif sur une base $S$ est localement déployé pour la topologie étale. 
\end{theo}
\begin{proof}
Voir \cite[Exp. XXII, cor. 2.3]{sga3}.
\end{proof}

\begin{theo} \label{lisseetalefppf_theo}
Pour tout groupe algébrique $G$ lisse sur $S$ (ex: $G$ réductif), l'application $\Het^1(S,G) \to \Hfppf^1(S,G)$ est un isomorphisme. Il en est de même pour l'application $\Het^2(S,A) \to \Hfppf^2(S,A)$ si $A$ est abélien.
\end{theo}
\begin{proof}
Au niveau des $\mathrm{H}^1$, l'application est injective, voir ci-dessus. Il reste donc à voir qu'un pseudo-torseur déployé par un recouvrement \fppf\ est en fait déjà déployé par un recouvrement étale. C'est vrai lorsque la base $S$ est locale hensélienne par \cite[Exp. XXIV, prop. 81]{sga3}, puis pour tout anneau local et enfin pour un voisinage Zariski de tout point de $S$ par deux arguments de limite successifs.
Pour les $\mathrm{H}^2$, faute d'argument élémentaire, référons à \cite[11.7]{brauer3} pour la comparaison des groupes de cohomologie étale usuels, et à \cite[Ch. IV, §3.5]{gir} pour comparer ces groupes avec le $\mathrm{H}^2$ défini en terme de gerbes.
\end{proof}

\subsection{Représentabilité} \label{representabilite_sec}

Rappelons quelques résultats de représentabilité dont nous nous servirons constamment. 
\begin{prop}
La topologie \fpqc\ est moins fine que la topologie canonique.
\end{prop}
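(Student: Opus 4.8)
The plan is to unwind the definition of the canonical topology recalled just above: since it is the finest topology for which every representable functor is a sheaf, a topology is less fine than it exactly when every representable functor is a sheaf for that topology. Hence it suffices to prove that for every $S$-scheme $X$, the functor of points $h_X \colon T \mapsto \Hom_S(T,X)$ is an \fpqc-faisceau; in other words, to establish faithfully flat quasi-compact descent for morphisms with target $X$.

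First I would reduce the sheaf axiom to two elementary types of coverings that generate the \fpqc\ topology. It is enough to check the axiom on a cofinal family of coverings, so given an arbitrary \fpqc\ covering one may first pass to affine opens of the base and of the covering schemes (a Zariski refinement), and then, thanks to the quasi-compactness built into the \fpqc\ pretopology, keep only finitely many affine flat pieces over each affine open of the base. Since a finite disjoint union of affine schemes is again affine, these pieces combine into a single faithfully flat morphism of affine schemes $\Spec B \to \Spec A$. Thus it suffices to verify the sheaf condition separately for (a) Zariski coverings and (b) a single faithfully flat quasi-compact morphism $\Spec B \to \Spec A$.

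For (a), the functor $h_X$ is a Zariski sheaf simply because a scheme and its morphisms can be glued from an open cover; this is the gluing axiom for schemes and needs nothing new. For (b), I would first treat the case where the target $X = \Spec C$ is affine. The sheaf condition then reads that
$$\Hom(C,A) \to \Hom(C,B) \rightrightarrows \Hom(C, B \otimes_A B)$$
is an equalizer, where $\Hom$ denotes ring homomorphisms. This is immediate from faithful flatness: the Amitsur sequence $A \to B \rightrightarrows B \otimes_A B$ is exact, so that $A = \{\, b \in B \mid 1 \otimes b = b \otimes 1 \,\}$, and a ring homomorphism $C \to B$ equalizing the two maps therefore factors uniquely through $A$.

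The main obstacle is case (b) for a general, non-affine target $X$, that is, the effectivity of descent for morphisms $g \colon \Spec B \to X$. Given that the two pullbacks of $g$ to $\Spec(B \otimes_A B)$ coincide, I would use that $\Spec B \to \Spec A$ is surjective to see that $g$ is constant along the fibres, cover $X$ by affine opens and $\Spec A$ by affine opens over which $g$ lands in one of them, apply the affine case (b) to descend $g$ locally, and finally glue these local descents using the uniqueness (injectivity) already established. The delicate point is choosing the affine opens of $X$ and of $\Spec A$ compatibly so that the descended pieces agree on overlaps; this is exactly the classical faithfully flat descent theorem for morphisms, for which one may instead simply invoke the standard references, for instance \cite[Exp. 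IV]{sga3} or \cite{dg}.
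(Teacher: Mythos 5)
Le texte de l'article ne donne pas de preuve : il renvoie simplement � \cite[Exp. IV, prop. 6.3.1]{sga3}, et votre argument est pr�cis�ment la d�monstration standard de ce r�sultat (r�duction aux recouvrements Zariski et � un seul morphisme fid�lement plat d'affines, suite d'Amitsur pour un but affine, puis descente classique des morphismes pour un but quelconque). Votre esquisse est correcte et ne fait que d�tailler ce que l'article laisse � la r�f�rence.
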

\begin{proof}
Voir \cite[Exp. IV, proposition 6.3.1]{sga3}.
\end{proof}
En d'autres termes, tout $S$-foncteur de points qui est représentable est un faisceau pour la topologie \fpqc, et donc également pour les topologies \fppf, étales et Zariski. 

\begin{prop} \label{reprLocal_prop}
Un faisceau pour la topologie de Zariski qui est représentable localement sur la base $S$ est représentable. Un faisceau pour la topologie étale ou \fppf\ qui est représentable localement par des schémas affines sur $S$ est représentable (par un schéma affine sur $S$).
\end{prop}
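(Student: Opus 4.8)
The plan is to reduce both assertions to gluing (resp.\ descent) of schemes, producing the required descent data automatically via Yoneda and then checking representability with the sheaf axioms. First I would treat the Zariski case. Let $F$ be a Zariski sheaf and let $(S_i)_{i\in I}$ be a Zariski open cover of $S$ such that each $F_{S_i}$ is represented by an $S_i$-scheme $X_i$. Writing $S_{ij}=S_i\times_S S_j$, both $(X_i)_{S_{ij}}$ and $(X_j)_{S_{ij}}$ represent $F_{S_{ij}}$, so Yoneda furnishes a \emph{canonical} isomorphism $\phi_{ij}\colon (X_i)_{S_{ij}}\isoto (X_j)_{S_{ij}}$, and the uniqueness clause of Yoneda forces the cocycle identity on the triple overlaps $S_{ijk}$. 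Since $(X_i)_{S_{ij}}$ is just the open subscheme of $X_i$ lying over $S_{ij}$, the $\phi_{ij}$ are exactly the data needed to glue the $X_i$ along open subschemes into an $S$-scheme $X$.

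For the \'etale or \fppf\ case I would run the identical bookkeeping but replace gluing by faithfully flat descent. Let $F$ be a $\Ttau$-sheaf (with $\Ttau$ \'etale or \fppf) and let $(S_i\to S)$ be a cover such that each $F_{S_i}$ is represented by a scheme $X_i$ affine over $S_i$. Exactly as above, Yoneda gives canonical isomorphisms $\phi_{ij}\colon (X_i)_{S_{ij}}\isoto (X_j)_{S_{ij}}$ over $S_{ij}=S_i\times_S S_j$, satisfying the cocycle condition over $S_{ijk}$; this is precisely a descent datum for affine schemes, equivalently for quasi-coherent $\cO$-algebras, relative to the cover. The crux is effectivity: an \fppf\ cover (a fortiori an \'etale one) is an \fpqc\ cover, so Grothendieck's faithfully flat descent for affine morphisms makes this datum effective, yielding a scheme $X$ affine over $S$ together with compatible identifications $X_{S_i}\simeq X_i$. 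I would cite the descent results of SGA/FGA for this step rather than reprove them.

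It then remains, in both cases, to check that $X$ represents $F$, and this is the same purely formal step. For a test scheme $T\to S$ I would pull the cover back to $T$; since $X$ and $F$ are both $\Ttau$-sheaves, a point of either over $T$ is the same thing as a compatible family of points over the induced cover of $T$, and the local identifications $X_{S_i}\simeq X_i$ representing $F_{S_i}$, being compatible with the gluing (resp.\ descent) data, assemble into a bijection $X(T)\isoto F(T)$ natural in $T$. The main obstacle is the effectivity invoked in the second part: gluing along open subschemes in the Zariski case is elementary, whereas over a non-Zariski cover one genuinely needs faithfully flat descent, and this is exactly why the hypothesis must be representability by \emph{affine} schemes --- a general scheme-valued functor need not descend along an \'etale or \fppf\ cover. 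Everything else is formal manipulation of Yoneda and the sheaf axioms.
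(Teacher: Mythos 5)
Your argument is correct; the point to be aware of is that the paper does not actually prove this proposition at all --- its ``proof'' is the single line ``Voir \cite[Exp. VIII, lemme 1.7.2]{sga3}'' --- so what you have written is essentially the standard proof of the lemma being cited. In the Zariski case your reasoning is complete: Yoneda gives canonical identifications $(X_i)_{S_{ij}}\isoto (X_j)_{S_{ij}}$ whose uniqueness forces the cocycle condition, one glues along open subschemes, and the sheaf axiom for both $F$ and the glued scheme yields the bijection on $T$-points. In the �tale/\fppf\ case there are two small points you pass over. First, effectivity of descent for affine schemes is stated for a single faithfully flat \emph{quasi-compact} morphism $S'\to S$; to apply it one should first reduce to $S$ affine (legitimate by the Zariski case just treated) and then refine the cover to finitely many affine $S_i$, so that $\coprod_i S_i\to S$ is \fpqc\ and Grothendieck's descent of quasi-coherent $\cO$-algebras applies. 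Second, the final formal step uses, on the representable side, that the functor of points of the descended scheme is itself a sheaf for the topology $\Ttau$; this is exactly the preceding proposition of the paper (the \fpqc\ topology is coarser than the canonical topology), and it is worth invoking explicitly since it is what lets you compare $X(T)$ and $F(T)$ via the pulled-back cover. With these two remarks made explicit, your proof is a correct, self-contained replacement for the reference to SGA3.
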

\begin{proof}
Voir \cite[Exp. VIII, lemme 1.7.2]{sga3}.
\end{proof}

Cette proposition nous sera souvent utile pour nous ramener au cas d'une base $S$ affine. Nous aurons également besoin du résultat suivant.

\begin{lemm}\label{produit_fibre_lemm}
Soient $F,G,H$ des $S$-foncteurs en groupes représentables par les schémas $X$, $Y$ et $Z$ respectivement. Soient encore $f:F\to H$ et $g:G\to H$ des morphismes de foncteurs en groupes. Alors le foncteur $F\times_H G$ défini par $(F\times_H G)(T):=F(T)\times_{H(T)}G(T)$ est également représentable par le schéma $X \times_Z Y$; il est muni naturellement d'une structure de schéma en groupes et il satisfait la propriété universelle du produit fibré de $F$ et $G$ au-dessus de $H$ dans la catégorie des schémas en groupes. Si $X$, $Y$ et $Z$ sont affines sur $S$, alors il en est de même de $X \times_Z Y$.
\end{lemm}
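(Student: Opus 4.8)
Le plan est de ramener tout l'\'enonc\'e \`a la propri\'et\'e universelle du produit fibr\'e de sch\'emas combin\'ee au lemme de Yoneda. Le produit fibr\'e $X \times_Z Y$ existe toujours dans la cat\'egorie des $S$-sch\'emas, et sa d\'efinition m\^eme fournit, pour tout $S$-sch\'ema $T$, une bijection $\Hom_S(T, X\times_Z Y) \cong \Hom_S(T,X) \times_{\Hom_S(T,Z)} \Hom_S(T,Y)$, fonctorielle en $T$. Comme $F$, $G$, $H$ sont repr\'esent\'es par $X$, $Y$, $Z$, et que les morphismes $f$ et $g$ correspondent par Yoneda \`a des morphismes de sch\'emas $X \to Z$ et $Y \to Z$, le terme de droite n'est autre que $F(T) \times_{H(T)} G(T)$. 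Donc $F \times_H G$ est repr\'esentable par $X \times_Z Y$.

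Pour la structure de groupe et la propri\'et\'e universelle, je me place au niveau des foncteurs de points. Comme $F$, $G$, $H$ sont \`a valeurs dans les groupes et que $f$, $g$ sont des morphismes de foncteurs en groupes, le produit fibr\'e ensembliste $F(T) \times_{H(T)} G(T)$ est un sous-groupe de $F(T) \times G(T)$, fonctoriellement en $T$: c'est le produit fibr\'e dans la cat\'egorie des groupes. Ainsi $F \times_H G$ est naturellement un $S$-foncteur en groupes. Le foncteur de points des $S$-sch\'emas en groupes vers les pr\'efaisceaux en groupes \'etant pleinement fid\`ele, le transport de cette structure via l'isomorphisme du paragraphe pr\'ec\'edent munit $X \times_Z Y$ d'une unique structure de $S$-sch\'ema en groupes rendant les projections vers $X$ et $Y$ des morphismes de groupes. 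La propri\'et\'e universelle s'en d\'eduit de m\^eme: \'etant donn\'e un $S$-sch\'ema en groupes $W$ avec des morphismes de groupes $W \to X$ et $W \to Y$ dont les compos\'es vers $Z$ sont \'egaux, on obtient sur les points un unique morphisme de groupes $W(T) \to F(T)\times_{H(T)} G(T)$, d'o\`u par repr\'esentabilit\'e un unique morphisme de sch\'emas en groupes $W \to X \times_Z Y$ compatible aux projections.

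Il reste le cas affine. Si $X$, $Y$, $Z$ sont affines sur $S$, alors $X \to S$ est affine et $Z \to S$, \'etant affine, est s\'epar\'e; par la propri\'et\'e d'annulation des morphismes affines (si $g \circ f$ est affine et $g$ s\'epar\'e, alors $f$ est affine), le morphisme $X \to Z$ est affine. Son changement de base $X \times_Z Y \to Y$ l'est donc aussi, et en le composant avec le morphisme affine $Y \to S$ on conclut que $X \times_Z Y \to S$ est affine. On peut aussi, via la proposition \ref{reprLocal_prop}, se ramener au cas o\`u $S = \Spec(R)$ est affine, auquel cas $X = \Spec(A)$, $Y = \Spec(B)$, $Z = \Spec(C)$ donnent directement $X \times_Z Y = \Spec(A \otimes_C B)$. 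L'\'enonc\'e \'etant pour l'essentiel une traduction par Yoneda de faits standard sur les sch\'emas, je ne m'attends pas \`a de v\'eritable difficult\'e; le seul point demandant un soin particulier est cette assertion d'affinit\'e, qui repose sur le lemme d'annulation ci-dessus plut\^ot que sur la seule stabilit\'e par changement de base.
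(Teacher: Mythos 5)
Votre preuve est correcte et suit essentiellement la seule voie naturelle : le texte se contente de citer \cite[I, (3.2.6) et (3.4.3.2)]{ega} pour l'existence du produit fibr\'e de sch\'emas et laisse le reste au lecteur, et vos arguments (identification du foncteur de points par la propri\'et\'e universelle, transport de la structure de groupe et de la propri\'et\'e universelle via la pleine fid\'elit\'e de Yoneda, affinit\'e par annulation ou par r\'eduction au cas $\Spec(A\otimes_C B)$) sont pr\'ecis\'ement les d\'etails standards ainsi omis.
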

\begin{proof}
Voir \cite[I, (3.2.6) et (3.4.3.2)]{ega} pour l'existence du produit. Le reste est laissé au lecteur.
\end{proof}

\begin{rema} \label{imageinvnoyau_rema}
Un cas particulier du lemme précédent est le cas où $g:G \to H$ est l'inclusion d'un sous-groupe. On obtient alors le schéma en groupes \emph{image inverse de $G$ par $f$}. Si $G$ est le groupe trivial, on obtient le schéma noyau de $f$, noté $\ker(f)$.
\end{rema}

\begin{prop} \label{reprschemas_prop}
Si $T \to S$ est un morphisme fidèlement plat et quasi-compact et si $X_0$ est un $S$-schéma, alors toute forme de $X_0$ déployée sur $T$ est encore un $S$-schéma si $X_0$ est affine sur $S$.
\end{prop}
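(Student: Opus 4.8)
Le plan est de ramener l'\'enonc\'e au th\'eor\`eme classique de descente fid\`element plate des modules quasi-coh\'erents. Je commencerais par me ramener au cas o\`u $S$ et $T$ sont affines. Pour la base: une forme $X$ de $X_0$ est en particulier un faisceau pour la topologie de Zariski, et la repr\'esentabilit\'e \'etant Zariski-locale sur $S$ par la premi\`ere partie de la proposition \ref{reprLocal_prop}, il suffit de traiter le cas $S=\Spec(R)$. Pour le recouvrement: puisque $T\to S$ est quasi-compact et $S$ affine, on peut recouvrir $T$ par un nombre fini d'ouverts affines, dont la somme disjointe $T'=\Spec(R')$ est affine et encore fid\`element plate quasi-compacte sur $S$; comme $X_{T'}\simeq (X_0)_{T'}$, la forme $X$ est aussi d\'eploy\'ee sur $T'$ et l'on peut remplacer $T$ par $T'$. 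On est ainsi ramen\'e \`a une extension fid\`element plate d'anneaux $R\to R'$.

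Ensuite, je fabriquerais la donn\'ee de descente. Comme $X_0$ est affine sur $S$, \'ecrivons $X_0=\Spec(A_0)$ pour une $R$-alg\`ebre $A_0$, de sorte que $(X_0)_T=\Spec(A_0\otimes_R R')$. La forme $X$ \'etant d\'eploy\'ee sur $T$, on dispose d'un isomorphisme de faisceaux $\phi\colon X_T\isoto (X_0)_T$ (d\'ef. \ref{formes_defi}). En tirant $\phi$ en arri\`ere par les deux projections $p_1,p_2\colon T\times_S T\to T$ et en composant, on obtient l'isomorphisme
$$\psi=(p_2^*\phi)\circ(p_1^*\phi)^{-1}\colon p_1^*(X_0)_T\isoto p_2^*(X_0)_T,$$
qui, $\phi$ \'etant un isomorphisme sur $T$ tout entier, v\'erifie automatiquement la condition de cocycle sur $T\times_S T\times_S T$. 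Le couple $\big((X_0)_T,\psi\big)$ est donc une donn\'ee de descente pour le recouvrement $T\to S$, c'est-\`a-dire une donn\'ee de descente sur la $R'$-alg\`ebre $A_0\otimes_R R'$.

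Le point d\'ecisif sera alors l'effectivit\'e de cette donn\'ee, qui r\'esulte du th\'eor\`eme de descente fid\`element plate des modules quasi-coh\'erents (voir \cite[Exp. VIII]{sga3}) appliqu\'e \`a $A_0\otimes_R R'$ munie de $\psi$: il existe une $R$-alg\`ebre $A$ et un isomorphisme $A\otimes_R R'\cong A_0\otimes_R R'$ transportant la donn\'ee de descente canonique sur $\psi$. Je poserais $X'=\Spec(A)$, un $S$-sch\'ema affine. Pour conclure que $X'$ repr\'esente $X$, on remarque que $X$ et $X'$ sont deux faisceaux (le second parce qu'il est repr\'esentable, donc un \fpqc-faisceau) qui deviennent isomorphes sur $T$ par un isomorphisme compatible aux deux donn\'ees de descente; comme le faisceau des morphismes descend (le point \ref{faisceauchamp_item} de la d\'efinition d'un champ appliqu\'e \`a $\CatFais_{/S}$), cet isomorphisme se recolle en $X\simeq X'$ sur $S$. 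Ainsi $X$ est repr\'esentable, et m\^eme affine sur $S$, ce qui est l\'eg\`erement plus fort que l'\'enonc\'e. La seule r\'eelle difficult\'e est concentr\'ee dans l'effectivit\'e de la descente, unique endroit o\`u la platitude fid\`ele intervient de mani\`ere essentielle; tout le reste est formel une fois les d\'efinitions d\'epli\'ees.
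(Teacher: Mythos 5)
Votre preuve est correcte et suit essentiellement la m\^eme voie que le texte, qui se contente de renvoyer \`a \cite[Exp. VIII, Lemme 1.7.2 (ii)]{sga3} en signalant que toute forme est canoniquement munie d'une donn\'ee de descente : vous ne faites que d\'eplier cette r\'ef\'erence (r\'eduction au cas o\`u $S$ et $T$ sont affines, construction du cocycle \`a partir de la trivialisation, effectivit\'e de la descente fid\`element plate pour les alg\`ebres quasi-coh\'erentes, puis recollement de l'isomorphisme $X\simeq X'$ par la condition faisceautique sur les $\Hom$). Votre conclusion, l\'eg\`erement plus forte, que la forme est affine sur $S$ est bien celle du lemme cit\'e.
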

\begin{proof}
Voir \cite[Exp. VIII, Lemme 1.7.2 (ii)]{sga3} (toute forme est canoniquement munie d'une donnée de descente).
\end{proof}

\subsection{Quelques schémas fondamentaux} \label{schemasFond_sec}

Introduisons maintenant certains schémas qui nous serviront constamment par la suite. Rappelons que $\faisO_S$ est le $S$-schéma en anneaux tel que $\faisO_S(T)=\Gamma(T,\cO_{T})=\Gamma(T)$ et donc $\faisO_S(\Spec(R))=R$ pour tout schéma affine $\Spec(R)$ au dessus de $S$, avec addition et multiplication données par celles de $R$ (point \ref{anneaustruc_item} des exemples \ref{structures_exem}).  

\subsubsection{$\faisO_S$-modules localement libres de type fini}

Si $\cF$ est un $\cO_S$-module, et $T\to S$ un morphisme on note $\cF_{T}$ le $\cO_{T}$ module $\cF\otimes_{\cO_S}\cO_{T}$. Rappelons que le foncteur $\wW$ des $\cO_S$-modules vers les $\faisO_S$-modules est défini par
$$\wW(\cF)(T)=\Gamma(T,\cF_{T}).$$
\begin{prop} \label{Wprop_prop}
Le foncteur $\wW$ possède les propriétés suivantes:
\begin{enumerate}
\item \label{Wpf_item} Il est pleinement fidèle.
\item \label{Wchmtbase_item} Il est compatible au changement de base: $\wW(\cF\otimes_{\cO_S} \cO_{T})=\wW(\cF)_{T}$.
\item \label{Wloclib_item} Il envoie les faisceaux localement libres de type fini au sens de Zariski (resp. étale) vers les faisceaux localement libres de type fini pour la topologie de Zariski (resp. étale) sur $S$.
\end{enumerate}
\end{prop}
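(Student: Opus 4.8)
La proposition affirme trois propriétés du foncteur $\wW$ qui envoie un $\cO_S$-module $\cF$ sur le $\faisO_S$-module $T \mapsto \Gamma(T,\cF_T)$. Je prévois de traiter les trois points séparément, le point \eqref{Wchmtbase_item} sur le changement de base étant le plus facile et servant d'outil pour les deux autres.

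**Changement de base (point \eqref{Wchmtbase_item}).** Je commencerais par ce point, purement formel. Pour $\cF$ un $\cO_S$-module et $T\to S$, il faut comparer les deux $T$-foncteurs de points $\wW(\cF\otimes_{\cO_S}\cO_T)$ et $\wW(\cF)_T$. Par la définition de la restriction d'un $S$-foncteur donnée dans l'excerpt (après \ref{Sfais_defi}), pour tout $T'\to T$ on a $\wW(\cF)_T(T')=\wW(\cF)(T')=\Gamma(T',\cF_{T'})$, où $\cF_{T'}=\cF\otimes_{\cO_S}\cO_{T'}$. D'autre part $\wW(\cF\otimes_{\cO_S}\cO_T)(T')=\Gamma(T',(\cF\otimes_{\cO_S}\cO_T)\otimes_{\cO_T}\cO_{T'})$. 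La transitivité du produit tensoriel, c'est-à-dire l'isomorphisme canonique $(\cF\otimes_{\cO_S}\cO_T)\otimes_{\cO_T}\cO_{T'}\cong \cF\otimes_{\cO_S}\cO_{T'}$, fournit l'égalité cherchée, fonctoriellement en $T'$.

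**Pleine fidélité (point \eqref{Wpf_item}).** C'est le c\oe{}ur de la proposition et je m'attends à ce que ce soit l'obstacle principal. Il s'agit de montrer que $\Hom_{\cO_S}(\cF,\cG)\to \Hom_{\faisO_S}(\wW(\cF),\wW(\cG))$ est bijectif. Pour l'\emph{injectivité}, un morphisme $u:\cF\to\cG$ induit en particulier l'application $\Gamma(S,\cF)\to\Gamma(S,\cG)$ sur la section $T=S$, mais cela ne suffit pas à reconstruire $u$ en général; je me ramènerais plutôt, grâce au point \eqref{Wchmtbase_item}, à tester sur les ouverts affines $T=\Spec(A)$ d'un recouvrement de $S$, où $\wW(\cF)(T)=\cF(T)$ est le $A$-module des sections et où un morphisme de $\faisO_S$-modules redonne exactement un morphisme de $A$-modules. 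Pour la \emph{surjectivité}, étant donné un morphisme de $\faisO_S$-modules $\phi:\wW(\cF)\to\wW(\cG)$, j'utiliserais qu'il est en particulier $\faisO_S$-linéaire sur chaque ouvert affine $\Spec(A)$, donc donné par un morphisme de $A$-modules $\cF(\Spec A)\to\cG(\Spec A)$; la fonctorialité en $T$ (compatibilité aux restrictions) garantit que ces morphismes locaux se recollent en un morphisme de faisceaux de $\cO_S$-modules $\cF\to\cG$, dont $\wW$ redonne $\phi$ par construction. La difficulté technique réside dans le fait que $\cF$ n'est supposé ni localement libre ni quasi-cohérent pour ce point précis : il faut donc argumenter au niveau des faisceaux sur le petit site de Zariski et non via des présentations, en exploitant que la catégorie des $\cO_S$-modules et celle de leurs foncteurs de sections s'identifient via la donnée des sections sur les ouverts et des applications de restriction.

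**Préservation du local-libre (point \eqref{Wloclib_item}).** Ce dernier point se déduit des deux précédents. Si $\cF$ est localement libre de type fini pour la topologie de Zariski (resp. étale), il existe un recouvrement $(S_i)$ (par des ouverts, resp. par des morphismes étales) tel que $\cF_{S_i}\cong \cO_{S_i}^{n}$. En appliquant le point \eqref{Wchmtbase_item}, on obtient $\wW(\cF)_{S_i}=\wW(\cF_{S_i})\cong\wW(\cO_{S_i}^{n})$, et $\wW$ commutant visiblement aux sommes directes finies (car $\Gamma(T,-)$ le fait), cela vaut $\wW(\cO_{S_i})^{n}=(\faisO_{S_i})^{n}$. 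Le $\faisO_S$-module $\wW(\cF)$ est donc localement libre de rang $n$ pour la même topologie, ce qui conclut. Le cas étale utilise seulement que le recouvrement trivialisant $\cF$ est un recouvrement étale au sens de la prétopologie considérée, et que tout ce formalisme est stable par ce type de changement de base.
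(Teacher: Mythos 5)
Votre traitement des points \eqref{Wchmtbase_item} et \eqref{Wloclib_item} est correct et co\"{\i}ncide avec celui du texte (qui d\'eduit aussi \eqref{Wloclib_item} de \eqref{Wchmtbase_item}); pour les points \eqref{Wpf_item} et \eqref{Wchmtbase_item}, le texte se contente en fait de renvoyer \`a \cite[Exp. I, prop. 4.6.2]{sga3}, tandis que vous proposez une preuve directe. C'est l\'egitime, mais votre argument de pleine fid\'elit\'e laisse pr\'ecis\'ement ouverte l'\'etape non triviale. Pour la surjectivit\'e, vous reconstruisez bien un morphisme de faisceaux $u:\cF\to\cG$ \`a partir des composantes de $\phi$ sur les ouverts (affines) de $S$, puis vous affirmez que \og $\wW$ redonne $\phi$ par construction \fg. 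Or $\phi$ est une transformation naturelle index\'ee par \emph{tous} les $S$-sch\'emas $T\to S$, pas seulement par les immersions ouvertes; il reste donc \`a montrer que $\phi_T=\Gamma(T,u_T)$ pour $T\to S$ arbitraire. Vous nommez cette difficult\'e (\og argumenter au niveau des faisceaux sur le petit site de Zariski \fg) mais ne la r\'esolvez pas: identifier les $\cO_S$-modules \`a leurs sections sur les ouverts ne dit rien, en soi, sur la d\'etermination d'un morphisme de $\faisO_S$-modules d\'efini sur le gros site par sa restriction au petit site.

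L'ingr\'edient manquant est le suivant: pour $f:T\to S$, toute section de $\cF_T=f^{-1}\cF\otimes_{f^{-1}\cO_S}\cO_T$ est, localement sur $T$, une combinaison $\cO_T$-lin\'eaire finie $\sum_i a_i\cdot (s_i)_{|V}$ d'images de sections $s_i\in\Gamma(U,\cF)$ sur des ouverts $U$ de $S$ (c'est la d\'efinition m\^eme de l'image inverse comme faisceautis\'e). La naturalit\'e de $\phi$ relativement aux morphismes $V\to U$ et sa $\faisO$-lin\'earit\'e forcent alors $\phi_V\bigl(\sum_i a_i (s_i)_{|V}\bigr)=\sum_i a_i\,\bigl(\phi_U(s_i)\bigr)_{|V}=\Gamma(V,u_V)\bigl(\sum_i a_i (s_i)_{|V}\bigr)$, et on conclut par la propri\'et\'e de faisceau de $\cG_T$. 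C'est ce lemme de \og g\'en\'eration locale par les sections tir\'ees de $S$ \fg\ qui fait fonctionner \`a la fois l'unicit\'e et le fait que $\wW(u)=\phi$; sans lui, votre surjectivit\'e ne produit qu'un morphisme co\"{\i}ncidant avec $\phi$ sur les ouverts de $S$. Une fois cet ingr\'edient ajout\'e, votre preuve est compl\`ete et a l'avantage, par rapport \`a la simple citation du texte, de ne pas supposer $\cF$ quasi-coh\'erent.
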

\begin{proof}
Pour les points \ref{Wpf_item} et \ref{Wchmtbase_item}, voir \cite[Exp. I, prop. 4.6.2]{sga3}. Le point \ref{Wloclib_item} est une conséquence immédiate du point \ref{Wchmtbase_item}.
\end{proof}

\begin{prop} \label{loclibre_prop}
Soit $M$ un foncteur de points en $\faisO_S$-modules. Les conditions suivantes sont équivalentes.
\begin{enumerate}
\item \label{zarzar_item} $M$ est un faisceau Zariski et il est Zariski-localement isomorphe à un $\faisO_S$-module libre de type fini. 
\item \label{Wzarlib_item} $M\simeq \wW(\cM)$ où $\cM$ est localement libre de type fini. 
\item \label{etzar_item} $M$ est un faisceau étale, et il est Zariski-localement isomorphe à un $\faisO_S$-module libre de type fini.
\item \label{etet_item} $M$ est un faisceau étale, et il est étale-localement isomorphe à un $\faisO_S$-module libre de type fini.
\item \label{Wetlib_item} $M\simeq \wW(\cM)$ où $\cM$ est étale-localement libre de type fini. 
\end{enumerate}
\end{prop}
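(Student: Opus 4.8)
Le plan est de d\'emontrer le cycle d'implications \ref{zarzar_item} $\Rightarrow$ \ref{Wzarlib_item} $\Rightarrow$ \ref{etzar_item} $\Rightarrow$ \ref{etet_item} $\Rightarrow$ \ref{Wetlib_item} $\Rightarrow$ \ref{zarzar_item}. Deux outils interviennent de mani\`ere r\'ep\'et\'ee. D'une part, la pleine fid\'elit\'e de $\wW$ et sa compatibilit\'e au changement de base (proposition \ref{Wprop_prop}, points \ref{Wpf_item} et \ref{Wchmtbase_item}), qui permettent de relever tout isomorphisme entre $\faisO$-modules de la forme $\wW(\cO^n)$ en un unique isomorphisme entre les $\cO$-modules libres correspondants, la fid\'elit\'e garantissant au passage la condition de cocycle. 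D'autre part, la descente fpqc des modules quasi-coh\'erents, qui fournit \`a la fois le fait que $\wW(\cM)$ est un faisceau fpqc (donc \'etale et Zariski) pour tout $\cM$ quasi-coh\'erent, et le fait qu'un $\cO$-module muni d'une donn\'ee de descente se recolle en un $\cO_S$-module.

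Pour \ref{zarzar_item} $\Rightarrow$ \ref{Wzarlib_item} (resp. \ref{etet_item} $\Rightarrow$ \ref{Wetlib_item}), je choisirais un recouvrement de Zariski (resp. \'etale) $(S_i)$ trivialisant $M$, de sorte que $M_{S_i} \simeq \faisO_{S_i}^{n_i} = \wW(\cO_{S_i}^{n_i})$. Sur les intersections $S_{ij} = S_i \times_S S_j$, les isomorphismes de transition de $M$ proviennent, par la proposition \ref{Wprop_prop}, d'uniques isomorphismes de $\cO_{S_{ij}}$-modules libres v\'erifiant la condition de cocycle; la descente des modules quasi-coh\'erents (simple recollement pour Zariski, descente \'etale/fppf dans l'autre cas) fournit un $\cO_S$-module $\cM$, localement libre de type fini pour la topologie voulue. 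Comme $M$ et $\wW(\cM)$ sont deux faisceaux isomorphes localement de mani\`ere compatible, on obtient $M \simeq \wW(\cM)$. L'implication \ref{Wzarlib_item} $\Rightarrow$ \ref{etzar_item} r\'esulte de ce que $\wW(\cM)$ est un faisceau \'etale (par la descente rappel\'ee ci-dessus) et, par la proposition \ref{Wprop_prop} point \ref{Wloclib_item}, est Zariski-localement isomorphe \`a un $\faisO_S$-module libre; enfin \ref{etzar_item} $\Rightarrow$ \ref{etet_item} est imm\'ediat, puisque tout recouvrement de Zariski est un recouvrement \'etale.

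Le point d\'elicat est \ref{Wetlib_item} $\Rightarrow$ \ref{zarzar_item}. Que $M \simeq \wW(\cM)$ soit un faisceau Zariski est automatique; tout le contenu est de montrer qu'un $\cO_S$-module $\cM$ \emph{\'etale-localement} libre de type fini est d\'ej\`a \emph{Zariski-localement} libre de type fini. En se ramenant \`a $S$ affine, un recouvrement \'etale fini donne un morphisme fid\`element plat et quasi-compact $S' \to S$ tel que $\cM_{S'}$ soit libre de type fini; $\cM$ est alors de pr\'esentation finie et plat, ces deux propri\'et\'es descendant le long de $S' \to S$, et un module de pr\'esentation finie et plat est Zariski-localement libre. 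C'est l\`a le point technique central. Une fois $\cM$ reconnu Zariski-localement libre de type fini, la proposition \ref{Wprop_prop} point \ref{Wloclib_item} assure que $\wW(\cM) = M$ est Zariski-localement isomorphe \`a un $\faisO_S$-module libre, ce qui ferme le cycle.
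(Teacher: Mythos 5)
Votre démonstration est correcte, mais elle suit un chemin sensiblement différent de celui du texte. Vous organisez les équivalences en un unique cycle et, pour les implications \ref{zarzar_item} $\Rightarrow$ \ref{Wzarlib_item} et \ref{etet_item} $\Rightarrow$ \ref{Wetlib_item}, vous recollez directement les $\cO$-modules libres locaux au moyen de la pleine fidélité de $\wW$ (qui relève les isomorphismes de transition et leur condition de cocycle) et de la descente (fpqc) des modules quasi-cohérents; c'est aussi cette descente qui vous donne que $\wW(\cM)$ est un faisceau étale. Le texte, lui, ne travaille pas du côté des $\cO$-modules mais du côté des foncteurs: il invoque la proposition \ref{reprLocal_prop} pour conclure qu'un faisceau Zariski (resp. étale) localement représentable par des schémas affines sur $S$ est représentable, ce qui fournit d'un coup $M \simeq \wW(\cM)$ (la structure de module étant transportée par pleine fidélité) et le fait que $M$ est un faisceau étale (tout foncteur représentable l'est). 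Le point technique central est le même dans les deux cas: un $\cO_S$-module étale-localement libre de type fini est Zariski-localement libre de type fini; le texte le cite sous la forme \cite[IV 2.5.2]{ega} pour prouver \ref{Wetlib_item} $\Rightarrow$ \ref{Wzarlib_item}, tandis que vous le redémontrez (platitude et présentation finie descendent le long d'un morphisme fidèlement plat quasi-compact, et plat $+$ présentation finie $=$ localement libre) pour établir \ref{Wetlib_item} $\Rightarrow$ \ref{zarzar_item}. Votre version est plus autonome sur la partie modules mais suppose acquise la descente fpqc des quasi-cohérents; celle du texte tire parti des lemmes de représentabilité déjà établis en section \ref{representabilite_sec} et évite de manipuler explicitement les données de descente.
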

\begin{proof}
Les implications \ref{etzar_item} $\implies$ \ref{zarzar_item} et \ref{etzar_item} $\implies$ \ref{etet_item} sont évidentes, ainsi que \ref{Wzarlib_item} $\implies$ \ref{zarzar_item} et \ref{Wetlib_item} $\implies$ \ref{etet_item} car $\wW$ commute au changement de base. Pour \ref{zarzar_item} $\implies$ \ref{Wzarlib_item}, il suffit de voir qu'un tel faisceau Zariski est représentable par la proposition \ref{reprLocal_prop} et parce que $\faisO_S^n$ l'est, et la structure de module est transportée car $\wW$ est pleinement fidèle. 
De même, \ref{etet_item} $\implies$ \ref{Wetlib_item} par la même proposition, car les schémas représentant étale-localement $M$ sont affines sur $S$. 
Toujours par représentabilité, on a \ref{Wzarlib_item} $\implies$ \ref{etzar_item} (tout foncteur représentable est un faisceau étale). 
Enfin, \ref{Wetlib_item} $\implies$ \ref{Wzarlib_item} par descente sur $\cM$ (voir \cite[IV 2.5.2]{ega}).
\end{proof}

\begin{defi} \label{loclibre_defi}
Nous appellerons simplement $\faisO_S$-module localement libre de type fini un foncteur de points en $\faisO_S$-modules qui vérifie les conditions équivalentes de la proposition précédente. Son rang est alors défini comme on le devine, et il est localement constant et fini.
\end{defi}
La proposition \ref{loclibre_prop} dit alors en particulier:
\begin{coro} \label{Wequivloclibres_coro}
Le foncteur $\wW$ définit une équivalence de catégories des $\cO_S$-modules localement libres de type fini vers les $\faisO_S$-modules localement libres de type fini.
\end{coro}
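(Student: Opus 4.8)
The plan is to deduce the statement formally from the two preceding results, since all the genuine content has already been placed there. An equivalence requires three things of $\wW$ restricted to the locally free objects: that it takes values in the correct target category, that it is fully faithful, and that it is essentially surjective. First I would check well-definedness. If $\cM$ is an $\cO_S$-module locally free of finite type, then point \ref{Wloclib_item} of Proposition \ref{Wprop_prop} already tells us $\wW(\cM)$ is locally free of finite type for the Zariski (resp. étale) topology; equivalently, $\wW(\cM)$ satisfies condition \ref{Wzarlib_item} of Proposition \ref{loclibre_prop} tautologically (witnessed by $\cM$ itself), hence is an $\faisO_S$-module locally free of finite type in the sense of Definition \ref{loclibre_defi}. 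So $\wW$ does restrict to a functor between the two categories in play.

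For full faithfulness, there is nothing new to prove: point \ref{Wpf_item} of Proposition \ref{Wprop_prop} asserts that $\wW$ is fully faithful as a functor on \emph{all} $\cO_S$-modules, and both categories under consideration are full subcategories (of the $\cO_S$-modules and of the $\faisO_S$-modules respectively). Restricting a fully faithful functor to a full subcategory of its source keeps it fully faithful, so the restricted functor inherits the property directly.

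Essential surjectivity is exactly where Proposition \ref{loclibre_prop} carries the weight. By Definition \ref{loclibre_defi}, an $\faisO_S$-module locally free of finite type is a functor of points in $\faisO_S$-modules satisfying the equivalent conditions of that proposition; in particular it satisfies condition \ref{Wzarlib_item}, namely $M \simeq \wW(\cM)$ with $\cM$ locally free of finite type. Thus every object of the target category lies, up to isomorphism of $\faisO_S$-modules, in the essential image of $\wW$. Combining the three points yields the asserted equivalence of categories.

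The main point to stress is that there is no real obstacle at the level of the corollary itself: the substance is hidden in the equivalence of the five conditions of Proposition \ref{loclibre_prop}, which in turn rests on the representability results (Proposition \ref{reprLocal_prop}) and on faithfully flat descent. Since those are available to us, the corollary is a pure repackaging. The one subtlety I would keep in mind while writing is that essential surjectivity must be read as an isomorphism of $\faisO_S$-modules rather than merely of underlying functors of points, which is precisely how condition \ref{Wzarlib_item} (and the full faithfulness transporting the module structure) guarantees it.
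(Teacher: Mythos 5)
Votre démonstration est correcte et suit exactement la voie du texte : le papier énonce ce corollaire comme conséquence immédiate de la proposition \ref{loclibre_prop} (en particulier de la condition \ref{Wzarlib_item} pour la surjectivité essentielle) et de la pleine fidélité de $\wW$ donnée par la proposition \ref{Wprop_prop}, sans preuve séparée. Votre découpage en bonne définition, pleine fidélité et surjectivité essentielle ne fait qu'expliciter ce que le texte laisse implicite.
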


Si $S=\Spec(R)$, on a $\faisO_S(S)=R$ et donc la suite de foncteurs suivants 
$$\xymatrix{
R\text{-mod} \ar[r]^-{\tilde{(-)}}_-{\simeq} & \cO_S\text{-mod quasi-cohérents} \ar[r]^-{\wW} & \faisO_S-\text{mod}
}$$
où le premier est l'équivalence de catégories bien connue, et le second est pleinement fidèle. Il s'ensuit que si $R'$ est une extension de $R$ et qu'on pose $T=\Spec(R')$, on a alors pour tous modules $M$ et $N$ sur $R$, on a
\begin{equation} \label{Waffine_eq}
\begin{split}
\faisHom^{\faisO_S\mathrm{-mod}}_{\wW(\tilde{M}),\wW(\tilde{N})}(T) & = \Hom_{\faisO_{T}\mathrm{-mod}}(\wW(\tilde{M})_{T},\wW(\tilde{N})_{T}) \\
& = \Hom_{\faisO_{T}\mathrm{-mod}}(\wW(\tilde{M}_{T}),\wW(\tilde{N}_{T})) \\
& = \Hom_{\cO_{T}\mathrm{-mod}}(\tilde{M},\tilde{N}) \\ 
& = \Hom_{R'\mathrm{-mod}}(M_{R'},N_{R'})
\end{split}
\end{equation}
où le premier terme est défini en \ref{isoaut_nota}. Il en est de même pour $\faisEnd$, $\faisIso$ et $\faisAut$.

\begin{rema} \label{Walg_rema}
Le foncteur $\wW$ étant additif, il permet de transporter une structure de $\cO_S$-algèbre sur un module $\cA$ en une structure de $\faisO_S$-algèbre sur $\wW(\cA)$. Réciproquement, comme $\wW$ est pleinement fidèle, si un $\faisO_S$-module $\wW(\cA)$ est muni d'une structure de $\faisO_S$-algèbre, elle provient d'une unique structure de $\cO_S$-algèbre sur $\cA$. Il en est de même pour le foncteur $\tilde{(-)}$ dans le cas affine.
\end{rema}

\begin{lemm} \label{homMN_lemm}
La flèche naturelle 
$$\Hom_{\faisO_S\mathrm{-mod}}(M,N)\to \Hom_{\faisO_S(S)\mathrm{-mod}}(M(S),N(S))$$ 
est un isomorphisme lorsque $M$ et $N$ sont libres ou lorsqu'ils sont localement libres de type fini et $S$ est affine.
\end{lemm}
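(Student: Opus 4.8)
The plan is to treat the two cases separately, reducing each to the identity map on a common $\Hom$-group, and in both cases the real content is an identification of the two sides followed by a compatibility check with evaluation at $S$.

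For the free case I would first reduce, by additivity of $\Hom$ in each variable, to $M=\faisO_S$ and $N$ arbitrary; note that the argument in fact only uses that $M$ is free. The key point is that $\faisO_S$ (the free module of rank one) co-represents the underlying module functor. Indeed, given a morphism $\phi:\faisO_S\to N$ of $\faisO_S$-modules, $\faisO_S(T)$-linearity forces $\phi_T(a)=a\cdot\phi_T(1_T)$ for every $T$ and every $a\in\faisO_S(T)=\Gamma(T)$, while naturality of $\phi$ together with the fact that the unit $1_T$ is the restriction of $1_S\in\Gamma(S)$ forces $\phi_T(1_T)=\phi_S(1_S)|_T$. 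Hence $\phi$ is determined by the single section $\phi_S(1_S)\in N(S)$, and conversely every element of $N(S)$ defines such a $\phi$; this gives a bijection $\Hom_{\faisO_S\text{-mod}}(\faisO_S,N)\isoto N(S)$ which is, by its very construction, evaluation at $S$ followed by the tautological identification $\Hom_{\faisO_S(S)\text{-mod}}(\faisO_S(S),N(S))\cong N(S)$. Reassembling the $m$ and $n$ free factors yields the claim for free modules over an arbitrary base $S$.

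For the locally free case with $S=\Spec(R)$ affine, I would invoke Corollary \ref{Wequivloclibres_coro} (together with the equivalence $\tilde{(-)}$ in the affine case) to write $M=\wW(\tilde{M_0})$ and $N=\wW(\tilde{N_0})$ for finitely generated projective $R$-modules $M_0,N_0$. Evaluating the chain of identifications \eqref{Waffine_eq} at $T=S$ (so $R'=R$) gives $\Hom_{\faisO_S\text{-mod}}(M,N)=\Hom_R(M_0,N_0)$, since $\Hom_{\faisO_S\text{-mod}}(M,N)=\faisHom^{\faisO_S\text{-mod}}_{M,N}(S)$ by Notation \ref{isoaut_nota}. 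On the other side, $M(S)=\Gamma(S,\tilde{M_0})=M_0$ and $N(S)=N_0$, while $\faisO_S(S)=R$, so the target is again $\Hom_R(M_0,N_0)$. To finish I would check that the natural map is the identity under these two identifications: by full faithfulness of $\wW\circ\tilde{(-)}$ a morphism $\phi$ corresponds to a unique $f:M_0\to N_0$ with $\phi=\wW(\tilde f)$, and then $\phi_S=\Gamma(S,\tilde f)=f$.

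The genuinely routine part is the two computations of the $\Hom$-groups; the step that needs care, and which I regard as the main obstacle, is verifying that the natural evaluation map coincides with the \emph{identity} under the identifications produced, rather than with some twist of it. In the free case this amounts to tracking that the co-representability isomorphism is literally $\phi\mapsto\phi_S(1_S)$, and in the affine case it reduces to the compatibility of full faithfulness of $\wW$ with global sections, namely $\wW(\tilde f)_S=\Gamma(S,\tilde f)$. Both become immediate once the definitions are unwound, so no serious difficulty remains.
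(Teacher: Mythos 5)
Votre preuve est correcte et suit essentiellement la m\^eme d\'emarche que celle du texte : dans le cas libre, on se ram\`ene par additivit\'e au fait que $\faisO_S$ co-repr\'esente le foncteur sous-jacent (le texte l'\'enonce sous la forme $\Hom_{\faisO_S\mathrm{-mod}}(\faisO_S,\faisO_S)=\faisO_S(S)$), et dans le cas affine on \'evalue la suite d'\'egalit\'es \eqref{Waffine_eq} en $T=S$. Votre v\'erification explicite que la fl\`eche naturelle co\"incide avec l'identit\'e sous ces identifications ne fait que d\'etailler ce que le texte laisse implicite.
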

\begin{proof}
Lorsque $M$ et $N$ sont libres, on utilise $\Hom_{\faisO_S\mathrm{-mod}}(\faisO_S,\faisO_S)=\faisO_S(S)=\Gamma(S,\cO_S)$. Dans le cas affine, on utilise la suite d'égalités \eqref{Waffine_eq}.
\end{proof}

On pose que $\faisM_{n,S}$ est la $\faisO_S$-algèbre $\faisEnd^{\faisO_S\mathrm{-mod}}_{\faisO_S^n}$ et par ce qui précède, on a bien
$$\faisM_{n,S}(T) = \setM_n(\Gamma(T)).$$
Ce foncteur de points est représentable par un schéma affine sur $S$ comme un produit de $n^2$ copies de $\faisO$, et les morphismes définissant sa structure de $\faisO$-algèbre sont ceux qu'on imagine. Plus généralement, lorsque $M$ est un $\faisO_S$-module localement libre de type fini, le foncteur de points $\faisEnd^{\faisO_S\mathrm{-mod}}_{M}$ est représentable par un schéma affine sur $S$ par \ref{reprLocal_prop}, car c'est un faisceau Zariski puisque $M$ en est un, et il est localement isomorphe à $\faisM_{n,S}$.
\begin{defi} \label{matrices_defi}
Pour un $\faisO_S$-module localement libre de type fini $M$, on note $\faisEnd_{M}$ le schéma en $\faisO_S$-algèbres qui vérifie 
$$\faisEnd_{M}(T)=\setEnd_{\faisO_{T}\mathrm{-mod}}(M_{T})$$ 
pour tout $T\to S$.
En particulier, si $M=\faisO_S^n$, on a $\faisEnd_{M}=\faisM_{n,S}$ avec
$$\faisM_{n,S}(T)=\setM_n(\Gamma(T))$$
pour tout $T\to S$.
On supprime $S$ de toutes ces notations lorsqu'il n'y a pas d'ambiguïté. 
\end{defi}

Considérons la $S$-catégorie fibrée $\Vect$ (resp. $\Vecn{n}$) dont la fibre en $T$ est la catégorie des $\faisO_T$-modules localement libres de type fini (resp. de rang constant $n$). 
\begin{prop} \label{VecChamp_prop}
Les catégories fibrées $\Vect$ et $\Vecn{n}$ sont des champs (étales ou \fppf).
\end{prop}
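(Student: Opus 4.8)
The plan is to realize $\Vect$ and $\Vecn{n}$ as full fibered subcategories of the stack of $\faisO_S$-modules and to deduce each of the two stack axioms of Definition~\ref{SChamp_defi} from results already at our disposal. Recall that the fibered category of $\faisO_S$-modules is nothing but $\CatFais_{/S}^{\struc}$, where $\struc$ is the $\faisO_S$-module structure of example~\ref{structures_exem} \eqref{OSmod_item}; by Proposition~\ref{champstruc_prop} it is a stack. For each $T$, the fiber $\Vect_T$ (resp. $(\Vecn{n})_T$) is exactly the full subcategory of $\faisO_T$-modules that are locally free of finite type (resp. of constant rank $n$) in the sense of Definition~\ref{loclibre_defi}, and the base-change functors are the restrictions of those on $\faisO_S$-modules. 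So $\Vect$ and $\Vecn{n}$ are full fibered subcategories of $\CatFais_{/S}^{\struc}$.

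For the sheaf condition on the $\Hom$'s (point~\eqref{faisceauchamp_item} of Definition~\ref{SChamp_defi}) there is nothing new to prove. Since $\Vect$ and $\Vecn{n}$ are \emph{full} subcategories fiber by fiber, for locally free $x,y$ the functor $T'\mapsto \setHom_{\cC_{T'}}(x_{T'},y_{T'})$ coincides with the corresponding $\Hom$-functor computed in the ambient category of $\faisO_S$-modules, which is a sheaf precisely because that ambient fibered category is a stack.

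The substantive point is the descent of objects (point~\eqref{descente_item}). Given a descent datum $(x_i,\psi_{ij})$ relative to a covering $(T_i)_{i\in I}$ of $T$, with each $x_i$ locally free of finite type (resp. of rank $n$), I would first glue it \emph{in the ambient stack} of $\faisO_S$-modules, obtaining an $\faisO_T$-module $x$ together with isomorphisms $\lambda_i\colon x_i\isoto x_{T_i}$. It then remains to check that $x$ itself lies in $\Vect_T$ (resp. $(\Vecn{n})_T$), i.e. that being locally free of finite type (resp. of rank $n$) descends along the covering. Refining each $T_i$ by a Zariski cover trivializing $x_i$, the module $x$ becomes \emph{locally} free of finite type for the ambient topology, and one concludes as follows: in the étale case this is exactly the equivalence between conditions \eqref{etet_item}/\eqref{Wetlib_item} and \eqref{zarzar_item}/\eqref{Wzarlib_item} of Proposition~\ref{loclibre_prop}, applied over $T$; in the \fppf\ case one invokes the corresponding \fppf\ descent of finite locally free modules (faithfully flat descent of quasi-coherent modules together with the fact that finite projectivity is an \fppf-local property). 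The rank is bookkeeping: it is locally constant by Definition~\ref{loclibre_defi}, so if every $x_i$ has rank $n$ then so does $x$.

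The main obstacle is this last descent step in the \fppf\ topology, since Proposition~\ref{loclibre_prop} only records the Zariski/étale comparison; for \fppf\ one must either extend that proposition or cite the standard faithfully flat descent of finite locally free modules. Everything else --- the identification with $\CatFais_{/S}^{\struc}$, the automatic inheritance of the $\Hom$-sheaf condition from fullness, and the rank bookkeeping --- is formal.
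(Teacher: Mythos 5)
Votre démonstration est correcte et suit essentiellement la même voie que celle du texte : on invoque la proposition \ref{champstruc_prop} pour le champ ambiant des $\faisO_S$-modules, la condition faisceautique sur les $\Hom$ est héritée par plénitude, et il ne reste qu'à vérifier que la propriété d'être localement libre de type fini (et de rang constant $n$) descend. Le texte qualifie cette dernière vérification d'immédiate là où vous explicitez, à juste titre, que le cas \fppf\ requiert la descente fidèlement plate des modules finis localement libres, ce qui est un complément utile mais ne change pas la structure de l'argument.
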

\begin{proof}
Sans la restriction ``localement libre de type fini'', la catégorie fibrée obtenue est un champ par la proposition \ref{champstruc_prop} appliquée à l'exemple \ref{structures_exem} \eqref{OSmod_item}. Il suffit alors de voir que la propriété d'être localement libre de type fini descend ainsi que d'être de rang constant $n$, ce qui est immédiat. 
\end{proof}

\subsubsection{Groupe linéaire}

\begin{prop} \label{GLrepr_prop}
Soit $A$ une $\faisO_S$-algèbre unitaire et associative (non nécessairement commutative) qui est localement libre de type fini en tant que $\faisO_S$-module. Considérons le foncteur en groupes ``éléments inversibles''
$$T\mapsto A(T)^\times.$$
Ce foncteur est représentable par un schéma affine sur $S$. 
\end{prop}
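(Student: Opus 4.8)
The plan is to reduce invertibility in the possibly non-commutative algebra $A$ to invertibility of a single $\faisO_S$-linear endomorphism, namely left multiplication, and then to cut out the locus where an associated determinant is a unit. First I would record the key algebraic lemma: for any $S$-scheme $T$ and any $a \in A(T)$, the element $a$ is (two-sidedly) invertible in the ring $A(T)$ if and only if left multiplication $L_a \colon A_T \to A_T$, $x \mapsto ax$, is an automorphism of the $\faisO_T$-module $A_T$. The direction ($\Rightarrow$) is immediate since $L_{a^{-1}} = L_a^{-1}$. For ($\Leftarrow$), surjectivity of $L_a$ produces $b$ with $ab = 1$, and then $L_a(ba-1) = a(ba)-a = (ab)a - a = 0$, so injectivity of $L_a$ forces $ba = 1$; hence $a$ is a unit. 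This uses only associativity and no field hypothesis, so it holds over the arbitrary base $T$.

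For representability, $a \mapsto L_a$ defines a morphism of $S$-schemes $\lambda\colon A \to \faisEnd_A$ (the left regular representation), where both $A$ and $\faisEnd_A$ are representable and affine over $S$: the latter by Definition \ref{matrices_defi}, and $A = \wW(\cA)$ by the same Zariski-local argument (it is a Zariski sheaf locally isomorphic to $\faisO_S^n$, hence representable by an affine $S$-scheme via Proposition \ref{reprLocal_prop}). The determinant of an endomorphism of the locally free module $A$ gives a base-change-compatible morphism $\det\colon \faisEnd_A \to \faisO_S$, and an endomorphism is an automorphism exactly when its determinant lands in $\faisGmS{S} \subset \faisO_S$. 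Thus the subfunctor $U \subset \faisEnd_A$ of module-automorphisms of $A$ is the fibre product $\faisEnd_A \times_{\faisO_S} \faisGmS{S}$ along $\det$ and the inclusion, which is affine over $S$ since $\faisGmS{S}$ is. By the lemma, $\faisGL_{1,A} = \lambda^{-1}(U) = A \times_{\faisEnd_A} U$, and the fibre product of affine $S$-schemes is affine over $S$, which represents $\faisGL_{1,A}$. Concretely, after localizing so that $A \cong \faisO_S^n$ as a module (Proposition \ref{loclibre_prop}), $\det(L_a)$ is a homogeneous degree-$n$ polynomial $N$ in the coordinates of $a$ whose coefficients are the structure constants of $A$, and $\faisGL_{1,A}$ is the principal open $D(N)$ inside the affine space $A$, i.e. $\Spec R[x_1,\dots,x_n,t]/(tN - 1)$; these glue over $S$ again by Proposition \ref{reprLocal_prop}.

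The only genuinely non-formal point is the lemma above; once invertibility is expressed through $L_a$, everything else is the standard machinery of determinants, $\faisGmS{S}$, and fibre products of affine schemes already established (Proposition \ref{reprLocal_prop}, Lemma \ref{produit_fibre_lemm}). The remaining verifications are routine: $\faisGL_{1,A}$ is a sheaf because it is a subsheaf of $A$ whose local inverses are unique and therefore glue, and $\lambda$ and $\det$ commute with base change by their linear, respectively polynomial, description. I expect no real obstacle here; the one point to handle with care is that the statement is two-sided, so one must genuinely deduce $ba=1$ from $ab=1$ using injectivity of $L_a$ rather than assume it.
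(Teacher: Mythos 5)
Votre d�monstration est correcte et suit essentiellement la m�me strat�gie que celle du texte : r�duction au cas affine avec $A$ libre via le lemme de faisceau Zariski et la proposition \ref{reprLocal_prop}, puis description de $\faisGL_{1,A}$ comme l'ouvert principal de l'espace affine $A$ o� la norme $N=\det(L_a)$ est inversible, ce qui est exactement le $\Spec(\Symalg(B^\dual)[N^{-1}])$ du texte. La seule micro-variante est la justification du caract�re bilat�re de l'inversibilit� (vous appliquez l'injectivit� de $L_a$ � $ba-1$, l� o� le texte passe par la multiplication � droite), mais les deux arguments sont valables.
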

\begin{proof}
Il est facile de voir que ce foncteur est un faisceau pour la topologie de Zariski, en utilisant que $A$ en est un. Par \ref{reprLocal_prop}, on peut donc supposer que $S=\Spec(R)$, et par la remarque \ref{Walg_rema} que $A=\wW(\tilde{B})$ où $B$ est une $R$-algèbre libre de rang fini comme $R$-module. On peut donc définir une norme $N:B \to R$ en associant à un élément le déterminant de la multiplication à gauche par cet élément. Le foncteur de points de l'énoncé est alors représentable par le schéma affine $\Spec(\Symalg(B^\dual)[N^{-1}])$ où $\Symalg(B^\dual)$ est l'algèbre symétrique sur $B^\dual=\Hom_R(B,R)$ et $N$ est l'élément de $\Symalg^n(B^\vee)$ correspondant à la norme définie ci-dessus. En effet
$$\Hom_{R-\text{alg}}(\Symalg(B^\dual),R')=\Hom_{R-\text{mod}}(B^\dual,R')=\Hom_{R-\text{mod}}(B^\dual,R)\otimes_R R'= B_{R'}$$
où on a utilisé le fait que $B$ est libre pour les deux dernières égalités. Lorsqu'on remplace $\Symalg(B^\dual)$ par $\Symalg(B^\dual)[N^{-1}]$ à gauche, on obtient le sous-ensemble des morphismes qui envoie $N$ dans $R^\times$, et donc à droite $B_{R'}^\times$.
\footnote{Si la multiplication à gauche par $a$ est inversible, il a un inverse à droite, donc la multiplication à droite par $a$ a un déterminant inversible, et est donc inversible, donc $a$ a un inverse à gauche.}
\end{proof}

\begin{defi} \label{GLn_defi}
Soit $\faisGL_{1,A}$ le schéma en groupes défini par la proposition précédente. Lorsque $A=\faisM_{n,S}$, on utilise la notation $\faisGL_{n,S}=\faisGL_{1,\faisM_{n,S}}$, et on supprime $S$ de la notation lorsqu'il n'y a pas d'ambiguïté. Enfin, lorsque $M$ est un $\faisO_S$-module localement libre de type fini, on utilise également la notation $\faisGL_{M}$ pour $\faisGL_{1,\faisEnd(M)}$. 
\end{defi}

\begin{defi} \label{Gm_defi}
Le schéma en groupes $\faisGmS{S}$, ou $\faisGm$, est par définition $\faisGL_{1,S}$.
\end{defi}
On a donc $\faisGm(T)=\Gamma(T)^\times$.

\begin{defi} \label{mun_defi}
Soit $\faismu_{n,S}$, ou $\faismu_n$, le noyau de l'application élévation à la puissance $n$ du schéma en groupes $\faisGmS{S}$ vers lui-même. 
\end{defi}
Pour tout schéma $T$ sur $S$, on a donc $\faismu_n(T)=\{u\in \Gamma(T)^\times \text{ t.q. } u^n=1\}$. De plus, il est facile de voir que sur une base affine $S=\Spec(R)$, le schéma en groupes $\faismu_n$ est représenté par $\Spec(R[x]/(x^n-1))$. Il est donc représentable par la proposition \ref{reprLocal_prop}.
C'est un cas particulier de groupe diagonalisable, voir \cite[Exp. VIII]{sga3} pour plus de détails, et il est lisse si et seulement si $n$ est premier aux caractéristiques résiduelles de $S$, voir \cite[Exp. VIII, prop. 2.1]{sga3}.

\subsubsection{Torseurs sous $\faisGL_n$, $\faisGm$ et $\faismu_n$.} \label{GLGmmunTors_sec}

Notons que par définition, $\faisGL_{n,S}=\faisAut_{\faisO_S^n}$ et $\faisGL_{M}=\faisAut_M$, où $\faisO_S^n$ et $M$ sont des objets du champ $\Vect$.

Les propositions \ref{tordusformes_prop} et \ref{loclibre_prop} et impliquent immédiatement: 
\begin{prop} \label{GLntors_prop}
Le foncteur $M \mapsto \faisIso_{\faisO_S^n,M}$ définit une équivalence de catégories fibrées 
$$(\Vecn{n})_{\grpd} \isoto \Tors{\faisGL_n}$$
du champ en groupoïdes des modules localement libres de rang constant $n$ vers le champ en groupoïdes des $\faisGL_n$-torseurs pour la topologie Zariski ou étale. En particulier, lorsque $n=1$, cela définit une équivalence de catégories fibrées 
$$(\Vecn{1})_\grpd \isoto \Tors{\faisGm}$$
des $\faisO_S$-modules inversibles vers les $\faisGm$-torseurs. 
\end{prop}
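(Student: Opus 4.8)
The plan is to obtain both statements by feeding the two cited propositions into each other, with essentially no extra work beyond an identification of categories. First I would recall the tautological fact, noted just above the statement, that $\faisGL_{n,S}=\faisAut_{\faisO_S^n}$, where $\faisO_S^n$ is regarded as an object of the stack $\Vect$ (equivalently of the stack of all $\faisO_S$-modules, since we will see that forms of $\faisO_S^n$ automatically land in $\Vect$). Taking $X_0=\faisO_S^n$ in Proposition \ref{tordusformes_prop}, I immediately get that the two functors
$$P \mapsto P \contr{\faisGL_n} \faisO_S^n \qquad\text{and}\qquad M \mapsto \faisIso_{\faisO_S^n,M}$$
are mutually inverse equivalences of fibered categories between $\Tors{\faisGL_n}$ and $(\Formes{\faisO_S^n})_{\grpd}$. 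Since the second of these is exactly the functor appearing in the statement, the entire content reduces to identifying $(\Formes{\faisO_S^n})_{\grpd}$ with $(\Vecn{n})_{\grpd}$.

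For that identification I would invoke Proposition \ref{loclibre_prop}. By Definition \ref{formes_defi}, a form of $\faisO_S^n$ is a $\faisO_S$-module which is locally (for the chosen topology) isomorphic to $\faisO_S^n$. For the Zariski topology this is condition \eqref{zarzar_item} of Proposition \ref{loclibre_prop}, and for the étale topology it is condition \eqref{etet_item}; the whole point of that proposition is that these conditions, together with conditions \eqref{etzar_item}, \eqref{Wzarlib_item}, \eqref{Wetlib_item}, are all \emph{equivalent}, and characterize precisely the locally free $\faisO_S$-modules of finite type. Imposing constant rank $n$ (which descends, as noted in the proof of Proposition \ref{VecChamp_prop}) then cuts out exactly $\Vecn{n}$. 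Thus the objects of $\Formes{\faisO_S^n}$ are exactly the objects of $\Vecn{n}$, regardless of whether one works Zariski- or étale-locally, and passing to the underlying groupoids gives $(\Formes{\faisO_S^n})_{\grpd}=(\Vecn{n})_{\grpd}$. Composing with the equivalence of the previous paragraph yields the desired equivalence $(\Vecn{n})_{\grpd}\isoto \Tors{\faisGL_n}$.

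The case $n=1$ requires no separate argument: by Definition \ref{Gm_defi} one has $\faisGm=\faisGL_{1,S}=\faisAut_{\faisO_S}$, and locally free modules of constant rank $1$ are precisely the invertible $\faisO_S$-modules, so the general equivalence specializes to $(\Vecn{1})_{\grpd}\isoto \Tors{\faisGm}$. I expect the only genuinely substantive step to be the identification of forms with locally free modules \emph{uniformly in the topology}; but this is not an obstacle here, being entirely packaged into Proposition \ref{loclibre_prop} (whose own proof absorbs the representability and descent arguments via Proposition \ref{reprLocal_prop}). Consequently the proof really is immediate, amounting to: apply Proposition \ref{tordusformes_prop} with $X_0=\faisO_S^n$, then rewrite $\Formes{\faisO_S^n}$ as $\Vecn{n}$ using Proposition \ref{loclibre_prop}.
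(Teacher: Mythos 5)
Votre preuve est correcte et suit exactement la même démarche que le texte, qui se contente d'affirmer que le résultat « découle immédiatement » des propositions \ref{tordusformes_prop} et \ref{loclibre_prop} : vous appliquez la première avec $X_0=\faisO_S^n$ (en utilisant $\faisGL_n=\faisAut_{\faisO_S^n}$), puis identifiez $\Formes{\faisO_S^n}$ à $\Vecn{n}$ grâce à la seconde. Rien à redire.
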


Notons $\Det{M}$ le déterminant du $\faisO_S$-module (localement libre de type fini) $M$. Sur une composante connexe de $S$, il s'agit du module puissance extérieure maximale $\Lambda^{r}(M)$ où $r$ est le rang (constant) de $M$, avec la convention que si $r=0$, on a $\Lambda^{0}(M)=\faisO_S$. Il s'agit donc toujours d'un $\faisO_S$-module localement libre de rang $1$. Cela définit, pour chaque rang, non nécessairement constant, un foncteur $\Det{}$ des $\faisO_S$-modules localement libres de ce rang vers les $\faisO_S$-modules localement libre de rang constant $1$. 

\begin{defi}\label{Detchamps_defi}
Soit $\Det{}$ le foncteur de catégories fibrées $\Det{}: \Vect \to \Vecn{1}$ défini sur les fibres comme ci-dessus.
\end{defi}

Le morphisme déterminant $\faisGL_n \to \faisGm$ de groupes algébriques est défini sur les points par la formule habituelle.
\begin{lemm} \label{torsDetFonct_lemm}
Le long de ce morphisme déterminant, le torseur $\faisIso_{\faisO_S^n,M}$ se pousse en le torseur $\faisIso_{\faisO_S,\Det{M}}$.
\end{lemm}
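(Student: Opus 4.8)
The plan is to obtain this as a formal consequence of the general transport-of-torsors machinery, specifically item \ref{fonctobjet_item} of Proposition \ref{foncttors_prop}, combined with the torsor--form dictionary of Proposition \ref{tordusformes_prop}. Throughout I take the functor of stacks to be the determinant $\Det{}: \Vect \to \Vecn{1}$ of Definition \ref{Detchamps_defi}, and the base object to be $X_0 = \faisO_S^n$.

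First I would record the computation $\Det{\faisO_S^n} \cong \faisO_S$, which is immediate since the top exterior power $\Lambda^n(\faisO_S^n)$ is, canonically after fixing the standard basis, the trivial invertible module $\faisO_S$. Under this identification, the functor $\Det{}$ induces on automorphism sheaves a morphism $\faisAut_{\faisO_S^n} \to \faisAut_{\Det{\faisO_S^n}}$, that is, $\faisGL_n \to \faisGm$. The one point genuinely needing verification is that \emph{this induced morphism is exactly the determinant morphism of group schemes}: an automorphism $g \in \faisGL_n(T)$ is carried by the top exterior power to $\Lambda^n(g) = \det(g) \in \faisGm(T)$. This is essentially the very definition of the determinant via $\Lambda^n$, and I would check it on $T$-points over an affine base, where $M$ and $\faisO_S^n$ are free and $\Lambda^n$ is computed explicitly.

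Granting this identification, the rest is formal. By Proposition \ref{tordusformes_prop} applied to $X_0 = \faisO_S^n$ (using $\faisGL_n = \faisAut_{\faisO_S^n}$ as in Proposition \ref{GLntors_prop}), the module $M$ is recovered as the form $M \cong \faisIso_{\faisO_S^n,M} \contr{\faisGL_n} \faisO_S^n$. Write $P'$ for the pushforward of $P = \faisIso_{\faisO_S^n,M}$ along the determinant morphism $\faisGL_n \to \faisGm$. Then item \ref{fonctobjet_item} of Proposition \ref{foncttors_prop}, applied with $F = \Det{}$, supplies a natural isomorphism
$$P' \contr{\faisGm} \faisO_S \isoto \Det{M},$$
since $\Det{}$ sends the form $\faisIso_{\faisO_S^n,M} \contr{\faisGL_n} \faisO_S^n \cong M$ to $\Det{M}$.

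Finally I would conclude by the inverse equivalence of Proposition \ref{tordusformes_prop}, this time for $X_0 = \faisO_S$ and $\faisAut_{\faisO_S} = \faisGm$: a $\faisGm$-torsor is determined up to canonical isomorphism by its associated form. Both $P'$ and $\faisIso_{\faisO_S,\Det{M}}$ are $\faisGm$-torsors whose associated form is $\Det{M}$, hence $P' \cong \faisIso_{\faisO_S,\Det{M}}$, which is the assertion. The only real obstacle is the first step, identifying the map $\Det{}$ induces on $\faisAut_{\faisO_S^n}$ with the determinant homomorphism; once that is in place, the statement follows purely formally from the functoriality of the contracted product and the form--torsor equivalence.
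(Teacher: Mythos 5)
Your proof is correct, but it is packaged differently from the one in the paper. The paper's proof is a direct construction: by Proposition \ref{flecheIsoTors_prop} (an equivariant morphism of torsors is automatically an isomorphism), it suffices to exhibit a morphism $\faisIso_{\faisO_S^n,M}\contr{\faisGL_n}\faisGm \to \faisIso_{\faisO_S,\Det{M}}$, which is written down explicitly on points as $(f,u)\mapsto \det(f)\cdot u$ with $\det(f)=\Lambda^n(f)$, and then obtained in general by descent. You instead derive the statement from the general transport machinery: the identification of the map induced by $\Det{}$ on $\faisAut_{\faisO_S^n}$ with the determinant homomorphism, then Proposition \ref{foncttors_prop}~(\ref{fonctobjet_item}) to get $P'\contr{\faisGm}\faisO_S \isoto \Det{M}$, then the unit of the equivalence of Proposition \ref{tordusformes_prop} to conclude $P'\cong \faisIso_{\faisO_S,\Det{M}}$. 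The two arguments have the same mathematical content — unwinding Proposition \ref{foncttors_prop} in this case produces exactly the map $(f,u)\mapsto\det(f)\cdot u$ — and your route is in fact the one the paper itself uses for the closely analogous Lemma \ref{pousseisontriv_lemm}. What the paper's version buys is an explicit description of the isomorphism (useful later when the authors spell out what the pushed torsor does on points); what yours buys is economy, at the price of leaning on the general propositions and on the one genuinely non-formal check you correctly isolate, namely that $\Det{}$ induces the determinant on automorphism sheaves.
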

\begin{proof}
Par la proposition \ref{flecheIsoTors_prop}, il suffit de donner un morphisme $\faisIso^{\faisO_S-\text{mod}}_{\faisO_S^n,M}\contr{\faisGL_n} \faisGm \to \faisIso^{\faisO_S-\text{mod}}_{\faisO_S,\Det{M}}$. On vérifie immédiatement que sur les points, $(f,u) \mapsto \det(f)\cdot u$ est bien définie, où $\det(f):\faisO_S \to \Det{M}$ est la puissance extérieure $n$-ième de $f: \faisO_S^n \to M$, ce qui définit le morphisme par descente. 
\end{proof}

Lorsque le schéma $\faismu_{n,S}$ n'est pas lisse, i.e. lorsque $S$ a un point géométrique de caractéristique non première à $n$, ses torseurs pour la topologie étale ne coïncident pas avec ses torseurs pour la topologie \fppf. Ces derniers sont plus faciles à décrire, et suffisants pour les applications qui nous intéressent; en effet, $\faismu_n$ apparaît principalement dans des suite exactes courtes de groupes algébriques où les autres groupes sont lisses, et pour lesquels il est donc indifférent de considérer les torseurs étales ou \fppf. Décrivons donc les $\faismu_n$-torseurs pour la topologie \fppf.

Considérons le foncteur fibré $(\Vecn{1})_{\grpd}\to (\Vecn{1})_{\grpd}$ qui envoie un module de rang $1$ sur sa puissance $n$-ième, ainsi que le foncteur fibré $\Final \to \Vect{1}$ qui envoie le seul objet de la fibre sur $T$ vers $\faisO_T$. 
\begin{defi}
Le champ (\cf exercice \ref{prodfibchamps_exo}) des modules inversibles $n$-trivialisés $\nTriv{n}$ est le produit fibré de $(\Vecn{1})_{\grpd}\times_{\Vecn{1}} \Final$ à l'aide de ces deux foncteurs.
\end{defi}
Explicitement, un objet de $\nTriv{n}_T$ est donc donné par un $\faisO_T$-module localement libre $L$ de rang constant $1$ muni d'un isomorphisme $\phi: L^{\otimes n} \to \faisO_S$, et un morphisme de $\faisO_S$-modules trivialisés $(L_1,\phi_1)\to (L_2,\phi_2)$ est donc un morphisme $r: L_1 \to L_2$ tel que $\phi_2 \circ r^{\otimes n} = \phi_1$. Un tel morphisme est automatiquement un isomorphisme.

\begin{defi}
Le module $n$-trivialisé \emph{trivial} est le couple $(\faisO_S,m)$ où $m:\faisO_S^{\otimes n} \to \faisO_S$ est la multiplication. 
\end{defi}

\begin{lemm}
Pour toute topologie entre la topologie de Zariski et la topologie canonique telle que $\phi$ est localement une puissance $n$-ième, \ie $\phi:L^{\otimes n}\to \faisO_S$ factorise localement par $m:\faisO_S^{\otimes n} \to \faisO_S$, un module $n$-trivialisé $(L,\phi)$ est localement trivial, i.e. est une forme de $(\faisO_S,m)$, C'est toujours le cas pour la topologie \fppf. 
\end{lemm}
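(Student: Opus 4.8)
The plan is to reduce immediately to the split line bundle and then recognize the statement as an assertion about extracting \(n\)-th roots of units. Since being locally trivial is a local condition and every invertible sheaf is Zariski-locally trivial (and all the topologies under consideration are finer than Zariski), I would first pass to an affine base \(T=\Spec(R)\) over which \(L\) is free, and fix a trivialization \(L\simeq\faisO_T\). This induces \(L^{\otimes n}\simeq\faisO_T\), carries \(m\) to the identity, and turns \(\phi\) into multiplication by a unit \(u\in\Gamma(T)^\times=R^\times\). Under this dictionary, a morphism \((L,\phi)\to(\faisO_S,m)\), i.e.\ a map \(r\colon L\to\faisO_T\) with \(m\circ r^{\otimes n}=\phi\), is exactly the datum of an element \(v\in R\) with \(v^{n}=u\); such a \(v\) is automatically a unit, and the corresponding morphism is an isomorphism, as noted after the definition of \(n\)-trivialized modules.

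With this translation in hand, the first assertion is essentially a restatement of the hypothesis: a topology for which \(\phi\) is locally an \(n\)-th power, i.e.\ for which \(\phi\) factors locally as \(m\circ r^{\otimes n}\), is precisely one for which the unit \(u\) is locally an \(n\)-th power of a unit, hence one for which the element \(v\) above exists locally. This produces a local isomorphism \((L,\phi)\simeq(\faisO_S,m)\), so that \((L,\phi)\) is a form of \((\faisO_S,m)\) in the sense of Definition~\ref{formes_defi}. No computation beyond the trivialization is needed here.

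The genuine content is the final clause, that this always holds for the \fppf\ topology; equivalently, that every unit is \fppf-locally an \(n\)-th power, or again that the \(n\)-th power endomorphism of \(\faisGm\) is an \fppf\ epimorphism. The plan is to exhibit an explicit cover: the \(R\)-algebra \(B=R[x]/(x^{n}-u)\), which is free of rank \(n\) over \(R\) (basis \(1,x,\dots,x^{n-1}\)), so that \(\Spec(B)\to\Spec(R)\) is finite, flat, of finite presentation, and surjective because finite flat of constant positive rank; hence it is an \fppf\ covering. Over \(B\) the class of \(x\) satisfies \(x^{n}=u\) and is a unit with inverse \(x^{n-1}u^{-1}\), so \(u\) becomes the \(n\)-th power of the unit \(x\) and the desired trivialization is obtained. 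The only point requiring any care is checking that \(B\) is faithfully flat of finite presentation over \(R\); this is the main (and quite mild) obstacle, and it follows directly from freeness of rank \(n\geq 1\). Everything else is the bookkeeping of the first paragraph.
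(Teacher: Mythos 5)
Your proposal is correct and follows essentially the same route as the paper: reduce Zariski-locally to an affine base with $L$ trivialized, observe that the first assertion is then a restatement of the hypothesis, and for the \fppf\ case adjoin an $n$-th root via the extension $R[x]/(x^n-u)$. The extra bookkeeping you supply (that this extension is free of rank $n$, hence faithfully flat of finite presentation, and that $x$ is a unit) is exactly what the paper leaves implicit in calling it ``l'extension �vidente''.
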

\begin{proof}
La seule chose qui n'est pas évidente est l'affirmation sur la topologie \fppf: on commence par se restreindre à un ouvert Zariski sur lequel $L$ est trivial et la base est affine et vaut $\Spec(R)$, et il faut alors montrer que tout élément $\lambda\in R$ devient une puissance $n$-ième après une extension \fppf. On prend l'extension évidente $R[x]/(x^n-\lambda)$. 
\end{proof}

On suppose maintenant que la topologie vérifie les conditions du lemme précédent.

Il est immédiat qu'on a $\faisAut^{\nTriv{n}}_{(\faisO_S,m)}=\faismu_n$ où les objets sont dans le champ $\nTriv{n}$. On considère le faisceau $\faisIso_{(\faisO_S,m),(L,\phi)}$, torseur sous le précédent. Explicitement: 
$$\faisIso_{(\faisO_S,m),(L,\phi)}(T) =\{r: \faisO_{T} \to L_{T},\ \phi_{T'} \circ r_{T'}^{\otimes n} = m_{T'},\ \forall T'\to T\}.$$ 
(Les $r$ considérés sont des morphismes de $\faisO_S$-modules; ils sont automatiquement inversibles.) L'action à droite de $\faismu_n=\faisAut_{(\faisO_S, m)}$ revient à la multiplication d'un morphisme par un scalaire. Remarquons au passage que le faisceau $\faisIso_{(\faisO_S,m),(L,\phi)}$ est représentable par un schéma affine sur $S$ car c'est la fibre au dessus de $\phi$ de l'application $(-)^{\otimes n}:\faisHom_{L,\faisO_S} \to \faisHom_{L^{\otimes n},\faisO_S}$ entre faisceaux d'ensembles représentables par des schémas affines sur $S$.
La proposition \ref{tordusformes_prop} donne alors immédiatement:
\begin{prop} \label{torseursfppfmun_prop}
Le foncteur $(L,\phi) \mapsto \faisIso_{(\faisO_S,m),(L,\phi)}$ est une équivalence de catégories fibrées entre champs en groupoïdes
$$\Formes{(\faisO_S,m)} \isoto \Tors{\faismu_n}.$$ 
En particulier, si la topologie est \fppf, tous les modules $n$-trivialisés sont des formes de $(\faisO_S,m)$ et on a 
$$\nTriv{n} \isoto \Tors{\faismu_n}.$$ 
\end{prop}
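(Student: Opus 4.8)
The plan is to recognize the statement as a direct instance of the general torsor--form dictionary of Proposition~\ref{tordusformes_prop}, specialized to the champ $\cC=\nTriv{n}$ and the base object $X_0=(\faisO_S,m)$. Once the (few) hypotheses of that proposition are in place, both assertions drop out with essentially no further computation, so the proof is really a matter of assembling the pieces already proved above.

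First I would record that $\nTriv{n}$ is a champ: it was defined as a fibre product of champs (\cf exercice~\ref{prodfibchamps_exo}), and every morphism of $n$-trivialized modules is automatically an isomorphism, so it is in fact a champ en groupo\"ides. Next, the computation made just before the statement gives $\faisAut^{\nTriv{n}}_{(\faisO_S,m)}=\faismu_n$, so that the automorphism sheaf occurring in Proposition~\ref{tordusformes_prop} is exactly $\faismu_n$. Feeding $X_0=(\faisO_S,m)$ into that proposition then yields at once that the two mutually inverse functors
\[
X\mapsto \faisIso_{(\faisO_S,m),X},\qquad P\mapsto P\contr{\faismu_n}(\faisO_S,m)
\]
define an equivalence of champs en groupo\"ides $\Formes{(\faisO_S,m)}\isoto\Tors{\faismu_n}$, which is the first assertion.

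It then remains only to identify the source $\Formes{(\faisO_S,m)}$ with the full champ $\nTriv{n}$ in the \fppf\ case. This is precisely the content of the local-triviality lemma proved above: for the \fppf\ topology every $n$-trivialized module $(L,\phi)$ is locally a form of $(\faisO_S,m)$ (one trivializes $L$ over a Zariski affine $\Spec(R)$, then makes the scalar $\lambda$ an $n$-th power over $R[x]/(x^n-\lambda)$). Hence the substack of forms exhausts all objects of $\nTriv{n}$, the inclusion $\Formes{(\faisO_S,m)}\hookrightarrow\nTriv{n}$ is an equality of champs, and composing with the equivalence above gives $\nTriv{n}\isoto\Tors{\faismu_n}$.

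I do not expect a genuine obstacle: the substantive work has already been carried out elsewhere, on the one hand in Proposition~\ref{tordusformes_prop}, which packages the equivalence between forms and torsors under the automorphism sheaf, and on the other hand in the local-triviality lemma, which supplies the explicit \fppf\ cover. The single point deserving care is the final identification $\Formes{(\faisO_S,m)}=\nTriv{n}$, i.e. checking that being an object of $\nTriv{n}$ is equivalent to being an \fppf-form of the trivial object $(\faisO_S,m)$; but this equivalence is exactly what the lemma delivers, so no extra argument is needed.
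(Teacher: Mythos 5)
Votre démonstration est correcte et suit exactement la même voie que le texte : on identifie $\faisAut^{\nTriv{n}}_{(\faisO_S,m)}=\faismu_n$, on applique la proposition \ref{tordusformes_prop} à l'objet $(\faisO_S,m)$ du champ $\nTriv{n}$, puis on invoque le lemme de trivialité locale \fppf\ pour conclure que $\Formes{(\faisO_S,m)}$ épuise $\nTriv{n}$. Rien à redire.
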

Enfin, par la proposition \ref{foncttors_prop}, on a:
\begin{lemm} \label{pousseisontriv_lemm}
Le torseur $\faisIso_{(\faisO_S,m),(L,\phi)}$ se pousse le long de $\faismu_n \to \faisGm$ en un torseur isomorphe à $\faisIso_{\faisO_S,L}$ par la fonctorialité définie en \ref{fonctorialitegroupe_prop}.
\end{lemm}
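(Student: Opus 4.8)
Le plan est d'appliquer la proposition \ref{foncttors_prop} au foncteur de champs d'oubli $F:\nTriv{n}\to (\Vecn{1})_{\grpd}$ qui envoie un module $n$-trivialis� $(L,\phi)$ sur le module sous-jacent $L$ (et oublie $\phi$), en prenant pour objet de base $X_0=(\faisO_S,m)$, dont l'image est $F(X_0)=\faisO_S$. Tout le travail consiste alors � reconna�tre dans les objets fournis par la proposition \ref{foncttors_prop} ceux de l'�nonc�.

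D'abord, on identifie les groupes d'automorphismes et le morphisme que $F$ induit entre eux. On a d�j� vu que $\faisAut^{\nTriv{n}}_{(\faisO_S,m)}=\faismu_n$, tandis que $\faisAut_{\faisO_S}=\faisGL_{\faisO_S}=\faisGm$ dans le champ $\Vecn{1}$. Un automorphisme de $(\faisO_S,m)$ n'est autre que la multiplication par une racine $n$-i�me de l'unit� $u$, et $F$ l'envoie sur la multiplication par $u$ vue comme automorphisme du $\faisO_S$-module $\faisO_S$; le morphisme $\faisAut_{X_0}\to\faisAut_{F(X_0)}$ induit par $F$ est donc exactement l'inclusion canonique $\faismu_n\to\faisGm$. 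Par cons�quent, le torseur $P'$ sous $\faisGm$ obtenu en poussant $P=\faisIso_{(\faisO_S,m),(L,\phi)}$ le long de ce morphisme, au sens de la proposition \ref{fonctorialitegroupe_prop}, est pr�cis�ment le torseur $P'$ qui intervient dans la proposition \ref{foncttors_prop}.

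Il reste � l'identifier � $\faisIso_{\faisO_S,L}$. Le point \ref{fonctobjet_item} de la proposition \ref{foncttors_prop} fournit un isomorphisme naturel
$$P'\contr{\faisGm}\faisO_S \isoto F\big(P\contr{\faismu_n}(\faisO_S,m)\big).$$
Comme $P=\faisIso_{(\faisO_S,m),(L,\phi)}$, la co�nit� de l'�quivalence de la proposition \ref{tordusformes_prop}, appliqu�e dans le champ $\nTriv{n}$, donne $P\contr{\faismu_n}(\faisO_S,m)\simeq (L,\phi)$, d'o� $F(P\contr{\faismu_n}(\faisO_S,m))\simeq L$ et finalement $P'\contr{\faisGm}\faisO_S\simeq L$. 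On conclut en appliquant l'�quivalence inverse $X\mapsto\faisIso_{\faisO_S,X}$ de la proposition \ref{tordusformes_prop} entre $\Tors{\faisGm}$ et $\Formes{\faisO_S}$, qui donne $P'\simeq\faisIso_{\faisO_S,P'\contr{\faisGm}\faisO_S}\simeq\faisIso_{\faisO_S,L}$.

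Le point qui demandera le plus de soin --- plut�t que de r�elle difficult� --- est la v�rification que le morphisme induit par $F$ sur les automorphismes est bien l'inclusion $\faismu_n\to\faisGm$, car c'est cela qui assure que le torseur pouss� abstrait de la proposition \ref{foncttors_prop} co�ncide avec le torseur pouss� le long de $\faismu_n\to\faisGm$ de l'�nonc�. Une fois ce point acquis, le reste d�coule formellement de \ref{foncttors_prop} et \ref{tordusformes_prop}, sans calcul.
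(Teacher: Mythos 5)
Votre preuve est correcte et suit exactement la m�me voie que le texte, qui se contente de citer la proposition \ref{foncttors_prop} sans d�tailler : vous explicitez simplement l'application de cette proposition au foncteur d'oubli $\nTriv{n}\to(\Vecn{1})_{\grpd}$, l'identification du morphisme induit sur les automorphismes avec l'inclusion $\faismu_n\to\faisGm$, puis la conclusion via l'�quivalence de la proposition \ref{tordusformes_prop}. Rien � redire.
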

Remarquons que l'isomorphisme $\faisIso_{(\faisO_S,m),(L,\phi)}\contr{\faismu_n} \faisGm \isoto \faisIso_{\faisO_S,L}$ peut s'expliciter. Sur les points, il envoie un couple $(r,u)$, où $r$ est un isomorphisme tel que $\phi\circ r^{\otimes n}=m$ et $u$ un point de $\faisGm$, donc un automorphisme de $\faisO_S$, vers l'élément $r \circ u$. 

\subsubsection{Groupe projectif linéaire}

\begin{prop} \label{autrepr_prop}
Soit $A$ une $\faisO_S$-algèbre localement libre de type fini.
Le foncteur en groupes $\faisAut^\alg_{A}$ est représentable par un schéma en groupes affine sur $S$ et de type fini, qui est un sous-groupe fermé de $\faisGL_{A}$ ($A$ est vu comme $\faisO_S$-module, ici).
\end{prop}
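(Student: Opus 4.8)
The plan is to realize $\faisAut^\alg_A$ as a closed subfunctor of $\faisGL_A$, which is already known to be representable by an affine $S$-scheme of finite type (Proposition~\ref{GLrepr_prop} and Definition~\ref{GLn_defi}); a closed subscheme of such a scheme is again affine of finite type over $S$, and the group structure is then inherited from $\faisGL_A$. Recall that $\faisGL_A=\faisAut_A$ is the group of $\faisO_S$-module automorphisms of $A$, and that an algebra automorphism is in particular a module automorphism, so $\faisAut^\alg_A$ is a subfunctor of $\faisGL_A$. Conversely, a module automorphism of $A$ lies in $\faisAut^\alg_A$ as soon as it is multiplicative and preserves the unit, since a bijective algebra homomorphism automatically has an algebra homomorphism as inverse. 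Thus it suffices to cut out, inside $\faisGL_A$, the locus where these two conditions hold, and to check that this locus is closed.

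Let $m\colon A\otimes A\to A$ be the multiplication and $e\colon\faisO_S\to A$ the unit, both morphisms of $\faisO_S$-modules. Since $A$, and hence $A\otimes A$, is locally free of finite type, the functors $\faisHom_{A\otimes A,A}$ and $\faisHom_{\faisO_S,A}$ are representable by affine $S$-schemes, exactly as for $\faisEnd_A$ in Definition~\ref{matrices_defi}. I would then consider the two pairs of $S$-morphisms
\[
f\longmapsto f\circ m,\qquad f\longmapsto m\circ(f\otimes f)
\]
from $\faisGL_A$ to $\faisHom_{A\otimes A,A}$ (both are functorial in the test scheme, hence genuine morphisms of schemes, the second being merely quadratic rather than linear in $f$, which is irrelevant), and
\[
f\longmapsto f\circ e,\qquad f\longmapsto e
\]
from $\faisGL_A$ to $\faisHom_{\faisO_S,A}$. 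By construction $\faisAut^\alg_A$ is exactly the intersection of the two equalizers of these pairs. As each target is affine, hence separated over $S$, the equalizer of two $S$-morphisms into it is the preimage of the diagonal and is therefore a closed subscheme of $\faisGL_A$; the intersection of the two is again closed.

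This exhibits $\faisAut^\alg_A$ as a closed subscheme of $\faisGL_A$, hence as an affine $S$-scheme of finite type, and the subfunctor is visibly stable under composition and inversion, so it is a closed subgroup scheme of $\faisGL_A$, as claimed. The point that needs a little care is the representability and separatedness of the auxiliary $\faisHom$-schemes, together with the verification that the two displayed assignments really are morphisms of schemes; once this is granted, the equalizer-into-a-separated-scheme argument does the rest. If one prefers an explicit check, one may first reduce to the case $S=\Spec(R)$ with $A$ free over $R$ using Proposition~\ref{reprLocal_prop}: fixing a basis and the structure constants of $A$, multiplicativity becomes a system of quadratic equations and unit-preservation a system of linear equations in the matrix entries of $f$, cutting out $\faisAut^\alg_A$ as the vanishing locus of finitely many functions inside $\faisGL_A$.
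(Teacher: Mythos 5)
Your proof is correct, and while it cuts out the same locus as the paper, the mechanism justifying closedness is genuinely different. The paper first reduces to $S=\Spec(R)$ affine with $A=\wW(\tilde B)$ free (using that the functor is a Zariski sheaf, together with \ref{reprLocal_prop}), then views the multiplication as an element $m$ of $M=\Hom_R(B\otimes_R B,B)$, lets $\faisGL_A$ act on $\wW(\tilde M)$ by $\alpha\cdot f=\alpha\circ f\circ(\alpha^{-1}\otimes\alpha^{-1})$, quotes \cite[Exp. I, \S 6.7]{sga3} for the representability of the stabilizer of $m$ as a closed subgroup scheme of $\faisGL_{\wW(\tilde M)}$, and concludes by pulling that stabilizer back along the representation (remark \ref{imageinvnoyau_rema}). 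Since $\alpha$ is invertible, the stabilizer condition $\alpha\circ m\circ(\alpha^{-1}\otimes\alpha^{-1})=m$ is literally your equalizer condition $\alpha\circ m=m\circ(\alpha\otimes\alpha)$, so the defining equations agree; what you replace is the appeal to the stabilizer machinery by the observation that $\faisHom_{A\otimes A,A}$ and $\faisHom_{\faisO_S,A}$ are representable by affine, hence separated, $S$-schemes (same argument as for $\faisEnd_M$ before definition \ref{matrices_defi}), so that the equalizer of two $S$-morphisms into them is the preimage of a closed diagonal. Your route stays global, needs no preliminary localization, and avoids the external citation, at the cost of establishing representability of the auxiliary Hom-schemes, which you do; your explicit-equations fallback is essentially the paper's local picture (and the spirit of the proof of \ref{stab_prop}). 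One small remark: your unit equalizer is redundant. If $f$ is an invertible multiplicative endomorphism, then $f(1)f(a)=f(a)$ for all $a$, and surjectivity makes $f(1)$ a two-sided unit, whence $f(1)=1$; this is consistent with the paper's remark immediately after its proof that neither associativity nor unitality of $A$ is used. Including the condition is harmless, since it only adds a closed condition automatically satisfied on the subfunctor being represented.
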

\begin{proof}
C'est un faisceau Zariski puisque $A$ en est un. On se ramène donc au cas $S=\Spec(R)$ affine et $A$ libre comme $\faisO_S$-module, avec $A=\wW(\tilde{B})$.
Considérons alors le $R$-module $M=\Hom_R(B \otimes_R B, B)$ et l'élément $m$ qui y représente la multiplication de $B$. Le stabilisateur de $m$ est représentable par un sous-schéma en groupes fermé de $\faisGL_{\wW(\tilde{M})}$, par \cite[Exp. I, \S 6.7]{sga3}. Définissons un morphisme $\faisGL_{A} \to \faisGL_{\wW(\tilde{M})}$ en envoyant un élément $\alpha$ des points de $\faisGL_{A}(T)$ sur l'endomorphisme de $M_T$ qui à un morphisme $f:B_T \otimes B_T \to B_T$ associe le morphisme $\alpha \circ f \circ (\alpha^{-1} \otimes \alpha^{-1})$. L'image réciproque du stabilisateur de $m$ est le foncteur en groupes de l'énoncé, et il est représentable par la remarque \ref{imageinvnoyau_rema}.
\end{proof}
En fait, dans cette proposition, le fait que $A$ soit associative ou unitaire n'est pas utilisé.

\begin{defi} \label{PGLA_defi}
On note $\faisPGL_A$ le foncteur de points représentable de la proposition précédente.
\end{defi}
Pour tout schéma $T$ au dessus de $S$, on a bien entendu $(\faisPGL_A)_{T} = \faisPGL_{A_{T}}$ par la remarque \ref{Homchgmt_rema}.

\begin{prop} \label{PGLApoints_prop}
Si $T$ est affine et égal à $\Spec(R')$, alors
$$\faisPGL_A (T)= \setAut_{R'\mathrm{-alg}}(A(T)).$$
\end{prop}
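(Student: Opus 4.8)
The plan is to unwind the definition $\faisPGL_A=\faisAut^\alg_A$ (proposition \ref{autrepr_prop}) and compute its $T$-points by reducing the question to the module-level comparison of lemma \ref{homMN_lemm}, and then upgrading the resulting bijection from $\faisO_T$-modules to $\faisO_T$-algebras.

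First I would pass to the affine base $T=\Spec(R')$. By the definition of $\faisAut$ (notation \ref{isoaut_nota}) together with remark \ref{Homchgmt_rema}, the set $\faisPGL_A(T)=\faisAut^\alg_A(T)$ is the group of automorphisms of $A_T$ as an $\faisO_T$-algebra. Base change preserves being locally free of finite type, so $A_T$ is a locally free $\faisO_T$-algebra over the affine base $T$, and $A(T)=A_T(T)$ is an $R'$-algebra (recall $\faisO_T(T)=R'$). Thus it suffices to identify the $\faisO_T$-algebra automorphisms of the sheaf $A_T$ with the $R'$-algebra automorphisms of the module $A(T)$.

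Second, I would invoke the module comparison. Over the affine base $T$, lemma \ref{homMN_lemm} (made explicit by \eqref{Waffine_eq}) says that for locally free $\faisO_T$-modules of finite type $M,N$, evaluation at $T$ induces an isomorphism $\Hom_{\faisO_T\mathrm{-mod}}(M,N)\isoto\Hom_{R'\mathrm{-mod}}(M(T),N(T))$, compatibly with composition. Applied to $M=N=A_T$, this identifies $\faisO_T$-linear endomorphisms of $A_T$ with $R'$-linear endomorphisms of $A(T)$. The only substantial point is to upgrade this to an algebra statement. An $\faisO_T$-linear endomorphism $f$ of $A_T$ is an $\faisO_T$-algebra endomorphism precisely when $f\circ m=m\circ(f\otimes f)$ as morphisms $A_T\otimes A_T\to A_T$ and $f$ fixes the unit $\faisO_T\to A_T$, where $m$ denotes the multiplication. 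Since $A_T\otimes A_T$ is again locally free of finite type, lemma \ref{homMN_lemm} applies to it as well, so this equality of morphisms can be checked after evaluating at $T$; by functoriality of evaluation with respect to composition and tensor product (so that $(A_T\otimes A_T)(T)=A(T)\otimes_{R'}A(T)$ and $m$, $f$ evaluate to the multiplication and to $f(T)$), the evaluated identity is exactly the multiplicativity of $f(T)$, and likewise $f$ fixes the unit iff $f(T)$ fixes $1$. Hence evaluation at $T$ restricts to a bijection between $\faisO_T$-algebra endomorphisms of $A_T$ and $R'$-algebra endomorphisms of $A(T)$ — equivalently, this is the full faithfulness on algebras of $\wW\circ\tilde{(-)}$, cf. remark \ref{Walg_rema} and corollary \ref{Wequivloclibres_coro}. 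Restricting to invertible elements (a monoid isomorphism carries units to units) yields $\faisPGL_A(T)=\faisAut^\alg_A(T)=\setAut_{R'\mathrm{-alg}}(A(T))$.

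The main obstacle is exactly this last upgrade: the comparison of \eqref{Waffine_eq} is a statement about \emph{module} morphisms, and the real work lies in feeding it the auxiliary module $A_T\otimes A_T$ and using the naturality of evaluation to see that the multiplicativity constraint is detected on global sections. Once that is in place, everything else is bookkeeping with the definition of $\faisAut$ and with base change.
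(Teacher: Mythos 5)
Your proposal is correct and follows essentially the same route as the paper: the paper identifies $A_T$ with $\wW(\tilde{A(T)})$ (condition \ref{Wzarlib_item} of proposition \ref{loclibre_prop}) and concludes by full faithfulness of $\wW$ and $\tilde{(-)}$, with remark \ref{Walg_rema} handling the transport of the algebra structure. You merely unwind that full faithfulness explicitly via \eqref{Waffine_eq} and the auxiliary module $A_T\otimes A_T$, which is the detail the paper leaves implicit.
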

\begin{proof}
Lorsque $T$ est affine, alors $A_{T}=\wW(\tilde{A(T)})$ puisque $A_{T}$ vérifie la condition \ref{Wzarlib_item} de la proposition \ref{loclibre_prop}. On conclue par pleine fidélité de $\wW$ et $\tilde{(-)}$.
\end{proof}

Nous aurons enfin besoin des deux foncteurs suivants.
\begin{prop}\label{stab_prop}
Soit $N$ un $\faisO_S$-module localement libre de type fini et $v\in N(S)$. Alors $\faisStab_v$, défini par
$$\faisStab_v(T)=\{\alpha \in \faisGL_N(T),\ \alpha(v_{T'})=v_{T'},\ \forall T'\to T\}$$ 
est un sous-groupe représentable et fermé de $\faisGL_N$.
\end{prop}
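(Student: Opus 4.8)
The plan is to exhibit $\faisStab_v$ as a closed subscheme cut out by explicit equations, after first reducing to a convenient base. As a preliminary observation, the defining condition is a subfunctor condition on $\faisGL_N$ that is local for the Zariski topology: since $N$ and $\faisGL_N$ are Zariski sheaves, so is $\faisStab_v$. Moreover the clause ``$\forall T'\to T$'' in the definition is automatic, since evaluation $\alpha\mapsto\alpha(v)$ commutes with base change, so $\alpha(v_T)=v_T$ in $N(T)$ already forces $\alpha_{T'}(v_{T'})=v_{T'}$ for every $T'\to T$. By Proposition \ref{reprLocal_prop} it therefore suffices to prove representability by a closed subscheme after restricting to an affine base $S=\Spec(R)$ over which $N$ is free, say $N\simeq\faisO_S^n$ and $\faisGL_N=\faisGL_n$.

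On such a base I would consider the orbit morphism $\mu:\faisGL_N\to N$ given on points by $\alpha\mapsto\alpha(v_T)$, together with the constant morphism $c_v$ equal to the composite $\faisGL_N\to S\tooby{v} N$, where $v:S\to N$ is the section corresponding to $v\in N(S)$. Then $\faisStab_v$ is precisely the equalizer of $\mu$ and $c_v$. Since $N$ in this situation is $\faisO_S^n$, it is affine, hence separated, over $S$, and the equalizer of two $S$-morphisms into a separated $S$-scheme is a closed subscheme of the source. Concretely, expressing $\mu(\alpha)-v$ in the coordinates of $N\simeq\faisO_S^n$ yields $n$ regular functions on $\faisGL_n$ whose common zero locus is exactly $\faisStab_v$, which is thus a closed subscheme of $\faisGL_N$. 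Equivalently, since this is nothing but the stabilizer of an element of the $\faisGL_N$-representation $N$, one may invoke \cite[Exp. I, \S 6.7]{sga3} directly, exactly as was done in the proof of Proposition \ref{autrepr_prop}.

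Either route gives that $\faisStab_v$ is a closed subscheme of $\faisGL_N$; as $\faisGL_N$ is affine over $S$ and a closed immersion is affine, $\faisStab_v$ is affine over $S$. Finally, that it is a subgroup is the routine check that the identity fixes $v$ and that the locus of $\alpha$ fixing $v$ is stable under product and inverse, so the group structure of $\faisGL_N$ restricts to it. The only genuinely substantive point is the closedness, which rests entirely on the separatedness of $N$ over $S$ (equivalently, on the cited SGA3 statement); everything else is formal reduction and bookkeeping.
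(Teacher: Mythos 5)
Votre preuve est correcte et suit essentiellement la m�me d�marche que celle du texte : on v�rifie que le foncteur est un faisceau Zariski, on se ram�ne par la proposition \ref{reprLocal_prop} au cas o� $S$ est affine et $N$ libre, puis on d�coupe $\faisStab_v$ dans $\faisGL_n$ par les $n$ �quations lin�aires $\sum_k x_{jk}v_k=v_j$ (votre formulation par �galisateur de $\mu$ et $c_v$ dans le sch�ma s�par� $N$ n'est qu'un habillage conceptuel de ces m�mes �quations, comme vous le dites vous-m�me). Les remarques suppl�mentaires (automaticit� de la clause $\forall T'\to T$, v�rification de la structure de sous-groupe, variante via \cite[Exp. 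I, \S 6.7]{sga3}) sont exactes.
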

\begin{proof}
On vérifie aisément que ce foncteur est un faisceau Zariski. On peut donc se ramener au cas où $N$ est libre sur une base affine $S=\Spec(R)$ par la proposition \ref{reprLocal_prop}. Choisissons alors une base $e_1,\ldots,e_n$ de $N(R)$, qui permet d'identifier $\faisGL_N\simeq \faisGL_n$. Le terme de droite est représenté par l'anneau $A=R[x_{ij},\mathrm{det}^{-1}]$. \'Ecrivant $v=(v_1,\ldots,v_n)$ dans la base donnée, on considère l'idéal $I$ de $A$ défini par les équations
$$\sum_{k=1}^n x_{jk}v_k=v_j$$
pour $j=1,\ldots,n$. On voit que $A/I$ représente $\faisStab_v$ et que ce foncteur est fermé dans $\faisGL_N$. 
\end{proof}

\begin{prop} \label{normalisateur_prop}
Soit $G$ un $S$-groupe algébrique muni d'une représentation $\rho:G \to \faisGL_N$, $N$ un $\faisO_S$-module localement libre, et $M$ un sous-$\faisO_S$-module localement facteur direct de $N$. Alors le foncteur normalisateur $\faisNorm_{\rho,M}$, défini par 
$$\faisNorm_{\rho,M}(T)=\{g \in G(T),\ \rho(g)(M_T)=M_T,\}$$ 
est un sous-groupe fermé représentable de $G$.
\end{prop}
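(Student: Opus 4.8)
The plan is to exhibit $\faisNorm_{\rho,M}$ as the preimage of a zero section under a morphism of representable functors, in the same spirit as the proof of Proposition~\ref{stab_prop}. First I would use that $M$ is locally a direct factor of $N$ to form the quotient $\faisO_S$-module $P:=N/M$, which is again locally free of finite type, together with the inclusion $j\colon M\to N$ and the projection $\pi\colon N\to P$. The key observation is that whether $\rho(g)$ carries $M$ into $M$ is detected by the single $\faisO_S$-linear map $\pi\circ\rho(g)\circ j\colon M\to P$, which vanishes precisely when $\rho(g)(M)\subseteq M$.

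Next I would define a morphism of $S$-functors
$$\Phi\colon G\longrightarrow \faisHom_{M,P},\qquad g\longmapsto \pi_T\circ\rho(g)\circ j_T,$$
functorial in $T$; by Yoneda this is a morphism of $S$-schemes. The target $\faisHom_{M,P}$ is representable by an affine $S$-scheme: like $\faisEnd_M$ in Definition~\ref{matrices_defi}, it is a Zariski sheaf that is locally a matrix space, so Proposition~\ref{reprLocal_prop} applies. The zero morphism provides a section $e\colon S\to\faisHom_{M,P}$, which is a closed immersion. Since $G$ is representable and closed immersions are stable under base change, the preimage $\Phi^{-1}(e)=G\times_{\faisHom_{M,P}}S$ is a closed subscheme of $G$, hence itself representable and closed over $S$; by construction its $T$-points are exactly the $g\in G(T)$ with $\rho(g)(M_T)\subseteq M_T$.

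It then remains to identify this transporter with $\faisNorm_{\rho,M}$, that is, to upgrade the inclusion $\rho(g)(M_T)\subseteq M_T$ to an equality. This is the only genuine point. Since $\rho(g)$ is an automorphism of $N_T$ stabilising the direct factor $M_T$, its restriction $\rho(g)|_{M_T}$ is an automorphism of $M_T$: I would check this fibrewise, since over each residue field an injective endomorphism of a finite-dimensional space is bijective, and then conclude globally because the invertibility of $\rho(g)|_{M_T}$ amounts to the non-vanishing of its determinant section on the locally free module $M_T$. Hence $\rho(g)(M_T)=M_T$ and $\faisNorm_{\rho,M}=\Phi^{-1}(e)$. (Should one prefer to bypass the fibrewise argument, one intersects $\Phi^{-1}(e)$ with its pullback along the inversion $\iota\colon G\to G$; this is again a closed subscheme and cuts out exactly the $g$ for which both $\rho(g)$ and $\rho(g^{-1})$ transport $M$ into $M$.) Finally $\faisNorm_{\rho,M}$ is a subgroup functor, as the identity preserves $M$ and composites and inverses of elements preserving $M$ do as well, so it is a closed representable subgroup of $G$. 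The main obstacle is thus purely this $\subseteq$-versus-$=$ matching; representability and closedness are formal once $G$ and $\faisHom_{M,P}$ are known to be representable.
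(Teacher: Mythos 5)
Your proof is correct, and it reaches the same goal as the paper's by a slightly different, coordinate-free packaging. The paper localizes immediately (base affine, $N\simeq M\oplus M'$ free, bases chosen), identifies $\faisGL_N$ with a matrix group, and cuts out the normaliser as the preimage under $\rho$ of the closed subgroup defined by the vanishing of the off-diagonal block, concluding by the remarque \ref{imageinvnoyau_rema}; you instead form the quotient $P=N/M$ globally, encode the transporter condition as the vanishing of $\pi\circ\rho(g)\circ j$ in the representable sheaf $\faisHom_{M,P}$, and take the preimage of the zero section. The two arguments are the same linear-algebra fact in different clothes, but yours buys a construction that needs localization only to check representability of the target, and --- more importantly --- it forces you to address explicitly the passage from $\rho(g)(M_T)\subseteq M_T$ to equality, which the paper leaves implicit (in its block form it follows because the determinant of an invertible block-triangular matrix is the product of the determinants of the diagonal blocks, so the block acting on $M$ is automatically invertible). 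Your fibrewise determinant argument for this point is sound, as is the fallback of intersecting with the pullback along inversion. The only thing to make sure of is that $\faisHom_{M,P}$ is indeed a Zariski sheaf locally isomorphic to a matrix space, which holds because $M$ and $P$ are locally free of finite type; this is exactly the situation of \ref{matrices_defi} and \ref{reprLocal_prop}.
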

\begin{proof}
Comme dans la proposition précédente, on se ramène au cas où $M$ et $N$ sont libres avec $N \simeq M \oplus M'$ et sur une base affine $\Spec(R)$. On choisit alors des bases $\{e_1,\ldots,e_m\}$ de $M$ et $\{e_{m+1},\ldots,e_{m+n}\}$ de $M'$, ce qui permet d'identifier $\faisGL_N$ avec $\faisGL_{m+n}$, représenté par $R[x_{ij},1 \leq i,j\leq m+n,\mathrm{det}^{-1}]$. Le groupe recherché est alors l'image réciproque par $\rho$ du sous-groupe fermé de $\faisGL_N$ défini par 
l'idéal engendré par les $x_{ij}$ avec $1 \leq j < n+1 \leq i \leq n+m$. On conclut donc par la remarque \ref{imageinvnoyau_rema}.
\end{proof}

\subsection{Algèbres séparables, étales et d'Azumaya} \label{AlgSepEtAz_sec}

Rappelons dans cette partie la définition et quelques propriétés des algèbres séparables, puis étales et enfin d'Azumaya sur un anneau, ou plus généralement des faisceaux en algèbres de ces types. Le livre de Knus et Ojanguren \cite{kn-oj} est une bonne référence pour ces trois notions, ainsi que l'article de Grothendieck \cite{brauer1}.

Dans cette section, sauf mention contraire, la topologie de Grothendieck considérée est la topologie étale, et toutes les formes sont donc implicitement des formes étales.

\subsubsection{Algèbres séparables}

\begin{defi} 
Une algèbre $A$ localement libre de type fini sur $R$ est dite \emph{séparable} si elle est projective comme module sur $A \otimes_R A^{\opp}$ (agissant par $(x\otimes y).a = xay$). 
\end{defi}

\begin{prop} \label{separableequiv_prop}
Soit $A$ un faisceau en $\faisO_S$-algèbres localement libres de type fini. Les conditions suivantes sont équivalentes.
\begin{enumerate}
\item \label{loczarsep_item} Localement pour la topologie de Zariski, l'algèbre $A$ est l'image par le foncteur $\wW$ d'une algèbre séparable sur $\Gamma(S)$.
\item \label{locaffinesep_item} Pour tout schéma affine $T$ au-dessus de $S$, l'algèbre $A(T)$ est séparable sur $\Gamma(T)$.
\item \label{locetsep_item} Localement pour la topologie étale, l'algèbre $A$ est associée par le foncteur $\wW$ à une algèbre séparable sur $\Gamma(S)$.
\end{enumerate}
\end{prop}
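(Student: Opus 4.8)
The plan is to establish the three equivalences by proving a cycle of implications, exploiting that separability is a local property that can be tested after faithfully flat base change and that it descends. The implications \ref{loczarsep_item} $\implies$ \ref{locetsep_item} and \ref{locaffinesep_item} $\implies$ \ref{loczarsep_item} (taking $T$ to be a small enough Zariski open) are essentially formal, so the substance lies in closing the loop, for which I would aim to prove \ref{locetsep_item} $\implies$ \ref{locaffinesep_item} and \ref{loczarsep_item} $\implies$ \ref{locaffinesep_item}. The key external input is that separability is \emph{stable under arbitrary base change} and, crucially, \emph{descends along faithfully flat (in particular étale) covers}: if $A$ is an $R$-algebra and $R\to R'$ is faithfully flat, then $A$ is separable over $R$ if and only if $A_{R'}=A\otimes_R R'$ is separable over $R'$. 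This is a standard fact about separable algebras (see Knus--Ojanguren \cite{kn-oj}), and it is the engine that lets one pass freely between the local statements for the Zariski and étale topologies.

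First I would reduce to an affine base. Since each of the three conditions is local in nature and, by Proposition \ref{Wprop_prop} (compatibility of $\wW$ with base change) and Corollary \ref{Wequivloclibres_coro}, the datum of $A$ on an affine $T=\Spec(R')$ is equivalent to that of the $R'$-algebra $A(T)$, every statement can be checked on affine $T\to S$. For \ref{locetsep_item} $\implies$ \ref{locaffinesep_item}: fix an affine $T=\Spec(R')$. By hypothesis there is an étale cover $(T_i\to T)$ on which $A$ becomes $\wW$ of a separable algebra; refining, I may take a single faithfully flat étale $R'\to R''$ such that $A(T)\otimes_{R'} R''$ is separable over $R''$. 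By faithfully flat descent of separability, $A(T)$ is then separable over $R'$, which is exactly \ref{locaffinesep_item}. The implication \ref{loczarsep_item} $\implies$ \ref{locaffinesep_item} is the same argument with a Zariski cover in place of an étale one (a Zariski cover being in particular faithfully flat when restricted to the affine opens covering $T$), so the descent statement applies verbatim.

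The remaining implications are then immediate: \ref{locaffinesep_item} trivially implies both \ref{loczarsep_item} and \ref{locetsep_item}, since one may take the trivial cover, and the identity $A(T)=\wW(\widetilde{A(T)})$ of Remark \ref{Walg_rema} exhibits $A$ locally as $\wW$ of the separable algebra $A(T)$. This closes the cycle \ref{loczarsep_item} $\implies$ \ref{locaffinesep_item} $\implies$ \ref{locetsep_item} $\implies$ \ref{locaffinesep_item} $\implies$ \ref{loczarsep_item}, giving the equivalence of all three.

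I expect the main obstacle to be the descent direction, namely deducing separability of $A(T)$ over $R'$ from separability after a faithfully flat base change. Going \emph{up} along base change is formal (projectivity of $A$ as an $A\otimes_R A^{\opp}$-module is preserved by flat, even arbitrary, base change, since $(A\otimes_R A^{\opp})\otimes_R R' = A_{R'}\otimes_{R'} A_{R'}^{\opp}$ and base change sends the diagonal bimodule to the diagonal bimodule). The descent in the other direction requires that projectivity of a finitely presented module descends along faithfully flat maps, which is where I would invoke \cite{kn-oj} rather than reproving it; the one point to verify carefully is that the $A\otimes_R A^{\opp}$-module structure itself, and not merely the underlying module, is the one being descended, i.e.\ that the separability idempotent descends. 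Once that descent statement is in hand, all the topological bookkeeping is routine given the established dictionary between faisceaux in algebras and their sections via $\wW$.
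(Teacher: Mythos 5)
Your proof is correct and follows essentially the same route as the paper: reduce to the affine case via the dictionary provided by $\wW$, and invoke Knus--Ojanguren for descent of separability along coverings --- the paper cites separately a Zariski local-global principle (Ch.~III, prop.~2.5) and faithfully flat descent (Ch.~III, prop.~2.2), whereas you subsume both under a single faithfully flat descent statement applied to $R'\to\prod_j R'_{f_j}$ in the Zariski case and to an affine refinement of the étale cover in the other. The only organizational difference is that you close a cycle of implications through \ref{locaffinesep_item} instead of proving the two biconditionals with \ref{loczarsep_item} as hub, and you correctly flag that the genuinely nontrivial content sits in the descent direction, which the paper delegates entirely to the citations.
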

\begin{proof}
Le point \ref{loczarsep_item} implique évidemment les points \ref{locaffinesep_item} et \ref{locetsep_item}. Pour \ref{locaffinesep_item} $\Rightarrow$ \ref{loczarsep_item}, voir \cite[Ch. III, prop. 2.5]{kn-oj}. Enfin, pour \ref{locetsep_item} $\Rightarrow$ \ref{loczarsep_item}, voir \cite[Ch. III, prop. 2.2, (b)]{kn-oj}.  
\end{proof}

\begin{defi}
Un faisceau en $\faisO_S$-algèbres localement libres de type fini qui satisfait aux conditions équivalentes de la proposition précédente est appelé \emph{séparable}.
\end{defi}

\begin{exem}
Un produit fini de copies de $R$ est séparable. Une algèbre de matrices $\setM_n(R)$ est séparable sur $R$.
\end{exem}

\begin{prop} \label{sepsurcorps_prop}
Lorsque $R$ est un corps, une algèbre de type fini est séparable si et seulement si elle est isomorphe à un produit fini d'algèbres de matrices sur des algèbres à division (corps gauches) de dimension finie sur $R$ et dont les centres sont des extensions de corps (finies) séparables de $R$. En particulier, si une telle algèbre est commutative, c'est un produit fini d'extensions de corps finies séparables de $R$. Voir \cite[Ch. III, th. 3.1]{kn-oj}.
\end{prop}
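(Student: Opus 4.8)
Ce r\'esultat est classique (c'est essentiellement \cite[Ch. III, th. 3.1]{kn-oj}); je d\'ecris n\'eanmoins la strat\'egie que je suivrais. L'id\'ee est de relier la d\'efinition module-th\'eorique de la s\'eparabilit\'e --- la projectivit\'e de $A$ comme module sur son alg\`ebre enveloppante $A \otimes_R A^{\opp}$ --- \`a la classification de Wedderburn des alg\`ebres semi-simples. Deux faits formels servent de pivot: la s\'eparabilit\'e est stable par changement de corps de base (un idempotent de s\'eparabilit\'e se change de base), et elle est stable par produit fini (l'alg\`ebre enveloppante d'un produit se d\'ecompose en le produit des alg\`ebres enveloppantes).

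Pour le sens direct, je commencerais par montrer qu'une alg\`ebre s\'eparable est semi-simple. Le plus commode est d'observer que $A \otimes_R \bar{R}$ reste s\'eparable, donc sans radical (sur un corps alg\'ebriquement clos, une alg\`ebre s\'eparable de dimension finie est semi-simple), et d'en d\'eduire par descente que $A$ elle-m\^eme est semi-simple. Le th\'eor\`eme de Wedderburn--Artin donne alors $A \cong \prod_i \setM_{n_i}(D_i)$ pour des alg\`ebres \`a division $D_i$ de dimension finie sur $R$. Il resterait \`a prouver que le centre $Z_i = Z(D_i)$ est s\'eparable sur $R$: chaque facteur \'etant s\'eparable, l'alg\`ebre $\setM_{n_i}(D_i) \otimes_R \bar{R}$ est semi-simple, ce qui force $Z_i \otimes_R \bar{R}$ \`a \^etre r\'eduite, autrement dit $Z_i/R$ s\'eparable.

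Pour la r\'eciproque, la stabilit\'e par produit fini ram\`ene au cas $A = \setM_n(D)$. J'utiliserais d'abord que $\setM_n(D)$ est s\'eparable sur $R$ si et seulement si $D$ l'est (la s\'eparabilit\'e sur le corps de base est invariante par passage aux alg\`ebres de matrices, par \'equivalence de Morita ou par un calcul direct de l'idempotent de s\'eparabilit\'e). Enfin, $D$ \'etant centrale simple sur son centre $Z$, elle est automatiquement s\'eparable sur $Z$ (une telle alg\`ebre est d'Azumaya sur son centre); par transitivit\'e de la s\'eparabilit\'e le long de $R \to Z \to D$, l'hypoth\`ese de s\'eparabilit\'e de $Z/R$ entra\^ine la s\'eparabilit\'e de $D$ sur $R$, donc celle de $A$.

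La partie d\'elicate est le seul passage entre la d\'efinition abstraite et la caract\'erisation concr\`ete, et c'est l\`a qu'intervient la caract\'eristique $p$: c'est pr\'ecis\'ement le changement de base \`a $\bar{R}$ qui d\'etecte l'ins\'eparabilit\'e d'une extension de corps, via l'apparition de nilpotents dans $Z_i \otimes_R \bar{R}$. Le cas commutatif s'en d\'eduit imm\'ediatement: si $A$ est commutative, tous les $D_i$ sont commutatifs donc \'egaux \`a leur centre $Z_i$ et tous les $n_i$ valent $1$, d'o\`u $A \cong \prod_i Z_i$ avec chaque $Z_i/R$ extension finie s\'eparable.
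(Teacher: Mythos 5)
Votre esquisse est correcte : c'est l'argument classique (stabilit\'e de la s\'eparabilit\'e par changement de base et par produit, semi-simplicit\'e via l'absence de radical apr\`es extension \`a $\bar{R}$, Wedderburn--Artin, puis d\'etection de la s\'eparabilit\'e du centre par la r\'eduction de $Z_i \otimes_R \bar{R}$, et pour la r\'eciproque l'invariance de Morita et la transitivit\'e le long de $R \to Z \to D$). Le texte ne donne d'ailleurs aucune d\'emonstration et renvoie simplement \`a \cite[Ch. III, th. 3.1]{kn-oj}, dont votre sch\'ema reproduit essentiellement la preuve.
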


\subsubsection{Algèbres étales}

\begin{defi} \label{algetale_defi}
Une algèbre commutative sur un anneau $R$ est dite \emph{étale} si elle est séparable, plate et de présentation finie. Elle est alors dite \emph{finie} si elle est de type fini comme $R$-module. 
\end{defi}

Une algèbre étale finie est automatiquement localement libre de rang fini comme module sur $R$ (voir \cite[IV$_4$, 18.2.3]{ega}). 

\begin{prop}
Soit $Z$ un faisceau en $\faisO_S$-algèbres. Les conditions suivantes sont équivalentes.
\begin{enumerate}
\item Localement pour la topologie de Zariski, l'algèbre $A$ est l'image par le foncteur $\wW\circ \tilde{(-)}$ d'une algèbre étale finie sur $\Gamma(S)$.
\item Pour tout schéma affine $T$ au-dessus de $S$, l'algèbre $Z(T)$ est étale finie sur $\Gamma(T)$. 
\item Localement pour la topologie étale, l'algèbre $A$ est l'image par le foncteur $\wW\circ \tilde{(-)}$ d'une algèbre étale finie sur $\Gamma(S)$.
\end{enumerate}
\end{prop}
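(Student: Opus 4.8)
The plan is to deduce this three-way equivalence from the corresponding statement for separable algebras, Proposition \ref{separableequiv_prop}, by observing that an étale finite algebra is nothing but a \emph{commutative separable} algebra which is moreover \emph{locally free of finite rank} as a module. First I would record this reformulation precisely. By Definition \ref{algetale_defi}, an algebra is étale finite exactly when it is commutative, separable, flat, of finite presentation and finite (of finite type as a module). Now, a commutative separable $\Gamma(T)$-algebra $B$ which is locally free of finite rank as a module is automatically flat, of finite presentation and finite; hence for such $B$ the notions ``étale finite'' and ``separable'' coincide. Conversely an étale finite algebra is always locally free of finite rank as a module (as recalled after Definition \ref{algetale_defi}, citing \cite[IV$_4$, 18.2.3]{ega}), so this reformulation is genuinely an equivalence. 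Note also that under any of the three conditions the sheaf $A$ is commutative (its sections on affine schemes are), so the local separable models produced below will be commutative too.

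With this in hand, each of the three conditions stated here is the conjunction of the corresponding condition of Proposition \ref{separableequiv_prop} with the requirement that $A$ be locally free of finite rank as an $\faisO_S$-module. The implications $(1)\Rightarrow(2)$ and $(1)\Rightarrow(3)$ are then immediate and copy the easy implications of \ref{separableequiv_prop}: a Zariski cover is in particular an étale cover, and for affine $T\to S$ one restricts a Zariski cover of $S$ to a Zariski cover of $T$, along which étale-finiteness descends (being Zariski-local on the base) and is stable under base change, so $A(T)$ is étale finite.

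The two substantive implications are $(2)\Rightarrow(1)$ and $(3)\Rightarrow(1)$, and here I would feed directly into Proposition \ref{separableequiv_prop}. Since an étale finite algebra is separable, conditions $(2)$ and $(3)$ imply respectively conditions \ref{locaffinesep_item} and \ref{locetsep_item} of that proposition, which therefore yields that $A$ is Zariski-locally of the form $\wW(\tilde B')$ with $B'$ a separable $\Gamma(S_i)$-algebra. It then remains only to promote each $B'$ to an étale finite algebra, and this is exactly where local freeness enters: conditions $(2)$ and $(3)$ both force $A$ to be (affine-pointwise, resp.\ étale-locally) locally free of finite rank, so $A$ is a locally free $\faisO_S$-module of finite rank by the insensitivity of this notion to the topology (Proposition \ref{loclibre_prop}). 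After refining the Zariski cover if necessary, each $B'$ is locally free of finite rank over $\Gamma(S_i)$, hence commutative, separable, flat, of finite presentation and finite, i.e.\ étale finite by the reformulation of the first paragraph. This gives $(1)$.

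The main obstacle is not really new: it is localized inside Proposition \ref{separableequiv_prop}, namely the descent of separability from the affine-pointwise and étale-local settings to the Zariski-local setting, which rests on \cite[Ch. III, prop. 2.5 and prop. 2.2, (b)]{kn-oj}. Once that descent is granted, the only genuinely additional work here is the bookkeeping that upgrades ``separable'' to ``étale finite'', and this is harmless precisely because the extra hypotheses (flat, of finite presentation, finite) are all consequences of local freeness of finite rank, a property that passes freely between the Zariski and étale topologies by Proposition \ref{loclibre_prop}.
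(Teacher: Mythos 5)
Your proposal is correct and its backbone is the same as the paper's: both arguments reduce the equivalence to Proposition \ref{separableequiv_prop} for the separability condition. The difference lies in how the remaining conditions of Definition \ref{algetale_defi} are localized: the paper invokes descent of flatness and of finite presentation along Zariski and \'etale covers directly (EGA IV, prop.\ 2.5.1, prop.\ 2.7.1 and cor.\ 17.7.3), whereas you absorb flatness, finite presentation and finiteness into the single requirement of being locally free of finite rank as a module --- automatic for \'etale finite algebras by EGA IV$_4$, 18.2.3, already quoted after Definition \ref{algetale_defi} --- and then transport that requirement between topologies via the paper's own Proposition \ref{loclibre_prop}. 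Both routes work, and yours is somewhat more self-contained. One caveat: condition (2) is stated pointwise on affine schemes over $S$, and none of the equivalent conditions of Proposition \ref{loclibre_prop} has that pointwise form, so that proposition does not by itself upgrade (2) to the hypothesis of local freeness of the functor that is needed before Proposition \ref{separableequiv_prop} can be invoked; a small quasi-coherence argument is implicit there. The paper's one-line proof is equally silent on this point, so this is not a defect specific to your route.
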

\begin{proof}
Les conditions d'être séparable, plate ou de présentation finie peuvent se tester localement pour les topologies Zariski ou étale, respectivement par la proposition \ref{separableequiv_prop}, puis par les propositions 2.5.1 et 2.7.1 et le corollaire 17.7.3 de \cite[IV]{ega}.
\end{proof}

\begin{defi} \label{faiscalget_defi}
Nous appellerons $\faisO_S$-algèbre étale finie un faisceau $Z$ en $\faisO_S$-algèbres qui satisfait aux conditions équivalentes de la proposition précédente. On dit qu'elle est \emph{de rang $n$} si son rang comme module localement libre est constant sur $S$ et égal à $n$. 
\end{defi}

\begin{prop} \label{algetfinies_prop}
Les formes étales de la $\faisO_S$-algèbre $\faisO_S^n$ sont les $\faisO_S$-algèbres étales finies de rang $n$. Pour une telle forme $Z$, le faisceau $\faisAut^\alg_Z$ est un forme étale du faisceau en groupes constant $(\mathcal{S}_n)_S$ (groupe symétrique).
\end{prop}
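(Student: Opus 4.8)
Le plan se décompose selon les deux assertions de l'énoncé.

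Pour la première (description des formes), je procéderais par double inclusion. Qu'une $\faisO_S$-algèbre étale finie de rang $n$ soit une forme étale de $\faisO_S^n$ revient à dire qu'elle est étale-localement isomorphe à $\faisO_S^n$ ; c'est le fait classique qu'un revêtement fini étale de degré $n$ se trivialise étale-localement en le revêtement trivial $\coprod_{i=1}^n S \to S$. Sur un corps séparablement clos $k$ cela résulte de la proposition \ref{sepsurcorps_prop} (une algèbre commutative séparable y est un produit d'extensions séparables finies, donc isomorphe à $k^n$), et le cas d'une base quelconque s'en déduit par descente étale. Réciproquement, une forme étale de $\faisO_S^n$ est étale-localement isomorphe à $\faisO_S^n$, qui est étale finie de rang $n$ ; comme cette propriété, ainsi que la constance du rang, est locale pour la topologie étale (voir la définition \ref{faiscalget_defi} et la proposition qui la précède), elle descend à la forme.

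Pour la seconde (le groupe d'automorphismes), je commencerais par le cas déployé $Z=\faisO_S^n$. Le lemme algébrique clé est que, pour un anneau commutatif connexe $R$, tout automorphisme de $R$-algèbre de $R^n$ permute les idempotents primitifs $e_1,\dots,e_n$ et est entièrement déterminé par cette permutation, d'où $\setAut_{R\text{-alg}}(R^n)\simeq \mathcal{S}_n$. Ceci définit un morphisme naturel de faisceaux en groupes $(\mathcal{S}_n)_S \to \faisAut^\alg_{\faisO_S^n}$, qu'il s'agit de montrer être un isomorphisme. Comme $\faisAut^\alg_{\faisO_S^n}$ est représentable par la proposition \ref{autrepr_prop} (et $(\mathcal{S}_n)_S$ l'est aussi), il suffit pour cela de vérifier que ce morphisme est bijectif sur les points à valeurs dans un anneau local strictement hensélien : un tel anneau étant connexe, le lemme donne $\mathcal{S}_n$ de part et d'autre.

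Enfin, pour une forme $Z$ quelconque, j'invoquerais la première assertion : $Z$ est étale-localement isomorphe à $\faisO_S^n$, et comme $\faisAut^\alg$ commute au changement de base (remarque \ref{Homchgmt_rema}) et est fonctoriel en son argument, on obtient étale-localement $\faisAut^\alg_Z \simeq \faisAut^\alg_{\faisO_S^n} \simeq (\mathcal{S}_n)_S$, ce qui exprime exactement que $\faisAut^\alg_Z$ est une forme étale de $(\mathcal{S}_n)_S$. Le point le plus délicat me semble être la globalisation du lemme algébrique : sur une base non connexe, le faisceau $\faisAut^\alg_{\faisO_S^n}$ possède davantage de sections que $\mathcal{S}_n$ (un automorphisme peut varier d'une composante à l'autre), mais c'est précisément ce que capture le faisceau constant, et le passage par les anneaux locaux strictement henséliens permet de contourner proprement cette subtilité.
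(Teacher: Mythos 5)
Votre seconde assertion et l'architecture g\'en\'erale de votre preuve co\"incident avec celles du texte : on construit le morphisme \'evident $(\mathcal{S}_n)_S \to \faisAut^\alg_{\faisO_S^n}$, on v\'erifie que c'est un isomorphisme localement (les anneaux locaux \'etant connexes, votre lemme sur les idempotents primitifs suffit), et le cas d'une forme quelconque $Z$ s'en d\'eduit par torsion (proposition \ref{auttordus_prop}) ou, de fa\c{c}on \'equivalente, par votre argument de localisation \'etale joint \`a la remarque \ref{Homchgmt_rema}. Rien \`a redire sur cette partie, ni sur la direction facile de la premi\`ere assertion (les propri\'et\'es en jeu descendent).

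En revanche, l'inclusion non triviale de la premi\`ere assertion contient une lacune r\'eelle. Il s'agit de montrer que toute $\faisO_S$-alg\`ebre \'etale finie de rang $n$ est \'etale-localement isomorphe \`a $\faisO_S^n$, et votre phrase ``le cas d'une base quelconque s'en d\'eduit par descente \'etale'' \`a partir du cas d'un corps s\'eparablement clos n'est pas une d\'emonstration : la descente permet de v\'erifier une propri\'et\'e apr\`es passage \`a un recouvrement ou de recoller des donn\'ees locales, mais elle ne produit pas le recouvrement \'etale de $S$ qui trivialise $Z$ ; savoir que les fibres g\'eom\'etriques de $Z$ sont triviales ne fournit pas, par descente, une trivialisation \'etale-locale. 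C'est pr\'ecis\'ement l\`a que la preuve du texte fait le travail : on se ram\`ene \`a un anneau local $R$, on \'ecrit $Z(R)=R[x]/P(x)$ avec $P$ unitaire de degr\'e $n$ et $P'(x)$ inversible (en relevant un g\'en\'erateur de l'alg\`ebre r\'esiduelle et en appliquant Nakayama), puis on montre par r\'ecurrence sur $n$ que l'extension \'etale $R \to Z(R)$ elle-m\^eme d\'eploie un facteur : $Z(R)\otimes_R Z(R) \simeq Z(R)[y]/P(y) \simeq Z(R) \times Z(R)[y]/Q(y)$ par le th\'eor\`eme chinois, les \'el\'ements $y-x$ et $Q(y)$ engendrant l'id\'eal unit\'e puisque $P'(y)=(y-x)Q'(y)+Q(y)$ est inversible. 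On peut aussi, comme le signale la remarque qui suit la proposition, invoquer l'hens\'elis\'e strict et un argument de limite ; mais dans les deux cas il s'agit d'un argument substantiel, et non d'une cons\'equence formelle du cas des corps.
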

\begin{proof}
On dispose d'un morphisme évident $\mathcal{S}_n\to \faisAut^\alg_{\faisO_S^n}$ dont on vérifie facilement qu'il est un isomorphisme localement. La deuxième affirmation de l'énoncé découle alors de la première par la proposition \ref{auttordus_prop}. 

Les propriétés d'être étale, finie et localement libre de rang $n$ sont locales pour la topologie étale, donc toute forme étale de $\faisO_S^n$ est encore étale finie de rang $n$.

Réciproquement, montrons que toute $\faisO_S$-algèbre étale finie de rang $n$ est localement isomorphe à $\faisO_S^n$ pour la topologie étale. On peut supposer $S$ affine et l'algèbre libre sur $\faisO_S$ et la base locale, par un argument de limite. On est alors ramené au cas d'une algèbre étale finie $A$ sur un anneau local $R$. Par changement de base au corps résiduel, on obtient une extension de degré $n$ de corps, étale donc séparable. On prend un générateur de cette extension, et on le relève en un élément $x$ de $A$. Par le lemme de Nakayama, on montre facilement que les éléments $1,x,\ldots,x^{n-1}$ engendrent $A$, qui s'écrit donc $R[x]/P(x)$ avec $P$ unitaire de degré $n$, et $P'(x)$ inversible dans $A$, par \cite[IV, cor. 18.4.3]{ega}. Montrons alors par récurrence sur $n$ que $A$ est déployée par une extension étale finie. C'est évident si $n=1$. Utilisons l'extension $R \to A$ elle-même, et considérons donc $A \otimes_R A \simeq A[y]/P(y)$. Le polynôme unitaire $P(y)$ se décompose en $(y-x)Q(y)$ où $Q$ est un polynôme unitaire en $y$ à coefficients dans $A$. Puisque $P'(y)=(y-x)Q'(y)+Q(y)$ est inversible, les éléments $y-x$ et $Q(y)$ engendrent l'idéal unité. Par le théorème chinois, on a donc $A \otimes_R A \simeq A[y]/(y-x) \times A[y]/Q(y) \simeq A \times A[y]/Q(y)$. De plus $Q'(y)$ est inversible dans $A[y]/Q(y)$ par la même équation, ce qui nous ramène au cas $n-1$ de la récurrence. 
\end{proof}

\begin{rema}
Le fait que toute $\faisO_S$-algèbre étale finie de degré $n$ est localement $\faisO_S^n$ pour la topologie étale peut également se voir comme conséquence de la théorie des revêtements étales (\ie des morphismes finis étales). En effet, c'est exactement dire que le revêtement correspondant est localement trivial pour la topologie étale. C'est une conséquence de deux faits: l'hensélisé strict d'un anneau local est obtenu comme limite inductive d'algèbres étales, et tout revêtement d'un hensélisé strict est trivial (voir \cite[IV, déf. 18.8.7 et prop. 18.8.8, (i)]{ega}).
\end{rema}

\begin{coro} \label{etaledegre2_coro}
Si $Z$ est une $\faisO_S$-algèbre étale finie de rang $n=2$, $\faisAut^\alg_Z=(\ZZ/2)_S$ canoniquement. 
\end{coro}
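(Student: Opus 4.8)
Le plan est d'invoquer la proposition \ref{algetfinies_prop}, qui fournit déjà que $\faisAut^\alg_Z$ est une forme étale du faisceau en groupes constant $(\mathcal{S}_n)_S$; pour $n=2$ on a $(\mathcal{S}_2)_S=(\ZZ/2)_S$. Tout revient donc à établir que la seule forme étale du faisceau en groupes $(\ZZ/2)_S$ est $(\ZZ/2)_S$ lui-même, et ce canoniquement.

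Le point clé est le calcul de $\faisAut^\gr_{(\ZZ/2)_S}$: pour tout $T\to S$, un automorphisme du $T$-schéma en groupes $(\ZZ/2)_T$ fixe la section neutre, et l'unique section non neutre ne peut dès lors que rester fixe; c'est donc l'identité. Ainsi $\faisAut^\gr_{(\ZZ/2)_S}$ est le faisceau en groupes trivial. Par la proposition \ref{tordusformes_prop}, appliquée dans le champ $\CatFais_{/S}^\gr$ des faisceaux en groupes, les formes de $(\ZZ/2)_S$ sont classifiées par les torseurs sous $\faisAut^\gr_{(\ZZ/2)_S}$, qui sont tous triviaux; il n'y a donc qu'une seule forme à isomorphisme près.

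Pour obtenir l'identification canonique, je procéderais concrètement. Le faisceau $(\ZZ/2)_S$ possède une section globale non neutre canonique $\tau$ (son générateur), et toute trivialisation étale-locale $\faisAut^\alg_Z|_{S_i}\simeq(\ZZ/2)_{S_i}$ en transporte une copie $\tau_i$. Puisqu'on vient de voir que tout automorphisme de $\ZZ/2$ fixe le générateur, les $\tau_i$ coïncident sur les $S_{ij}$; par la propriété de faisceau de $\faisAut^\alg_Z$, ils se recollent en une section globale non neutre $\tau\in\faisAut^\alg_Z(S)$. Le morphisme de faisceaux en groupes $(\ZZ/2)_S\to\faisAut^\alg_Z$ envoyant le générateur sur $\tau$ est alors un isomorphisme, car il en est un étale-localement, ce qui donne l'identification canonique voulue.

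Il n'y a ici aucun obstacle sérieux: tout repose sur la trivialité de $\mathrm{Aut}(\ZZ/2)$, qui force les données de descente à être triviales. Le seul point réclamant un soupçon d'attention est le caractère canonique de l'énoncé, lequel découle précisément de cette trivialité, assurant que la section non neutre locale est globalement bien définie.
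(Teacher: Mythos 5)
Votre preuve est correcte, mais elle ne suit pas exactement le chemin du texte. La preuve du papier tient en une ligne : par la proposition \ref{algetfinies_prop} (via \ref{auttordus_prop}), le faisceau $\faisAut^\alg_Z$ est le tordu de $(\mathcal{S}_2)_S$ par un torseur sous $\mathcal{S}_2$ \emph{agissant par automorphismes int�rieurs}; comme $\mathcal{S}_2=\ZZ/2$ est commutatif, cette action est triviale et le produit contract� est canoniquement $(\ZZ/2)_S$. Vous �tablissez � la place que $\faisAut^\gr_{(\ZZ/2)_S}$ est le faisceau trivial, d'o� l'absence de toute forme non triviale (par \ref{tordusformes_prop}), puis vous recollez la section non neutre locale pour obtenir la canonicit�. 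Les deux arguments reposent sur le m�me fait �l�mentaire, mais ils n'ont pas la m�me port�e : le v�tre traite toutes les formes �tales de $(\ZZ/2)_S$ comme faisceau en groupes (pas seulement les tordus int�rieurs), tandis que celui du papier s'appliquerait tel quel � n'importe quel groupe ab�lien (par exemple $\ZZ/3$, dont le groupe d'automorphismes n'est pourtant pas trivial, cas o� votre argument ne conclurait pas directement). Pour $\ZZ/2$ les deux co�ncident. Votre recollement explicite de la section $\tau$ est correct et rejoint d'ailleurs la remarque qui suit le corollaire dans le texte, o� l'automorphisme non trivial est construit � la main via $z\mapsto\trace(z)-z$.
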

\begin{proof}
Le groupe symétrique $\mathcal{S}_2 = \ZZ/2$ étant commutatif, ses tordus par des torseurs sous un groupe agissant par automorphismes intérieurs lui sont isomorphes.
\end{proof}

\begin{rema}
On peut construire à la main l'automorphisme différent de l'identité lorsque $n=2$. Si $Z(T)$ est libre sur $R'=\faisO_S(T)$ pour $T$ affine, on définit $\trace(z)$ la trace de $z\in Z(T)$ comme la trace de la multiplication par $z$ exprimée sur un base quelconque. Le morphisme $z \mapsto \trace(z)-z$ est bien un automorphisme de $R'$-algèbres et c'est le seul différent de l'identité. Comme il est canonique, il permet de définir par recollement un automorphisme d'algèbres de $Z$ au-dessus de $S$.
\end{rema}

\begin{defi} \label{Etn_defi}
La catégorie fibrée de $T$-fibre les algèbres étales finies de rang $n$ est notée $\Etn{n}$. C'est un champ pour les topologies Zariski, étale ou \fppf.
\end{defi}
\begin{proof}
Les $\faisO_S$-algèbres forment un champ par \ref{champstruc_prop}, et le fait d'être étale fini de degré $n$ descend pour la topologie \fppf, comme vu plus haut.
\end{proof}

Les propositions \ref{tordusformes_prop} et \ref{algetfinies_prop} impliquent immédiatement:
\begin{prop} \label{SntorsEtn_prop}
Le foncteur $Z \mapsto \faisIso_{Z,\faisO_S^n}$ définit une équivalence du champ en groupoïdes $(\Etn{n})_{\grpd}$ des algèbres étales finies de rang $n$ vers $\Tors{\mathcal{S}_n}$ le champ des torseurs étales ou \fppf\ sous $\mathcal{S}_n$.
\end{prop}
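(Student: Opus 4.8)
The plan is to read this off as the special case $X_0=\faisO_S^n$ of the general form/torsor dictionary of Proposition~\ref{tordusformes_prop}, working inside the stack of $\faisO_S$-algebras, which is a stack by Proposition~\ref{champstruc_prop} applied to the structure $\alg$. The two inputs that specialize the general statement to the present one are both furnished by Proposition~\ref{algetfinies_prop}: the automorphism sheaf $\faisAut^\alg_{\faisO_S^n}$ is canonically the constant symmetric group $\mathcal{S}_n$, and the \'etale forms of $\faisO_S^n$ are exactly the \'etale finite $\faisO_S$-algebras of rank $n$.

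First I would match up the source stacks. By Proposition~\ref{algetfinies_prop} an $\faisO_S$-algebra is \'etale-locally isomorphic to $\faisO_S^n$ precisely when it is \'etale finite of rank $n$; since both descriptions cut out, by an \'etale-local condition, the same full subgroupoid of the stack of $\faisO_S$-algebras, with algebra isomorphisms as morphisms on either side, one gets an honest equality $(\Formes{\faisO_S^n})_\grpd=(\Etn{n})_\grpd$ of groupoid stacks. I would then invoke Proposition~\ref{tordusformes_prop} with $X_0=\faisO_S^n$ and $\faisAut_{X_0}=\mathcal{S}_n$: it produces an equivalence $X\mapsto\faisIso_{\faisO_S^n,X}$ from $(\Formes{\faisO_S^n})_\grpd$ to $\Tors{\mathcal{S}_n}$, and hence, after the identification just made, an equivalence $(\Etn{n})_\grpd\isoto\Tors{\mathcal{S}_n}$.

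It then remains only to reconcile conventions, and this is where the one genuine piece of bookkeeping lies. The functor in the statement sends $Z$ to $\faisIso_{Z,\faisO_S^n}$, whereas Proposition~\ref{tordusformes_prop} produces $\faisIso_{\faisO_S^n,Z}$; the two sheaves are interchanged by inversion $f\mapsto f^{-1}$, which carries the left $\mathcal{S}_n$-action by post-composition to the right $\mathcal{S}_n$-action defining a torsor in the sense of Definition~\ref{torseur_defi}, so the two functors coincide up to this canonical identification. Finally, the target $\Tors{\mathcal{S}_n}$ is the same whether taken for the \'etale or the \fppf\ topology, since $\mathcal{S}_n$ is finite \'etale, hence smooth, over $S$ (the stack-level counterpart of Theorem~\ref{lisseetalefppf_theo}), which accounts for the ``\'etale ou \fppf'' in the statement. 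With these conventions settled, everything else is a direct citation of the two propositions, so the equivalence is immediate, as announced.
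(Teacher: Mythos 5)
Your proposal is correct and follows exactly the route the paper takes: the paper states this proposition with no separate proof, noting only that it is an immediate consequence of Propositions~\ref{tordusformes_prop} and~\ref{algetfinies_prop}, which are precisely the two inputs you combine. Your extra remarks on the $\faisIso_{Z,\faisO_S^n}$ versus $\faisIso_{\faisO_S^n,Z}$ convention and on the étale/\fppf\ agreement are sound bookkeeping that the paper leaves implicit.
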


Attardons-nous enfin sur le cas $n=2$, qui nous sera plus utile que les autres. \'Etant donnée une algèbre étale finie $E$ de rang $2$, on lui associe fonctoriellement un module localement libre de rang $1$, le noyau de la trace, qu'on note $\chi_E$. De plus, la multiplication induit un isomorphisme $\mu_E:\chi_E\otimes \chi_E \to \faisO_S$ et cela fait de $(\chi_E,\mu_E)$ un module déterminant au sens de la définition \ref{moddet_defi} plus bas. Ces faits peuvent se vérifier localement, auquel cas, voir \cite[Ch. III, (4.2.1) à (4.2.3)]{knus}. 

Cela permet de définir un foncteur fibré $\XiFonc:(\Etn{2})_\grpd \to \ModDet$, qui envoie une algèbre $E$ sur $(\chi_E,\mu_E)$, qui induit sur les automorphismes $(\ZZ/2)_S=\faisAut_E \to \faisAut_{\XiFonc(E)}=(\faismu_2)_S$ le morphisme canonique $\ZZ/2 \to \faismu_2$ envoyant $i$ sur $(-1)^i$ (localement). 
\begin{rema} \label{Zmu2_rema}
Les points de $\ZZ/2$ peuvent se décrire comme des projecteurs: $(\ZZ/2)(T) \cong \{ p \in \Gamma(T) \text{ t.q. } p^2=p\}$, où une fonction localement constante à valeurs dans $\{0,1\}$ est identifiée au projecteur qui vaut l'identité là où elle vaut $1$ et $0$ là où elle vaut $0$. Le morphisme $\ZZ/2 \to \faismu_2$ est alors donné sur les points par $p \mapsto 1-2p$. C'est donc un isomorphisme lorsque $2\in \Gamma(S)^\times$, alors que c'est un morphisme constant de valeur $1$ lorsque $2=0$ dans $\Gamma(S)$. 
\end{rema}

\subsubsection{Algèbres d'Azumaya} \label{AlgAzu_sec}

Examinons maintenant les algèbres d'Azumaya.

\begin{defi}
Une algèbre séparable $A$ sur un anneau $R$ est appelée \emph{$R$-algèbre d'Azumaya} si $R$ s'injecte dans $A$ et si le centre de $A$ est réduit à $R$. 
\end{defi}

\begin{exem}
Une algèbre d'Azumaya sur un corps est une algèbre centrale simple sur ce corps par la proposition \ref{sepsurcorps_prop}.
\end{exem}

\begin{prop} \label{azudefi_prop}
Soit $A$ une $\faisO_S$-algèbre localement libre de type fini. Les conditions suivantes sont équivalentes.
\begin{enumerate}
\item \label{azuloczar_item} Localement pour la topologie de Zariski, l'algèbre $A$ est l'image par le foncteur $\wW\circ \tilde{(-)}$ d'une algèbre d'Azumaya sur $\Gamma(S)$.
\item \label{azuaffine_item} Pour tout schéma affine $T$ au-dessus de $S$, l'algèbre $A(T)$ est une algèbre d'Azumaya sur $\Gamma(T)$.
\item \label{azulocet_item} Localement pour la topologie étale, l'algèbre $A$ est l'image par le foncteur $\wW\circ \tilde{(-)}$ d'une algèbre d'Azumaya sur $\Gamma(S)$.
\end{enumerate}
\end{prop}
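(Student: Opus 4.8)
The plan is to reproduce the scheme of proof of Proposition~\ref{separableequiv_prop}, keeping in mind that an Azumaya algebra is a separable algebra subject to the two extra conditions that $R$ inject into $A$ and that the centre of $A$ reduce to $R$. As in the separable case, I would prove the cycle \ref{azuloczar_item}$\Rightarrow$\ref{azuaffine_item}, \ref{azuloczar_item}$\Rightarrow$\ref{azulocet_item}, \ref{azuaffine_item}$\Rightarrow$\ref{azuloczar_item} and \ref{azulocet_item}$\Rightarrow$\ref{azuloczar_item}, which yields the three equivalences at once. Throughout I would rely on the classical reformulation (see \cite[Ch.~III]{kn-oj}) that a finite locally free faithful $R$-algebra $A$ is Azumaya if and only if the canonical map $A\otimes_R A^{\opp}\to \End_R(A)$, $a\otimes b\mapsto(x\mapsto axb)$, is an isomorphism; this is convenient because it is manifestly compatible with base change and, being an isomorphism between finite locally free modules, can be tested locally for any of our topologies.

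The implications \ref{azuloczar_item}$\Rightarrow$\ref{azuaffine_item} and \ref{azuloczar_item}$\Rightarrow$\ref{azulocet_item} are then immediate. Indeed, $\wW$ commutes with base change by Proposition~\ref{Wprop_prop}, and the Azumaya property is stable under arbitrary ring base change and local on the base; so if $A$ is Zariski-locally $\wW(\tilde B)$ with $B$ Azumaya, then over any affine $T=\Spec(R')$ the algebra $A(T)$ is Zariski-locally on $T$ a base change of $B$, hence Azumaya over $\Gamma(T)$ by the locality just recalled.

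For \ref{azuaffine_item}$\Rightarrow$\ref{azuloczar_item} I would simply cover $S$ by affine opens $U_i=\Spec(R_i)$. On each $U_i$, the restriction of $A$ is a locally free $\faisO_{U_i}$-module of finite type, so by Corollary~\ref{Wequivloclibres_coro} and Remark~\ref{Walg_rema} it is of the form $\wW(\widetilde{A(U_i)})$ for the $R_i$-algebra $A(U_i)$; and $A(U_i)$ is Azumaya over $\Gamma(U_i)=R_i$ by hypothesis~\ref{azuaffine_item}. This already exhibits $A$ in the Zariski-local form required by \ref{azuloczar_item}.

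The only substantial step, and where I expect the real difficulty, is \ref{azulocet_item}$\Rightarrow$\ref{azuloczar_item}: one has to descend the Azumaya property from an \'etale cover down to the Zariski topology. Using the reformulation above, the map $A\otimes_R A^{\opp}\to\End_R(A)$ becomes an isomorphism after an \'etale base change, and since it is a morphism of finite locally free modules whose being an isomorphism is an \'etale-local condition, it is already an isomorphism Zariski-locally; the faithfulness and the constancy of the rank descend likewise. One may also argue directly through separability: \'etale-local separability gives Zariski-local separability by Proposition~\ref{separableequiv_prop}, and it remains to descend the centre and faithfulness conditions, for which I would again invoke the descent statements for Azumaya algebras in \cite[Ch.~III]{kn-oj}. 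Either route closes the cycle and establishes the equivalence of \ref{azuloczar_item}, \ref{azuaffine_item} and \ref{azulocet_item}.
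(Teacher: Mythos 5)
Your proof is correct, but it is organized differently from the paper's, which is much terser: the paper notes that \ref{azuaffine_item} $\Rightarrow$ \ref{azuloczar_item} $\Rightarrow$ \ref{azulocet_item} is trivial, and disposes of the single substantial implication \ref{azulocet_item} $\Rightarrow$ \ref{azuaffine_item} by reducing to an affine base and citing the equivalence of points 4 and 1 of \cite[Ch.~III, th.~6.6]{kn-oj} --- exactly the \'etale-descent statement you identify as the crux. You instead use \ref{azuloczar_item} as a hub (four implications instead of a triangle) and, for the descent step, replace the black-box citation by an explicit argument via the criterion that a finite locally free faithful algebra is Azumaya if and only if $A\otimes_R A^{\opp}\to \End_R(A)$ is an isomorphism, combined with faithfully flat descent of isomorphisms of finite locally free modules (plus descent of faithfulness and of the positivity of the rank). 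This buys self-containedness and makes transparent that the same statement holds with \fppf\ covers in place of \'etale ones; what it costs is some bookkeeping you partly gloss over: before descending you must reduce to the purely ring-theoretic situation, i.e.\ note that the standing hypothesis together with Proposition \ref{loclibre_prop} and Remark \ref{Walg_rema} writes $A$ Zariski-locally as $\wW(\tilde{B})$, and you must refine the \'etale cover over an affine open into a single affine faithfully flat \'etale morphism (possible by quasi-compactness) to apply descent to one ring map. Your alternative route through separability (Proposition \ref{separableequiv_prop}) plus descent of the centre ends up re-citing \cite{kn-oj} and is essentially the paper's argument. Note finally that your orientation \ref{azuloczar_item} $\Rightarrow$ \ref{azuaffine_item} is slightly less trivial than the paper's \ref{azuaffine_item} $\Rightarrow$ \ref{azuloczar_item}, since it needs stability of the Azumaya property under base change and its Zariski-locality over a ring; both are standard and both follow from your isomorphism criterion, so nothing is wrong --- the paper's choice of the trivial chain simply avoids having to say this.
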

\begin{proof}
On a trivialement \ref{azuaffine_item} $\Rightarrow$ \ref{azuloczar_item} $\Rightarrow$ \ref{azulocet_item}.  
Pour \ref{azulocet_item} $\Rightarrow$ \ref{azuaffine_item}, se ramener à une base affine, puis utiliser l'équivalence entre les points 4 et 1 de \cite[Ch. III, th. 6.6]{kn-oj}.
\end{proof}

\begin{defi}
Un \emph{$\faisO_S$-faisceau en algèbres d'Azumaya}, ou plus simplement une $\faisO_S$-algèbre d'Azumaya, est un faisceaux en $\faisO_S$-algèbres $A$ qui satisfait aux conditions équivalentes de la proposition précédente.
\end{defi}

\begin{exem} \label{EndMAzumaya_exem}
Pour tout $n$, la $\faisO_S$-algèbre $\faisM_{n,S}$ est une $\faisO_S$-algèbre d'Azumaya; on vérifie que son centre est bien $\faisO_S$ en utilisant les matrices élémentaires. De même, pour tout $\faisO_S$-module localement libre $M$, le faisceau en $\faisO_S$-algèbres $\faisEnd_M$ est une $\faisO_S$-algèbre d'Azumaya: on se ramène au cas précédent localement.
\end{exem}

\begin{prop}
Le rang comme $\faisO_S$-module d'une $\faisO_S$-algèbre d'Azumaya est un carré.
\end{prop}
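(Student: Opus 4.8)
The plan is to reduce to the case of an algebraically closed field, where the claim follows from the classical structure theory of central simple algebras. Since the rank of a locally free module of finite type is locally constant (definition \ref{loclibre_defi}), it suffices to prove that the rank of $A$ at each point $s \in S$ is a square. This rank is the dimension over $\kappa(s)$ of the fibre of $A$ at $s$, and it is unchanged if we base change further to an algebraic closure $\Omega$ of $\kappa(s)$, since for a field $\kappa(s)$ one has $\dim_{\Omega}(V \otimes_{\kappa(s)} \Omega) = \dim_{\kappa(s)} V$; moreover $\wW$ and the rank commute with base change (proposition \ref{Wprop_prop}).

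First I would note that the fibre $A(\Spec \Omega)$ is an $\Omega$-algebra of Azumaya. Indeed, $\Spec \Omega$ is affine over $S$ (through the point $s$), so the condition \ref{azuaffine_item} of proposition \ref{azudefi_prop} asserts exactly that $A(\Spec \Omega)$ is an Azumaya algebra over $\Gamma(\Spec \Omega) = \Omega$. In particular it is a separable $\Omega$-algebra whose centre is reduced to $\Omega$.

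Next I would apply the structure theorem over a field, proposition \ref{sepsurcorps_prop}: the separable $\Omega$-algebra $A(\Spec \Omega)$ is a finite product $\prod_i \setM_{n_i}(D_i)$, where each $D_i$ is a finite-dimensional division algebra whose centre is a finite separable extension of $\Omega$. As $\Omega$ is algebraically closed, its only finite separable extension is $\Omega$ itself, and the only finite-dimensional division algebra with centre $\Omega$ is $\Omega$; hence $D_i = \Omega$ for every $i$ and $A(\Spec\Omega) \cong \prod_i \setM_{n_i}(\Omega)$. The centre of such a product is $\Omega^k$, with $k$ the number of factors, so the Azumaya condition (centre reduced to $\Omega$) forces $k = 1$. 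Therefore $A(\Spec \Omega) \cong \setM_n(\Omega)$ for some $n$, of dimension $n^2$ over $\Omega$. This exhibits the rank of $A$ at $s$ as the square $n^2$, and since $s$ is arbitrary the rank of $A$ is everywhere a square.

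The only genuinely non-formal ingredient is the structure theory over the algebraically closed field $\Omega$, and specifically the classical fact that there is no nontrivial finite-dimensional division algebra over an algebraically closed field; everything else — local constancy of the rank, stability of the Azumaya property under base change to $\kappa(s)$ and to $\Omega$, and invariance of the rank under these base changes — is immediate from the results already established. The one point to handle with care is that the centre of a product of matrix algebras is the product of the underlying base fields, which is what lets the Azumaya hypothesis eliminate all factors but one; this is the step where I expect the main (albeit modest) subtlety to lie.
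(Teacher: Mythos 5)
Votre d�monstration est correcte et compl�te. Le texte, lui, ne donne pas d'argument : il renvoie simplement � Knus--Ojanguren (Ch.~III, prop.~6.1 et th.~6.4). Votre r�daction fournit donc une preuve maison, mais c'est la preuve standard et c'est essentiellement celle de la r�f�rence cit�e : r�duction au rang en un point, passage � une cl�ture alg�brique $\Omega$ de $\kappa(s)$, puis structure des alg�bres centrales simples sur un corps alg�briquement clos. Les deux points d�licats sont bien trait�s : le fait que $\Spec\Omega$ est un sch�ma affine au-dessus de $S$, ce qui permet d'appliquer directement la condition affine de la proposition caract�risant les alg�bres d'Azumaya pour obtenir que la fibre g�om�trique est d'Azumaya sur $\Omega$; et l'�limination des facteurs multiples du produit $\prod_i \setM_{n_i}(D_i)$ par la condition de centralit�, puis des alg�bres � division non triviales par la cl�ture alg�brique. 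L'avantage de votre r�daction est qu'elle n'utilise que des �nonc�s d�j� �tablis dans le texte (compatibilit� de $\wW$ au changement de base, caract�risation des alg�bres s�parables sur un corps) plus un fait classique, l� o� le texte externalise enti�rement l'argument.
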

\begin{proof}
Voir \cite[Ch. III, prop. 6.1 et th. 6.4]{kn-oj}.
\end{proof}

\begin{defi} \label{degreAzumaya_defi}
Le degré d'une $\faisO_S$-algèbre d'Azumaya est la racine carré de son rang. C'est une fonction localement constante à valeurs entières par la proposition précédente.
\end{defi}

\begin{prop} \label{Azumayaforme_prop}
Un $\faisO_S$-faisceau en algèbres d'Azumaya de degré $n$ sur une base $S$ est un faisceau en $\faisO_S$-algèbres qui est une forme (étale) de $\faisM_{n,S}$. 
\end{prop}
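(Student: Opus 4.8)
The plan is to unwind the definition of ``forme \'etale de $\faisM_{n,S}$''. By Definition \ref{formes_defi} (in the \'etale topology, which is the default here), this amounts to showing that $A$ is \'etale-locally isomorphic to $\faisM_{n,S}$ as an $\faisO_S$-algebra. Since this is a condition that may be checked Zariski-locally on $S$, and since by Proposition \ref{azudefi_prop} every Azumaya algebra is Zariski-locally of the form $\wW(\tilde B)$ with $B$ an Azumaya $\Gamma(S)$-algebra, I would first reduce to $S=\Spec(R)$ affine with $A=\wW(\tilde B)$ and $B$ an Azumaya $R$-algebra of degree $n$, i.e. $R$-locally free of rank $n^2$. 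By full faithfulness of $\wW\circ\tilde{(-)}$ and its compatibility with base change (Proposition \ref{Wprop_prop}), together with $\faisM_{n,S}=\wW(\widetilde{\setM_n(R)})$, producing an \'etale-local isomorphism $A\cong\faisM_{n,S}$ is exactly the same as producing a finite \'etale extension $R\to R'$ with $B\otimes_R R'\cong\setM_n(R')$ as $R'$-algebras.

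So the genuine task is the classical splitting statement: an Azumaya $R$-algebra of degree $n$ is split by an \'etale extension. I would reduce further to $R$ local (Zariski), and then pass to a strict henselization $R^{sh}$ of $R$, which is a filtered colimit of finite \'etale local $R$-algebras $R_i$ (\cite[IV, 18.8.7 et 18.8.8]{ega}). The ring $R^{sh}$ is strictly henselian, with separably closed residue field $k$, and $B\otimes_R k$ is a central simple $k$-algebra; by Proposition \ref{sepsurcorps_prop} (there is no nontrivial finite-dimensional division algebra over a separably closed field) it is isomorphic to $\setM_n(k)$. I would then lift the matrix structure of $B\otimes_R k\cong\setM_n(k)$ to $R^{sh}$: a complete system of $n^2$ matrix units lifts to $B\otimes_R R^{sh}$ by henselianity, since $B$ is module-finite over the henselian local ring $R^{sh}$, so that $\mathfrak{m}_{R^{sh}}\,B$ lies in the Jacobson radical and the idempotents and remaining polynomial relations defining the matrix units can be solved by Hensel's lemma. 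These lifted units furnish an isomorphism $B\otimes_R R^{sh}\cong\setM_n(R^{sh})$.

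Finally, a standard finite-presentation argument descends this isomorphism: it is given by finitely many elements of $B\otimes_R R^{sh}$ satisfying finitely many relations, hence is already defined over some $R_i$ in the colimit, so $B\otimes_R R_i\cong\setM_n(R_i)$ with $R_i/R$ finite \'etale. This exhibits $A$ as an \'etale form of $\faisM_{n,S}$ and proves the proposition. (One may instead simply cite the equivalence of conditions in \cite[Ch. III, th. 6.6]{kn-oj}, which packages exactly this statement; it is the same equivalence already invoked in the proof of Proposition \ref{azudefi_prop}.) The converse inclusion is worth recording and is immediate: any \'etale form of $\faisM_{n,S}$ is an Azumaya algebra of degree $n$, since $\faisM_{n,S}$ is Azumaya by Example \ref{EndMAzumaya_exem} and, by Proposition \ref{azudefi_prop}, being Azumaya of a given degree is an \'etale-local property.

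The \emph{main obstacle} is the splitting over the strictly henselian local ring $R^{sh}$: concretely, the two inputs that the Brauer group of a separably closed field is trivial (so the central simple residue algebra is already $\setM_n(k)$) and the henselian lifting of its matrix structure. Everything else is formal reduction to the affine local case and a limit argument.
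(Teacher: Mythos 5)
Your proof is correct, and its skeleton coincides with the paper's: the easy direction (any \'etale form of $\faisM_{n,S}$ is Azumaya of degree $n$) is handled exactly as in the paper via the \'etale-local characterization of \ref{azudefi_prop}, and the reduction of the hard direction to the affine case is the same. Where you diverge is that the paper, after reducing to $S$ affine, simply cites point 4 of \cite[Ch.~III, th.~6.6]{kn-oj} for the splitting, whereas you prove that splitting from scratch: pass to the strict henselization $R^{sh}$, split the residue algebra using triviality of division algebras over a separably closed field (your appeal to \ref{sepsurcorps_prop} needs the small classical supplement that a central division algebra always contains a maximal \emph{separable} subfield, hence is trivial over a separably closed field), lift a full system of matrix units by henselianity, and descend by finite presentation. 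This buys a self-contained argument in the spirit of the paper's own proof of \ref{algetfinies_prop} (which uses the same henselization remark), at the cost of length; the paper's citation is more economical. One inaccuracy worth fixing: $R^{sh}$ is a filtered colimit of \emph{\'etale local} $R$-algebras (\'etale neighborhoods, i.e.\ localizations of \'etale $R$-algebras), not of \emph{finite} \'etale ones, so your descent produces an \'etale, not necessarily finite \'etale, extension $R_i$ --- which is all that the notion of \'etale form in \ref{formes_defi} requires, so the proof stands with ``finite'' deleted. Similarly, after lifting the matrix units you should add the one-line observation that $e_{11}(B\otimes_R R^{sh})e_{11}$ is Azumaya of rank $1$, hence equals $R^{sh}$, to conclude $B\otimes_R R^{sh}\cong\setM_n(R^{sh})$.
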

\begin{proof}
Puisque $\faisM_{n,S}$ est une algèbre d'Azumaya, toute forme étale de $\faisM_{n,S}$ en est également une par le point \ref{azulocet_item} de \ref{azudefi_prop}.

Réciproquement, on se ramène au cas où la base est affine puis on utilise le point 4 de \cite[Ch. III, th. 6.6]{kn-oj}.
\end{proof}

\begin{prop}
Le centre d'une algèbre d'Azumaya est $\faisO_S$.
\end{prop}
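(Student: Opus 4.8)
The plan is to reduce the global statement to the defining property of Azumaya algebras over a ring---namely that their centre is the ground ring---by means of the affine characterisation of Prop.~\ref{azudefi_prop}, and then to globalise by a sheaf argument. Write $A$ for the $\faisO_S$-algebra of Azumaya, and let $\faisCentre_A$ denote its centre, defined exactly as for group sheaves (\cf the definition of $\faisCentre_G$) by
$$\faisCentre_A(T)=\{a \in A(T) : a_{T'}a'=a'a_{T'}\ \text{ for all } T'\to T \text{ and all } a'\in A(T')\}.$$
As in the case of $\faisCentre_G$, one checks without difficulty that $\faisCentre_A$ is a subsheaf of $A$, since centrality of an element can be tested locally because $A$ is a sheaf. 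Because $A$ is an $\faisO_S$-algebra, the structural morphism $\faisO_S \to A$ factors through $\faisCentre_A$, giving a canonical morphism $\faisO_S \to \faisCentre_A$; it remains to see that it is an isomorphism.

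First I would evaluate on an arbitrary affine scheme $T=\Spec(R')$ over $S$. By point \ref{azuaffine_item} of Proposition~\ref{azudefi_prop}, $A(T)$ is an Azumaya algebra over $\Gamma(T)=R'$, so by the very definition of an Azumaya algebra over a ring its centre $Z(A(T))$ equals $R'$. This yields both inclusions: taking $T'=T$ shows $\faisCentre_A(T) \subseteq Z(A(T))=R'=\faisO_S(T)$, while conversely every scalar lies in $\faisCentre_A(T)$, since it remains a scalar, hence central, after any base change $T'\to T$. Thus the map $\faisO_S(T)\to \faisCentre_A(T)$ is bijective for every affine $T$ over $S$.

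Finally I would conclude that $\faisO_S \to \faisCentre_A$ is an isomorphism of sheaves. Both sides are Zariski sheaves, and a morphism of sheaves that is bijective on all affine schemes over $S$ is an isomorphism, since every $S$-scheme is covered by affine opens and the values on a general $T$ are recovered from those on affines by the sheaf condition---this is the principle, used throughout the text, that a sheaf is determined by its restriction to affine schemes over $S$.

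I do not expect any genuine obstacle: the proposition is essentially a restatement of the defining property of Azumaya algebras. The only point requiring a little care is the fibrewise matching of the sheaf-theoretic centre $\faisCentre_A$ with the ordinary ring centre $Z(A(T))$; this works precisely because for an Azumaya algebra the centre consists \emph{exactly} of the scalars, which are manifestly preserved by, and remain central under, arbitrary base change. Consequently no subtlety about the formation of the centre commuting with base change ever needs to be invoked.
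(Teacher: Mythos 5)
Your proof is correct, but it follows a genuinely different route from the paper's. The paper argues \'etale-locally: since every \'etale covering is a universal epimorphism, testing whether a central section lies in $\faisO_S(T)$ can be done after an \'etale base change that trivialises $A$ into $\faisM_{n,S}$, where the centre is computed directly by commutation with elementary matrices. You instead invoke point \ref{azuaffine_item} of Proposition \ref{azudefi_prop}, which says that $A(T)$ is a ring-theoretic Azumaya algebra over $\Gamma(T)$ for every affine $T$ --- and the ring-theoretic definition adopted in the paper already \emph{postulates} that the centre of such an algebra is the ground ring --- then you globalise by Zariski gluing. Your reduction of the sheaf-theoretic centre to the naive centre on affines (inclusion one way by taking $T'=T$, the other way because scalars stay scalar under base change) is the one point that needed care, and you handle it correctly. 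What each approach buys: yours makes the statement nearly tautological given the chosen definition, at the cost of leaning on the equivalence of conditions in Proposition \ref{azudefi_prop} (itself proved via Knus--Ojanguren); the paper's proof is independent of that equivalence in the direction used and stays within the \'etale-local philosophy of the section, identifying the centre by an explicit matrix computation. Both are valid and non-circular.
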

\begin{proof}
Comme tout recouvrement étale est épimorphique (universel) par \cite[Exp. IV, prop. 4.4.3]{sga3}, pour tester si un élément central est dans $\faisO_S(T)$, on peut supposer que l'algèbre est $\faisM_{n,S}$, dont le centre est bien $\faisO_S$ par un calcul direct de commutation aux matrices élémentaires.  
\end{proof}

\begin{defi} \label{Azchamp_defi}
Lorsque la topologie est \fppf\ ou bien étale, on définit le champ des algèbres d'Azumaya de degré $n$, noté $\Azumayan{n}$ en posant que $(\Azumayan{n})_T$ est la catégorie des $\faisO_T$-algèbres d'Azumaya de degré $n$.
\end{defi}
Notons que cette catégorie fibrée est bien un champ pour la topologie étale ou \fppf: en effet, les $\faisO_S$-algèbres forment un champ par la proposition \ref{champstruc_prop}, et la propriété d'être d'Azumaya de degré $n$ est locale par définition pour la topologie étale. C'est de plus un champ en groupoïdes, puisque tout morphisme d'algèbres entre deux algèbres d'Azumaya de même degré est un isomorphisme: on le vérifie localement pour la topologie étale, auquel cas, il suffit de montrer que tout endomorphisme $f:\setM_n(R) \to \setM_n(R)$ est un isomorphisme, pour un anneau $R$. Voir alors \cite[Ch. III, Cor. 5.4]{kn-oj}. 
 
Décrivons maintenant les torseurs sous $\faisGL_{1,A}$ lorsque $A$ est une $\faisO_S$-algèbre d'Azumaya.

Soit $M$ un $A$-module, donc donné par une structure analogue à celle des $\faisO_S$-modules, en remplaçant $\faisO_S$ par $A$. C'est automatiquement un $\faisO_S$-module, puisque $\faisO_S$ est inclus dans $A$ (comme son centre). 

\begin{lemm}
Si un tel $M$ est localement libre type fini sur $A$, alors il est localement libre de type fini sur $\faisO_S$. De plus, si $A$ est de degré constant $n$, alors $M$ est de rang $m$ sur $A$ si et seulement s'il est de rang $mn^2$ sur $\faisO_S$.
\end{lemm}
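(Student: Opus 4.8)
Le plan est de tout ramener au cas scind\'e par descente \'etale. Commen\c{c}ons par remarquer que les deux propri\'et\'es en jeu --- \^etre localement libre de type fini sur $\faisO_S$ et l'\^etre sur $A$ --- sont toutes deux locales pour la topologie \'etale: pour la premi\`ere, c'est le contenu de la proposition \ref{loclibre_prop}; pour la seconde, c'est la d\'efinition m\^eme de la libert\'e locale d'un $A$-module. On peut donc se placer sur un recouvrement \'etale de $S$ trivialisant \`a la fois $A$ et $M$. Par la proposition \ref{Azumayaforme_prop}, quitte \`a raffiner le recouvrement, on dispose d'un isomorphisme de $\faisO_S$-alg\`ebres $A \simeq \faisM_{n,S}$, et par hypoth\`ese de libert\'e locale de $M$, d'un isomorphisme de $A$-modules $M \simeq A^m$; d'o\`u $M \simeq (\faisM_{n,S})^m$ comme $\faisO_S$-module.

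Il suffit alors de traiter ce cas scind\'e. Or, par la d\'efinition \ref{matrices_defi}, $\faisM_{n,S}$ est, vu comme $\faisO_S$-module, libre de rang $n^2$, soit $\faisM_{n,S} \simeq \faisO_S^{n^2}$. On obtient donc $M \simeq \faisO_S^{mn^2}$ localement pour la topologie \'etale. Comme $M$ est par ailleurs un faisceau \'etale (\'etant un $A$-module localement libre, donc localement isomorphe au faisceau $A^m$), il satisfait la condition \ref{etet_item} de la proposition \ref{loclibre_prop}: c'est donc bien un $\faisO_S$-module localement libre de type fini, et son rang vaut $mn^2$ l\`a o\`u son rang sur $A$ vaut $m$. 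Ceci \'etablit la premi\`ere assertion et une implication de l'\'equivalence sur les rangs.

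Pour conclure, il reste \`a voir que la correspondance $m \leftrightarrow mn^2$ est biunivoque. Le rang de $M$ sur $\faisO_S$ est localement constant par la d\'efinition \ref{loclibre_defi}, et il en va de m\^eme de son rang sur $A$ lorsque $A$ est de degr\'e constant $n$. L'\'egalit\'e $\mathrm{rg}_{\faisO_S}(M) = n^2\, \mathrm{rg}_A(M)$, obtenue localement ci-dessus, vaut alors globalement; comme $n \geq 1$, chacun des deux rangs d\'etermine l'autre. En particulier, si $M$ est de rang $mn^2$ sur $\faisO_S$ et de rang $m'$ sur $A$, alors $m'n^2 = mn^2$ impose $m = m'$.

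L'obstacle principal est ici de nature essentiellement formelle: il s'agit de s'assurer que le rang d'un $A$-module libre est bien d\'efini malgr\'e la non-commutativit\'e de $A$ (\ie que $A^m \simeq A^{m'}$ entra\^ine $m = m'$), ce qui d\'ecoule pr\'ecis\'ement du calcul du rang sur $\faisO_S$ men\'e dans le cas scind\'e, puis de combiner correctement les deux trivialisations \'etales sur un recouvrement commun. Le c\oe ur math\'ematique --- le fait qu'une alg\`ebre d'Azumaya devient matricielle apr\`es extension \'etale --- est d\'ej\`a acquis par la proposition \ref{Azumayaforme_prop}.
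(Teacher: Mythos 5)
Votre preuve est correcte et suit essentiellement la m\^eme voie que celle du texte, dont la d\'emonstration se r\'eduit \`a la phrase \og Localement, $A \simeq \faisM_n$ \fg{}: r\'eduction au cas scind\'e par localisation \'etale via la proposition \ref{Azumayaforme_prop}, puis comptage des rangs. Vous explicitez simplement les d\'etails (caract\`ere local des deux propri\'et\'es, recours \`a la proposition \ref{loclibre_prop} pour redescendre) que le texte laisse au lecteur.
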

\begin{proof}
Localement, $A \simeq \faisM_n$.
\end{proof}

Pour toute $\faisO_S$-algèbre d'Azumaya $A$ de degré constant $n$, considérons la catégorie fibrée $\AVecn{A}{m}$ dont les objets de la fibre sur $T$ sont des $A_T$-modules à gauche, qui sont localement libres de rang $m$ comme $A$-modules. Cette catégorie fibrée est un champ: sans la condition ``localement libre de rang $m$'', c'est un champ (pour les topologies Zariski, étale ou \fppf) par la proposition \ref{champstruc_prop}. Il faut ensuite voir que cette condition descend, ce qui est clair.

Définissons le foncteur fibré ``Endomorphismes''
$$\EndFonc:(\AVecn{A^{\opp}}{1})_\grpd\to \Azumayan{n}$$
qui envoie un objet $M$ sur $\faisEnd_{M}$ (endomorphismes de $A^{\opp}$-modules) et un (iso)morphisme $f$ sur $\int_f$. 

Le $A^{\opp}$-module $A_d$ qui est $A$ vu comme $A$-module à droite, et donc comme $A^{\opp}$-module à gauche est évidemment libre de rang $1$.
Alors, dans le champ $\AVecn{A^{\opp}}{1}$ le faisceau d'endomorphismes $\faisEnd_{A_d}$ est canoniquement isomorphe à $A$ (comme $\faisO_S$-algèbre) par $f \mapsto f(1)$. L'objet $A_d$ s'envoie donc sur $A$ par le foncteur $\EndFonc$.
De plus, $\faisAut_{A_d}=\faisGL_{1,A}$, et le foncteur $\EndFonc$ induit la flèche canonique
$$\faisGL_{1,A}=\faisAut^{\AVecn{A^{\opp}}{1}}_{A_d}\to \faisAut^{\Azumayan{n}}_A = \faisPGL_A.$$
(les premiers automorphismes sont de $A^{\opp}$-modules, les seconds de $\faisO_S$-algèbres).

Puisque tout objet de $\AVecn{A^\opp}{1}$ est localement isomorphe à $A_d$, par la proposition \ref{tordusformes_prop}, on obtient:
\begin{prop} \label{GL1Atorseurs_prop}
Le foncteur fibré $\faisIso_{A_d,-}$ définit une équivalence de champs en groupoïdes 
$$(\AVecn{A^{\opp}}{1})_\grpd\isoto\Tors{\faisGL_{1,A}}.$$
\end{prop}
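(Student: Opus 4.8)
The plan is to obtain this as a direct instance of Proposition \ref{tordusformes_prop}, applied to the object $X_0=A_d$ of the stack $\AVecn{A^{\opp}}{1}$. Most of the groundwork has already been laid just above the statement: $A_d$ is the left $A^{\opp}$-module given by $A$ acting on itself on the right, it is free of rank $1$, its sheaf of automorphisms is $\faisAut_{A_d}=\faisGL_{1,A}$, and $\AVecn{A^{\opp}}{1}$ has been observed to be a stack for the topology at hand. So the real content is packaged elsewhere and this proof should be a short assembly of facts.

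First I would identify $(\AVecn{A^{\opp}}{1})_\grpd$ with the groupoid of forms $(\Formes{A_d})_\grpd$. This is essentially tautological: an object of $\AVecn{A^{\opp}}{1}$ over $T$ is by definition a left $A_T^{\opp}$-module that is locally free of rank $1$ as an $A$-module, i.e. locally isomorphic to $(A_d)_T$, which is exactly the condition that it be a form of $A_d$ in the sense of Definition \ref{formes_defi}. Since the two fibered categories share the same morphisms, restricting to isomorphisms on both sides gives $(\AVecn{A^{\opp}}{1})_\grpd=(\Formes{A_d})_\grpd$. The only thing to watch here is that \emph{localement libre de rang $1$} refers to the same topology (Zariski, \'etale or \fppf) for which $\Tors{\faisGL_{1,A}}$ is formed; as a module that is locally free for the coarser Zariski topology is a fortiori a form for the finer ones, this causes no difficulty.

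Then I would invoke Proposition \ref{tordusformes_prop} with $X_0=A_d$. Using $\faisAut_{A_d}=\faisGL_{1,A}$, it produces an equivalence of stacks in groupoids
$$(\Formes{A_d})_\grpd \isoto \Tors{\faisGL_{1,A}}$$
realized by the functor $X\mapsto \faisIso_{A_d,X}$, with quasi-inverse $P\mapsto P\contr{\faisGL_{1,A}}A_d$. Specializing the first functor to a module $M$ recovers precisely the functor $\faisIso_{A_d,-}$ of the statement, so this is the asserted equivalence.

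I do not expect a genuine obstacle here: all the substance is carried by Proposition \ref{tordusformes_prop}, and what remains are the two bookkeeping identifications recalled above, namely $\faisAut_{A_d}=\faisGL_{1,A}$ and the equality of locally free rank-$1$ $A$-modules with forms of $A_d$, both immediate from the preceding discussion. The single point demanding a little care is the compatibility of topologies, to make sure the torsors obtained are torsors for the intended topology; once that is settled the equivalence follows at once.
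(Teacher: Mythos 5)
Your proposal is correct and is essentially the paper's own proof: the paper likewise notes that every object of $\AVecn{A^{\opp}}{1}$ is by definition locally isomorphic to $A_d$, that $\faisAut_{A_d}=\faisGL_{1,A}$, and then applies Proposition \ref{tordusformes_prop} to conclude. Your extra remarks (the identification with $(\Formes{A_d})_\grpd$ and the compatibility of topologies) only make explicit what the paper leaves implicit.
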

De même, puisque toute algèbre d'Azumaya est localement isomorphe à $A$, on obtient également:
\begin{prop} \label{PGLAtorseurs_prop}
Le foncteur fibré $\faisIso_{A,-}$ définit une équivalence de champs en groupoïdes 
$$\Azumayan{n}\isoto\Tors{\faisPGL_{A}}.$$
\end{prop}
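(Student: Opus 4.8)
Le plan est de reconna\^itre que le champ en groupo\"ides $\Azumayan{n}$ co\"incide avec le champ $\Formes{A}$ des formes de $A$, puis d'appliquer directement la proposition \ref{tordusformes_prop} avec $X_0=A$, exactement comme cela a \'et\'e fait pour la proposition \ref{GL1Atorseurs_prop}.

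Je commencerais par identifier les objets des deux champs. Par la proposition \ref{Azumayaforme_prop}, toute $\faisO_T$-alg\`ebre d'Azumaya de degr\'e $n$ est \'etale-localement isomorphe \`a $\faisM_{n,T}$; comme $A_T$ l'est \'egalement, une telle alg\`ebre est \'etale-localement isomorphe \`a $A_T$, donc est une forme de $A_T$. R\'eciproquement, une forme de $A_T$ est \'etale-localement isomorphe \`a $A_T$, donc \`a $\faisM_{n,T}$, et elle est alors d'Azumaya de degr\'e $n$ puisque cette propri\'et\'e est locale pour la topologie \'etale (proposition \ref{azudefi_prop}). Les deux champs ont donc les m\^emes objets, et de fa\c con compatible au changement de base.

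Il faudra ensuite constater que les morphismes co\"incident aussi. Le champ $\Azumayan{n}$ est d\'ej\`a un champ en groupo\"ides, car tout morphisme d'alg\`ebres entre deux alg\`ebres d'Azumaya de m\^eme degr\'e est un isomorphisme (observation faite apr\`es la d\'efinition \ref{Azchamp_defi}); ses morphismes sont donc exactement les isomorphismes d'alg\`ebres, c'est-\`a-dire ceux de $\Formes{A}_\grpd$. On a ainsi $\Azumayan{n}=\Formes{A}$ comme champs en groupo\"ides. Comme $\faisAut^\alg_A=\faisPGL_A$ par d\'efinition, la proposition \ref{tordusformes_prop} fournit alors l'\'equivalence
$$\Formes{A}\isoto\Tors{\faisPGL_A},$$
dont le foncteur associe \`a un objet $B$ le torseur $\faisIso_{A,B}$, c'est-\`a-dire pr\'ecis\'ement le foncteur $\faisIso_{A,-}$ de l'\'enonc\'e.

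La seule \'etape qui demande un peu de travail est l'identification des objets de $\Azumayan{n}$ avec les formes de $A$; elle r\'esulte imm\'ediatement de la proposition \ref{Azumayaforme_prop} appliqu\'ee dans les deux sens, et tout le reste n'est qu'une traduction formelle via la proposition \ref{tordusformes_prop}.
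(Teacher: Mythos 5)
Votre démonstration est correcte et suit exactement la même voie que le texte : on identifie $\Azumayan{n}$ au champ des formes de $A$ (via la proposition \ref{Azumayaforme_prop} et la localité de la propriété d'être d'Azumaya), puis on applique la proposition \ref{tordusformes_prop} avec $\faisAut^\alg_A=\faisPGL_A$. C'est précisément l'argument, laissé implicite dans la phrase qui précède l'énoncé dans l'article.
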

De plus, par la proposition \ref{foncttors_prop}, on a enfin:
\begin{lemm}
A travers ces équivalences, le foncteur $\EndFonc$ induit la fonctorialité $\Tors{\faisGL_{1,A}}\to \Tors{\faisPGL_{A}}$ le long du morphisme canonique $\faisGL_{1,A} \to \faisPGL_A$.
\end{lemm}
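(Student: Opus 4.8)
The plan is to deduce the statement directly from Proposition~\ref{foncttors_prop}, applied to the functor $F=\EndFonc$ and the base object $X=A_d$ of the champ $(\AVecn{A^{\opp}}{1})_\grpd$. Recall the ingredients already assembled just above: $\faisAut_{A_d}=\faisGL_{1,A}$, the object $\EndFonc(A_d)$ is canonically identified with $A$ via $f\mapsto f(1)$, one has $\faisAut_A=\faisPGL_A$, and the morphism $\faisAut_{A_d}\to\faisAut_{\EndFonc(A_d)}$ induced by $\EndFonc$ on automorphism sheaves is exactly the canonical morphism $\faisGL_{1,A}\to\faisPGL_A$. The two equivalences in play, from Propositions~\ref{GL1Atorseurs_prop} and~\ref{PGLAtorseurs_prop}, are the instances of Proposition~\ref{tordusformes_prop} attached to the base objects $A_d$ and $A$; their quasi-inverses are therefore $P\mapsto P\contr{\faisGL_{1,A}}A_d$ and $Q\mapsto Q\contr{\faisPGL_A}A$.

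First I would fix a $\faisGL_{1,A}$-torsor $P$. Point~\ref{fonctobjet_item} of Proposition~\ref{foncttors_prop}, with the pushforward of $P$ along $\faisGL_{1,A}\to\faisPGL_A$ being by definition $P\contr{\faisGL_{1,A}}\faisPGL_A$, yields a natural isomorphism
$$\EndFonc\!\left(P \contr{\faisGL_{1,A}} A_d\right) \isoto \left(P \contr{\faisGL_{1,A}} \faisPGL_A\right) \contr{\faisPGL_A} A.$$
Applying the equivalence $\faisIso_{A,-}$ of Proposition~\ref{PGLAtorseurs_prop} to both sides and using that it is quasi-inverse to $Q\mapsto Q\contr{\faisPGL_A}A$, the right-hand side is carried to the torsor $P\contr{\faisGL_{1,A}}\faisPGL_A$, which is precisely the image of $P$ under the functoriality of Proposition~\ref{fonctorialitegroupe_prop} along $\faisGL_{1,A}\to\faisPGL_A$. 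Since the left-hand side is, through the equivalence of Proposition~\ref{GL1Atorseurs_prop}, the result of transporting $P$ to $(\AVecn{A^{\opp}}{1})_\grpd$ and applying $\EndFonc$, this is exactly the asserted commutation of the square formed by $\EndFonc$, the two equivalences, and the functoriality map.

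The one point requiring care is that the morphism induced by $\EndFonc$ on automorphism sheaves really is the canonical $\faisGL_{1,A}\to\faisPGL_A$, so that the pushforward torsor appearing in Proposition~\ref{foncttors_prop} coincides with the functoriality image $P\contr{\faisGL_{1,A}}\faisPGL_A$; but this identification has already been recorded above, so no additional argument is needed. The remaining checks, namely naturality of the displayed isomorphism in $P$ and its compatibility with the equivalences as functors of fibered categories, are formal and follow from the functoriality built into Propositions~\ref{foncttors_prop} and~\ref{tordusformes_prop}.
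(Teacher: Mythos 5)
Votre preuve est correcte et suit exactement la voie du texte, qui se contente d'invoquer la proposition \ref{foncttors_prop} : vous appliquez son point \ref{fonctobjet_item} au foncteur $\EndFonc$ et � l'objet $A_d$, en utilisant les identifications $\faisAut_{A_d}=\faisGL_{1,A}$, $\EndFonc(A_d)\simeq A$ et $\faisAut_A=\faisPGL_A$ d�j� �tablies juste avant l'�nonc�. Le d�tail que vous ajoutez (le transport des deux c�t�s de l'isomorphisme par l'�quivalence $\faisIso_{A,-}$) est une explicitation fid�le de ce que le texte laisse implicite.
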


Intéressons-nous au cas particulier où $A=\faisEnd_M$ où $M$ est un $\faisO_S$-module localement libre de rang $n$. On a alors l'équivalence de Morita:
$$\AVecn{\End_M^\opp}{1} \isoto \Vecn{n}$$
qui est donnée par le foncteur $M \otimes_{\faisEnd_M^{\opp}} - $ et par le foncteur $M^{\dual} \otimes -$ dans l'autre sens. Cela utilise les isomorphismes canoniques 
$$M \otimes_{\faisEnd_M^\opp} M^\dual \isoto \faisO_S\quad\text{et}\quad M^\dual \otimes M \isoto \faisEnd_M^\opp$$ dont le premier envoie $m \otimes f$ sur $f(m)$. 
Par cette équivalence, $M \in \Vecn{n}$ est donc envoyé sur $\faisEnd_M^{\opp}$ dans $\AVecn{\End_M^\opp}{1}$ et on retrouve la description de la proposition \ref{GLntors_prop} dans le cas où $M =\faisO_S^n$.

Le foncteur fibré $\Vecn{1} \to \AVecn{A^{\opp}}{1}$ envoyant un fibré en droites $L$ sur le $A^{\opp}$-module $A_d \otimes L$ (l'action de $A^{\opp}$ se faisant sur $A_d$) envoie l'objet $\faisO_S$ sur $A_d$ et induit entre leurs automorphismes l'inclusion $\faisGm \to \faisGL_{1,A}$.

Introduisons enfin la norme et la trace réduite.

On considère la catégorie fibrée $\TrAz$ dont la $T$-fibre a pour objets les paires $(A,t)$ où $A$ est une $\faisO_T$-algèbre d'Azumaya et où $t:A \to \faisO_T$ est un morphisme de $\faisO_T$-modules qui vérifie $t(ab)=t(ba)$ sur les points. Les morphismes $(A,t)\to (A',t')$ sont les morphismes d'algèbres $f:A \to A'$ tels que $t'\circ f=t$. Cette catégorie fibrée est un champ par \ref{champstruc_prop}, et il y a un foncteur fibré oubli évident $\TrAz \to \Azumaya$, qui induit donc pour toute paire $(A,t)$ un morphisme de faisceaux en groupes $\faisAut_{A,t} \to \faisAut_A$. Ce morphisme est en fait un isomorphisme. En effet, tout morphisme d'algèbres $f:A \to A$ vérifie $t \circ f = t$: comme les morphismes sont à valeurs dans $\faisO_S$, un faisceau, on peut vérifier l'égalité localement, et donc supposer que $f$ est intérieur, auquel cas cela découle immédiatement de la propriété imposée sur $t$. 
Partant donc d'un torseur sous $\faisPGL_n=\faisAut_{\faisM_n}$ correspondant à $A$ par \ref{PGLAtorseurs_prop}, on peut donc tordre $(\faisM_n,\trace)$, où $\trace$ est la trace, en une paire $(A,\trd_A)$. 
\begin{defi} \label{trd_defi}
Pour toute algèbre d'Azumaya $A$, le morphisme de $\faisO_S$-modules $\trd_A: A \to \faisO_S$ défini ci-dessus est appelé \emph{trace réduite}.
\end{defi}
Le foncteur fibré $\Azumaya \to \TrAz$ qui envoie $A$ sur $(A,\trd_A)$ est une section de l'oubli $\TrAz \to \Azumaya$.

\begin{lemm} \label{trdreg_lemm}
Si $A$ est de degré constant pair, le module bilinéaire $(A,t)$ où $t:A \to A^\dual$ est donné par $t(a)=\trd(a-)$ est régulier.
\end{lemm}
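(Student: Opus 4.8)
The plan is to prove regularity \'etale-locally on $S$. By definition, the bilinear module $(A,t)$ is regular exactly when the adjoint morphism $t : A \to A^{\dual}$ is an isomorphism of $\faisO_S$-modules. Both $A$ and its dual $A^{\dual}$ are locally libres de type fini (of rank $n^2$), and whether a morphism between locally free modules of finite type is an isomorphism is a local property for the \'etale topology, since $\Vect$ is a champ (proposition \ref{VecChamp_prop}) and an isomorphism in a champ peut se d\'etecter localement. It therefore suffices to check that $t$ becomes an isomorphism after passing to an \'etale cover.

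By proposition \ref{Azumayaforme_prop}, the algebra $A$ is \'etale-localement isomorphe to $\faisM_{n,S}$. Moreover, by the very construction preceding d\'efinition \ref{trd_defi}, the reduced trace $\trd_A$ is obtained by twisting the pair $(\faisM_n,\trace)$ along the same cocycle that produces $A$ from $\faisM_{n,S}$; hence, after the same \'etale localization, the pair $(A,t)$ is identified with $(\faisM_n,\beta)$, where $\beta$ is the bilinear form $(X,Y)\mapsto \trace(XY)$. Using proposition \ref{reprLocal_prop} I would further reduce to an affine base $S=\Spec(R)$ with $A=\setM_n(R)$, so that it remains to prove that $\beta$ is regular over an arbitrary commutative ring $R$.

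This last point is an explicit computation in the standard basis $\{E_{ij}\}_{1\le i,j\le n}$ of $\setM_n(R)$. Since $E_{ij}E_{kl}=\delta_{jk}E_{il}$ and $\trace(E_{il})=\delta_{il}$, one gets $\beta(E_{ij},E_{kl})=\delta_{jk}\delta_{il}$, so the Gram matrix of $\beta$ is the permutation matrix of the involution $E_{ij}\mapsto E_{ji}$. Its determinant is $\pm 1$, hence a unit in any ring, so $\beta$ is regular; descending back along the \'etale cover gives the result over $S$. I expect no serious obstacle here: the only points requiring care are the local-to-global descent of the isomorphism property (handled by $\Vect$ being a champ) and the verification that the twist really carries $\trace$ to $\trd_A$, so that the local model of $(A,t)$ is genuinely $(\faisM_n,\trace)$. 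Note in particular that the parity of the degree plays no role in the regularity of the trace form itself; the even-degree hypothesis is the standing one in view of the later quadratic-pair constructions, not a constraint needed for this computation.
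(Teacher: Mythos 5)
Votre preuve est correcte et suit essentiellement la même démarche que celle du texte : réduction étale-locale au cas $(\faisM_n,\trace)$ (justifiée par la construction même de $\trd$ comme tordu de la trace), puis calcul explicite sur la base des matrices élémentaires, que le texte laisse en exercice et que vous explicitez. Votre remarque finale est juste : la parité du degré ne joue aucun rôle dans la régularité de la forme trace elle-même.
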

\begin{proof}
C'est une propriété locale pour la topologie étale, on peut donc supposer $A=\faisM_n$ et $\trd$ est alors la trace usuelle, auquel cas c'est un exercice facile d'algèbre linéaire.
\end{proof}

De manière analogue, on introduit la norme réduite en considérant le champ $\NrAzn{n}$ des $(A,d)$, avec $A$ d'Azumaya de degré $n$ et $d:A \to \faisO_S$ satisfaisant $d(ab)=d(a)d(b)$ sur les points. Comme plus haut, on a $\faisAut_{A,d} \to \faisAut_A$ est un isomorphisme, et on obtient un morphisme $\nrd$ en tordant la paire $(\faisM_n,\det)$.
\begin{defi} \label{normereduite_defi}
Ce morphisme $\nrd: A \to \faisO_S$ est appelé \emph{norme réduite}. Il induit sur les éléments inversibles un morphisme $\faisGL_{1,A} \to \faisGm$.
\end{defi} 
 
\begin{rema} \label{detGLM_rema}
Lorsque $A=\faisEnd_M$ pour un $\faisO_S$-module localement de type fini $M$, cette norme réduite est bien le déterminant usuel qui, sur les points, à un endomorphisme de $M$ associe son endomorphisme déterminant, point de $\faisEnd_{\Det{M}}=\faisO_S$. Par construction, c'est vrai localement et donc en général puisque $\faisO_S$ est un faisceau. 
\end{rema}

\subsection{Modules quadratiques} \label{modulesquadratiques_sec}

Rappelons qu'un $\faisO_S$-module bilinéaire (resp. quadratique) est un objet en $\faisO_S$-modules bilinéaires (resp. quadratiques), au sens de l'exemple \ref{bilineaire_item} de \ref{structures_exem} (resp. \ref{quadratique_item}). Nous éviterons d'utiliser la terminologie ``forme bilinéaire'' car le mot ``forme'' est déjà pris (déf. \ref{formes_defi}). De plus, pour alléger, nous désignerons souvent un module bilinéaire (resp. quadratique) par la seule mention de l'application $b:M \times M \to \faisO_S$ (resp. $q: M \to \faisO_S$), et nous utiliserons parfois le terme ``module'' à la place de $\faisO_S$-module. 

La notation $M^\dual$ désigne le module dual donné par 
$$M^\dual(T)=\Hom_{\faisO_{T}-\text{mod}}(M_{T},\faisO_{T}).$$ 
Pour un $\faisO_S$-module localement libre de type fini, le morphisme canonique $\bid_M: M \to (M^\dual)^\dual$ est un isomorphisme. Nous supposerons dans cette partie et dans toute la suite du texte que, sauf mention explicite, tous les modules quadratiques ou bilinéaires sont localement libres de type fini. 

La somme orthogonale de deux modules bilinéaires $b_1$ et $b_2$ est notée $b_1\orth b_2$. Idem pour les modules quadratiques.

On montre facilement que se donner un $\faisO_S$-module bilinéaire est équivalent à se donner un $\faisO_S$-module $M$ (loc. libre de type fini) et un morphisme $\phi:M \to M^\dual$. Ce $\faisO_S$-module sera symétrique si $(\phi)^\dual \circ \bid_M=\phi$ et antisymétrique si $(\phi)^\dual \circ \bid_M=-\phi$. Par ailleurs un module bilinéaire est dit alterné si la composition 
\[
\xymatrix{M\ar[r]^-\Delta & M\times M\ar[r]^-b & \faisO_S}
\]
est triviale, où $\Delta$ est l'application diagonale. Il est clair qu'un module alterné est antisymétrique.

Lorsque la base $S$ est égale à $\Spec(R)$, le foncteur $\tilde{(-)}$ induit une équivalence de catégorie entre les modules bilinéaires (resp. quadratiques) usuels sur $R$ et les $\faisO_S$-modules bilinéaires (resp. quadratiques). 

Un module quadratique $q$ vient par définition avec un \emph{module polaire associé} $b_q$, qui est un module bilinéaire symétrique. Sur les points, il s'agit de $b_q(x,y)=q(x+y)-q(x)-q(y)$.

De même, partant d'un module module bilinéaire $b$, on obtient un module quadratique associé $q_b$ en précomposant le morphisme bilinéaire $M \times M \to M$ par l'application diagonale $M \to M \times M$. On a $q_{b_q} = 2 q$ et si $b$ est symétrique, $b_{q_b}=2b$. Ces deux constructions sont clairement fonctorielles.

\begin{defi}
Nous utiliserons la notation usuelle $\fq{a_1,a_2,\ldots,a_n}$ pour désigner le module quadratique diagonal d'équation $a_1 x_1^2 + \cdots + a_n x_n^2$ sur $\faisO_S^n$. Pour les modules bilinéaires, nous utiliserons la même notation pour désigner le module d'équation $a_1 x_1 y_1 +a_2x_2 y_2+\dots a_n x_n y_n$ sur $\faisO_S^n$.
\end{defi}
On a donc $b_{\fq{a_1,\ldots,a_n}}=\fq{2a_1,\ldots,2a_n}$ et $q_{\fq{a_1,\ldots,a_n}} =\fq{a_1,\ldots,a_n} $. 

Si $(M,b)$ est un module bilinéaire et si $N$ est un sous-module de $M$, l'orthogonal de $N$, noté $N^{\orth}$ désigne le sous-module de $M$ noyau de l'application correspondante $M \to M^{\dual}$ restreinte à $N$, autrement dit
$$N^{\orth}(T)=\{m \in M(T) \text{ t.q. $b(m_{T'},n)=0$ $\forall T'\to T$ et $\forall n \in N(T')$}\}.$$
Si $(M,q)$ est un module quadratique, alors l'orthogonal d'un sous-module est toujours pris au sens de la forme polaire $b_q$ associée.

\begin{defi} \label{radical_defi}
Le \emph{radical} $\rad_b$ d'un module bilinéaire $(M,b)$ est le noyau de l'application $M \to M^\dual$. \'Etant donné un module quadratique $(M,q)$, nous appellerons \emph{radical polaire}, noté $\rad_q$, le radical de son module polaire $b_q$ et nous appellerons \emph{radical quadratique}, noté $\qrad_q$ le sous-module de son radical polaire constitué des éléments s'envoyant sur $0$ par $q$. 
\end{defi}

\begin{rema}
On vérifie aisément que le radical quadratique d'un module quadratique est bien un sous-module. Par contre, en général, il n'est pas localement libre. Si $2$ est inversible dans $\Gamma(S)$, alors le radical quadratique et le radical polaire de $q$ coïncident.
Sur un anneau, ni le radical polaire, ni le radical quadratique ne commutent en général à l'extension des scalaires, au sens que $\qrad_q(R) \otimes R'$ peut-être strictement contenu dans $\qrad_{q_{R'}}(R')$. Il suffit de prendre l'exemple des formes $\fq{1}$ et $\fq{2}$ sur $\ZZ$ puis sur les corps de car. $2$ ou pas, pour avoir tous les cas possibles. Le radical polaire commute aux extensions plates de la base, car c'est le noyau de $M \to M^\dual$ et donc en particulier, sur un corps de base, comme tout est plat, il y a commutation à l'extension des scalaires. Par contre, le radical quadratique ne commute même pas à ces extensions plates: sur un corps $k$ de caractéristique $2$, on peut prendre $R'=k[t]/t^2$ (qui est plate). Mais le module quadratique quasi-linéaire $\fq{1}$ vérifie $\qrad_{\fq{1}}(k)=\{0\}$ et pourtant $\qrad_{\fq{1}_{R'}}(R')=\{tx,\ x\in k\}$. 
\end{rema}

La situation en rang un est la suivante.
\begin{prop} \label{rangun_prop}
Le foncteur $b \mapsto q_b$ induit une équivalence de catégories des modules bilinéaires de rang $1$ vers les modules quadratiques de rang $1$.
\end{prop}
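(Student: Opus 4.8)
Le plan est d'\'etablir que le foncteur $F:b\mapsto q_b$ est l'identit\'e sur les modules sous-jacents, puis d'en d\'eduire qu'il est pleinement fid\`ele et essentiellement surjectif, le rang $1$ n'intervenant que par un unique calcul local. Je rappellerais d'abord qu'un morphisme de modules bilin\'eaires $(M_1,b_1)\to(M_2,b_2)$ (resp.\ quadratiques $(M_1,q_1)\to(M_2,q_2)$) est un morphisme de $\faisO_S$-modules $f:M_1\to M_2$ pr\'eservant la forme, \ie\ $f^*b_2=b_1$ (resp.\ $f^*q_2=q_1$), o\`u $f^*$ d\'esigne la pr\'ecomposition par $f$. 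Comme $q_{f^*b}=f^*q_b$ (les deux valent $x\mapsto b(f(x),f(x))$ sur les points), le foncteur est bien d\'efini sur les morphismes en envoyant $f$ sur lui-m\^eme, et il est imm\'ediatement fid\`ele.

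Le c\oe ur de la preuve sera le lemme suivant: pour tout $\faisO_S$-module inversible $M$, l'application $\faisO_S$-lin\'eaire $b\mapsto q_b$, du faisceau des formes bilin\'eaires sur $M$ vers celui des formes quadratiques sur $M$, est un isomorphisme. Ces deux faisceaux \'etant localement libres de rang $1$ (le premier est $(M\otimes M)^\dual$, le second lui est isomorphe en rang $1$) et l'application \'etant $\faisO_S$-lin\'eaire, \^etre un isomorphisme se teste localement; on se ram\`ene alors \`a $M\simeq\faisO_S$, o\`u une forme bilin\'eaire s'\'ecrit $b(x,y)=a\,xy$, d'o\`u $q_b(x)=a\,x^2$, tandis qu'une forme quadratique s'\'ecrit $q(x)=c\,x^2$: l'application devient l'identit\'e $a\mapsto a$. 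C'est le seul endroit o\`u le rang $1$ est essentiel, puisqu'en rang sup\'erieur l'application $b\mapsto q_b$ perd de l'information (typiquement en caract\'eristique $2$).

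Il resterait alors \`a conclure. Pour la pleine fid\'elit\'e, partant de $f:M_1\to M_2$ avec $f^*q_{b_2}=q_{b_1}$, on aurait $q_{f^*b_2}=q_{b_1}$, et l'injectivit\'e du lemme sur $M_1$ donnerait $f^*b_2=b_1$; ainsi tout morphisme dans la cat\'egorie image provient d'un unique morphisme de modules bilin\'eaires. Pour la surjectivit\'e essentielle, \`a un module quadratique $(M,q)$ de rang $1$ la surjectivit\'e du lemme associerait une unique forme bilin\'eaire $b$ sur $M$ avec $q_b=q$, de sorte que $F(M,b)=(M,q)$. Le foncteur $F$ serait donc une \'equivalence, et m\^eme un isomorphisme de cat\'egories. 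Je ne pr\'evois pas d'obstacle s\'erieux: la seule vigilance requise est d'identifier correctement les morphismes comme pr\'eservant les formes, puis de mener le calcul local du lemme, qui est \'el\'ementaire.
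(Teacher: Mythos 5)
Votre preuve est correcte et suit essentiellement la m\^eme d\'emarche que celle du texte : tout repose sur le calcul local sur un module libre de rang $1$, o\`u $b$ est d\'etermin\'ee par $b(e,e)=q_b(e)$, suivi d'un recollement Zariski pour la pleine fid\'elit\'e et la surjectivit\'e essentielle. Votre reformulation du pas local-global comme l'\'enonc\'e qu'une application $\faisO_S$-lin\'eaire entre les deux faisceaux inversibles $(M\otimes M)^\dual$ est un isomorphisme d\`es qu'elle l'est localement est une mani\`ere l\'eg\`erement plus structur\'ee de dire la m\^eme chose.
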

\begin{proof}
\'Etant donné un module module quadratique $q$ sur un $\faisO_S$-module localement libre de rang $1$ , construisons une forme bilinéaire $b$ telle que $q_b\simeq q$. Lorsque $L$ est libre de rang $1$ et que $e$ est une base de $\faisO_S(S)$, l'application $q$ est entièrement déterminée par l'image $q(e)$, et on choisit la forme bilinéaire donnée sur les points par $(xe,ye)\mapsto xyq(e)$; cette forme est indépendante du choix de $e$. Pour un $L$ quelconque, on prend un recouvrement ouvert Zariski qui trivialise $L$, et on construit $b$ par recollement à partir du cas précédent; l'isomorphisme $q\simeq q_b$ peut se tester localement. Cela montre que le foncteur est essentiellement surjectif. Il est fidèle puisque les modules sous-jacents ne changent pas, et on vérifie qu'il est plein en localisant à nouveau à des ouverts Zariski. 
\end{proof}

\begin{rema}
\'Etant donné $b_{q_b}=2b$, il est clair que si $2$ est inversible dans $\faisO_S(S)$, alors $q \mapsto b_q$ et $b \mapsto q_b$ sont des équivalences de catégories. Mais hors de cette hypothèse, dès le rang $2$, il y a des modules quadratiques qui ne sont pas de la forme $q_b$ avec $b$ bilinéaire. Il suffit de prendre une base $S$ en caractéristique $2$ (\ie où $2=0$ dans $\faisO_S(S)$). On a alors $b_{q_b}=2b=0$, donc un module quadratique $q$ tel que $b_q$ est non nul fournit un exemple. C'est le cas de $q$ sur $M=\faisO_S^2$ donné par $q(a e_1 + be_2)=ab$ (qui est hyperbolique, voir ci-dessous).
\end{rema}

\begin{defi} \label{hyperbolique_defi}
Le $\faisO_S$-module bilinéaire sur $M \oplus M^\dual$ donné par la matrice $\left(\begin{smallmatrix} 0 & 1 \\ \bid_M & 0 \end{smallmatrix} \right)$ est symétrique et sera noté $\hyps_M$. De même, le module bilinéaire donnée par la matrice $\left(\begin{smallmatrix} 0 & 1 \\ -\bid_M & 0 \end{smallmatrix}\right)$ est alterné et sera noté $\hypa_M$. On les appelle respectivement module hyperbolique bilinéaire symétrique et alterné. Le module quadratique hyperbolique sur $M \oplus M^\dual$, noté $\hypq_M$, est donné par l'évaluation $\hypq_M(m,e)=e(m)$ sur les points.
Lorsque $M=\faisO_S^n$, on utilise les notations $\hyps_{2n}$, $\hypa_{2n}$ et $\hypq_{2n}$. 
\end{defi}
Attention, on a $b_{\hypq_M}=\hyps_M$ mais $q_{\hyps_M}=2 \hypq_M$.
\begin{defi} \label{qhyperbolique_defi}
En rang impair, on note $\hyps_{2n+1}$ et $\hypq_{2n+1}$ les sommes orthogonale de $\hyps_{2n}$ et $\hypq_{2n}$ avec le module $\fq{1}$, bilinéaire ou quadratique respectivement.
\end{defi}
Attention, on a alors $b_{\hypq_{2n+1}}=\fq{2} \orth \hyps_{2n}$ et $q_{\hyps_{2n+1}} = \fq{1} \orth 2\hypq_{2n}$. 

\begin{defi} \label{regulier_defi}
Un $\faisO_S$-module bilinéaire est dit \emph{régulier} si son radical est nul. Un $\faisO_S$-module quadratique est dit régulier si son module bilinéaire polaire associé est régulier. 
\end{defi}

\begin{exem}
Les modules hyperboliques $\hyps_M$, $\hypa_M$ et donc $\hypq_M$ sont réguliers.
\end{exem}

\begin{prop} \label{regulierloca_prop}
Un $\faisO_S$-module bilinéaire (resp. quadratique) est régulier si et seulement s'il l'est localement pour la topologie étale ou même \fppf. 
\end{prop}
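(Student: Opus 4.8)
The plan is to observe that both notions reduce to the vanishing of a single subsheaf, and that the vanishing of a subsheaf is by its very nature a local condition. I would first treat the bilinear case. By Definition \ref{radical_defi} the radical $\rad_b$ is the kernel of the morphism $\phi\colon M \to M^\dual$ attached to $b$. Since $M$ is locally free of finite type, both $M$ and $M^\dual$ are representable, hence sheaves for the \'etale and \fppf\ topologies (see \ref{loclibre_prop}); consequently $\rad_b=\ker(\phi)$ is a subsheaf of $M$ for each of these topologies. The one formal point to record is that the formation of $\rad_b$ commutes with base change: for any $T\to S$ one has $(M^\dual)_T=(M_T)^\dual$ tautologically on the functor of points, and $\phi_T$ is precisely the morphism attached to $b_T$, whence $(\rad_b)_T=\rad_{b_T}$. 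It is essential that this is a statement about the subsheaf, not about its module of global sections; the latter does not commute with base change in general, as the remark following Definition \ref{radical_defi} shows.

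Granting this, both implications are immediate. If $(M,b)$ is regular, i.e. $\rad_b=0$, then for any covering $(S_i\to S)$ we obtain $\rad_{b_{S_i}}=(\rad_b)_{S_i}=0$, so each $(M_{S_i},b_{S_i})$ is regular. Conversely, suppose $\rad_{b_{S_i}}=0$ for all $i$ in an \'etale (or \fppf) covering. Then $(\rad_b)_{S_i}=0$ for every $i$, so the subsheaf $\rad_b\subseteq M$ vanishes on a covering. For an arbitrary $T\to S$ and $m\in\rad_b(T)$, the restrictions of $m$ to the covering $(T\times_S S_i\to T)$ lie in $(\rad_b)_{S_i}=0$; since $M$ is a sheaf, $m=0$. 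Hence $\rad_b=0$ and $(M,b)$ is regular.

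For the quadratic case I would reduce to the bilinear one: by Definition \ref{regulier_defi} a quadratic module $(M,q)$ is regular precisely when its polar bilinear module $(M,b_q)$ is, and the polar module commutes with base change, since on points $b_q(x,y)=q(x+y)-q(x)-q(y)$. Thus $b_{q_{S_i}}=(b_q)_{S_i}$, and the statement for $q$ follows by applying the bilinear case to $b_q$. I do not anticipate a genuine obstacle; the only thing demanding care is to run the entire argument at the level of sheaves, where $\rad_b$ is truly a subsheaf that commutes with base change, rather than at the level of global-section modules, where the radical may fail to commute even with flat base change. Alternatively one could use the characterization, valid for locally free modules of finite type, that $\rad_b=0$ if and only if $\phi\colon M\to M^\dual$ is an isomorphism, and then invoke the fact that being an isomorphism of sheaves is a local property; but the direct argument above avoids even that reformulation.
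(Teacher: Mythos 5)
Your proof is correct. The paper's own proof is a single line --- \emph{le fait d'\^etre un isomorphisme se teste localement pour la topologie \fppf} --- which implicitly rests on the equivalence, for modules localement libres de type fini, between the vanishing of the radical and the morphism $\phi\colon M\to M^\dual$ being an isomorphism; that equivalence is only made explicit afterwards (Proposition \ref{detregiso_prop}, via the determinant). You instead argue directly from Definition \ref{radical_defi}: $\rad_b=\ker(\phi)$ is a subsheaf of $M$ whose formation commutes tautologically with base change at the level of functors of points, and the vanishing of a subsheaf of an \'etale or \fppf\ sheaf can be checked on a covering, using only the separatedness of $M$. This buys you two things: you never need the reformulation \emph{r\'egulier $\Leftrightarrow$ isomorphisme} (which you correctly flag as the paper's route), and you make explicit the point the paper leaves silent, namely that the relevant commutation with base change is the tautological one for the kernel subsheaf, not the commutation for modules of global sections, which fails in general as the remark following Definition \ref{radical_defi} illustrates. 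The reduction of the quadratic case to the bilinear one through the polar form $b_q$ is common to both treatments. Both arguments are sound; yours is slightly longer but more self-contained.
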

\begin{proof}
Le fait d'être un isomorphisme se teste localement pour la topologie \fppf.
\end{proof}

\subsubsection{Déterminant}

Définissons maintenant le déterminant d'un module bilinéaire. Rappelons que $\Det{M}$ est le déterminant du $\faisO_S$-module localement libre de type fini $M$. 
\begin{defi}
Le \emph{morphisme déterminant} d'un module bilinéaire $b:M \to M^\dual$, est le morphisme de $\faisO_S$-modules $(\Det{M})^{\otimes 2} \to \faisO_S$ donné par la composition du morphisme $id\otimes \Det{b}: \Det{M}^{\otimes 2} \to \Det{M} \otimes \Det{M^\dual}$ suivi des isomorphismes canoniques
$$\Det{M} \otimes \Det{M^\dual} \simeq \Det{M} \otimes \Det{M}^{\dual} \simeq \faisO_S.$$
On le note $\bDetMorph{b}$.
\end{defi}

\begin{rema}
Lorsque $M=\faisO_S^n$, on a canoniquement $\faisO_S \simeq \faisO_S^{\otimes 2} \simeq \Det{M}^{\otimes 2}$ et on obtient bien le déterminant de la matrice qui décrit l'application bilinéaire sur la base canonique, en regardant l'élément de $\faisO_S(S)$ qui est l'image de $1$ par ce morphisme.  
\end{rema}

\begin{prop} \label{detregiso_prop} 
Un module bilinéaire $M$ est régulier si et seulement si son morphisme déterminant $\Det{M}^{\otimes 2} \to \faisO_S$ est un isomorphisme. Un module quadratique est régulier si et seulement si le déterminant de son module bilinéaire associé est inversible.
\end{prop}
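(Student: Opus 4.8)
The plan is to reduce both statements to the single fact that $(M,b)$ is regular if and only if the morphism $b\colon M \to M^\dual$ is an isomorphism. First I would note that $\rad_b = 0$ means exactly that $b$ is a monomorphism of sheaves, \ie that $b_{R'}$ is injective for every $\faisO_S(S)$-algebra $R'$. Since regularity is a local property (proposition \ref{regulierloca_prop}) and being an isomorphism can be checked locally, I would reduce to the case $S = \Spec(R)$ affine with $M = \faisO_S^n$ free, where $b$ is given by a square matrix $B$ and $\rad_b$ by the functorial kernel of $B$ (evaluated at $R'$, it is $\ker(B|_{(R')^n})$). The one genuinely non-formal point lies here: injectivity of $B$ after every base change forces $\det(B)$ to be a \emph{unit}, not merely a non-zero-divisor. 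Indeed, if $\det(B)$ belonged to some maximal ideal $\mathfrak{m}$, base change to the residue field $R/\mathfrak{m}$ would kill this determinant and hence produce a nontrivial kernel, contradicting $\rad_b = 0$. Conversely, if $\det(B) \in R^\times$ then $B$ is invertible and $b$ is an isomorphism. Thus $(M,b)$ is regular if and only if $b$ is an isomorphism.

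To finish the first statement I would compare $b$ with its determinant morphism. In the free case above, $\bDetMorph{b}$ is identified with multiplication by $\det(B)$ through the canonical isomorphisms $\Det{M}^{\otimes 2} \simeq \faisO_S$, so $\bDetMorph{b}$ is an isomorphism if and only if $\det(B)$ is a unit, if and only if $b$ is an isomorphism. As both conditions (being regular, and $\bDetMorph{b}$ being an isomorphism) are local for the Zariski topology, the local equivalence globalises. Alternatively, for the forward direction the functoriality of $\Det{}$ sends the isomorphism $b$ to an isomorphism $\Det{b}\colon \Det{M} \isoto \Det{M^\dual}$, whence $\bDetMorph{b}$ is an isomorphism; for the converse, $\bDetMorph{b}$ an isomorphism makes $\det(B)$ locally a unit, so $b$ is locally an isomorphism, hence an isomorphism.

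The second statement is then immediate: by definition \ref{regulier_defi} a quadratic module $(M,q)$ is regular if and only if its polar bilinear module $b_q$ is regular, and the first part applied to $b_q$ says precisely that this amounts to the invertibility of the determinant of $b_q$. The only real obstacle in the whole argument remains the equivalence ``$\rad_b = 0 \iff \det(B) \in R^\times$'', which rests essentially on the functorial nature of the radical, that is, on testing after base change to the residue fields; without that functoriality one would only obtain $\det(B)$ a non-zero-divisor, which is strictly weaker.
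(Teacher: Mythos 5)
Votre preuve est correcte et suit pour l'essentiel la m�me route que celle du texte : localiser pour la topologie de Zariski au cas libre, puis se ramener au fait classique qu'une matrice est inversible si et seulement si son d�terminant est une unit�. Le texte exp�die l'�tape cl� d'un simple ``classique'', tandis que vous explicitez utilement pourquoi l'annulation du radical au sens des faisceaux --- test�e apr�s changement de base aux corps r�siduels --- force $\det(B)$ � �tre une unit� et non pas seulement un non-diviseur de z�ro.
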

\begin{proof}
La deuxième affirmation est une conséquence triviale de la première, qui se vérifie localement pour la topologie de Zariski, et lorsque le module est $\faisO_S^n$, l'affirmation est classique: une matrice est inversible si et seulement si son déterminant est inversible. 
\end{proof}

Un $\faisO_S$-\emph{module déterminant} est un module $n$-trivialisé avec $n=2$, voir la partie \ref{GLGmmunTors_sec}; Explicitement, c'est un couple $(L,\phi)$ où $L$ est un $\faisO_S$-module inversible de rang constant $1$, et $\phi:L^{\otimes 2} \to \faisO_S$ est un isomorphisme. 

\begin{defi} \label{moddet_defi}
On note $\ModDet=\nTriv{2}$  le champ des modules déterminants.
\end{defi}

Notons que pour tout module déterminant $(L,\phi)$, on a $\faisAut_{(L,\phi)}=\faismu_2$.
 
\begin{defi}
Soit $(M,b)$ un module bilinéaire (resp. quadratique) régulier. Son $\emph{module déterminant}$ est le module déterminant $(\Det{M},\bDetMorph{b})$ (resp. $(\Det{M},\bDetMorph{b_q})$).  
\end{defi}

Par compatibilité aux formes hyperboliques, il est parfois nécessaire d'ajouter un signe au module déterminant trivial.
\begin{defi} \label{moddettriv_defi}
Soit $\signe=1$ ou $-1$.
Le $\faisO_S$-module déterminant $(\faisO_S,\signe\cdot m)$ où $m:\faisO_S^{\otimes 2} \to \faisO_S$ est la multiplication est appelé le $\faisO_S$-module déterminant \emph{trivial pair} si $\signe =1$ et \emph{trivial impair} si $\signe =-1$. 
\end{defi}

\begin{lemm}
Le module déterminant du module hyperbolique $\hypq_{2n}$ est le module déterminant trivial de la parité de $n$.
\end{lemm}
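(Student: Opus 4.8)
The plan is to reduce everything to a direct determinant computation on the standard free module. First I would recall that, by construction, the polar module associated to $\hypq_{2n}$ is $\hyps_{2n}$ (the identity $b_{\hypq_M}=\hyps_M$ noted just after Définition~\ref{hyperbolique_defi}), so that the determinant module of $\hypq_{2n}$ is by definition $(\Det{M\oplus M^\dual},\bDetMorph{\hyps_{2n}})$ with $M=\faisO_S^n$. Since the hyperbolic modules are regular, this is a genuine $\faisO_S$-module d\'eterminant, and it suffices to identify the isomorphism $\bDetMorph{\hyps_{2n}}$.

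Next I would exploit that $M=\faisO_S^n$ is free: its standard basis together with the associated dual basis of $M^\dual$ provides a basis of $M\oplus M^\dual\simeq\faisO_S^{2n}$, in which the canonical map $\bid_M\colon M\to (M^\dual)^\dual$ is the identity matrix. In this basis the Gram matrix of $\hyps_{2n}=\hyps_M$ is exactly the block matrix $\left(\begin{smallmatrix}0 & I_n\\ I_n & 0\end{smallmatrix}\right)$. By the remark following the definition of the determinant morphism, under the canonical identifications $\faisO_S\simeq\faisO_S^{\otimes 2}\simeq\Det{\faisO_S^{2n}}^{\otimes 2}$ the morphism $\bDetMorph{\hyps_{2n}}$ is nothing but multiplication by the determinant of this Gram matrix.

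It then remains to compute that determinant. The matrix $\left(\begin{smallmatrix}0 & I_n\\ I_n & 0\end{smallmatrix}\right)$ is the permutation matrix of the involution exchanging the $i$-th and the $(n+i)$-th basis vectors for $1\le i\le n$; this permutation is a product of $n$ disjoint transpositions, hence has sign $(-1)^n$, so its determinant is $(-1)^n$. Therefore $\bDetMorph{\hyps_{2n}}=(-1)^n\cdot m$, where $m\colon\faisO_S^{\otimes 2}\to\faisO_S$ is the multiplication, which is precisely the trivial determinant module $(\faisO_S,\signe\cdot m)$ with $\signe=(-1)^n$; that is, the trivial even one when $n$ is even and the trivial odd one when $n$ is odd (Définition~\ref{moddettriv_defi}). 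There is no real obstacle here: the only points requiring care are checking that using the dual basis makes $\bid_M$ the identity, so that the Gram matrix is genuinely block-antidiagonal, and correctly reading off the sign $(-1)^n$ of the block swap.
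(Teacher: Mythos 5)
Votre preuve est correcte et suit essentiellement la même démarche que celle du texte : on calcule le déterminant de la matrice de Gram de la forme polaire $b_{\hypq_{2n}}=\hyps_{2n}$ sur une base du module libre sous-jacent et on trouve $(-1)^n$. La seule différence est le choix de l'ordre de la base (blocs $M$ puis $M^\dual$ plutôt que base entrelacée), ce qui ne change rien au résultat.
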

\begin{proof}
Le déterminant de la matrice associée à la forme bilinéaire sur la base canonique est $(-1)^n$. 
\end{proof}

La proposition \ref{torseursfppfmun_prop} avec $n=2$ se décline pour $\signe=\pm 1$. 
\begin{prop} \label{champmu2_prop}
Le foncteur $(L,\phi) \mapsto \faisIso_{(\faisO_S,\signe\cdot  m),(L,\phi)}$ est une équivalence de catégories fibrées entre champs en groupoïdes 
$$\Formes{(\faisO_S,\signe\cdot m)} \isoto \Tors{\faismu_2}.$$ 
des formes de $(\faisO_S,\signe \cdot m)$ vers les $\mu_2$-torseurs.
En particulier, si la topologie est \fppf, tous les modules $2$-trivialisés sont des formes de $(\faisO_S,m)$ puisque $(-1)$ est localement un carré et on a 
$$\nTriv{2}_{\simeq} \isoto \Tors{\faismu_2}.$$ 
\end{prop}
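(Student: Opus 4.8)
The plan is to derive the statement directly from Proposition~\ref{torseursfppfmun_prop}, which already treats the case $\signe=1$, by reducing the general sign to it. First I would observe that $\ModDet = \nTriv{2}$ is exactly the champ of $2$-trivialized modules, and that $\Formes{(\faisO_S,\signe\cdot m)}$ is the full subchamp of forms of the object $(\faisO_S,\signe\cdot m)$. The key point is that $\faisAut_{(\faisO_S,\signe\cdot m)}=\faismu_2$ \emph{independently of $\signe$}: an automorphism $r:\faisO_S\to\faisO_S$ of the $2$-trivialized module $(\faisO_S,\signe\cdot m)$ must satisfy $\signe\cdot m\circ r^{\otimes 2}=\signe\cdot m$, and the factor $\signe$ cancels, so the condition is exactly $m\circ r^{\otimes 2}=m$, \ie $r^2=1$. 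Thus the automorphism group is $\faismu_2$ for both signs, which is what makes the torsor description uniform.

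The main equivalence then follows from the general machinery of Proposition~\ref{tordusformes_prop}, applied to the object $X_0=(\faisO_S,\signe\cdot m)$ in the champ $\ModDet=\nTriv{2}$. That proposition gives an equivalence of fibered categories in groupoids between $\Tors{\faisAut_{X_0}}$ and $\Formes{X_0}$, realized precisely by the two mutually inverse functors $P\mapsto P\contr{\faisAut_{X_0}}X_0$ and $X\mapsto \faisIso_{X_0,X}$. Since $\faisAut_{X_0}=\faismu_2$ by the previous paragraph, the functor $(L,\phi)\mapsto \faisIso_{(\faisO_S,\signe\cdot m),(L,\phi)}$ is one half of this equivalence, hence an equivalence onto $\Tors{\faismu_2}$. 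This establishes the first displayed isomorphism for each value of $\signe$.

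For the last assertion, specialized to the \fppf\ topology, I would invoke the lemma preceding Proposition~\ref{torseursfppfmun_prop}, which shows that \fppf-locally any $2$-trivialization $\phi:L^{\otimes 2}\to\faisO_S$ factors through the multiplication $m$; this already identifies every object of $\nTriv{2}$ with a form of $(\faisO_S,m)$. The only additional input needed for the sign is that $-1$ is \fppf-locally (indeed Zariski-locally is too strong, but certainly \fppf-locally) a square, so that $(\faisO_S,-m)$ and $(\faisO_S,m)$ become isomorphic as $2$-trivialized modules after base change: if $-1=u^2$ locally, the automorphism $r=u$ of $\faisO_S$ sends $m$ to $-m$, giving an isomorphism $(\faisO_S,m)\isoto(\faisO_S,-m)$. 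Consequently $\Formes{(\faisO_S,\signe\cdot m)}$ is the whole champ $\nTriv{2}_\grpd$ regardless of $\signe$, and the first equivalence upgrades to $\nTriv{2}_\grpd\isoto\Tors{\faismu_2}$, as claimed.

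I expect the only genuinely delicate point to be the verification that the factor $\signe$ really drops out when computing $\faisAut_{(\faisO_S,\signe\cdot m)}$; once that is in hand the statement is a pure specialization of Proposition~\ref{tordusformes_prop}, with the \fppf\ sharpening furnished by the local-square property of $-1$ together with the cited lemma. No new computation beyond these observations is required.
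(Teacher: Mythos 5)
Votre démonstration est correcte et suit essentiellement la même voie que le texte, qui présente cette proposition comme une simple déclinaison de la proposition~\ref{torseursfppfmun_prop} pour $n=2$ : calcul de $\faisAut_{(\faisO_S,\signe\cdot m)}=\faismu_2$ (le facteur $\signe$ se simplifie), application de la proposition~\ref{tordusformes_prop}, puis, pour le cas \fppf, le fait que $-1$ est localement un carré pour identifier les formes des deux signes. Rien à redire.
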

Notons enfin:
\begin{lemm}
Deux $(L_1,\phi_1)$ et $(L_2,\phi_2)$ sont isomorphes si et seulement s'il existe un isomorphisme $\psi: L_1 \simeq L_2$ et si $\phi_1^{-1} \psi^{\otimes 2} \phi_2(1)$ est un carré de $\faisGm(S)$. 
\end{lemm}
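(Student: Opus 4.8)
The plan is to prove the two implications separately, noting at the outset that both tacitly require $L_1$ and $L_2$ to be abstractly isomorphic as $\faisO_S$-modules (an isomorphism of determinant modules is in particular an isomorphism of the underlying modules, by the definition of morphisms in $\nTriv{2}$). The entire content will lie in the converse, and it amounts to correcting an abstract comparison isomorphism by a scalar; I will read the expression $\phi_1^{-1}\psi^{\otimes 2}\phi_2(1)$ as the value at $1$ of the composite endomorphism $\phi_2 \circ \psi^{\otimes 2} \circ \phi_1^{-1}$ of $\faisO_S$, which is multiplication by a unit of $\faisGm(S)=\Gamma(S)^\times$.

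For the direct implication, I would start from an isomorphism of determinant modules $(L_1,\phi_1)\isoto(L_2,\phi_2)$, which by definition is an isomorphism of $\faisO_S$-modules $r:L_1\to L_2$ with $\phi_2\circ r^{\otimes 2}=\phi_1$. Taking $\psi=r$ then gives $\phi_2\circ \psi^{\otimes 2}\circ\phi_1^{-1}=\id_{\faisO_S}$, so the associated unit $\phi_1^{-1}\psi^{\otimes 2}\phi_2(1)$ equals $1=1^2$, which is a square in $\faisGm(S)$. This settles the ``only if'' direction with no real work.

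For the converse, I would assume given an isomorphism of $\faisO_S$-modules $\psi:L_1\isoto L_2$ such that $u:=\phi_1^{-1}\psi^{\otimes 2}\phi_2(1)=v^2$ for some $v\in\faisGm(S)$. Since all three maps are isomorphisms, $\phi_2\circ\psi^{\otimes 2}\circ\phi_1^{-1}$ is multiplication by $u$, i.e. $\phi_2\circ\psi^{\otimes 2}=u\cdot\phi_1$. I would then rescale: set $r=v^{-1}\psi$, again an isomorphism of modules because $v$ is a global unit. Then $r^{\otimes 2}=v^{-2}\psi^{\otimes 2}$, whence $\phi_2\circ r^{\otimes 2}=v^{-2}u\cdot\phi_1=\phi_1$, so $r$ is an isomorphism of determinant modules $(L_1,\phi_1)\isoto(L_2,\phi_2)$, as desired.

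The only conceptual point, and the reason squares enter, is that replacing the comparison isomorphism $\psi$ by $\lambda\psi$ for $\lambda\in\faisGm(S)$ multiplies the induced trivialization by $\lambda^2$; this is exactly the infinitesimal shadow of the identity $\faisAut_{(L,\phi)}=\faismu_2$. Consequently the defect unit $u$ is only canonical modulo squares of global units, and it can be absorbed into a rescaling of $\psi$ precisely when it is itself such a square. I do not expect any serious obstacle beyond bookkeeping of this scaling factor.
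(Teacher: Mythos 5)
Your proof is correct and follows exactly the paper's (very terse) argument: the forward direction is immediate since an isomorphism of determinant modules yields the unit $1$, and for the converse one corrects $\psi$ by a square root of the defect unit. Your explicit reading of $\phi_1^{-1}\psi^{\otimes 2}\phi_2(1)$ as the value at $1$ of $\phi_2\circ\psi^{\otimes 2}\circ\phi_1^{-1}$ and the rescaling $r=v^{-1}\psi$ are precisely what the paper intends.
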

\begin{proof}
Un sens est évident, et dans l'autre, s'il existe un tel $\phi$, on peut le modifier par une racine carrée de l'élément de $\faisGm(S)$ pour le corriger en un morphisme de module déterminants.
\end{proof}

Lorsque $(M,b)$ est un module bilinéaire régulier, son module déterminant $(\Det{M},\bDetMorph{b})$ définit donc un $\faismu_2$-torseur $\faisIso_{(\faisO_S,\signe\cdot m),(\Det{M},\bDetMorph{b})}$. 
A priori, c'est un torseur \fppf, mais c'est en fait un torseur étale quand 
$q$ est une forme du module hyperbolique de rang pair, par la proposition \ref{formesetaleshyppair_prop}.
\begin{defi} \label{torseurdet_defi}
On note $\bDetMod{b}$ le module déterminant $(\Det{M},\bDetMorph{b})$ associé à un module bilinéaire $(b,M)$ régulier. Lorsque $q$ est un module quadratique régulier, on définit son module déterminant $\qDetMod{q}=\bDetMod{b_q}$.
Cela définit un foncteur fibré
$$\qDetFonc:\Formes{\hypq_{2n}} \to \ModDet.$$
qui à $q$ associe $\qDetMod{q}$.
\end{defi} 

Sur le module $\faisO_S^2$, de base canonique $(e_1,e_2)$, étant donné deux éléments $a$ et $b$ de $\Gamma(S)$, on note $[a,b]$ le module quadratique donné sur les points par $xe_1+ye_2 \mapsto ax^2+xy+by^2$. 
\begin{lemm} \label{ab_lemm}
Le module $[a,b]$ est régulier si et seulement si $1-4ab \in \Gamma(S)^\times$ et le schéma $T=\Spec(\cO_S[t]/(t^2-t+ab))$ est alors étale sur $S$. Si de plus $a,b\in \Gamma(S)^\times$, le module $[a,b]_{T}$ est isomorphe à $(\hypq_2)_{T}$.
\end{lemm}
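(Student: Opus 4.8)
The plan is to reduce everything to explicit computations on the free module $\faisO_S^2$, on which $[a,b]$ is given in the global standard basis $(e_1,e_2)$, so that no local patching is needed for the computational core.

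First I would treat the regularity criterion. Computing the polar form $b_q$ of $q=[a,b]$ from $b_q(x,y)=q(x+y)-q(x)-q(y)$, one finds on $(e_1,e_2)$ the Gram matrix $\left(\begin{smallmatrix} 2a & 1 \\ 1 & 2b \end{smallmatrix}\right)$, whose determinant is $4ab-1$. By Proposition~\ref{detregiso_prop}, $[a,b]$ is regular if and only if the determinant of its associated bilinear module is invertible, and under the canonical identification $\Det{\faisO_S^2}^{\otimes 2}\simeq\faisO_S$ this determinant is exactly $4ab-1$. Since $4ab-1$ and $1-4ab$ differ by the unit $-1$, regularity is equivalent to $1-4ab\in\Gamma(S)^\times$, which settles the first equivalence.

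Next I would prove the \'etale assertion. The key algebraic identity, which will be used twice, is
$(2t-1)^2 = 4t^2-4t+1 = 4(t-ab)-4t+1 = 1-4ab$
in $A=\Gamma(S)[t]/(t^2-t+ab)$, obtained from $t^2=t-ab$. Hence, when $1-4ab$ is a unit, $P'(t)=2t-1$ is invertible in $A$, and $T=\Spec(\cO_S[t]/(t^2-t+ab))$ is \'etale over $S$ by exactly the argument used in Proposition~\ref{algetfinies_prop} (cf. \cite[IV, cor. 18.4.3]{ega}); one reduces to the affine case and invokes the invertibility of the derivative modulo $P$.

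Finally, working over $T$ and using the extra hypothesis $a\in\Gamma(S)^\times$, I would exhibit an explicit hyperbolic basis. Set $u = ax-(t-1)y$ and $v = x+a^{-1}t\,y$; a direct expansion, simplifying with $t^2-t=-ab$, gives $uv = ax^2+xy+by^2 = q$. The transition matrix from $(x,y)$ to $(u,v)$ has determinant $a\cdot a^{-1}t+(t-1) = 2t-1$, which is a unit over $T$ by the identity above. Thus $(u,v)$ is a genuine coordinate system over $T$ in which $q=uv$, i.e. $[a,b]_T\simeq(\hypq_2)_T$ (the roles of $a$ and $b$ being symmetric, one could equally use $b\in\Gamma(S)^\times$). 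The one point deserving care, and the only place where some thought rather than bookkeeping is needed, is the identity $(2t-1)^2=1-4ab$: it is what simultaneously makes $P'(t)$ invertible (giving \'etaleness) and makes the change of basis unimodular (giving the hyperbolic splitting), so everything hinges on recognising that $2t-1$ is a square root of the discriminant in $A$.
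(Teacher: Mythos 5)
Your proof is correct and takes essentially the same route as the paper's: regularity via the determinant $4ab-1$ of the polar form and Proposition~\ref{detregiso_prop}, \'etaleness via the invertibility of the derivative $2t-1$ through the identity $(2t-1)^2=1-4ab$, and an explicit base change of determinant $2t-1$ over $T$. Your coordinates $u=ax-(t-1)y$, $v=x+a^{-1}ty$ differ only cosmetically from the paper's basis $(te_1-ae_2,\,(1-t)e_1-ae_2)$ — yours in fact yield $q=uv$ exactly, while the paper's basis gives a unit multiple of $\hypq_2$ needing a final rescaling — but the underlying idea is identical.
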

\begin{proof}
On vérifie immédiatement que le déterminant de $[a,b]$ est la multiplication par $4ab-1$ de $\faisO_S=\faisO_S^{\otimes 2} \to \faisO_S$. C'est donc un isomorphisme si et seulement si $1-4ab$ est inversible, ce qui prouve l'affirmation sur la régularité.

On peut vérifier que l'extension est étale localement, lorsque $S=\Spec(R)$, en calculant la dérivée de $t^2-t+ab$ qui vaut $2t-1$, qui est donc inversible dans $R'=R[t]/(t^2-t+ab)$ puisque $(2t-1)^2=1-4ab$. 

Lorsque $a$ et $b$ sont inversibles, on peut alors vérifier que sur $T$, $(t e_1 -ae_2,(1-t)e_1-ae_2)$ est une nouvelle base du module, sur laquelle il est clairement isomorphe à $\hypq_{2}$.
\end{proof}

\begin{prop}
Sur un anneau local, un module quadratique de rang pair est régulier si et seulement s'il est isomorphe à une somme directe de modules de la forme $[a,b]$ ci-dessus avec $1-4ab$, $a$ et $b$ dans $\Gamma(S)^\times$. 
\end{prop}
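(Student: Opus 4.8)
On pose $R=\Gamma(S)$, anneau local d'id\'eal maximal $\mathfrak{m}$ et de corps r\'esiduel $k=R/\mathfrak{m}$; tous les modules sont libres de type fini. Le sens direct est imm\'ediat: si $(M,q)=\orth_{i}[a_i,b_i]$ avec chaque $1-4a_ib_i\in R^\times$, alors chaque $[a_i,b_i]$ est r\'egulier par le lemme \ref{ab_lemm}, et le module polaire $b_q$ est diagonal par blocs sur la base adapt\'ee \`a la d\'ecomposition orthogonale; son morphisme d\'eterminant est le produit des morphismes d\'eterminants des blocs, donc un isomorphisme, de sorte que $q$ est r\'egulier par la proposition \ref{detregiso_prop}. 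Il reste \`a \'etablir la r\'eciproque.

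Pour celle-ci, je proc\`ede par r\'ecurrence sur le rang (pair) $2n$, le cas $2n=0$ \'etant la somme vide. Le point central est le lemme de scindage suivant: \emph{tout module quadratique r\'egulier $(M,q)$ de rang $\geq 2$ sur $R$ poss\`ede un sous-module $P$, facteur direct orthogonal, isomorphe \`a un $[a,b]$ avec $a$, $b$ et $1-4ab$ dans $R^\times$, et tel que $P^{\orth}$ soit r\'egulier}. Admettons-le: comme $b_q$ est unimodulaire (proposition \ref{detregiso_prop}) et que sa restriction \`a $P$ l'est aussi, la projection $M\to P^{\dual}$ induite par $b_q$ est surjective et scind\'ee, de noyau $P^{\orth}$, d'o\`u $M=P\oplus P^{\orth}$; cette somme est orthogonale pour $q$ puisque $q(p+p')=q(p)+q(p')$ d\`es que $b_q(p,p')=0$, et $P^{\orth}$ est r\'egulier de rang pair $2n-2$. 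On applique alors l'hypoth\`ese de r\'ecurrence \`a $P^{\orth}$, ce qui termine.

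Reste \`a d\'emontrer le lemme de scindage. Apr\`es r\'eduction modulo $\mathfrak{m}$, $(\bar M,\bar q)$ est r\'egulier sur $k$, et il s'agit d'y trouver un plan non d\'eg\'en\'er\'e muni d'une base $(\bar v_1,\bar v_2)$ telle que les trois quantit\'es $\bar q(\bar v_1)$, $\bar q(\bar v_2)$ et $b_{\bar q}(\bar v_1,\bar v_2)$ soient non nulles. On rel\`eve alors $\bar v_1,\bar v_2$ en des vecteurs primitifs $v_1,v_2$ de $M$; par le lemme de Nakayama $P=Rv_1\oplus Rv_2$ est facteur direct, et $\alpha=q(v_1)$, $\beta=q(v_2)$, $\gamma=b_q(v_1,v_2)$ sont des unit\'es, tandis que $4\alpha\beta-\gamma^2$ (le d\'eterminant du module polaire de $P$) en est une, donc $P$ est un binaire r\'egulier. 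En rempla\c{c}ant $v_1$ par $\gamma^{-1}v_1$ on normalise le terme crois\'e \`a $1$ et on obtient $P\cong[a,b]$ avec $a=\gamma^{-2}\alpha$, $b=\beta$ dans $R^\times$ et $1-4ab=-\gamma^{-2}(4\alpha\beta-\gamma^2)\in R^\times$, comme voulu.

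Le v\'eritable obstacle est donc l'existence d'un tel plan sur $k$, c'est-\`a-dire le fait de rendre inversibles \`a la fois les deux valeurs diagonales \emph{et} le terme crois\'e. Une forme quadratique r\'eguli\`ere non nulle sur un corps repr\'esente une valeur inversible: si $\bar q(\bar e_1)=\bar q(\bar e_2)=0$ pour deux vecteurs ind\'ependants, la r\'egularit\'e force $b_{\bar q}(\bar e_1,\bar e_2)\neq 0$, et alors $\bar q(\bar e_1+\bar e_2)=b_{\bar q}(\bar e_1,\bar e_2)\neq 0$; on dispose ainsi d'un $\bar v_1$ non isotrope. On cherche alors $\bar v_2$ \emph{non orthogonal} \`a $\bar v_1$ et non isotrope, engendrant avec lui un plan non d\'eg\'en\'er\'e; en \'ecrivant $\bar v_2=\bar w+t\bar v_1$, on voit que chacune des trois conditions exclut un nombre fini de valeurs du param\`etre $t\in k$, cependant que la relation de r\'egularit\'e interdit au terme crois\'e et \`a la valeur $\bar q(\bar v_2)$ de s'annuler pour un m\^eme $t$. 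Il subsiste donc un choix admissible, et c'est pr\'ecis\'ement cet argument de comptage sur le corps r\'esiduel --- reposant sur l'abondance des valeurs inversibles repr\'esent\'ees par une forme r\'eguli\`ere --- qui constitue le point le plus d\'elicat de la preuve.
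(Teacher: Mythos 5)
Votre architecture g\'en\'erale --- \'etablir la d\'ecomposition sur le corps r\'esiduel puis la relever par Nakayama, par r\'ecurrence sur le rang --- est exactement celle que le papier esquisse (sa preuve est un simple renvoi \`a Knus, Ch.~IV, lemme 2.2.2, assorti de cette m\^eme id\'ee en une phrase), et vos \'etapes de scindage $M=P\oplus P^{\orth}$ et de rel\`evement par Nakayama sont correctes. Mais le trou est exactement l\`a o\`u vous le pressentez. D'abord un point de structure : en posant $\bar v_2=\bar w+t\bar v_1$, vous ne quittez jamais le plan engendr\'e par $\bar v_1$ et $\bar w$; la non-d\'eg\'en\'erescence du plan n'est donc pas une condition sur $t$ (le d\'eterminant $4\bar q(\bar v_1)\bar q(\bar v_2(t))-b_{\bar q}(\bar v_1,\bar v_2(t))^2$ est constant en $t$) et doit \^etre garantie par le choix pr\'ealable de $\bar w$. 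Ensuite, et surtout, ``chaque condition exclut un nombre fini de valeurs de $t$'' ne conclut rien sur un corps fini, et le lemme r\'esiduel vis\'e est \emph{faux} pour $k=\FF_2$ et $k=\FF_3$ : pour $\bar q=xy$ sur $\FF_2$, l'unique vecteur anisotrope est $e_1+e_2$; sur $\FF_3$, les vecteurs anisotropes de $xy$ forment les deux droites port\'ees par $(1,1)$ et $(1,2)$, qui sont orthogonales puisque $b_{\bar q}((1,1),(1,2))=3=0$. Dans ces cas, aucune base $(\bar v_1,\bar v_2)$ ne rend simultan\'ement non nuls $\bar q(\bar v_1)$, $\bar q(\bar v_2)$ et $b_{\bar q}(\bar v_1,\bar v_2)$.

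Ce d\'efaut n'est pas r\'eparable tel quel, car il refl\`ete une difficult\'e de l'\'enonc\'e pris \`a la lettre avec $a$ \emph{et} $b$ inversibles : le plan hyperbolique $\hypq_2$ sur $\FF_2$, $\FF_3$, ou sur les anneaux locaux $\ZZ_2$, $\ZZ_3$, est r\'egulier mais n'est isomorphe \`a aucun $[a,b]$ avec $a,b$ inversibles (sur $\ZZ_2$, deux vecteurs de valeurs inversibles ont toutes leurs coordonn\'ees inversibles, donc un accouplement polaire $x_1y_2+x_2y_1$ pair, jamais \'egal \`a $1$; sur $\FF_2$ le seul candidat $[1,1]$ est anisotrope), et l'obstruction persiste en tout rang pair (sur $\FF_2$ l'invariant de Arf d'une somme de copies de $[1,1]$ est d\'etermin\'e par le rang; sur $\FF_3$ le d\'eterminant de la forme polaire d'une somme de $[a,b]$ \`a coefficients inversibles est toujours un carr\'e). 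La d\'ecomposition d\'emontrable sans restriction --- et qui suffit \`a l'usage que le papier en fait, quitte \`a adapter le lemme \ref{ab_lemm} --- n'exige que l'inversibilit\'e de $1-4ab$, le plan hyperbolique \'etant $[0,0]$; l'inversibilit\'e simultan\'ee de $a$ et $b$ ne s'obtient que si le corps r\'esiduel a au moins quatre \'el\'ements, hypoth\`ese sous laquelle votre comptage (au plus trois valeurs de $t$ exclues une fois le plan fix\'e) devient valide et votre preuve compl\`ete. En l'\'etat, l'argument donn\'e pour franchir le point que vous qualifiez vous-m\^eme de plus d\'elicat \'echoue pr\'ecis\'ement dans les cas o\`u il serait n\'ecessaire.
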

\begin{proof}
Voir \cite[Ch. IV, lemme 2.2.2]{knus}. La preuve est essentiellement qu'on peut prouver ce type de décomposition sur un corps, puis la relever du corps résiduel à l'anneau local par le lemme de Nakayama.
\end{proof}

\begin{prop} \label{formesetaleshyppair_prop}
Les formes étales du module quadratique hyperbolique $\hypq_{2n}$ (de rang $2n$ sur $\faisO_S^{2n}$) sont les modules quadratiques réguliers de rang constant $2n$. 
\end{prop}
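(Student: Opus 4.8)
L'idée est de traiter séparément les deux inclusions.

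Montrons d'abord que toute forme étale de $\hypq_{2n}$ est un module quadratique régulier de rang constant $2n$. Cette direction est essentiellement locale: le module $\hypq_{2n}$ est régulier (exemple suivant la définition \ref{regulier_defi}) et de rang constant $2n$, et ces deux propriétés sont préservées par changement de base, donc valables étale-localement pour toute forme étale. Comme la régularité descend pour la topologie étale (et même \fppf) par la proposition \ref{regulierloca_prop}, et que le rang constant est lui aussi une condition étale-locale, toute forme étale de $\hypq_{2n}$ est bien un module quadratique régulier de rang constant $2n$.

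Pour la réciproque, qui est la direction substantielle, on part d'un module quadratique régulier $(M,q)$ de rang constant $2n$ et l'on montre qu'il est étale-localement isomorphe à $\hypq_{2n}$. Il suffit de produire, au voisinage de chaque point $s\in S$, une extension étale trivialisant $q$. Quitte à restreindre $S$ à un voisinage affine de $s$ sur lequel $M$ est libre, on passe à l'anneau local $\cO_{S,s}$. Sur cet anneau local, le théorème de structure (la proposition précédente) fournit un isomorphisme $q\simeq[a_1,b_1]\orth\cdots\orth[a_n,b_n]$ où tous les $a_i$, $b_i$ et $1-4a_ib_i$ sont inversibles. Un argument de passage à la limite usuel étend cet isomorphisme à un voisinage de Zariski $U$ de $s$, quitte à rétrécir $U$ pour que les éléments $a_i$, $b_i$ et $1-4a_ib_i$, en nombre fini, y restent inversibles; on a alors $q_U\simeq[a_1,b_1]\orth\cdots\orth[a_n,b_n]$.

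Il reste à trivialiser chaque facteur par une extension étale. Par le lemme \ref{ab_lemm}, comme $a_i,b_i\in\Gamma(U)^\times$, le module $[a_i,b_i]$ devient isomorphe à $(\hypq_2)_{T_i}$ sur le $U$-schéma étale $T_i=\Spec(\cO_U[t]/(t^2-t+a_ib_i))$. En formant le produit fibré $T=T_1\times_U\cdots\times_U T_n$, qui est fini étale et surjectif sur $U$, tous les facteurs deviennent hyperboliques à la fois, d'où $q_T\simeq(\hypq_2\orth\cdots\orth\hypq_2)_T\simeq(\hypq_{2n})_T$ ($n$ copies), en utilisant l'identification canonique $\hypq_M\orth\hypq_N\simeq\hypq_{M\oplus N}$. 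Comme $T\to U$ est étale surjectif et $U$ ouvert dans $S$, le morphisme composé $T\to S$ est étale; lorsque $s$ parcourt $S$, les $T\to S$ ainsi obtenus forment un recouvrement étale trivialisant $q$, et $(M,q)$ est donc une forme étale de $\hypq_{2n}$.

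La principale difficulté est la réciproque, et en son sein le seul point véritablement délicat est le passage de l'anneau local $\cO_{S,s}$ --- sur lequel s'applique le théorème de structure --- à un véritable voisinage (ici de Zariski), qui se traite par l'argument de passage à la limite habituel en veillant à préserver les conditions d'inversibilité en nombre fini. Tout le reste n'est qu'un assemblage direct du théorème de structure cité et du lemme \ref{ab_lemm}.
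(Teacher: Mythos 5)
Votre démonstration est correcte et suit exactement la même stratégie que celle du texte : régularité par descente (\ref{regulierloca_prop}) pour le sens direct, puis décomposition en facteurs $[a_i,b_i]$ sur l'anneau local via la proposition de structure, étalement à un voisinage de Zariski par finitude des coefficients, et trivialisation étale de chaque facteur par le lemme \ref{ab_lemm}. Vous explicitez simplement davantage le produit fibré des extensions $T_i$ et le recollement final, ce que le texte laisse implicite.
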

\begin{proof}
Puisque $\hypq_{2n}$ est régulier, toute forme étale est régulière par \ref{regulierloca_prop}. Réciproquement, étant donné un module quadratique régulier, on peut supposer que son module sous-jacent est libre. Comme il s'écrit comme somme de modules de la forme $[a,b]$ avec $a,b,1-4ab\in R^\times$ sur les anneaux locaux et qu'il n'y a alors qu'un nombre fini de coefficients, c'est encore le cas sur un voisinage Zariski et affine de chaque point. On conclut par le lemme \ref{ab_lemm}.
\end{proof}
\medskip

On considère alors la catégorie fibrée de tous les modules quadratiques (à module sous-jacent localement libre), ainsi que sa sous-catégorie fibrée des modules quadratiques réguliers de rang $n$. Pour les topologies Zariski, étale ou \fppf, la première est un champ par \ref{champstruc_prop}, et par conséquent la seconde aussi parce que la propriété d'être régulier se teste localement, par \ref{regulierloca_prop}.
\begin{defi} \label{champquadreg_defi}
On appelle $\Quad$ le champ de tous les module quadratiques (à module sous-jacent localement libre), et $\RegQuadn{n}$ celui des modules quadratiques réguliers de rang constant $n$. 
\end{defi}
La proposition \ref{formesetaleshyppair_prop} montre que le sous-champ $\Formes{\hypq_{2n}}$ est égal à tout $\RegQuadn{2n}$.

En rang impair, la notion adéquate pour remplacer la régularité qui n'arrive jamais en caractéristique $2$ est la semi-régularité. Pour éviter de développer toute la notion, nous renvoyons à \cite[Ch. IV, §3]{knus}. C'est une notion locale pour la topologie \fppf\ (cf \cite[Ch. IV, (3.1.5)]{knus}). On en tire immédiatement, comme dans le cas pair:

\begin{prop} \label{semiregformeshyp_prop}
Les formes \fppf\ du module quadratique hyperbolique $\hypq_{2n+1}$ sont les modules quadratiques semi-réguliers de rang constant $2n+1$.
\end{prop}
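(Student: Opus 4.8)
Le plan est de calquer la preuve de la proposition \ref{formesetaleshyppair_prop}, en rempla\c{c}ant partout la r\'egularit\'e par la semi-r\'egularit\'e, et l'extension \'etale diagonalisante par une extension \fppf\ extrayant une racine carr\'ee. C'est pr\'ecis\'ement ce dernier ingr\'edient, absent du cas pair, qui explique que l'on obtienne seulement des formes \fppf\ et non \'etales.

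On commencerait par le sens facile: toute forme \fppf\ de $\hypq_{2n+1}$ est un module quadratique semi-r\'egulier de rang constant $2n+1$. En effet, $\hypq_{2n+1}=\hypq_{2n}\orth\fq{1}$ est semi-r\'egulier, et la semi-r\'egularit\'e est une propri\'et\'e locale pour la topologie \fppf\ (\cite[Ch. IV, (3.1.5)]{knus}); toute forme \fppf\ est donc semi-r\'eguli\`ere, exactement comme dans le cas pair on utilisait la localit\'e de la r\'egularit\'e (\ref{regulierloca_prop}). Le rang, quant \`a lui, est pr\'eserv\'e par d\'efinition m\^eme d'une forme.

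Pour la r\'eciproque, soit $(M,q)$ un module quadratique semi-r\'egulier de rang constant $2n+1$; il s'agit de le d\'eployer \fppf-localement. On se ram\`ene, comme dans le cas pair, \`a $M$ libre sur une base locale affine $S=\Spec(R)$, puis on invoque la classification locale des modules semi-r\'eguliers de rang impair (analogue impair de \cite[Ch. IV, lemme 2.2.2]{knus}): un tel module se d\'ecompose en
$$[a_1,b_1]\orth\cdots\orth[a_n,b_n]\orth\fq{c}$$
avec $a_i,b_i,1-4a_ib_i$ et $c$ dans $R^\times$. Cette d\'ecomposition ne mettant en jeu qu'un nombre fini de coefficients, elle persiste sur un voisinage Zariski affine de chaque point. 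On d\'eploie alors chaque facteur s\'epar\'ement: pour les facteurs pairs $[a_i,b_i]$, le lemme \ref{ab_lemm} fournit une extension \emph{\'etale} sur laquelle $[a_i,b_i]\simeq\hypq_2$; pour le facteur quasi-lin\'eaire $\fq{c}$, on prend l'extension \fppf\ $R\to R[x]/(x^2-c)$, qui fait de $c$ un carr\'e et rend donc $\fq{c}$ isomorphe \`a $\fq{1}$ apr\`es l'homoth\'etie de rapport une racine carr\'ee de $c$. En recollant ces extensions on obtient un recouvrement \fppf\ sur lequel $(M,q)$ devient isomorphe \`a $\hypq_{2n}\orth\fq{1}=\hypq_{2n+1}$.

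La principale difficult\'e est l'\'etape de classification locale, c'est-\`a-dire la d\'ecomposition orthogonale d'un module semi-r\'egulier de rang impair sur un anneau local en ses facteurs $[a_i,b_i]$ et en un unique facteur quasi-lin\'eaire $\fq{c}$ anisotrope; c'est l'analogue impair de \cite[Ch. IV, lemme 2.2.2]{knus}, dont la preuve passe \`a nouveau par le cas du corps r\'esiduel puis un rel\`evement \`a la Nakayama. C'est aussi le seul point o\`u l'on a besoin d'extraire une racine carr\'ee, donc d'une extension \fppf\ plut\^ot qu'\'etale, pr\'ecis\'ement \`a cause de ce facteur $\fq{c}$ qui ne figurait pas dans le rang pair.
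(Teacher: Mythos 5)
Votre preuve suit exactement la m\^eme voie que le texte, qui se contente de dire que la semi-r\'egularit\'e est \fppf-locale d'apr\`es \cite[Ch. IV, (3.1.5)]{knus} et que le r\'esultat s'en d\'eduit \og imm\'ediatement, comme dans le cas pair \fg{}; vous ne faites qu'expliciter cette d\'eduction (localit\'e pour un sens, d\'ecomposition locale en $[a_i,b_i]\orth\fq{c}$ puis extraction \fppf\ d'une racine carr\'ee de $c$ pour l'autre). C'est correct, et votre identification du facteur quasi-lin\'eaire $\fq{c}$ comme seule source du passage de \og \'etale \fg{} \`a \og \fppf\ \fg{} est bien le point que le texte laisse implicite.
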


\begin{defi} \label{O_defi}
Partant d'un module quadratique $(M,q)$, considérons $\faisAut_{M,q}$, que nous noterons $\faisorthO_{M,q}$ ou même $\faisorthO_q$ (voir les parties \ref{Groupesquadratiques_sec} et \ref{Bn_sec} pour plus de détails).  
\end{defi}

Nous aurons besoin d'un raffinement dû à Kneser du théorème de Cartan-Dieudonné. 
\begin{defi} \label{reflexion_defi}
Soit $(M,q)$ un module quadratique quelconque. Soit $v$ un élément de $M(S)$ tel que $q(v) \in \Gamma(S)^\times$. La \emph{réflexion orthogonale} associée à $v$ est l'élément de $\faisorthO_{M,q}(S)$ de la forme $\tau_v=x \mapsto x -\frac{b_q(x,v)}{q(v)}v$.
\end{defi}
Lorsqu'on parle de réflexion orthogonale sans préciser le vecteur $v$, il s'agit toujours d'une réflexion orthogonale associée à un vecteur $v$ comme dans la définition précédente (donc avec $q(v) \in \Gamma(S)^\times$).

\begin{lemm} \label{detrefl_lemm}
Le déterminant d'une réflexion orthogonale est $-1 \in \Gamma(S)^\times$.
\end{lemm}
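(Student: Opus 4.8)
Le plan est de ramener l'affirmation à un calcul local, puis de reconnaître dans $\tau_v$ une perturbation de rang un de l'identité. Comme $\det(\tau_v)$ est une section globale de $\faisGm$ et que $\faisGm$ est un faisceau, l'égalité $\det(\tau_v)=-1$ se vérifie Zariski-localement ; je me ramènerais donc à $S=\Spec(R)$ avec $M$ libre de rang fini et $v\in M(S)$ tel que $q(v)\in R^\times=\faisGm(S)$. Sur un tel module libre, je poserais la forme linéaire $\phi_v\colon M \to \faisO_S$, $x\mapsto b_q(x,v)$, de sorte que la réflexion s'écrit $\tau_v=\id_M-\tfrac{1}{q(v)}\,(v\otimes\phi_v)$, où $v\otimes\phi_v$ désigne l'endomorphisme $x\mapsto \phi_v(x)\,v$, sous l'identification usuelle des endomorphismes de rang $\le 1$ avec $M\otimes M^\dual$.

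J'invoquerais ensuite la formule donnant le déterminant d'une telle perturbation, valable sur un anneau commutatif quelconque :
\[
\det\bigl(\id_M-\lambda\,(v\otimes\psi)\bigr)=1-\lambda\,\psi(v)
\]
pour tout scalaire $\lambda$, tout vecteur $v$ et toute forme linéaire $\psi$. Appliquée avec $\lambda=\tfrac{1}{q(v)}$ et $\psi=\phi_v$, elle donne $\det(\tau_v)=1-\tfrac{1}{q(v)}\,\phi_v(v)=1-\tfrac{b_q(v,v)}{q(v)}$. Il reste alors à calculer $b_q(v,v)$ : par définition de la forme polaire sur les points, $b_q(v,v)=q(2v)-2q(v)=4q(v)-2q(v)=2q(v)$, d'où $\det(\tau_v)=1-2=-1$, qui est bien l'élément inversible attendu de $\Gamma(S)^\times$.

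Le point délicat sera la justification de la formule du déterminant d'une perturbation de rang un en caractéristique quelconque : la preuve habituelle sur un corps, qui complète $v$ en une base pour trianguler la matrice, n'est pas disponible sur un anneau arbitraire (on ne peut pas supposer $\phi_v(v)=b_q(v,v)=2q(v)$ inversible si $2$ ne l'est pas). Je contournerais cet obstacle en traitant l'identité ci-dessus comme une identité polynomiale universelle à coefficients entiers en $\lambda$, les coordonnées de $v$ et celles de $\psi$ ; il suffit alors de la vérifier génériquement, par exemple après extension à $\QQ$, où l'argument classique s'applique. Cette identité s'étant vérifiée sur $\ZZ[\lambda,v_i,\psi_j]$, elle vaut par spécialisation sur tout anneau $R$, ce qui achève le calcul local et donc la preuve.
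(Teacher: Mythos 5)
Votre preuve est correcte et suit essentiellement la même démarche que celle du texte : réduction Zariski-locale au cas d'un module libre sur un anneau local, puis calcul du déterminant de la perturbation de rang un $\id_M-\tfrac{1}{q(v)}(v\otimes\phi_v)$, qui vaut $1-b_q(v,v)/q(v)=1-2=-1$. La seule différence est que le texte établit la formule $\det(\id-\lambda\, v\otimes\psi)=1-\lambda\psi(v)$ par expansion directe du produit extérieur $\tau_v(e_1)\wedge\cdots\wedge\tau_v(e_n)$ (les termes contenant $v$ deux fois s'annulent), calcul valable tel quel sur tout anneau commutatif, ce qui rend superflu votre détour par une identité polynomiale universelle vérifiée sur $\QQ$ — mais votre argument de spécialisation est lui aussi parfaitement valide.
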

\begin{proof}
Ceci peut se vérifier localement pour la topologie de Zariski. On peut donc supposer que $S$ est le spectre d'un anneau local et que $M$ est libre. On prend alors une base $e_1,\ldots,e_n$ de $M$ avec $v=\sum v_i e_i$, et on calcule 
\begin{align*}
\Det{}(\tau_v)(e_1\wedge \ldots \wedge e_n) & = \tau_v(e_1)\wedge \ldots \wedge \tau_v(e_n) \\
& = (e_1 - \frac{b_q(v,e_1)}{q(v)} v) \wedge \ldots \wedge (e_n-\frac{b_q(v,e_n)}{q(v)} v) \\
& = \big(1 - \sum v_i \frac{b_q(v,e_i)}{q(v)}\big) (e_1\wedge \ldots\wedge e_n)
\end{align*}
or $2q(v)=b_q(v,v)=\sum v_i b_q(v,e_i)$ donc le facteur entre parenthèses vaut bien $(-1)$.
\end{proof}

\begin{theo}[Kneser] \label{Kneser_theo}
Soit $(q,M)$ un module quadratique sur un anneau local $R$, et soit $N$ et $N'$ deux sous-modules libres de $M$ tels que $q_{|N}$ soit régulière, et munis d'un isomorphisme $t: N \to N'$ de modules quadratiques $(q_{|N},N) \to (q_{|N'},N')$. Alors $t$ peut se prolonger en un élément de $\faisorthO_{M,q}(R)$ et de plus, cet élément peut-être choisi comme un produit de réflexions orthogonales si l'une des conditions suivantes est satisfaite: 1. le corps résiduel $k$ de $R$ est différent de $\mathbb{F}_2$, et la forme $q_{k}$ n'est pas identiquement nulle. 2. le corps résiduel de $R$ est $\mathbb{F}_2$, et la forme $q_{\mathbb{F}_2}$ n'est pas identiquement nulle sur $M^\orth$. 
\end{theo}
\begin{proof}
Voir \cite{Kneser02}, Ch. I, (4.4) et (4.5) dans le cas où $E=H$ avec la notation de \loccit.
\end{proof}
Attention, la condition 2 doit être comprise comme non satisfaite si $M^\orth=\{0\}$, donc en particulier, ce théorème ne dit rien sur la décomposition en réflexions lorsque le corps résiduel est $\mathbb{F}_2$ et le module est régulier.

\begin{theo}[Cartan-Dieudonné] \label{Cartan-Dieudonne_theo}
Si $(M,q)$ est un module quadratique régulier sur un anneau local, alors tout élément de $\faisorthO_{M,q}(R)$ se décompose en produit de réflexions orthogonales, sauf peut-être si le corps résiduel est $\mathbb{F}_2$ et si le rang de $M$ est inférieur où égal à $4$.
\end{theo}
\begin{proof}
On prend $N=N'=M$ dans le théorème précédent. Si le corps résiduel n'est pas $\mathbb{F}_2$, alors la condition 1 est vérifiée. Si le corps résiduel est $\mathbb{F}_2$, alors voir \cite[Ch. I, (4.6)]{Kneser02}.
\end{proof}

Pour un module semi-régulier, on a également:
\begin{theo}\label{Cartan-Dieudsemi_theo}
Si $(M,q)$ est un module semi-régulier sur un corps $k$, alors tout élément de $\faisorthO_{q}(k)$ est un produit de réflexions orthogonales.
\end{theo}
\begin{proof}
Voir \cite[Ch. I, (3.5)]{Kneser02}.
\end{proof}

\begin{rema} \label{contreExCartanDieudonne_rema}
Pour une module semi-régulier sur un anneau local, tout élément du groupe orthogonal n'est pas forcément un produit de réflexions. C'est déjà faux pour le module hyperbolique $\hypq_{2n+1}$, $n\geq 1$ sur $\mathbb{F}_2[x]/x^2$: si $e$ est le vecteur de base portant $\fq{1}$, toute réflexion orthogonale $\tau_v$ préserve le sous-espace engendré par $e$, puisque dans ce cas de caractéristique $2$, c'est le radical polaire. Donc, $\tau_v(e)=\lambda e$. Il suit que $\lambda e=\tau_v(e)=e-\frac{b_q(e,v)}{q(v)}v$, donc soit $b_q(e,v)=0$ et $\tau_v(e)=e$, soit $b_q(e,v)\neq 0$, et $v$ est proportionnel à $e$, donc $\tau_v(e)=-e=e$. Dans les deux cas, $\tau_v(e)=e$ pour tout vecteur $v$ et donc $f(e)=e$ pour tout produit de réflexions. Ainsi, l'application égale à l'identité sur le sous-espace sous-jacent à $\hypq_{2n}$ et envoyant $e$ sur $(1+x)e$ est bien dans le groupe orthogonal puisque $(1+x)^2=1$, mais ne peut être un produit de réflexions. 
\end{rema}

\subsection{Algèbres d'Azumaya à involution} \label{AlgInv_sec}

Passons maintenant en revue différents types d'involutions sur des algèbres d'Azumaya. Notre but n'est pas d'être exhaustif, mais de préparer le matériel qui interviendra dans la partie sur les groupes réductifs. Rappelons qu'une involution sur un anneau $A$ est un anti-auto\-mor\-phisme d'ordre $2$, c'est-à-dire un endomorphisme de $A$ qui vérifie $\sigma(xy)=\sigma(y)\sigma(x)$ pour $x,y \in A$ et $\sigma^2=\id$.

\begin{defi}
Une $\faisO_S$-algèbre d'Azumaya \emph{à involution de première espèce} est un faisceau en $\faisO_S$-algèbres à involution (structure $\alginv$ des exemples \ref{structures_exem}), dont le faisceau en algèbres sous-jacent est un faisceau en $\faisO_S$-algèbres d'Azumaya et dont l'involution est $\faisO_S$-linéaire. Il est immédiat que sur une base affine, cette structure est l'image par le foncteur $\wW \circ \tilde{(-)}$ d'une algèbre à involution de première espèce.
\end{defi}
Lorsque la base $S$ est affine et égale à $\Spec(R)$, on retrouve la définition classique en prenant les sections globales (\cf \ref{Walg_rema}): une algèbre d'Azumaya de première espèce sur un anneau $R$ est une algèbre d'Azumaya $A$ sur $R$ munie d'une involution $\sigma$ qui est $R$-linéaire. 

Si $M$ est un $\faisO_S$-module localement libre de type fini et $b:M \to M^\dual$ est un isomorphisme $\delta$-symétrique ($b=\delta b^\dual \circ \bid$ où $\bid: M \to (M^\dual)^\dual$ est l'identification canonique, et $\delta \in \faismu_2(S)$), autrement dit $(M,b)$ est un $\faisO_S$-module bilinéaire $\delta$-symétrique non dégénéré, alors on peut lui associer une involution $\invadj_b$ sur $A=\faisEnd_M$, donnée sur les points par $\invadj_b(x) = b^{-1} x^\dual b$, où $\faisEnd_M^\opp$ est naturellement identifiée avec $\faisEnd_{M^\dual}$, par l'application $x \mapsto x^\dual$. De manière équivalente, si on identifie $M \otimes M \isoto M \otimes M^\dual \isoto \faisEnd_M$ par $m_1 \otimes m_2 \mapsto m_1 b(m_2)$, alors $\sigma_b$ correspond à l'échange des facteurs sur $M \otimes M$. 
\begin{defi} \label{invadj_defi}
Si $(M,b)$ est un module bilinéaire symétrique non dégénéré (resp. $(M,q)$ un module quadratique régulier), on appelle \emph{involution adjointe à $b$} (resp. à $q$) l'involution $\invadj_b$ (resp. $\invadj_{b_q}$ définie ci-dessus).
\end{defi}
Notons que si $\invadj_b = \invadj_{b'}$, alors $\int_b = \int_{b'}$ et donc $b=\lambda b'$ pour un certain $\lambda$ inversible, puisque $A$ est de centre $\faisO_S$. 

\begin{prop} \label{invlocale_prop}
Localement pour la topologie étale, toute $\faisO_S$-algèbre à involution $(A,\sigma)$ de première espèce est isomorphe à une involution $(\faisEnd_V,\invadj_b)$ pour un certain $\faisO_S$-module bilinéaire $b$, qui est $\delta$-symétrique pour un certain $\delta \in \faismu_2(S)$, qui ne dépend que de l'involution de départ.
\end{prop}
\begin{proof}
Puisque c'est une propriété locale, on se ramène au cas où $S=\Spec(R)$ et où $A$ est $\faisM_n$ par \ref{Azumayaforme_prop}. La composition $\sigma \circ \transp(-)$ est donc un automorphisme de $\setM_n(R)$. Quitte à localiser encore $R$, il est donc intérieur (voir \ref{secPGLn_prop}), égal à $\int_b$ pour un certain $b \in \setGL_n(R)$. Or comme $\sigma^2=\id$, on doit avoir $\transp b b^{-1} \in R^\times$. Or $b=\delta \transp b =\delta^2 b$ donc $\delta\in\faismu_2(R)$. L'élément $b$ est celui cherché, à l'identification canonique de $R^n$ avec $(R^n)^\dual$ près. Il est fixé à un élément du centre près, le produit $\transp b b^{-1}$ est donc canonique. Par recollement ($\faismu_2$ est un faisceau), on construit ainsi l'élément $\delta \in \faismu_2(S)$ recherché, qui ne dépend que de $\sigma$. 
\end{proof}

\begin{defi}\label{involutionsymplectique_defi}
Le type d'une $\faisO_S$-algèbre à involution $A$ de première espèce est l'élément de $\faismu_2(S)$ obtenu par la construction précédente. Si cet élément est $1$, on dit que l'involution est \emph{orthogonale}, et s'il est $-1$, on dit que l'involution est \emph{faiblement symplectique}.\footnote{\label{orthsymp_foot}Attention, notre terminologie diffère de celle de \cite{bookinv}, dans lequel il n'y a pas d'involution orthogonale sur un corps de caractéristique $2$, où elles sont toutes dénommées symplectiques. Nous choisissons au contraire de dire qu'une telle involution est à la fois orthogonale et (faiblement) symplectique.} Si de plus $(A,\sigma)$ est étale localement de la forme $(\faisEnd_V,\invadj_b)$ pour un module $b$ alterné, alors on dit que l'involution est \emph{symplectique}.
\end{defi}

\begin{rema}
Notons que tout $\delta \in \faismu_2(S)$ est effectivement le type d'une involution de première espèce. Par exemple, sur $A=\setM_2$, alors l'application $a \mapsto u\cdot \transp a\cdot u^{-1}$ avec $u=\left(\begin{smallmatrix} 0 & \delta \\ 1 & 0\end{smallmatrix}\right)$ est de type $\delta$. 

Lorsque $2 \in \Gamma(S)^\times$ et que $S$ est connexe, une involution est donc soit orthogonale soit faiblement symplectique et ces deux cas s'excluent. De plus les involutions faiblement symplectiques et symplectiques coïncident. Lorsque $2=0 \in \Gamma(S)$, au contraire, les types orthogonal et faiblement symplectique se confondent$^{\eqref{orthsymp_foot}}$, et lorsque $S$ n'est pas un corps, il y a donc d'autres types, même localement, si $\mu_2(S)$ est strictement plus gros que $(\ZZ/2)(S)$. 
\end{rema}

\begin{lemm} \label{TrdNrdSigma_lemm}
Soit $(A,\sigma)$ une $\faisO_S$-involution de première espèce. Alors on a $\trd \circ \sigma = \trd$ et $\nrd \circ \sigma = \nrd$.
\end{lemm}
\begin{proof}
Il suffit de vérifier ces égalités étale localement. On peut donc supposer que l'algèbre est déployée, et par un argument limite, qu'on est sur un anneau local. Dans ce cas, puisque tout automorphisme est intérieur, l'involution $\sigma$ est de la forme $\int_g \circ \transp$, la trace réduite coïncide avec la trace et la norme réduite avec le déterminant (par construction). La formule est alors classique.
\end{proof}

Si $(A,\sigma)$ est une $\faisO_S$-algèbre à involution de première espèce, on définit le sous-$\faisO_S$-module des éléments symétriques de $(A,\sigma)$ par
$$\faisSym_{A,\sigma}(T) =\{a \in A(T)\text{ t.q. }\sigma(a)=a\}.$$
On définit également le sous-modules des éléments alternés 
$$\faisAlt_{A,\sigma}=\Im (\sigma -\id) \subseteq A$$
comme le faisceau Zariski image de l'application $\sigma -\id$. En d'autres termes, pour tout élément $x$ de $\faisAlt_{A,\sigma}(T)$, il existe un ouvert Zariski $U$ de $T$ tel que $x_{|U}$ soit de la forme $\sigma(a)-a$, avec $a \in A(U)$. 
\begin{lemm} \label{symAsigma_lemm}
Soit $(M,q)$ un $\faisO_S$-module quadratique régulier de rang constant $n$.
Si $(A,\sigma)$ est une forme \fppf\ de $(\faisEnd_{M},\invadj_q)$, alors 
\begin{enumerate}
\item \label{SymLoc_item} $\faisSym_{A,\sigma}$ est un $\faisO_S$-module localement libre de rang constant $n(n+1)/2$.
\item \label{AltLoc_item} $\faisAlt_{A,\sigma}$ est un $\faisO_S$-module localement libre de rang constant $n(n-1)/2$.
\item \label{SymAltorth_item} L'orthogonal de $\faisSym_{A,\sigma}$ pour la $\faisO_S$-module bilinéaire $A$ muni de $(x,y)\mapsto \trd(xy)$ est $\faisAlt_{A,\sigma}$. 
\end{enumerate}
\end{lemm}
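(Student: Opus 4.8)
Le plan est d'abord d'observer que les trois assertions sont de nature locale pour la topologie \fppf: \^etre localement libre d'un rang donn\'e descend, et l'\'egalit\'e de deux sous-modules de $A$ (assertion \ref{SymAltorth_item}) se teste localement. De plus $\faisSym_{A,\sigma}=\ker(\sigma-\id)$ et $\faisAlt_{A,\sigma}=\Im(\sigma-\id)$ commutent au changement de base plat, puisqu'un tel changement de base est exact et pr\'eserve donc noyaux et images. Comme $(A,\sigma)$ est une forme \fppf\ de $(\faisEnd_M,\invadj_q)$ avec $q$ r\'egulier et $M$ localement libre, je me ram\`enerais ainsi, quitte \`a passer \`a un recouvrement (\'etale, par la proposition \ref{invlocale_prop} et trivialisation de $M$), au cas scind\'e $(A,\sigma)=(\setM_n,\invadj_B)$, o\`u $B=\transp B\in\setGL_n$ est la matrice sym\'etrique inversible d'une forme bilin\'eaire r\'eguli\`ere et $\sigma(x)=B^{-1}\transp x\,B$. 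C'est ici qu'intervient le type orthogonal de $\invadj_q$: la matrice $B$ est \emph{sym\'etrique} (et non altern\'ee), y compris en caract\'eristique $2$ o\`u elle se trouve \^etre \`a diagonale nulle.

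Dans ce cas scind\'e, l'\'egalit\'e $\sigma(x)=x$ \'equivaut \`a ce que $Bx$ soit sym\'etrique, d'o\`u $\faisSym_{A,\sigma}=B^{-1}\cdot\{\text{matrices sym\'etriques}\}$, facteur direct libre de rang $n(n+1)/2$ quelle que soit la caract\'eristique. De m\^eme, en posant $z=By$, on obtient $(\sigma-\id)(y)=B^{-1}(\transp z-z)$, et l'image de $z\mapsto\transp z-z$ est exactement le module des matrices \emph{altern\'ees} (antisym\'etriques \`a diagonale nulle), de rang $n(n-1)/2$. Je v\'erifierais cette derni\`ere \'egalit\'e sur $\ZZ$ en exhibant un ant\'ec\'edent triangulaire sup\'erieur de chaque matrice altern\'ee, et en constatant que le conoyau est libre (isomorphe aux matrices sym\'etriques), ce qui assure la commutation de l'image au changement de base. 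On a alors $\faisAlt_{A,\sigma}=B^{-1}\cdot\{\text{matrices altern\'ees}\}$, facteur direct libre de rang $n(n-1)/2$, ce qui \'etablit \ref{SymLoc_item} et \ref{AltLoc_item} apr\`es descente.

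Pour \ref{SymAltorth_item}, je poserais $T(x,y)=\trd(xy)$ et utiliserais le lemme \ref{TrdNrdSigma_lemm} avec le caract\`ere anti-automorphe de $\sigma$: pour $a\in\faisSym_{A,\sigma}$ et $y\in A$,
$$T(a,\sigma(y))=\trd(a\,\sigma(y))=\trd\big(\sigma(a\,\sigma(y))\big)=\trd(y\,\sigma(a))=\trd(ya)=\trd(ay)=T(a,y),$$
d'o\`u $T(a,(\sigma-\id)(y))=0$ et donc l'inclusion $\faisAlt_{A,\sigma}\subseteq(\faisSym_{A,\sigma})^\orth$ sur $S$ tout entier. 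La forme $T$ \'etant r\'eguli\`ere (c'est vrai localement, o\`u $T$ est la forme trace de $\setM_n$ dont la matrice de Gram dans la base des unit\'es matricielles est une permutation; comparer \ref{trdreg_lemm}), le sous-module $(\faisSym_{A,\sigma})^\orth$ est un facteur direct de rang $n^2-n(n+1)/2=n(n-1)/2$, \'egal au rang de $\faisAlt_{A,\sigma}$. Je conclurais \`a l'\'egalit\'e en la testant apr\`es passage au cas scind\'e, o\`u un calcul direct donne $(\faisSym_{A,\sigma})^\orth=\{x:xB^{-1}\text{ altern\'ee}\}=B^{-1}\cdot\{\text{matrices altern\'ees}\}$, en utilisant que la conjugaison $a\mapsto BaB$ par la matrice sym\'etrique $B$ pr\'eserve les matrices altern\'ees, m\^eme en caract\'eristique $2$.

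Le principal obstacle est le traitement uniforme de la caract\'eristique $2$: il faut s'assurer que $\faisAlt_{A,\sigma}$, d\'efini comme une image, se calcule bien apr\`es trivialisation (d'o\`u la n\'ecessit\'e de la commutation au changement de base plat) et que le module des matrices \emph{altern\'ees} --- strictement plus petit que celui des matrices antisym\'etriques en caract\'eristique $2$ --- poss\`ede bien le rang $n(n-1)/2$ attendu.
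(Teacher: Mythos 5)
Votre preuve est correcte et suit pour l'essentiel la m\^eme strat\'egie que celle du texte : r\'eduction \fppf-locale au cas d\'eploy\'e pour \ref{SymLoc_item} et \ref{AltLoc_item} (le texte identifie $\faisEnd_M\simeq M\otimes M$ et l'involution \`a l'\'echange des facteurs, ce qui revient \`a votre calcul matriciel avec $\sigma=\int_{B^{-1}}\circ\,\transp$), puis pour \ref{SymAltorth_item} l'inclusion via le lemme \ref{TrdNrdSigma_lemm} suivie d'un argument de rang reposant sur la r\'egularit\'e de la forme trace. La seule divergence est cosm\'etique : pour conclure \`a l'\'egalit\'e dans \ref{SymAltorth_item}, le texte compare les dimensions sur les corps r\'esiduels et applique Nakayama, l\`a o\`u vous faites le calcul direct de l'orthogonal dans le cas scind\'e; votre soin suppl\'ementaire sur la commutation de l'image $\Im(\sigma-\id)$ au changement de base est bienvenu mais ne change pas la nature de l'argument.
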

\begin{proof}
Les points \ref{SymLoc_item} et \ref{AltLoc_item} peuvent se vérifier \fppf-localement. On peut donc supposer $(A,\sigma)=(\faisEnd_M,\invadj_q)$.
On définit un morphisme $(M \otimes M) \isoto \faisEnd_M$ en envoyant $m_1 \otimes m_2$ sur $m_1 b_q (m_2,-)$. Ce morphisme est un isomorphisme: cela se vérifie localement, lorsque $M$ est libre. Par cet isomorphisme, $\invadj_q$ correspond donc à l'échange des facteurs sur $M \otimes M$, et toujours localement, en prenant une base, on vérifie alors que les éléments symétriques de $M \otimes M$ sont bien un sous-module libre de la dimension requise, et de même pour les éléments alternés.

Pour l'orthogonalité, on utilise le lemme \ref{TrdNrdSigma_lemm} pour vérifier l'inclusion $\faisAlt_{A,\sigma}\subseteq \faisSym_{A,\sigma}^\perp$, puis le fait que sur tous les corps résiduels, on doit avoir égalité pour des raisons de dimension (la forme est régulière par \ref{trdreg_lemm}), ce qui implique l'égalité sur les anneau locaux des points par le lemme de Nakayama, et donc globalement.
\end{proof}
\medskip

Introduisons maintenant la notion de paire quadratique, Nous nous limiterons au cas de degré pair. Ce qui suit est essentiellement le contenu de \cite[Ch 1, §5, B]{bookinv}, légèrement adapté pour fonctionner sur une base quelconque.$^{\eqref{orthsymp_foot}}$ 

\begin{defi} \label{pairesquad_defi}
Une paire quadratique est un triplet $(A,\sigma,f)$ où $A$ est une algèbre à involution orthogonale, de degré pair (sur chaque composante connexe), et $f:\faisSym_{A,\sigma} \to \faisO_S$ est un morphisme de $\faisO_S$-modules qui satisfait à $f(a+\sigma(a))=\trd(a)$. On vérifie facilement que cette structure définit un champ, par la proposition \ref{champstruc_prop}, qu'on note $\PairesQuadn{2n}$.
\end{defi}

Si $(M,q)$ est un module quadratique régulier de rang $2n$, notons $\phi_q$ l'isomorphisme $M \otimes M \to \faisEnd_M$ décrit dans la preuve de \ref{symAsigma_lemm}, et $\invadj_q=\invadj_{b_q}$ l'involution décrite avant la proposition \ref{invlocale_prop}.

\begin{prop} \label{deffq_prop}
Il existe un unique morphisme $\faisO_S$-linéaire $f_q: \faisSym_{\faisEnd_M,\invadj_q}\to \faisO_S$ tel que $f_q \circ \phi_q (m \otimes m) = q(m)$ pour tout $T$-point $m$ de $M$, pour tout $T$ sur $S$, et $(\faisEnd_M, \invadj_q, f_q)$ est alors une paire quadratique. Cette construction est fonctorielle en $q$. Réciproquement, étant donné une paire quadratique $(\faisEnd_M,\sigma,f)$, il existe un module quadratique $(M,q)$ tel que $\invadj_q=\sigma$ et $f_q=f$, et $q$ est déterminé à un élément de $\Gamma(S)^\times$ près.
\end{prop}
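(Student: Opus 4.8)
The plan is to reduce everything to the identification, furnished by \ref{symAsigma_lemm}, of $\faisEnd_M$ with $M \otimes M$ via $\phi_q$, under which $\invadj_q$ becomes the exchange of the two factors and $\faisSym_{\faisEnd_M,\invadj_q}$ becomes the submodule of symmetric tensors. Under this identification the assignment $m \mapsto \phi_q(m \otimes m)$ is the universal quadratic map landing in $\faisSym_{\faisEnd_M,\invadj_q}$, so that the whole statement amounts to the sheaf-theoretic assertion that a quadratic form on $M$ is the same datum as an $\faisO_S$-linear form on the symmetric tensors, evaluated on the diagonal. First I would settle existence and uniqueness of $f_q$. Uniqueness is immediate, since the elements $\phi_q(m\otimes m)$ generate $\faisSym_{\faisEnd_M,\invadj_q}$ as an $\faisO_S$-module: the polarization identity
\[
m \otimes m' + m' \otimes m = (m+m')\otimes(m+m') - m\otimes m - m'\otimes m'
\]
exhibits every off-diagonal symmetric tensor as a combination of diagonal ones, as one checks Zariski-locally where $M$ is free. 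For existence I would work on a trivializing open, define $f_q$ on a basis of $\faisSym_{\faisEnd_M,\invadj_q}$ by $f_q(\phi_q(e_i\otimes e_i)) = q(e_i)$ and $f_q(\phi_q(e_i\otimes e_j + e_j\otimes e_i)) = b_q(e_i,e_j)$, and verify $f_q(\phi_q(m\otimes m)) = q(m)$ by expanding $q(\sum a_i e_i)$. The local solutions being unique, they glue to a global $f_q$.

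Next I would check that $(\faisEnd_M,\invadj_q,f_q)$ is a quadratic pair in the sense of \ref{pairesquad_defi}. The degree is even, and $\invadj_q = \invadj_{b_q}$ is orthogonal because $b_q$ is symmetric, so its type in the sense of \ref{involutionsymplectique_defi} is $1$. The substantive point is the identity $f_q(a + \invadj_q(a)) = \trd(a)$: by $\faisO_S$-linearity of both sides it suffices to check it locally on elements $a = \phi_q(m_1 \otimes m_2)$, where $a + \invadj_q(a) = \phi_q(m_1 \otimes m_2 + m_2 \otimes m_1)$ has $f_q$-value $b_q(m_1,m_2)$ by the computation above, while $\phi_q(m_1\otimes m_2)$ is the rank-one operator $x \mapsto m_1\, b_q(m_2,x)$, whose reduced trace equals its ordinary trace $b_q(m_2,m_1) = b_q(m_1,m_2)$. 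Functoriality in $q$ is then formal: an isomorphism $g\colon (M,q)\isoto(M',q')$ induces $\int_g\colon \faisEnd_M \isoto \faisEnd_{M'}$ intertwining the involutions, and since $\phi_{q'}\circ(g\otimes g) = \int_g\circ\phi_q$ and $q'(gm)=q(m)$, it intertwines $f_q$ and $f_{q'}$ by the uniqueness just established.

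For the converse I would use \ref{invlocale_prop} to write $\sigma = \invadj_b$ for a symmetric bilinear $b$ on $M$, noting by the remark following \ref{invadj_defi} that $b$ is determined up to an invertible scalar; then set $q(m) := f(\phi_b(m\otimes m))$, which is quadratic since $f$ is linear. Its polar form follows from the quadratic-pair axiom: writing $a=\phi_b(m\otimes m')$ one has $b_q(m,m') = f(\phi_b(m\otimes m' + m'\otimes m)) = f(a + \invadj_b(a)) = \trd(a) = b(m,m')$, whence $\invadj_q = \invadj_{b_q} = \sigma$, while $f_q = f$ by uniqueness since both send $\phi_b(m\otimes m)$ to $q(m)$. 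Replacing $b$ by $\lambda b$ scales $\phi_b$, hence $q$, by $\lambda$, so $q$ is determined exactly up to an element of $\Gamma(S)^\times$. The main obstacle, and the only point where global geometry intervenes, is precisely this direction: producing the form $b$ (hence $q$) globally rather than merely locally. I would handle the globalization through the scalar rigidity — the local forms agree up to units and the global linear form $f$ fixes the construction — reducing if necessary to an affine and ultimately local base, where an involution on $\faisEnd_M$ is genuinely adjoint to a bilinear form on $M$.
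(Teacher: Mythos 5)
Your argument is in substance the paper's: the text simply cites \cite[Ch.~I, (5.11)]{bookinv}, notes that the proof there works verbatim over any base once $M$ is free, and glues by uniqueness of $f_q$; what you have written out is precisely that free-case proof. Your treatment of existence, of uniqueness via polarization, of the identity $f_q(a+\invadj_q(a))=\trd(a)$ on the generators $\phi_q(m_1\otimes m_2)$, and of functoriality is correct, and correctly characteristic-free (the only delicate point being that the diagonal tensors $m\otimes m$ generate the symmetric tensors integrally, which your polarization identity settles).

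The one place where your write-up does not close is the globalization of the converse, and the mechanism you invoke there does not work: the linear form $f$ does \emph{not} rigidify the local solutions. Indeed if $q'=\lambda q$ with $\lambda\in\Gamma(S)^\times$ then $\phi_{q'}=\lambda\phi_q$, so that $f_{q'}=f_q$ as well as $\invadj_{q'}=\invadj_q$; the pair $(\sigma,f)$ therefore determines $q$ only up to a unit, exactly as the statement says. Consequently the local forms $q_i$ you produce on a trivializing cover differ on overlaps by units $\lambda_{ij}$ forming a $\faisGm$-cocycle, and they glue a priori only to a quadratic form with values in the corresponding line bundle; nothing in the data $(\sigma,f)$ kills this class in $\Pic(S)$ (one sees the obstruction is genuine by taking the $L$-valued hyperbolic form on $N\oplus(N^\dual\otimes L)$ for $L$ non trivial: its adjoint involution and its $f$ are $\faisO_S$-valued, but no $\faisO_S$-valued $q$ induces the same involution). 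To be fair, the paper's own proof is no more explicit than you are on this point --- its phrase \og par unicit\'e de $f_q$ \dots\ par descente \fg{} really only handles the forward direction --- and only the local form of the converse is used later (corollaire \ref{pairequadformes_coro}); but a complete argument for the converse as stated must either assume $\Pic(S)$ poses no obstruction (affine or local base, or $M$ free) or be reformulated with line-bundle-valued quadratic forms.
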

\begin{proof}
Une preuve est donnée dans \cite[Ch. I, (5.11)]{bookinv} pour les corps, et elle est valable sans changement sur n'importe quelle base lorsque $M$ est libre. Par unicité de $f_q$ dans une telle situation, le cas général suit par descente. 
\end{proof}

Nous noterons $\invadj_{2n}$ au lieu de $\invadj_{\hypq_{2n}}$ et $f_{2n}$ au lieu de $f_{\hypq_{2n}}$. 

On vérifie que sur la base de $\faisO_S^{2n}$ telle que $\hypq_{2n}$ est donnée par $\hypq_{2n}(x_1,\ldots,x_{2n})=\sum_1^n x_{2i-1} x_{2i}$, alors $\invadj_{2n}=\int_H\circ \transp(-)$ où $H=\sum_1^n (E_{2i-1,2i}+E_{2i,2i-1})$ (N.B. $H=H^{-1}=\transp H$). Le module $\faisSym_{\faisM_{2n},\invadj_{2n}}$ admet alors pour base les $E_{2i-1,2j-1}+E_{2i,2j}$, $1\leq i,j\leq n$ ajoutés aux $E_{2i-1,2j}+E_{2i,2j-1}$, $1\leq i\neq j \leq n$ et aux $E_{2i,2i-1}$ et $E_{2i-1,2i}$, $1\leq i \leq n$. Le morphisme $f_{2n}$ est alors donné sur cette base par $f_{2n}(E_{i,i}+E_{i+1,i+1})=1$ alors que tous les autres sont envoyés sur $0$. En d'autres termes, c'est la somme d'un terme diagonal sur deux (sachant que les autres sont identiques).

\begin{coro} \label{pairequadformes_coro}
Toute paire quadratique est une forme étale de la paire $(\faisM_{2n},\invadj_{2n},f_{2n})$.
\end{coro}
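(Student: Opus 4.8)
The plan is to build the desired \'etale-local isomorphism in three successive localization steps, assembling the structural results already established for involutions and for regular quadratic modules. Since $\PairesQuadn{2n}$ is a champ (Definition \ref{pairesquad_defi}), being an \'etale form of $(\faisM_{2n},\invadj_{2n},f_{2n})$ is a local condition, so it suffices to produce isomorphisms on the members of some \'etale cover and then descend/compose them.

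First I would reduce the underlying algebra with involution to the split case. Let $(A,\sigma,f)$ be a quadratic pair, so that $A$ is Azumaya of degree $2n$ and $\sigma$ is orthogonal. By Proposition \ref{invlocale_prop}, \'etale-locally $(A,\sigma)$ is isomorphic to $(\faisEnd_M,\invadj_b)$ for some non-degenerate bilinear module $b$ that is $\delta$-symmetric, with $\delta$ depending only on $\sigma$; since $\sigma$ is orthogonal its type is $\delta=1$ (Definition \ref{involutionsymplectique_defi}), so $b$ is symmetric, and it is regular because $\invadj_b$ requires $b\colon M\to M^\dual$ to be an isomorphism. Transporting $f$ along this isomorphism yields a quadratic pair of split shape $(\faisEnd_M,\invadj_b,f')$.

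Next I would recognize this split quadratic pair as arising from a quadratic module. By the converse part of Proposition \ref{deffq_prop}, there is a quadratic module $(M,q)$, unique up to a unit scalar, with $\invadj_q=\invadj_b$ and $f_q=f'$, so that $(\faisEnd_M,\invadj_b,f')=(\faisEnd_M,\invadj_q,f_q)$; here $q$ is regular since $b_q$ is a unit multiple of the regular form $b$. Now Proposition \ref{formesetaleshyppair_prop} says that a regular quadratic module of constant rank $2n$ is an \'etale form of the hyperbolic module $\hypq_{2n}$, so after a further \'etale localization I may assume $(M,q)\simeq\hypq_{2n}$. Because the assignment $q\mapsto(\faisEnd_M,\invadj_q,f_q)$ is functorial in $q$ (Proposition \ref{deffq_prop}), this isomorphism of quadratic modules induces an isomorphism of triples $(\faisEnd_M,\invadj_q,f_q)\simeq(\faisM_{2n},\invadj_{2n},f_{2n})$. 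Composing the isomorphisms of the three steps over a common refining \'etale cover gives $(A,\sigma,f)\simeq(\faisM_{2n},\invadj_{2n},f_{2n})$ \'etale-locally, which is the claim.

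The substance is carried entirely by the three structural propositions, so I expect no serious obstacle; the only delicate points are bookkeeping. One must check that orthogonality genuinely forces $b$ to be symmetric and regular, so that the module $q$ produced in the second step is regular and Proposition \ref{formesetaleshyppair_prop} applies; and one must use the functoriality of $q\mapsto(\faisEnd_M,\invadj_q,f_q)$ to lift the hyperbolic trivialization of $(M,q)$ to the level of triples rather than merely the level of quadratic modules. In short, the corollary is the concatenation of Propositions \ref{invlocale_prop}, \ref{deffq_prop} and \ref{formesetaleshyppair_prop}.
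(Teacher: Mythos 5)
Your argument is correct and is exactly the chain the paper intends: the corollary is stated without a written proof precisely because it follows from concatenating Proposition \ref{invlocale_prop} (reduction to the split algebra with involution, with $\delta=1$ forcing a regular symmetric $b$), the converse part of Proposition \ref{deffq_prop} (recovering a regular quadratic module $q$ with $f_q=f$), and Proposition \ref{formesetaleshyppair_prop} together with the functoriality of $q\mapsto(\faisEnd_M,\invadj_q,f_q)$. Your bookkeeping on the symmetry and regularity of $b$, and on lifting the hyperbolic trivialization to the level of triples, is exactly the right way to fill in the details the paper leaves implicit.
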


Rappelons que les modules quadratiques réguliers sont les formes du module hyperbolique de rang $2n$ et qu'ils sont les objets du champ $\RegQuadn{2n}$ défini en \ref{champquadreg_defi}. La proposition précédente permet ainsi de définir un foncteur fibré $\RegQuadn{2n}\to \PairesQuadn{2n}$, qui envoie l'objet $(M,q)$ sur $(\faisEnd_M,\invadj_q,f_q)$.
\medskip

Passons maintenant aux algèbres à involution de deuxième espèce.

\begin{defi}
Une algèbre à involution de deuxième espèce sur un anneau $R$ est une algèbre d'Azumaya $A$ sur $Z$, qui est une algèbre étale finie de degré $2$ sur $R$, munie d'une involution $R$-linéaire $\sigma$ qui se restreint à $Z$ en son unique automorphisme de $R$-algèbres partout localement non trivial (voir cor. \ref{etaledegre2_coro}). 
\end{defi}

\begin{defi} \label{alginvdeuxesp_defi}
Un faisceau en algèbres d'Azumaya \emph{avec involution de deuxième espèce} est un faisceau en $\faisO_S$-algèbres d'Azumaya $A$ sur une $\faisO_S$-algèbre étale finie $Z$ de degré $2$ sur $\faisO_S$, muni d'une involution qui se restreint à $Z$ en son unique automorphisme de $\faisO_S$-algèbre partout localement non trivial. Clairement, dans le cas où $S$ est affine, c'est l'image par le foncteur $\wW\circ \tilde{(-)}$ de la structure de la définition précédente.
\end{defi}

Soit $\exch$ l'involution sur $\setM_n(\ZZ)\times \setM_n^\opp(\ZZ)$ définie par $(a,b) \mapsto (b,a)$. En considérant le morphisme de faisceau associé à $\exch$, et en étendant la base de $\ZZ$ à $S$, on obtient donc un faisceau en $\faisO_S$-algèbres à involution $(\faisM_{n,S} \times \faisM_{n,S}^\opp, \exch_S)$ de deuxième espèce, que nous appellerons \emph{déployée}.

\begin{prop} \label{alginvdeuxesp_prop}
Les $S$-formes (au sens de la structure $\faisO_S$-$\alginv$) de la $\faisO_S$-algèbre à involution déployée $(\faisM_{n,S} \times \faisM_{n,S}^\opp, \exch_S)$ sont les $\faisO_S$-algèbres d'Azumaya de degré $n$ avec involution de deuxième espèce.
\end{prop}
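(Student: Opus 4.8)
The plan is to prove the two inclusions separately, in each case reducing to the étale-local descriptions already established for rank-$2$ étale algebras and for Azumaya algebras of degree $n$.

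First I would treat the inclusion asserting that every form is an Azumaya algebra of degree $n$ with involution of second kind. Let $(A,\sigma)$ be a form of $(\faisM_{n,S}\times\faisM_{n,S}^\opp,\exch_S)$, so there is an étale cover over which $(A,\sigma)$ becomes isomorphic to the split model. All the defining properties of the target class are étale-local: that the underlying algebra is Azumaya of degree $n$ over its center is local by Proposition \ref{azudefi_prop} and Definition \ref{degreAzumaya_defi}; that the center $Z$ is a rank-$2$ étale $\faisO_S$-algebra is local by Definition \ref{faiscalget_defi}; and that $\sigma$ restricts on $Z$ to its locally nontrivial automorphism can be checked locally by Corollary \ref{etaledegre2_coro}. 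Since on the split model the center is $\faisO_S\times\faisO_S$ and $\exch_S$ swaps its two factors (hence is the nontrivial automorphism there), these properties descend to $(A,\sigma)$, which is therefore of the required type.

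For the converse, let $(A,\sigma)$ be a $\faisO_S$-Azumaya algebra of degree $n$ over a rank-$2$ étale center $Z$, with $\sigma|_Z$ the locally nontrivial automorphism. By Proposition \ref{algetfinies_prop}, $Z$ is an étale form of $\faisO_S^2$, so after passing to an étale cover I may assume $Z=\faisO_S\times\faisO_S$, with orthogonal idempotents $e_1,e_2$. Then $A=Ae_1\times Ae_2=:A_1\times A_2$, and each $A_i$ is a $\faisO_S$-Azumaya algebra of degree $n$ (of rank $n^2$ over $\faisO_S$). Because $\sigma$ induces the swap on $Z$, it exchanges $e_1$ and $e_2$, hence sends $A_1$ onto $A_2$; being an anti-automorphism, $\sigma|_{A_1}\colon A_1\to A_2$ is an anti-isomorphism, equivalently an isomorphism $A_1^\opp\isoto A_2$. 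Using it to identify $A_2$ with $A_1^\opp$, the relation $\sigma^2=\id$ forces $\sigma$ to become the exchange, so $(A,\sigma)\cong(A_1\times A_1^\opp,\exch)$. Finally $A_1$, being Azumaya of degree $n$, is étale-locally isomorphic to $\faisM_{n,S}$ by Proposition \ref{Azumayaforme_prop}; after one further étale refinement this yields $(A,\sigma)\cong(\faisM_{n,S}\times\faisM_{n,S}^\opp,\exch_S)$, exhibiting $(A,\sigma)$ as a form in the sense of Definition \ref{formes_defi}.

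The hard part will be the organization of the second inclusion rather than any single computation: one must check that the anti-isomorphism supplied by $\sigma$ glues the two factors into precisely the standard exchange involution once $A_2$ is identified with $A_1^\opp$, and that the successive étale localizations (splitting $Z$, then splitting $A_1$) compose into a single étale cover over which $(A,\sigma)$ is isomorphic to the split model, with the locally constant degree genuinely equal to $n$ on each factor. The remaining verifications—that the $e_i$ are indeed orthogonal idempotents and that $Ae_i$ is Azumaya of degree $n$ over $\faisO_S$—are routine and I would leave them as a local check.
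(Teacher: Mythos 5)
Your proof is correct and follows essentially the same strategy as the paper: both directions are handled by reducing to étale-local statements, the forward direction because being Azumaya of degree $n$ over a rank-$2$ étale center with $\sigma$ inducing the locally nontrivial automorphism is an étale-local condition, and the converse by splitting étale-locally and exhibiting an isomorphism with the exchange involution. The only difference is in how the converse is packaged: the paper splits $Z$ \emph{and} $A$ simultaneously down to $\faisM_n\times\faisM_n^\opp$, then computes (citing \cite[2.14]{bookinv}) that the involution must be $(\epsilon\times(\epsilon^{-1})^\opp)\circ\exch$ for some automorphism $\epsilon$ of $\faisM_n$, and finally normalizes it to $\exch$ by conjugating with $\epsilon\times\id$; you instead split only the center, decompose $A=A_1\times A_2$ by the idempotents, and use $\sigma$ itself as the identification $A_1^\opp\isoto A_2$, so that $\sigma^2=\id$ immediately turns $\sigma$ into the exchange on $A_1\times A_1^\opp$ before you ever split $A_1$. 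This is the same computation in a slightly cleaner order — it absorbs the paper's conjugation step into the choice of identification and avoids the external citation — and your worry about composing the two successive étale refinements is unfounded, since a composite of étale covers is an étale cover.
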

\begin{proof}
Par les propositions \ref{algetfinies_prop} et \ref{Azumayaforme_prop}, toute forme de l'algèbre à involution déployée est bien une $\faisO_S$-algèbre à involution de deuxième espèce. 

Réciproquement, étant donné une $\faisO_S$-algèbre à involution de deuxième espèces $(A,Z)$, par les mêmes propositions, localement pour la topologie étale, on peut supposer que $Z=\faisO_S\times \faisO_S$ et $A=\faisM_{n,S}\times \faisM_{n,S}^\opp$. Un court calcul utilisant les deux projecteur canoniques du centre $\faisO_S \times \faisO_S$ montre alors que l'involution est de la forme $(\sigma \times (\sigma^{-1})^\opp) \circ \exch$ où $\sigma$ est un automorphisme de $\faisM_n$. Voir par exemple \cite[2.14]{bookinv}. Or dans ce cas, le morphisme $(\sigma \times \id)$ définit un isomorphismes de $\faisO_S$-algèbres à involution de $(\faisM_n \times \faisM_n^\opp,\exch)$ vers $(\faisM_n \times \faisM_n^\opp,(\sigma \times (\sigma^{-1})^\opp) \circ \exch)$. On a donc montré que localement pour la topologie étale, toute $\faisO_S$-algèbre à involution de deuxième espèce est déployée.
\end{proof}

\subsection{Algèbres de Lie}

Soit $\faisDual_S$ le schéma des nombres duaux sur $S$, défini comme le spectre de la $\cO_S$-algèbre $\cO_S[t]/t^2$. Son morphisme structural $\faisDual_S \to S$ admet une section canonique correspondant à l'application envoyant $t$ sur $0$. Voir \cite[Exp. II, \S 2]{sga3} pour les détails. On a $\faisO_S(\faisDual_{T})=\faisO_S(T)[t]/t^2$ pour tout $T\to S$. 

Dans \cite[Exp. II]{sga3}, l'algèbre de Lie $\faisLie_G$ d'un schéma en groupes $G$ sur une base $S$ est défini (en 3.9.0.1) comme le foncteur de points tiré le long de la section unité $S \to G$ du fibré tangent $\faisTan_G$ à $G$, lui-même défini (en 3.1) comme le foncteur de points des $T\mapsto \faisTan_G(T)=G(\faisDual_{T})$. Ce fibré tangent et l'algèbre de Lie sont munis d'une structure de foncteur en groupes, et on a ainsi pour tout $T\to S$ une suite exacte scindée de groupes
\begin{equation} \label{seLie_eq}
\xymatrix{
\faisLie_G(T) \ar[r]^-{i} & G(\faisDual_{T}) \ar[r]^-{p} & G(T) \ar@/^1ex/[l]^{s}
}
\end{equation}
qui identifie $\faisLie_G(T)$ au noyau du morphisme $p$, et où $p$ et $s$ sont induits par les deux morphismes mentionnés plus haut entre $S$ et $\faisDual_S$.
Le foncteur $\faisLie_G$ s'identifie (par 3.3) à la fibration vectorielle $\vV(\omega^1_{G/S})$, au sens de \cite[Exp. I, déf. 4.6.1]{sga3}, associée au $\cO_S$-module cohérent $\omega^1_{G/S}$ des différentielles relatives de $G$ par rapport à $S$ tiré à $S$ par la section unité. En particulier, ce foncteur 
est muni d'une structure de $\faisO_S$-module. Son crochet de Lie est défini (en 4.7.2) par l'intermédiaire de la représentation adjointe, et $G\mapsto \faisLie_G$ définit naturellement un foncteur des schémas en groupes vers les $\faisO_S$-modules munis d'une loi de composition bilinéaire (voir \cite[Exp. II, (i) avant la prop. 4.8]{sga3}). 

\begin{prop} \label{LieOS_prop}
Dans le cas du groupe additif sous-jacent au $\faisO_S$-module $\faisO_S$ lui-même, on a les identifications $\faisTan_{\faisO_S}= \faisO_S[t]/t^2$ et $\faisLie_{\faisO_S}=\faisO_S$, muni du crochet nul.
\end{prop}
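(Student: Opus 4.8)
The plan is to unwind the definitions recalled just above, since the statement is essentially a computation of the two point functors together with their extra structure. I would first establish the tangent bundle identification: by definition $\faisTan_{\faisO_S}(T) = \faisO_S(\faisDual_T)$ for every $T \to S$, and the identity $\faisO_S(\faisDual_T) = \faisO_S(T)[t]/t^2$ recalled above gives $\faisTan_{\faisO_S}(T) = \faisO_S(T)[t]/t^2$, functorially in $T$; this is exactly the point functor denoted $\faisO_S[t]/t^2$, which settles the first claim.

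Next, for the Lie algebra I would use the split exact sequence \eqref{seLie_eq}, which identifies $\faisLie_{\faisO_S}(T)$ with the kernel of the morphism $p \colon \faisTan_{\faisO_S}(T) \to \faisO_S(T)$ induced by $t \mapsto 0$. Under the identification above, $p$ is the reduction $a + bt \mapsto a$, whose kernel is the principal ideal $t\,\faisO_S(T)$, so the rule $bt \mapsto b$ is a functorial bijection $\faisLie_{\faisO_S} \isoto \faisO_S$. To match the two $\faisO_S$-module structures, I would recall that scalar multiplication by $\lambda$ on $\faisLie_G$ is induced by the homothety $t \mapsto \lambda t$ of $\faisDual_T$ (see \cite[Exp. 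II, 3.3]{sga3}); here it sends $bt$ to $\lambda b t$, so the bijection $bt \mapsto b$ is $\faisO_S$-linear, giving the second identification.

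Finally, for the bracket: since the additive group underlying $\faisO_S$ is commutative, it acts trivially on itself by conjugation, so the adjoint representation through which the bracket is defined in \cite[Exp. II, 4.7.2]{sga3} is trivial and the bracket vanishes. There is no real obstacle here; the only points requiring a touch of care are the compatibility of the isomorphism $bt \mapsto b$ with the $\faisO_S$-module structure and the fact that the bracket is genuinely read off from the (trivial) adjoint action rather than from the group law itself.
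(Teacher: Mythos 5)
Votre preuve est correcte et suit exactement le d�roulement des d�finitions que le texte consid�re comme imm�diat (la preuve du papier se r�duit � \og C'est imm�diat \fg). Vous ne faites qu'expliciter les �tapes sous-entendues — identification de $\faisO_S(\faisDual_T)$ avec $\faisO_S(T)[t]/t^2$, noyau de la r�duction $t\mapsto 0$, et nullit� du crochet par commutativit� du groupe — sans d�vier de l'argument attendu.
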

\begin{proof}
C'est immédiat.
\end{proof}

\begin{prop}
Pour toute extension $T \to S$, on a $(\faisLie_G)_{T} = \faisLie_{G_{T}}$.
\end{prop}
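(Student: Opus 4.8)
Le plan est d'\'etablir d'abord la commutation au changement de base du foncteur tangent, \`a savoir $(\faisTan_G)_T = \faisTan_{G_T}$, puis d'en d\'eduire l'\'enonc\'e pour $\faisLie_G$, qui s'obtient comme noyau dans la suite exacte \eqref{seLie_eq}.

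L'observation essentielle est que la formation du sch\'ema des nombres duaux est absolue: pour tout sch\'ema $T'$, le sch\'ema $\faisDual_{T'} = \Spec(\cO_{T'}[t]/t^2)$ ne d\'epend pas de la base au-dessus de laquelle on consid\`ere $T'$. En particulier, si $T'$ est un $T$-sch\'ema, alors $\faisDual_{T'}$ en est un \'egalement, et la propri\'et\'e universelle du produit fibr\'e donne $\Hom_T(\faisDual_{T'}, G_T) = \Hom_S(\faisDual_{T'}, G)$, c'est-\`a-dire $G_T(\faisDual_{T'}) = G(\faisDual_{T'})$.

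J'utiliserais ceci pour identifier les foncteurs tangents: pour tout $T' \to T$, on a
$$\faisTan_{G_T}(T') = G_T(\faisDual_{T'}) = G(\faisDual_{T'}) = \faisTan_G(T') = (\faisTan_G)_T(T'),$$
par d\'efinition du foncteur tangent et du changement de base d'un foncteur de points. Les deux morphismes structuraux entre $S$ et $\faisDual_S$ (la projection et la section unit\'e envoyant $t$ sur $0$) \'etant eux aussi absolus, les morphismes $p$ et $s$ de la suite \eqref{seLie_eq} commutent au changement de base. Comme $\faisLie_G$ est le noyau de $p$ et que la formation d'un tel noyau (un produit fibr\'e) commute \`a la restriction aux $T$-sch\'emas, on obtient $(\faisLie_G)_T = \faisLie_{G_T}$ comme foncteurs en groupes.

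Il resterait \`a v\'erifier que cette identification respecte la structure de $\faisO_T$-module et le crochet de Lie, mais c'est automatique: la structure de module provient de la fibration vectorielle $\vV(\omega^1_{G/S})$, compatible au changement de base puisque $\omega^1_{G_T/T} = (\omega^1_{G/S})_T$, et le crochet, d\'efini via la repr\'esentation adjointe, l'est aussi par le m\^eme argument appliqu\'e \`a $\faisTan$. L'\'enonc\'e ne pr\'esente donc pas de r\'eelle difficult\'e: le seul point \`a surveiller est l'identification des nombres duaux sous changement de base, qui est imm\'ediate vu le caract\`ere absolu de la construction.
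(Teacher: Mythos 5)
Votre d\'emonstration est correcte et suit la m\^eme voie que la r\'ef\'erence invoqu\'ee par le texte : l'identit\'e $G_T(\faisDual_{T'}) = G(\faisDual_{T'})$ pour $T'$ au-dessus de $T$ (caract\`ere absolu des nombres duaux et propri\'et\'e universelle de $G\times_S T$), jointe au fait que noyaux et restrictions de foncteurs de points se calculent terme \`a terme, donne bien $(\faisLie_G)_T = \faisLie_{G_T}$, et la compatibilit\'e de la structure de module et du crochet se traite comme vous l'indiquez. Le texte se contente de renvoyer \`a \cite[Exp. II, prop. 3.4 et 4.1.2.0]{sga3}, qui \'etablissent pr\'ecis\'ement ce r\'esultat par le m\^eme d\'eballage des d\'efinitions ; votre argument en est une version explicit\'ee, sans divergence de m\'ethode.
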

\begin{proof}
Voir \cite[Exp. II]{sga3}, proposition 3.4 et 4.1.2.0.
\end{proof}
Posons $\mLie_{G}=(\omega^1_{G/S})^\dual$.
\begin{prop}
Si $G$ est lisse sur $S$, alors $\omega^1_{G/S}$ est localement libre de type fini et $\faisLie_G = \wW(\mLie_{G})$. 
Si de plus $S$ est affine égal à $\Spec(R)$, on a donc $\mLie_G=\tilde{A}$ où $A=\mLie_G(S)=\faisLie_G(S)$ est un module localement libre sur $R$ et pour schéma affine $T=\Spec(R')$ au-dessus de $S$, on a
$$\faisLie_G(T)=A_{R'}.$$
\end{prop}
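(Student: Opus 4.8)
Le plan est de d\'eduire l'\'enonc\'e de l'identification $\faisLie_G \simeq \vV(\omega^1_{G/S})$ rappel\'ee ci-dessus, en trois temps~: \'etablir la locale libert\'e de $\omega^1_{G/S}$ gr\^ace \`a la lissit\'e, traduire la fibration vectorielle $\vV$ en termes du foncteur $\wW$ appliqu\'e au dual, puis sp\'ecialiser au cas affine.

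D'abord, on utiliserait le fait classique que pour un morphisme lisse $G \to S$, le faisceau des diff\'erentielles relatives $\Omega^1_{G/S}$ est localement libre de type fini (de rang la dimension relative), voir \cite[IV$_4$, 17.2.3]{ega}. Comme $\omega^1_{G/S}$ d\'esigne ici l'image inverse de $\Omega^1_{G/S}$ par la section unit\'e $S \to G$, et que l'image inverse d'un module localement libre de type fini reste localement libre de type fini, on obtient que $\omega^1_{G/S}$, donc aussi son dual $\mLie_G = (\omega^1_{G/S})^\dual$, sont localement libres de type fini.

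Ensuite, on comparerait $\vV$ et $\wW$. Pour un $\cO_S$-module localement libre de type fini $\cF$, la fibration vectorielle $\vV(\cF)$ de \cite[Exp. I, d\'ef. 4.6.1]{sga3} a pour foncteur de points $T \mapsto \Hom_{\cO_T}(\cF_T,\cO_T)$, qui co\"incide avec $\Gamma(T,(\cF^\dual)_T)$ puisque la dualit\'e commute au changement de base pour les modules localement libres, soit exactement $\wW(\cF^\dual)(T)$, et ce fonctoriellement en $T$. En prenant $\cF = \omega^1_{G/S}$, on en tire $\faisLie_G \simeq \vV(\omega^1_{G/S}) = \wW((\omega^1_{G/S})^\dual) = \wW(\mLie_G)$, ce qui est la premi\`ere assertion. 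On v\'erifierait au passage que cet isomorphisme est bien un isomorphisme de $\faisO_S$-modules, la structure de module sur $\vV(\cF)$ provenant de celle de $\cO_S$-module de $\cF$.

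Enfin, pour le cas $S = \Spec(R)$ affine, la locale libert\'e de $\mLie_G$ fournit via l'\'equivalence $\tilde{(-)}$ un $R$-module localement libre (donc projectif de type fini) $A$ avec $\mLie_G = \tilde{A}$ et $A = \Gamma(S,\mLie_G) = \mLie_G(S)$. On aurait alors $\faisLie_G(S) = \wW(\tilde A)(S) = \Gamma(S,\tilde A) = A$, d'o\`u $A = \mLie_G(S) = \faisLie_G(S)$. Pour $T = \Spec(R')$ au-dessus de $S$, la compatibilit\'e de $\wW$ au changement de base (proposition \ref{Wprop_prop}, point \ref{Wchmtbase_item}) et celle de $\tilde{(-)}$ donneraient
$$\faisLie_G(T) = \wW(\tilde A)(T) = \Gamma\big(T,(\tilde A)_T\big) = \Gamma\big(\Spec(R'),\widetilde{A_{R'}}\big) = A_{R'},$$
ce qui ach\`everait la preuve. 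L'essentiel du travail, et le seul point r\'eellement d\'elicat, est le renversement par dualit\'e reliant $\vV$ et $\wW$ dans la deuxi\`eme \'etape~; les deux autres \'etapes sont soit des r\'ef\'erences standard, soit des v\'erifications imm\'ediates.
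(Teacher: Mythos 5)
Votre preuve est correcte et suit essentiellement la m\^eme d\'emarche que celle du texte~: locale libert\'e de $\omega^1_{G/S}$ par lissit\'e, identification $\vV(\cF)=\wW(\cF^\dual)$ pour $\cF$ localement libre, puis cha\^ine d'identifications via la compatibilit\'e de $\wW$ au changement de base dans le cas affine. La seule diff\'erence est cosm\'etique~: vous d\'eroulez \`a la main l'\'egalit\'e $\vV(\omega^1_{G/S})=\wW((\omega^1_{G/S})^\dual)$ l\`a o\`u le texte se contente de citer \cite[Exp. I, cor. 4.6.5.1]{sga3}, et vous obtenez la locale libert\'e de $\omega^1_{G/S}$ en tirant celle de $\Omega^1_{G/S}$ par la section unit\'e plut\^ot qu'en citant directement \cite[Exp. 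II, th. 4.3]{sga1}.
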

\begin{proof}
Le faisceau $\omega^1_{G/S}$ est localement libre de type fini par \cite[Exp. II, th. 4.3]{sga1}. On a $\faisLie_G=\vV(\omega^1_{G/S})$ et $\vV(\omega^1_{G/S})=\wW((\omega^1_{G/S})^\dual)$ par \cite[Exp. I, cor. 4.6.5.1]{sga3}.
Le cas affine s'obtient par la suite d'identifications
$$\faisLie_G(T) = (\faisLie_G)_{T}(T) = \wW(\tilde A)_{T}(T)=\wW((\tilde A)_{T})(T)=\wW(\tilde{A_{R'}})(T)=A_{R'}.$$ 
\end{proof}
\begin{rema}
Dans le cas affine et lisse de la proposition précédente, le module $A$ est muni d'une structure d'algèbre de Lie sur $R$ par pleine fidélité de $\wW$ et comme cette algèbre permet de retrouver complètement le foncteur $\faisLie_G$, nous appellerons parfois $A$ l'algèbre de Lie de $G$. 
\end{rema}

Le lemme suivant justifie les calculs des algèbres de Lie de nombreux groupes ``classiques''.

\begin{lemm}\label{sous_algebre_lemm}
Soit $G$ un foncteur en groupes représentable sur une base $S$ et soit $f:F\to G$ un sous-groupe représentable. Alors $\faisLie_f:\faisLie_F\to \faisLie_G$ identifie $\faisLie_F$ à une sous-$\faisO_S$-algèbre de Lie de $\faisLie_G$. 
\end{lemm}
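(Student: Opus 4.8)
Le plan est d'exploiter le caract�re purement fonctoriel de la construction de l'alg�bre de Lie rappel�e ci-dessus, aucune hypoth�se de lissit� n'�tant requise. Je commencerais par observer que, pour tout $T\to S$, la suite exacte scind�e \eqref{seLie_eq} est fonctorielle en le foncteur en groupes consid�r�. Le morphisme $f:F\to G$ induit donc un diagramme commutatif dont les deux lignes sont les suites associ�es � $F$ et � $G$, et dans lequel $\faisLie_f(T)$ n'est autre que la restriction de $f(\faisDual_{T}):F(\faisDual_{T})\to G(\faisDual_{T})$ aux noyaux $\faisLie_F(T)=\ker\big(F(\faisDual_{T})\to F(T)\big)$ et $\faisLie_G(T)=\ker\big(G(\faisDual_{T})\to G(T)\big)$.

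Le point principal sera l'injectivit� de $\faisLie_f$. Puisque $f$ fait de $F$ un sous-groupe de $G$, l'application $F(U)\to G(U)$ est injective pour tout $S$-sch�ma $U$, en particulier pour $U=\faisDual_{T}$. L'application $f(\faisDual_{T})$ �tant injective, il en va de m�me de sa restriction au sous-groupe $\faisLie_F(T)$, de sorte que $\faisLie_f(T):\faisLie_F(T)\to\faisLie_G(T)$ est injective pour tout $T$. Ainsi $\faisLie_f$ est un monomorphisme de foncteurs.

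Il reste � voir que cette injection respecte toute la structure. Or la construction $G\mapsto\faisLie_G$ d�finit un foncteur des sch�mas en groupes vers les $\faisO_S$-modules munis d'un crochet bilin�aire (voir la discussion pr�c�dant la proposition \ref{LieOS_prop}), de sorte que $\faisLie_f$ est automatiquement un morphisme de $\faisO_S$-modules compatible au crochet de Lie. Par cons�quent, pour chaque $T$, l'image de $\faisLie_f(T)$ est un sous-$\faisO_S(T)$-module de $\faisLie_G(T)$ stable par le crochet; ces images forment un sous-foncteur de $\faisLie_G$ qui est une sous-$\faisO_S$-alg�bre de Lie. L'injectivit� �tablie plus haut identifie alors $\faisLie_F$ � cette sous-alg�bre, ce qui conclut.

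Je n'anticipe pas de v�ritable obstacle: le seul point m�ritant un mot est l'injectivit�, qui d�coule imm�diatement de la d�finition d'un sous-groupe appliqu�e aux points � valeurs dans les nombres duaux, et tout le reste (structure de module, stabilit� par le crochet, identification � l'image) est formel et provient de la seule fonctorialit� de $\faisLie$.
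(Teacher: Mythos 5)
Votre argument est correct et suit essentiellement la m�me voie que le texte: la preuve du papier se r�duit � remarquer que tout est clair via la d�finition par les nombres duaux, ce que vous ne faites que d�tailler (injectivit� de $f(\faisDual_{T})$ restreinte aux noyaux, puis fonctorialit� de $\faisLie$ pour la structure de module et le crochet). Rien � redire.
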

\begin{proof}
Le fait que c'est un sous-$\faisO_S$-module est clair par la définition utilisant les nombres duaux.
\end{proof}

Soit $A$ une $\faisO_S$-algèbre associative et unitaire (mais non nécessairement commutative), localement libre comme $\faisO_S$-module et telle que $\faisO_S$ est central dans $A$. On note $\alL_A$ le foncteur en algèbres de Lie de $\faisO_S$-module sous-jacent $A$ et de crochet $[a,b]=ab-ba$. 

\begin{prop} \label{LieGLn_prop}
L'algèbre de Lie $\faisLie_{\faisGL_{1,A}}$ est isomorphe à $\alL_A$, et par cet isomorphisme, la représentation adjointe correspond à la conjugaison.
\end{prop}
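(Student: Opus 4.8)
The plan is to read everything off the dual-numbers definition of the Lie algebra recalled in \eqref{seLie_eq}. Since $A$ is locally free of finite type we may write $A=\wW(\cA)$, and the equality $\faisO_S(\faisDual_T)=\Gamma(T)[t]/t^2$ together with the compatibility of $\wW$ with base change gives, for every $T\to S$, the decomposition $A(\faisDual_T)=A(T)\oplus t\,A(T)$, whose elements I write $a_0+t a_1$ with $a_i\in A(T)$. The first step is the invertibility criterion: because $t$ is square-zero, $a_0+t a_1=a_0\bigl(1+t\,a_0^{-1}a_1\bigr)$ is a unit of $A(\faisDual_T)$ as soon as $a_0\in A(T)^\times$ (the inverse being $a_0^{-1}-t\,a_0^{-1}a_1a_0^{-1}$), and conversely reduction modulo $(t)$ shows $a_0$ must be a unit. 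Hence $\faisGL_{1,A}(\faisDual_T)=\{a_0+t a_1:\ a_0\in A(T)^\times\}$, the map $p$ of \eqref{seLie_eq} is $a_0+t a_1\mapsto a_0$, and its kernel, namely $\faisLie_{\faisGL_{1,A}}(T)$, is $\{1+t a:\ a\in A(T)\}$. Thus $1+t a\mapsto a$ is a functorial bijection onto $A(T)$.

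Next I would check that this bijection is an isomorphism of $\faisO_S$-modules onto $\alL_A$. Additivity follows from $(1+t a)(1+t b)=1+t(a+b)$, and scalar multiplication by $\lambda\in\faisO_S(T)$, which by definition is induced by the homothety $t\mapsto\lambda t$ of $\faisDual_T$, sends $1+t a$ to $1+t(\lambda a)$; both operations agree with those on the underlying module of $\alL_A$. This yields the isomorphism $\faisLie_{\faisGL_{1,A}}\cong A$ of $\faisO_S$-modules.

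It remains to identify the adjoint representation and the bracket, which is the real content. For $g\in\faisGL_{1,A}(T)=A(T)^\times$, viewed inside $A(\faisDual_T)^\times$ through $\Gamma(T)\hookrightarrow\Gamma(T)[t]/t^2$, conjugation gives $g(1+t a)g^{-1}=1+t\,g a g^{-1}$, so under the isomorphism above $\mathrm{Ad}(g)$ corresponds to the conjugation $a\mapsto g a g^{-1}$, i.e. to $\int_g$; this proves the assertion about the adjoint representation. For the bracket I would use its definition in \cite[Exp. II, 4.7.2]{sga3} as $\mathrm{ad}=\faisLie(\mathrm{Ad})$: representing a section $a$ by $1+s a$, with $s$ a second, independent dual-number variable, and applying the formula just obtained over $\faisDual_T$ to a section $b$ gives $\mathrm{Ad}(1+s a)(b)=(1+s a)\,b\,(1-s a)=b+s(ab-ba)$, whence $\mathrm{ad}(a)(b)=ab-ba$. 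The transported bracket is therefore the commutator, identifying $\faisLie_{\faisGL_{1,A}}$ with $\alL_A$ as Lie algebras.

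The main obstacle is bookkeeping rather than mathematics: one must keep the two instances of dual numbers distinct when differentiating twice, make sure the $\faisO_S$-module structure arising from the $\faisGm$-homothety on $\faisDual_T$ matches the module structure of $\alL_A$, and invoke the \SGAtrois\ definition of the bracket through $\mathrm{Ad}$ so as to avoid any circularity between the bracket and the adjoint action. Once these points are pinned down, every computation reduces to the elementary ones above, all of which are functorial in $T$ and so need no reduction to the affine case beyond the use of $A=\wW(\cA)$.
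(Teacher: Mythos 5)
Your proof is correct and follows essentially the same route as the paper: identify $\faisTan_A(T)=A(T)[t]/t^2$ using that $A$ is locally free, read off $\faisLie_{\faisGL_{1,A}}(T)=\{1+ta\}$ as the kernel of reduction mod $t$, obtain the adjoint action by conjugation, and derive the bracket by differentiating once more with a second independent square-zero variable (your $(1+sa)b(1-sa)$ is the same computation as the paper's $(1+xt)(1+yt')(1-xt)(1-yt')=1+(xy-yx)tt'$). The only cosmetic difference is that you spell out the unit criterion and the $\faisO_S$-module structure, which the paper leaves implicit.
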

\begin{proof}
Le morphisme naturel $A\otimes_{\faisO_S} \faisTan_{\faisO_S} \to \faisTan_A$ est un isomorphisme de $\faisO_S$-modules car $A$ est localement libre (voir \cite[Exp. II, 4.4.2]{sga3}). Cela permet d'identifier l'espace tangent $\faisTan_A(T)=A(I_{T})=A(T)[t]/t^2$, et la suite \eqref{seLie_eq} pour $G=\faisGL_{1,A}$ devient alors
\begin{equation} \label{LieGLn_eq}
\xymatrix{
A(T) \ar[r]^-{i} & A(T)^\times \times A(T) \ar[r]^-{p} & A(T)^\times \ar@/^1ex/[l]^-{s}
}
\end{equation}
avec la loi de groupe additive sur $A(T)$. La représentation adjointe est donc immédiate, et on vérifie facilement que le crochet de Lie est bien celui de $\alL_A$ en utilisant la méthode de \cite[Exp. II, 4.7.3]{sga3}, ce qui revient à calculer $(1+xt)(1+yt')(1-xt)(1-yt')=1+(xy-yx)tt'$ où $t$ et $t'$ sont deux variables de carré nul, intervenant dans $\faisDual_S \times_S \faisDual_S$
\end{proof}

Si $B$ est une $R$-algèbre, soit $\Der_R(B,B)$ le module des différentielles de $B$ dans $B$, c'est-à-dire des applications $R$-linéaires $d:B \to B$ satisfaisant à $d(ab)=d(a)b+ad(b)$. Elles sont automatiquement nulles sur $R$. Muni du crochet $[d,e]=d\circ e-e\circ d$, c'est une algèbre de Lie. Pour tout $\faisO_S$-algèbre $A$, on définit alors $\Der_{\faisO_S-\mathrm{alg}}(A,A)$ comme le sous-ensemble des endomorphismes de $\faisO_S$-module de $A$ dont les éléments vérifient la condition ci-dessus sur les $T$-points pour tout $T\to S$. 
 
Le $R$-module sous-jacent à $B$ muni du crochet $[a,b]=ab-ba$ est également une algèbre de Lie. Ce crochet est nul sur $R$, et définit donc une structure d'algèbre de Lie sur le module quotient $B/R$. L'application $\phi: B \to \Der_R(B,B)$ qui à $a$ associe la dérivation $(x \mapsto ax-xa)$ est un morphisme d'algèbres de Lie (\ie\ respecte le crochet). Son noyau est le centre (multiplicatif) de $B$; elle se factorise donc en une application d'algèbres de Lie $\bar \phi: B/R \to \Der_R(B,B)$. Pour toute $\faisO_S$-algèbre $A$, on définit ainsi une application $\bar \phi: A/\faisO_S \to \Der_{A,A}$.
\begin{defi}
Soit $A$ une $\faisO_S$-algèbre localement libre. Notons $\faisDer_{A,A}$ le foncteur de points
$T \mapsto \Der_{\faisO_{T}-\mathrm{alg}}(A_{T},A_{T})$.
\end{defi}

\begin{prop} \label{algLieAut_prop}
On a un isomorphisme d'algèbres de Lie 
$$\faisLie_{\faisAut^\alg_A} \simeq \faisDer_{A,A}.$$ 
La représentation adjointe est donnée par $\sigma(d)=\sigma \circ d \circ \sigma^{-1}$ pour $\sigma \in \setAut_{\faisO_{T}-\mathrm{alg}}(A_{T})$ et $d \in \Der_{\faisO_{T}}(A_{T},A_{T})$. Lorsque l'algèbre $A$ est séparable\footnote{C'est-à-dire que localement pour la topologie étale, l'algèbre $A(T)$ est séparable.} sur $\faisO_S$ et de centre $\faisO_S$, le morphisme $\bar\phi$ est un isomorphisme, et à travers cet isomorphisme, la représentation adjointe est donnée par l'action usuelle des $\faisO_S$-automorphismes de $A$ sur $A/\faisO_S$, le quotient par le centre. 
\end{prop}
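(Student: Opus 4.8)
Le plan est de calculer l'alg\`ebre de Lie \`a l'aide des nombres duaux, exactement comme dans la preuve de la proposition \ref{LieGLn_prop}, puis d'\'etablir s\'epar\'ement l'assertion d'int\'eriorit\'e des d\'erivations dans le cas s\'eparable de centre $\faisO_S$.

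D'abord, je d\'ecris $\faisLie_{\faisAut^\alg_A}$. Par la suite \eqref{seLie_eq}, pour tout $T\to S$ ce $\faisO_S$-module est le noyau de $\faisAut^\alg_A(\faisDual_T)\to \faisAut^\alg_A(T)$. Un $\faisDual_T$-point est un automorphisme de la $\faisO_{\faisDual_T}$-alg\`ebre $A_{\faisDual_T}=A_T[t]/t^2=A_T\oplus A_T\,t$, et il appartient \`a ce noyau lorsqu'il se r\'eduit \`a l'identit\'e modulo $t$; un tel automorphisme s'\'ecrit donc de mani\`ere unique $a\mapsto a+d(a)\,t$ pour une application $\faisO_T$-lin\'eaire $d:A_T\to A_T$. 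En d\'eveloppant $(a+d(a)t)(b+d(b)t)$ modulo $t^2$, la multiplicativit\'e \'equivaut pr\'ecis\'ement \`a $d(ab)=d(a)b+a\,d(b)$, c'est-\`a-dire \`a ce que $d$ soit une d\'erivation. On obtient ainsi l'isomorphisme de $\faisO_S$-modules $\faisLie_{\faisAut^\alg_A}\simeq\faisDer_{A,A}$. Pour identifier le crochet, je reprends la m\'ethode de \cite[Exp. II, 4.7.3]{sga3} d\'ej\`a employ\'ee pour \ref{LieGLn_prop}: le calcul du commutateur de deux tels automorphismes avec deux variables $t,t'$ de carr\'e nul sur $\faisDual_S\times_S\faisDual_S$ fait appara\^itre, en facteur de $tt'$, le terme $d\circ e-e\circ d$, qui est bien le crochet de $\faisDer_{A,A}$.

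Pour la repr\'esentation adjointe, un point $\sigma\in\faisAut^\alg_A(T)$ agit par conjugaison dans $\faisAut^\alg_A(\faisDual_T)$, o\`u l'on rel\`eve $\sigma$ via la section $s$. En notant $\phi_d:a\mapsto a+d(a)\,t$ l'automorphisme associ\'e \`a $d$, on calcule directement $\sigma\,\phi_d\,\sigma^{-1}(a)=a+\sigma\bigl(d(\sigma^{-1}(a))\bigr)\,t$, d'o\`u la formule annonc\'ee $\sigma(d)=\sigma\circ d\circ\sigma^{-1}$.

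Il reste le cas o\`u $A$ est s\'eparable de centre $\faisO_S$. L'injectivit\'e de $\bar\phi$ est imm\'ediate, puisque le noyau de $\phi:A\to\faisDer_{A,A}$ est le centre, r\'eduit \`a $\faisO_S$. Le point principal, et la seule v\'eritable difficult\'e, est la surjectivit\'e, autrement dit le fait que toute d\'erivation est int\'erieure. Je l'\'etablis localement pour la topologie \'etale: l'application $\bar\phi$ et ses deux membres commutent au changement de base, et par \ref{Azumayaforme_prop} l'alg\`ebre $A$ est \'etale-localement isomorphe \`a $\faisM_n$; il suffit donc de traiter le cas $A=\faisM_n$. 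Les deux modules sont alors localement libres de rang $n^2-1$, l'application $\bar\phi$ est injective, et elle est surjective car toute d\'erivation de $\faisM_n$ est de la forme $x\mapsto ax-xa$ (nullit\'e du premier groupe de cohomologie de Hochschild d'une alg\`ebre s\'eparable). Enfin, pour traduire la repr\'esentation adjointe \`a travers $\bar\phi$: si $d=\bar\phi(\bar a)$, alors pour un automorphisme $\sigma$ on a $\sigma\circ d\circ\sigma^{-1}(x)=\sigma(a)\,x-x\,\sigma(a)=\bar\phi(\overline{\sigma(a)})(x)$, de sorte que l'action adjointe correspond exactement \`a l'action naturelle $\bar a\mapsto\overline{\sigma(a)}$ de $\sigma$ sur le quotient $A/\faisO_S$.
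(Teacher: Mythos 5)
Votre preuve est correcte et suit pour l'essentiel la m\^eme d\'emarche que celle du texte : identification de l'alg\`ebre de Lie via les nombres duaux (un \'el\'ement du noyau s'\'ecrit $a\mapsto a+d(a)t$ avec $d$ une d\'erivation, ce que le texte formule de mani\`ere \'equivalente comme les couples $(\phi_1,\phi_2)$ avec $\phi_1=\id$), puis calcul direct de l'action adjointe par conjugaison. La seule divergence r\'eelle porte sur la surjectivit\'e de $\bar\phi$ dans le cas s\'eparable de centre $\faisO_S$ : le texte cite directement le th\'eor\`eme de Knus--Ojanguren selon lequel toute d\'erivation d'une alg\`ebre s\'eparable est int\'erieure, tandis que vous remarquez qu'une telle alg\`ebre est d'Azumaya et r\'eduisez par descente \'etale au cas $A=\faisM_n$, o\`u le r\'esultat est \'el\'ementaire ; les deux arguments sont valables et reviennent au m\^eme fait (nullit\'e du $\mathrm{HH}^1$ d'une alg\`ebre s\'eparable), le v\^otre \'etant un peu plus autonome, celui du texte plus direct.
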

\begin{proof}
En utilisant à nouveau $A_{\faisDual_S}(T)\simeq A(T)[t]/t^2$, on calcule directement que 
$$\faisTan_{\faisAut^\alg_A}(T)=\faisAut^\alg_A(\faisDual_{T})=\setAut_{\faisO_{T}[t]/t^2-\mathrm{alg}}(A_{T}[t]/t^2)$$ 
s'identifie à l'ensemble des couples $(\phi_1,\phi_2)$ où $\phi_1 \in \setAut_{\faisO_{T}-\mathrm{alg}}(A_{T})$, $\phi_2\in \setEnd_{\faisO_{T}-\mathrm{mod}}(A_{T})$ avec $\phi_2(ab)=\phi_1(a)\phi_2(b)+\phi_2(a)\phi_1(b)$ sur les points, et l'algèbre de Lie est le sous-groupe des éléments tels que $\phi_1$ est l'identité, donc $\faisDer_{A,A}$. 
Pour une algèbre séparable, l'application $\phi$ est un isomorphisme localement par \cite[III, th. 1.4 p. 73]{kn-oj}. Pour tout $\sigma \in \setAut(A_{T})$, on a $\phi(\sigma(a))=\sigma \circ \phi(a) \circ \sigma^{-1}$, d'où l'action adjointe annoncée.
\end{proof}

\subsection{Lissité} \label{Lisse_sec}

Pour montrer qu'un groupe algébrique sur $S$ est réductif, sa lissité sur $S$ est un point clé. Le critère de lissité par fibres \cite[IV-4, prop. 17.8.2]{ega} permet de déduire la lissité sur $S$ de la lissité des fibres au-dessus de tous les points de $S$ à condition d'avoir la platitude sur $S$, ce qui n'est pas évident pour des groupes définis comme des foncteurs de points. On rappelle donc un critère issu de \SGAtrois\ qui nous permettra d'obtenir la lissité sans peine dans les cas que nous considérerons. 

\begin{prop} \label{sgcorpslisse_prop}
Soit $G$ un schéma en groupes de type fini sur un corps $k$. Alors $G$ est lisse sur $k$ si et seulement si son algèbre de Lie est de dimension égale à la dimension (de Krull) de $G$.
\end{prop}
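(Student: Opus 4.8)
Le plan est de se ramener au cas alg\'ebriquement clos par changement de base, puis d'exploiter l'homog\'en\'eit\'e du groupe pour concentrer la question au point unit\'e $e$, et enfin d'identifier la condition dimensionnelle \`a la r\'egularit\'e de l'anneau local $\mathcal{O}_{G,e}$, qui \'equivaut \`a la lissit\'e sur un corps alg\'ebriquement clos.

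Je commencerais par observer que les trois conditions en jeu sont invariantes par l'extension $k \to \bar k$, o\`u $\bar k$ est une cl\^oture alg\'ebrique. La lissit\'e est stable par changement de base et descend par fid\`ele platitude, donc $G$ est lisse sur $k$ si et seulement si $G_{\bar k}$ est lisse sur $\bar k$. L'alg\`ebre de Lie commute au changement de base (proposition $(\faisLie_G)_T = \faisLie_{G_T}$ rappel\'ee plus haut), d'o\`u $\dim_k \faisLie_G(k) = \dim_{\bar k} \faisLie_{G_{\bar k}}(\bar k)$ ; et la dimension de Krull d'un sch\'ema de type fini est pr\'eserv\'ee par extension du corps de base, donc $\dim G = \dim G_{\bar k}$. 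L'\'enonc\'e pour $G$ sur $k$ se d\'eduit ainsi de l'\'enonc\'e pour $G_{\bar k}$ sur $\bar k$, et je peux supposer $k$ alg\'ebriquement clos. Sur un tel corps, j'utiliserais l'homog\'en\'eit\'e : pour tout $g \in G(k)$, la translation \`a gauche $L_g$ est un automorphisme de $G$ envoyant $e$ sur $g$. Le lieu lisse \'etant un ouvert stable par translation, s'il contient $e$ il contient tous les points rationnels, donc tout $G$ (un ferm\'e sans point rationnel d'un sch\'ema de type fini sur un corps alg\'ebriquement clos est vide) ; ainsi $G$ est lisse si et seulement s'il est lisse en $e$. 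De m\^eme, toutes les composantes de $G$ \'etant des translat\'ees de la composante neutre $G^0$, elles ont m\^eme dimension, et $\dim \mathcal{O}_{G,e} = \dim G^0 = \dim G$.

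Par la construction de $\faisLie_G$ au moyen des nombres duaux, $\faisLie_G(k)$ s'identifie \`a l'espace tangent de Zariski $T_e G = (\mathfrak{m}_e/\mathfrak{m}_e^2)^\dual$, de sorte que $\dim_k \faisLie_G(k)$ est la dimension de plongement de $\mathcal{O}_{G,e}$. Comme l'in\'egalit\'e $\dim_k \mathfrak{m}_e/\mathfrak{m}_e^2 \geq \dim \mathcal{O}_{G,e}$ est toujours vraie, avec \'egalit\'e si et seulement si l'anneau local $\mathcal{O}_{G,e}$ est r\'egulier, la condition $\dim_k \faisLie_G(k) = \dim G$ \'equivaut exactement \`a la r\'egularit\'e de $\mathcal{O}_{G,e}$. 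Or, sur un corps alg\'ebriquement clos, un sch\'ema de type fini est lisse en un point ferm\'e si et seulement si son anneau local y est r\'egulier. En combinant : $G$ lisse $\iff$ $G$ lisse en $e$ $\iff$ $\mathcal{O}_{G,e}$ r\'egulier $\iff$ $\dim_k \faisLie_G(k) = \dim G$.

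La principale subtilit\'e est concentr\'ee dans l'\'etape de descente : sur un corps imparfait, la r\'egularit\'e n'entra\^ine pas la lissit\'e en un point ferm\'e de corps r\'esiduel non s\'eparable, et c'est pr\'ecis\'ement le passage \`a $\bar k$ qui transforme la condition $\dim \faisLie_G = \dim G$ en une condition \emph{g\'eom\'etrique}, d\'etectant la r\'egularit\'e de la fibre g\'eom\'etrique, c'est-\`a-dire la lissit\'e. Le fait que $e$ soit un point rationnel est ici d\'ecisif : la dimension de plongement et la dimension de Krull en $e$ \'etant toutes deux stables par extension du corps, la r\'egularit\'e y est automatiquement g\'eom\'etrique, ce qui est la raison profonde pour laquelle l'\'enonc\'e vaut pour les sch\'emas en groupes et \'echappe au contre-exemple habituel des sch\'emas r\'eguliers non lisses.
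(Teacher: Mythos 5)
Votre preuve est correcte et suit essentiellement la m\^eme strat\'egie que celle du texte : r\'eduction au cas d'un point rationnel par extension du corps de base, translation au point unit\'e, puis crit\`ere jacobien reliant la dimension de l'espace tangent \`a la dimension locale (le texte cite directement SGA1, Exp.~II, th.~5.5 pour cette derni\`ere \'etape, que vous red\'emontrez via la r\'egularit\'e de $\mathcal{O}_{G,e}$). La remarque finale sur le r\^ole du point rationnel et des corps imparfaits est juste et \'eclaire bien pourquoi l'\'enonc\'e vaut pour les sch\'emas en groupes.
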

\begin{proof}
Soit $g\in G$. Par descente plate de la lissité \cite[Exp. II, cor. 4.13]{sga1}, on peut supposer que $g$ est $k$-rationnel, et par changement de base le long de la multiplication par $g$, on peut supposer que $g$ est l'unité de $G$. Le résultat découle alors de \cite[Exp. II, th. 5.5]{sga1}.
\end{proof}

\begin{prop} \label{lissite_prop}
Soit $G$ un groupe algébrique localement de présentation finie sur une base $S$ noethérienne réduite, dont les fibres sont lisses, connexes et de dimension localement constante. Alors $G$ est lisse sur $S$.
\end{prop}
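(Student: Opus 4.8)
The plan is to deduce smoothness from flatness, which is the only ingredient missing from the hypotheses. Since smoothness is local on the base, I would first reduce to $S = \Spec(R)$ with $R$ noetherian reduced. As $G$ is locally of finite presentation and all its geometric fibres are smooth (hence geometrically regular), the fibrewise criterion of smoothness \cite[IV-4, prop.~17.8.2]{ega} shows that $G$ is smooth over $S$ as soon as it is \emph{flat} over $S$. The whole statement thus reduces to the single assertion that $G \to S$ is flat, the hypotheses on the fibres being exactly what that criterion then consumes.

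The real content is therefore flatness over a base that is reduced but possibly singular, and this is where I expect the difficulty to lie. I would check flatness stalkwise: fixing a point $x \in G$ with image $s \in S$, one must show that $\mathcal{O}_{G,x}$ is flat over the reduced noetherian local ring $A = \mathcal{O}_{S,s}$. Generic flatness \cite[IV, 6.9.1]{ega} gives flatness over a dense open of $S$, hence over the localisations of $A$ at its minimal primes; since $A$ is reduced these localisations are fields, so flatness there is automatic. The task is to spread this out over all of $\Spec(A)$. The mechanism I would use is relative Noether normalisation: localising $G$ near $x$ and using that the fibres have constant dimension $n$, one presents a neighbourhood as finite over $\mathbb{A}^n_S$, so that flatness of $G/S$ becomes flatness of a finite module over the reduced noetherian ring $R[t_1,\dots,t_n]$, that is, local freeness.

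For a finite module over a reduced noetherian ring, local freeness is equivalent to the fibre rank being locally constant, and establishing this constancy is the crux: it is precisely here that the hypotheses that the fibres are smooth, connected and of locally constant dimension must enter, in order to prevent the rank, equivalently the flat locus, from dropping over the non-generic points of $S$. I expect this constancy to be the main obstacle; everything surrounding it is either a formal reduction or a citation. Alternatively one may bypass the preceding reduction by invoking directly a flatness criterion over reduced bases formulated in terms of equidimensional, geometrically reduced fibres (in the spirit of \cite[IV, 11.3.10]{ega}), which reduces the proposition to just two references: that criterion and the fibrewise smoothness criterion \cite[IV-4, prop.~17.8.2]{ega}.
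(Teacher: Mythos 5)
Your opening reduction is correct: by the fibrewise criterion \cite[IV-4, prop. 17.8.2]{ega} everything comes down to the flatness of $G\to S$, and you rightly locate the whole difficulty there. The gap is that you stop exactly at that point: the ``local constancy of the rank'' that you yourself flag as the crux is never established, and the route you sketch cannot establish it, because at no stage does your argument use the group structure. Without it the statement you are implicitly trying to prove --- locally of finite presentation over a reduced noetherian base, smooth connected fibres of locally constant dimension, hence flat --- is simply \emph{false}. Take $S$ a nodal curve over an algebraically closed field $k$, $\widetilde S\to S$ its normalisation, and $X\subset\widetilde S$ the complement of one of the two points above the node: all fibres of $X\to S$ are single reduced $k$-points (smooth, connected, of constant dimension $0$), yet $X\to S$ is not flat at the remaining preimage of the node, since on completed local rings it is $A=k[[u,v]]/(uv)\to k[[u]]=A/(v)$ and $\mathrm{Tor}_1^A(A/(v),A/(v))\neq 0$. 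This example also shows why your two concrete mechanisms fail: relative Noether normalisation only exhibits a neighbourhood as \emph{quasi}-finite over $\mathbb{A}^n_S$, and the rank of a finite hull is not controlled by the fibres of the open piece (here the finite hull is $\widetilde S$ itself, whose rank jumps from $1$ to $2$ at the node); and no flatness criterion of the form ``equidimensional geometrically reduced fibres over a reduced base'' can exist --- the criterion \cite[IV, 11.3.10]{ega} you invoke requires a condition over all generisations which, again, only the group structure can supply.

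The paper's own proof is a short reduction plus a citation: after splitting $S$ into connected components, it notes that $G$ is connected (unit section plus connectedness of the fibres) and invokes \cite[Exp. VI, cor. 4.4]{sga3}, which is precisely this statement for group schemes over a reduced base. The ingredient your proposal is missing is visible in the proof of that result: one first uses translations to show that $G\to S$ is \emph{universally open} (homogeneity of the fibres plus Chevalley semicontinuity plus the local constancy of the fibre dimension), and then applies the genuine flatness criterion ``universally open, reduced base, geometrically reduced fibres $\Rightarrow$ flat'' \cite[IV, 15.2.3]{ega}. Some input of this kind from the group law is indispensable; your constancy-of-rank step cannot be completed without it.
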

\begin{proof}
Si $S$ n'est pas connexe, $G$ se décompose en une union disjointe de groupes sur les composantes connexes de $S$. On peut donc supposer que $S$ est connexe. On remarque ensuite que le morphisme structurel $G\to S$ est scindé par la section unité. On en déduit que $G$ est connexe puisque $S$ est connexe et les fibres sont connexes. Le résultat suit alors de \cite[exp. VI, cor. 4.4]{sga3}.
\end{proof}

\begin{exem} \label{munlisse_exem}
Le schéma en groupes $\faismu_{n,S}$ est lisse si et seulement si $n$ est premier aux caractéristiques résiduelles de $S$. Voir \cite[Exp. VIII, prop. 2.1]{sga3}.
\end{exem}
Signalons enfin un résultat bien utile pour calculer la dimension des fibres géométriques de schémas en groupes. 

\begin{prop} \label{dimsec_prop}
Soient $G$ et $H$ deux schémas en groupes localement de type fini sur un corps $k$, et $f:G \to H$ un morphisme quasi-compact surjectif de $k$-schémas en groupes. Alors $\dim G = \dim H +\dim \ker(f)$.  
\end{prop}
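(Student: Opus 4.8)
The plan is to factor $f$ through the quotient by its kernel and to reduce the claim to the two standard facts that a quotient map is faithfully flat and that monomorphisms of algebraic groups are immersions. Write $N=\ker(f)$ (cf. remark \ref{imageinvnoyau_rema}), a closed normal subgroup scheme of $G$. By the quotient theory of \SGAtrois\ for group schemes locally of finite type over a field, the \fppf\ quotient $G/N$ exists and is again a $k$-group scheme locally of finite type, and the projection $\pi\colon G\to G/N$ is faithfully flat. Since $f$ is trivial on $N$, it factors uniquely as $f=\bar f\circ\pi$ with $\bar f\colon G/N\to H$ a homomorphism; as $\pi$ is an epimorphism and $\pi^{-1}(\ker\bar f)=N=\ker\pi$, the kernel of $\bar f$ is trivial, so $\bar f$ is a monomorphism of $k$-groups.

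First I would establish $\dim(G/N)=\dim H$. Because $\pi$ is surjective and $f$ is surjective and quasi-compact, $\bar f$ is surjective on underlying spaces. On the other hand a monomorphism of group schemes locally of finite type over a field is an immersion (\SGAtrois), so $\bar f$ is a surjective immersion; its image, being locally closed and equal to all of $H$, makes $|G/N|\to|H|$ a homeomorphism, whence $\dim(G/N)=\dim H$. This is the step where the surjectivity and quasi-compactness hypotheses are really used.

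Next, $\dim G=\dim(G/N)+\dim N$. Here I would exploit homogeneity: on a group scheme over a field the local dimension $\dim_x$ is constant (translations are automorphisms), so it suffices to evaluate the flat fibre-dimension formula $\dim_gG=\dim_{\pi(g)}(G/N)+\dim_g\bigl(\pi^{-1}(\pi(g))\bigr)$ (\cite[IV, 6.1.2]{ega}, valid since $\pi$ is flat and locally of finite type) at a single point $g$. The fibre $\pi^{-1}(\pi(g))$ is the translate $gN$, hence isomorphic to $N$ and of dimension $\dim N$; evaluating at the identity gives $\dim G=\dim(G/N)+\dim N$. Combining with the previous paragraph and $N=\ker(f)$ yields $\dim G=\dim H+\dim\ker(f)$. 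One may first base-change to $\bar k$ and pass to identity components to make $G$, $H$, $N$ irreducible, but the argument above needs neither, since local dimension, the kernel, surjectivity and quasi-compactness are all stable under $\bar k/k$.

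The main obstacle is not in the dimension bookkeeping but in the two inputs borrowed from \SGAtrois: that the quotient $G/\ker(f)$ exists and that $G\to G/\ker(f)$ is faithfully flat although $f$ itself is not assumed flat — a point that is genuinely non-trivial in positive characteristic, where $f$ can be an infinitesimal isogeny such as a relative Frobenius; and that the resulting monomorphism $\bar f$ is an immersion, which is precisely what prevents the passage to the image from dropping the dimension. Once these are granted, the constancy of the fibres of $\pi$ (all translates of $N$) makes the computation immediate.
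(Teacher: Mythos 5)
Your argument is correct, but it is worth saying up front that the paper does not prove this proposition at all: it simply cites \SGAtrois, Exp.\ VI, 2.5.2, which is essentially this statement. What you have written is, in effect, the standard proof of that reference. The two nontrivial inputs you borrow (existence and faithful flatness of the quotient $G/N$ for $N=\ker(f)$ a normal closed subgroup of a $k$-group locally of type fini, and the fact that a monomorphism of such groups is an immersion) are themselves results of \SGAtrois, Exp.\ VI, so your proof is not more elementary than the citation, but it does make visible where each hypothesis enters. Two small points of bookkeeping that you could make explicit: first, quasi-compactness of $f$ guarantees that $N=f^{-1}(e)$ is quasi-compact, hence of type fini, and that $\bar f$ is quasi-compact (if $\bar f\circ\pi$ is quasi-compact and $\pi$ is surjective, the preimage under $\bar f$ of a quasi-compact open is the image under $\pi$ of a quasi-compact, hence quasi-compact) --- this is the form in which the immersion criterion is most safely invoked; second, the reduction of $\dim$ to $\dim_e$ uses that all connected components of a $k$-group locally of type fini have the same dimension, which one sees after base change to $\bar k$ by translating by $\bar k$-points, as you indicate. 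With these remarks your proof is complete and agrees with what the cited result of \SGAtrois\ asserts.
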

\begin{proof}
Voir \cite[Exp. VI, 2.5.2]{sga3}.
\end{proof}
Remarquons que les hypothèses sont vérifiées si $G$ et $H$ sont affines de type fini sur $k$ et si $G \to H$ est un épimorphisme de faisceaux \fppf. En effet, tout morphisme provenant de $G$ est alors quasi-compact, et un épimorphisme de faisceaux \fppf\ est automatiquement surjectif comme morphisme de schémas.

\subsection{Données radicielles}

Rappelons brièvement la notion de donnée radicielle, qui est la donnée combinatoire qui permet la classification des groupes réductifs déployés, et comment un groupe réductif fournit une donnée radicielle, afin de pouvoir par la suite faire correspondre les groupes classiques avec la classification de \SGAtrois.

Une donnée radicielle est la donnée d'un $\ZZ$-module libre de rang fini $M$, d'un sous-ensemble fini $\Roots$ de $M$ dont les éléments sont appelés \emph{racines}, d'une injection $\Roots \to M^\cdual$, habituellement notée $\alpha \mapsto \alpha^\cdual$, dont les images sont appelées \emph{coracines}, et qui satisfont à deux axiomes rappelés en \cite[Exp. XXI, déf. 1.1.1]{sga3}. Cette donnée est plus générale que celle d'un système de racines pour deux raisons: premièrement, il peut y avoir une partie de $M$ qui n'est pas engendrée par les racines (même après tensorisation par $\QQ$) et qui correspond au fait que le groupe peut avoir un radical non trivial s'il n'est pas semi-simple. Deuxièmement, par rapport à un système de racines, on conserve la donnée du réseau des caractères du tore entre le réseau des racines et celui des poids. 

Soit $G$ un groupe algébrique réductif sur $S$ contenant un tore maximal $T$ qu'on suppose déployé, c'est-à-dire isomorphe à $D_S(M)$ pour un groupe abélien libre $M$, qui s'identifie donc aux caractères de $T$. Les racines de $G$ sont alors les éléments $\alpha \in M$ pour lesquels, sur toutes les fibres, $\alpha$ reste non nul et l'espace propre correspondant dans la représentation adjointe de $G$ restreinte à $T$ est également non nul \cite[Exp. XIX, déf. 3.2]{sga3}. La coracine $\alpha^\cdual \in M^\cdual$ correspondant à $\alpha$ est obtenue par unicité dans \cite[th. 2.4.1]{D}, qui reprend \cite[Exp. XX, th. 2.1]{sga3}. Tout cela forme bien une donnée radicielle par \cite[Exp. XXII prop. 1.14]{sga3}. Pour identifier les données radicielles de groupes réductifs donnés, nous utiliserons donc \cite[th. 2.4.1]{D}, qui fait intervenir les morphismes exponentiels, qu'il faudra donc avoir identifiés avant, toujours par unicité, voir \cite[th. 2.3.4]{D} ou \cite[Exp. XX, th. 1.5]{sga3}. 

\begin{rema} \label{signeAccouplement_rema}
Attention, les énoncés de \cite[th. 2.4.1]{D} diffèrent d'un signe dans l'accouplement entre espaces propres de racines opposées de l'algèbre de Lie (désigné par $(X,Y)\mapsto XY$). La raison de cette différence de signe, qui est un choix à faire, est expliquée par la note de Demazure dans le présent volume, et il en ressort que le choix de \cite[th. 2.4.1]{D} est meilleur. Nous nous y tiendrons donc, et le lecteur devra donc multiplier l'accouplement par $(-1)$ s'il veut utiliser les formules de \cite{sga3}.
\end{rema}

\subsection{Groupes d'automorphismes}

Rappelons quelques résultats sur les groupes d'automorphismes de groupes réductifs, qui nous seront utiles par la suite.

Un schéma en groupes réductif $G$ est en particulier un faisceau étale en groupes, et a donc à ce titre un faisceau en groupes d'automorphismes $\faisAut_G=\faisAut_G^\gr$, défini par \ref{isoaut_nota}. 

Le quotient (en tant que faisceaux \fpqc) d'un groupe réductif par son centre est représentable, affine sur la base, et est un groupe réductif semi-simple adjoint, appelé groupe adjoint de $G$ et qu'on notera ici $G_\adjoint$.

\begin{theo} \label{automorphismes_theo}
Pour tout groupe réductif sur $S$, on a une suite exacte de $S$-faisceaux étales en groupes
$$1 \to G_\adjoint \to \faisAut_G \to \faisAutExt_G \to 1$$
où la première flèche envoie un élément sur l'automorphisme intérieur associé, et où $\faisAutExt_G$ est défini comme le quotient.
\begin{enumerate}
\item Lorsque $G$ est déployé et donc muni d'un épinglage, on a un isomorphisme canonique entre $\faisAutExt_G$ et le groupe des automorphismes de la donnée radicielle épinglée, et la suite est scindée.
\item Lorsque $G$ est semi-simple simplement connexe ou adjoint, ce dernier groupe coïncide avec celui des automorphismes du diagramme de Dynkin.
\end{enumerate}
\end{theo}
\begin{proof}
Voir \cite[Exp. XXIV, th. 1.3]{sga3} pour les deux premiers points, suivi de \cite[Exp. XXI, prop. 6.7.2 et cor. 7.4.5]{sga3} pour le dernier.
\end{proof}

Il se trouve que $\faisAut_G^\gr$ est alors représentable comme conséquence de ce théorème, mais nous n'en aurons pas besoin. Nous utiliserons uniquement ce faisceau en groupes pour tordre un groupe déployé donné, et obtenir tous les groupes réductifs de même donnée radicielle déployée. 

\section{Groupes de type $A_n$} \label{An_sec}

\subsection{Groupe linéaire}

Le groupe linéaire est probablement le groupe le plus classique qui soit, et comme la plupart des autres groupes d'automorphismes sont définis comme des sous-groupes du groupe linéaire, il est naturel de le traiter en premier. C'est de plus un bon exemple pour se faire la main. Il a été défini en \ref{GLn_defi}, et sa représentation adjointe dans son algèbre de Lie est décrite en \ref{LieGLn_prop}. C'est le prototype du groupe réductif qui n'est pas semi-simple.

\begin{prop} \label{GL1red_prop} 
Pour toute $\faisO_S$-algèbre $A$ localement libre de type fini séparable, le schéma en groupes $\faisGL_{1,A}$ est réductif. 
\end{prop}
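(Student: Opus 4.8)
Le plan est de v\'erifier directement la d\'efinition d'un groupe r\'eductif rappel\'ee dans l'introduction: il faut montrer que $\faisGL_{1,A}$ est affine et lisse sur $S$, et que toutes ses fibres g\'eom\'etriques sont r\'eductives et connexes. L'affinit\'e (et la repr\'esentabilit\'e) est d\'ej\`a acquise par la proposition \ref{GLrepr_prop}. Comme la lissit\'e entra\^ine la platitude, il ne restera donc qu'\`a traiter la lissit\'e et les fibres g\'eom\'etriques.

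Pour la lissit\'e, je me ram\`enerais, par la proposition \ref{reprLocal_prop} et parce que la lissit\'e est locale sur la base et sur la source pour la topologie de Zariski, au cas o\`u $S=\Spec(R)$ est affine et $A=\wW(\tilde{B})$ avec $B$ libre de rang fini sur $R$. Dans ce cas, la preuve de la proposition \ref{GLrepr_prop} identifie $\faisGL_{1,A}$ au sch\'ema $\Spec(\Symalg(B^\dual)[N^{-1}])$, qui est un ouvert principal de l'espace affine $\Spec(\Symalg(B^\dual))$ au-dessus de $R$. Comme un espace affine est lisse sur $R$ et qu'un ouvert d'un sch\'ema lisse est lisse, on en d\'eduit la lissit\'e de $\faisGL_{1,A}$ sur $S$.

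Il resterait \`a traiter les fibres g\'eom\'etriques. Soit $\bar s$ un point g\'eom\'etrique de $S$, de corps alg\'ebriquement clos $\bar k$. La formation de $\faisGL_{1,A}$ commutant au changement de base, la fibre vaut $\faisGL_{1,A_{\bar s}}$. La s\'eparabilit\'e \'etant pr\'eserv\'ee par changement de base (la projectivit\'e sur $A\otimes_R A^\opp$ l'est), $A_{\bar s}$ est une $\bar k$-alg\`ebre s\'eparable de dimension finie; par la proposition \ref{sepsurcorps_prop}, c'est un produit fini d'alg\`ebres de matrices sur des alg\`ebres \`a division de centres s\'eparables sur $\bar k$. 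Puisque $\bar k$ est alg\'ebriquement clos, ces extensions s\'eparables sont triviales et ces alg\`ebres \`a division se r\'eduisent \`a $\bar k$, de sorte que $A_{\bar s}\simeq \prod_i \setM_{n_i}(\bar k)$. La fibre g\'eom\'etrique est donc $\faisGL_{1,A_{\bar s}}\simeq \prod_i \setGL_{n_i,\bar k}$, un produit fini de groupes lin\'eaires, qui est bien connexe et r\'eductif sur le corps alg\'ebriquement clos $\bar k$ (cas classique sur un corps, le radical de chaque facteur \'etant le tore central des scalaires).

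La difficult\'e principale est concentr\'ee dans l'identification de la fibre g\'eom\'etrique: tout repose sur le fait que, sur un corps alg\'ebriquement clos, une alg\`ebre s\'eparable est un produit d'alg\`ebres de matrices, puis sur le caract\`ere r\'eductif connexe (classique) de $\setGL_n$ sur un corps. Une fois r\'eunis ces trois points (affinit\'e, lissit\'e, fibres), $\faisGL_{1,A}$ satisfait la d\'efinition d'un groupe r\'eductif. On pourrait aussi, de mani\`ere alternative, invoquer que la r\'eductivit\'e est une propri\'et\'e locale pour la topologie \'etale et trivialiser $A$ \'etale-localement en un produit $\prod_i \faisM_{n_i,S}$, ce qui ram\`enerait l'\'enonc\'e \`a la r\'eductivit\'e de $\faisGL_n$; mais l'approche fibre par fibre expos\'ee ci-dessus est plus autonome ici.
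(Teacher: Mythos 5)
Votre démonstration est correcte et suit essentiellement la même démarche que celle du texte : réduction au cas affine avec $A$ libre, lissité obtenue en voyant $\faisGL_{1,A}$ comme un ouvert d'un fibré vectoriel (l'espace affine $\Spec(\Symalg(B^\dual))$), puis identification des fibres géométriques comme produits de $\setGL_{n_i}$ via la proposition \ref{sepsurcorps_prop} et conclusion par la théorie classique sur les corps. Vos précisions supplémentaires (trivialité des algèbres à division sur $\bar k$, description du radical) ne changent pas l'argument.
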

\begin{proof}
Comme cela se vérifie localement sur la base, on se ramène au cas $S$ affine et $A$ libre. Par construction (voir prop. \ref{GLrepr_prop}), le schéma $\faisGL_{1,A}$ est alors affine et ouvert dans $A$ qui est un fibré vectoriel sur $S$, donc lisse. Il suffit alors de vérifier que les fibres géométriques sont réductives et connexes. On est donc ramené à la théorie sur les corps algébriquement clos. Or sur un tel corps, une algèbre séparable est un produit d'algèbres de matrices par la proposition \ref{sepsurcorps_prop}, et son groupe linéaire est donc bien réductif par la preuve classique.
\end{proof}

Soit $\faisDiag_{n}$ le sous-$S$-faisceau en groupes de $\faisGL_{n}$ formé des matrices diagonales (\ie\ le foncteur de points de ces matrices). C'est évidemment un sous-faisceau en groupes de $\faisGL_{n}$ représentable puisqu'isomorphe à $\faisGm^n$, qui est un tore déployé. Les caractères de ce tore sont donnés par le $\ZZ$-module libre de base $t_1,\ldots,t_n$ où $t_i$ donne la $i$-ème coordonnée d'un élément diagonal.

\begin{prop} \label{toreGLn_prop}
Le sous-groupe $\faisDiag_{n}$ est un tore maximal de $\faisGL_{n}$.
\end{prop}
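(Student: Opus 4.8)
```latex
Le plan est de montrer que $\faisDiag_n$ est un tore (ce qui est d\'ej\`a acquis, puisqu'il est isomorphe \`a $\faisGm^n$, donc d\'eploy\'e de rang $n$) et surtout qu'il est \emph{maximal} dans $\faisGL_n$. Comme la maximalit\'e d'un tore se teste sur les fibres g\'eom\'etriques (voir \cite[Exp. XII]{sga3}), je me ram\`enerais d'abord au cas o\`u $S$ est le spectre d'un corps alg\'ebriquement clos $k$, \`a l'aide de la proposition \ref{reprLocal_prop} et du fait que $\faisDiag_n\simeq \faisGm^n$ commute au changement de base. Sur un tel corps, la question devient celle, classique, de la maximalit\'e du tore diagonal dans $\mathrm{GL}_n(k)$.

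La m\'ethode la plus \'economique pour conclure sur un corps est un argument par l'alg\`ebre de Lie et le centralisateur. D'apr\`es la proposition \ref{LieGLn_prop}, on a $\faisLie_{\faisGL_n}\simeq \alL_{\faisM_n}$, c'est-\`a-dire l'espace $\setM_n(k)$ muni du crochet usuel, et la repr\'esentation adjointe de $\faisDiag_n$ y agit par conjugaison. Premi\`erement je calculerais cette action: un \'el\'ement diagonal $\mathrm{diag}(u_1,\dots,u_n)$ agit sur la matrice \'el\'ementaire $E_{ij}$ par le scalaire $u_i u_j^{-1}$, autrement dit par le caract\`ere $t_i-t_j$ dans la base $t_1,\dots,t_n$ des caract\`eres de $\faisDiag_n$. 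Les poids non nuls sont donc exactement les $t_i-t_j$ pour $i\neq j$ (les racines de $A_{n-1}$), et l'espace des poids nuls, c'est-\`a-dire l'alg\`ebre de Lie du centralisateur de $\faisDiag_n$, est pr\'ecis\'ement l'espace des matrices diagonales, de dimension $n$.

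Deuxi\`emement j'en d\'eduirais la maximalit\'e. Le centralisateur $Z$ de $\faisDiag_n$ dans $\faisGL_n$ est un sous-groupe ferm\'e lisse contenant $\faisDiag_n$, et le calcul pr\'ec\'edent montre que $\dim \faisLie_Z = n = \dim \faisDiag_n$; par la proposition \ref{sgcorpslisse_prop}, $Z$ a donc m\^eme dimension que $\faisDiag_n$, et comme $\faisDiag_n$ est connexe on obtient $Z^\circ=\faisDiag_n$. Un tore maximal est exactement un tore \'egal \`a la composante neutre de son propre centralisateur; $\faisDiag_n$ v\'erifie cette propri\'et\'e et est donc bien un tore maximal de $(\faisGL_n)_k$, donc de $\faisGL_n$ par le principe de descente aux fibres g\'eom\'etriques.

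Le point d\'elicat n'est pas le calcul des poids, qui est imm\'ediat, mais le passage rigoureux du cas du corps alg\'ebriquement clos au cas d'une base quelconque: il faut invoquer que \faisGL_n est r\'eductif (cons\'equence de la proposition \ref{GL1red_prop} avec $A=\faisM_{n,S}$), que $\faisDiag_n$ est un tore d\'eploy\'e, et que la propri\'et\'e pour un tore d'\^etre maximal dans un groupe r\'eductif se v\'erifie fibre par fibre sur les fibres g\'eom\'etriques. C'est ce dernier crit\`ere de descente, appliqu\'e \`a un sous-tore de rang \'egal au rang r\'eductif de chaque fibre, qui fournit la conclusion sur $S$ tout entier.
```
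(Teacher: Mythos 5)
Your proof is correct and follows essentially the same route as the paper: both reduce the maximality of $\faisDiag_n$ to the geometric fibers (the definition of maximality in \SGAtrois\ being fiberwise), where the paper simply invokes the classical theory over a field as known while you spell that step out via the weight decomposition of $\faisM_n$ and the criterion ``$T$ maximal $\Leftrightarrow$ $T=C_G(T)^\circ$''. The only point you assert without justification is the smoothness of the centralisateur of $\faisDiag_n$ (a standard fact from \cite[Exp. XI]{sga3}, which can in any case be bypassed here by the elementary observation that a matrix commuting with all invertible diagonal matrices is itself diagonal), so nothing essential is missing.
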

\begin{proof}
La théorie sur les corps étant supposée connue, on voit immédiatement que c'est un tore maximal sur toute fibre géométrique.  
\end{proof}

Soit $\faisTriSup_{n}$ le sous-$S$-faisceau en groupes de $\faisGL_{n,S}$ formé des matrices triangulaires supérieures. On montre qu'il est représentable comme un sous-schéma fermé de $\faisGL_{n}$ défini par les équations qui fixent la partie sous la diagonale à zéro. 

\begin{prop} \label{borelGLn_prop}
Le sous-groupe $\faisTriSup_{n}$ de $\faisGL_{n}$ est un sous-groupe de Borel au sens de \cite[exp. XIV, déf. 4.5]{sga3}.
\end{prop}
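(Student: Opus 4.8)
La définition de sous-groupe de Borel de \cite[Exp. XIV, déf. 4.5]{sga3} requiert essentiellement que $\faisTriSup_{n}$ soit un sous-groupe lisse de $\faisGL_{n}$ dont les fibres géométriques sont des sous-groupes de Borel des fibres correspondantes de $\faisGL_{n}$. Le plan est donc d'établir d'abord la lissité et la connexité géométrique des fibres par une description explicite du schéma sous-jacent, puis de se ramener à la théorie classique sur les corps algébriquement clos pour identifier ces fibres géométriques comme sous-groupes de Borel.

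On a déjà observé que $\faisTriSup_{n}$ est représentable par un sous-schéma fermé de $\faisGL_{n}$, donc en particulier affine sur $S$. L'application qui à une matrice triangulaire supérieure associe le couple formé de sa partie diagonale et de sa partie strictement triangulaire supérieure définit, sur les points à valeurs dans les $S$-schémas affines, un isomorphisme de $S$-schémas (et non de groupes) $\faisTriSup_{n} \isoto \faisDiag_{n} \times_{S} \faisGa^{n(n-1)/2} \simeq \faisGm^{n} \times_{S} \faisGa^{n(n-1)/2}$, qui s'étend par représentabilité. Comme $\faisGm$ et $\faisGa$ sont lisses à fibres connexes sur $\Spec(\ZZ)$, ce produit est lisse sur $S$, à fibres géométriquement connexes, de dimension constante $n(n+1)/2$. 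On pourrait aussi invoquer la proposition \ref{lissite_prop}.

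Il reste à vérifier que, pour tout point géométrique $\bar{s}$ de $S$ de corps résiduel algébriquement clos $k=\kappa(\bar s)$, la fibre $(\faisTriSup_{n})_{\bar s}$ est un sous-groupe de Borel de $(\faisGL_{n})_{\bar s}=\setGL_{n,k}$. Or cette fibre n'est autre que le groupe des matrices triangulaires supérieures inversibles à coefficients dans $k$, dont la théorie classique sur les corps assure qu'il s'agit d'un sous-groupe connexe résoluble maximal, c'est-à-dire précisément d'un sous-groupe de Borel de $\setGL_{n}$. Les conditions de la définition sont alors remplies.

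La seule subtilité est de s'assurer que la décomposition diagonale/strictement triangulaire fournit bien un isomorphisme de schémas sur une base quelconque, ce qui ne pose en fait aucune difficulté puisqu'il suffit de le vérifier fonctoriellement sur les $S$-points affines. Tout le contenu véritable se ramène à la théorie sur les corps algébriquement clos, supposée connue ici, de sorte qu'aucun obstacle sérieux n'apparaît.
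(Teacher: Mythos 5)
Votre preuve est correcte et aboutit au m\^eme point que celle du texte, mais elle obtient la lissit\'e par un chemin diff\'erent. Le texte proc\`ede enti\`erement par fibres : il part du fait classique que $(\faisTriSup_n)_k$ est un sous-groupe de Borel lisse de $\faisGL_{n,k}$ sur tout corps, en d\'eduit la lissit\'e de $\faisTriSup_{n,\ZZ}$ par la proposition \ref{lissite_prop} (fibres lisses, connexes, de dimension constante sur la base noeth\'erienne r\'eduite $\Spec(\ZZ)$), puis conclut par changement de base, la propri\'et\'e de Borel se v\'erifiant ensuite par d\'efinition sur les fibres g\'eom\'etriques. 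Vous court-circuitez ce d\'etour par $\ZZ$ au moyen de la d\'ecomposition explicite de sch\'emas (et non de groupes) $\faisTriSup_n \isoto \faisGm^n \times_S \faisGa^{n(n-1)/2}$, via le couple (partie diagonale, partie strictement triangulaire), valide fonctoriellement sur les points puisqu'une matrice triangulaire sup\'erieure est inversible si et seulement si ses coefficients diagonaux le sont (le d\'eterminant \'etant leur produit); cela donne directement la lissit\'e sur une base quelconque, avec fibres g\'eom\'etriquement connexes de dimension $n(n+1)/2$, sans invoquer le crit\`ere fibre par fibre ni ses hypoth\`eses. Votre argument est donc plus autonome et plus explicite, au prix d'une v\'erification suppl\'ementaire (la bijectivit\'e fonctorielle de la d\'ecomposition, que vous traitez correctement), tandis que la preuve du texte est plus courte et uniforme avec la strat\'egie employ\'ee ailleurs pour les autres groupes classiques; l'identification finale des fibres g\'eom\'etriques comme sous-groupes de Borel de $\setGL_{n,k}$ est identique dans les deux cas.
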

\begin{proof}
La situation sur un corps $k$ étant connue, $(\faisTriSup_{n})_k$ est un sous-groupe de Borel de $\faisGL_{n,k}$ et il est lisse.
Le groupe $\faisTriSup_{n,\ZZ}$ est alors lisse par la proposition \ref{lissite_prop}, ainsi que $\faisTriSup_{n,S}$ par changement de base. Le fait d'être un sous-groupe de Borel se vérifie alors par définition sur les fibres.
\end{proof}

Considérons l'action adjointe de $\faisGL_n$ sur son algèbre de Lie. Par la proposition \ref{LieGLn_prop}, cette algèbre de Lie est $\faisM_n$ muni du crochet classique, et l'action adjointe est donnée sur les points par la conjugaison de matrices. On restreint alors cette action au tore maximal $\faisDiag_n$. Soit $E_{i,j}$ la matrice élémentaire avec un $1$ en ligne $i$ colonne $j$ et zéro ailleurs. Un calcul immédiat montre que $\faisDiag_n$ agit sur $\faisO_S\cdot E_{i,j}$ par $t_i-t_j$. Cela donne donc les racines de $\faisGL_n$ par rapport au tore $\faisDiag_n$.

Par \cite[exp. XXII, th. 1.1]{sga3}, pour chaque racine $\alpha$ il existe un unique morphisme $\exp_\alpha$ du groupe algébrique (additif) correspondant à l'espace propre de $\alpha$ de l'algèbre de Lie vers $\faisGL_n$, qui induise entre algèbre de Lie l'inclusion canonique de cet espace propre dans l'algèbre de Lie de $\faisGL_n$. Un calcul immédiat montre que le morphisme
$$\begin{array}{llll}
\exp_{\alpha_{i,j}}: & \faisO_S\cdot E_{i,j} & \to & \faisGL_{n} \\
& \lambda\cdot E_{i,j} & \mapsto & Id +\lambda\cdot E_{i,j}
\end{array}$$
(décrit sur les points) fournit le candidat.

\begin{prop} \label{donneeradicielleGLn_prop}
La donnée radicielle du $S$-schéma en groupes $\faisGL_n$ relativement au tore maximal déployé $\faisDiag_n$ est 
\begin{itemize}
\item Le $\ZZ$-module libre $M\simeq \ZZ^n$ des caractères de $\faisDiag_n$ (dont $t_1,\ldots,t_n$ forment une base); 
\item Le sous-ensemble fini de celui-ci formé des racines $\alpha_{i,j}=t_i-t_j$ pour $1\leq i,j\leq n$ et $i\neq j$;
\item Le $\ZZ$-module libre $M^\dual\simeq\ZZ^n$ des cocaractères (de base duale $t_1^\cdual,\ldots,t_n^\cdual$);
\item Le sous-ensemble fini de celui-ci formé des coracines $\alpha_{i,j}^\cdual=t_i^\dual-t_j^\dual$ pour $1\leq i,j\leq n$ et $i\neq j$.
\end{itemize}
\end{prop}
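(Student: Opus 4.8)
The plan is to assemble the four constituents of the root datum, observing that three of them are already in hand and that the genuine work lies in the coroots. Since $\faisGL_n$ is reductive by Proposition~\ref{GL1red_prop} and $\faisDiag_n$ is a split maximal torus by Proposition~\ref{toreGLn_prop}, the general formalism for root data recalled above applies. The character module $M$ is the character group of $\faisDiag_n\simeq\faisGm^n$, free on the coordinate characters $t_1,\dots,t_n$, and dually the cocharacter module $M^\cdual$ is free on $t_1^\cdual,\dots,t_n^\cdual$ by the standard description of a split torus. The roots have in effect been produced in the computation preceding the statement: by Proposition~\ref{LieGLn_prop} the adjoint action of $\faisDiag_n$ on the weight line $\faisO_S\cdot E_{i,j}\subset\faisLie_{\faisGL_n}=\faisM_n$ is through $t_i-t_j$, and for $i\neq j$ this character stays nonzero on every geometric fibre, so by \cite[Exp. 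XIX, d\'ef.~3.2]{sga3} the $\alpha_{i,j}=t_i-t_j$ are exactly the roots.

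It then remains to pin down the coroots. First I would record, using the uniqueness of exponential morphisms \cite[th. 2.3.4]{D}, that the candidate $\exp_{\alpha_{i,j}}(\lambda E_{i,j})=\mathrm{Id}+\lambda E_{i,j}$ displayed before the statement really is the exponential attached to $\alpha_{i,j}$: it is a homomorphism from the additive group of the weight line inducing the canonical inclusion on Lie algebras, so uniqueness forces it. With the exponentials for $\alpha_{i,j}$ and its opposite $\alpha_{j,i}=-\alpha_{i,j}$ available, the coroot $\alpha_{i,j}^\cdual$ is determined by the uniqueness part of \cite[th. 2.4.1]{D}. Concretely, $\exp_{\alpha_{i,j}}$ and $\exp_{\alpha_{j,i}}$ take values in the copy of $\faisSL_2$ occupying the $(i,j)$-block, and the coroot is the cocharacter arising from the diagonal torus of that $\faisSL_2$, namely $u\mapsto\mathrm{diag}(\dots,u,\dots,u^{-1},\dots)$ with $u$ in position $i$ and $u^{-1}$ in position $j$; as an element of $M^\cdual$ this is $t_i^\cdual-t_j^\cdual$.

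The main obstacle will be carrying out this identification with the correct normalisation when invoking \cite[th. 2.4.1]{D}, since that theorem fixes $\alpha^\cdual$ through the pairing of the root spaces for $\alpha$ and $-\alpha$, for which one must keep track of the sign convention flagged in Remark~\ref{signeAccouplement_rema}. In practice I would reduce everything to the $2\times2$ block: there the bracket $[E_{i,j},E_{j,i}]=E_{i,i}-E_{j,j}$ is the image under $\faisLie$ of the cocharacter $t_i^\cdual-t_j^\cdual$, and a direct check gives $\langle\alpha_{i,j},t_i^\cdual-t_j^\cdual\rangle=2$, confirming that the candidate meets the defining requirement. Finally, since any reductive group produces a root datum by \cite[Exp. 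XXII, prop. 1.14]{sga3}, the axioms need no separate verification, and the four items of the statement follow.
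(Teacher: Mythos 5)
Votre démonstration est correcte et suit pour l'essentiel la même démarche que celle du texte : lecture des racines sur la décomposition de l'algèbre de Lie sous l'action de $\faisDiag_n$, identification des morphismes exponentiels par unicité, puis détermination des coracines par le critère d'unicité de \cite[th. 2.4.1]{D} en tenant compte de la convention de signe de la remarque \ref{signeAccouplement_rema}, la vérification se ramenant au bloc $2\times 2$. La seule variante est cosmétique : vous contrôlez $\langle\alpha_{i,j},t_i^\cdual-t_j^\cdual\rangle=2$ via le crochet $[E_{i,j},E_{j,i}]=E_{i,i}-E_{j,j}$ là où le texte explicite l'accouplement $\lambda E_{i,j}\otimes\mu E_{j,i}\mapsto-\lambda\mu$ et la formule (F).
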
 
\begin{proof}
On lit les racines dans la décomposition précédente de l'algèbre de Lie, et on vérifie immédiatement que les coracines associées sont bien celles proposées en appliquant le critère-définition \cite[th. 2.4.1]{D} (voir la remarque \ref{signeAccouplement_rema}): l'accouplement entre sous-espaces propres de racines opposées est $\lambda\cdot E_{i,j}\otimes \mu\cdot E_{j,i} \mapsto -\lambda \mu$ et le morphisme de groupes $\alpha_{i,j}^\cdual: \faisGm \to \faisDiag_n$ envoie $u \in \faisGm(S)$ sur la matrice diagonale avec $u$ (resp. $u^{-1}$) en position $i$ (resp. $j$) et $1$ ailleurs. C'est bien ce qui correspond à $t_i^\dual -t_j^\dual$ par l'isomorphisme $\faisDiag_n \simeq D(M)$ (dual de Cartier). 
On vérifie à la main la formule (F) de \loccit, les morphismes exponentiels ayant déjà été décrits.
\end{proof}

\begin{exo} \label{autoDonneeRadGL_exo}
Montrer par un calcul direct que pour $n\geq 2$, si $\phi$ est un automorphisme de $M$ qui permute les racines simples $(t_i-t_{i+1})_{1\leq i\leq n-1}$ et tel que $\phi^\dual$ permute les coracines simples $(t_i^\dual-t_{i+1}^\dual)_{1\leq i\leq n-1}$, alors $\phi$ est l'identité ou la matrice avec des $-1$ sur l'antidiagonale, et $0$ partout ailleurs.
\end{exo}

\begin{prop} \label{centreGLn_prop}
Le centre de $\faisGL_n$ est $\faisGm$ envoyé diagonalement dedans.
\end{prop}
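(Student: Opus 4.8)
The plan is to establish the two inclusions of subsheaves of $\faisGL_n$, namely $\faisGm \subseteq \faisCentre_{\faisGL_n}$ and $\faisCentre_{\faisGL_n} \subseteq \faisGm$, where $\faisGm$ is viewed inside $\faisGL_n$ through the diagonal (scalar) embedding $u \mapsto u\cdot \mathrm{Id}$. The first inclusion is immediate and requires no computation: a scalar matrix $u\cdot \mathrm{Id}$ commutes with every element of $\faisGL_n(T')$ for every $T'\to T$, hence lies in $\faisCentre_{\faisGL_n}(T)$ by the very definition of the centre.

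For the reverse inclusion I would argue pointwise on affine test schemes. Since $\faisCentre_{\faisGL_n}$ and $\faisGm$ are both subsheaves of $\faisGL_n$, and since a sheaf is determined by its values on affine $S$-schemes (by the comparison theorem recalled above), it suffices to show that for every affine $T=\Spec(R')$ over $S$, a central element $g\in\faisGL_n(T)=\setGL_n(R')$ is a scalar matrix with entry in $(R')^\times$. The crucial observation is that, to exploit centrality, I only need $g$ to commute with a few well-chosen elements already defined over $T$ itself, so that no base change intervenes in this direction: the elementary transvections $\mathrm{Id}+E_{i,j}$ with $i\neq j$, which belong to $\setGL_n(R')$.

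The remaining step is the elementary matrix computation: $g$ commutes with $\mathrm{Id}+E_{i,j}$ if and only if $gE_{i,j}=E_{i,j}g$. Writing $g=(g_{k,l})$, the product $gE_{i,j}$ is supported in column $j$ with $(k,j)$-entry $g_{k,i}$, while $E_{i,j}g$ is supported in row $i$ with $(i,l)$-entry $g_{j,l}$; comparing them forces $g_{k,i}=0$ for $k\neq i$, $g_{j,l}=0$ for $l\neq j$, and $g_{i,i}=g_{j,j}$. Letting $i,j$ range over all pairs with $i\neq j$ shows that $g$ is diagonal with all diagonal entries equal, i.e. $g=\lambda\cdot \mathrm{Id}$; invertibility of $g$ then forces $\lambda\in (R')^\times=\faisGm(T)$, which is exactly the claim. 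There is no genuine obstacle here, the proposition being essentially elementary: the only two points deserving a word are the reduction to affine points via the sheaf property, and the remark that the scalar matrices are precisely the image of the diagonal $\faisGm$ already identified inside the maximal torus $\faisDiag_n$ of Proposition~\ref{toreGLn_prop}.
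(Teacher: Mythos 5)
Votre preuve est correcte et suit essentiellement la m\^eme d\'emarche que celle du texte, qui invoque pr\'ecis\'ement le \og calcul classique utilisant les matrices \'el\'ementaires \fg{}; vous ne faites qu'expliciter ce calcul et la r\'eduction aux points affines. Rien \`a redire.
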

\begin{proof}
Les matrices inversibles qui commutent à toutes les autres sont diagonales, par un calcul classique utilisant les matrices élémentaires. 
\end{proof}

\subsection{Groupe déployé adjoint}

\begin{defi} \label{PGLn_defi}
Soit $\faisPGL_{n,S}$ le groupe algébrique sur $S$ défini par le foncteur de points $\faisAut^{\faisO_S-\alg}_{\faisM_{n,S}}$ (représentable, affine et de type fini sur $S$ par la proposition \ref{autrepr_prop}). Lorsque la base $S$ est implicite, on ne la mentionne plus dans la notation.
\end{defi}
On a bien entendu $\faisPGL_{n,S}=(\faisPGL_{n,\ZZ})_{S}$ par la remarque \ref{Homchgmt_rema}.
\begin{prop}
Pour tout $S$-schéma $T$,
$$\faisPGL_{n,S}(T)=\setAut_{\Gamma(T)\mathrm{-alg}}(\setM_n(\Gamma(T)).$$ 
Pour tout anneau $R$, on a donc $\faisPGL_n(R)=\setAut_{R\mathrm{-alg}}(\setM_n(R))$. 
\end{prop}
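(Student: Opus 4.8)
Il s'agit de montrer que pour tout $S$-schéma $T$, on a l'égalité
$$\faisPGL_{n,S}(T)=\setAut_{\Gamma(T)\text{-alg}}(\setM_n(\Gamma(T))),$$
et, comme cas particulier affine, que $\faisPGL_n(R)=\setAut_{R\text{-alg}}(\setM_n(R))$ pour tout anneau $R$.

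Le point essentiel est que $\faisPGL_{n,S}$ a été \emph{défini} (définition \ref{PGLn_defi}) comme le foncteur $\faisAut^{\faisO_S\text{-}\alg}_{\faisM_{n,S}}$, dont la représentabilité est fournie par la proposition \ref{autrepr_prop}. Ainsi, par définition même, $\faisPGL_{n,S}(T)=\setAut^{\faisO_T\text{-}\alg}_{(\faisM_{n,S})_T}$, où le membre de droite désigne le groupe des automorphismes du faisceau en algèbres $(\faisM_{n,S})_T=\faisM_{n,T}$ comme $\faisO_T$-algèbre. L'énoncé revient donc à identifier ce groupe d'automorphismes de faisceau avec le groupe d'automorphismes de l'algèbre concrète $\setM_n(\Gamma(T))$ sur l'anneau $\Gamma(T)$.

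**Stratégie.** Le plan est le suivant. D'abord, je me ramènerais au cas affine : il suffit de traiter $T=\Spec(R')$ avec $R'=\Gamma(T)$ lorsque $T$ est affine, puis d'invoquer le fait que la proposition \ref{PGLApoints_prop} a déjà établi, pour $A=\faisM_{n,S}$, l'égalité $\faisPGL_A(T)=\setAut_{R'\text{-alg}}(A(T))$ sur les $T$ affines. Or pour $A=\faisM_{n,S}$, on a $A(T)=\faisM_{n,S}(T)=\setM_n(\Gamma(T))=\setM_n(R')$ (définition \ref{matrices_defi}). La proposition \ref{PGLApoints_prop} donne donc immédiatement
$$\faisPGL_{n,S}(T)=\setAut_{R'\text{-alg}}(\setM_n(R'))$$
pour tout $T$ affine, ce qui est exactement l'énoncé dans ce cas, et en particulier l'assertion $\faisPGL_n(R)=\setAut_{R\text{-alg}}(\setM_n(R))$ pour tout anneau $R$.

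**Extension aux $T$ non affines.** Pour un $T$ quelconque, je rappellerais que, $\faisPGL_{n,S}$ étant un faisceau Zariski (il est représentable par un schéma affine sur $S$, donc un faisceau pour la topologie canonique par la proposition précédant \ref{reprLocal_prop}, \emph{a fortiori} Zariski), on connaît ses sections sur $T$ dès qu'on les connaît sur les ouverts affines d'un recouvrement de $T$. Le membre de droite $\setAut_{\Gamma(T)\text{-alg}}(\setM_n(\Gamma(T)))$ se décrit de la même façon : un $\Gamma(T)$-automorphisme de $\setM_n(\Gamma(T))$ est déterminé par ses restrictions aux ouverts affines, et inversement des automorphismes compatibles se recollent, ce qui est précisément le contenu de l'égalité $\faisPGL_{n,S}(T)=\faisAut^{\faisO_T\text{-}\alg}_{\faisM_{n,T}}(T)$ vue via la proposition \ref{PGLApoints_prop} sur chaque ouvert affine. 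L'énoncé général en découle par recollement.

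**Principal obstacle.** Il n'y a pas de réelle difficulté ici : l'énoncé est essentiellement un \emph{déballage} de la définition \ref{PGLn_defi} combiné à la proposition \ref{PGLApoints_prop} déjà démontrée. Le seul point méritant une mention est le passage du cas affine au cas général, qui repose sur le caractère faisceautique (Zariski) des deux membres. Je m'attends donc à une preuve très courte, dont l'unique subtilité consiste à invoquer correctement la proposition \ref{PGLApoints_prop} (qui fait elle-même usage de la pleine fidélité de $\wW$ et de $\tilde{(-)}$, et de la condition \ref{Wzarlib_item} de la proposition \ref{loclibre_prop} pour identifier $A_T$ à $\wW(\widetilde{A(T)})$ sur une base affine).
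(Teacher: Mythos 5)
Votre preuve est correcte et non circulaire (la proposition \ref{PGLApoints_prop} pr�c�de bien l'�nonc� dans le texte), mais elle ne suit pas le chemin de l'article. La preuve du texte ne passe pas par le cas affine~: elle consid�re directement, pour $T$ \emph{quelconque}, la fl�che naturelle $\faisAut_{\faisO_{T}\mathrm{-alg}}(\faisM_{n,T})\to \setAut_{\Gamma(T)\mathrm{-alg}}(\setM_n(\Gamma(T)))$ et observe qu'elle est un isomorphisme parce que l'isomorphisme analogue vaut d�j� au niveau des $\faisEnd$ --- c'est le cas ``libre'' du lemme \ref{homMN_lemm}, valable sur une base arbitraire pr�cis�ment parce que $\faisM_n$ est un $\faisO_S$-module \emph{libre} et non seulement localement libre --- les conditions d'�tre un automorphisme d'alg�bre se correspondant alors des deux c�t�s. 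Votre itin�raire (cas affine via \ref{PGLApoints_prop}, puis recollement Zariski) vous oblige en revanche � v�rifier que le membre de droite $T\mapsto \setAut_{\Gamma(T)\mathrm{-alg}}(\setM_n(\Gamma(T)))$ satisfait lui-m�me � la descente Zariski~; c'est vrai ici parce que $\setM_n(\Gamma(T))=\Gamma(T)^{n^2}$ est libre et que $\Gamma(T)$ est l'�galisateur des sections sur un recouvrement affine, mais c'est exactement le point que l'argument de libert� du texte fournit d'un seul coup pour tout $T$, sans recollement. Les deux approches reposent donc in fine sur le m�me fait (la libert� de $\faisM_n$), la v�tre le d�coupant en deux �tapes (affine puis descente) l� o� le texte l'utilise directement.
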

\begin{proof}
Il suffit de voir que la flèche naturelle 
$$\faisAut^{\faisO_{S}-\alg}_{\faisM_{n,S}}(T)=\faisAut_{\faisO_{T}\mathrm{-alg}}(\faisM_{n,T})\to \faisAut_{\Gamma(T)\mathrm{-alg}}(\setM_{n,S}(\Gamma(T)))$$ 
est un isomorphisme. Cela provient du même isomorphisme pour $\faisEnd$ car $\faisM_n$ est un $\faisO_S$-module libre et de ce que les conditions à imposer pour être un automorphisme d'algèbre sont évidemment les mêmes des deux côtés de ce dernier isomorphisme.
\end{proof}

On définit alors le morphisme canonique $\faisGL_n \to \faisPGL_n$ en envoyant un élément inversible de $\faisM_n(T)$ vers l'automorphisme conjugaison par cet élément. On obtient ainsi:
 
\begin{prop} \label{secPGLn_prop}
Sur toute base $S$, la suite de faisceaux Zariski 
$$1 \to \faisGm \to \faisGL_n \to \faisPGL_n \to 1$$
est exacte ($\faisGm$ est envoyé diagonalement dans $\faisGL_n$). Il en est de même pour les topologies entre Zariski et canonique, comme étale ou \fppf.
\end{prop}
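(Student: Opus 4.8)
The sequence $1 \to \faisGm \to \faisGL_n \to \faisPGL_n \to 1$ is exact as Zariski sheaves (and for all topologies between Zariski and canonical).

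The plan is to verify the three conditions defining exactness of a sequence of sheaves of groups: that $\faisGm \to \faisGL_n$ is a monomorphism, that its image equals the kernel of $\faisGL_n \to \faisPGL_n$, and that $\faisGL_n \to \faisPGL_n$ is an epimorphism of sheaves. The first two hold already at the level of points. The diagonal map $\lambda \mapsto \lambda\cdot\mathrm{Id}$ is visibly injective on sections (compare diagonal entries), hence a monomorphism. For the kernel, a section $g \in \faisGL_n(T)$ maps to the identity automorphism of $\faisM_{n,T}$ exactly when $\int_g=\mathrm{id}$, i.e. when $g$ commutes with every matrix; by the centre computation (proposition \ref{centreGLn_prop}, via the elementary matrices $E_{ij}$) these are precisely the invertible scalar matrices, which is the image of $\faisGm$. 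This yields exactness at $\faisGm$ and at $\faisGL_n$.

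The substantive point, and the main obstacle, is surjectivity: I must show $\faisGL_n \to \faisPGL_n$ is an epimorphism of Zariski sheaves, i.e. that every $\faisO_S$-algebra automorphism of $\faisM_{n,S}$ is \emph{Zariski-locally inner}. As this is local I would reduce to $S=\Spec(R)$ affine and to an automorphism $\sigma \in \setAut_{R\mathrm{-alg}}(\setM_n(R))$ (using that $\faisPGL_n$ is a Zariski sheaf, so any section arises from such a $\sigma$ after a Zariski localisation). The idea is the relative Morita/Skolem--Noether argument: view $R^n$ as a left $\setM_n(R)$-module in the standard way, $M$, and in the $\sigma$-twisted way $M_\sigma$, namely $R^n$ with action $a\cdot v=\sigma(a)v$. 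Then $\sigma$ is inner, $\sigma=\int_g$, exactly when $M\cong M_\sigma$ as $\setM_n(R)$-modules, an isomorphism being an invertible $g$ with $ga=\sigma(a)g$ for all $a$, i.e. $\sigma(a)=gag^{-1}$.

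To control $M_\sigma$ I would set $L=\sigma(E_{11})R^n$, the image of the idempotent $\sigma(E_{11})$, which is the $R$-module attached to $M_\sigma$ under the Morita equivalence. The orthogonal idempotents $\sigma(E_{ii})$ sum to $\sigma(1)=1$ and are mutually conjugate through the $\sigma(E_{ij})$, so $R^n\cong L^{\oplus n}$; hence $L$ is a rank-one projective $R$-module, i.e. an invertible module. An invertible module is Zariski-locally free of rank one, so on a suitable Zariski cover $(\Spec R_{f_i})$ one has $L_{f_i}\cong R_{f_i}$, whence $M_\sigma\cong M$ over $R_{f_i}$ and therefore $\sigma_{|R_{f_i}}=\int_{g_i}$ for some $g_i\in\setGL_n(R_{f_i})$. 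Thus $\sigma$ lifts Zariski-locally to $\faisGL_n$, proving the epimorphism and completing exactness for the Zariski topology.

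Finally, for any topology between Zariski and the canonical one I would note that nothing new is required: $\faisGm$, $\faisGL_n$ and $\faisPGL_n$ are representable, hence already sheaves for all such topologies; the monomorphism condition and the description of the kernel are unchanged; and a Zariski cover is \emph{a fortiori} a cover for any finer topology, so the map remains an epimorphism there. Hence the sequence stays exact for the étale and \fppf\ topologies as well.
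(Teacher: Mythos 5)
Your proof is correct, and its skeleton agrees with the paper's (kernel identified with $\faisGm$ via the centre computation of \ref{centreGLn_prop}; surjectivity of $\faisGL_n \to \faisPGL_n$ as the crux; representability of all three functors to pass from Zariski to any subcanonical topology). Where you genuinely diverge is in the surjectivity step. The paper first reduces to $S=\Spec(\ZZ)$ by base change of exact sequences of sheaves, then checks epimorphy on stalks, which are local rings, and there invokes as a black box the Skolem--Noether theorem over local rings (\cite[th. 3.6]{ausgol}): every automorphism of $\setM_n(R)$, $R$ local, is inner. You instead give a self-contained proof of Zariski-local innerness over an arbitrary ring: $\sigma$ is inner exactly when the twisted module $M_\sigma$ is isomorphic to the standard one, $M_\sigma$ corresponds under Morita equivalence to $L=\sigma(E_{11})R^n$, and $L$ is invertible because the orthogonal idempotents $\sigma(E_{ii})$ together with the partial isomorphisms given by the $\sigma(E_{ij})$ yield $R^n\cong L^{\oplus n}$; local triviality of invertible modules then produces the required Zariski cover. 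This argument is sound (all the Morita steps check out, and localisation commutes with forming $M_\sigma$ and $L$), and it in effect reproves the cited result, since over a local ring $L$ is free and $\sigma$ is globally inner. What the paper's route buys is brevity, by outsourcing to Auslander--Goldman; what yours buys is self-containedness plus an explicit exhibition of the obstruction: your $L$ is canonically the module $\Hom_{\setM_n}(M,M_\sigma)$, i.e.\ precisely the line bundle $L_\sigma$ that the paper constructs later (in the section on long exact cohomology sequences) to describe the connecting map $\setAut_A(S)\to \Pic(S)$, so your argument makes the eventual cohomological picture visible already at this stage. Your treatment of the finer topologies coincides with the paper's.
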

\begin{proof}
Une suite exacte de faisceaux reste exacte par changement de base, voir \cite[Exp. IV, 4.5.7]{sga3}. Il suffit donc de faire le cas $S=\Spec(\ZZ)$. Le noyau du morphisme $\faisGL_n \to \faisPGL_n$ est bien $\faisGm$, par la proposition \ref{centreGLn_prop}. Il suffit de vérifier la surjectivité de faisceaux sur leurs germes, qui sont des anneaux locaux. Or, sur un anneau local, le théorème de Skolem-Noether vaut: tous les automorphismes des algèbres de matrices sont intérieurs (voir \cite[th. 3.6]{ausgol}). Le foncteur de points $\faisPGL_n$ étant représentable, c'est directement un faisceau pour toute topologie entre la topologie Zariski et canonique, la surjectivité comme faisceau Zariski montre la surjectivité pour les autres topologies. 
\end{proof}

En d'autres termes, le faisceau étale $\faisPGL_n$ est le faisceau associé au préfaisceau qui à $Y$ associe $\faisGL_n(Y)/\faisGm(Y)$.

\begin{coro} \label{PGLncorps_coro}
Pour tout schéma $S$ tel que $\Pic(S)=0$, on a $\faisPGL_n(S) \simeq \faisGL_n(S)/\faisGm(S)$. En particulier, c'est le cas lorsque $S$ est le spectre d'un corps, ou même d'un anneau local.
\end{coro}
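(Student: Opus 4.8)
Pour tout schéma $S$ tel que $\Pic(S)=0$, on a $\faisPGL_n(S) \simeq \faisGL_n(S)/\faisGm(S)$, en particulier lorsque $S$ est le spectre d'un corps ou d'un anneau local.

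Le plan est de déduire l'énoncé en passant aux sections globales dans la suite exacte courte de faisceaux de la proposition~\ref{secPGLn_prop}, via la suite exacte longue de cohomologie. Je prends pour $\Ttau$ la topologie de Zariski, de sorte que cette proposition fournit la suite exacte de $\Ttau$-faisceaux
$$1 \to \faisGm \to \faisGL_n \to \faisPGL_n \to 1,$$
le noyau $\faisGm$ étant central dans $\faisGL_n$ par la proposition~\ref{centreGLn_prop}. La suite exacte longue \eqref{selcohomologie_eq} en donne le début
$$1 \to \faisGm(S) \to \faisGL_n(S) \to \faisPGL_n(S) \to \Htau^1(S,\faisGm),$$
exacte comme suite de groupes en $\faisGm(S)$ et $\faisGL_n(S)$, et comme suite d'ensembles pointés en $\faisPGL_n(S)$.

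Ensuite, j'identifie le terme $\Htau^1(S,\faisGm)$ au groupe de Picard. Par définition, $\Htau^1(S,\faisGm)$ est l'ensemble des classes d'isomorphisme de $\faisGm$-torseurs pour la topologie de Zariski ; or le cas $n=1$ de la proposition~\ref{GLntors_prop} donne une équivalence $(\Vecn{1})_\grpd \isoto \Tors{\faisGm}$, si bien que ces classes sont exactement les classes d'isomorphisme de $\faisO_S$-modules inversibles, c'est-à-dire $\Pic(S)$. L'hypothèse $\Pic(S)=0$ signifie donc que $\Htau^1(S,\faisGm)$ est réduit à son point base. L'exactitude en $\faisPGL_n(S)$ dit alors que l'image de $\faisGL_n(S)$ est la préimage de ce point base, c'est-à-dire $\faisPGL_n(S)$ tout entier : le morphisme $\faisGL_n(S) \to \faisPGL_n(S)$ est surjectif. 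Combiné à l'exactitude en $\faisGL_n(S)$, qui en identifie le noyau à $\faisGm(S)$, cela fournit l'isomorphisme de groupes $\faisPGL_n(S) \simeq \faisGL_n(S)/\faisGm(S)$. Pour la dernière assertion, il suffit de rappeler que tout module inversible sur un anneau local — en particulier sur un corps — est libre de rang $1$, donc trivial, de sorte que $\Pic(S)=0$.

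Il n'y a pas ici d'obstacle sérieux ; le seul point à surveiller est de choisir la topologie de manière cohérente, pour que simultanément la suite de la proposition~\ref{secPGLn_prop} soit exacte comme suite de faisceaux et que $\Htau^1(S,\faisGm)$ s'identifie à $\Pic(S)$ : la topologie de Zariski convient aux deux. On peut d'ailleurs dérouler l'argument de façon concrète, sans la suite longue : une section globale de $\faisPGL_n$ se relève, sur un recouvrement ouvert $(U_i)$ assez fin, en des matrices $g_i \in \faisGL_n(U_i)$ vérifiant $g_i = \lambda_{ij} g_j$ sur les intersections, où les $\lambda_{ij} \in \faisGm(U_{ij})$ forment un cocycle de \v{C}ech ; l'annulation de $\Pic(S)$ le rend cohomologue à zéro, soit $\lambda_{ij} = \mu_i \mu_j^{-1}$, et les $\mu_i^{-1} g_i$ se recollent alors en un relèvement global dans $\faisGL_n(S)$.
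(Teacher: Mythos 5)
Votre preuve est correcte et suit essentiellement la même démarche que celle du texte : on applique la suite exacte longue de cohomologie à la suite $1 \to \faisGm \to \faisGL_n \to \faisPGL_n \to 1$ et on utilise l'identification $\Htau^1(S,\faisGm)=\Pic(S)=0$. Le seul écart est le choix de la topologie de Zariski au lieu de la topologie étale, ce qui est sans conséquence ici puisque la suite est exacte et l'identification avec $\Pic(S)$ vaut pour les deux.
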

\begin{proof}
On considère la suite exacte en cohomologie associée à la suite exacte courte de faisceaux étales de la proposition \ref{secPGLn_prop}, avec l'identification $\Het^1(S,\faisGm)=\Pic(S)$.
\end{proof}

Considérons le sous-groupe de $\faisPGL_n$ image du tore $\faisDiag_n$ par le morphisme $\faisGL_n \to \faisPGL_n$. C'est le quotient de $\faisDiag_n$ par $\faisGm$, c'est donc le dual de Cartier du noyau de l'application somme des coordonnées $\ZZ^n \to \ZZ$. Ce noyau est isomorphe à $\ZZ^{n-1}$ donc le sous-groupe considéré est un tore déployé de rang $n-1$. Appelons-le $\faisPDiag_n$.
 
\begin{coro} \label{PGLToreMax_coro}
Le sous-groupe $\faisPDiag_n$ de $\faisPGL_n$ décrit ci-dessus est un tore maximal (déployé) au sens de \SGAtrois.
\end{coro}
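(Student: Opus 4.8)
Le plan est de reproduire la strat\'egie de la proposition \ref{toreGLn_prop}: se ramener aux fibres g\'eom\'etriques, o\`u l'assertion est classique sur un corps alg\'ebriquement clos.

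Tout d'abord, je rappellerais que, d'apr\`es le paragraphe pr\'ec\'edant imm\'ediatement l'\'enonc\'e, $\faisPDiag_n$ est un tore d\'eploy\'e de rang $n-1$ (le dual de Cartier du noyau de l'application somme des coordonn\'ees $\ZZ^n \to \ZZ$), obtenu comme image de $\faisDiag_n$ par le morphisme $\faisGL_n \to \faisPGL_n$, c'est-\`a-dire comme le quotient $\faisDiag_n/\faisGm$. C'est en particulier un sous-tore de $\faisPGL_n$. Or \^etre un tore maximal au sens de \SGAtrois\ est une propri\'et\'e qui se teste fibre par fibre, de sorte qu'il suffit de v\'erifier que $(\faisPDiag_n)_{\bar s}$ est un tore maximal de $(\faisPGL_n)_{\bar s}$ pour tout point g\'eom\'etrique $\bar s$ de $S$.

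L'\'etape cl\'e est la compatibilit\'e au changement de base. La suite exacte courte de faisceaux $1 \to \faisGm \to \faisDiag_n \to \faisPDiag_n \to 1$ reste exacte apr\`es changement de base (\cf \cite[Exp. IV, 4.5.7]{sga3}, comme dans la preuve de \ref{secPGLn_prop}); ainsi $(\faisPDiag_n)_{\bar s} = (\faisDiag_n)_{\bar s}/(\faisGm)_{\bar s}$ s'identifie \`a l'image du tore diagonal dans $(\faisPGL_n)_{\bar s} = \setPGL_{n,\kappa(\bar s)}$. Sur le corps alg\'ebriquement clos $\kappa(\bar s)$, c'est le tore maximal standard de $\setPGL_n$, ce qui est bien connu.

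La principale difficult\'e est de disposer pr\'ecis\'ement du crit\`ere fibre par fibre pour les tores maximaux dans \SGAtrois, et de confirmer l'identification de $(\faisPGL_n)_{\bar s}$ avec $\setPGL_n$ sur le corps r\'esiduel, ce qui d\'ecoule de la compatibilit\'e de $\faisPGL_n = \faisAut^\alg_{\faisM_n}$ au changement de base (remarque suivant la d\'efinition \ref{PGLn_defi}).
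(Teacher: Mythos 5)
Votre preuve est correcte et suit essentiellement la m\^eme strat\'egie que celle du texte : r\'eduction aux fibres g\'eom\'etriques puis identification avec le tore maximal classique de $\setPGL_n$ sur un corps alg\'ebriquement clos. La seule nuance est que le texte justifie l'identification sur les fibres via le corollaire \ref{PGLncorps_coro} ($\Pic$ trivial sur un corps), tandis que vous invoquez l'exactitude apr\`es changement de base de la suite $1 \to \faisGm \to \faisDiag_n \to \faisPDiag_n \to 1$ ; les deux arguments disent la m\^eme chose, \`a savoir que la formation du quotient commute au passage aux fibres.
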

\begin{proof}
Par le corollaire \ref{PGLncorps_coro}, ce tore donne le tore maximal connu de $\faisPGL_n$ sur chaque fibre géométrique (étant donné connue la théorie sur les corps). 
\end{proof}

\begin{prop} \label{algLiePGLn_prop}
L'algèbre de Lie de $\faisPGL_n$ est le $\faisO_S$-module $\faisM_n/\faisO_S$, muni du crochet $[\bar m, \bar n] = \overline{mn-nm}$ sur les points. La représentation adjointe est l'action des automorphismes sur le quotient par le centre. 
\end{prop}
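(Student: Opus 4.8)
L'algèbre de Lie de $\faisPGL_n$ est le $\faisO_S$-module $\faisM_n/\faisO_S$, muni du crochet $[\bar m, \bar n] = \overline{mn-nm}$, et la représentation adjointe est l'action des automorphismes sur le quotient par le centre.

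Le plan est d'appliquer directement la proposition \ref{algLieAut_prop} à la $\faisO_S$-algèbre $A=\faisM_{n,S}$. En effet, par définition \ref{PGLn_defi}, on a $\faisPGL_n=\faisAut^{\faisO_S-\alg}_{\faisM_{n,S}}$, donc $\faisLie_{\faisPGL_n}=\faisLie_{\faisAut^\alg_{\faisM_n}}$. La proposition \ref{algLieAut_prop} fournit d'abord l'isomorphisme général $\faisLie_{\faisAut^\alg_A}\simeq \faisDer_{A,A}$, valable pour toute $\faisO_S$-algèbre localement libre. Il reste donc à identifier $\faisDer_{\faisM_n,\faisM_n}$ avec $\faisM_n/\faisO_S$ muni du crochet voulu, et c'est précisément la partie de la proposition \ref{algLieAut_prop} qui traite le cas séparable et de centre $\faisO_S$ : il suffit de vérifier ces deux hypothèses pour $\faisM_n$.

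D'abord, la première étape consiste à rappeler que $\faisM_{n,S}$ est une $\faisO_S$-algèbre d'Azumaya (exemple \ref{EndMAzumaya_exem}), donc en particulier séparable et de centre $\faisO_S$ (proposition sur le centre d'une algèbre d'Azumaya, déjà établie en \ref{AlgAzu_sec}). Ces deux propriétés sont exactement les hypothèses requises pour la dernière assertion de la proposition \ref{algLieAut_prop}, qui affirme que le morphisme canonique $\bar\phi: A/\faisO_S \to \faisDer_{A,A}$ est un isomorphisme. En composant avec l'isomorphisme $\faisLie_{\faisAut^\alg_A}\simeq\faisDer_{A,A}$, on obtient l'identification souhaitée $\faisLie_{\faisPGL_n}\simeq \faisM_n/\faisO_S$. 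La structure de crochet se lit sur la construction de $\bar\phi$ : ce morphisme envoie la classe $\bar a$ sur la dérivation intérieure $x\mapsto ax-xa$, et c'est un morphisme d'algèbres de Lie ; le crochet sur $A/\faisO_S$ induit par $[a,b]=ab-ba$ correspond donc exactement à $[\bar m,\bar n]=\overline{mn-nm}$.

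Enfin, la description de la représentation adjointe découle de la seconde partie de la proposition \ref{algLieAut_prop} : à travers l'isomorphisme $\bar\phi$, la représentation adjointe est donnée par l'action usuelle des $\faisO_S$-automorphismes de $A$ sur le quotient $A/\faisO_S$ par le centre, ce qui est précisément l'énoncé demandé. Je n'anticipe pas d'obstacle réel dans cette preuve : toute la difficulté technique a déjà été absorbée dans la proposition \ref{algLieAut_prop} (notamment le recours à \cite[III, th. 1.4]{kn-oj} pour montrer que $\bar\phi$ est un isomorphisme localement pour une algèbre séparable). La preuve se réduit donc à invoquer \ref{algLieAut_prop} et à constater que $\faisM_n$ en vérifie les hypothèses, en renvoyant à \ref{EndMAzumaya_exem} pour la séparabilité et le calcul du centre.
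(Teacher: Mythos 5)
Votre preuve est correcte et suit exactement la même démarche que celle du texte, qui se contente d'invoquer la proposition \ref{algLieAut_prop} ; vous explicitez simplement la vérification des hypothèses (séparabilité et centre $\faisO_S$ de $\faisM_n$ via \ref{EndMAzumaya_exem}), ce qui est bien le contenu implicite de l'argument du papier.
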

\begin{proof}
C'est une application immédiate de la proposition \ref{algLieAut_prop}.
\end{proof}

\begin{prop}
Le groupe $\faisPGL_n$ est réductif.
\end{prop}
\begin{proof}
Le groupe $\faisPGL_n$ est de type fini et affine par construction (voir la prop. \ref{autrepr_prop}) . Sur un corps $k$, on sait que $\faisPGL_{n,k}$ est lisse, connexe et réductif. La lissité de $\faisPGL_{n,\ZZ}$ résulte donc de la proposition \ref{lissite_prop}, puis celle de $\faisPGL_{n,S}$ vient par changement de base.

Alternativement, cela découle de \cite[Exp. XXII, prop. 4.3.1]{sga3} puisque $\faisGL_n$ est réductif et déployé par les propositions \ref{GL1red_prop} et \ref{toreGLn_prop} et que $\faisPGL_n$ en est le quotient par son centre, par les propositions \ref{centreGLn_prop} et \ref{secPGLn_prop}.
\end{proof}

Comme dans le cas de $\faisGL_{n,R}$, on identifie les applications exponentielles par unicité. L'application $\exp_{\bar\alpha_{i,j}}$ envoie $\lambda\cdot \bar E_{i,j}$ vers l'image de la matrice $1+\lambda E_{i,j}$ dans $\faisGL_n(S)$ (donc vers l'automorphisme de $\faisM_n(S)$ de conjugaison par cette matrice). 

\begin{prop} \label{donneeradiciellePGLn_prop}
La donnée radicielle du $S$-schéma en groupes $\faisPGL_n$ relativement au tore maximal déployé $\faisPDiag_n$ est 
\begin{itemize}
\item Le $\ZZ$-module libre $\ker(\ZZ^n\to \ZZ)$ des caractères de $\faisDiag_n$ dont la somme des coordonnées est nulle (dont $t_1-t_2,\ldots,t_{n-1}-t_n$ forment une base); 
\item Le sous-ensemble fini de celui-ci formé des racines $\alpha_{i,j}=t_i-t_j$ pour $1\leq i,j\leq n$ et $i\neq j$;
\item Le $\ZZ$-module libre dual $\coker(\ZZ \to \ZZ^n)$, quotient des cocaractères de $\faisDiag_n$ par l'application diagonale (de base duale $\overline{t_1^\cdual},\overline{t_1^\dual+t_2^\dual},\ldots,\overline{t_1^\cdual+\cdots +t_{n-1}^\dual}$);
\item Le sous-ensemble fini de celui-ci formé des coracines $\alpha_{i,j}^\cdual=\overline{t_i^\dual-t_j^\dual}$ pour $1\leq i,j\leq n$ et $i\neq j$.
\end{itemize}
\end{prop}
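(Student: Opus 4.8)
The plan is to mirror the computation carried out for $\faisGL_n$ in Proposition~\ref{donneeradicielleGLn_prop}, using that $\faisPGL_n$ is the quotient of $\faisGL_n$ by its central $\faisGm$ (Propositions~\ref{centreGLn_prop} and~\ref{secPGLn_prop}) and that $\faisPDiag_n=\faisDiag_n/\faisGm$ by construction. First I would pin down the character and cocharacter lattices by Cartier duality (\cite[Exp. VIII]{sga3}). The surjection $\faisDiag_n\to\faisPDiag_n$ has kernel the diagonal $\faisGm$, so it dualizes to a short exact sequence of character groups $0\to M'\to\ZZ^n\to\ZZ\to 0$ whose second map sends each $t_i$ to $1$, i.e. is the sum of coordinates; this identifies the characters of $\faisPDiag_n$ with $M'=\ker(\ZZ^n\to\ZZ)$, for which $t_1-t_2,\ldots,t_{n-1}-t_n$ is a basis. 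Dually, the cocharacter group is the quotient $\coker(\ZZ\to\ZZ^n)$ of $M^\dual$ by the diagonal cocharacter $t_1^\dual+\cdots+t_n^\dual$, with the dual basis listed in the statement.

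Next I would read off the roots from the adjoint representation. By Proposition~\ref{algLiePGLn_prop} the Lie algebra is $\faisM_n/\faisO_S$ with the conjugation action descended to the quotient; restricting to $\faisPDiag_n$, the weight decomposition of $\faisM_n$ used in Proposition~\ref{donneeradicielleGLn_prop} descends, killing the scalar line $\faisO_S\cdot\mathrm{Id}$ and leaving the lines $\faisO_S\cdot\bar E_{i,j}$ ($i\neq j$), on which the torus acts through $\alpha_{i,j}=t_i-t_j$, together with a weight-zero part. Each $\alpha_{i,j}$ has vanishing coordinate sum, hence lies in $M'$, and stays nonzero on every geometric fibre, so these are precisely the roots.

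For the coroots I would apply the criterion of \cite[th. 2.4.1]{D}, using the exponential morphisms $\exp_{\bar\alpha_{i,j}}$ already identified just before the statement. The pairing between the opposite root spaces spanned by $\bar E_{i,j}$ and $\bar E_{j,i}$ in $\faisM_n/\faisO_S$ is induced from the one on $\faisM_n$, hence is again $\lambda\bar E_{i,j}\otimes\mu\bar E_{j,i}\mapsto-\lambda\mu$ (with the sign convention of Remark~\ref{signeAccouplement_rema}); and the cocharacter selected by the criterion is the image under $\faisDiag_n\to\faisPDiag_n$ of the $\faisGL_n$-coroot $t_i^\dual-t_j^\dual$, namely $\overline{t_i^\dual-t_j^\dual}$ in $\coker(\ZZ\to\ZZ^n)$. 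The verification of formula~(F) of \loccit\ then reduces to the one already performed for $\faisGL_n$ by transporting it along the quotient map.

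The step I expect to be the main obstacle is exactly this last descent along $\faisGL_n\to\faisPGL_n$: one must check that the cocharacter produced by the criterion for $\faisPGL_n$ is really the image of the $\faisGL_n$-coroot (rather than some other lift), and keep careful track of the sign in the pairing. By comparison, the lattice bookkeeping of the first two steps and the reading off of the roots are routine.
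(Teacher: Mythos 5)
Your proposal is correct and follows essentially the same route as the paper: read the character/cocharacter lattices off the quotient description $\faisPDiag_n=\faisDiag_n/\faisGm$, obtain the roots from the action on the lines $\faisO_S\cdot\bar E_{i,j}$ in $\faisM_n/\faisO_S$, and get the coroots from the uniqueness in \cite[th. 2.4.1]{D} by transporting the $\faisGL_n$ verification along the quotient map. The ``main obstacle'' you flag is precisely what the paper disposes of by uniqueness: since the images of the $\faisGL_n$ exponentials and pairing satisfy formula (F), the coroot selected by the criterion can only be $\overline{t_i^\dual-t_j^\dual}$.
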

\begin{proof}
Par la description du tore maximal $\faisPDiag_n$ comme quotient de $\faisDiag_n$, on a immédiatement qu'il agit sur le sous-espace $\faisO_S\cdot \bar E_{i,j}$ par $\bar\alpha_{i,j}$, ce qui donne les racines. Les coracines sont alors immédiatement obtenues par unicité en utilisant comme pour $\faisGL_n$ la formule (F) de \cite[th. 2.4.1]{D} (il n'y a rien à vérifier, cela a déjà été fait pour $\faisGL_n$, et on ne fait que regarder les images dans $\faisPGL_n$).
\end{proof}

\begin{theo}
Pour tout $n\geq 1$, le groupe (semi-simple) adjoint déployé de type $A_n$ sur une base $S$ est $\faisPGL_{n+1}$.
\end{theo}
\begin{proof}
Un tel groupe est unique à isomorphisme près.
Le groupe $\faisPGL_n$ étant réductif et déployé, l'énoncé ne dépend que de sa donnée radicielle (voir \cite[exp. XXIII, cor. 5.4]{sga3}), on est donc ramené à la situation sur un corps, supposée connue. On peut en tout cas facilement lire directement sur la donnée radicielle que le groupe est semi-simple adjoint: les racines engendrent les caractères du tore. Alternativement, c'est une conséquence de \cite[prop. 4.3.5]{sga3}.
\end{proof}

\begin{prop}
Le sous-ensemble de racines $\Delta=\{\alpha_{i,i+1},\ 1\leq i\leq n-1\}$ forme un système de racines simples, et assorti des éléments $u_{i,i+1}=\overline{1+E_{i,i+1}}=\bar{E_{i,i+1}} \in \faisM_n(S)/\faisO_S(S)$, ils munissent le groupe $\faisPGL_n$, le tore $\faisPDiag_n$ et son système de racines d'un épinglage. 
\end{prop}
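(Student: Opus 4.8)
Le plan est de d\'eduire l'\'enonc\'e des propositions \ref{donneeradiciellePGLn_prop} et \ref{algLiePGLn_prop} d\'ej\`a \'etablies, un \'epinglage au sens de \cite[Exp. XXIII]{sga3} consistant en la donn\'ee d'un tore maximal, d'un syst\`eme de racines simples, et pour chaque racine simple d'une base du sous-espace propre correspondant de l'alg\`ebre de Lie. Le tore maximal $\faisPDiag_n$ \'etant d\'ej\`a fourni par le corollaire \ref{PGLToreMax_coro}, il restera \`a \'etablir les deux autres points.

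Je commencerais par v\'erifier que $\Delta$ est un syst\`eme de racines simples. La donn\'ee radicielle ayant \'et\'e calcul\'ee en \ref{donneeradiciellePGLn_prop}, c'est une propri\'et\'e purement combinatoire du syst\`eme de racines de type $A_{n-1}$: toute racine $\alpha_{i,j}=t_i-t_j$ avec $i<j$ s'\'ecrit comme la somme t\'elescopique $\alpha_{i,i+1}+\alpha_{i+1,i+2}+\cdots+\alpha_{j-1,j}$ \`a coefficients entiers positifs, et toute racine avec $i>j$ s'obtient de m\^eme \`a coefficients n\'egatifs; comme les $\alpha_{i,i+1}$ pour $1\leq i\leq n-1$ forment de plus une base du r\'eseau des caract\`eres (voir \ref{donneeradiciellePGLn_prop}), ils constituent bien une base du syst\`eme de racines.

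Je v\'erifierais ensuite que les $u_{i,i+1}$ trivialisent les sous-espaces radiciels simples. Par la proposition \ref{algLiePGLn_prop}, l'alg\`ebre de Lie de $\faisPGL_n$ est $\faisM_n/\faisO_S$, et comme not\'e dans la preuve de \ref{donneeradiciellePGLn_prop}, le tore $\faisPDiag_n$ agit sur le sous-module $\faisO_S\cdot\bar E_{i,j}$ par la racine $\alpha_{i,j}$. Le sous-espace propre associ\'e \`a la racine simple $\alpha_{i,i+1}$ est donc le $\faisO_S$-module libre de rang $1$ engendr\'e par $\bar E_{i,i+1}$; comme $\bar 1=0$ dans $\faisM_n(S)/\faisO_S(S)$, on a bien $u_{i,i+1}=\overline{1+E_{i,i+1}}=\bar E_{i,i+1}$, qui en fournit donc une base globale, compatible avec le morphisme exponentiel $\exp_{\bar\alpha_{i,i+1}}$ d\'ecrit avant l'\'enonc\'e.

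Ces trois points r\'eunis donnent exactement un \'epinglage de $\faisPGL_n$ relatif \`a $\faisPDiag_n$ et \`a son syst\`eme de racines. Aucune de ces v\'erifications n'est r\'eellement difficile, l'essentiel ayant \'et\'e accompli dans les propositions pr\'ec\'edentes; le seul argument \`a fournir est la combinatoire de type $A$ du deuxi\`eme paragraphe, qui est standard. La principale pr\'ecaution sera de s'assurer que les sous-espaces radiciels sont \emph{globalement} libres de base $\bar E_{i,i+1}$, et non seulement localement, ce qui r\'esulte imm\'ediatement de la description explicite $\faisM_n/\faisO_S$ de l'alg\`ebre de Lie.
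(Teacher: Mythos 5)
Votre démonstration est correcte et suit essentiellement la même démarche que celle du texte, qui se contente de dire que l'ensemble de racines simples est classique et que la condition d'épinglage de \cite[Exp. XXIII, déf. 1.1]{sga3} se vérifie immédiatement ; vous explicitez simplement ces deux vérifications (la somme télescopique pour le type $A_{n-1}$ et le fait que $\bar E_{i,i+1}$ engendre globalement le sous-espace radiciel dans $\faisM_n/\faisO_S$). Rien à redire.
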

\begin{proof}
L'ensemble de racines simples est classique. Il est immédiat de vérifier que les $u_{i,i+1}$ satisfont à la condition de \cite[exp. XXIII, déf. 1.1]{sga3}.
\end{proof}

Profitons-en pour signaler que le morphisme $\faisGL_n \to \faisPGL_n$ induit un morphisme de données radicielles (voir \cite[exp. XXI, déf. 6.1]{sga3}). Il est donné par la projection de $\faisDiag_n$ vers $\faisPDiag_n$ et des morphismes associés respectivement entre caractères et cocaractères. Ces morphismes induisent bien des bijections entre ensemble de racines, ainsi qu'entre ensembles de coracines. Il s'agit du morphisme de $\faisGL_n$ vers son groupe adjoint associé, qui correspond au morphisme de la donnée radicielle adjointe vers la donné radicielle de $\faisGL_n$ (voir \cite[prop. 6.5.5]{sga3}).

\subsection{Groupe déployé simplement connexe}

\begin{defi} \label{SLn_defi}
Soit $n\geq 1$. Soit $\faisSL_n$ le schéma en groupe affine de type fini sur $S$ noyau de l'application déterminant $\faisGL_n \to \faisGm$. 
\end{defi}
\begin{proof}
Ce noyau est clairement représentable par un schéma affine de type fini sur $S$, car localement sur un anneau $R$, il est le sous-schéma fermé de $\faisGL_n$ donné par l'équation du déterminant.
\end{proof}

Pour tout $S$-schéma $T$, le groupe $\faisSL_{n,S}(T)$ est l'ensemble des matrices de $\faisM_n(T)=\setM_n(\faisO_S(T))$ de déterminant $1$.
En particulier, on a
$$\faisSL_{n,T}=(\faisSL_{n,S})_{T} =(\faisSL_{n,\ZZ})_{T}.$$ 

\begin{prop}
L'algèbre de Lie de $\faisSL_n$ est isomorphe au sous $\faisO_S$-module libre de $\faisM_n$ constitué des matrices de trace nulle. L'action adjointe de $\faisSL_n$ dessus est donnée par conjugaison.
\end{prop}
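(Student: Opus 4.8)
The plan is to exploit the definition of $\faisSL_n$ as the kernel of the determinant $\faisGL_n \to \faisGm$ (Definition \ref{SLn_defi}), combined with the computation of $\faisLie_{\faisGL_n}$ carried out in Proposition \ref{LieGLn_prop}. Recall from the latter that, through the dual numbers, $\faisLie_{\faisGL_n}(T)$ is identified with $\faisM_n(T)$ sitting inside $\faisGL_n(\faisDual_T)=\faisGL_n(T)^\times\times\faisM_n(T)$ as the matrices $I+tX$ (with $t^2=0$), the bracket being $[a,b]=ab-ba$, and that the adjoint action of $\faisGL_n$ is conjugation.

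First I would determine the map on Lie algebras induced by the determinant. Since $\faisLie$ is functorial, $\det$ induces $\faisLie_{\faisGL_n}\to\faisLie_{\faisGm}$, whose target is $\faisO_S$ by Proposition \ref{LieOS_prop}. On $T$-points this map sends $I+tX$ to $\det(I+tX)\in\faisGm(\faisDual_T)$, and the only point-level computation needed is $\det(I+tX)=1+t\,\trace(X)$: in the Leibniz expansion the identity permutation contributes $\prod_i(1+tX_{ii})=1+t\,\trace(X)$ modulo $t^2$, while every other permutation supplies at least two factors divisible by $t$, hence a multiple of $t^2=0$. Therefore $\faisLie_{\det}$ is exactly the trace $\faisM_n\to\faisO_S$.

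Next, because $\faisLie_{\faisGL_n}(T)$ is cut out inside $\faisGL_n(\faisDual_T)$ as the kernel of the projection $p$ in the split exact sequence \eqref{seLie_eq}, and because $\faisSL_n(\faisDual_T)$ is by definition the kernel of $\det$ on $\faisDual_T$-points, the group $\faisLie_{\faisSL_n}(T)$ is the intersection $\faisLie_{\faisGL_n}(T)\cap\faisSL_n(\faisDual_T)$, that is, precisely the $X\in\faisM_n(T)$ with $\trace(X)=0$. This submodule is a free direct factor of $\faisM_n$ of rank $n^2-1$ (an explicit basis is furnished by the $E_{i,j}$ with $i\neq j$ together with the $E_{i,i}-E_{n,n}$ for $i<n$), as claimed. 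For the bracket and the adjoint action I would invoke Lemma \ref{sous_algebre_lemm}: the inclusion $\faisSL_n\hookrightarrow\faisGL_n$ identifies $\faisLie_{\faisSL_n}$ with a sub-$\faisO_S$-Lie algebra of $\faisLie_{\faisGL_n}$, so the bracket is the restriction of $[a,b]=ab-ba$ (which indeed preserves trace-zero matrices), and the adjoint representation of $\faisSL_n$ is the restriction of that of $\faisGL_n$, namely conjugation.

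The only genuinely delicate point is the compatibility of $\faisLie$ with the formation of the kernel defining $\faisSL_n$: the assertion that $\faisLie_{\faisSL_n}(T)$ is exactly the kernel of the trace rather than something a priori larger. This rests entirely on the dual-number identity $\det(I+tX)=1+t\,\trace(X)$ and on the left-exactness built into the sequence \eqref{seLie_eq}; everything else is a formal consequence of the functoriality of $\faisLie$ and of the facts already established for $\faisGL_n$.
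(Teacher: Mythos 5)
Votre preuve est correcte et suit essentiellement la m�me voie que le texte, dont la d�monstration se borne � invoquer l'alg�bre de Lie de $\faisGL_n$ (proposition \ref{LieGLn_prop}) et le lemme \ref{sous_algebre_lemm} en qualifiant le reste d'imm�diat. Vous ne faites qu'expliciter les d�tails laiss�s au lecteur, notamment l'identit� $\det(I+tX)=1+t\,\mathrm{tr}(X)$ sur les nombres duaux, qui est exactement le calcul sous-entendu par le texte.
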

\begin{proof}
C'est immédiat à partir de l'algèbre de Lie de $\faisGL_n$ en utilisant le lemme \ref{sous_algebre_lemm}.
\end{proof}

\begin{prop}
Le groupe $\faisSL_n$ est réductif. 
\end{prop}
\begin{proof}
Il ne manque que la lissité, qui se déduit sur $\ZZ$ de la situation connue sur les corps par la proposition \ref{lissite_prop}, puis par changement de base pour $\faisSL_{n,S}$.
\end{proof}

\begin{prop}
L'intersection du tore maximal $\faisDiag_n$ de $\faisGL_n$ avec $\faisSL_n$ est un tore maximal (déployé) de $\faisSL_n$. Notons-le $\faisSDiag_n$. Il est de rang $n-1$.
\end{prop}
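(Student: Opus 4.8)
The plan is to follow exactly the pattern already used for the maximal tori $\faisDiag_n$ of $\faisGL_n$ (proposition \ref{toreGLn_prop}) and $\faisPDiag_n$ of $\faisPGL_n$ (corollaire \ref{PGLToreMax_coro}): first exhibit $\faisSDiag_n$ as a split torus by a Cartier-duality computation, and then check maximality geometric fibre by geometric fibre, where the classical theory over fields is assumed known. First I would record that $\faisSDiag_n = \faisDiag_n \cap \faisSL_n$ is representable as a closed subgroup: it is the kernel of the restriction to $\faisDiag_n$ of the determinant morphism $\faisGL_n \to \faisGm$, hence representable by la remarque \ref{imageinvnoyau_rema} (the kernel case of le lemme \ref{produit_fibre_lemm}). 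On points it is simply the group of diagonal matrices whose product of diagonal entries equals $1$, and as a closed subtorus it is contained in $\faisSL_n$.

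Next I would pin down the torus structure. Under the isomorphism $\faisDiag_n \isoto \faisGm^n$ the restriction of the determinant is the product morphism $\faisGm^n \to \faisGm$, which on character groups is the morphism $\ZZ \to \ZZ^n$ sending $1$ to $(1,\dots,1)$; this is exactly the character $t_1 + \dots + t_n$ read off from proposition \ref{donneeradicielleGLn_prop}. Since $\faisDiag_n = D(\ZZ^n)$ is diagonalizable, the kernel of a morphism of diagonalizable groups is the Cartier dual of the cokernel of the corresponding morphism of character groups, so $\faisSDiag_n \simeq D\big(\ZZ^n / \ZZ\cdot(1,\dots,1)\big)$. As $(1,\dots,1)$ is a primitive vector, this cokernel is free of rank $n-1$, whence $\faisSDiag_n \simeq D(\ZZ^{n-1}) \simeq \faisGm^{n-1}$ is a split torus of rank $n-1$. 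This already gives the last assertion of the statement.

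It then remains to prove maximality. Since $\faisSL_n$ is reductive and $\faisSDiag_n$ is a subtorus, being a maximal torus is, by definition, a condition to be verified on the geometric fibres. I would therefore base-change to a geometric point $\bar s$ of $S$ and invoke the classical theory over an algebraically closed field, where the diagonal matrices of determinant $1$ are well known to form a maximal torus of $\faisSL_{n,\bar s}$. This is the same fibrewise reduction already used in the proofs of \ref{toreGLn_prop} and \ref{PGLToreMax_coro}.

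The hard part, such as it is, is not conceptual: the proposition is essentially formal once one has the reductivity of $\faisSL_n$ and the known picture over fields. The only step requiring genuine care is the Cartier-duality bookkeeping, namely verifying that the relevant cokernel is \emph{torsion-free} of rank precisely $n-1$, so that $\faisSDiag_n$ is a bona fide split torus rather than a diagonalizable group carrying a finite part; this is guaranteed by the primitivity of $(1,\dots,1)$. With that point settled, fibrewise maximality is immediate from the field case.
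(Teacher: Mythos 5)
Your proposal is correct and follows essentially the same route as the paper: the paper also identifies $\faisSDiag_n$ as the Cartier dual of the cokernel of the diagonal map $\ZZ\to\ZZ^n$ (dual to the determinant on $\faisDiag_n$), hence a split torus of rank $n-1$, and then deduces maximality on each geometric fibre from the known theory over fields. Your only addition is to make explicit the primitivity of $(1,\dots,1)$ guaranteeing the cokernel is free, a point the paper dismisses with ``il est clair''.
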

\begin{proof}
Il est clair que c'est un tore déployé de rang $n-1$, dual de Cartier du conoyau de l'application diagonale $\ZZ \to \ZZ^n$ duale du déterminant sur $\faisDiag_n$. La théorie sur les corps étant connue, il est maximal dans chaque fibre géométrique. 
\end{proof}

\begin{prop}
L'intersection de $\faisTriSup_n$ avec $\faisSL_n$ est un sous-groupe de Borel de $\faisSL_n$ contenant le tore précédent.
\end{prop}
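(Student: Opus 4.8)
Le plan est de reproduire à l'identique la démarche de la proposition \ref{borelGLn_prop} relative à $\faisTriSup_n$. Posons $B=\faisTriSup_n \cap \faisSL_n$, réalisé comme le produit fibré $\faisTriSup_n \times_{\faisGL_n} \faisSL_n$. Par le lemme \ref{produit_fibre_lemm} (les deux flèches vers $\faisGL_n$ étant des immersions fermées), ce foncteur en groupes est représentable par un sous-schéma fermé de $\faisGL_n$, affine et de type fini sur $S$, et naturellement muni d'une structure de schéma en groupes.

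D'abord, j'établirais la lissité par réduction aux fibres, comme pour tous les groupes précédents. Sur un corps $k$, la fibre $B_k$ est le sous-groupe de Borel standard de $\faisSL_{n,k}$, à savoir les matrices triangulaires supérieures de déterminant $1$; la théorie sur les corps étant supposée connue, ce groupe est lisse, connexe, et de dimension $\tfrac{n(n+1)}{2}-1$, indépendante de $k$ et donc localement constante sur $S$. La proposition \ref{lissite_prop} fournit alors la lissité de $B_\ZZ$ sur $\ZZ$, puis celle de $B_S$ s'en déduit par changement de base.

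Ensuite, une fois la lissité acquise, le fait d'être un sous-groupe de Borel au sens de \SGAtrois\ est une condition portant sur les fibres géométriques, que l'on vérifie donc fibre par fibre; or chaque fibre géométrique est précisément le Borel connu de $\faisSL_n$. Enfin, l'inclusion du tore précédent est immédiate: comme $\faisDiag_n \subseteq \faisTriSup_n$, on a $\faisSDiag_n=\faisDiag_n \cap \faisSL_n \subseteq \faisTriSup_n \cap \faisSL_n=B$.

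Il n'y a pas ici de véritable obstacle: tout repose sur le critère de lissité \ref{lissite_prop} et sur la connaissance du cas des corps, exactement comme dans la preuve de \ref{borelGLn_prop}. Le seul point demandant un peu de soin est de s'assurer que la définition d'un sous-groupe de Borel se ramène bien, la lissité étant établie, à une condition sur les seules fibres géométriques, et d'avoir correctement constaté la constance de la dimension des fibres afin de pouvoir invoquer \ref{lissite_prop}.
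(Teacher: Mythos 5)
Votre démonstration est correcte et suit exactement la même démarche que celle du texte, qui se contente d'écrire \emph{Immédiat par fibres}: réduction aux fibres géométriques, connaissance du cas des corps, et lissité via la proposition \ref{lissite_prop} comme pour \ref{borelGLn_prop}. Vous avez simplement explicité les détails que le texte laisse au lecteur.
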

\begin{proof}
Immédiat par fibres.
\end{proof}

\begin{prop} \label{centreSL_prop}
Le centre de $\faisSL_n$ est $\faismu_n$ inclus diagonalement.
\end{prop}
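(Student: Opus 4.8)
Le plan est de calquer la d\'emonstration sur celle de la proposition \ref{centreGLn_prop} relative \`a $\faisGL_n$, en insistant sur le point particulier \`a $\faisSL_n$ : les matrices \'el\'ementaires, de d\'eterminant $1$, sont d\'ej\`a des points de $\faisSL_n$. Je commencerais par l'inclusion facile. Le morphisme diagonal $\faismu_n \to \faisSL_n$ envoyant $\lambda$ (tel que $\lambda^n=1$, voir \ref{mun_defi}) sur la matrice scalaire $\lambda\cdot\mathrm{Id}$ est bien d\'efini, puisque $\det(\lambda\cdot\mathrm{Id})=\lambda^n=1$. Comme une matrice scalaire commute \`a toute matrice sur tout anneau, ce morphisme se factorise par le centre, d'o\`u $\faismu_n \subseteq \faisCentre_{\faisSL_n}$.

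Pour l'inclusion r\'eciproque, je partirais d'un $T$-point central $g\in\faisCentre_{\faisSL_n}(T)$ et j'utiliserais qu'il commute en particulier aux matrices \'el\'ementaires $1+\lambda E_{i,j}$ pour $i\neq j$ et $\lambda\in\faisO_S(T')$, pour tout $T'\to T$. Le point crucial \`a souligner est que ces matrices appartiennent bien \`a $\faisSL_n$, et pas seulement \`a $\faisGL_n$ : la centralit\'e dans le groupe \emph{plus petit} $\faisSL_n$ suffit donc \`a les faire intervenir. La relation $g(1+\lambda E_{i,j})=(1+\lambda E_{i,j})g$ se ram\`ene \`a $gE_{i,j}=E_{i,j}g$, et le calcul \'el\'ementaire classique (exactement celui invoqu\'e dans la preuve de \ref{centreGLn_prop}) force alors, en faisant varier $i\neq j$, les coefficients hors diagonale de $g$ \`a \^etre nuls et les coefficients diagonaux \`a \^etre tous \'egaux. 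Ainsi $g=\lambda\cdot\mathrm{Id}$ est scalaire, et la contrainte $\det g=1$ donne enfin $\lambda^n=1$, c'est-\`a-dire $g\in\faismu_n(T)$.

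La seule subtilit\'e r\'eelle, qui est donc le point \`a ne pas rater plut\^ot qu'un v\'eritable obstacle, est celle mentionn\'ee ci-dessus : le centre \'etant d\'efini par commutation avec $\faisSL_n$ et non avec $\faisGL_n$, il faut observer que la famille des matrices \'el\'ementaires, toutes de d\'eterminant $1$, engendre d\'ej\`a assez de contraintes pour conclure au caract\`ere scalaire de $g$. Pour le reste, tout se teste sur les points \`a valeurs dans un anneau quelconque $\Gamma(T')$ (le faisceau $\faismu_n$ et le faisceau centre \'etant d\'efinis par de telles conditions ponctuelles, stables par changement de base), de sorte qu'aucun argument local suppl\'ementaire n'est n\'ecessaire : l'\'egalit\'e $\faisCentre_{\faisSL_n}=\faismu_n$ s'obtient directement sur chaque $\setM_n(\Gamma(T'))$.
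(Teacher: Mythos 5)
Votre preuve est correcte et suit exactement la m�thode du texte, dont la d�monstration se r�duit � la phrase ��C'est clair sur tous les $T$-points en utilisant les matrices �l�mentaires��: vous explicitez pr�cis�ment ce calcul, en relevant � juste titre le seul point � ne pas rater, � savoir que les matrices $1+\lambda E_{i,j}$ sont de d�terminant $1$ et vivent donc d�j� dans $\faisSL_n$. Rien � redire.
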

\begin{proof}
C'est clair sur tous les $T$-points en utilisant les matrices élémentaires.
\end{proof}

La décomposition de l'algèbre de Lie de $\faisSL_n$ en caractères se lit immédiatement, (comme celle de $\faisGL_n$ qu'on a déjà traitée). On obtient que les racines sont les caractères $\alpha_{i,j}=\overline{t_i-t_j}$ avec $i\neq j$, d'espace propre $\faisO_S\cdot E_{i,j}$. Les morphismes exponentiels sont les mêmes que pour $\faisGL_n$ (ils se factorisent par $\faisSL_n$).

\begin{prop} \label{donneeradicielleSLn_prop} 
La donnée radicielle de $\faisSL_n$ relativement au tore maximal déployé $\faisSDiag_n$ est 
\begin{itemize}
\item Le $\ZZ$-module libre $\coker(\ZZ \to \ZZ^n)$ (diagonale), quotient des caractères de $\faisDiag_n$ (dont $\overline{t_1},\overline{t_1+t_2},\ldots,\overline{t_1+\cdots +t_{n-1}}$ est une base). 
\item Le sous-ensemble fini de celui-ci formé des racines $\alpha_{i,j}=\bar t_i-\bar t_j$ pour $1\leq i,j\leq n$ et $i\neq j$;
\item Le $\ZZ$-module libre dual $\ker(\ZZ^n \to \ZZ)$ (somme des coordonnées), sous groupe des cocaractères de $\faisDiag_n$ (de base duale $t_1^\dual-t_n^\dual,\cdots,t_{n-1}^\dual-t_n^\dual$);
\item Le sous-ensemble fini de celui-ci formé des coracines $\alpha_{i,j}^\cdual=t_i^\dual-t_j^\dual$ pour $1\leq i,j\leq n$ et $i\neq j$.
\end{itemize}
\end{prop}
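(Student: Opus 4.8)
The plan is to follow the same route as for $\faisGL_n$ (proposition~\ref{donneeradicielleGLn_prop}) and $\faisPGL_n$ (proposition~\ref{donneeradiciellePGLn_prop}), but transporting everything along the inclusion $\faisSL_n \hookrightarrow \faisGL_n$ instead of along a quotient. First I would record the character and cocharacter groups of the torus. Since $\faisSDiag_n$ is the Cartier dual of $\coker(\ZZ \to \ZZ^n)$, its group of characters is exactly this cokernel and its group of cocharacters is the dual lattice $\ker(\ZZ^n \to \ZZ)$, with the bases given in the statement. The closed immersion $\faisSDiag_n \hookrightarrow \faisDiag_n$ induces on characters the quotient projection $\ZZ^n \to \coker(\ZZ \to \ZZ^n)$ and on cocharacters the inclusion $\ker(\ZZ^n \to \ZZ) \hookrightarrow \ZZ^n$; these are the maps through which the roots and coroots of $\faisGL_n$ will be carried over.

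Next I would read off the roots from the weight decomposition of $\faisLie_{\faisSL_n}$, the traceless matrices, under the adjoint (conjugation) action of $\faisSDiag_n$. As noted just before the statement, $\faisSDiag_n$ acts on the line $\faisO_S \cdot E_{i,j}$ (for $i \neq j$) through the restriction $\bar t_i - \bar t_j$ of the character $t_i - t_j$ of $\faisDiag_n$, which is precisely the image of $\alpha_{i,j}$ under the projection above; this yields the claimed set of roots.

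Finally I would identify the coroots by means of the criterion \cite[th. 2.4.1]{D}. The exponential morphism $\exp_{\alpha_{i,j}}$ sends $\lambda\cdot E_{i,j}$ to $1 + \lambda\cdot E_{i,j}$, a matrix of determinant $1$ for $i \neq j$, so it factors through $\faisSL_n$ and coincides with the one already computed for $\faisGL_n$. The pairing between opposite root spaces and the verification of formula~(F), with the sign convention of remark~\ref{signeAccouplement_rema}, are therefore inherited verbatim from the $\faisGL_n$ computation, and the coroot $\alpha_{i,j}^\cdual$ is again $t_i^\dual - t_j^\dual$. The one thing to check is that this cocharacter really belongs to the cocharacter lattice $\ker(\ZZ^n \to \ZZ)$ of $\faisSDiag_n$: this is immediate, since the coroot morphism $u \mapsto \mathrm{diag}(1,\ldots,u,\ldots,u^{-1},\ldots,1)$ already takes values in $\faisSL_n$, equivalently the sum of the coordinates of $t_i^\dual - t_j^\dual$ is $1 - 1 = 0$.

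The only real difficulty is bookkeeping rather than mathematics: one must keep in mind that passing from $\faisGL_n$ to $\faisSL_n$ dualizes the picture relative to $\faisPGL_n$, so that here the character lattice becomes a \emph{quotient} of $\ZZ^n$ while the cocharacter lattice becomes a \emph{sublattice}, and one must check that the roots and coroots are compatible with the corresponding projection and inclusion. Once this is set up correctly, the entire root datum is inherited from the $\faisGL_n$ case and nothing substantial remains to be verified.
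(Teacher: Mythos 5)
Votre démonstration est correcte et suit exactement la même voie que le texte, qui se contente de renvoyer à la preuve faite pour $\faisGL_n$ : lecture des racines sur la décomposition en poids de l'algèbre de Lie, réutilisation des morphismes exponentiels (qui se factorisent par $\faisSL_n$) et du critère de \cite[th. 2.4.1]{D}. Les vérifications supplémentaires que vous explicitez (identification des réseaux de caractères et de cocaractères de $\faisSDiag_n$, appartenance de $t_i^\dual-t_j^\dual$ à $\ker(\ZZ^n\to\ZZ)$) sont exactes et ne font que détailler ce que le texte laisse implicite.
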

\begin{proof}
La preuve est la même que pour $\faisGL_n$.
\end{proof}

\begin{theo}
Pour $n\geq 1$, le $S$-schéma en groupes (semi-simple) simplement connexe déployé de type $A_n$ est $\faisSL_{n+1}$.
\end{theo}
\begin{proof}
Un tel groupe est unique à isomorphisme près. Le groupe $\faisSL_{n+1}$ est semi-simple et simplement connexe de manière évidente puisque les coracines engendrent les cocaractères. Son type de Dynkin se lit sur une fibre géométrique où la théorie est connue. Alternativement, il se voit sur la donnée radicielle.
\end{proof}

On a donc le diagramme de morphismes de groupes algébriques correspondant au diagramme de données radicielles de \cite[Exp. XXI, prop. 6.5.5]{sga3}:

$$\xymatrix{
\faisSL_n =(\faisGL_n)_\simco= (\faisGL_n)_\derive \ar[rr] \ar[dr] & & (\faisGL_n)_\semisim = (\faisGL_n)_\adjoint = \faisPGL_n  \\
& \faisGL_n \ar[ur] &  \\
}$$

Décrivons maintenant les $\faisSL_n$-torseurs.
Considérons les foncteurs fibrés $\det: (\Vecn{n})_{\grpd} \to \Vecn{1}$, défini en \ref{Detchamps_defi}, et $\Final \to \Vecn{1}$ qui envoie le seul objet de $\Final_T$ sur $\faisO_T$. Définissons alors le produit fibré $\Vecn{n} \times_{\Vecn{1}} \Final$, qui est un champ par l'exercice \ref{prodfibchamps_exo}.
\begin{defi}
On appelle $\Vectrivdetn{n}$ le produit fibré précédent, et on le nomme \emph{champ des $\faisO$-modules de déterminant trivialisé}.
\end{defi}
Explicitement, un objet de $(\Vectrivdetn{n})_T$ est donc une paire $(M,\phi)$ où $M$ est un $\faisO_T$-module localement libre de rang constant $n$, et $\phi: \Det{M} \isoto \faisO_S$ est un isomorphisme. Un morphisme $(M_1,\phi_1)\to (M_2,\phi_2$ est un isomorphisme de $\faisO_S$-modules $f:M_1 \to M_2$ tel que $\det{f}:\Det{M_1} \isoto \Det{M_2}$ satisfait à $\phi_2 \circ \det{f}=\phi_1$. 

Considérons alors dans $(\Vectrivdetn{n})_S$ l'objet $(\faisO_S^n,d)$ où $d$ est l'isomorphisme ``déterminant'' qui envoie un produit extérieur $v_1\wedge\ldots\wedge v_n$ sur le déterminant des vecteurs $v_1,\ldots,v_n$. On a alors l'égalité de faisceaux $\faisAut_{(\faisO_S^n,d)} = \faisSL_n$, puisque par la description concrète des morphismes ci-dessus, un isomorphisme $f$ doit vérifier $d \circ \det{f}=d$, donc $\det{f}=1$.

Localement, pour la topologie Zariski (donc également étale, etc.), tout objet $(M,\phi)$ de $\Vectrivdetn{n}$ est isomorphe à $(\faisO_S^n,d)$. En effet, localement $M \simeq \faisO_S^n$ par la définition \ref{loclibre_defi}, puis $(\faisO_S^n,\phi)\isoto(\faisO_S^n,d)$ localement dans le cas affine, car il est facile de construire dans le cas affine un isomorphisme $\faisO_n \isoto \faisO_n$ de déterminant donné, avec une matrice diagonale. 

La proposition \ref{tordusformes_prop} fournit alors immédiatement:
\begin{prop}
Le foncteur fibré qui envoie un objet $(M,\phi)$ de $\Vectrivdetn{n}$ vers $\faisIso_{(O_S^n,d),(M,\phi)}$ définit une équivalence de catégories fibrées du champ en groupoïdes $\Vectrivdetn{n}$ vers le champ $\Tors{\faisSL_n}$ (la topologie sous-jacente étant toujours la topologie étale, mais c'est vrai pour une topologie plus fine que Zariski).
\end{prop}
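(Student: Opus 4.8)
The plan is to recognize the functor in the statement as the instance of Proposition \ref{tordusformes_prop} attached to the base object $X_0 = (\faisO_S^n, d)$ of the groupoid stack $\Vectrivdetn{n}$. That proposition asserts, for any object $X_0$ of a stack, that $X \mapsto \faisIso_{X_0, X}$ is an equivalence between $\Formes{X_0}$ and $\Tors{\faisAut_{X_0}}$; so it suffices to identify the two endpoints with the stacks named in the statement. Concretely, I need two identifications: first that $\faisAut_{(\faisO_S^n, d)} = \faisSL_n$, and second that $\Formes{(\faisO_S^n, d)} = \Vectrivdetn{n}$.

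The first identification is immediate from the explicit description of morphisms in $\Vectrivdetn{n}$ recalled just above the statement: a morphism $(M_1, \phi_1) \to (M_2, \phi_2)$ is an isomorphism $f$ of $\faisO_S$-modules satisfying $\phi_2 \circ \det(f) = \phi_1$. Specializing both sides to $X_0 = (\faisO_S^n, d)$ forces $d \circ \det(f) = d$, hence $\det(f) = 1$, so that $\faisAut_{(\faisO_S^n, d)}$ is exactly the kernel of the determinant morphism $\faisGL_n \to \faisGm$, i.e. $\faisSL_n$ (Definition \ref{SLn_defi}).

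The second identification carries the only genuine content, and it amounts to checking that every object $(M, \phi)$ of $\Vectrivdetn{n}$ is a form of $(\faisO_S^n, d)$, that is, is locally isomorphic to it. First I would reduce to an affine base and trivialize $M \simeq \faisO_S^n$ Zariski-locally using Definition \ref{loclibre_defi}; after this $\phi$ becomes an arbitrary isomorphism $\Det{\faisO_S^n} \isoto \faisO_S$, and I would match it to $d$ by a diagonal automorphism of $\faisO_S^n$ whose determinant equals the unit comparing $\phi$ with $d$ — available over an affine base since every unit is the determinant of a suitable diagonal matrix. Since $\Vectrivdetn{n}$ is already a groupoid stack (its morphisms being isomorphisms by construction), this gives $\Formes{(\faisO_S^n, d)} = \Vectrivdetn{n}$ on the nose; note the trivialization is obtained Zariski-locally, so the conclusion holds for any topology between Zariski and the canonical one, in particular the \'etale one.

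With both endpoints identified, Proposition \ref{tordusformes_prop} yields the equivalence of groupoid stacks $\Vectrivdetn{n} \isoto \Tors{\faisSL_n}$ carried by $(M, \phi) \mapsto \faisIso_{(\faisO_S^n, d), (M, \phi)}$, which is precisely the claim. The main obstacle is the local triviality of the preceding paragraph; everything else is a direct transcription of the general torsor/form correspondence, which is why the result follows \emph{immediately} from Proposition \ref{tordusformes_prop}.
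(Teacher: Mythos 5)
Votre démonstration est correcte et suit exactement la même démarche que le texte : identification de $\faisAut_{(\faisO_S^n,d)}$ avec $\faisSL_n$ via la description concrète des morphismes, trivialité Zariski-locale de tout objet $(M,\phi)$ obtenue en trivialisant $M$ puis en corrigeant $\phi$ par une matrice diagonale de déterminant prescrit, et enfin application de la proposition \ref{tordusformes_prop}. Rien à redire.
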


\subsection{Automorphismes} \label{automorphismes_sec}

Intéressons-nous maintenant aux automorphismes de $\faisPGL_n$, et donnons-en une description en termes de groupes classiques.

\'Etant donnée un faisceau en algèbres $A$ sur $S$ localement libre de type fini comme $\faisO_S$-module, munie d'une involution $\faisO_S$-linéaire $\sigma$, on considère $\faisAut_{A,\sigma}$ son faisceau des automorphismes. 
\begin{prop}\label{Autalginv_prop}
Pour tout schéma $T$ au-dessus de $S$, on a
$$(\faisAut^\alginv_{A,\sigma})_{T}=\faisAut^\alginv_{A_{T},\sigma_{T}}.$$
Lorsque $T$ est un schéma affine, alors $\faisAut^\alginv_{A,\sigma}(T)$ est l'ensemble des automorphismes d'algèbre de $A(T)$ commutant à l'involution.
De plus, ce faisceau est représentable par un schéma affine sur $S$.
\end{prop}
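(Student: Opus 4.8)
First I would establish the base-change compatibility $(\faisAut^\alginv_{A,\sigma})_T = \faisAut^\alginv_{A_T,\sigma_T}$. This is the analogue of what was already done for $\faisPGL_A$ just after Definition~\ref{PGLA_defi}, and it follows directly from Remark~\ref{Homchgmt_rema}: the faisceau $\faisAut^\alginv_{A,\sigma}$ is by construction (Notation~\ref{isoaut_nota}, applied to the structure $\alginv$) of the form $\faisAut^{\cC^\struc}_{(A,\sigma)}$ where $\struc = \alginv$, and such automorphism faisceaux commute with restriction to $T$-schemes by the general remark. Since the structure $\alginv$ is the structure of $\faisO_S$-alg\`ebre with an involution (example \ref{structures_exem}, item \alginv), an automorphism in $\cC^\alginv$ is precisely an algebra automorphism commuting with $\sigma$, so this is immediate.

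Second, for the description on affine schemes, I would invoke the now-familiar dictionary. When $T = \Spec(R')$ is affine, $A_T$ satisfies condition \ref{Wzarlib_item} of Proposition~\ref{loclibre_prop}, so $A_T = \wW(\tilde{A(T)})$; then by full faithfulness of $\wW$ and of $\tilde{(-)}$ (Proposition~\ref{Wprop_prop} and Remark~\ref{Walg_rema}), an automorphism of $A_T$ in the fibered category corresponds to an $R'$-algebra automorphism of $A(T)$, and the compatibility with $\sigma$ translates verbatim. This is exactly the argument used for Proposition~\ref{PGLApoints_prop}, adapted to carry the involution along.

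Third, and this is where the real content lies, I would prove representability by an affine scheme over $S$. The strategy mirrors Proposition~\ref{autrepr_prop}: first observe that $\faisAut^\alginv_{A,\sigma}$ is a Zariski faisceau (since $A$ is), so by Proposition~\ref{reprLocal_prop} one reduces to $S = \Spec(R)$ affine with $A = \wW(\tilde B)$ free as an $\faisO_S$-module. Now $\faisAut^\alginv_{A,\sigma}$ sits inside $\faisAut^\alg_A = \faisPGL_A$, which is already known to be representable by an affine closed subgroup of $\faisGL_A$ by Proposition~\ref{autrepr_prop}. It therefore suffices to cut out, inside $\faisPGL_A$, the closed condition that an algebra automorphism commute with $\sigma$. \textbf{The main obstacle} is verifying that this commutation condition $\alpha\circ\sigma = \sigma\circ\alpha$ is representable by a closed subscheme. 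I would handle it as a stabilizer/equalizer condition: the involution $\sigma$ is a point of the representable faisceau $\faisHom^{\faisO_S\text{-mod}}_{A,A}$, and the two maps $\alpha \mapsto \alpha\circ\sigma$ and $\alpha \mapsto \sigma\circ\alpha$ from $\faisPGL_A$ to this $\faisHom$-faisceau are morphisms of representable faisceaux; their equalizer is closed because the target is separated (being affine over $S$). Concretely, in the affine case one writes everything in a basis of $B$ and the condition becomes the vanishing of finitely many polynomial entries, giving a closed ideal. Combining the closed immersion into $\faisPGL_A$ with the fact that $\faisPGL_A$ is affine over $S$ yields that $\faisAut^\alginv_{A,\sigma}$ is affine over $S$, completing the proof.
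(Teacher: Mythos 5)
Votre démonstration est correcte et suit pour l'essentiel la même démarche que celle du texte : les deux premiers points par le dictionnaire $\wW$ comme dans la proposition \ref{PGLApoints_prop}, puis la représentabilité en réduisant au cas affine par \ref{reprLocal_prop} et en réalisant le groupe comme sous-schéma fermé de $\faisPGL_A$ défini par les équations (en nombre fini, sur des générateurs) exprimant la commutation à $\sigma$. La reformulation de cette dernière condition comme égalisateur de deux morphismes vers un faisceau $\faisHom$ représentable est un habillage plus abstrait du même argument, sans changement de fond.
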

\begin{proof}
En dehors de la représentabilité, la preuve est identique à celle de la proposition \ref{PGLApoints_prop}, mais avec une involution en plus.
Le faisceau est ensuite représentable comme sous-schéma fermé de $\faisAut_A$. En effet, on se réduit au cas affine $S=\Spec(R)$ par \ref{reprLocal_prop}. L'algèbre $A(S)$ est alors de type fini sur $R$, la commutation à $\sigma$ s'exprime par un certain nombre (fini) d'équations sur des générateurs. 
\end{proof}

\begin{defi}\label{Autalginv_defi}
On note $\faisAut^\alginv_{A,\sigma}$ le schéma affine sur $S$ ainsi obtenu.
\end{defi}

L'opposé d'une algèbre $A$, notée $A^{\opp}$ a la même structure de $R$-module sous-jacente que $A$ et la multiplication y est définie à partir de celle de $a$ en inversant l'ordre des deux facteurs.
Rappelons que $\exch$ est l'involution sur $\faisM_n\times \faisM_n^{\opp}$, qui à $(a,b)$ associe $(b,a)$. Considérons un automorphisme d'algèbre commutant à $\exch$. Sa restriction au centre $\faisO_S \times \faisO_S$ est l'identité ou l'échange des facteurs (sur chaque composante connexe). Si sa restriction au centre est l'identité, il est facile de voir qu'il est de la forme $(\epsilon,\epsilon^{\opp})$ avec $\epsilon \in \faisAut_{\faisM_n}$. On obtient donc que $\faisPGL_n$ s'identifie à un sous-groupe normal de $\faisAut^\alginv_{\setM_n \times \setM_n^{\opp},\exch}$, noyau de l'application de restriction au centre 
$$\faisAut^\alginv_{\faisM_n \times \faisM_n^{\opp},\exch} \to \faisAut^\alginv_{\faisO_S\times \faisO_S,\exch}\simeq (\ZZ/2)_S.$$
De plus la conjugaison par tout $\sigma \in \setAut(\faisM_n(S)\times \setM_n^{\opp}(S),\exch)$ préserve ce noyau et définit donc une application 
$$\phi:\faisAut^\alginv_{\faisM_n \times \faisM_n^{\opp},\exch} \to \faisAut^\gr_{\faisPGL_n}$$ 
(toutes ces constructions commutent à l'extension de la base $S$ de manière évidente). 

\begin{theo} \label{PGLAut_theo}
Pour $n\geq 3$, on a un isomorphisme de suites exactes courtes scindées
$$\xymatrix{
1 \ar[r] & \faisAut^\alg_{\faisM_n} \ar[r] \ar@{=}[d] & \faisAut^\alginv_{\faisM_n\times \faisM^\opp_n,\exch} \ar[r] \ar[d]^{\simeq}_{\phi} & \faisAut^\alginv_{\faisO_S\times \faisO_S,\exch} \ar[r] \ar[d]^{\simeq} & 1 \\
1 \ar[r] & \faisPGL_n \ar[r] & \faisAut^\gr_{\faisPGL_n} \ar[r] & \faisAutExt_{\faisPGL_n} \ar[r] & 1 
}$$
où les flèches de la ligne du haut sont décrites ci-dessus, les flèches de la ligne du bas sont les flèches canoniques (conjugaison et quotient).
Les deux groupes de droite sont isomorphes au groupe (localement) constant $(\ZZ/2)_S$. Soit $d$ la matrice antidiagonale et $\transp : \setM_n \to \setM^{\opp}_n$ la transposition. L'application qui, sur chaque composante connexe de $S$, envoie l'élément $\exch\neq id$ sur l'automorphisme $ex:(a,b)\mapsto (d(\transp b)d^{-1}, d(\transp a)d^{-1})$ est un scindage de la surjection d'en haut. 

Pour $n\leq 2$, le groupe $\faisPGL_n$ n'a pas d'automorphisme extérieur. On a le même diagramme, mais la flèche $\phi$ n'est pas un isomorphisme, et la flèche de droite est la projection vers le groupe trivial. 
\end{theo}
\begin{proof}
Toutes les affirmations étant Zariski locales sur la base $S$, on peut supposer $S=\Spec(R)$ où $R$ est un anneau local.
Le carré de gauche commute par construction, et l'affirmation sur le scindage se vérifie immédiatement par calcul. 
Le groupe $\faisPGL_1$ est trivial et $\faisPGL_2$ n'a pas d'automorphisme extérieur car le diagramme de Dynkin est un point. Il n'y a donc rien de plus à justifier pour $n=1,2$. 
Supposons donc $n\geq 3$. Les deux groupes de droite sont bien isomorphes au groupe constant $(\ZZ/2)_R$. C'est évident pour $\faisAut_{R \times R,\exch}$, et cela découle de l'interprétation de $\faisAutExt$ en termes d'automorphismes du diagramme de Dynkin, voir \cite[Exp. XXIV, th. 1.3 (iii)]{sga3}.
Pour vérifier que le carré de droite commute, il suffit donc de vérifier que $\phi(ex)$ s'envoie bien sur l'élément non trivial de $\faisAutExt$. Or, on voit comment $\phi(ex)$ agit sur $\faisPGL_{n,R}$ (identifié aux éléments de la forme $(\epsilon,\epsilon^{\opp})$) et en particulier sur l'image de $\faisGL_{n,R}$, où les élément sont de la forme $(\int_g,\int_g^{\opp})$, $g \in \faisGL_{n,R}$, qu'il envoie sur $(\int_{d(\transp g^{-1})d^{-1}},\int_{d(\transp g^{-1})d^{-1}}^{\opp})$. Il préserve donc le tore $\faisPDiag$ image de $\faisDiag$, en agissant dessus comme le dual de Cartier de l'automorphisme des caractères qui envoie $\alpha_{i,i+1}=t_i-t_{i+1}$ sur $-t_{n+1-i}+t_{n-i}=\alpha_{n-i,n-i+1}$. Il préserve donc aussi l'ensemble des racines simples $\Delta$, sur lequel il agit comme l'automorphisme non trivial du diagramme de Dynkin. De plus, il envoie l'élément $u_{i,i+1}$ sur $u_{n-i,n-i+1}$. Il est ainsi un morphisme de groupes épinglés selon \cite[exp. XXIII, déf. 1.3]{sga3}, et fournit l'élément non trivial de $\faisAutExt$.

\end{proof}

\begin{rema}
Dans $\faisPGL_2$, le candidat à être un automorphisme extérieur n'en est pas un: pour tout automorphisme $\epsilon\in \setAut(\setM_n(R))$, on a $\transp{}\circ \epsilon^{\opp} \circ \transp{} = \int_{e} \circ \epsilon \circ \int_{e}^{-1}$ où $e=\left( \begin{smallmatrix} 0&1\\ -1&0 \end{smallmatrix} \right)$. C'est un calcul immédiat quand l'automorphisme $\epsilon$ est lui-même intérieur, ce qui suffit localement.
\end{rema}

Par le théorème \ref{automorphismes_theo}, le groupe $\faisSL_n$ a les mêmes automorphismes que $\faisPGL_n$. On veut toutefois comprendre comment ils agissent sur $\faisSL_n$ lorsqu'on les identifie à $\faisAut^\alginv_{\faisM_n \times \faisM_n^{\opp},\exch}$. Considérons le monomorphisme $\faisGL_n \to \faisM_n\times \faisM_n^{\opp}$ qui envoie un élément $g\in \faisGL_n(S)$ vers l'élément $(g,g^{-1})\in \faisM_n(S) \times \faisM_n^{\opp}(S)$. L'image de cette application est préservée par $\faisAut^\alginv_{\faisM_n \times \faisM_n^{\opp},\exch}$. Par la suite exacte scindée du théorème précédent, il suffit de le vérifier pour l'élément $ex$ et pour tout $(\epsilon, \epsilon^{\opp})$, ce qui est immédiat. Par identification de $\faisGL_n$ à l'image, cela définit donc une action de $\faisAut^\alginv_{\faisM_n \times \faisM_n^{\opp},\exch}$ sur $\faisGL_n$, pour laquelle $ex(g)=\transp{g}^{-1}$ et $(\epsilon,\epsilon^{\opp})(g)=\epsilon(g)$. 

\begin{lemm} \label{SLnactionaut_lemm}
Le sous-groupe algébrique $\faisSL_n$ est préservé par cette action. 
\end{lemm}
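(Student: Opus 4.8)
The plan is to use the explicit description of the action given just above the statement, together with the split exact sequence of Theorem \ref{PGLAut_theo}, to reduce the claim to two elementary determinant computations. Since $\faisSL_n$ is by definition the kernel of $\det:\faisGL_n \to \faisGm$, saying that the action preserves $\faisSL_n$ amounts to saying that for every $T$-point $\alpha$ of $\faisAut^\alginv_{\faisM_n \times \faisM_n^{\opp},\exch}$ and every point $g$ of $\faisSL_n$ one has $\det(\alpha(g))=1$. As $\faisGm$ is a sheaf, this equality may be checked locally, and as the property ``preserves $\faisSL_n$'' is stable under composition of automorphisms, I would first observe that it suffices to verify it on a set of generators. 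The split exact sequence of Theorem \ref{PGLAut_theo} provides exactly such generators étale-locally: the elements of the normal subgroup $\faisAut^\alg_{\faisM_n}$, acting by $(\epsilon,\epsilon^{\opp})(g)=\epsilon(g)$, and the section element $ex$, acting by $ex(g)=\transp{g}^{-1}$.

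First I would treat the section element $ex$. Here the computation is immediate: for any point $g$ of $\faisGL_n$ one has $\det(\transp{g}^{-1}) = \det(g)^{-1}$, so $\det(g)=1$ forces $\det(ex(g))=1$; equivalently $\det\circ ex = \det^{-1}$ as characters of $\faisGL_n$. Thus $ex$ carries $\faisSL_n$ into itself. I would also note in passing that $ex$ really is a group automorphism of $\faisGL_n$ (it is the composite of the two anti-automorphisms $g\mapsto \transp{g}$ and $g\mapsto g^{-1}$, hence multiplicative), which is what makes the question of preserving a subgroup meaningful.

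Next I would handle the elements $(\epsilon,\epsilon^{\opp})$ with $\epsilon \in \faisAut^\alg_{\faisM_n}$. Since the vanishing condition is étale-local on the base, I may assume $S=\Spec(R)$ with $R$ local; then, exactly as in the proof of Proposition \ref{secPGLn_prop}, Skolem--Noether makes $\epsilon$ inner, say $\epsilon=\int_a$ for some $a\in \faisGL_n(R)$. Consequently $\det(\epsilon(g))=\det(aga^{-1})=\det(g)$, so again $\det$ is preserved and $\faisSL_n=\ker(\det)$ is mapped into itself. By descent this conclusion holds over an arbitrary base $S$, which completes the argument.

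I do not expect any real obstacle: the content of the lemma is the pair of one-line determinant identities above, and the only points requiring a little care are the reduction to generators through the splitting of Theorem \ref{PGLAut_theo} and the étale-local passage to inner automorphisms. Both are routine and already used elsewhere in the text.
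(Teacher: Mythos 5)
Your proof is correct and follows essentially the same route as the paper: reduce via the split exact sequence to the element $ex$ (where $\det(\transp{g}^{-1})=\det(g)^{-1}$) and to the elements $(\epsilon,\epsilon^{\opp})$, which étale-locally become inner by Proposition \ref{secPGLn_prop} and hence preserve the determinant. The extra detail you supply (e.g.\ that $ex$ is a genuine group automorphism) is harmless but not needed.
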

\begin{proof}
Comme plus haut, il suffit de le vérifier pour l'élément $ex$, pour lequel c'est évident, et pour tout élément de la forme $(\epsilon,\epsilon^\opp)$. On a affaire à des faisceaux, on peut vérifier l'énoncé localement pour la topologie étale. Par \ref{secPGLn_prop}, on peut alors supposer que $\epsilon$ est la conjugaison par un élément inversible, et préserve donc le déterminant.
\end{proof}

\begin{theo} \label{autGLSL_theo}
L'action de $\faisAut^\alginv_{\faisM_n \times \faisM_n^{\opp},\exch}$ sur $\faisGL_n$ (resp. $\faisSL_n$) définie ci-dessus et celle de $\faisAut^\alginv_{\faisM_n \times \faisM_n^{\opp},\exch}$ sur $\faisPGL_n$ du théorème \ref{PGLAut_theo} commutent au morphisme naturel $\faisGL_n \to \faisPGL_n$ (resp. $\faisSL_n \to \faisPGL_n$). Pour $n \geq 3$, il identifie ainsi les automorphismes de $\faisGL_n$ (resp. $\faisSL_n$) à $\faisAut^\alginv_{\faisM_n \times \faisM_n^{\opp},\exch}$. Pour $n=2$, ce dernier groupe coïncide toujours avec les automorphismes de $\faisGL_2$ (mais pas avec ceux de $\faisSL_2$ ni $\faisPGL_2$ comme vu plus haut).
\end{theo}
\begin{proof}
La compatibilité avec la projection $\pi:\faisGL_n \to \faisPGL_n$ se teste sur $ex$ et les $(\epsilon,\epsilon^{\opp})$. Pour $n\geq 3$, on a bien les automorphismes de $\faisSL_n$ et ceux de $\faisGL_n$ par le théorème \ref{automorphismes_theo} et l'exercice \ref{autoDonneeRadGL_exo}. Pour $n=2$, l'automorphisme extérieur de $\faisGL_2$ est bien fourni par $g \mapsto \transp{g}^{-1}$, le raisonnement de la fin de la preuve du théorème \ref{PGLAut_theo} s'applique.
\end{proof}

\subsection{Groupes tordus}

Commençons par le cas semi-simple adjoint et donc de donnée radicielle déployée isomorphe à celle de $\faisPGL_{n,S}$ sur une base quelconque $S$, omise ci-après des notations pour alléger. 

Par \ref{tordusformes_prop}, toute forme de $\faisPGL_n$ est obtenue par torsion par un torseur $P$ sous son groupe d'automorphismes, qui est $\faisAut_{\faisM_n\times \faisM^\opp_n,\exch}$ (voir \ref{PGLAut_theo}). On tord donc la suite du théorème \ref{PGLAut_theo} à l'aide de la proposition \ref{torsionsec_prop} dans le cas de la remarque \ref{secautext_rema}. On obtient une suite de groupes d'automorphismes tordus
$$1 \to P \contr{H} \faisPGL_n \to  P \contr{H} H \to P \contr{H} \faisAut^\alginv_{S \times S,\exch} \to 1$$
où $H:=\faisAut^\alginv_{\faisM_n\times \faisM^\opp_n, \exch}$.
En utilisant \ref{auttordus_prop}, on identifie le terme du milieu 
$$P \contr{H} H = \faisAut^\alginv_{P \contr{H} (\faisM_n \times \faisM^\opp_n, \exch)} = \faisAut_{A,\nu}$$ 
pour un certain faisceau en algèbres d'Azumaya $(A,\nu)$ de deuxième espèce sur $S$.
De même, le terme de droite s'identifie à
$$P \contr{H} \faisAut_{S \times S,\exch}= \faisAut_{Z(A),\nu}$$
où $Z(A)$ est le centre de $A$, un faisceau en algèbres étales finies de degré $2$ sur $S$ et $\nu$ est le seul $S$-automorphisme non trivial de $Z(A)$ sur toutes les composantes connexes de $S$ (Voir \ref{algetale_defi} et \ref{etaledegre2_coro}). Notons que $\nu$ est une involution et un automorphisme puisque $Z(A)$ est commutatif.
Dans le cas d'une forme intérieure, le torseur $P$ est de la forme $Q \contr{\faisPGL_n} H$ où $Q$ est un torseur sous $\faisPGL_n$, et on a donc 
$$P \contr{H} \faisPGL_n = Q \contr{\faisPGL_n} \faisPGL_n = Q \contr{\faisAut^\alg_{\faisM_n}} \faisAut^\alg_{\faisM_n} = \faisAut^\alg_{Q \contr{\faisAut^\alg_{\faisM_n}} \faisM_n} \hspace{-1.5ex}= \faisAut^\alg_{A}$$ 
pour un certain faisceau en algèbres d'Azumaya $A$ de degré $n$ sur $S$. Puisque toutes les formes s'obtiennent par torsion (voir \ref{tordusformes_prop}) et que tout groupe réductif est localement déployé pour la topologie étale (voir \ref{reductifetale_theo}), on a donc montré:

\begin{prop} \label{adjointtordu_prop}
\`A isomorphisme près, les groupes réductifs de donnée radicielle déployée identique à celle de $\faisPGL_n$ ($n\geq 3$), c'est-à-dire de type déployé $A_{n-1}$ et adjoint, sont les noyaux de $\faisAut_{A,\nu} \to \faisAut_{Z(A),\nu}$ où $(A,\nu)$ est un faisceau d'algèbre d'Azumaya sur $S$ de deuxième espèce, de degré $n$ et de centre $Z(A)$ étale fini de degré $2$ sur $S$. Pour $n\geq 2$, les formes intérieures sont les $\faisPGL_A$ où $A$ est un faisceau en algèbre d'Azumaya de degré $n$ (voir déf. \ref{PGLA_defi}). Le groupe $\faisPGL_2$ n'a que des formes intérieures. 
\end{prop}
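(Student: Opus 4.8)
L'essentiel de la construction ayant été mené dans les paragraphes qui précèdent, le plan consiste à en rassembler les conclusions et à contrôler leur compatibilité.

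Je partirais de l'observation que tout groupe réductif dont la donnée radicielle déployée est celle de $\faisPGL_n$ est, par le théorème \ref{reductifetale_theo}, déployé étale-localement, donc devient isomorphe à $\faisPGL_n$ après une extension étale convenable ; comme un groupe déployé est déterminé à isomorphisme près par sa donnée radicielle, un tel groupe est une forme étale de $\faisPGL_n$ au sens de \ref{formes_defi}. Par la proposition \ref{tordusformes_prop}, ces formes sont classifiées, à isomorphisme près, par les torseurs sous $\faisAut^\gr_{\faisPGL_n}$, faisceau qui s'identifie à $H:=\faisAut^\alginv_{\faisM_n\times \faisM^\opp_n,\exch}$ grâce au théorème \ref{PGLAut_theo} (l'hypothèse $n\geq 3$ sert ici à garantir la présence des automorphismes extérieurs ; le cas $n=2$ sera traité à part).

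Pour un torseur $P$ sous $H$ fixé, je tordrais la suite exacte scindée du théorème \ref{PGLAut_theo} au moyen de la proposition \ref{torsionsec_prop} dans la situation de la remarque \ref{secautext_rema}, ce qui est licite puisque $H$ opère sur son sous-groupe normal $\faisPGL_n$ par automorphismes intérieurs. Il resterait alors à identifier les trois termes tordus : par la proposition \ref{auttordus_prop}, le terme médian $P\contr{H}H$ devient $\faisAut_{A,\nu}$ où $(A,\nu)=P\contr{H}(\faisM_n\times \faisM^\opp_n,\exch)$, lequel est exactement, par la proposition \ref{alginvdeuxesp_prop}, un faisceau en algèbres d'Azumaya de degré $n$ à involution de deuxième espèce, de centre $Z(A)$ étale fini de degré $2$ ; le terme de droite devient de même $\faisAut_{Z(A),\nu}$ ; et le terme de gauche $P\contr{H}\faisPGL_n$, qui est le groupe réductif cherché, se lit donc comme le noyau de $\faisAut_{A,\nu}\to\faisAut_{Z(A),\nu}$. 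Réciproquement, un tel noyau est une forme de $\faisPGL_n$, donc réductif de la bonne donnée radicielle par descente, ce qui donne la première assertion. Pour les formes intérieures, j'utiliserais la définition \ref{intfortint_defi} : $P$ provient alors d'un torseur $Q$ sous $\faisPGL_n=\faisAut^\alg_{\faisM_n}$, d'où $P\contr{H}\faisPGL_n=Q\contr{\faisPGL_n}\faisPGL_n=\faisAut^\alg_A$ par \ref{auttordus_prop}, avec $A=Q\contr{\faisPGL_n}\faisM_n$ une algèbre d'Azumaya de degré $n$ (forme de $\faisM_n$, par \ref{Azumayaforme_prop}), soit $\faisPGL_A$ au sens de \ref{PGLA_defi}. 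Enfin, pour $n=2$, le théorème \ref{PGLAut_theo} montre que $\faisAutExt_{\faisPGL_2}$ est trivial, de sorte que toute forme est intérieure.

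Le point le plus délicat n'est pas calculatoire mais consiste à s'assurer que ces identifications sont mutuellement compatibles, c'est-à-dire qu'elles rendent commutatif le diagramme reliant la suite tordue à la suite $1\to\faisPGL_A\to\faisAut_{A,\nu}\to\faisAut_{Z(A),\nu}$ ; cette compatibilité découle de la naturalité du produit contracté et de la fonctorialité énoncée à la proposition \ref{foncttors_prop}, et c'est là que réside véritablement la vérification, l'essentiel du contenu ayant déjà été produit dans la discussion qui précède l'énoncé.
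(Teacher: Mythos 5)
Votre démonstration est correcte et suit essentiellement le même chemin que celle du texte : classification des formes par les torseurs sous $\faisAut_{\faisPGL_n}\simeq\faisAut^\alginv_{\faisM_n\times\faisM_n^\opp,\exch}$ via \ref{tordusformes_prop} et \ref{PGLAut_theo}, torsion de la suite exacte par \ref{torsionsec_prop} et \ref{secautext_rema}, identification des termes par \ref{auttordus_prop} et \ref{alginvdeuxesp_prop}, puis traitement des formes intérieures et du cas $n=2$ exactement comme dans le texte. La seule différence est cosmétique : vous explicitez la vérification de compatibilité des identifications (via \ref{foncttors_prop}), que le texte laisse implicite.
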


Pour les formes fortement intérieures de $\faisPGL_n$, voir \ref{adjointAnfortint_coro}.
\medskip

Passons maintenant au cas simplement connexe, c'est-à-dire les formes (étales) de $\faisSL_n$. Nous obtiendrons également celles de $\faisGL_n$. D'après \ref{reductifetale_theo} et \ref{tordusformes_prop}, un tel groupe est obtenu par torsion de $\faisGL_n$ par un torseur étale sous $\faisAut_{\faisSL_n}=\faisAut_{\faisPGL_n}$.

Soit $(A,\nu)$ un faisceau en algèbres à involution de deuxième espèce sur $S$ (voir déf. \ref{alginvdeuxesp_defi}). Considérons le $S$-foncteur de points défini par
$$\faisU_{A,\nu}(T)=\left\{a \in A(T),\ \nu(a)a=1\right\}.$$ 
\begin{prop} \label{Urepr_prop}
Pour tout schéma $T$ au-dessus de $S$, on a 
$$(\faisU_{A,\nu})_{T} = \faisU_{A_{T},\nu_{T}}.$$
De plus, ce foncteur est représentable.
\end{prop}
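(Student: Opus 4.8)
The plan is to dispatch the base-change compatibility first, which is purely formal, and then to establish representability by exhibiting $\faisU_{A,\nu}$ as a closed subscheme of an affine scheme, exactly along the lines already used for $\faisSL_n$ (def. \ref{SLn_defi}), for $\faisAut^\alginv_{A,\sigma}$ (prop. \ref{Autalginv_prop}) and for $\faisStab_v$ (prop. \ref{stab_prop}).

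For the first assertion I would argue as in \ref{Autalginv_prop}: for any $T'\to T$ one has $A_T(T')=A(T')$ and $\nu_T$ is the restriction of $\nu$, so that the defining condition $\nu(a)a=1$ is literally the same on both sides. Hence $(\faisU_{A,\nu})_T(T')=\faisU_{A_T,\nu_T}(T')$ functorially in $T'$, which is the claimed identification.

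For representability, I would first observe that $\faisU_{A,\nu}$ is a Zariski sheaf, since $A$ is one and the condition $\nu(a)a=1$ is tested inside the sheaf $A$. By \ref{reprLocal_prop} it then suffices to treat an affine base $S=\Spec(R)$ over which $A$ is free as an $\faisO_S$-module; writing $A=\wW(\tilde B)$ with $B=A(S)$ free of finite rank over $R$ (remark \ref{Walg_rema}), the underlying $\faisO_S$-module functor $T\mapsto A(T)$ is represented by the affine space $\Spec(\Symalg(B^\dual))$. Now $\nu$ is $\faisO_S$-linear (it fixes $R$, since by def. \ref{alginvdeuxesp_defi} it fixes the centre over $\faisO_S$) and the multiplication is a morphism, so $a\mapsto \nu(a)a$ is a morphism of $S$-schemes $A\to A$; the functor $\faisU_{A,\nu}$ is its fibre over the unit section $1:S\to A$, which is a closed immersion because $A\to S$ is affine. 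Thus $\faisU_{A,\nu}$ is representable by a closed subscheme of $\Spec(\Symalg(B^\dual))$, hence by a scheme affine over $S$; concretely the defining ideal is the finite system of polynomial equations obtained by expanding $\nu(a)a=1$ in coordinates.

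The only point I would take care to spell out, in order to see that one in fact obtains a closed sub\emph{group} scheme of $\faisGL_{1,A}$, is that every point of $\faisU_{A,\nu}$ is invertible. If $\nu(a)a=1$, then, writing $N$ for the norm of \ref{GLrepr_prop} (the determinant of left multiplication on the locally free module $A$), multiplicativity of $\det$ gives $N(\nu(a))\,N(a)=1$, so $N(a)\in R^\times$ and $a\in\faisGL_{1,A}$; moreover the relation forces $a^{-1}=\nu(a)$, so the inverse again satisfies the defining equation. I do not expect any genuine obstacle: the argument is entirely parallel to the representability proofs of \ref{Autalginv_prop} and \ref{stab_prop}, the cut-out equations being even simpler, and the only mild subtlety is the invertibility step, which the norm of \ref{GLrepr_prop} resolves immediately.
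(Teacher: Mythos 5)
Votre preuve est correcte et suit essentiellement la m�me strat�gie que celle du texte : on v�rifie que le foncteur est un faisceau Zariski, on se ram�ne par \ref{reprLocal_prop} au cas o� $S$ est affine et $A$ libre, et la condition $\nu(a)a=1$ d�coupe alors un ferm� par des �quations polynomiales en les coordonn�es. La seule diff�rence est cosm�tique : le texte r�alise $\faisU_{A,\nu}$ comme sous-foncteur ferm� de $\faisGL_{1,A}$, tandis que vous le d�coupez dans l'espace affine $\vV(A)$ puis v�rifiez l'inversibilit� des points via la norme de \ref{GLrepr_prop} --- un d�tail utile que le texte laisse implicite.
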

\begin{proof} 
La première égalité est claire. Pour la représentabilité, on montre que le foncteur est un faisceau Zariski en utilisant que c'est un sous-foncteur de $\faisGL_{1,A}$ et en utilisant que la condition $\nu(a)a=1$ est de nature locale. On se ramène alors au cas $S$ affine et $A$ libre par la proposition \ref{reprLocal_prop}. Dans ce cas, $A=\wW(\tilde{A(S)})$ et la condition $\nu(a)a=1$ est donnée par des équations sur des générateurs de $A(S)$. 
\end{proof}

\begin{defi} \label{U_defi}
On note $\faisU_{A,\nu}$ le $S$-groupe algébrique représentant le foncteur décrit ci-dessus, et on l'appelle le schéma en groupes \emph{unitaire} de $(A,\nu)$.
\end{defi}

\begin{lemm} \label{UGL_lemm}
Le morphisme définit sur les points par $g \mapsto (g,g^{-1})$ définit un isomorphisme de schémas en groupes
$\faisGL_n \simeq \faisU_{\faisM_n\times \faisM_n^\opp,\exch}$. 
\end{lemm}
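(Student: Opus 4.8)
The plan is to identify the $T$-points of the unitary group explicitly and recognize them as the image of $g \mapsto (g,g^{-1})$. First I would write a $T$-point of $A = \faisM_n \times \faisM_n^\opp$ as a pair $(a,b)$ with $a,b \in \setM_n(\Gamma(T))$, keeping in mind that the second coordinate is multiplied via the opposite law. Then $\exch(a,b) = (b,a)$, and the defining condition $\exch(a,b)\cdot(a,b) = (1,1)$ unwinds componentwise to the single equation $ba = 1$: in the first (ordinary) factor the product of $b$ and $a$ is $ba$, while in the opposite factor the product of $a$ and $b$ is $ba$ as well. Hence $\faisU_{\faisM_n\times\faisM_n^\opp,\exch}(T) = \{(a,b)\ :\ ba = 1\}$.

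Next I would observe that, $\setM_n(\Gamma(T))$ being the endomorphism ring of a finite free module, a left inverse is automatically a two-sided inverse (the determinant argument already invoked in the footnote to the proof of Proposition \ref{GLrepr_prop}); thus $ba = 1$ forces $a \in \faisGL_n(T)$ and $b = a^{-1}$. Therefore the $T$-points of the unitary group are exactly the pairs $(g,g^{-1})$ with $g \in \faisGL_n(T)$, naturally in $T$. It then remains to check that the stated morphism $g \mapsto (g,g^{-1})$ is an isomorphism of group schemes. It is a natural transformation of functors of points and lands in $\faisU_{\faisM_n\times\faisM_n^\opp,\exch}$ by the computation above; it is a group homomorphism because $(g,g^{-1})\cdot(h,h^{-1}) = (gh, h^{-1}g^{-1}) = (gh,(gh)^{-1})$, where the middle equality uses the opposite multiplication in the second factor. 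By the previous step it is bijective on $T$-points for every $T$, and its inverse is induced by the first projection $\faisM_n \times \faisM_n^\opp \to \faisM_n$, which is a morphism of schemes. Since an isomorphism of functors between representable functors is an isomorphism of schemes (both $\faisGL_n$ and $\faisU_{\faisM_n\times\faisM_n^\opp,\exch}$ being representable, the latter by Proposition \ref{Urepr_prop}), this yields the desired isomorphism.

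The only genuinely delicate point is the bookkeeping of the opposite-algebra multiplication, so that the abstract relation $\nu(a)a = 1$ collapses to the concrete matrix equation $ba = 1$; once this is pinned down, together with the remark that a left-invertible square matrix over a commutative ring is invertible, everything else is a routine Yoneda argument.
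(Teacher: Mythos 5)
Your proof is correct and is exactly the verification the paper leaves implicit (its proof reads simply \og Clair \fg): unwinding $\exch(a,b)\cdot(a,b)=(ba,ba)$ so that the unitary condition reduces to $ba=1$, invoking that a one-sided inverse of a square matrix over a commutative ring is two-sided, and concluding by Yoneda with the first projection as inverse. Nothing is missing, and the bookkeeping of the opposite multiplication is handled correctly.
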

\begin{proof}
Clair.
\end{proof}

Tout faisceau en groupes $H$ agissant sur $(A,\nu)$ en respectant la structure de $\alginv$ agit également sur $\faisU_{A,\nu}$ de la manière évidente. 

\begin{prop} \label{Uforme_prop}
Soit $P$ un torseur sous un tel $H$. On a un isomorphisme canonique
$$P \contr{H} \faisU_{\faisM_n\times \faisM_n^\opp,\exch} \simeq \faisU_{P \contr{H} (\faisM_n\times \faisM_n^\opp,\exch)}$$
\end{prop}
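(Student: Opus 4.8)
The plan is to follow the pattern used for Propositions \ref{auttordus_prop} and \ref{GLtordu_prop}: construct a canonical comparison morphism, check that it is an isomorphism after trivialising $P$, and conclude by descent. Set $(A,\nu)=(\faisM_n\times\faisM_n^\opp,\exch)$. First I would note that $H$ really acts on $\faisU_{A,\nu}$: since $H$ acts by automorphisms of the $\alginv$-structure, its points commute with the multiplication and with $\nu$ and fix $1$, hence preserve the defining relation $\nu(a)a=1$. Because $A$ is Azumaya, this relation forces $a$ to be invertible with $\nu(a)=a^{-1}$, so $\faisU_{A,\nu}$ is the $H$-stable subgroup of $\faisGL_{1,A}$ cut out by $\nu(a)=a^{-1}$, and $P\contr{H}\faisU_{A,\nu}$ is defined as an object of $\CatFais_{/S}$.

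Next I would build the morphism. The $H$-equivariant inclusion $\faisU_{A,\nu}\hookrightarrow\faisGL_{1,A}$ gives, by functoriality of $P\contr{H}(-)$ and Proposition \ref{GLtordu_prop}, a morphism $P\contr{H}\faisU_{A,\nu}\to P\contr{H}\faisGL_{1,A}\isoto\faisGL_{1,A'}$, where $A'=P\contr{H}A$. By the torsion-of-a-structure formalism of \S\ref{torseurs_sec}, $A'$ inherits the twisted involution $\nu'=P\contr{H}\nu$, and $(A',\nu')$ is again an Azumaya algebra with involution of the second kind, so that $\faisU_{P\contr{H}(A,\nu)}=\faisU_{A',\nu'}$ makes sense. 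I claim this morphism factors through $\faisU_{A',\nu'}\subseteq\faisGL_{1,A'}$ and is an isomorphism onto it.

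Both the factorisation and the bijectivity are local conditions on $S$ (being an isomorphism of faisceaux can be tested locally, $\CatFais_{/S}$ being a champ), so I would verify them on a covering over which $P$ becomes trivial. There $P\simeq H$, and the canonical identifications $H\contr{H}\faisU_{A,\nu}\isoto\faisU_{A,\nu}$ and $H\contr{H}(A,\nu)\isoto(A,\nu)$ are compatible with the multiplication and the involution; under them the comparison morphism becomes, up to canonical isomorphism, the identity of $\faisU_{A,\nu}$, which visibly lands in and exhausts the unitary subgroup. Hence the morphism is a local isomorphism, therefore an isomorphism, and by descent it provides the asserted canonical isomorphism.

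The only delicate point, which is bookkeeping rather than a real obstacle, is to make sure that the involution $\nu'$ produced by twisting is exactly the one defining the right-hand side and that the unitary equation transports correctly under the contracted product; this is guaranteed by Definition \ref{actiongroupeelem_defi} together with the commutation of $P\contr{H}(-)$ with finite products (Lemma \ref{tordprod_lemm}), which is precisely what lets the pointwise relation $\nu(a)a=1$ be twisted into $\nu'(a')a'=1$.
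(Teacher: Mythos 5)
Your proposal is correct and follows essentially the same route as the paper, whose proof simply says to proceed as for Proposition \ref{GLtordu_prop}: build the canonical comparison morphism, check it is an isomorphism after trivialising $P$ (where it reduces to the identity up to canonical identifications), and conclude by descent. The extra bookkeeping you supply about the twisted involution and the transport of the relation $\nu(a)a=1$ via Lemma \ref{tordprod_lemm} is exactly what the paper leaves implicit.
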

\begin{proof}
On procède comme pour la proposition \ref{GLtordu_prop}.
\end{proof}

\begin{theo} \label{formesGLn_theo}
Tout groupe réductif dont la donnée radicielle déployée est celle de $\faisGL_n$ est isomorphe à $\faisU_{A,\nu}$ où $(A,\nu)$ est un faisceaux en algèbres d'Azumaya à involution de deuxième espèce de degré $n$ sur $S$. Les formes intérieures sont les $\faisGL_{1,A}$ où $A$ est un faisceau en algèbres d'Azumaya de degré $n$. Les formes strictement intérieures sont les $\faisGL_{M}$ où $M$ est un $S$-faisceau localement libre de rang $n$ sur $S$. 
\end{theo}
\begin{proof}
Il suffit d'utiliser la proposition \ref{Uforme_prop} dans le cas $H=\faisAut_{\faisM_n\times\faisM_n^\opp}$ et de voir que l'action qui y est considérée correspond bien à celle du théorème \ref{autGLSL_theo} par l'identification du lemme \ref{UGL_lemm}. 
Pour les formes intérieures, il suffit d'utiliser la proposition \ref{GLtordu_prop}. 
Enfin, pour les formes strictement intérieures, on utilise l'équivalence \ref{GLntors_prop} entre les $\faisGL_n$-torseurs (étales ou Zariski) et les faisceaux localement libre de type fini sur $S$. Lorsqu'on tord $\faisM_n$ par un tel torseur, on obtient justement $\faisEnd_M$ où $M$ est le faisceau localement libre associé.
\end{proof}

Tout faisceau en algèbres à involution $(A,\nu)$ de deuxième espèce est obtenu par définition comme $P \contr{H}(\faisM_n \times \faisM_n^\opp,\exch)$ où $P$ est un torseur sous $\faisAut^\alginv_{\faisM_n \times \faisM_n^\opp,\exch}$. 
Le morphisme $\faisGL_{1,\faisM_n\times \faisM_n^\opp}=\faisGL_n \times \faisGL_n^\opp \to \faisGm$ donné par le déterminant sur la première composante se restreint en un morphisme $\faisU_{\faisM_n \times \faisM_n^\opp,\exch} \to \faisGm$.

\begin{lemm} 
Ce morphisme est équivariant si $\faisAut^\alginv_{\faisM_n \times \faisM_n^\opp,\exch}$ agit sur $\faisGm$ trivialement. Son noyau s'identifie à $\faisSL_n$ par l'isomorphisme du lemme \ref{UGL_lemm}, qui est donc stable par l'action.
\end{lemm}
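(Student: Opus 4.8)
The plan is to reduce everything to the ordinary determinant through the isomorphism of Lemma~\ref{UGL_lemm}, and then to obtain the equivariance from the canonicity of the reduced norm. First I would transport the morphism along $g \mapsto (g,g^{-1})$: every point of $\faisU_{\faisM_n \times \faisM_n^\opp,\exch}$ is of the form $(g,g^{-1})$ with $g \in \faisGL_n$, so the determinant of its first component is $\det(g)$. Hence, under the identification of Lemma~\ref{UGL_lemm}, the morphism $\faisU_{\faisM_n \times \faisM_n^\opp,\exch} \to \faisGm$ is nothing but the determinant $\faisGL_n \to \faisGm$, whose kernel is $\faisSL_n$ by Definition~\ref{SLn_defi}. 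This already yields the identification of the kernel asserted in the second sentence of the statement, on the nose.

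For the equivariance, I would argue at the level of the reduced norm. The determinant on the first component is the first component of the reduced norm $\nrd \colon \faisGL_{1,\faisM_n \times \faisM_n^\opp} \to \faisGL_{1,Z}$ of the Azumaya algebra $\faisM_n \times \faisM_n^\opp$ over its centre $Z = \faisO_S \times \faisO_S$ (on $\faisM_n^\opp$ the reduced norm is again the determinant). The decisive point is that $\nrd$ is canonical: by its construction in Definition~\ref{normereduite_defi}, together with the isomorphism $\faisAut_{A,d}\isoto\faisAut_A$ preceding it, it is preserved by every algebra automorphism, hence is equivariant for the action of $H=\faisAut^\alginv_{\faisM_n \times \faisM_n^\opp,\exch}$, where $H$ acts on $\faisGL_{1,Z}$ through its action on $Z$. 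On the inner subgroup $\faisPGL_n=\faisAut^\alg_{\faisM_n}$, which fixes $Z$, this gives at once $\det(\epsilon(g))=\det(g)$, i.e.\ equivariance for the \emph{trivial} action on $\faisGm$, since an $\faisO_S$-algebra automorphism of $\faisM_n$ preserves the determinant. For the whole of $H$, the morphism thus factors $H$-equivariantly through $\faisGL_{1,Z}$ before projecting to the first factor, so that the stated hypothesis that $H$ acts trivially on $\faisGm$ is exactly what is needed to conclude equivariance for the trivial action.

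Finally, the stability of the kernel is already available: by Lemma~\ref{SLnactionaut_lemm} the action of $H$ on $\faisGL_n$ (the one of Theorem~\ref{autGLSL_theo}, which matches the obvious action on $\faisU$ under Lemma~\ref{UGL_lemm}) preserves $\faisSL_n$; alternatively it is formal, the kernel of an equivariant morphism being preserved. So the last clause follows with no further work.

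The step I expect to be the main obstacle is precisely the equivariance clause, and within it the behaviour on the outer generator $ex$ of Theorem~\ref{autGLSL_theo}, described there by $g\mapsto \transp g^{-1}$. Rather than a direct matrix manipulation, I would handle it by routing everything through the centre $Z$ as above and invoking the compatibility of the reduced norm with the involution (Lemma~\ref{TrdNrdSigma_lemm}); the entire delicacy of the clause is then carried by the triviality hypothesis on the $\faisGm$-action, which is the condition under which the induced action on the target collapses and the morphism becomes equivariant in the usual sense.
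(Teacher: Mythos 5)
Your kernel identification is correct and coincides with what the paper does, and your treatment of the inner part is also fine: the paper's entire proof is the single line that the lemma follows from Lemma \ref{SLnactionaut_lemm}, whose own verification is exactly your two cases --- for $(\epsilon,\epsilon^{\opp})$ one reduces �tale-locally via \ref{secPGLn_prop} to conjugation by an invertible element, which preserves the determinant (your canonicity-of-$\nrd$ argument is an equivalent packaging of this), and for the outer generator $ex$ one checks directly.

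The genuine gap sits exactly where you predicted the difficulty would be, on $ex$, and your proposed remedy does not close it. Routing through the centre proves the opposite of what you need: the reduced norm of $\faisM_n\times\faisM_n^{\opp}$ is equivariant for the action of $H=\faisAut^\alginv_{\faisM_n\times\faisM_n^{\opp},\exch}$ on $\faisGL_{1,Z}$ \emph{through the swap of the two factors of} $Z=\faisO_S\times\faisO_S$, so composing with the projection onto the first factor precisely fails to commute with $ex$. Concretely, under the identification of Lemma \ref{UGL_lemm} one has $ex(g)=\transp{g}^{-1}$ (up to conjugation by $d$), hence $\det(ex(g))=\det(g)^{-1}$: the morphism intertwines $ex$ with \emph{inversion} on $\faisGm$, not with the identity. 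Your appeal to Lemma \ref{TrdNrdSigma_lemm} is inapposite, since that lemma concerns involutions of the \emph{first} kind ($\faisO_S$-linear, fixing the centre), whereas $\exch$ is of the second kind and acts on the centre; the correct identity here is $\nrd\circ\exch=\exch_Z\circ\nrd$, which is the source of the inversion rather than a cure for it. Your closing assertion that ``the triviality hypothesis on the $\faisGm$-action is exactly what is needed to conclude'' is circular: that triviality is precisely what must be checked against $ex$, and the check, carried out with your own machinery, yields anti-equivariance. What does survive --- and is all that the paper's citation of Lemma \ref{SLnactionaut_lemm} actually exploits, and all that is needed downstream in Definition \ref{SU_defi} --- is the stability of the kernel: on $\ker(\det)=\faisSL_n$ one has $\det(g)^{-1}=1=\det(g)$, so $ex$ preserves the kernel even though it inverts the determinant; strict trivial-action equivariance holds only on the inner part.
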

\begin{proof}
Cela découle immédiatement du lemme \ref{SLnactionaut_lemm}. 
\end{proof} 

Il se tord donc en un morphisme $\faisU_{A,\nu} \to \faisGm$.
\begin{defi} \label{SU_defi}
On définit le faisceau en groupes $\faisSU_{A,\nu}$ comme le noyau du morphisme $\faisU_{A,\nu} \to \faisGm$ ci-dessus. C'est donc bien le tordu de $\faisSL_n$ par le torseur donnant $(A,\nu)$. 
\end{defi}
En particulier, à travers l'identification $\faisGL_n \simeq \faisU_{\faisM_n \times \faisM_n^\opp,\exch}$, le sous groupe $\faisSL_n$ s'identifie à $\faisSU_{\faisM_n \times \faisM_n^\opp,\exch}$.

On fait agir $\faisAut_{\faisM_n}$ sur $\faisGL_n=\faisGL_{\faisM_n}$ de manière évidente et sur $\faisGm$ trivialement. 
\begin{lemm}
Le morphisme déterminant $\faisGL_n \to \faisGm$ est équivariant. 
\end{lemm}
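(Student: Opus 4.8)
The plan is to unwind the definition of equivariance and then reduce to a local computation. Since $\faisAut_{\faisM_n}$ acts on $\faisGL_n$ by $\epsilon\cdot g=\epsilon(g)$ and acts trivially on $\faisGm$, the asserted equivariance of the determinant is exactly the identity $\det(\epsilon(g))=\det(g)$, to be checked for every $T$ over $S$, every $\epsilon\in\faisAut_{\faisM_n}(T)$ and every $g\in\faisGL_n(T)$, functorially in $T$. As this is an equality of two morphisms of sheaves $\faisAut_{\faisM_n}\times\faisGL_n\to\faisGm$ whose target $\faisGm$ is a sheaf, I would verify it locally for the \'etale topology, hence on stalks, which are local rings.

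Over a local ring the argument is the one already used in the proof of proposition \ref{secPGLn_prop}: by Skolem-Noether every algebra automorphism $\epsilon$ of $\faisM_n$ is inner, so $\epsilon=\int_h$ for some $h\in\faisGL_n$. The computation then collapses to
$$\det(\epsilon(g))=\det(hgh^{-1})=\det(h)\det(g)\det(h)^{-1}=\det(g),$$
the last step using that $\faisGm$ is commutative. This yields the equivariance locally, and hence globally by the sheaf reduction above.

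Alternatively, I would invoke the reduced-norm machinery directly: on $\faisM_n=\faisEnd_{\faisO_S^n}$ the determinant coincides with the reduced norm $\nrd$ by remark \ref{detGLM_rema}, and the whole point of the construction of $\nrd$ (definition \ref{normereduite_defi}) is that the forgetful morphism $\faisAut_{A,d}\to\faisAut_A$ is an isomorphism, \ie that every algebra automorphism preserves the reduced norm; the desired identity is then immediate. I do not expect any genuine obstacle here. The only point deserving a word of care is the passage to local rings, which is legitimate precisely because the target $\faisGm$ is a sheaf, so that two morphisms agreeing \'etale-locally agree on the nose.
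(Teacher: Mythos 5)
Your argument is exactly the paper's: reduce to a local verification (justified because $\faisGm$ is a sheaf), use proposition \ref{secPGLn_prop} (Skolem--Noether) to assume the acting automorphism is inner, and conclude by commutativity of $\faisGm$ via $\det(hgh^{-1})=\det(g)$. The alternative remark via the reduced norm is a harmless bonus; the main proof matches the paper's in both structure and substance.
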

\begin{proof}
Il suffit de le vérifier localement pour la topologie étale, et on peut donc supposer par \ref{secPGLn_prop} qu'on fait agir un élément de $\faisAut_{\faisM_n}(T)$ qui la conjugaison par un élément de $\faisGL_n(T)$, auquel cas c'est évident. 
\end{proof}

\begin{defi} \label{SLA_defi}
Pour tout $S$-faisceau en algèbres d'Azumaya $A$ de degré $n$, on note $\faisSL_{1,A}$ le noyau du morphisme norme réduite défini en \ref{normereduite_defi}; C'est bien le tordu de $\faisSL_n$ par \ref{torsionsec_prop}.
\end{defi}

Pour tout $S$-schéma $T$, on a clairement $(\faisSL_{1,A})_{T} = \faisSL_{1,A_{T}}$.

\begin{theo} \label{formesSLn_theo}
Les groupes réductifs dont la donnée radicielle déployée est celle de $\faisSL_n$, c'est-à-dire de type déployé $A_{n-1}$ et simplement connexe, sont les groupes isomorphe à $\faisSU_{A,\nu}$ où $(A,\nu)$ est un faisceau en algèbres d'Azumaya à involution de deuxième espèce de degré $n$ sur $S$. Les formes intérieures sont les $\faisSL_{1,A}$ où $A$ est un faisceau en algèbres d'Azumaya de degré $n$. Les formes strictement intérieures sont les $\faisSL_M$ où $M$ est un faisceau localement libre de rang $n$ sur $S$ et de déterminant trivial. Lorsque $n\leq 2$, toutes les formes sont intérieures.
\end{theo}
\begin{proof}
Par la définition \ref{SU_defi}, si $P$ est un torseur sous $H=\faisAut_{\faisM_n\times \faisM_n^\opp,\exch}$, et si $(A,\nu)=P \contr{H} (\faisM_n\times \faisM_n^\opp,\exch)$, alors $P \contr{H} \faisSL_n=\faisSU_{A,\nu}$. Les formes intérieures s'obtiennent par la définition \ref{SLA_defi}. 
Pour les formes strictement intérieures, on utilise comme dans la preuve du théorème \ref{formesGLn_theo}, l'équivalence de catégories \ref{GLntors_prop} entre les torseurs sous $\faisGL_n$ et les fibrés vectoriels sur $S$ de rang $n$ et on constate par la suite exacte scindée $1 \to \faisSL_n \to \faisGL_n \to \faisGm \to 1$ que les torseurs provenant de $\faisSL_n$ correspondent aux fibrés vectoriels de déterminant trivial, grâce au lemme \ref{torsDetFonct_lemm}.

Le groupe $\faisSL_1$ est trivial. Toutes les formes de $\faisSL_2$ sont intérieures, l'absence d'automorphisme extérieur se lit sur le diagramme de Dynkin. Le groupe $\faisAut_{\faisM_2 \times \faisM_2^\opp}$ est plus gros que le groupe d'automorphismes $\faisAut_{\faisM_2}$ et son action se factorise par celui-ci.
\end{proof}

\begin{coro} \label{adjointAnfortint_coro}
Les formes fortement intérieures de $\faisPGL_{n}$ sont les $\faisPGL_{M}=\faisAut^\alg_{\faisEnd_M}$ où $M$ est un faisceau localement libre de rang $n$ sur $S$, et de déterminant trivial.
\end{coro}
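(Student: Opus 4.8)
The plan is to read off both inclusions from the torsor dictionary already in place. By Definition \ref{intfortint_defi}, since $\faisPGL_n$ is semisimple with simply connected cover $(\faisPGL_n)_\simco = \faisSL_{n}$, a strongly inner form of $\faisPGL_n$ is by construction a twist $P \contr{\faisSL_n} \faisPGL_n$, where $P$ is an $\faisSL_n$-torsor and $\faisSL_n$ acts on $\faisPGL_n$ through the canonical map $\faisSL_n \to \faisPGL_n = \faisAut^\alg_{\faisM_n}$ followed by the inner action of $\faisPGL_n$ on itself. First I would invoke the equivalence $\Vectrivdetn{n} \isoto \Tors{\faisSL_n}$ established in the simply connected section: every such $P$ is of the form $\faisIso_{(\faisO_S^n,d),(M,\phi)}$ for a locally free module $M$ of rank $n$ equipped with a trivialization $\phi \colon \Det{M} \isoto \faisO_S$. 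In particular $M$ has trivial determinant, and conversely every such $M$ (after choosing any $\phi$) produces such a $P$.

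Next I would compute the twist. Since the $\faisSL_n$-action on $\faisPGL_n$ factors through $\faisSL_n \to \faisPGL_n$, Proposition \ref{fonctorialiteTors_prop} identifies $P \contr{\faisSL_n} \faisPGL_n$ with $P' \contr{\faisPGL_n} \faisPGL_n$, where $P' = P \contr{\faisSL_n} \faisPGL_n$ is the image torsor under $\faisPGL_n = \faisAut^\alg_{\faisM_n}$. As $\faisPGL_n$ acts on $\faisM_n$, and hence on itself, by inner algebra automorphisms, Proposition \ref{auttordus_prop} gives $P' \contr{\faisPGL_n} \faisPGL_n \cong \faisAut^\alg_{A}$ with $A = P' \contr{\faisPGL_n} \faisM_n$, the Azumaya algebra attached to $P'$ by Proposition \ref{PGLAtorseurs_prop}. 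It then remains to identify $A$ with $\faisEnd_M$. For this I would apply point \ref{fonctobjet_item} of Proposition \ref{foncttors_prop} to the functor of fibered categories $F \colon \Vectrivdetn{n} \to \Azumayan{n}$, $(M,\phi) \mapsto \faisEnd_M$, with $X_0 = (\faisO_S^n,d)$: here $\faisAut_{X_0} = \faisSL_n$, $F(X_0) = \faisM_n$, $\faisAut_{F(X_0)} = \faisPGL_n$, and the induced map $\faisSL_n \to \faisPGL_n$ is the canonical one. The proposition then yields $P' \contr{\faisPGL_n} \faisM_n \isoto F(P \contr{\faisSL_n} (\faisO_S^n,d)) = \faisEnd_{M}$. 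Combining these identifications, the twist equals $\faisAut^\alg_{\faisEnd_M} = \faisPGL_M$ in the notation of Definitions \ref{GLn_defi} and \ref{PGLA_defi}.

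This establishes both directions simultaneously: a strongly inner form of $\faisPGL_n$ is $\faisPGL_M$ for some $M$ of rank $n$ with trivial determinant, and every such $\faisPGL_M$ arises this way. The one genuine point to verify carefully, which I expect to be the main obstacle, is the coherence of the two chains of equivalences used, namely that pushing the $\faisSL_n$-torsor $P$ forward to $\faisPGL_n$ and then forming its Azumaya algebra yields exactly $\faisEnd_M$ for the module $M$ classifying $P$; this is precisely the compatibility encoded by Proposition \ref{foncttors_prop} applied to $F$, whose hypothesis reduces to the fact that $F$ induces $\faisSL_n \to \faisPGL_n$ on automorphisms of $X_0$, which holds by construction since an automorphism of $(M,\phi)$ acts on $\faisEnd_M$ by conjugation. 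The remaining verifications are immediate: $\faisPGL_M = \faisAut^\alg_{\faisEnd_M}$ depends only on $M$ and not on the chosen $\phi$, and representability is already guaranteed by Proposition \ref{autrepr_prop}.
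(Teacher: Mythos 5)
Your argument is correct and follows exactly the route the paper intends: the paper states this as an immediate corollary of Theorem \ref{formesSLn_theo}, relying implicitly on the dictionary $\Vectrivdetn{n}\isoto\Tors{\faisSL_n}$, the factorization of the $\faisSL_n$-action through $\faisPGL_n$, and Propositions \ref{auttordus_prop} and \ref{foncttors_prop} to identify the twist of $\faisAut^\alg_{\faisM_n}$ with $\faisAut^\alg_{\faisEnd_M}$. You have merely made explicit the compatibility check that the paper leaves to the reader, and your verification of it is sound.
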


\begin{rema}
La suite exacte
$$1 \to \faisGm \to \faisGL_n \to \faisPGL_n \to 1$$
ou encore en identifiant $\faisPGL_n=\ker(\faisAut_{\faisM_n\times \faisM_n^\opp} \to \faisAut_{\faisO_S \times \faisO_S,\exch})$,
$$1 \to \faisGm \to \faisGL_n \to \faisAut_{\faisM_n\times \faisM_n^\opp} \to \faisAut_{\faisO_S \times \faisO_S,\exch} \to 1$$
se tord en
$$1 \to \faisGm \to \faisU_{A,\nu} \to \faisAut_{A,\nu} \to \faisAut_{Z(A),\nu} \to 1.$$
(Le premier morphisme est bien équivariant pour l'action triviale sur $\faisGm$ et passe donc à la torsion par le même type de raisonnement local que dans le lemme \ref{SLnactionaut_lemm}.) 
Dans le cas d'un $\faisGL_n$-torseur, on a simplement
\begin{equation} \label{secPGLA_eq}
1 \to \faisGm \to \faisGL_{1,A} \to \faisAut_{A} \to 1.
\end{equation}
De même, la suite exacte
$$1 \to \faismu_n \to \faisSL_n \to \faisPGL_n \to 1$$
donne après torsion
$$1 \to \faismu_n \to \faisSU_{A,\nu} \to \faisAut_{A,\nu} \to \faisAut_{Z(A),\nu} \to 1$$ 
ou dans le cas d'un $\faisGL_n$-torseur
$$1 \to \faismu_n \to \faisSL_{1,A} \to \faisAut_{A} \to 1$$ 
et la suite exacte
$$1 \to \faisSL_n \to \faisGL_n \to \faisGm \to 1$$
donne
$$1 \to \faisSU_{A,\nu} \to \faisU_{A,\nu} \to \faisGm \to 1.$$
ou dans le cas d'un $\faisGL_n$-torseur
$$1 \to \faisSL_{1,A} \to \faisGL_{1,A} \to \faisGm \to 1.$$
\end{rema}

\subsection{Suites exactes longues de cohomologie}

Détaillons quelques suites exactes longues de cohomologie associées à des groupes de type $A_n$, en termes de torseurs et de groupes de Brauer d'algèbres d'Azumaya. 

\subsubsection{Gerbes liées par $\faismu_n$ et $\faisGm$ à la Brauer}

Commençons par rappeler quelques faits bien connus à propos du groupe de Brauer. Nous distinguerons le groupe de Brauer-Azumaya, défini en termes de classes d'algèbres d'Azumaya, du groupe de Brauer cohomologique, défini en termes de cohomologie étale.

\begin{defi} \label{BrauerAzumaya_defi}
Le \emph{Groupe de Brauer-Azumaya} $\Brauer(S)$ est l'ensemble des algèbres d'Azumaya sur $S$, quotienté par la relation $A \sim B$ s'il existe deux $\faisO_S$-modules localement libres de type fini $M$ et $N$ un isomorphisme de $\faisO_S$-algèbres $A\otimes_{\faisO_S} \faisEnd_M \simeq B \otimes_{\faisO_S} \faisEnd_N$ et muni de la loi de groupe induite par le produit tensoriel. On note $[A]$ la classe de $A$ dans $\Brauer(S)$.
\end{defi}
L'élément neutre du groupe est la classe de $\faisO_S$ elle-même, et l'inverse de $[A]$ est $[A^\opp]$ la classe de l'algèbre opposée: on dispose en effet de l'isomorphisme bien connu $A \otimes_{\faisO_S} A^\opp \simeq \faisEnd_A$ (ce dernier $A$ est vu comme $\faisO_S$-module) envoyant $a\otimes b$ sur $x \mapsto axb$. Dans le cas d'une base $S$ sur laquelle tous les $\faisO_S$-modules localement libres sont libres, par exemple un anneau local, voire un corps, on retrouve la relation d'équivalence plus connue, pour laquelle $A \sim B$ s'il existe $m,n \in \NN \setminus \{0\}$ et un isomorphisme $\faisM_m(A) \simeq \faisM_n(B)$.

Le groupe $\Het^2(S,\faisGm)$ coïncide avec $\Hfppf^2(S,\faisGm)$ car $\faisGm$ est lisse, par \ref{lisseetalefppf_theo}. Le \emph{groupe de Brauer cohomologique} en est la partie de torsion.
\medskip
 
La construction suivante est une légère généralisation de \cite[Ch. V, \S 4.2]{gir}. Soit $\AVec{A}$ le champ des $A$-modules qui sont localement libres comme $\faisO_S$-modules. \'Etant donnée une $\faisO_S$-algèbre d'Azumaya $A$ fixée, pour toute $\faisO_S$-algèbre d'Azumaya $B$, on considère le foncteur $\Final \to \Azumayan{}$ qui envoie le seul objet d'une fibre sur $B$, et le foncteur $\faisEnd:\AVec{A}_\grpd\to \Azumayan{}$. Le produit fibré $\Final \times_{\Azumaya} \AVec{A}_\grpd$ est un champ par l'exercice \ref{prodfibchamps_exo}, et un objet de ce champ est une paire $(M,\phi)$ où $M$ est un $A$-module localement libre de type fini (comme $A$-module) et $\phi:B \isoto \faisEnd_M$ est un isomorphisme de $\faisO_S$-algèbres (d'Azumaya). Un morphisme $(M,\phi) \to (N,\psi)$ est un isomorphisme $f:M \isoto N$ de $A$-modules tel que $\int_f \circ \phi = \psi$. Ce champ est donc une gerbe, puisque toute algèbre d'Azumaya est localement triviale.
\begin{defi} \label{banalisation_defi}
On appelle \emph{gerbe des $A$-banalisations de $B$} la gerbe ci-dessus. On la note $\Agerban{A}(B)$.
\end{defi}
Le morphisme de faisceaux $\faisGm  \to \faisAut^{\Agerban{A}(B)}_{M,\phi}$ envoyant un scalaire sur l'homothétie correspondante de $M$ est un isomorphisme, par l'exemple \ref{EndMAzumaya_exem}, on en déduit que cette gerbe est liée par $\faisGm$.
\begin{prop} \label{BrauerInject_defi}
L'application qui à une algèbre d'Azumaya $B$ associe la gerbe $\Agerban{A}(B)$ passe aux classes d'équivalences, et définit un morphisme injectif
$$\Brauer(S) \to \Het^2(S,\faisGm).$$
\end{prop}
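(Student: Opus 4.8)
We must show that the assignment $B \mapsto \Agerban{A}(B)$ is well-defined on Brauer classes and yields an injective group homomorphism $\Brauer(S) \to \Het^2(S,\faisGm)$.

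The plan is to treat the case $A=\faisO_S$ of Definition \ref{banalisation_defi}, which is the one banded by $\faisGm$; write $\Agerban{\faisO_S}(B)$ for the gerbe whose objects over $T$ are pairs $(M,\phi)$ with $M$ a locally free $\faisO_T$-module and $\phi\colon B_T\isoto\faisEnd_M$ an isomorphism of $\faisO_T$-algebras. As already noted before the statement, the homotheties identify $\faisGm$ with $\faisAut_{(M,\phi)}$, so $\Agerban{\faisO_S}(B)$ is a gerbe banded by $\faisGm$ and defines a class in $\Het^2(S,\faisGm)$ (the etale and fppf versions agreeing by \ref{lisseetalefppf_theo}). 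The one tool I would use repeatedly is the criterion, established for gerbes banded by an abelian group, that such a gerbe is neutral exactly when it admits a global object over $S$: here $\Agerban{\faisO_S}(B)_S\neq\emptyset$ means precisely that $B\simeq\faisEnd_M$ for some locally free $M$, i.e. that $[B]=0$ in $\Brauer(S)$.

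First I would establish additivity at the level of algebras. Given banalizations $(M_1,\phi_1)$ of $B_1$ and $(M_2,\phi_2)$ of $B_2$, the canonical isomorphism $\faisEnd_{M_1}\otimes\faisEnd_{M_2}\isoto\faisEnd_{M_1\otimes M_2}$ sends them to the banalization $(M_1\otimes M_2,\phi_1\otimes\phi_2)$ of $B_1\otimes B_2$. This yields a functor $\Agerban{\faisO_S}(B_1)\times\Agerban{\faisO_S}(B_2)\to\Agerban{\faisO_S}(B_1\otimes B_2)$ which on automorphism sheaves is the multiplication $\faisGm\times\faisGm\to\faisGm$; by the construction of the Baer sum of gerbes banded by $\faisGm$ it exhibits the class of $\Agerban{\faisO_S}(B_1\otimes B_2)$ as the sum of the classes of $\Agerban{\faisO_S}(B_1)$ and $\Agerban{\faisO_S}(B_2)$ in $\Het^2(S,\faisGm)$. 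Specializing $B_2=\faisEnd_N$, whose gerbe is neutral since it carries the global object $(N,\mathrm{id})$ (example \ref{EndMAzumaya_exem}), gives that $\Agerban{\faisO_S}(B\otimes\faisEnd_N)$ and $\Agerban{\faisO_S}(B)$ have the same class. Equivalently, and more directly for the descent, the assignment $(P,\psi)\mapsto(P\otimes N,\psi\otimes\mathrm{id})$ is a morphism of gerbes banded by $\faisGm$ from $\Agerban{\faisO_S}(B)$ to $\Agerban{\faisO_S}(B\otimes\faisEnd_N)$, hence an equivalence, since every morphism of gerbes banded by an abelian group is one.

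From this the conclusion follows quickly. If $A\otimes\faisEnd_M\simeq B\otimes\faisEnd_N$, then the classes of $\Agerban{\faisO_S}(A)$ and $\Agerban{\faisO_S}(B)$ agree by the previous paragraph, so the construction descends to a map $\bar c\colon\Brauer(S)\to\Het^2(S,\faisGm)$; it is a homomorphism by the additivity above and sends the neutral class to the basepoint, since $\Agerban{\faisO_S}(\faisO_S)$ carries the global object $(\faisO_S,\mathrm{id})$. For injectivity I would invoke the neutrality criterion once more: $\bar c([B])=0$ forces $\Agerban{\faisO_S}(B)_S\neq\emptyset$, whence $B\simeq\faisEnd_M$ and $[B]=0$, and a homomorphism with trivial kernel is injective. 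The main obstacle I anticipate is the bookkeeping in the additivity step --- checking that the tensor-product functor really is a morphism of $\faisGm$-banded gerbes inducing multiplication on the band, and that this genuinely computes the sum in $\Het^2(S,\faisGm)$ rather than merely producing some class. If one only wants an injective morphism of pointed sets, this obstacle disappears and only the neutrality criterion together with the descent equivalence of the second paragraph is required.
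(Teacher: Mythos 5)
Votre argument est correct et essentiellement complet pour le cas $A=\faisO_S$ (ou, ce qui revient au même, $A$ déployée) : le critère de neutralité (une gerbe liée par un faisceau abélien est neutre si et seulement si sa fibre sur $S$ est non vide), l'additivité par produit tensoriel des banalisations induisant la multiplication sur le lien, et l'équivalence $\Agerban{\faisO_S}(B)\isoto\Agerban{\faisO_S}(B\otimes\faisEnd_N)$ donnée par $(P,\psi)\mapsto (P\otimes N,\psi\otimes\id)$ sont exactement les ingrédients que le texte emprunte à Giraud (Ch.~V, Lemme 4.3). Sur ce cas particulier, votre rédaction est même plus détaillée que celle du texte.

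Mais la proposition porte sur une algèbre d'Azumaya $A$ \emph{fixée et quelconque}, et vous annoncez dès le départ que vous ne traitez que $A=\faisO_S$ sans jamais revenir au cas général : c'est là une lacune réelle. Lorsque $A$ n'est pas déployée, l'application $[B]\mapsto [\Agerban{A}(B)]$ n'est \emph{pas} un morphisme de groupes (la gerbe $\Agerban{A}(B)$ est neutre lorsque $B\simeq\faisEnd_M$ pour un $A$-module $M$, c'est-à-dire lorsque $[B]=[A^{\opp}]$ et non $[B]=0$), de sorte que votre argument final \og homomorphisme à noyau trivial donc injectif \fg\ ne s'applique pas tel quel. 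Il manque l'étape qui ramène le cas général au cas déployé : on vérifie que le produit tensoriel sur les objets définit une équivalence de gerbes liées par $\faisGm$ entre $\Agerban{\faisO_S}(A)\times\Agerban{\faisO_S}(B)$ et $\Agerban{A}(B)$, si bien que la classe de $\Agerban{A}(B)$ est celle de $\Agerban{\faisO_S}(B)$ décalée de la constante $[\Agerban{\faisO_S}(A)]$ ; l'application reste donc bien définie sur les classes et injective (une translation d'une injection est une injection), tout en cessant d'être un morphisme de groupes. C'est précisément la seconde moitié de la preuve du texte, absente de votre proposition.
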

\begin{proof}
Le morphisme précédent est en fait un morphisme de groupes lorsque $A$ est déployée, voir \loccit. L'injectivité vient alors du fait qu'une gerbe est neutre si et seulement si sa fibre sur $S$ est non vide. Pour le passage aux classes, voir \cite[Ch. V, Lemme 4.3]{gir}. Plus généralement, lorsque $A$ n'est pas déployée, on vérifie que la gerbe $\Agerban{\faisO_S}(A)\times \Agerban{\faisO_S}(B)$ est en fait équivalente à la gerbe $\Agerban{A}(B)$ par produit tensoriel sur les objets (comme dans \loccit, Ch. V, Lemme 4.3). On est donc ramené au cas $A$ déployé, à un décalage près. (Attention, lorsque $A$ n'est pas déployée, ce n'est donc plus un morphisme de groupes).
\end{proof}

Définissons maintenant un analogue du groupe de Brauer-Azumaya lorsqu'on remplace $\faisGm$ par $\faismu_n$. On considère les triplets $(B,P,\phi)$ où $B$ est une $\faisO_S$-algèbre d'Azumaya, $P$ est un $\faisO_S$-module, et $\phi:B^{\otimes n} \isoto \faisEnd_P$ est un isomorphisme de $\faisO_S$-algèbres. C'est donc une banalisation de $B^{\otimes n}$, mais en conservant la donnée de $B$. Deux tels triplets $(B_1,P_1,\phi_1)$ et $(B_2,P_2,\phi_2)$ sont dits Brauer-équivalents s'il existe 
\begin{itemize}
\item deux $\faisO_S$-modules localement libres $V_1$ et $V_2$
\item un isomorphisme de $\faisO_S$-modules $x:P_1 \otimes V_1^{\otimes n} \simeq P_2 \otimes V_2^{\otimes n}$
\item un isomorphisme de $\faisO_S$-algèbres $y: B_1 \otimes \faisEnd_{V_1} \simeq B_2 \otimes \faisEnd_{V_2}$  
\end{itemize}
qui satisfont à
$$ 
\xymatrix{
(B_1 \otimes \faisEnd_{V_1})^{\otimes n} \ar[r]^{y} \ar[d]_{\phi_1} & (B_2 \otimes \faisEnd_{V_2})^{\otimes n} \ar[d]_{\phi_2} \\ 
\faisEnd_{P_1} \otimes \faisEnd_{V_1}^{\otimes n} \ar[r]^{\int_x} & \faisEnd_{P_2} \otimes \faisEnd_{V_2}^{\otimes n} 
}
$$
(où on a supprimé les identifications canoniques $\faisEnd_{V\otimes W}\simeq \faisEnd_{V}\otimes\faisEnd_W$ et les permutations de facteurs des produits tensoriels pour alléger.)
Il est facile de voir que cela définit bien une relation d'équivalence. 
De plus, il y a un produit tensoriel évident sur les triplets: $(B_1,P_1,\phi_1)\otimes (B_2,P_2,\phi_2)=(B_1 \otimes B_2, P_1\otimes P_2, \phi)$ où la définition de $\phi$ se devine facilement. Ce produit tensoriel passe aux classes d'équivalences et définit une loi de groupe sur le quotient, la classe neutre étant $(\faisO_S,\faisO_S,\faisO_S^{\otimes n}\simeq \faisO_S =\faisEnd_{\faisO_S})$ avec la multiplication $\faisO_S^{\otimes n}\to \faisO_S$. L'inverse de la classe de $(B,P,\phi)$ est $(B^{\opp},P^{\dual},\phi^{\opp})$.
\begin{defi}
On note $\muBrauer{n}(S)$ le groupe ainsi obtenu. 
\end{defi}
Il est clair que $\muBrauer{n}(S)$ est fonctoriel en $S$. De plus, il s'envoie dans $\Brauer(S)$ en ne conservant que l'algèbre dans un triplet.

Définissons maintenant le champ $\nABan{n}{A}$ des algèbres d'Azumaya $n$-$A$-banalisées. Il s'agit du produit fibré $\Azumaya \times_{\Azumaya}\AVec{A}_\grpd$ obtenu en considérant les foncteurs $\Azumaya \to \Azumaya$ envoyant $B$ sur $B^{\otimes n}$ et $\AVec{A}_\grpd \to \Azumaya$ envoyant $P$ sur $\faisEnd^{\AVec{A}}_P$. Explicitement, un objet de ce champ est un $(B,P,\psi)$ avec $B$ une algèbre d'Azumaya, $P$ un $\faisO_S$-module localement libre, et $\psi$ un isomorphisme $B^{\otimes n} \simeq \faisEnd_P$.

\'Etant donné un tel objet $(B,P,\psi)$, considérons maintenant la catégorie fibrée 
$$\germuban{A}{n}(B,P,\psi)=\Final \times_{\nABan{n}{A}} \AVec{A}_\grpd$$ 
où le foncteur $\Final\to \nABan{n}{A}$ envoie le seul objet d'une fibre sur $(B,P,\psi)$, et où celui $\AVec{A}_\grpd \to \nABan{n}{A}$ envoie $N$ sur $(\End_N,N^{\otimes n}, \End_{N^{\otimes n}} \simeq \faisEnd_{N}^{\otimes n})$. C'est bien un champ, comme produit fibré de champs, et c'est une gerbe pour la topologie \fppf, essentiellement parce que toute algèbre d'Azumaya est localement triviale, et qu'on peut localement extraire les racines $n$-ièmes. 

De manière analogue à la proposition \ref{BrauerInject_defi}, on prouve:
\begin{prop}
L'application qui à une algèbre d'Azumaya trivialisée $(B,P,\psi)$ associe la gerbe $\germuban{A}{n}(B)$ passe aux classes d'équivalences, et définit un morphisme injectif
$$\muBrauer{n}(S) \to \Hfppf^2(S,\faisGm).$$
\end{prop}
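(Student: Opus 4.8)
The plan is to transpose, ingredient by ingredient, the proof of Proposition~\ref{BrauerInject_defi}. The first step I would carry out is to identify the link of the gerbe $\germuban{A}{n}(B,P,\psi)$, which also pins down the coefficient group of the target. An object of this gerbe over $T$ is a pair $(N,\alpha)$, where $N$ is an object of $(\AVec{A}_\grpd)_T$ and $\alpha$ is an isomorphism $(B,P,\psi)_T\isoto(\faisEnd_N,N^{\otimes n},\mathrm{can})$ in $\nABan{n}{A}$; an automorphism of $(N,\alpha)$ is then an $A$-module automorphism $f$ of $N$ with $\int_f=\id$ on $\faisEnd_N$ and $f^{\otimes n}=\id$ on $N^{\otimes n}$. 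By Example~\ref{EndMAzumaya_exem}, the relation $\int_f=\id$ forces $f$ to be a homothety by a scalar $\lambda\in\faisGm$, and $f^{\otimes n}=\id$ then reads $\lambda^n=1$. Hence $\faisAut_{(N,\alpha)}\simeq\faismu_n$ canonically, compatibly with base change and with isomorphisms of objects, so $\germuban{A}{n}(B,P,\psi)$ is an \fppf-gerbe linked by $\faismu_n$ and determines a class in $\Hfppf^2(S,\faismu_n)$.

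Second, I would check that the assignment descends to Brauer classes and is a morphism (in fact a group homomorphism, once $\Hfppf^2(S,\faismu_n)$ carries its group structure for the abelian link $\faismu_n$), following \cite[Ch.~V, \S 4.2 and Lemme 4.3]{gir}. As in Proposition~\ref{BrauerInject_defi}, I would first reduce to $A=\faisO_S$: a tensor-product equivalence of the type used there shows that $\germuban{A}{n}(B,P,\psi)$ and $\germuban{\faisO_S}{n}(B,P,\psi)$ differ only by a fixed shift, so it suffices to treat the split case. In that case the tensor product of triples induces the tensor product $N_1\otimes N_2$ on objects, which realizes the group law on gerbes and gives the homomorphism property; and a Brauer-equivalence datum $(V_1,V_2,x,y)$ produces, through $y$ and the commuting square of its definition, a functor between the two gerbes respecting the $\faismu_n$-links. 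By the lemma asserting that every morphism of gerbes linked by an abelian group is an equivalence, the two classes then agree, so the map is well defined on $\muBrauer{n}(S)$.

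Finally, injectivity follows, as in \loccit, from the fact that a gerbe linked by an abelian group is neutral if and only if it admits an object over $S$. Concretely, $\germuban{A}{n}(B,P,\psi)$ has a global object precisely when there is an $A$-module $N$ on $S$ with $(\faisEnd_N,N^{\otimes n},\mathrm{can})\simeq(B,P,\psi)$, that is, precisely when $(B,P,\psi)$ is Brauer-equivalent to the neutral triple $(\faisO_S,\faisO_S,\mathrm{can})$; this forces the class in $\muBrauer{n}(S)$ to be trivial. The main obstacle I expect is the middle step, namely matching the commuting-square condition in the definition of Brauer-equivalence of triples with an honest morphism of $\faismu_n$-linked gerbes --- in other words, adapting Giraud's Lemme~4.3 to the $n$-trivialized setting --- the remaining work being the routine tensor-product bookkeeping that the definitions suppress.
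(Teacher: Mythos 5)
Votre proposition est correcte et suit exactement la voie que le papier sous-entend : le texte ne donne aucune preuve au-del� de la mention \og de mani�re analogue � la proposition pr�c�dente \fg, et votre d�veloppement (identification du lien, r�duction au cas $A=\faisO_S$ par d�calage tensoriel, passage aux classes via le lemme de Giraud, injectivit� par neutralit� de la gerbe) est pr�cis�ment l'analogie attendue. Une remarque utile que votre calcul fait appara�tre : le lien de $\germuban{A}{n}(B,P,\psi)$ est bien $\faismu_n$ et non $\faisGm$, de sorte que la cible naturelle est $\Hfppf^2(S,\faismu_n)$ ; le $\faisGm$ de l'�nonc� imprim� est manifestement une coquille, car l'injectivit� � valeurs dans $\Hfppf^2(S,\faisGm)$ serait fausse en g�n�ral (le noyau de $\Hfppf^2(S,\faismu_n)\to \Hfppf^2(S,\faisGm)$ contient l'image de $\Pic(S)/n$ par la suite de Kummer, et c'est pr�cis�ment l'information suppl�mentaire port�e par la paire $(P,\psi)$). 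Votre preuve d�montre donc la version correcte de l'�nonc�.
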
 

\subsubsection{Suites exactes}

Signalons enfin quelques identifications dans les suites exactes longues en cohomologie étale associées à certaines suites exactes courtes rappelées à la fin de la section \ref{torseurs_sec}. La suite exacte de faisceaux
$$1 \to \faisGmS{S} \to \faisGL_{1,A} \to \faisPGL_A \to 1$$ 
fournit une suite exacte longue
\begin{multline*}
1 \to \Gamma(S)^\times \to A(S)^\times \to \setAut_A(S) \\
\to \Het^1(S,\faisGm) \to \Het^1(S,\faisGL_{1,A}) \to \Het^1(S,\faisPGL_A) \\
\to \Het^2(S,\faisGm)
\end{multline*}
dans laquelle $\Het^1(S,\faisGm)$ et $\Het^1(S,\faisGL_{1,A})$ s'identifient par la proposition \ref{GLntors_prop} respectivement à $\Pic(S)$ et à l'ensemble des classes d'isomorphismes de $A^{\opp}$-modules localement libres de rang $1$. De même, $\Het^1(S,\faisPGL_A)$ s'identifie aux classes d'isomorphismes d'algèbres d'Azumaya (pointé par la classe de $A$). Modulo ces identifications, il est facile de comprendre les foncteurs entre champs qui induisent les flèches de cette suite exacte: l'homomorphisme de connexion $\setAut_A(S)\to \Pic(S)$ est donné par le foncteur qui envoie un automorphisme $\sigma$ sur le fibré en droites $L_\sigma$ tel que $L_\sigma(T) = \{ a \in A(T),\ a_T m =\sigma(m)a_T,\ \forall m \in A(T'),\ \forall T'\to T\}$. On vérifie immédiatement que c'est un fibré en droite en utilisant que localement, $\sigma$ est intérieur.

Le morphisme $\Het^1(S,\faisGm)\to \Het^1(S,\faisGL_{1,A})$ est induit par le foncteur $\Vecn{1}\to \AVecn{1}{A^{\opp}}$ qui envoie un fibré en droites $L$ sur $L \otimes A_d$, vu comme $A^{\opp}$-module par l'action à droite de $A$ sur $A_d$. 

Le morphisme $\Het^1(S,\faisGL_{1,A})\to \Het^1(S,\faisPGL_A)$ est induit par le foncteur $\EndFonc$, décrit juste avant la proposition \ref{GL1Atorseurs_prop}. 

Enfin, le morphisme $\Het^1(S,\faisPGL_A) \to \Het^2(S,\faisGm)$ est induit par le foncteur ``gerbe des relèvements'', défini en \ref{deuxbord_exem}. On obtient donc qu'une $\faisO_S$-algèbre d'Azumaya $B$ est envoyée sur la gerbe dont la fibre en $T$ est constituée des relèvements $(M,\phi)$, où $M$ est un $A^{\opp}$-module, et $\phi:\faisEnd^{A^{\opp}\text{-mod}}_M \isoto B$ est un isomorphisme de $\faisO_S$-algèbres d'Azumaya. On reconnaît donc la gerbe $\Agerban{A^{\opp}}(B)$ des $A^{\opp}$-banalisations de $B$ définie en \ref{banalisation_defi}. Cela prouve que ce dernier morphisme se factorise en fait par $\Brauer(S)$. 
\medskip

Passons maintenant à la suite exacte courte
$$1 \to \faisSL_n \to \faisGL_n \to \faisGm \to 1$$
qui donne la suite exacte longue
\begin{multline*}
1 \to \setSL_n(\Gamma(S)) \to \setGL_n(\Gamma(S)) \to \Gamma(S)^\times \\
\to \Het^1(S,\faisSL_n) \to \Het^1(S,\faisGL_n) \to \Het^1(S,\faisGm)
\end{multline*}
Outre les précédentes identifications, on peut également voir $\Het^1(S,\faisSL_n)$ comme l'ensemble des classes d'isomorphismes de couples $(M,d)$ où $M$ est un fibré de rang $M$ et $d:\Det{M} \to \faisO_S$ est un isomorphisme. Le morphisme de connexion envoie alors un élément $d \in \Gamma(S)^\times$ sur le couple $(\faisO_S^{\oplus n},d)$, et il est donc facile de voir que cet morphisme est trivial (mais en général le morphisme $\Het^1(S,\faisSL_n) \to \Het^1(S,\faisGL_n)$ n'est pas injectif).  Le morphisme suivant oublie $d$, et le dernier envoie un fibré sur son déterminant. 
\medskip

\section{Groupes de nature quadratique} \label{Groupesquadratiques_sec}

Introduisons maintenant un certain nombre de groupes classiques en rapport avec les modules quadratiques, comme le groupe orthogonal, le groupe spin, etc. et essayons de donner une description concrète des torseurs sous ces groupes. Les questions de lissité sont laissées de côté pour le moment, et seront traitées dans les parties de classification. La topologie de Grothendieck considérée est, sauf mention contraire, la topologie étale. 

\subsection{Groupe orthogonal}

Soit $(M,q)$ un module quadratique (voir partie \ref{modulesquadratiques_sec}). Rappelons en particulier que $M$ est un $\faisO_S$-module localement libre de type fini, et qu'on l'oublie souvent dans la notation.

Soit $\Quad$ la catégorie fibrée telle que $\Quad_T$ soit la catégorie des $\faisO_T$-modules quadratiques. C'est un champ par la proposition \ref{champstruc_prop}. 

\begin{prop} \label{Oqrepr_prop}
Le foncteur de points $\faisAut_{M,q}$ est représentable par un sous-schéma en groupes fermé de $\faisGL_M$, donc affine et de type fini sur $S$.
\end{prop}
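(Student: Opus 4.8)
The plan is to realize $\faisAut_{M,q}=\faisorthO_{M,q}$ as the stabiliser of $q$ inside a suitable linear representation of $\faisGL_M$, exactly in the spirit of the proof of Proposition \ref{autrepr_prop}. First I would observe that $\faisAut_{M,q}$ is a subfunctor of $\faisGL_M$, which is representable, affine and of finite type over $S$ by \ref{GLrepr_prop} and \ref{GLn_defi}; so it suffices to exhibit it as a closed subgroup. Since $M$ and $q$ are Zariski sheaves, so is $\faisAut_{M,q}$, and by Proposition \ref{reprLocal_prop} the whole question is local on $S$: I may assume $S=\Spec(R)$ affine with $M$ free of rank $n$.

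In that situation, let $N=\mathrm{Quad}(M)$ denote the $\faisO_S$-module of quadratic forms on $M$ (when $M$ is free of rank $n$ this is free of rank $\binom{n+1}{2}$, hence in general locally free of finite type). The group $\faisGL_M$ acts linearly on $N$ through $\rho(g)\colon p\mapsto p\circ g^{-1}$, giving a morphism of group schemes $\rho\colon\faisGL_M\to\faisGL_N$. The quadratic form $q$ is a global section $q\in N(S)$, and by definition the $T$-points of $\faisAut_{M,q}$ are exactly the $g\in\faisGL_M(T)$ with $q_T\circ g=q_T$, i.e. those for which $\rho(g)$ fixes $q$. Thus $\faisAut_{M,q}$ is the inverse image $\rho^{-1}(\faisStab_q)$ of the stabiliser $\faisStab_q\subseteq\faisGL_N$ of the point $q$.

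Now $\faisStab_q$ is a representable closed subgroup of $\faisGL_N$ by Proposition \ref{stab_prop}, and the inverse image of a closed subgroup under a morphism of representable group schemes is again representable and closed by Remark \ref{imageinvnoyau_rema} (a special case of Lemma \ref{produit_fibre_lemm}). Hence $\faisAut_{M,q}$ is a closed subscheme of $\faisGL_M$, and is therefore affine and of finite type over $S$ since $\faisGL_M$ is. Patching these local descriptions back together by the sheaf property and Proposition \ref{reprLocal_prop} yields representability over the original base $S$.

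The only genuine subtlety — the point to get right — is that one must take the stabiliser of the quadratic form $q$ itself, acting on the module $N=\mathrm{Quad}(M)$ of quadratic forms, rather than the stabiliser of the polar bilinear form $b_q$: in characteristic $2$ the form $q$ is not recovered from $b_q$, so stabilising $b_q$ would produce a strictly larger functor. Once the representation is set up on $\mathrm{Quad}(M)$, checking that $N$ is locally free and that $p\mapsto p\circ g^{-1}$ is a well-defined $\faisO_S$-linear action landing in $N$ is then routine.
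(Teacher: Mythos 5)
Votre démonstration est correcte et suit essentiellement la même voie que celle du texte : le texte voit $q$ comme une section de $\Symalg^2(M^\vee)$ — canoniquement isomorphe à votre module $N=\mathrm{Quad}(M)$ des formes quadratiques puisque $M$ est localement libre — et réalise de même $\faisAut_{M,q}$ comme image inverse du stabilisateur de $q$ sous le morphisme $\faisGL_M \to \faisGL_{\Symalg^2(M^\vee)}$, en concluant par les propositions \ref{stab_prop} et \ref{imageinvnoyau_rema} (la localisation explicite que vous ajoutez est inoffensive, déjà absorbée par la preuve de \ref{stab_prop}). Votre remarque finale — stabiliser $q$ lui-même et non sa forme polaire $b_q$, faute de quoi l'argument échouerait en caractéristique $2$ — est précisément le point qui rend cette construction valable sur toute base.
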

\begin{proof}
On voit $q$ comme un élément de $\Symalg^2(M^\vee)$ 
et on vérifie que $\faisAut_q$ est l'image inverse de $\faisStab_q$ par le morphisme $\faisGL_M\to \faisGL_{\Symalg^2(M^\vee)}$ qui à $\alpha:M\to M$ associe l'automorphisme de $\Symalg^2(M^\vee)$ induit par $\alpha^\vee$. Il est donc représentable par la proposition \ref{stab_prop} et la remarque \ref{imageinvnoyau_rema}, comme un sous-schéma fermé de $\faisGL_M$, qui est de type fini sur $S$. Il est donc également affine de type fini sur $S$. 
Dans le cas affine, on identifie $\faisGL_M(S)=\setGL(M_R)$ grâce au lemme \ref{homMN_lemm}. 
\end{proof}
\begin{defi}
On note $\faisorthO_{q}$ le schéma en groupes défini par la proposition précédente et également $\faisorthO_n=\faisorthO_{\hypq_{n}}$ pour le module hyperbolique $\hypq_n$ de rang $n$.
\end{defi}
Notons que par définition,
$$\faisorthO_q(T)=\{\alpha \in \faisGL_M(T),\ q_{T'}\circ \alpha_{T'}(x) = q_{T'}(x),\ \forall T' \to T,\ \forall x \in M(T')\}.$$
Lorsque $S=\Spec(R)$ et que $(M,q)$ est le $\faisO_S$-module quadratique associé à un $R$-module quadratique $(M_R,q_R)$, on a 
$$\faisorthO_{q}(S)=\setorthO(q_R)=\{\alpha \in \setGL(M_R),\ q_R\circ \alpha = q_R\}$$
en utilisant l'équivalence $\wW$ et le lemme \ref{homMN_lemm}.
Ceci garantit que lorsque la base est un corps, le schéma en groupes défini ci-dessus est bien le schéma en groupes classique, utilisé par exemple dans \cite{borelag} ou \cite{bookinv}. 
Donnons tout de suite une autre caractérisation de $\faisorthO_q$ en rang pair qui servira par la suite, en termes de la paire quadratique associée, définie en \ref{deffq_prop}.
\begin{lemm} \label{Oqinvolution_lemm}
Si $q$ est un module quadratique régulier de rang constant $2n$, alors pour tout schéma $T$ sur $S$,
$$\faisorthO_q(T)=\{g \in \faisGL_M(T),\ \invadj_q(g)g =1,\ f_q \circ \int_g = f_q\}.$$
\end{lemm}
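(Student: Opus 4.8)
The plan is to verify the asserted equality of sets on $T$-points for every $S$-scheme $T$. Both sides are subfunctors of $\faisGL_M$ cut out by conditions of a local nature (equalities of module morphisms, respectively conditions on points, that may be tested Zariski-locally), so I would first reduce to the case where $T=\Spec(R)$ is affine and $M_T$ is free, and argue with an $R$-point $g$ of $\faisGL_M$.

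First I would unwind the condition $\invadj_q(g)g=1$. Since $\invadj_q$ is adjoint to the polar form $b_q$ (see \ref{invadj_defi}), on points one has $\invadj_q(g)=b_q^{-1}g^\dual b_q$, so $\invadj_q(g)g=1$ is equivalent to $g^\dual b_q g=b_q$, that is, to $g$ being an isometry of $b_q$: $b_q(gx,gy)=b_q(x,y)$ for all $x,y$. Note that under this condition $\int_g$ commutes with $\invadj_q$, hence preserves $\faisSym_{\faisEnd_M,\invadj_q}$, so that the second condition $f_q\circ\int_g=f_q$ is meaningful.

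The heart of the matter is a single computation through the isomorphism $\phi_q:M\otimes M\isoto\faisEnd_M$ of \ref{symAsigma_lemm} sending $m_1\otimes m_2$ to $m_1 b_q(m_2,-)$, under which $f_q$ is characterized by $f_q(\phi_q(m\otimes m))=q(m)$ (\ref{deffq_prop}). I would check that whenever $g$ preserves $b_q$ one has $\int_g(\phi_q(m\otimes m))=\phi_q(gm\otimes gm)$: applying both endomorphisms to a vector $v$ and using $b_q(m,g^{-1}v)=b_q(gm,v)$ produces $b_q(gm,v)\,g(m)$ on each side. Consequently $f_q(\int_g(\phi_q(m\otimes m)))=q(gm)$ while $f_q(\phi_q(m\otimes m))=q(m)$. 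This already settles the forward implication: if $g\in\faisorthO_q(T)$, then $g$ preserves $b_q$ by bilinearity of the polarization, whence the first condition, and the displayed identity gives $f_q(\int_g(\phi_q(m\otimes m)))=q(gm)=q(m)=f_q(\phi_q(m\otimes m))$.

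To upgrade the agreement on the special elements $\phi_q(m\otimes m)$ to an equality of the two $\faisO_S$-linear forms on all of $\faisSym$ — and to obtain the converse — I would use that the elements $m\otimes m$ generate the symmetric submodule of $M\otimes M$ over an arbitrary base: in a basis $e_1,\dots,e_{2n}$ the tensors $e_i\otimes e_i$ and $(e_i+e_j)\otimes(e_i+e_j)$ span the $e_i\otimes e_i$ together with the $e_i\otimes e_j+e_j\otimes e_i$, which form a basis of $\faisSym$. Thus $f_q$ and $f_q\circ\int_g$, being linear and agreeing on the generators $\phi_q(m\otimes m)$ as soon as $q(gm)=q(m)$, coincide; conversely, assuming both conditions and evaluating $f_q\circ\int_g=f_q$ on $\phi_q(m\otimes m)$ forces $q(gm)=q(m)$, so $g\in\faisorthO_q$. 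The only delicate point is precisely this generation statement in characteristic $2$, where the polarization fails to be invertible and one cannot identify $\faisSym$ with $\Symalg^2 M$; I would therefore argue directly with the basis rather than by polarization.
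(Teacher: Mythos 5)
Votre preuve est correcte et suit essentiellement la m\^eme d\'emarche que celle du texte : traduction de $\invadj_q(g)g=1$ en la pr\'eservation de $b_q$, calcul de $\int_g\circ\phi_q(m\otimes m)=\phi_q(gm\otimes gm)$, puis comparaison de $f_q\circ\int_g$ et $f_q$ sur les \'el\'ements $\phi_q(m\otimes m)$ dans les deux sens. La seule diff\'erence est cosm\'etique : l\`a o\`u vous red\'emontrez \`a la main (par une base) que les $m\otimes m$ engendrent $\faisSym$, le texte invoque directement la clause d'unicit\'e de $f_q$ dans la proposition \ref{deffq_prop}, qui repose sur le m\^eme fait.
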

\begin{proof}
Notons $\phi_q:M \otimes M \isoto \faisEnd_M$ l'isomorphisme $m_1 \otimes m_2 \mapsto m_1 b_q(m_2,-)$. Rappelons qu'à travers cet isomorphisme, l'involution adjointe $\invadj_q$ correspond à l'échange des facteurs, et qu'on a évidemment $b_q(\invadj_q(x),y)=b_q(x,\invadj_q(y))$. On en tire que si $g \in \faisorthO_q(T)$, alors pour tous les points $x,y$ au-dessus de $T$, on a $b_q(x,\invadj_q(g)(y))=b_q(x,g^{-1}(y))$, d'où $\invadj_q(g)=g^{-1}$ car $b_q$ est non dégénérée par hypothèse. On vérifie également immédiatement que pour tout $g \in \faisGL_M(T)$, $\int_g \circ \phi_q(m_1 \otimes m_2)=\phi_q(g(m_1)\otimes \invadj_q(g^{-1})(m_2)$.
D'où, 
\begin{equation} \label{fqintg_eq}
f_q \circ \int_g \circ \phi_q (m\otimes m)= f_q \circ \phi_q(g(m)\otimes g(m))=q(g(m))=q(m)
\end{equation}
si $g \in \faisorthO_q(T)$. Mais par la propriété d'unicité de $f_q$ de \ref{deffq_prop}, on a alors $f_q\circ \int_g=f_q$.   
Réciproquement, si $g \in \faisGL_M(T)$ vérifie les deux propriétés, on obtient $q(m)=f_q \circ \phi_q (m\otimes m)=f_q \circ \int_g \circ \phi_q (m\otimes m) = q(g(m))$ pour tous les points $m$ au-dessus de $T$, donc $g \in \faisorthO_q(T)$. 
\end{proof}

\begin{prop} \label{formesmodulesquad_prop}
Le foncteur qui à un module quadratique $q$ associe le torseur $\faisIso_{\hypq_{2n},q}$ (resp. $\faisIso_{\hypq_{2n+1},q}$) définit une équivalence de catégories fibrées  
\begin{itemize}
\item du champ $\Formes{\hypq_{2n}}$ vers le champ $\Tors{\faisorthO_{2n}}$;
\item du champ $\Formes{\hypq_{2n+1}}$ vers le champ $\Tors{\faisorthO_{2n+1}}$. 
\end{itemize}
Ceci est valable pour la topologie étale ou \fppf.
\end{prop}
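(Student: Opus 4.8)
Le plan est de voir cet énoncé comme une instance directe de l'équivalence générale entre formes et torseurs établie à la proposition \ref{tordusformes_prop}. Je prendrais pour champ ambiant $\cC$ le champ $\Quad$ de tous les modules quadratiques (à module sous-jacent localement libre), qui est bien un champ pour les topologies Zariski, étale et \fppf\ par la proposition \ref{champstruc_prop} (voir aussi \ref{champquadreg_defi}), et pour objet de base $X_0$ le module hyperbolique $\hypq_{2n}$ (resp. $\hypq_{2n+1}$ en rang impair). Toute la difficulté apparente est ainsi déplacée vers la vérification des hypothèses de \loccit, qui sont ici immédiates.

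Il reste alors à identifier le faisceau des automorphismes de $X_0$ calculé dans le champ $\Quad$. Par la définition \ref{O_defi} du groupe orthogonal et la convention $\faisorthO_n=\faisorthO_{\hypq_n}$, on a exactement $\faisAut_{\hypq_{2n}}=\faisorthO_{\hypq_{2n}}=\faisorthO_{2n}$ (resp. $\faisorthO_{2n+1}$), la représentabilité de ce faisceau par un schéma en groupes affine de type fini étant assurée par la proposition \ref{Oqrepr_prop}. La proposition \ref{tordusformes_prop} fournit donc immédiatement l'équivalence de champs en groupoïdes $\Formes{\hypq_{2n}}\isoto\Tors{\faisorthO_{2n}}$, le foncteur direct étant $q\mapsto \faisIso_{\hypq_{2n},q}$ comme dans \loccit, et de même dans le cas de rang impair avec $\hypq_{2n+1}$ et $\faisorthO_{2n+1}$.

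Puisque $\Quad$ est un champ aussi bien pour la topologie étale que pour la topologie \fppf, et que la proposition \ref{tordusformes_prop} s'applique mot pour mot dans chacun de ces deux cadres (il s'agit de deux énoncés distincts, relatifs à deux notions de forme et de torseur, mais tous deux obtenus par le même argument formel), la double validité annoncée en découle sans travail supplémentaire. Je n'anticipe pas d'obstacle véritable: tout le contenu est encapsulé dans l'équivalence \ref{tordusformes_prop}, et le seul point qui mérite un mot de justification est la cohérence entre le faisceau d'automorphismes vu dans le champ $\Quad$ et le schéma en groupes $\faisorthO$ qui le représente, c'est-à-dire le fait que les isométries entre modules quadratiques coïncident, sur les points, avec les points de $\faisorthO$ — ce qui est essentiellement la définition \ref{O_defi} elle-même. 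On peut accessoirement relier le terme de gauche à une description plus concrète via les propositions \ref{formesetaleshyppair_prop} et \ref{semiregformeshyp_prop}, mais cela n'est pas requis pour établir l'équivalence.
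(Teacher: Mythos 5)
Votre preuve est correcte et suit exactement la m\^eme voie que celle du texte, qui se borne \`a dire que l'\'enonc\'e est une application imm\'ediate de la proposition \ref{tordusformes_prop}, le faisceau d'automorphismes de $\hypq_{2n}$ (resp. $\hypq_{2n+1}$) dans le champ $\Quad$ \'etant $\faisorthO_{2n}$ (resp. $\faisorthO_{2n+1}$) par la d\'efinition \ref{O_defi}. Rien \`a redire.
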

\begin{proof}
C'est une application immédiate de \ref{tordusformes_prop}.
\end{proof}

Soit $q_0$ un module quadratique. Considérons le foncteur fibré $\Quad \to \Quad$ qui à un module quadratique $q$ associe le module quadratique $q \orth q_0$. 
\'Etant donné une forme $q$, ce foncteur induit donc l'inclusion $\faisorthO_q \to \faisorthO_{q\orth q_0}$ prolongeant un automorphisme par l'identité sur le sous-espace correspondant à $q_0$. 
\begin{lemm} \label{Osousforme_lemm}
Si $q'$ est une forme de $q$, correspondant à un torseur sous $\faisorthO_q$, alors le morphisme $\faisorthO_q \to \faisorthO_{q \orth q_0}$ se tord par la procédure de la proposition \ref{foncttors_prop} en le morphisme $\faisorthO_{q'} \to \faisorthO_{q' \orth q_0}$.
\end{lemm}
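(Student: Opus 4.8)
The plan is to deduce this lemma as a special case of Proposition \ref{foncttors_prop}, applied to the fibered functor $F:\Quad \to \Quad$ given on each fiber by $q \mapsto q \orth q_0$. First I would check that $F$ is genuinely a morphism of stacks: the orthogonal sum is functorial in the first variable and commutes with base change, so $q \mapsto q \orth q_0$ defines a fibered functor $\Quad \to \Quad$. On the object $q$ one has $\faisAut_q = \faisorthO_q$ and $\faisAut_{F(q)} = \faisorthO_{q \orth q_0}$, and the morphism $\faisAut_q \to \faisAut_{F(q)}$ that $F$ induces (in the sense of Proposition \ref{torsionmorphisme_prop}) is exactly the inclusion $\faisorthO_q \to \faisorthO_{q \orth q_0}$ extending an automorphism by the identity on the $q_0$-summand, i.e. the morphism described just before the statement. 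This identification is checked on points and is immediate.

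Next, since $q'$ is a form of $q$, Proposition \ref{tordusformes_prop} attaches to it a torsor $P$ under $\faisorthO_q$ together with a canonical isomorphism $q' \simeq P \contr{\faisorthO_q} q$. Point \ref{fonctobjet_item} of Proposition \ref{foncttors_prop} then supplies a natural isomorphism $F(q') = q' \orth q_0 \simeq P' \contr{\faisorthO_{q \orth q_0}} (q \orth q_0)$, where $P'$ is the pushforward of $P$ along $\faisorthO_q \to \faisorthO_{q \orth q_0}$. In particular $q' \orth q_0$ is the form of $q \orth q_0$ carried by $P'$, so its automorphism group is $\faisorthO_{q' \orth q_0}$, which is what lets us name the target of the twisted morphism concretely.

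Finally, I would invoke point \ref{fonctaut_item} of Proposition \ref{foncttors_prop}, which states precisely that the morphism $\faisorthO_q \to \faisorthO_{q \orth q_0}$ twists (via Proposition \ref{torsionmorphisme_prop}, using the identifications of Proposition \ref{auttordus_prop} and of point \ref{fonctobjet_item} above) into $\faisAut_{P \contr{\faisorthO_q} q} \to \faisAut_{F(P \contr{\faisorthO_q} q)}$, that is, into $\faisorthO_{q'} \to \faisorthO_{q' \orth q_0}$. This is the assertion of the lemma. The only delicate part is purely bookkeeping: matching the abstract twisted morphism produced by Proposition \ref{foncttors_prop} with the named orthogonal groups, which amounts to tracking the canonical isomorphisms $q' \simeq P \contr{\faisorthO_q} q$ and $q' \orth q_0 \simeq P' \contr{\faisorthO_{q \orth q_0}} (q \orth q_0)$ through the construction, together with the pointwise identification of the functor-induced map with the concrete inclusion.
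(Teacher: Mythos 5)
Votre preuve est correcte et suit exactement la m\^eme voie que celle du texte, qui se contente d'\'ecrire que c'est une application directe de la proposition \ref{foncttors_prop} : vous ne faites qu'expliciter le foncteur fibr\'e $q \mapsto q \orth q_0$, l'identification des groupes d'automorphismes et l'usage des deux points de cette proposition, d\'etails que le texte laisse implicites.
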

\begin{proof}
C'est une application directe de la proposition \ref{foncttors_prop}.
\end{proof}

\subsection{Algèbres de Clifford}

Rappelons maintenant la construction et quelques propriétés de l'algèbre de Clifford d'un module quadratique $(M,q)$, toujours supposé localement libre de type fini. Par le corollaire \ref{Wequivloclibres_coro}, on peut considérer que $M=\wW(\cM)$ et que la structure de module quadratique sur $M$ provient d'une structure de même nature sur $\cM$, également notée $q$.

On considère l'algèbre tensorielle $\bigoplus_i \cM^{\otimes i}$, construite à l'aide du produit tensoriel des $\cO_S$-modules. 
On considère alors le quotient, en tant que préfaisceau, de ce produit tensoriel par l'idéal bilatère engendré par les sections de la forme $m^{\otimes 2}-q(m)1$, puis on le faisceautise (Zariski). On applique $\wW$ au faisceau en $\cO_S$-algèbres obtenu, qui hérite d'une $\ZZ/2$-graduation de l'algèbre tensorielle. 

\begin{defi} \label{algCliff_defi}
Pour tout module quadratique $(M,q)$ sur la base $S$, on note $\faisCliff_{M,q}$ ou même $\faisCliff_q$ le faisceau en $\faisO_S$-algèbres $\ZZ/2$-graduées construit ci-dessus. On l'appelle \emph{algèbre de Clifford} de $(M,q)$. On note $\faisCliff_{0,q}$ et $\faisCliff_{1,q}$ ses parties respectivement paire et impaire. 
\end{defi}

Dans le cas affine, le $\cO_S$-module $\cM$ étant cohérent, 
on a:
\begin{prop}
Si $T=\Spec(R')$, alors $\faisCliff_{M,q}(R')=\setCliff(M(R'),q_{R'})$, où ce dernier est l'algèbre de Clifford usuelle d'un module quadratique sur un anneau (voir \cite[Ch. IV, \S 1]{knus}).
\end{prop}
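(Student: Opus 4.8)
The plan is to stay within the category of quasi-coherent $\cO_S$-modules and to reduce the claim, via the definition of $\wW$, to a computation of global sections over the affine scheme $T$. Write $\mathcal{C}$ for the sheaf of $\cO_S$-algebras obtained before applying $\wW$, so that by construction $\faisCliff_{M,q}=\wW(\mathcal{C})$ and hence $\faisCliff_{M,q}(R')=\Gamma(T,\mathcal{C}_T)$, where $\mathcal{C}_T=\mathcal{C}\otimes_{\cO_S}\cO_T$. It therefore suffices to identify $\Gamma(T,\mathcal{C}_T)$ with the usual Clifford algebra $\setCliff(M(R'),q_{R'})$.

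First I would check quasi-coherence. Since $\cM$ is coherent and locally free, each tensor power $\cM^{\otimes i}$ is quasi-coherent, and so is the tensor algebra $\mathcal{T}(\cM)=\bigoplus_i\cM^{\otimes i}$ as their direct sum. The two-sided ideal sheaf $\mathcal{I}$ generated by the sections $m^{\otimes 2}-q(m)\cdot 1$ is the image of a morphism of quasi-coherent sheaves, hence quasi-coherent, and the sheafification of the presheaf quotient is exactly the sheaf cokernel $\mathcal{C}=\mathcal{T}(\cM)/\mathcal{I}$, again quasi-coherent. Next I would verify that the construction commutes with the base change to $T$: pullback is right exact and monoidal, so $\mathcal{T}(\cM)_T=\mathcal{T}(\cM_T)$, the ideal $\mathcal{I}_T$ is the two-sided ideal of $\mathcal{T}(\cM_T)$ generated by the pulled-back relations $x^{\otimes 2}-q_{R'}(x)\cdot 1$, and $\mathcal{C}_T=\mathcal{T}(\cM_T)/\mathcal{I}_T$ is the Clifford sheaf of the quadratic $\cO_T$-module $(\cM_T,q_{R'})$.

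Finally I would compute the global sections over the affine scheme $T=\Spec(R')$. Here the functor $\widetilde{(-)}$ from $R'$-modules to quasi-coherent $\cO_T$-modules is an exact equivalence, under which $\Gamma(T,\mathcal{T}(\cM_T))$ is the tensor algebra of the $R'$-module $M(R')=\Gamma(T,\cM_T)$ and $\Gamma(T,\mathcal{I}_T)$ is the two-sided ideal $I$ generated by the Clifford relations $x^{\otimes 2}-q_{R'}(x)\cdot 1$. Applying $\Gamma(T,-)$ to the short exact sequence $0\to\mathcal{I}_T\to\mathcal{T}(\cM_T)\to\mathcal{C}_T\to 0$ of quasi-coherent sheaves on the affine $T$ keeps it exact, because $H^1$ of a quasi-coherent sheaf on an affine scheme vanishes; the $\ZZ/2$-grading is inherited throughout. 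Thus $\Gamma(T,\mathcal{C}_T)$ is the quotient of the tensor algebra of $M(R')$ by $I$, which is precisely $\setCliff(M(R'),q_{R'})$.

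The main obstacle is the identification of $\Gamma(T,\mathcal{I}_T)$ with the algebraically generated ideal $I$: a priori the ideal was obtained by sheafifying a presheaf quotient, and sheafification can alter global sections. What saves the argument is precisely the exactness of $\widetilde{(-)}$ on the affine $T$, under which the quasi-coherent ideal sheaf generated by the global relations corresponds to $\widetilde{I}$, so that no spurious sections are introduced; once this is in place, the remaining steps are the routine compatibilities of the tensor algebra and of the quotient with base change.
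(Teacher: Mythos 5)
Your proof is correct and follows exactly the route the paper intends: the paper offers no written proof at all, merely prefacing the statement with the remark that in the affine case $\cM$ is coherent, which is shorthand for precisely your argument (the whole construction stays within quasi-coherent $\cO_T$-modules, where the equivalence $\widetilde{(-)}$ and the exactness of $\Gamma(T,-)$ on quasi-coherents over the affine $T$ identify the sheaf-theoretic quotient with the module-theoretic one). Your explicit treatment of the ideal sheaf and of the vanishing of $H^1$ supplies the details the paper leaves implicit, and is sound.
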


\begin{prop} \label{chgmtBaseCliff_prop}
Pour tout morphisme $T \to S$, on a $\faisCliff_{q_{T}} \simeq (\faisCliff_q)_{T}$.
\end{prop}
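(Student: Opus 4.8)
The plan is to descend the question to the quasi-cohérent $\cO_S$-modules out of which the Clifford algebra is built, and then to transport the answer through $\wW$. Recall from the construction that $\faisCliff_q=\wW\big(\setCliff(\cM,q)\big)$, where $\setCliff(\cM,q)$ denotes the faisceau en $\cO_S$-algèbres $\ZZ/2$-graduées obtained by sheafifying, for the Zariski topology, the presheaf quotient of the tensor algebra $\bigoplus_i\cM^{\otimes i}$ by the two-sided ideal engendré par les sections $m^{\otimes 2}-q(m)\cdot 1$. Since $\wW$ commute au changement de base (proposition \ref{Wprop_prop}, point \ref{Wchmtbase_item}), \ie\ $\wW(\cF_T)=\wW(\cF)_T$ with $\cF_T=\cF\otimes_{\cO_S}\cO_T$, it is enough to produce a canonical isomorphism of faisceaux en $\cO_T$-algèbres
$$\setCliff(\cM,q)_T\;\simeq\;\setCliff(\cM_T,q_T),$$
bearing in mind that $\cM_T$ is the $\cO_T$-module underlying $q_T$.

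To obtain this isomorphism I would check that each of the three operations used to form $\setCliff(\cM,q)$ is compatible with the base-change functor $(-)_T=f^*$, where $f\colon T\to S$. The pullback of $\cO$-modules is a left adjoint, hence commutes with direct sums and is monoidal for $\otimes_{\cO}$; thus $\big(\bigoplus_i\cM^{\otimes i}\big)_T\simeq\bigoplus_i(\cM_T)^{\otimes i}$, \ie\ base change carries the tensor algebra of $\cM$ to that of $\cM_T$. Being a left adjoint, $f^*$ is also right exact, so it commutes with the presheaf cokernel defining the quotient, and the image of the pulled-back ideal is the two-sided ideal generated by the images of the chosen generators, namely the sections $m^{\otimes 2}-q_T(m)\cdot 1$; the quotient is therefore the presheaf defining $\setCliff(\cM_T,q_T)$. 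Finally $f^*$ commutes with Zariski sheafification, again because it is a left adjoint. Composing the three identifications yields the desired canonical comparison morphism.

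The step deserving the most care is precisely this passage through a presheaf quotient and its sheafification, since the construction of $\setCliff(\cM,q)$ is not manifestly universal at the level of sheaves; the safe way to conclude is to verify that the comparison morphism is an isomorphism, and this may be done locally. As being an isomorphism of sheaves is Zariski-local on $S$ and on $T$, one reduces to $S=\Spec(R)$ and $T=\Spec(R')$ affine with $\cM$ libre. There the proposition preceding the statement gives $\faisCliff_q(R)=\setCliff(M(R),q_R)$, so that $(\faisCliff_q)_T$ has global sections $\setCliff(M(R),q_R)\otimes_R R'$ while $\faisCliff_{q_T}$ has global sections $\setCliff(M(R)\otimes_R R',q_{R'})$; the classical base-change isomorphism for Clifford algebras of quadratic modules over rings, $\setCliff(N,q)\otimes_R R'\simeq\setCliff(N\otimes_R R',q\otimes 1)$, for which I would cite \cite[Ch. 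IV]{knus}, then identifies these two algebras. Its naturality ensures that the local isomorphisms so obtained are induced by the global comparison morphism, which is hence an isomorphism, giving $\faisCliff_{q_T}\simeq(\faisCliff_q)_T$.
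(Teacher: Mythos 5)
Votre démonstration est correcte et suit pour l'essentiel la même voie que celle du texte : on invoque le cas affine classique de \cite[Ch. IV]{knus}, on utilise la commutation de $\wW$ au changement de base (proposition \ref{Wprop_prop}), puis on globalise le morphisme de comparaison par naturalité en vérifiant localement que c'est un isomorphisme. Les vérifications supplémentaires sur la compatibilité du produit tensoriel, du quotient et de la faisceautisation avec $f^*$ sont un complément utile mais ne changent pas la structure de l'argument.
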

\begin{proof}
Voir \cite[Ch. IV, Prop. 1.2.3]{knus} pour le cas affine, avant d'appliquer le foncteur $\wW$, qui commute au changement de base par la proposition \ref{Wprop_prop}. Ce raisonnement s'adapte directement au cas général (non affine) ou bien, au choix, on utilise le cas affine et la naturalité de l'isomorphisme pour globaliser le morphisme et vérifier que c'est un isomorphisme.
\end{proof}

L'algèbre de Clifford d'un module quadratique $(M,q)$ satisfait à la propriété universelle suivante. C'est l'objet initial de la catégorie des paires $(A,j)$ où $A$ est une $\faisO_S$-algèbre localement libre de type fini, et où $j:M \to A$ est un morphisme de $\faisO_S$-module, qui satisfait à $j(m)^2=q(m)\cdot 1$ dans l'algèbre $A$ (ce qui peut s'exprimer comme un diagramme).
Vérifier cette propriété universelle revient à construire un morphisme, évident sur le préfaisceau, puis qu'on faisceautise.
Cela permet par exemple de construire deux involutions:
soit $\invstd: \faisCliff_q \to \faisCliff_q^\opp$ l'unique morphisme de $\faisO_S$-algèbres qui vérifie $\invstd(m)=m$ (resp. $\invstd(m)=-m$) pour tout $m \in M(T)$. Il est automatiquement d'ordre $2$ ($\invstd^2=\id$) par la propriété universelle. 
\begin{defi}[involutions] \label{involutionstd_defi}
L'involution $\invstd$ de $\faisCliff_q$ telle que $\invstd_{|M}=\id_M$ est appelée l'involution canonique. Celle telle que $\invstd_{|M}=-\id_M$ est appelée l'involution standard, et on note cette dernière $\invstd_q$.
\end{defi}

Considérons la forme hyperbolique $\hypq_M$ sur le module $M \oplus M^\dual$ (avec la notation de \ref{hyperbolique_defi}), et soit $\totExt M = \bigoplus_i \Lambda^i M$. On définit une application $j: M \oplus M^\dual \to \faisEnd_{\totExt M}$ sur les points en envoyant $m \in M(T)$ sur l'endomorphisme $m\wedge(-)$ et $f \in M^\vee(T)$ sur $d_f: m_1\wedge \ldots \wedge m_r \mapsto \sum_{i=1}^r (-1)^{i-1} f(m_i) m_1\wedge \ldots \check{m}_i \ldots \wedge m_r$. On vérifie qu'elle satisfait bien à $j(m)^2=q(m)\cdot Id_{\totExt M_T}$ (voir par exemple \cite[Ch. IV, Prop. 2.1.1]{knus}. Cela induit donc par propriété universelle un morphisme
$$\faisCliff_{\hypq_M} \to \faisEnd_{\totExt M}.$$
\begin{prop} \label{CliffHypPaire_prop}
Le morphisme ci-dessus est un isomorphisme de $\faisO_S$-algèbres graduées si on met la graduation sur $\faisEnd_{\totExt M}$ induite par la décomposition $\totExt M \simeq \totExtPair M \oplus \totExtImpair M$ (puissances extérieures paires et impaires).
\end{prop}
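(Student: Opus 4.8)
The plan is to reduce to the classical statement over a ring on which $M$ is free, and to recover the general case by the locality of the isomorphism property together with the base-change behaviour of the Clifford functor.

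First I would check that the morphism is \emph{graded}. The generators $m\in M$ and $f\in M^\dual$ of $\faisCliff_{\hypq_M}$ are homogeneous of odd degree, and their images act on $\totExt M$ respectively as exterior multiplication $m\wedge(-)$ and as the contraction $d_f$, both of which change the parity of the exterior degree. Hence $j$ lands in the odd part $\faisHom_{\totExtPair M,\totExtImpair M}\oplus\faisHom_{\totExtImpair M,\totExtPair M}$ of $\faisEnd_{\totExt M}$ for the grading coming from $\totExt M\simeq\totExtPair M\oplus\totExtImpair M$. Since these elements generate $\faisCliff_{\hypq_M}$ as a $\faisO_S$-algebra and the induced morphism is an algebra morphism by the universal property, it respects the $\ZZ/2$-grading. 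Because a grading-preserving morphism that is bijective is automatically an isomorphism on each graded piece, it remains only to prove that the morphism is an isomorphism of $\faisO_S$-modules.

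Being an isomorphism of $\faisO_S$-modules may be tested Zariski-locally on $S$, so by Corollary \ref{Wequivloclibres_coro} I may assume $S=\Spec(R)$ affine and $\cM$ free of rank $n$. By Proposition \ref{chgmtBaseCliff_prop} and the compatibility of $\wW$ with base change (Proposition \ref{Wprop_prop}), the morphism is then the image under $\wW\circ\tilde{(-)}$ of the analogous $R$-linear map $\setCliff(\hypq_M)\to\End_R(\totExt M)$, so by full faithfulness of $\wW\circ\tilde{(-)}$ it suffices to treat this map over $R$ with $M$ free. Here both sides are free $R$-modules of the same rank $4^n$: the Clifford algebra of a rank-$2n$ quadratic module is locally free of rank $2^{2n}$, while $\totExt M$ has rank $2^n$ and hence $\End_R(\totExt M)$ has rank $(2^n)^2=4^n$. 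A surjection between free modules of the same finite rank is an isomorphism, so it is enough to prove surjectivity.

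The hard part is exactly this surjectivity, and it can be checked on the residue fields of $R$: there it is the classical fact that the creation operators $e_i\wedge(-)$ and the annihilation operators $d_{e_i^\dual}$ generate the full endomorphism algebra of $\totExt M$. Starting from the cyclic vector $1\in\Lambda^0 M$ one produces every basis monomial by successive wedging, and every elementary matrix unit by composing a wedge taking $1$ to a monomial with a contraction returning to $1$; this is precisely \cite[Ch. IV, Prop. 2.1.1]{knus}. Once surjectivity holds on every residue field, Nakayama upgrades it to surjectivity over each local ring, the rank count makes it bijective there, and the isomorphism property then globalizes by its Zariski-locality. Everything apart from the fibrewise generation statement is formal.
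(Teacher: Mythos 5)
Votre preuve est correcte et suit pour l'essentiel la même stratégie que celle du texte : on vérifie que le morphisme est gradué, on se ramène par localité au cas affine avec $M$ libre, et on invoque \cite[Ch. IV, Prop. 2.1.1]{knus} (le texte cite directement cet énoncé pour tout le cas local, là où vous ne l'utilisez que sur les corps résiduels avant de remonter par Nakayama et un comptage de rangs, ce qui revient au même). Rien à redire.
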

\begin{proof}
Cela se vérifie localement, auquel cas voir \cite[Ch. IV, Prop. 2.1.1]{knus}.
\end{proof}

Puisque $\faisEnd_{\totExt M}$ est une $\faisO_S$-algèbre d'Azumaya, il en est de même pour $\faisCliff_q$ pour toute forme $q$ de $\hypq_{2n}$ par \ref{chgmtBaseCliff_prop}. En particulier, le centre de $\faisCliff_q$ est alors $\faisO_S$. On obtient donc un foncteur de catégories fibrées:
$$\CliffFonc:\RegQuadn{2n}\to \Azumayan{2^n}.$$
Nous considérerons plus tard également un foncteur similaire et du même nom à valeurs dans les algèbres d'Azumaya graduées.

\begin{theo}[Structure des algèbres de Clifford en rang pair] \label{strucalgcliffpaires_theo}
Si $(M,q)$ est un module quadratique régulier de rang $2n$, alors son algèbre de Clifford est une $\faisO_S$-algèbre d'Azumaya. Le centre $Z_0$ de l'algèbre de Clifford paire $\faisCliff_{0,q}$ est une $\faisO_S$-algèbre étale finie de rang $2$, et $\faisCliff_{0,q}$ est d'Azumaya dessus.
\end{theo}
\begin{proof}
Toutes ces propriétés peuvent se vérifier localement, auquel cas on est ramené à \cite[Ch. IV, (2.2.3)]{knus}.
\end{proof}
\medskip

Avant de passer aux modules quadratiques de rang impair, introduisons l'algèbre de Clifford paire d'une paire quadratique de rang pair. Nous nous bornons à vérifier que la construction donnée dans \cite[Ch. II, § 8B]{bookinv} sur un corps s'étend sans réelle difficulté à une base quelconque. 

Soit $(A,\sigma,f)$ une paire quadratique. 
\begin{lemm} \label{lsigma_lemm}
Localement pour la topologie de Zariski, il existe un élément $l$ tel que $\trd(ls)=f(s)$ pour tout $s\in \faisSym_{A,\sigma}(S)$, et cet élément vérifie $l+\sigma(l)=1$. Deux tels éléments diffèrent localement d'un élément alterné, c'est-à-dire de la forme $\sigma(a)-a$, $a \in A(S)$.
\end{lemm}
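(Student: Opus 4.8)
L'idée est d'exploiter la régularité de la forme bilinéaire $b(x,y)=\trd(xy)$ sur $A$ --- régulière puisque $A$ est de degré pair (lemme \ref{trdreg_lemm}) --- et le fait que $\faisAlt_{A,\sigma}$ soit l'orthogonal de $\faisSym_{A,\sigma}$ pour $b$ (lemme \ref{symAsigma_lemm}, point \ref{SymAltorth_item}). Je commencerais par observer que la suite de $\faisO_S$-modules
\[
0 \to \faisSym_{A,\sigma} \to A \tooby{\sigma-\id} \faisAlt_{A,\sigma} \to 0
\]
est exacte, car $\faisSym_{A,\sigma}=\ker(\sigma-\id)$ et $\faisAlt_{A,\sigma}=\Im(\sigma-\id)$ par définition, et que son terme de droite est localement libre (lemme \ref{symAsigma_lemm}). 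Ainsi $\faisSym_{A,\sigma}$ est Zariski-localement un facteur direct de $A$, et l'inclusion $\faisSym_{A,\sigma}\hookrightarrow A$ se dualise en un épimorphisme localement scindé $A^\dual \to \faisSym_{A,\sigma}^\dual$.

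Je considérerais ensuite le morphisme de $\faisO_S$-modules $\beta\colon A \to \faisSym_{A,\sigma}^\dual$ donné sur les points par $l \mapsto (s\mapsto \trd(ls))$. C'est la composée de l'isomorphisme $A\isoto A^\dual$ induit par $b$ et de l'épimorphisme localement scindé ci-dessus, donc $\beta$ est lui aussi un épimorphisme localement scindé, de noyau $\faisSym_{A,\sigma}^\perp=\faisAlt_{A,\sigma}$. Comme $f$ est précisément une section globale de $\faisSym_{A,\sigma}^\dual$, il se relève, Zariski-localement, en un élément $l$ vérifiant $\trd(ls)=f(s)$ pour tout $s$ symétrique: c'est l'existence cherchée. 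De plus, deux tels relèvements diffèrent d'un élément du noyau $\faisAlt_{A,\sigma}$, donc localement de la forme $\sigma(a)-a$, le faisceau $\faisAlt_{A,\sigma}$ étant par définition l'image Zariski de $\sigma-\id$; ceci établit la dernière assertion.

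Il reste l'identité $l+\sigma(l)=1$. Par régularité de $b$, il suffit de montrer $\trd\bigl((l+\sigma(l))x\bigr)=\trd(x)$ pour tout $x\in A$. En utilisant $\trd\circ\sigma=\trd$ (lemme \ref{TrdNrdSigma_lemm}), le caractère anti-automorphique involutif de $\sigma$ et la symétrie $\trd(ab)=\trd(ba)$ de la trace réduite, on calcule $\trd(\sigma(l)x)=\trd(\sigma(\sigma(l)x))=\trd(\sigma(x)\,l)=\trd(l\,\sigma(x))$, d'où
\[
\trd\bigl((l+\sigma(l))x\bigr)=\trd\bigl(l\,(x+\sigma(x))\bigr).
\]
Or $x+\sigma(x)$ est symétrique, donc ce terme vaut $f(x+\sigma(x))$ par le choix de $l$, et l'axiome des paires quadratiques $f(a+\sigma(a))=\trd(a)$ (définition \ref{pairesquad_defi}) donne $f(x+\sigma(x))=\trd(x)$. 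On conclut $l+\sigma(l)=1$.

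Le point le plus délicat est la localité Zariski du relèvement de $f$, qui repose entièrement sur le fait que $\faisSym_{A,\sigma}$ est un facteur direct localement libre de $A$ (via la suite exacte ci-dessus) et sur la régularité de la forme trace, laquelle n'est assurée qu'en degré pair --- hypothèse présente dans la notion de paire quadratique. Une fois $l$ construit, l'identité $l+\sigma(l)=1$ est purement formelle et n'utilise que l'axiome définissant $f$.
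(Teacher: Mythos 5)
Votre preuve est correcte et suit essentiellement la même démarche que celle du texte : localisation Zariski pour faire de $\faisSym_{A,\sigma}$ un facteur direct de $A$, utilisation de la régularité de la forme trace réduite pour relever $f$ en un élément $l$, le même calcul $\trd\bigl((l+\sigma(l))x\bigr)=\trd\bigl(l(x+\sigma(x))\bigr)=f(x+\sigma(x))=\trd(x)$ pour obtenir $l+\sigma(l)=1$, et le point \ref{SymAltorth_item} du lemme \ref{symAsigma_lemm} pour l'unicité à un élément alterné près. Votre formulation via l'épimorphisme localement scindé $\beta\colon A\to\faisSym_{A,\sigma}^\dual$ n'est qu'un reconditionnement du « on étend $f$ n'importe comment puis on applique la régularité » du texte.
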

\begin{proof}
Sur un corps, ceci est prouvé en \cite[Ch. I, (5.7)]{bookinv}. Répétons-donc cette preuve avec les légères modifications nécessaires. On commence par se placer sur un ouvert sur lequel le module $\faisSym_{A,\sigma}$ est facteur direct de $A$. Cela permet d'étendre la forme linéaire $f$ à tout $A$ (n'importe comment). Puis, par régularité de la forme trace réduite (lemme \ref{trdreg_lemm}), on trouve alors $l$ tel que $f(a)=\trd(la)$ sur tout $A$. On a alors $\trd(\sigma(l)a)=\trd(\sigma(a)l)=\trd(l\sigma(a))$, d'où
$$\trd\big((l+\sigma(l))a\big)=\trd(l(a+\sigma(a))=f(a+\sigma(a))=\trd(a)$$ 
et $l+\sigma(l)=1$ par régularité de la forme trace.
Par ailleurs, par le lemme \ref{symAsigma_lemm}, point \ref{SymAltorth_item}, il est clair que deux tels éléments $l$ diffèrent localement d'un élément alterné.
\end{proof}

Par l'équivalence de catégories $\wW$, $A$ provient d'une algèbre d'Azumaya $\cA$ sur $\cO_S$. On considère l'algèbre tensorielle $T(\cA)$, puis le faisceau Zariski quotient $T(\cA)/(J_1(\sigma,f) + J_2(\sigma,f))$ où $J_1(\sigma, f)$ est le faisceau d'idéaux engendré par les éléments de la forme $s - f(s)$ avec $s \in \faisSym_{A,\sigma}$ et $J_2(\sigma, f)$ est le faisceaux d'idéaux engendré par les élément de la forme $\iota(u)$, où $\iota$ se définit de la façon suivante (voir \cite[Ch I, 8B]{bookinv}). 

Soit $\sand: \cA\otimes \cA \to \faisEnd_{\cA}$ l'application ``sandwich'', qui envoie $a \otimes b$ sur $x \mapsto a x b$. Premièrement, on définit $\sigma_2:\cA \otimes \cA \to \cA \otimes \cA$ par la condition $\sand(u)(\sigma(a))=\sand(\sigma_2(u))(a)$ pour des points $u$ de $\cA \otimes \cA$ et $a$ de $\cA$. Puis, on vérifie que si $u$ est tel que $\sigma_2(u)=u$, et si $l$ satisfait aux conditions du lemme \ref{lsigma_lemm} sur un ouvert, alors $\sigma(u)(l)$, ne dépend pas du choix d'un tel $l$. Cela permet par recollement (descente) de construire l'application $\iota=\sigma(-)(l)-\id$, de source le sous-faisceau de $\cA$ des éléments tels que $\sigma_2(a)=a$. Enfin, on définit l'algèbre de Clifford paire de $(\cA,\sigma,f)$ comme le quotient Zariski de $T(\cA)/(J_1(\sigma,f) + J_2(\sigma,f))$. 
\begin{defi} \label{AlgCliffPaireQuad_defi}
L'\emph{algèbre de Clifford} d'une paire quadratique $(A,\sigma,f)$, notée $\faisCliff_{0,A,\sigma,f}$, est définie comme la $\faisO_S$-algèbre image par le foncteur $\wW$ de la $\cO_S$-algèbre $(\cA,\sigma,f)$, définie ci-dessus. 
\end{defi}
Elle est naturellement munie d'une involution standard $\underline{\sigma}$ induite par l'inversion de l'ordre des facteurs sur l'algèbre tensorielle de $A$, et par l'involution $\sigma$ sur chaque facteur $A$ (voir \cite[Ch. II (8.11)]{bookinv}).

\begin{theo} \label{CompareClifford_theo}
Si $(M,q)$ est un module quadratique régulier de rang $2n$, alors on a un isomorphisme canonique
$$\faisCliff_{0,q} \simeq \faisCliff_{0,\faisEnd_M,\invadj_q,f_q}$$
où la paire quadratique $(\faisEnd_M,\invadj_q,f_q)$ a été définie en \ref{deffq_prop}. 
Par cet isomorphisme, l'involution standard $\invstd_q$ restreinte à $\faisCliff_{0,q}$ correspond à l'involution standard $\underline{\sigma}$.
\end{theo}
\begin{proof}
Le morphisme canonique est construit en passant au quotient de la partie paire de l'algèbre tensorielle, au moyen de l'identification canonique $M \otimes M \to \faisEnd_M$ envoyant $m_1 \otimes m_2$ sur $m_1 b_q (m_2,-)$. Ensuite, il suffit de vérifier que c'est un isomorphisme sur tous les corps résiduels des points fermés. On est donc ramené au cas des corps, qui est traité dans \cite[Ch. II, Prop. (8.8)]{bookinv}.
\end{proof}

\begin{theo}[Structure des algèbres de Clifford paires des paires quadratiques] \label{CliffodPaireQuad_theo}
Soit $(A,\sigma,f)$ une paire quadratique. Son algèbre de Clifford paire $\faisCliff_{0,A,\sigma,f}$ a pour centre une $\faisO_S$-algèbre étale finie de degré $2$, notée ici $Z_{0,A,\sigma,f}$. Elle est d'Azumaya sur ce centre.
\end{theo}
\begin{proof}
Tout cela peut se vérifier étale localement, et découle alors immédiatement des théorèmes \ref{strucalgcliffpaires_theo} et \ref{CompareClifford_theo} puisque localement $(A,\sigma,f)\simeq (\faisM_n,\invadj_{2n},f_{2n})$ par le théorème \ref{pairequadformes_coro}. 
\end{proof}

Ceci permet de définir un foncteur fibré 
$$\ArfFonc:\PairesQuadn{2n} \to \Etn{2},$$
du champ des paires quadratiques vers celui des algèbres étales finies de rang $2$, qui envoie une paire quadratique sur le centre de son algèbre de Clifford paire. 
\medskip

Considérons maintenant le module hyperbolique de rang impair $\hypq_{2n+1}$ et soit $e$ un vecteur de base du module sous-jacent à $\fq{1}$ tel que $q(e)=1$. 
\begin{theo}[Structure des algèbres de Clifford en rang impair, cas déployé] \label{CliffordSplit_theo}
\begin{enumerate}
\item \label{centreCliffhyp_item} Le centre de $\faisCliff_{\hypq_{2n+1}}$, noté ici $Z$, est une $\faisO_S$-algèbre $\ZZ/2$-graduée de degré $2$ telle que $Z_0=\faisO_S$ (comme $\faisO_S$-algèbre) et $Z_1\simeq \faisO_S$ (comme $\faisO_S$-module) avec pour multiplication $Z_1 \otimes Z_1 \to Z_0$ donné par la multiplication usuelle de $\faisO_S \otimes \faisO_S \to \faisO_S$ (qui est donc un isomorphisme). 
\item \label{actionpardet_item} L'action d'un élément $f \in \faisorthO_{\hypq_{2n+1}}(S)$ par $\CliffFonc(f)$ sur $\faisCliff_{\hypq_{2n+1}}$ se restreint à $Z_1$ et agit dessus par le déterminant de $f$. 
\item \label{C0impair_item} L'inclusion $i:\hypq_{2n} \to \hypq_{2n+1}$ induit par la décomposition $\hypq_{2n+1}=\hypq_{2n} \orth \fq{1}$ induit un isomorphisme $\faisCliff_{\hypq_{2n}} \simeq \faisCliff_{0,\hypq_{2n+1}}$ qui envoie un élément $(m,g)$ de $\faisO_S^n \oplus (\faisO_S^{n})^\dual$ sur $i(m,-g)\cdot e$.
\item \label{multCliff_item} La multiplication induit un isomorphisme $Z \otimes \faisCliff_{0,\hypq_{2n+1}} \to \faisCliff_{\hypq_{2n+1}}$ d'algèbres graduées.
\end{enumerate}
\end{theo}
\begin{proof} 
Tout les points peuvent se tester localement pour la topologie Zariski. 
On se ramène donc au cas où la base $S$ est affine. Pour les points \ref{centreCliffhyp_item}, \ref{C0impair_item} et \ref{multCliff_item}, voir alors \cite[Ch. IV, Prop. 3.2.2]{knus}. Pour prouver le point \ref{actionpardet_item}, notons que (3.2.3) p. 210 de \loccit fournit un générateur de $Z_1$, de la forme $w=x e$ où $e$ est comme ci-dessus et $x$ est un élément de $i(\faisCliff_{0,\hypq_{2n}})$. En particulier, $ex=xe$. On a de plus $w^2=e^2 x^2=1$. Toute réflexion orthogonale $\tau_v$ par rapport à un vecteur $v$ tel que $q(v)$ est inversible est de déterminant $(-1)$ par \ref{detrefl_lemm} et agit bien par son déterminant, car $\CliffFonc(\tau_v)=-\int_v$ et $w$ est central. L'affirmation est donc vraie pour les produits de réflexions. On peut par ailleurs se ramener au cas du spectre d'un anneau local, qui est une limite de voisinages Zariski, puisque tout élément vit à un niveau fini dans la limite. Malheureusement, même sur un anneau local, tout élément de $\faisorthO_{\hypq_{2n+1}}(S)$ n'est pas un produit de réflexions (voir \ref{contreExCartanDieudonne_rema}). Soit $F$ le module sous-jacent au sous-module quadratique $\hypq_{2n}$, qui est régulier. Un élément $f\in\faisorthO_{\hypq_{2n+1}}(S)$ envoie le plan $F$ sur son image $f(F)$, et par le théorème de Kneser \ref{Kneser_theo}, on peut donc trouver un produit de réflexions $g$ qui coïncide avec $f$ sur $F$. L'élément $f\circ g^{-1}$ est donc l'identité sur $F$, et préserve donc son orthogonal, qui est engendré par $e$. On a donc $f\circ g^{-1}(e)=\lambda e$ pour un certain $\lambda \in \faismu_2(S)$, et $\CliffFonc(f\circ g^{-1})(x)=x$, d'où $\CliffFonc(f\circ g^{-1})(w)=\lambda w$. Ainsi, $f\circ g^{-1}$ agit bien par son déterminant $\lambda$ et comme $g$ est un produit de réflexions, l'affirmation est vraie pour $f$. 
\end{proof}

\begin{theo}[Structure des algèbres de Clifford en rang impair, cas général] \label{CliffStruc_theo}
Soit $(M,q)$ une forme \fppf\ de $\hypq_{2n+1}$.
\begin{enumerate}
\item \label{centredet_item} Le centre de $\faisCliff_{q}$, noté ici $Z$, est une $\faisO_S$-algèbre $\ZZ/2$-graduée de degré $2$ telle que $Z_1=\faisO_S$ (comme $\faisO_S$-algèbre) et $Z_1\simeq \Det M$ (comme $\faisO_S$-module) par un isomorphisme canonique. 
\item \label{demi-det_item} La multiplication $Z_1 \otimes Z_1 \to Z_0$, canoniquement isomorphe à $\Det M \otimes \Det M \to \faisO_S$, est un isomorphisme.
\item Un élément $f \in \faisorthO_{q}(S)$ agit par son déterminant sur $Z_1$.
\item Lorsque $q=\hypq_M \orth \fq{1}$, alors l'inclusion $i:\hypq_M \to q$ induit un isomorphisme de $\faisO_S$-algèbres $\faisCliff_{\hypq_M} \simeq \faisCliff_{0,q}$ en envoyant un élément $(m,g)$ de $M \oplus M^\dual$ vers $(m,-g)e$. En particulier, $\faisCliff_{0,q}$ est une $\faisO_S$-algèbre d'Azumaya.
\item La multiplication induit un isomorphisme $Z \otimes \faisCliff_{0,q} \to \faisCliff_q$ d'algèbres graduées, autrement dit, $\faisCliff_{1,q}\simeq \faisCliff_{0,q}\otimes \Det M$ comme $\faisCliff_{0,q}$-module et la multiplication $\faisCliff_{1,q}^{\otimes 2} \to \faisCliff_{0,q}$ est induite par celle sur $Z_1\otimes Z_1 \to \faisO_S$.
\end{enumerate}
\end{theo}
\begin{proof}
Toutes ces affirmations se prouvent localement pour la topologie \fppf, on est donc ramené à la proposition précédente, à l'exception de l'identification de $Z_1 \simeq \Det M$. Pour cela, on utilise que tout morphisme dans $\Formes{\hypq_{2n+1}}$ agit par le foncteur $\CliffFonc$ en préservant le centre et la graduation, donc en préservant $Z_1$, et définit donc un foncteur fibré $\Formes{\hypq_{2n+1}} \to \Vecn{1}$. Par le point \ref{actionpardet_item} de la proposition \ref{CliffordSplit_theo}, ce foncteur induit le morphisme déterminant $\faisorthO_{\hypq_{2n+1}}=\faisAut_{\hypq_{2n+1}} \to \faisAut_{\faisO_S}=\faisGm$. Par la proposition \ref{isoFoncFib_prop}, il est donc canoniquement isomorphe au foncteur déterminant $\Formes{\hypq_{2n+1}} \to \Vecn{1}$, qui envoie le module $(q,M)$ sur le fibré en droites déterminant $\Det{M}$ de $M$..
\end{proof}

\begin{rema} \label{Cliffhyp_rema}
En particulier, lorsque $q=\hypq_M \orth \fq{1}$, alors $\faisCliff_{0,q} \simeq \faisEnd_{\totExt M}$ comme $\faisO_S$-algèbre et $\faisCliff_{1,q} \simeq \faisEnd_{\totExt M}$ comme $\faisCliff_{0,q}$-module. 
\end{rema}

\begin{defi} \label{demi-det_defi}
Le foncteur fibré \emph{demi-déterminant} 
$$\qDemiDetFonc:\Formes{\hypq_{2n+1}}\to \ModDet$$ 
associe à un module quadratique $q$ le module déterminant $(\Det{M}, \Det{M} \otimes \Det{M} \isoto \faisO_S)$ défini par le point \ref{demi-det_item} de la proposition \ref{CliffStruc_theo}. La définition du foncteur sur les morphismes est induit par la fonctorialité de l'algèbre de Clifford. 
\end{defi}

La composée des deux foncteurs fibrés
$$\Formes{\hypq_{2n+1}} \to \ModDet \to \Vecn{1}$$
est donc par construction canoniquement isomorphe au foncteur oubli $\Formes{\hypq_{2n+1}}\to \Vecn{2n+1}$ suivi du déterminant $\DetFonc: \Vecn{2n+1} \to \Vecn{1}$. 

\begin{rema} 
La notion de demi-déterminant est due à Kneser, dans le cadre des modules quadratiques quelconques sur un anneau.
Il peut se calculer de manière explicite sur une base, voir \cite[Ch. IV, §3]{knus}.
\end{rema}

\subsection{Groupe spécial orthogonal}

Considérons maintenant les $S$-foncteurs fibrés (pour la topologie étale ou \fppf) définis en \ref{torseurdet_defi} et \ref{demi-det_defi}
$$
\hfil
\begin{array}{clc}
\qDetFonc:\Formes{\hypq_{2n}} & \to & \ModDet \\
q & \mapsto & \qDetMod{q}
\end{array}
\qquad
\text{et} 
\qquad
\begin{array}{clc}
\qDemiDetFonc:\Formes{\hypq_{2n+1}} & \to & \ModDet \\
q & \mapsto & \qDemiDetMod{q}.
\end{array}
\hfil
$$

Définissons maintenant le groupe spécial orthogonal. Les carrés commutatifs de foncteurs fibrés\footnote{Le carré de droite n'est commutatif qu'à un isomorphisme canonique près, comme expliqué après la définition \ref{demi-det_defi}.} 
$$
\xymatrix{
\Formes{\hypq_{2n}} \ar[r]^-{\qDetFonc{}} \ar[d] & \ModDet \ar[d] \\
\Vecn{2n} \ar[r]^{\DetFonc} & \Vecn{1} 
}
\hspace{5ex}
\xymatrix{
\Formes{\hypq_{2n+1}} \ar[r]^-{\qDemiDetFonc{}} \ar[d] & \ModDet \ar[d] \\
\Vecn{2n+1} \ar[r]^{\DetFonc} & \Vecn{1} 
}
$$
où les foncteurs verticaux ne conservent que le module sous-jacent, induisent sur les automorphismes de $\hypq_{2n}$ (resp. $\hypq_{2n+1}$) et de ses images par les différents foncteurs les carrés commutatifs de morphismes de faisceaux\footnote{Cette fois-ci le carré de droite est bien strictement commutatif.}
$$
\xymatrix{
\faisorthO_{2n} \ar[r]^{\det} \ar@{^(->}[d] & \faismu_2 \ar@{^(->}[d] \\
\faisGL_{2n} \ar[r]^{\det} & \faisGm
}
\hspace{8ex}
\xymatrix{
\faisorthO_{2n+1} \ar[r]^{\det} \ar@{^(->}[d] & \faismu_2 \ar@{^(->}[d] \\
\faisGL_{2n+1} \ar[r]^{\det} & \faisGm
}
$$
La proposition \ref{foncttors_prop} donne alors immédiatement:
\begin{lemm} \label{detOq_lemm}
Soit $h$ l'un des modules quadratiques $\hypq_{2n}$ ou $\hypq_{2n+1}$ et soit $q$ une forme étale (resp. \fppf) de $h$, obtenue en tordant $h$ par le $\faisO_h$-torseur $\faisIso_{h,q}$. 
Alors si $M$ est le module sous-jacent à $q$, les carrés commutatifs ci-dessus se tordent par $\faisIso_{h,q}$ et ses poussés $\faisIso_{\faisO_S^{2n},M}$ (resp. $\faisIso_{\faisO_S^{2n+1},M}$), $\faisIso_{\qDetMod{h},\qDetMod{q}}$ (resp. $\faisIso_{\qDemiDetMod{h},\qDemiDetMod{q}}$) et $\faisIso_{\faisO_S,\Det{M}}$ en
$$
\xymatrix{
\faisorthO_{q} \ar[r]^{\det} \ar@{^(->}[d] & \faismu_2 \ar@{^(->}[d] \\
\faisGL_{M} \ar[r]^{\det} & \faisGm.
}
$$
\end{lemm}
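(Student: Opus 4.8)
The plan is to read the lemma as four parallel applications of Proposition~\ref{foncttors_prop}, one for each fibered functor occurring in the two commutative squares, all carried out with the single torsor $P=\faisIso_{h,q}$ under $\faisAut_h$ (which is $\faisorthO_{2n}$, \resp $\faisorthO_{2n+1}$). First I would record the automorphism sheaves of the four untwisted objects attached to $h$: one has $\faisAut_h=\faisorthO_{2n}$ (\resp $\faisorthO_{2n+1}$) by Definition~\ref{O_defi}; the determinant module $\qDetMod{h}$ (\resp $\qDemiDetMod{h}$) has automorphism sheaf $\faismu_2$, as noted after Definition~\ref{moddet_defi}; the underlying module $\faisO_S^{2n}$ (\resp $\faisO_S^{2n+1}$) has automorphism sheaf $\faisGL_{2n}$ (\resp $\faisGL_{2n+1}$) by Definition~\ref{GLn_defi}; and $\faisO_S=\Det{\faisO_S^{2n}}$ has automorphism sheaf $\faisGm$ by the rank-one case of Proposition~\ref{GLntors_prop}. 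The two commutative squares of fibered functors then induce on these sheaves exactly the two displayed squares of group morphisms, the horizontals being the determinants coming from $\qDetFonc$ (\resp $\qDemiDetFonc$) and $\DetFonc$, the verticals the inclusions coming from the forgetful functors; in the odd case the functor square commutes only up to the canonical isomorphism described after Definition~\ref{demi-det_defi}, but the resulting square of automorphism morphisms is strictly commutative.

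Next I would twist by $P=\faisIso_{h,q}$. By Proposition~\ref{tordusformes_prop} one has $P\contr{\faisAut_h}h\isoto q$, so $M$ is the underlying module of $q$. Applying point~\ref{fonctobjet_item} of Proposition~\ref{foncttors_prop} to each of the four functors, the pushforwards of $P$ along the various maps of automorphism sheaves are respectively $\faisIso_{\faisO_S^{2n},M}$ (\resp $\faisIso_{\faisO_S^{2n+1},M}$), $\faisIso_{\qDetMod{h},\qDetMod{q}}$ (\resp $\faisIso_{\qDemiDetMod{h},\qDemiDetMod{q}}$) and $\faisIso_{\faisO_S,\Det{M}}$, and they twist the three images of $h$ into $M$, into $\qDetMod{q}$ (\resp $\qDemiDetMod{q}$), and into $\Det{M}$. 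Point~\ref{fonctaut_item} of the same proposition, together with the identifications $P\contr{H}\faisAut_X\isoto\faisAut_{P\contr{H}X}$ of Proposition~\ref{auttordus_prop}, twists the four group morphisms into morphisms between $\faisAut_q=\faisorthO_q$, $\faisAut_{\qDetMod{q}}=\faismu_2$, $\faisAut_M=\faisGL_M$ and $\faisAut_{\Det{M}}=\faisGm$; functoriality of the contracted product keeps the square commutative.

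The one point I expect to require care is the identification of the twisted arrows with the \emph{named} morphisms of the statement, and this is where the real (if modest) work lies. The sheaf $\faisAut_{\qDetMod{q}}$ is canonically $\faismu_2$ for every determinant module, so the right-hand vertical survives the twist unchanged; the two vertical inclusions and the two determinant maps are canonical constructions, so after trivializing $P$ étale-locally the twisted square coincides with the untwisted one. By descent the twisted horizontals are then the determinant $\det\colon\faisGL_M\to\faisGm$ and the map $\faisorthO_q\to\faismu_2$, and the verticals the natural inclusions $\faisorthO_q\hookrightarrow\faisGL_M$ and $\faismu_2\hookrightarrow\faisGm$, which is precisely the square of the lemma.
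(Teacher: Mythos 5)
Your proposal is correct and follows exactly the route the paper takes: the paper states the lemma as an immediate consequence of Proposition~\ref{foncttors_prop} applied to the two commutative squares of fibered functors displayed just before it, which is precisely your reading. The extra care you take in identifying the automorphism sheaves of the four objects and in checking, by local trivialization and descent, that the twisted arrows are the named determinants and inclusions is exactly the content the paper leaves implicit in the word \og imm\'ediatement\fg.
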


Soit $\faisSO_{M,q}$ le noyau du morphisme déterminant $\faisorthO_{M,q}\to \faismu_2$.
Ses points sont donc donnés par
$$\faisSO_{M,q}(S) =\setSO(q_R)$$
lorsque $S=\Spec(R)$ et $(M,q)$ est le $\faisO_S$-module quadratique associé à un $R$-module quadratique $(M_R,q_R)$. 
Le faisceau en groupes $\faisSO_{M,q}$ est un sous-groupe fermé de $\faisorthO_{M,q}$ représenté par un schéma affine de type fini sur $\faisO_S$ par le lemme \ref{produit_fibre_lemm}.
 
\begin{defi} \label{SO_defi}
On note $\faisSO_{M,q}$, ou même simplement $\faisSO_q$, le schéma en groupes ainsi obtenu, et également $\faisSO_{n}$ lorsque $q = \hypq_n$.
\end{defi}

\begin{rema}
Si $s\in \Gamma(S)^\times$, on a les isomorphismes canoniques $\faisorthO_{M,sq}\simeq \faisorthO_{M,q}$ et $\faisSO_{M,sq}\simeq \faisSO_{M,q}$ comme sous-groupes de $\faisGL_{M}$.
\end{rema}

\begin{prop} \label{seSOOimpair_prop}
La suite
$$1 \to \faisSO_{2n+1} \to \faisorthO_{2n+1} \oto{\det} \faismu_2 \to 1$$
est une suite exacte de faisceaux étales.
Soit $q$ une forme \fppf\ de $\hypq_{2n+1}$. Par le torseur $\faisIso_{\hypq_{2n+1},q}$, la suite exacte précédente se tord en
$$1 \to \faisSO_q \to \faisorthO_q \oto{\det} \faismu_2 \to 1$$
qui est donc également une suite exacte de faisceaux étales ou \fppf.
\end{prop}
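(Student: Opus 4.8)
We must show that
$$1 \to \faisSO_{2n+1} \to \faisorthO_{2n+1} \oto{\det} \faismu_2 \to 1$$
is exact as étale sheaves, and that twisting by the torseur $\faisIso_{\hypq_{2n+1},q}$ for any \fppf\ form $q$ of $\hypq_{2n+1}$ yields the exact sequence $1 \to \faisSO_q \to \faisorthO_q \oto{\det} \faismu_2 \to 1$.

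Let me think about this carefully.

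First, the split case. The sequence $1 \to \faisSO_{2n+1} \to \faisorthO_{2n+1} \oto{\det} \faismu_2 \to 1$. By definition $\faisSO_{M,q}$ is the kernel of the determinant morphism $\faisorthO_{M,q} \to \faismu_2$. So exactness at the left and middle is immediate — that's just the definition of a kernel. The only content is surjectivity of $\det: \faisorthO_{2n+1} \to \faismu_2$ as a morphism of étale sheaves. For that, I need to show that locally for the étale topology, every element of $\faismu_2$ (i.e. $\pm 1$) is hit by an orthogonal transformation. The obvious candidate is a reflection: by Lemma \ref{detrefl_lemm}, the determinant of an orthogonal reflection $\tau_v$ is $-1$. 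So I need to exhibit a vector $v$ in the module underlying $\hypq_{2n+1}$ with $q(v) \in \Gamma(S)^\times$. In the odd hyperbolic form $\hypq_{2n+1} = \hypq_{2n} \perp \fq{1}$, the basis vector $e$ carrying $\fq{1}$ satisfies $q(e) = 1$, which is a unit. So $\tau_e$ is a global orthogonal reflection of determinant $-1$. This means $\det$ is surjective even on $S$-points (at least $-1$ is hit globally; $+1$ is hit by the identity), hence surjective as a morphism of sheaves.

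Now let me think about the twisting. This is where the structure of the excerpt really helps. The key tool is Proposition \ref{torsionsec_prop}, which says that a short exact sequence of sheaves in groups, with a group $H$ acting while preserving the subgroup, twists into a short exact sequence. Here the relevant fact is that the determinant morphism $\faisorthO_{2n+1} \to \faismu_2$ is part of the commutative-square data that, by Lemma \ref{detOq_lemm}, twists correctly. Specifically, Lemma \ref{detOq_lemm} (applied with $h = \hypq_{2n+1}$) tells us that the square relating $\faisorthO_{2n+1} \xrightarrow{\det} \faismu_2$ inside $\faisGL_{2n+1} \xrightarrow{\det} \faisGm$ twists by $\faisIso_{\hypq_{2n+1},q}$ into the analogous square for $\faisorthO_q \xrightarrow{\det} \faismu_2$. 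In particular the twisted determinant morphism $\faisorthO_q \to \faismu_2$ is exactly the one appearing in the statement, and $\faisSO_q$ — being defined as its kernel — is the twist of $\faisSO_{2n+1}$.

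**The plan.** First I would record that the sequence is exact on the left and in the middle by the very definition of $\faisSO_{2n+1} = \ker(\det)$. Then I would prove surjectivity of $\det : \faisorthO_{2n+1} \to \faismu_2$ by exhibiting the global reflection $\tau_e$ associated to the anisotropic basis vector $e$ of the summand $\fq{1}$ in $\hypq_{2n+1} = \hypq_{2n}\perp\fq{1}$: by Lemma \ref{detrefl_lemm} its determinant is $-1 \in \faismu_2(S)$, so together with the identity the map $\det$ hits all of $\faismu_2$, already on global sections, hence is an epimorphism of sheaves. For the twisted statement, I would invoke Proposition \ref{torsionsec_prop}: the action of $H = \faisorthO_{\hypq_{2n+1}}$ (or of whatever group under which $\faisIso_{\hypq_{2n+1},q}$ is a torseur) preserves the normal subgroup $\faisSO_{2n+1}$ because the determinant is equivariant, as encoded by the commuting squares of Lemma \ref{detOq_lemm}. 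Therefore the short exact sequence twists into a short exact sequence, and identifying the twisted terms — $\faisIso_{\hypq_{2n+1},q}\contr{H}\faisorthO_{2n+1} = \faisorthO_q$ and $\faisIso_{\hypq_{2n+1},q}\contr{H}\faismu_2 = \faismu_2$ (since $H$ acts trivially on $\faismu_2$) — yields exactly the claimed sequence with $\faisSO_q$ as the kernel.

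**Main obstacle.** The genuine mathematical content is concentrated in the surjectivity of $\det$, and for that the only subtle point is finding an anisotropic vector: in the odd-rank hyperbolic form this is free because of the explicit summand $\fq{1}$, so there is in fact no obstacle here — the reflection $\tau_e$ is defined globally. The twisting step is essentially formal once one checks that $\faisSO_{2n+1}$ is stable under the action (equivalently that $\det$ is equivariant with $\faismu_2$ carrying the trivial action), which is guaranteed by Lemma \ref{detOq_lemm}; the one thing to be careful about is that the twisted determinant map is literally the map named in the statement rather than merely an abstractly isomorphic one, but this identification is precisely the content of that lemma. The passage from étale to \fppf\ exactness is automatic since all the sheaves involved (and the torseur) make sense and twist identically for both topologies, the constructions being local and compatible with the finer topology.
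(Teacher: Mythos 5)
Le cœur de votre preuve de la surjectivité est erroné. Vous exhibez la réflexion $\tau_e$ de déterminant $-1$ et concluez que, avec l'identité, $\det$ atteint « tout $\faismu_2$ déjà sur les sections globales ». Mais $\faismu_2(T)$ n'est pas réduit à $\{\pm 1\}$ : sur une base non réduite de caractéristique résiduelle $2$ (par exemple $T=\Spec k[\epsilon]/\epsilon^2$ avec $\mathrm{car}\,k=2$), l'élément $1+\epsilon$ vérifie $(1+\epsilon)^2=1$ sans être égal à $\pm 1$, et il ne le devient pas davantage après un recouvrement étale. Atteindre $1$ et $-1$ ne donne donc que le sous-faisceau $\Im(\ZZ/2)\subseteq\faismu_2$ — c'est précisément la situation du rang pair (proposition \ref{seSOOpair_prop}), où le déterminant n'est \emph{pas} surjectif sur $\faismu_2$ bien que les réflexions y fournissent aussi du déterminant $-1$. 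Votre argument ne distingue pas le rang impair du rang pair, alors que l'énoncé est faux en rang pair : il manque donc une idée.

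L'idée manquante est celle du texte : en rang impair la suite est \emph{scindée}, par la section $\faismu_2\to\faisorthO_{2n+1}$, $\lambda\mapsto\lambda\cdot\mathrm{Id}$, qui est bien orthogonale ($q(\lambda v)=\lambda^2 q(v)=q(v)$) et de déterminant $\lambda^{2n+1}=\lambda$ pour tout $\lambda\in\faismu_2(T)$ — y compris les éléments qui ne sont pas $\pm1$. C'est exactement ce qui échoue en rang pair, où $\det(\lambda\cdot\mathrm{Id})=\lambda^{2n}=1$. La partie « torsion » de votre preuve (lemme \ref{detOq_lemm} plus proposition \ref{torsionsec_prop}) est, elle, correcte et conforme au texte.
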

\begin{proof}
Pour avoir l'exactitude de la première suite, il suffit, par définition de $\faisSO_{2n+1}$, de montrer que le morphisme déterminant est surjectif. Or la suite est trivialement scindée en rang impair.
La suite exacte tordue est alors une conséquence directe du lemme précédent et de la proposition \ref{torsionsec_prop}.
\end{proof}

\begin{prop} \label{SOCartanDieudonne_prop}
Lorsque la base $S$ est le spectre d'un anneau local, et lorsque $q$ est de la forme $q=\fq{1}\orth q'$ avec $q'$ régulière (en particulier $q$ est une forme \fppf\ de $\hypq_{2n+1}$), alors tout élément de $\faisSO_{q}(S)$ est un produit de réflexions orthogonales. 
\end{prop}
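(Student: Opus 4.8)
Le plan est de retrancher \`a un \'el\'ement $f\in\faisSO_q(S)$ un produit de r\'eflexions qui co\"incide avec lui sur la partie r\'eguli\`ere de $q$, puis de d\'eterminer compl\`etement l'action r\'esiduelle sur la droite orthogonale gr\^ace au d\'eterminant. Pour commencer, je poserais $R=\Gamma(S)$ et, via le lemme \ref{homMN_lemm}, j'identifierais $\faisSO_q(S)$ aux automorphismes $R$-lin\'eaires du module sous-jacent (libre, car $R$ est local) $M=Re\oplus M'$ de d\'eterminant $1$ pr\'eservant $q$, o\`u $q(e)=1\in R^\times$, o\`u $q'=q_{|M'}$ est r\'egulier de rang $2n$, et o\`u $b_q(e,M')=0$. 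J'observerais tout de suite que $(M')^\orth=Re$: un \'el\'ement $ae+m'$ orthogonal \`a $M'$ v\'erifie $b_{q'}(m',M')=0$, donc $m'=0$ par r\'egularit\'e de $q'$. Il vaut la peine de souligner pourquoi on ne peut pas se contenter d'invoquer Cartan--Dieudonn\'e \ref{Cartan-Dieudonne_theo}: en caract\'eristique r\'esiduelle $2$ la forme $q$ n'est que semi-r\'eguli\`ere, et la remarque \ref{contreExCartanDieudonne_rema} montre que l'\'enonc\'e analogue pour $\faisorthO_q$ tout entier est alors faux; c'est bien la restriction au groupe sp\'ecial qui sauvera la situation.

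Ensuite, pour $f\in\faisSO_q(S)$, je consid\'ererais sa restriction $t=f_{|M'}\colon(M',q')\to(f(M'),q_{|f(M')})$, qui est un isomorphisme de modules quadratiques r\'eguliers libres, et j'appliquerais le th\'eor\`eme de Kneser \ref{Kneser_theo} avec $N=M'$ et $N'=f(M')$ pour obtenir un prolongement $g\in\faisorthO_q(S)$ de $t$, que l'on peut choisir produit de $r$ r\'eflexions orthogonales. Le point demandant le plus de soin est la v\'erification des hypoth\`eses de Kneser. Comme $q(e)=1$, la forme $q_k$ sur le corps r\'esiduel prend la valeur $1$ et n'est donc pas identiquement nulle, ce qui donne la condition 1 lorsque $k\neq\mathbb{F}_2$. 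Lorsque $k=\mathbb{F}_2$ (donc en caract\'eristique $2$), j'utiliserais que $b_q(e,e)=2q(e)=0$ et $b_q(e,M')=0$ entra\^inent $M^\orth=Re$ par le m\^eme calcul que ci-dessus, puis que $q_{\mathbb{F}_2}(e)=1\neq0$ montre que $q_{\mathbb{F}_2}$ n'est pas identiquement nulle sur $M^\orth$, d'o\`u la condition 2.

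Enfin, je poserais $h=g^{-1}f$. Par construction $h$ fixe $M'$ point par point, donc pr\'eserve $(M')^\orth=Re$, et il existe $\lambda\in R^\times$ avec $h(e)=\lambda e$, o\`u $\lambda^2=1$ puisque $q(\lambda e)=q(e)$. Sur la d\'ecomposition $M=Re\oplus M'$, l'endomorphisme $h$ est diagonal par blocs, donc $\det h=\lambda$. C'est ici qu'intervient de fa\c{c}on d\'ecisive l'hypoth\`ese $f\in\faisSO_q$: avec $\det f=1$ et $\det g=(-1)^r$ (lemme \ref{detrefl_lemm}), on trouve $\lambda=\det h=(\det g)^{-1}\det f=(-1)^r\in\{1,-1\}$, ce qui exclut pr\'ecis\'ement le comportement du contre-exemple \ref{contreExCartanDieudonne_rema}, o\`u $\lambda$ pouvait \^etre un \'el\'ement de $\faismu_2(S)$ autre que $\pm1$. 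Si $\lambda=1$, alors $h=\id$ et $f=g$; si $\lambda=-1$, alors $h$ fixe $M'$ et envoie $e$ sur $-e$, tout comme la r\'eflexion $\tau_e$ (licite car $q(e)\in R^\times$), puisque $\tau_e(e)=e-\tfrac{b_q(e,e)}{q(e)}e=e-2e=-e$ et $\tau_e(m')=m'$ pour $m'\in M'$; donc $h=\tau_e$ et $f=g\tau_e$. Dans les deux cas $f$ est un produit de r\'eflexions orthogonales. L'obstacle principal que j'anticipe est la v\'erification correcte des hypoth\`eses de Kneser en caract\'eristique r\'esiduelle $2$ (l'identification $M^\orth=Re$ et la non-nullit\'e de $q$ dessus), l'argument de d\'eterminant restant le point conceptuel central de la preuve.
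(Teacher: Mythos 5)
Votre preuve est correcte et suit essentiellement la m\^eme strat\'egie que celle du texte : appliquer le th\'eor\`eme de Kneser \ref{Kneser_theo} \`a la partie r\'eguli\`ere $q'$ pour obtenir un produit de r\'eflexions $g$ co\"incidant avec $f$ sur $M'$, puis constater que $g^{-1}f$ agit par un \'el\'ement $\lambda\in\faismu_2$ sur la droite $Re$ et utiliser la contrainte du d\'eterminant pour forcer $\lambda=\pm 1$, le cas $\lambda=-1$ \'etant r\'ealis\'e par $\tau_e$. Votre r\'edaction est simplement plus d\'etaill\'ee, notamment dans la v\'erification explicite des hypoth\`eses de Kneser en caract\'eristique r\'esiduelle $2$, que le texte laisse implicite en renvoyant \`a la fin de la preuve du th\'eor\`eme \ref{CliffordSplit_theo}.
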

\begin{proof}
Par le même argument que dans la fin de la preuve du théorème \ref{CliffordSplit_theo}, en utilisant le théorème de Kneser \ref{Kneser_theo}, on voit que tout élément de $\faisorthO_q$ est un produit de réflexions orthogonales et d'une application, notée ici $u$, envoyant $e$ (base du sous-module sous-jacent à $\fq{1}$) sur $\lambda e$, $\lambda \in \faismu_2$ et se restreignant à l'identité sur le sous-module $F$ sous-jacent à $q'$. Or le déterminant d'un tel produit étant $\lambda \cdot (-1)^n$, il ne sera dans $\faisSO_q$ que si $\lambda = \pm 1$. Si $\lambda = 1$, alors $u=\id$, et on peut la supprimer du produit, et si $\lambda=-1$, alors $u$ est la réflexion par rapport au vecteur $e$. 
\end{proof}

En rang pair, la situation est un peu différente: le déterminant $\faisorthO_{2n} \to \faismu_2$ n'est pas surjectif, ce n'est même pas un épimorphisme de faisceaux en général, comme nous allons le voir. Considérons le morphisme du faisceau (localement) constant $\ZZ/2$ vers $\faismu_2$, qui sur les points, envoie $0$ sur $1$ et $1$ sur $-1$. C'est bien entendu un morphisme de faisceaux en groupes abéliens. Son image sera notée provisoirement $\Im(\ZZ/2)$. Remarquons que si $2=0$ sur la base, cette image est le faisceau en groupes trivial, alors que si $2$ est inversible, cette image est $\faismu_2$ tout entier.

\begin{prop} \label{seSOOpair_prop}
Le déterminant $\faisorthO_{2n} \to \faismu_2$ a pour image $\Im(\ZZ/2) \subseteq \faismu_2$ et on a donc une suite exacte de faisceaux 
$$1 \to \faisSO_{2n} \to \faisorthO_{2n} \oto{\det} \Im(\ZZ/2) \to 1$$
qui est scindée.
Soit $q$ une forme de $\hypq_{2n}$. Par le torseur $\faisIso_{\hypq_{2n},q}$, la suite exacte précédente se tord en
$$1 \to \faisSO_q \to \faisorthO_q \oto{\det} \Im(\ZZ/2) \to 1$$
qui est donc également une suite exacte de faisceaux étales ou \fppf.
\end{prop}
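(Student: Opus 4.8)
We must show the determinant map $\faisorthO_{2n} \oto{\det} \Im(\ZZ/2)$ (where $\Im(\ZZ/2)$ is the image of the canonical morphism $\ZZ/2 \to \faismu_2$) is surjective with kernel $\faisSO_{2n}$ and admits a splitting, giving the exact sequence $1 \to \faisSO_{2n} \to \faisorthO_{2n} \oto{\det} \Im(\ZZ/2) \to 1$; then that this sequence twists along $\faisIso_{\hypq_{2n},q}$ into $1 \to \faisSO_q \to \faisorthO_q \oto{\det} \Im(\ZZ/2) \to 1$ for any étale (or fppf) form $q$ of $\hypq_{2n}$.

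Let me think about the strategy.

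The author defines $\faisSO_{M,q}$ as the kernel of $\det: \faisorthO_{M,q} \to \faismu_2$ (Definition \ref{SO_defi}). So exactness of the sequence at $\faisSO_{2n}$ and $\faisorthO_{2n}$ is essentially by definition, once we know the image of $\det$ is exactly $\Im(\ZZ/2)$. The real content is: (1) the image of $\det$ is $\Im(\ZZ/2)$, not all of $\faismu_2$; (2) the surjection onto this image is split; and (3) the twisting preserves exactness and splitting.

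For the computation of the image, the natural approach is local. The statement is about a morphism of étale sheaves, so the image can be computed locally for the étale (or even Zariski) topology. Working over a local base and with the split hyperbolic form $\hypq_{2n}$, the orthogonal reflections $\tau_v$ are available (these are defined whenever $q(v) \in \Gamma(S)^\times$, see Definition \ref{reflexion_defi}), and each has determinant $-1$ by Lemma \ref{detrefl_lemm}. The key point distinguishing even rank from odd rank: in odd rank the sequence splits trivially by a reflection, but here I want to show that in characteristic $2$ (where $2=0$), the determinant of every orthogonal automorphism is $1$, i.e. the image collapses to the trivial subsheaf, while when $2$ is invertible a reflection gives determinant $-1$ and the image is all of $\faismu_2$; the subsheaf $\Im(\ZZ/2)$ is precisely the common description that interpolates between these cases.

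So the plan is as follows. First I would provide a splitting $s: \Im(\ZZ/2) \to \faisorthO_{2n}$. The cleanest construction: choose a fixed vector in the split form, say the basis vector $e_1$ of the first hyperbolic plane, paired with $e_2$; but a reflection requires $q(v)$ invertible, and on the hyperbolic plane $q(e_1)=0$. Instead I would take an anisotropic vector available over any base, namely $v=e_1+e_2$ in a hyperbolic plane $\hypq_2$ with $\hypq_2(ae_1+be_2)=ab$, so that $q(v)=1 \in \Gamma(S)^\times$. The reflection $\tau_v$ then lies in $\faisorthO_{2n}(S)$ globally (extended by the identity on the other planes), has determinant $-1$, and gives a section over the subgroup $\Im(\ZZ/2)$: indeed $\tau_v^2 = \id$, so sending the nontrivial section of $\Im(\ZZ/2)$ (the image of $1 \in (\ZZ/2)(T)$, which is $-1 \in \faismu_2$) to $\tau_v$ defines a morphism of group sheaves splitting $\det$ over $\Im(\ZZ/2)$. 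This simultaneously proves that the image of $\det$ contains $\Im(\ZZ/2)$ and that the surjection onto $\Im(\ZZ/2)$ is split, reducing the problem to showing the image is contained in $\Im(\ZZ/2)$.

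The main obstacle — and the heart of the proof — is showing $\det$ takes values in $\Im(\ZZ/2)$, i.e. that locally the determinant of any element of $\faisorthO_{2n}$ is in the image of $\ZZ/2 \to \faismu_2$. This is trivial where $2$ is invertible (the image is all of $\faismu_2$). Where $2=0$, I must show $\det = 1$ on $\faisorthO_{2n}$. I would verify this on residue fields and lift: over a field of characteristic $2$, a regular even-dimensional quadratic form has $\det \equiv 1$ on its orthogonal group — this is the classical fact that the orthogonal group equals the special orthogonal group in characteristic $2$ for regular forms (the determinant of every isometry is $1$ because reflections have determinant $-1 = 1$ and by Cartan–Dieudonné, Theorem \ref{Cartan-Dieudonne_theo}, the group is generated by reflections except possibly in small rank over $\mathbb{F}_2$). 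Since computing the image of a sheaf morphism is local and the target $\faismu_2$ is a sheaf, checking membership in $\Im(\ZZ/2)$ reduces to the fibers; one must handle the residual case of $\mathbb{F}_2$ in ranks $\le 4$ separately, but the statement $\det=1$ in characteristic $2$ for these small regular even forms can be checked directly. The interpolating description $\Im(\ZZ/2)$ is exactly what makes the single formula valid over an arbitrary base with mixed characteristics.

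Finally, the twisting: given an étale form $q$ of $\hypq_{2n}$, the torseur $\faisIso_{\hypq_{2n},q}$ under $\faisorthO_{2n} = \faisAut_{\hypq_{2n}}$ twists the commutative square of Lemma \ref{detOq_lemm} into the analogous square for $q$, identifying $\faisorthO_q$ and its determinant map onto $\faismu_2$. Since the subgroup $\Im(\ZZ/2) \subseteq \faismu_2$ is defined canonically and independently of $q$, and since $\faisorthO_{2n}$ acts on $\faismu_2$ trivially (the determinant target carries the trivial action), Proposition \ref{torsionsec_prop} applies: the exact sequence $1 \to \faisSO_{2n} \to \faisorthO_{2n} \to \Im(\ZZ/2) \to 1$ twists term by term into $1 \to \faisSO_q \to \faisorthO_q \to \Im(\ZZ/2) \to 1$, which remains exact. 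The splitting likewise twists, since $\tau_v$ is $\faisorthO_{2n}$-equivariant data; alternatively one invokes the fact that the twisted sequence is étale-locally isomorphic to the split one, so exactness — a local property — is preserved. This completes the argument.
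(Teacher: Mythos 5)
The splitting via the reflection $\tau_v$ with $v=e_1+e_2$ (the paper's $(1,1,0,\ldots,0)$) and the twisting via Propositions \ref{auttordus_prop} and \ref{torsionsec_prop} coincide with the paper's argument and are fine. The gap is in the central step, the inclusion of the image of $\det$ in $\Im(\ZZ/2)$. You assert that ``checking membership in $\Im(\ZZ/2)$ reduces to the fibers'' and propose to verify $\det=1$ on residue fields of characteristic $2$ and lift. This is not valid: passing to the residue field of a local ring is not a covering for any of the topologies in play, and membership in the subsheaf $\Im(\ZZ/2)\subseteq\faismu_2$ cannot be tested there. Over $R=\FF_2[x]/x^2$ one has $\faismu_2(R)=\{1,1+x\}$ while $\Im(\ZZ/2)(R)=\{1\}$; an isometry of determinant $1+x$ reduces to determinant $1$ over $\FF_2$ yet violates the conclusion --- and Remark \ref{contreExCartanDieudonne_rema} of the paper exhibits exactly such an isometry for the odd hyperbolic form, which is why the even and odd cases genuinely differ. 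Your dichotomy ``$2$ invertible or $2=0$'' also fails to cover local rings such as $\ZZ/4[\epsilon]/\epsilon^2$, where $\faismu_2$ is strictly larger than $\{\pm 1\}$ although $2$ is neither invertible nor zero. Finally, the true obstruction to Cartan--Dieudonn\'e is not the field $\FF_2$ in small rank but local rings \emph{with residue field} $\FF_2$ (Theorem \ref{Cartan-Dieudonne_theo} excludes precisely these), so ``checking the small cases directly over $\FF_2$'' does not address it.

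The missing idea, which is how the paper closes the argument: after reducing Zariski-locally and by a limit argument to a local ring $R$, embed $R$ into a larger local ring $R'$ (e.g.\ its strict hensélisation) whose residue field is not $\FF_2$; since $\det(g)\in R\subseteq R'$ and $\{1,-1\}\subseteq R$, it suffices to prove $\det(g)\in\{1,-1\}$ in $R'$. Over $R'$, Cartan--Dieudonn\'e (Theorem \ref{Cartan-Dieudonne_theo}) applies without exception \emph{over the local ring itself}, every isometry is a product of reflections, and $\det(g)=\pm 1$ by Lemma \ref{detrefl_lemm}. The whole computation must be carried out over the local ring, not over its residue field.
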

\begin{proof}
On peut vérifier la factorisation du déterminant localement pour la topologie de Zariski. Par un argument de limite, on peut donc supposer l'anneau local. De plus, quitte à injecter cet anneau dans un anneau plus gros (par exemple par hensélisation stricte), on peut supposer que le corps résiduel n'est pas $\FF_2$. Alors, par le théorème \ref{Cartan-Dieudonne_theo}, tout élément de $\faisorthO_{2n}$ est un produit de réflexions, et a donc un déterminant égal à $1$ ou $-1$, par \ref{detrefl_lemm}. De plus, $-1$ est atteint par n'importe quelle réflexion orthogonale $\tau_v$ (déf. \ref{reflexion_defi}), et il en existe bien car il y a des vecteurs $v$ tels que $q(v)$ est inversible, par exemple le vecteur $(1,1,0,\ldots,0)$. On obtient donc bien une surjection sur les points, et elle est scindée par l'application définie sur les points par $p \mapsto (1-p)\id + p \tau_v$ (à l'aide de la description de $\ZZ/2$ de la remarque \ref{Zmu2_rema}). On a donc bien un épimorphisme de faisceaux $\det: \faisorthO_{2n} \to \Im(\ZZ/2)$.

La suite tordue s'obtient alors par la proposition \ref{torsionsec_prop}, après identification du tordu de $\faisorthO_{2n}$ avec $\faisorthO_q$ par la proposition \ref{auttordus_prop}. Notons que $\Im(\ZZ/2)$ n'est pas tordu parce qu'il est abélien et toute action par automorphismes intérieurs est donc triviale.
\end{proof}

Dans le cas de rang pair, un autre groupe nous sera plus utile que $\faisSO$. Il s'agit du groupe $\faisOplus$, que nous introduisons maintenant.

\begin{defi} \label{Oplus_defi}
Pour tout module quadratique régulier $(M,q)$ de rang constant $2n$, on note $\faisOplus_{M,q}$ ou $\faisOplus_{q}$ le noyau de l'application $\faisorthO_q =\faisAut_{q}\to \faisAut_{Z_{0,q}}=\ZZ/2$ induite par le foncteur $\ArfFonc:\RegQuadn{2n}\to \Etn{2}$ qui envoie un module quadratique sur le centre de son algèbre de Clifford paire (et qui se factorise donc par $\PairesQuadn{2n}$)
\end{defi}
Notons que $\faisOplus_q$ est représentable et affine sur $S$ par la remarque \ref{imageinvnoyau_rema}.
\begin{prop} \label{secOplus_prop}
La suite 
$$1 \to \faisOplus_q \to \faisorthO_q \tooby{\ArfFonc} \ZZ/2 \to 0$$
est une suite exacte de faisceaux étales ou \fppf. Lorsque $q=\hypq_{2n}$, le morphisme $\faisorthO_q(T) \to \ZZ/2(T)$ est même surjectif pour tout schéma $T$ sur $S$.
\end{prop}
\begin{proof}
Par définition de $\faisOplus$, la seule chose à montrer est que $\faisorthO_q \to \ZZ/2$ est un épimorphisme de faisceaux. Cela découle de la surjectivité sur les points lorsque la forme est hyperbolique, qui est traitée en \cite[Ch. IV, Prop. (5.2.2)]{knus}.\footnote{Attention, dans \cite{knus}, en rang pair, le groupe $\faisOplus$ est noté $\faisSO$, et le noyau du déterminant n'est pas considéré du tout.}
\end{proof}

\begin{prop} \label{ArfDet_prop}
La composition $\RegQuadn{2n} \tooby{\ArfFonc} \Etn{2} \tooby{\XiFonc} \ModDet$ induit le déterminant $\faisorthO_q \to \faismu_2$ (Le foncteur $\XiFonc$ a été défini juste avant la remarque \ref{Zmu2_rema}).
\end{prop}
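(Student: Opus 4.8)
The plan is to check the equality of the two morphisms of faisceaux $\faisorthO_q \to \faismu_2$ — the déterminant on one side, and the morphism $\Phi$ induced on les automorphismes by $\XiFonc \circ \ArfFonc$ on the other — by reducing everything to réflexions. Both are group homomorphisms, and equality of morphisms of faisceaux may be tested étale-localement, both on the base $S$ and on the test schemes. So I would first use Proposition \ref{formesetaleshyppair_prop} to reduce, étale-localement sur $S$, to the split form $q=\hypq_{2n}$ with $S$ affine (the induced morphisms commute with restriction by the remark \ref{Homchgmt_rema}). To compare the two homomorphisms on a point $f\in\faisorthO_{2n}(R')$, I would then pass to the strict henselizations of the local rings of $R'$: their residue fields are separably closed, hence different from $\FF_2$, so Theorem \ref{Cartan-Dieudonne_theo} writes $f$ as a product of réflexions there, and a limit argument descends this to an étale neighbourhood of each point. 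Since $\faismu_2$ is un faisceau and both maps are homomorphisms, it then suffices to prove the equality on a single réflexion $\tau_v$ (définition \ref{reflexion_defi}).

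For such a réflexion, $\det(\tau_v)=-1$ by the lemme \ref{detrefl_lemm}, so I must show $\Phi(\tau_v)=-1$ as well, i.e. that $\tau_v$ acts on $Z_{0,q}=\ArfFonc(\hypq_{2n})$ — the rang-$2$ étale centre of $\faisCliff_{0,q}$ by le théorème \ref{strucalgcliffpaires_theo} — by the nontrivial automorphisme of $\faisAut_{Z_{0,q}}=(\ZZ/2)_S$ (corollaire \ref{etaledegre2_coro}), which $\XiFonc$ sends to $-1\in\faismu_2$ by construction (the canonical $\ZZ/2\to\faismu_2$ of la remarque \ref{Zmu2_rema}). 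I would first compute $\CliffFonc(\tau_v)$ on the partie paire: from $\tau_v(m)=m-\frac{b_q(m,v)}{q(v)}v$ and the relation $vm+mv=b_q(v,m)$ in $\faisCliff_q$, one gets $\int_v(m)=vmv^{-1}=-\tau_v(m)$ on $M$, so that the two signs cancel on products of two vectors and $\CliffFonc(\tau_v)=\int_v$ on $\faisCliff_{0,q}$. In particular $\tau_v$ acts on the centre $Z_{0,q}$ by the conjugaison intérieure $\int_v$, defining an element $\epsilon\in\faisAut_{Z_{0,q}}=\ZZ/2$.

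The crux is the nontriviality of this $\epsilon$, and I would establish it over a strict henselization without any explicit calculation. All réflexion vectors induce the same $\epsilon$: for two anisotropes $v,w$ the product $vw$ lies in $\faisCliff_{0,q}$, hence commutes with its centre $Z_{0,q}$, so $\int_v\circ\int_w$ is trivial on $Z_{0,q}$, whence $\int_v|_{Z_{0,q}}=\int_w|_{Z_{0,q}}$ (both of order $2$). If $\epsilon$ were trivial, every product of réflexions would act trivially on $Z_{0,q}$; but by la proposition \ref{secOplus_prop} the morphism $\faisorthO_{\hypq_{2n}}\to\ZZ/2$ is surjectif on points, so some $f_0$ acts by the nontrivial automorphisme, and over the strict henselization $f_0$ is a product of réflexions — a contradiction. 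Hence $\epsilon$ is nontrivial, $\ArfFonc(\tau_v)=1\in\ZZ/2$, and $\Phi(\tau_v)=-1=\det(\tau_v)$, which finishes the argument. The main obstacle is precisely to obtain this nontriviality uniformly in all caractéristiques: in caractéristique $2$ both morphisms are trivial (remarque \ref{Zmu2_rema}) and $\det(\tau_v)=-1=1$, so any direct computation is vacuous there, while the mixed-characteristic gluing must still be handled — which is why I route the proof through the surjectivité of la proposition \ref{secOplus_prop} together with Cartan–Dieudonné after strict henselization, rather than through an explicit élément volume, which would require $2$ inversible.
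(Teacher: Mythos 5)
Your proof is correct, but it takes a genuinely different route from the paper's. The paper disposes of the statement in three lines: the equality peut se v\'erifier \fppf-localement, donc pour la forme hyperbolique, o\`u l'action d'un \'el\'ement de $\faisorthO_q$ sur le noyau de la forme trace du centre de $\faisCliff_{0,q}$ est calcul\'ee dans Knus [Ch.~IV, (5.1.2)] --- tout le contenu est donc externalis\'e. Vous donnez au contraire un argument essentiellement autonome: r\'eduction aux r\'eflexions via hens\'elisation stricte et Cartan--Dieudonn\'e (th\'eor\`eme \ref{Cartan-Dieudonne_theo}), le calcul $\CliffFonc(\tau_v)|_{\faisCliff_{0,q}}=\int_v$, l'observation que toutes les r\'eflexions induisent le m\^eme \'el\'ement de $\faisAut_{Z_{0,q}}=\ZZ/2$ parce que $vw$ est pair donc commute au centre, et enfin la non-trivialit\'e de cet \'el\'ement extraite de la surjectivit\'e sur les points de $\faisorthO_{2n}\to\ZZ/2$ donn\'ee par la proposition \ref{secOplus_prop}. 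Deux remarques. Premi\`erement, vous avez eu raison d'\'eviter le lemme \ref{arfrefl_lemm}: sa preuve dans le texte (section sur la connexit\'e) invoque implicitement la pr\'esente proposition via l'identification de l'invariant de Arf au d\'eterminant en caract\'eristique $\neq 2$, de sorte que l'utiliser ici serait circulaire; votre d\'etour par \ref{secOplus_prop}, dont la preuve ne d\'epend pas de l'\'enonc\'e \`a d\'emontrer, est ce qui rend l'argument sain. Deuxi\`emement, votre preuve repose encore sur un fait import\'e --- cette m\^eme surjectivit\'e, que le texte \'etablit en citant Knus [Ch.~IV, (5.2.2)] --- donc le gain d'autonomie est r\'eel mais pas total; ce qu'il ach\`ete est un argument uniforme en toute caract\'eristique, sans autre calcul dans l'alg\`ebre de Clifford que l'identit\'e $vmv^{-1}=-\tau_v(m)$, l\`a o\`u une v\'erification directe sur un \'el\'ement explicite exigerait $2$ inversible.
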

\begin{proof}
Cela peut se vérifier localement pour la topologie \fppf, on est alors ramené au cas de la forme hyperbolique. Il faut alors vérifier l'action d'un élément de $\faisorthO_q$ sur le noyau de la forme trace du centre de l'algèbre de Clifford paire. C'est ce qui est fait en \cite[Ch. IV, (5.1.2)]{knus}.
\end{proof}

Cela implique que lorsque $2$ est inversible dans $\Gamma(S)$, puisque $(\ZZ/2)_S=(\faismu_2)_S$, la suite exacte de la proposition \ref{seSOOpair_prop} et celle de la proposition \ref{secOplus_prop} coïncident, et $\faisOplus_q=\faisSO_q$.
\medskip

On obtient maintenant une description des torseurs sous $\faisOplus_{2n}=\faisOplus_{\hypq_{2n}}$. Soit $\RegQuadArfTrivn{2n}$ le produit fibré de champs $\RegQuadn{2n} \times_{\Etn{2}} \Final$. Le centre de l'algèbre de Clifford paire du module hyperbolique $\hypq_{2n}$ étant isomorphe à $\faisO_S\times \faisO_S$ par l'isomorphisme de \ref{CliffHypPaire_prop} qu'on notera $\zeta_{2n}$, on obtient un objet trivial $(\hypq_{2n},\zeta_{2n})$. La proposition \ref{produitformes_prop} implique alors à l'aide de la surjectivité de la suite de la proposition \ref{secOplus_prop}:
\begin{prop} \label{Oplustorseurs_prop}
Le foncteur fibré $(q,\zeta) \mapsto \faisIso_{(\hypq_{2n},\zeta_{2n}),(q,\zeta)}$ définit une équivalence de champs $\RegQuadArfTrivn{2n}\to \Tors{\faisOplus_{2n}}$.
\end{prop}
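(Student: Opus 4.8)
Le plan est d'appliquer la proposition \ref{produitformes_prop} au produit fibr\'e $\RegQuadArfTrivn{2n} = \RegQuadn{2n} \times_{\Etn{2}} \Final$, en identifiant d'abord les groupes d'automorphismes en jeu. Par la d\'efinition \ref{Oplus_defi}, le groupe $\faisOplus_{2n}$ est pr\'ecis\'ement le noyau de $\ArfFonc: \faisorthO_{2n}=\faisAut_{\hypq_{2n}} \to \faisAut_{Z_{0,\hypq_{2n}}}=\ZZ/2$. D'un autre c\^ot\'e, l'objet $(\hypq_{2n},\zeta_{2n})$ de $\RegQuadArfTrivn{2n}$ a pour groupe d'automorphismes, par la premi\`ere partie de la proposition \ref{produitformes_prop}, le produit fibr\'e
$$\faisAut_{(\hypq_{2n},\zeta_{2n})} \simeq \faisAut_{\hypq_{2n}} \times_{\faisAut_{\ArfFonc(\hypq_{2n})}} \faisAut_{\Final} = \faisorthO_{2n} \times_{\ZZ/2} \Final,$$
o\`u $\faisAut_{\Final}$ est trivial. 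Ce produit fibr\'e est exactement le noyau de $\ArfFonc$, donc $\faisAut_{(\hypq_{2n},\zeta_{2n})}\simeq \faisOplus_{2n}$. C'est l'identification cl\'e qui fait fonctionner tout le reste.

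Ensuite, il faut v\'erifier l'hypoth\`ese d'\'epimorphisme de la proposition \ref{produitformes_prop}. Ici $\cC_1=\RegQuadn{2n}$, $\cC_2=\Final$, $\cD=\Etn{2}$, avec $F_1=\ArfFonc$ et $F_2$ envoyant l'objet final sur $\faisO_S\times\faisO_S=\ArfFonc(\hypq_{2n})$ via $\zeta_{2n}$. Le crit\`ere suffisant de la proposition \ref{produitformes_prop} demande que $\faisAut_{\hypq_{2n}} \to \faisAut_{\ArfFonc(\hypq_{2n})}$, c'est-\`a-dire $\ArfFonc: \faisorthO_{2n} \to \ZZ/2$, soit un \'epimorphisme de faisceaux. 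Or c'est exactement la conclusion de la proposition \ref{secOplus_prop}, qui affirme m\^eme la surjectivit\'e sur tous les $T$-points dans le cas hyperbolique. Cette hypoth\`ese est donc satisfaite sans effort suppl\'ementaire.

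La proposition \ref{produitformes_prop} fournit alors une \'equivalence de cat\'egories fibr\'ees $\Formes{(\hypq_{2n},\zeta_{2n})} \isoto \Formes{\hypq_{2n}} \times_{\Formes{\faisO_S\times\faisO_S}} \Formes{\Final}$. Comme $\RegQuadArfTrivn{2n}$ est par d\'efinition ce produit fibr\'e de champs (et gr\^ace \`a la proposition \ref{formesetaleshyppair_prop} qui identifie $\Formes{\hypq_{2n}}$ \`a $\RegQuadn{2n}$), le membre de gauche n'est autre que $\RegQuadArfTrivn{2n}$, \`a condition de v\'erifier que toute trivialisation $\zeta$ fait bien de $(q,\zeta)$ une forme de $(\hypq_{2n},\zeta_{2n})$. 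Combinant cela avec l'\'equivalence de la proposition \ref{tordusformes_prop} entre $\Formes{(\hypq_{2n},\zeta_{2n})}$ et $\Tors{\faisAut_{(\hypq_{2n},\zeta_{2n})}}=\Tors{\faisOplus_{2n}}$, donn\'ee explicitement par $X \mapsto \faisIso_{(\hypq_{2n},\zeta_{2n}),X}$, on obtient le foncteur annonc\'e et l'\'equivalence voulue.

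\textbf{Principal obstacle.} Le point le plus d\'elicat n'est pas la m\'ecanique des produits fibr\'es, qui s'applique presque formellement, mais bien la v\'erification pr\'ealable de l'\'epimorphisme $\ArfFonc: \faisorthO_{2n}\to\ZZ/2$ --- heureusement d\'ej\`a acquise via la proposition \ref{secOplus_prop}, elle-m\^eme renvoyant \`a Knus pour la surjectivit\'e ponctuelle dans le cas hyperbolique. Il faut aussi s'assurer que la d\'efinition de $\RegQuadArfTrivn{2n}$ comme produit fibr\'e co\"incide litt\'eralement avec le produit fibr\'e $\Formes{\hypq_{2n}} \times_{\Formes{\faisO_S\times\faisO_S}} \Final$ issu de la proposition \ref{produitformes_prop}, ce qui rel\`eve de la compatibilit\'e entre le foncteur $\ArfFonc$ et l'identification $\zeta_{2n}$ de $Z_{0,\hypq_{2n}}$ avec $\faisO_S\times\faisO_S$ fournie par la proposition \ref{CliffHypPaire_prop}. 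Le reste --- naturalit\'e du foncteur, v\'erification que $(q,\zeta)$ est localement isomorphe \`a $(\hypq_{2n},\zeta_{2n})$ --- est routinier une fois ces deux identifications pos\'ees.
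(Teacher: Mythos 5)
Your proof is correct and follows essentially the same route as the paper: the paper's (one-line) argument is precisely an appeal to Proposition \ref{produitformes_prop} combined with the surjectivity of $\faisorthO_{2n}\to\ZZ/2$ from Proposition \ref{secOplus_prop}. You have merely made explicit the identification $\faisAut_{(\hypq_{2n},\zeta_{2n})}\simeq\faisOplus_{2n}$ and the matching of $\RegQuadArfTrivn{2n}$ with the fibre product of forms, both of which the paper leaves implicit.
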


Donnons maintenant une description concrète des torseurs sous $\faisSO_{2n+1}$. Considérons le foncteurs fibré 
$\qDemiDetFonc: \Formes{\hypq_{2n+1}} \to \ModDet$ 
ainsi que le foncteur fibré $\Final \to \ModDet$ qui envoie l'unique objet sur le module déterminant trivial $(\faisO_S, \faisO_S \otimes \faisO_S \simeq \faisO_S)$.
Formons alors le produit fibré de champs $\QuadDetTrivn{h}=\Formes{\hypq_{2n+1}} \times_{\ModDet} \Final$. Explicitement, les objets de sa fibre sur $T$ sont donc des paires
$(q,\phi)$ où $q$ est une forme de $\hypq_{2n+1}$ et où $\phi$ est un isomorphisme du module demi-déterminant de $q$ vers le module déterminant trivial, autrement dit, $M$ étant le $\faisO_S$-module sous-jacent à $q$, c'est un isomorphisme $\phi:\Lambda M \simeq \faisO_S$ tel que $\phi^{\otimes 2}=\qDemiDetMorph{q}$. L'objet trivial est $(\hypq_{2n+1},\id_{\faisO_S})$.
\begin{prop} \label{SOtorseursimpairs_prop}
Le foncteur fibré qui envoie un objet $(q,\phi)$ de $\QuadDetTrivn{\hypq_{2n+1}}$ sur $\faisIso_{(\hypq_{2n+1},\id_{\faisO_S}),(q,\phi)}$ définit une équivalence de champs 
$$\QuadDetTrivn{\hypq_{2n+1}} \simeq \Tors{\faisSO_{2n+1}}.$$
\end{prop}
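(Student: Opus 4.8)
The plan is to exhibit $\QuadDetTrivn{\hypq_{2n+1}}$ as a champ of forms of a single trivial object and then to invoke Propositions \ref{produitformes_prop} and \ref{tordusformes_prop}, exactly as in the proof of Proposition \ref{Oplustorseurs_prop}. By construction $\QuadDetTrivn{\hypq_{2n+1}} = \Formes{\hypq_{2n+1}} \times_{\ModDet} \Final$, for the functor $\qDemiDetFonc : \Formes{\hypq_{2n+1}} \to \ModDet$ and the functor $\Final \to \ModDet$ sending the unique object to the trivial determinant module $(\faisO_S, m)$. By Theorem \ref{CliffordSplit_theo}, point \ref{centreCliffhyp_item}, the module $\qDemiDetMod{\hypq_{2n+1}}$ is canonically $(\faisO_S, m)$, so the distinguished object of this fiber product is $X_0 = (\hypq_{2n+1}, \id_{\faisO_S})$, as claimed in the statement.

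First I would compute $\faisAut_{X_0}$. Proposition \ref{produitformes_prop} gives $\faisAut_{X_0} \simeq \faisAut_{\hypq_{2n+1}} \times_{\faisAut_{(\faisO_S, m)}} \faisAut_{*}$, where $*$ is the unique object of $\Final$. Here $\faisAut_{\hypq_{2n+1}} = \faisorthO_{2n+1}$, $\faisAut_{(\faisO_S, m)} = \faismu_2$ and $\faisAut_{*}$ is the trivial group, so $\faisAut_{X_0}$ is the kernel of the morphism $\faisorthO_{2n+1} \to \faismu_2$ induced by $\qDemiDetFonc$ on automorphisms. Since $\qDemiDetFonc$ is defined through the functoriality of the Clifford algebra and, by Theorem \ref{CliffordSplit_theo} point \ref{actionpardet_item}, an orthogonal transformation acts on $Z_1 \simeq \Det{M}$ by its determinant, this induced morphism is precisely $\det$. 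Its kernel is $\faisSO_{2n+1}$ by Definition \ref{SO_defi}, whence $\faisAut_{X_0} = \faisSO_{2n+1}$.

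Next I would verify the hypothesis of Proposition \ref{produitformes_prop}: it suffices that $\faisAut_{\hypq_{2n+1}} = \faisorthO_{2n+1} \to \faisAut_{(\faisO_S,m)} = \faismu_2$ be an epimorphism of sheaves, which is exactly the surjectivity of the determinant furnished by the (split) exact sequence of Proposition \ref{seSOOimpair_prop}, valid because the rank is odd. Proposition \ref{produitformes_prop} then yields an equivalence $\Formes{X_0} \isoto \Formes{\hypq_{2n+1}} \times_{\Formes{(\faisO_S,m)}} \Final$; the target coincides with $\QuadDetTrivn{\hypq_{2n+1}}$, since in either fiber product an object is a form $q$ equipped with an isomorphism $\qDemiDetMod{q} \isoto (\faisO_S,m)$, and the mere existence of such an isomorphism places $\qDemiDetMod{q}$ in the full sub-champ $\Formes{(\faisO_S,m)}$ of $\ModDet$. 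Composing with the equivalence $\Formes{X_0} \simeq \Tors{\faisAut_{X_0}} = \Tors{\faisSO_{2n+1}}$ of Proposition \ref{tordusformes_prop}, which sends $X$ to $\faisIso_{X_0, X}$, gives the asserted equivalence, the composite acting on objects as $(q,\phi) \mapsto \faisIso_{(\hypq_{2n+1}, \id_{\faisO_S}), (q,\phi)}$. The one point requiring care — hence the main obstacle — is the automorphism computation: checking that the morphism induced by the half-determinant functor on orthogonal automorphisms really is the determinant, which is where the Clifford-theoretic input and the odd-rank hypothesis (also responsible for the surjectivity enabling Proposition \ref{produitformes_prop}) enter.
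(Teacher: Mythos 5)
Votre preuve est correcte et suit essentiellement la m�me d�marche que celle du texte : on identifie $\faisSO_{2n+1}$ comme produit fibr� (noyau du d�terminant), on invoque la surjectivit� du d�terminant $\faisorthO_{2n+1}\to\faismu_2$ fournie par la proposition \ref{seSOOimpair_prop} (scind�e en rang impair), puis on applique les propositions \ref{produitformes_prop} et \ref{tordusformes_prop}. Vous d�taillez davantage la v�rification que $\qDemiDetFonc$ induit bien $\det$ sur les automorphismes, point que le texte traite en amont (lemme \ref{detOq_lemm}), mais l'argument est le m�me.
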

\begin{proof}
On applique la proposition \ref{produitformes_prop}:
Par définition, le groupe $\faisSO_{\hypq_{2n+1}}$ est le produit fibré
$$\xymatrix{
\faisSO_{2n+1} \ar[r] \ar[d] \ar@{}[dr]|(.40){\ulcorner} & \faisUn \ar[d] \\
\faisO_{2n+1} \ar[r]^{\det} & \faismu_2
}$$ 
et le déterminant $\faisO_{2n+1} \to \faismu_2$ est surjectif sur les points (donc bien entendu un épimorphisme de faisceaux) par \ref{seSOOimpair_prop}.
\end{proof} 

\subsection{Groupes $\faisPGO$, $\faisPGOplus$, $\faisGO$, $\faisorthO$ et $\faisOplus$ d'une paire quadratique}

Ce qui suit est essentiellement une rapide adaptation des notions introduites dans \cite[Ch. VI, § 23.B]{bookinv} au cas d'une base quelconque.

Dans cette section $(A,\sigma,f)$ désigne toujours une paire quadratique de degré $2n$, donc un objet du champ $\PairesQuadn{2n}$, au sens de la définition \ref{pairesquad_defi}. Rappelons que toutes les paires quadratiques sont des formes étales de la paire hyperbolique $(\faisM_{2n},\invadj_{2n},f_{2n})$ par le corollaire \ref{pairequadformes_coro}.

\begin{defi} \label{PGO_defi}
Soit $\faisPGO_{A,\sigma,f}$ le faisceau en groupes des automorphismes $\faisAut_{A,\sigma,f}$. Lorsque $(M,q)$ est un module quadratique régulier, on note $\faisPGO_{M,q}$ ou $\faisPGO_q$ au lieu de $\faisPGO_{\faisEnd_M,\invadj_q,f_q}$.
\end{defi}

\begin{prop} \label{PGOrepresentable_prop}
Le faisceau $\faisPGO_{A,\sigma,f}$ est représentable par un schéma affine sur $S$. 
\end{prop}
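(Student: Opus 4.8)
The plan is to realize $\faisPGO_{A,\sigma,f}$ as a closed subscheme of the affine $S$-scheme $\faisAut^\alginv_{A,\sigma}$ constructed in Definition \ref{Autalginv_defi}. By definition, an automorphism of the quadratic pair $(A,\sigma,f)$ is an $\faisO_S$-algebra automorphism $g$ of $A$ that commutes with the involution $\sigma$ and preserves $f$, in the sense that $f \circ (g|_{\faisSym_{A,\sigma}}) = f$. The first two conditions say precisely that $g$ is a $T$-point of $\faisAut^\alginv_{A,\sigma}$ (note that any $g$ commuting with $\sigma$ automatically restricts to an automorphism of the submodule $\faisSym_{A,\sigma}$), so $\faisPGO_{A,\sigma,f}$ is exactly the subfunctor of $\faisAut^\alginv_{A,\sigma}$ cut out by the single additional condition $f \circ g = f$ on symmetric elements.

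First I would check that this subfunctor is a Zariski sheaf: the conditions defining it are equalities of morphisms with target the sheaf $\faisO_S$, hence can be tested locally, and $\faisAut^\alginv_{A,\sigma}$ is itself a Zariski sheaf. By Proposition \ref{reprLocal_prop} it then suffices to treat the case of an affine base $S=\Spec(R)$ over which $A$ is free as an $\faisO_S$-module. After shrinking further I may also assume that $\faisSym_{A,\sigma}$ (which is locally free of finite type by Lemma \ref{symAsigma_lemm}, applicable since every quadratic pair is \'etale-locally of the form $(\faisEnd_M,\invadj_q,f_q)$ by Corollary \ref{pairequadformes_coro}) is free, with a finite $R$-basis $s_1,\dots,s_r$.

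Over this affine base $\faisAut^\alginv_{A,\sigma}$ is represented by a finitely presented $R$-algebra $B$ carrying a universal automorphism $g^{\mathrm{univ}}$. For each basis vector $s_j$, the quantity $f\big(g^{\mathrm{univ}}(s_j)\big) - f(s_j)$ is a well-defined element of $B$ (the difference of two evaluations, expanded in the fixed basis of $A$), and the vanishing of these finitely many elements defines an ideal $I \subseteq B$. The quotient $B/I$ represents $\faisPGO_{A,\sigma,f}$, exhibiting it as a closed subscheme of $\faisAut^\alginv_{A,\sigma}$, and in particular as affine over $S$. Gluing these canonical local representations along an affine cover yields the representing scheme over an arbitrary base $S$.

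The only genuine point to watch is the reduction itself: one must ensure that preservation of $f$ can be tested on a finite generating family of $\faisSym_{A,\sigma}$, and that this really produces closed conditions. This rests entirely on $\faisSym_{A,\sigma}$ being locally free of finite type, which is exactly what Lemma \ref{symAsigma_lemm} supplies; granting that, the argument runs in complete parallel to the proofs of Propositions \ref{autrepr_prop} and \ref{Autalginv_prop} and presents no real obstacle.
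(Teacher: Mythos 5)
Your proof is correct and follows essentially the same route as the paper: both realize $\faisPGO_{A,\sigma,f}$ as the closed subscheme of the affine $S$-scheme $\faisAut^\alginv_{A,\sigma}$ cut out by the condition that $f$ be preserved, the key point in both cases being that $\faisSym_{A,\sigma}$ is locally free of finite type so that this is a closed condition expressible by finitely many equations. The only cosmetic difference is that the paper first reduces to the split pair $(\faisM_{2n},\invadj_{2n},f_{2n})$ via \ref{reprLocal_prop} and then invokes the stabilizer proposition \ref{stab_prop} for the representation on $\faisSym_{\faisM_{2n},\invadj_{2n}}^\dual$, whereas you write the defining equations by hand over a Zariski-local trivialization --- which is exactly what the proof of \ref{stab_prop} does internally.
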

\begin{proof}
Par la proposition \ref{reprLocal_prop}, on se réduit au cas de $\faisAut_{\faisM_{2n},\invadj_{2n},f_{2n}}$ et on peut supposer $S=\Spec(R)$. Le faisceau $\faisAut_{\faisM_{2n},\invadj_{2n}}$ est représentable par un schéma affine sur $S$ par \ref{Autalginv_prop}. On considère alors la représentation $\faisAut_{\faisM_{2n},\invadj_{2n}}\to \faisGL_{\faisSym_{\faisM_{2n},\invadj_{2n}}^\dual}$ induite par le fait que tout élément symétrique est préservé par un automorphisme d'algèbre à involution. Le stabilisateur de $f$ dans $\faisGL_{\faisSym_{\faisM_{2n},\invadj_{2n}}^\dual}$ est représentable par la proposition \ref{stab_prop}, et donc également $\faisPGO_{\faisM_{2n},\invadj_{2n},f_{2n}}$ comme son image réciproque, grâce au lemme \ref{produit_fibre_lemm}.
\end{proof}

Les torseurs sous $\faisPGO_{A,\sigma,f}$ se décrivent donc immédiatement.

\begin{prop} \label{PGOtorseurs_prop}
Le foncteur $(B,\tau,g)\mapsto \faisIso_{(A,\sigma,f),(B,\tau,g)}$ définit une équivalence de catégories fibrées $\PairesQuadn{2n} \isoto \Tors{\faisPGO_{A,\sigma,f}}$
\end{prop}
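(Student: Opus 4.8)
The plan is to recognize this statement as a direct instance of the general form/torsor dictionary of Proposition \ref{tordusformes_prop}, exactly as was done for $\faisPGL_A$ in Proposition \ref{PGLAtorseurs_prop} and for $\faisOplus_{2n}$ in Proposition \ref{Oplustorseurs_prop}. The whole content is therefore to identify the stack $\PairesQuadn{2n}$ with the stack of forms $\Formes{(A,\sigma,f)}$ of the fixed quadratic pair, and then to read off that the automorphism sheaf occurring in \ref{tordusformes_prop} is precisely $\faisPGO_{A,\sigma,f}$.

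First I would establish the identification $\PairesQuadn{2n}=\Formes{(A,\sigma,f)}$ as full substacks of the stack of all quadratic pairs (which is a stack by \ref{champstruc_prop}, as recorded in Definition \ref{pairesquad_defi}). For this I invoke Corollary \ref{pairequadformes_coro}: every quadratic pair of degree $2n$ is an étale form of the hyperbolic pair $(\faisM_{2n},\invadj_{2n},f_{2n})$. Since $(A,\sigma,f)$ is itself such a form, it is étale-locally isomorphic to $(\faisM_{2n},\invadj_{2n},f_{2n})$; by transitivity of the local-isomorphism relation, the objects locally isomorphic to $(A,\sigma,f)$ are exactly those locally isomorphic to the hyperbolic pair, that is, all quadratic pairs. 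Conversely, every form of $(A,\sigma,f)$ is again a quadratic pair, because being a quadratic pair is a local condition that descends. This gives the desired equality of substacks.

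With this in hand I would apply Proposition \ref{tordusformes_prop} to $X_0=(A,\sigma,f)$: it furnishes an equivalence of fibered categories $\Formes{(A,\sigma,f)}\isoto\Tors{\faisAut_{(A,\sigma,f)}}$ given by $X\mapsto\faisIso_{(A,\sigma,f),X}$. By Definition \ref{PGO_defi} one has $\faisAut_{(A,\sigma,f)}=\faisPGO_{A,\sigma,f}$, and under the identification of the previous paragraph the functor $X\mapsto\faisIso_{(A,\sigma,f),X}$ is exactly the functor $(B,\tau,g)\mapsto\faisIso_{(A,\sigma,f),(B,\tau,g)}$ of the statement. This is the claimed equivalence. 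The only genuine (and minor) obstacle is the substack identification, which rests entirely on the transitivity argument above together with Corollary \ref{pairequadformes_coro}; everything else is formal and already packaged in \ref{tordusformes_prop}.
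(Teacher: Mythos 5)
Your proof is correct and follows exactly the paper's route: the paper's own proof is a one-line appeal to Proposition \ref{tordusformes_prop}, with the identification $\PairesQuadn{2n}=\Formes{(A,\sigma,f)}$ (via Corollary \ref{pairequadformes_coro} and transitivity) left implicit. You have simply made that implicit step explicit, which is a faithful and complete rendering of the intended argument.
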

\begin{proof}
C'est une application immédiate de la proposition \ref{tordusformes_prop}.
\end{proof}
 
\begin{defi} \label{PGOplus_defi}
Soit $\faisPGOplus_{A,\sigma,f}$ le noyau de l'application 
$$\faisPGO_{(A,\sigma,f)}=\faisAut_{(A,\sigma,f)} \to \faisAut_{Z_{0,A,\sigma,f}} = \ZZ/2$$ 
induite par le foncteur $\ArfFonc:\PairesQuadn{2n} \to \Etn{2}$. 
Lorsque $(M,q)$ est un module quadratique régulier, on note $\faisPGOplus_{M,q}$ ou $\faisPGOplus_q$ au lieu de $\faisPGOplus_{\faisEnd_M,\invadj_q,f_q}$.
\end{defi}
Comme à l'accoutumée, on note $\faisPGO_{2n}$ et $\faisPGOplus_{2n}$ les schémas en groupes correspondants à l'involution hyperbolique $(\faisM_{2n},\invadj_{2n},f_{2n})$ 

\begin{prop} \label{PGOplusrepresentable_prop}
Le faisceau en groupes $\faisPGOplus_{A,\sigma,f}$ est représentable par un schéma affine sur $S$.
\end{prop}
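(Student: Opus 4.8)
Le plan est de réaliser $\faisPGOplus_{A,\sigma,f}$ comme le noyau d'un morphisme explicite entre schémas en groupes affines représentables, puis d'invoquer la représentabilité d'un tel noyau. En effet, par la définition \ref{PGOplus_defi}, $\faisPGOplus_{A,\sigma,f}$ est par construction le noyau du morphisme de $S$-foncteurs en groupes
$$\faisPGO_{A,\sigma,f}=\faisAut_{A,\sigma,f} \tooby{\ArfFonc} \faisAut_{Z_{0,A,\sigma,f}}=(\ZZ/2)_S$$
induit sur les automorphismes par le foncteur fibré $\ArfFonc:\PairesQuadn{2n}\to \Etn{2}$.

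D'abord, je remarquerais que le terme source $\faisPGO_{A,\sigma,f}=\faisAut_{A,\sigma,f}$ est déjà représentable par un schéma affine sur $S$, par la proposition \ref{PGOrepresentable_prop} qui précède. Ensuite, le terme but $(\ZZ/2)_S$ est lui aussi représentable et affine sur $S$: c'est le schéma en groupes (localement) constant, fini sur $S$ donc affine, représenté au-dessus d'une base affine $\Spec(R)$ par $\Spec(R[x]/(x^2-x))$, en accord avec la description de ses points de la remarque \ref{Zmu2_rema}. Enfin, la flèche $\ArfFonc$ est bien un morphisme de foncteurs en groupes, étant induite sur chaque fibre par un foncteur de champs.

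Il reste alors à appliquer la remarque \ref{imageinvnoyau_rema} dans le cas du noyau, autrement dit le lemme \ref{produit_fibre_lemm} avec pour sous-groupe le groupe trivial $S \hookrightarrow (\ZZ/2)_S$ donné par la section unité. On obtient que
$$\faisPGOplus_{A,\sigma,f}=\faisPGO_{A,\sigma,f}\times_{(\ZZ/2)_S} S$$
est représentable par le produit fibré des schémas affines correspondants, lequel est affine sur $S$ puisqu'un produit fibré de schémas affines sur $S$ l'est encore. Il n'y a ici aucune réelle difficulté: les seuls points à vérifier sont que le but $(\ZZ/2)_S$ est bien affine sur $S$ et que la flèche affichée est bien un morphisme de schémas en groupes, tous deux immédiats. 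On pourrait au besoin, par la proposition \ref{reprLocal_prop}, se ramener d'emblée au cas d'une base $S$ affine, mais l'argument par noyau fonctionne directement sur une base quelconque.
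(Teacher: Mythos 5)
Votre démonstration est correcte et suit exactement la même voie que celle du texte, qui se contente d'écrire \og c'est un noyau, on applique le lemme \ref{produit_fibre_lemm}\fg ; vous explicitez simplement les vérifications (représentabilité affine de la source par \ref{PGOplusrepresentable_prop} et du but $(\ZZ/2)_S$) que le texte laisse implicites. Rien à redire.
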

\begin{proof}
C'est un noyau, on applique le lemme \ref{produit_fibre_lemm}.
\end{proof}

\begin{prop} \label{secPGOplus_prop}
Pour toute paire quadratique $(A,\sigma,f)$, la suite
$$1 \to \faisPGOplus_{A,\sigma,f} \to \faisPGO_{A,\sigma,f} \to \ZZ/2 \to 0.$$
est une suite exacte de faisceaux étales. 
En particulier, pour tout module quadratique régulier $q$, cette suite s'écrit
$$1 \to \faisPGOplus_{q} \to \faisPGO_{q} \to \ZZ/2 \to 0.$$
Soit $P$ le $\faisPGO_{2n}$-torseur correspondant à $(A,\sigma,f)$. Alors la première suite est obtenue par torsion sous $P$ de la même suite pour $(A,\sigma,f)=(\faisM_{2n},\invadj_{2n},f_{2n})$.
\end{prop}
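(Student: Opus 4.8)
The plan is to reduce to the hyperbolic case and then pass to an arbitrary pair by twisting, exactly as in the proof of Proposition~\ref{secOplus_prop}. Since $\faisPGOplus_{A,\sigma,f}$ is defined in~\ref{PGOplus_defi} as the kernel of $\faisPGO_{A,\sigma,f} = \faisAut_{A,\sigma,f} \to \faisAut_{Z_{0,A,\sigma,f}} = \ZZ/2$, exactness at the first two terms holds by construction, and the only point requiring an argument is that this last map is an epimorphism of \'etale sheaves.

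First I would treat the hyperbolic pair $(\faisM_{2n},\invadj_{2n},f_{2n})$. The functor $\RegQuadn{2n} \to \PairesQuadn{2n}$ sending $(M,q)$ to $(\faisEnd_M,\invadj_q,f_q)$ induces the natural morphism $\faisorthO_q = \faisAut_q \to \faisAut_{\faisEnd_M,\invadj_q,f_q} = \faisPGO_q$, and by Theorem~\ref{CompareClifford_theo} the even Clifford algebras, hence their centers $Z_{0,q}$ and $Z_{0,\faisEnd_M,\invadj_q,f_q}$, are canonically identified. Therefore the $\ArfFonc$ morphism $\faisorthO_q \to \ZZ/2$ of Definition~\ref{Oplus_defi} factors through $\faisPGO_q \to \ZZ/2$, so that the square
$$
\xymatrix{
\faisorthO_{2n} \ar[r]^-{\ArfFonc} \ar[d] & \ZZ/2 \ar@{=}[d] \\
\faisPGO_{2n} \ar[r]^-{\ArfFonc} & \ZZ/2
}
$$
commutes. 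By Proposition~\ref{secOplus_prop} the top arrow is surjective on $T$-points for every $S$-scheme $T$; since it factors through the bottom arrow, the latter is surjective on points as well, hence an epimorphism of sheaves. This settles exactness in the hyperbolic case.

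For an arbitrary quadratic pair I would invoke Corollary~\ref{pairequadformes_coro}: the pair $(A,\sigma,f)$ is the twist of $(\faisM_{2n},\invadj_{2n},f_{2n})$ by the $\faisPGO_{2n}$-torsor $P = \faisIso_{(\faisM_{2n},\invadj_{2n},f_{2n}),(A,\sigma,f)}$ of Proposition~\ref{PGOtorseurs_prop}. Twisting the hyperbolic exact sequence by $P$ through Proposition~\ref{torsionsec_prop} keeps it exact; I would then identify $P \contr{\faisPGO_{2n}} \faisPGO_{2n}$ with $\faisPGO_{A,\sigma,f}$ using Proposition~\ref{auttordus_prop} and its kernel with $\faisPGOplus_{A,\sigma,f}$, while noting that $\ZZ/2$, being abelian, is fixed by the inner action and hence untouched by the twist. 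This yields at once the general exact sequence and the final twisting assertion.

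The main obstacle I expect is the commutativity of the square above, namely the precise compatibility between the functor $\ArfFonc$ on $\RegQuadn{2n}$ and on $\PairesQuadn{2n}$. This is not a mere formality: it rests on the canonical, functorial isomorphism of even Clifford algebras provided by Theorem~\ref{CompareClifford_theo}, and one must check that this isomorphism intertwines the actions of $\faisorthO_q$ and $\faisPGO_q$ on the respective centers. Everything else, namely the preservation of the kernel under twisting and the invariance of $\ZZ/2$, is handled by the general torsion machinery of~\ref{torsionsec_prop} and~\ref{auttordus_prop}.
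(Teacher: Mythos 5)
Your proposal is correct and follows the same overall architecture as the paper: reduce to the hyperbolic pair, establish surjectivity there, then twist. The one step you handle differently is the surjectivity of $\faisPGO_{2n}\to \ZZ/2$: the paper exhibits an explicit witness by hand, namely $\int_a$ where $a$ is the permutation matrix swapping the first two basis vectors (after reducing to a local affine base by a limit argument), whereas you deduce it from the already-proved surjectivity of $\faisorthO_{2n}(T)\to (\ZZ/2)(T)$ in Proposition \ref{secOplus_prop} via the factorization of $\ArfFonc$ through $\faisPGO_{2n}$. Both are valid; your route leans on Knus (via \ref{secOplus_prop}) instead of a direct matrix computation, and note that the paper's witness $\int_a$ is in fact the image of the reflection $\tau_{e_1-e_2}\in\faisorthO_{2n}$, so the two arguments produce essentially the same element. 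Also, the ``main obstacle'' you flag --- commutativity of the square relating $\ArfFonc$ on $\RegQuadn{2n}$ and on $\PairesQuadn{2n}$ --- is already built into the paper's Definition \ref{Oplus_defi}, which defines the Arf map on $\faisorthO_q$ precisely as the one factoring through $\PairesQuadn{2n}$ (using the canonical identification of Theorem \ref{CompareClifford_theo}), so no additional verification is needed there. For the twisting assertion, your use of \ref{torsionsec_prop} together with \ref{auttordus_prop} is equivalent to the paper's citation of \ref{auttordus_prop} and \ref{foncttors_prop}; in either case one only needs that the conjugation action preserves the kernel (automatic, as it is the kernel of a map to an abelian quotient) and that $\ZZ/2$ is untwisted.
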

\begin{proof}
Par définition de $\faisPGOplus$, il suffit de montrer l'exactitude à droite.
On peut supposer que $(A,\sigma,f)$ est $(\faisM_{2n},\invadj_{2n},f_{2n})$, et que la base est affine et même locale par un argument de limite. On calcule alors à la main que $\int_a$ est dans $\faisPGO$ et s'envoie sur $1 \in \ZZ/2$ si $a$ est la matrice de l'application qui permute les deux premiers vecteurs de base. 

L'affirmation sur la torsion découle immédiatement de la proposition \ref{auttordus_prop} et de la proposition \ref{foncttors_prop}, point \ref{fonctaut_item} appliquée au foncteur $\ArfFonc$, puisque $\ZZ/2$ ne se tord pas.
\end{proof}

Puisque $\faisPGOplus$ est un noyau, ses torseurs se décrivent comme à l'accoutumée à partir de ceux de $\faisPGO$. Soit $\PairesQuadArfTrivn{2n}$ le produit fibré des champs $\PairesQuadn{2n}\times_{\Etn{2}} \Final$, où $\Final\to \Etn{2}$ est le foncteur fibré envoyant le seul objet de chaque fibre sur l'algèbre étale de degré $2$ triviale $\faisO_S \times \faisO_S$. Un objet de ce produit fibré est donc de la forme $(A,\sigma,f,\zeta)$ où $\zeta:Z_{0,A,\sigma,f} \isoto \faisO_S \otimes \faisO_S$ est un isomorphisme de $\faisO_S$-algèbres. En particulier, par l'isomorphisme $\zeta_{2n}:Z_{0,\hypq_{2n}}\simeq \faisO_S\times \faisO_S$, nous avons donc un objet $(\faisM_n,\invadj_{2n},f_{2n},\zeta_{2n})$. 

\begin{prop} \label{PGOplustorseurs_prop}
Le foncteur 
$$(A,\sigma,f,\zeta)\mapsto \faisIso_{(\faisM_n,\invadj_{2n},f_{2n},\zeta_{2n}),(A,\sigma,f,\zeta)}$$
définit une équivalence de catégories fibrées 
$$\PairesQuadArfTrivn{2n} \isoto \Tors{\faisPGOplus_{\faisM_n,\invadj_{2n},f_{2n},\zeta_{2n}}}.$$
\end{prop}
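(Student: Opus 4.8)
The plan is to deduce this exactly as Proposition \ref{Oplustorseurs_prop} was obtained, the role of $\RegQuadn{2n}$ being played here by $\PairesQuadn{2n}$: everything reduces to one application of the fibre-product principle of Proposition \ref{produitformes_prop}, fed with the surjectivity already recorded in Proposition \ref{secPGOplus_prop}.

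First I would set up the data for \ref{produitformes_prop}. Take $\cC_1 = \PairesQuadn{2n}$, $\cC_2 = \Final$, $\cD = \Etn{2}$, with $F_1 = \ArfFonc$ the centre-of-even-Clifford functor and $F_2$ the functor sending the unique object to the split �tale algebra $\faisO_S \times \faisO_S$. The distinguished object is $X_0 = (\faisM_{2n},\invadj_{2n},f_{2n},\zeta_{2n})$, where $\zeta_{2n}:Z_{0,\hypq_{2n}} \isoto \faisO_S \times \faisO_S$ is the isomorphism fixed just above; by construction $X_0$ is an object of $\PairesQuadArfTrivn{2n} = \PairesQuadn{2n}\times_{\Etn{2}}\Final$. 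Since the forms of $(\faisM_{2n},\invadj_{2n},f_{2n})$ are exactly the paires quadratiques (Corollaire \ref{pairequadformes_coro}), the forms of $\faisO_S\times\faisO_S$ are the rank-$2$ �tale algebras (Proposition \ref{algetfinies_prop}), and the terminal object has no nontrivial forms, the fibre product $\Formes{X_1}\times_{\Formes{F_1(X_1)}}\Formes{X_2}$ appearing in \ref{produitformes_prop} identifies (as a stack in groupoids) with $\PairesQuadArfTrivn{2n}$ itself.

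The key step is the computation of $\faisAut_{X_0}$. The automorphism formula of \ref{produitformes_prop} gives $\faisAut_{X_0} \simeq \faisAut_{(\faisM_{2n},\invadj_{2n},f_{2n})} \times_{\faisAut_{Z_{0,\hypq_{2n}}}} \faisUn$, where $\faisAut_{(\faisM_{2n},\invadj_{2n},f_{2n})} = \faisPGO_{2n}$ (D�finition \ref{PGO_defi}) and $\faisAut_{Z_{0,\hypq_{2n}}} = \ZZ/2$ (Corollaire \ref{etaledegre2_coro}), the structural map being $\ArfFonc$. Hence the fibre product collapses to $\ker(\faisPGO_{2n} \tooby{\ArfFonc} \ZZ/2) = \faisPGOplus_{2n}$ by D�finition \ref{PGOplus_defi}; this is precisely the group $\faisPGOplus_{\faisM_{2n},\invadj_{2n},f_{2n},\zeta_{2n}}$ of the statement.

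Finally, to invoke the second half of \ref{produitformes_prop} I must check its epimorphism hypothesis, for which it suffices that $\faisAut_{X_1} \to \faisAut_{F_1(X_1)}$, that is $\faisPGO_{2n}\tooby{\ArfFonc}\ZZ/2$, be an epimorphism of sheaves; this is exactly the surjectivity of Proposition \ref{secPGOplus_prop}. Proposition \ref{produitformes_prop} then yields an equivalence $\Formes{X_0}\isoto\PairesQuadArfTrivn{2n}$, and composing with the tautological equivalence $\Formes{X_0}\isoto\Tors{\faisAut_{X_0}}$, $Y\mapsto\faisIso_{X_0,Y}$, of Proposition \ref{tordusformes_prop} produces the asserted equivalence $\PairesQuadArfTrivn{2n} \isoto \Tors{\faisPGOplus_{2n}}$ carrying $(A,\sigma,f,\zeta)$ to $\faisIso_{X_0,(A,\sigma,f,\zeta)}$. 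The only real obstacle is bookkeeping: one must make sure that the structural morphism in the automorphism fibre product is genuinely $\ArfFonc$ and not a twist of it, so that its kernel matches D�finition \ref{PGOplus_defi} on the nose, and that adjoining $\zeta$ does not enlarge the automorphism sheaf (an automorphism of $(A,\sigma,f)$ fixes $\zeta$ iff it acts trivially on $Z_0$). Once these identifications are confirmed, the surjectivity input from \ref{secPGOplus_prop} closes the argument.
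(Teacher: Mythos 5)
Your proposal is correct and follows exactly the route of the paper's own (one-line) proof: apply Proposition \ref{produitformes_prop} to the fibre product $\PairesQuadn{2n}\times_{\Etn{2}}\Final$, with the epimorphism hypothesis supplied by the exact sequence of Proposition \ref{secPGOplus_prop}. Your write-up merely makes explicit the identifications (of the automorphism sheaf of the marked object with $\faisPGOplus_{2n}$, and of the fibre product of form-stacks with $\PairesQuadArfTrivn{2n}$) that the paper leaves implicit.
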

\begin{proof}
Cela découle de \ref{produitformes_prop}, grâce à la suite exacte de la proposition \ref{secPGOplus_prop}, qui donne la condition d'épimorphisme requise.
\end{proof}

Par le foncteur oubli de la paire quadratique $\PairesQuadn{2n}\to \Azumaya{2n}$, on obtient sur les automorphisme un morphisme naturel $\faisPGO_{A,\sigma,f} \to \faisPGL_A$ qui fait de $\faisPGO_{A,\sigma,f}$ un sous-groupe fermé de $\faisPGL_A$ (le stabilisateur de $\sigma$ et $f$).
\begin{defi} \label{GO_defi}
Le faisceau en groupes $\faisGO_{A,\sigma,f}$ est défini comme le produit fibré
$$\xymatrix{
\faisGO_{A,\sigma,f} \ar[r] \ar@{}[dr]|(.40){\ulcorner} \ar[d] & \faisPGO_{A,\sigma,f} \ar[d] \\
\faisGL_{1,A} \ar[r] & \faisPGL_{A}
}
$$
Pour tout module quadratique régulier $(M,q)$, on note $\faisGO_q$ le groupe $\faisGO_{\faisEnd_M,\invadj_q,f_q}$.
\end{defi}
C'est donc un sous-groupe fermé de $\faisGL_{1,A}$, représentable par le lemme \ref{produit_fibre_lemm}. 
On vérifie sans peine que les points de $\faisGO_{A,\sigma,f}$ sont donnés par
$$\faisGO_{A,\sigma,f}(T)=\{a \in \faisGL_{1,A}(T),\ a\sigma(a) \in \faisGm(T),\ f\circ \int_a=f\}.$$
Cela revient à montrer que $\int_a \in \faisPGO_{A,\sigma,f}=\faisAut_{A,\sigma,f}$ si et seulement si les deux conditions définissant l'ensemble ci-dessus sont satisfaites, or $f\circ \int_a =\int_a$ est satisfaite par définition, et il est alors aisé de montrer que $\int_a \circ \sigma = \sigma \circ \int_a$ si et seulement si $a \sigma(a)$ est central, donc dans $\faisGm(T)$.

\begin{prop} \label{secGOPGO_prop}
La suite
$$1 \to \faisGm \to \faisGO_{A,\sigma,f} \to \faisPGO_{A,\sigma,f} \to 1$$
est une suite exacte de faisceaux étales.
En particulier, pour tout module quadratique $q$ régulier, cette suite est
$$1 \to \faisGm \to \faisGO_{q} \to \faisPGO_{q} \to 1.$$
\end{prop}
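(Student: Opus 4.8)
Le plan est d'exploiter le fait que, par la définition~\ref{GO_defi}, le morphisme $\faisGO_{A,\sigma,f} \to \faisPGO_{A,\sigma,f}$ est par construction le changement de base le long de $\faisPGO_{A,\sigma,f} \to \faisPGL_A$ du morphisme $\faisGL_{1,A} \to \faisPGL_A$. L'exactitude de la suite découlera alors formellement de celle de la suite \eqref{secPGLA_eq}, à savoir
$$1 \to \faisGm \to \faisGL_{1,A} \to \faisPGL_A \to 1,$$
valable pour la topologie étale (et qui identifie $\faisPGL_A=\faisAut_A$).

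Je commencerais par identifier le noyau de $\faisGO_{A,\sigma,f} \to \faisPGO_{A,\sigma,f}$. Sur les $T$-points, un élément de $\faisGO_{A,\sigma,f}(T)$ est une paire $(a,\phi)$ avec $a\in \faisGL_{1,A}(T)$ et $\phi\in \faisPGO_{A,\sigma,f}(T)$ ayant même image dans $\faisPGL_A(T)$; une telle paire est dans le noyau précisément lorsque $\phi=1$, c'est-à-dire lorsque $a$ s'envoie sur $1$ dans $\faisPGL_A$, donc lorsque $a\in \faisGm(T)$ par exactitude de \eqref{secPGLA_eq}. Le noyau est ainsi le $\faisGm$ central inclus dans $\faisGL_{1,A}$, plongé par $u\mapsto (u,1)$, ce qui fournit la partie gauche de la suite et l'injectivité.

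Reste la surjectivité, à savoir que $\faisGO_{A,\sigma,f} \to \faisPGO_{A,\sigma,f}$ est un épimorphisme de faisceaux étales. J'utiliserais ici que les épimorphismes de faisceaux sont stables par changement de base: puisque $\faisGL_{1,A}\to \faisPGL_A$ est un épimorphisme étale par \eqref{secPGLA_eq}, il en est de même de son tiré en arrière. Concrètement, étant donné $\phi\in \faisPGO_{A,\sigma,f}(T)$, son image dans $\faisPGL_A(T)$ se relève, après passage à un recouvrement étale $(T_i)$ de $T$, en des éléments $a_i\in \faisGL_{1,A}(T_i)$; les paires $(a_i,\phi_{T_i})$ sont alors des sections de $\faisGO_{A,\sigma,f}(T_i)$ relevant $\phi$, donc $\phi$ est localement dans l'image.

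Enfin, le cas particulier d'un module quadratique régulier $q$ s'obtient en spécialisant à $(A,\sigma,f)=(\faisEnd_M,\invadj_q,f_q)$, pour lequel $\faisGO_{A,\sigma,f}=\faisGO_q$ et $\faisPGO_{A,\sigma,f}=\faisPGO_q$. Aucun réel obstacle n'est à prévoir: le seul point demandant un peu de soin est la stabilité des épimorphismes par changement de base dans la catégorie des faisceaux, qui est exactement l'argument de relèvement local esquissé ci-dessus.
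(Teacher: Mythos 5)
Votre preuve est correcte et suit exactement la même démarche que celle du texte, qui se contente d'indiquer « chasse au diagramme à partir de la définition de $\faisGO$ et de la suite exacte \eqref{secPGLA_eq} » : vous explicitez cette chasse au diagramme en identifiant le noyau via le produit fibré de la définition \ref{GO_defi} et en utilisant la stabilité des épimorphismes de faisceaux par changement de base.
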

\begin{proof}
C'est une chasse au diagramme à partir de la définition de $\faisGO$ et de la suite exacte \eqref{secPGLA_eq}.
\end{proof}

\begin{prop}
Pour tout $T$ sur $S$ et pour tout point $a$ de $\faisGO_{A,\sigma,f}(T)$ le produit $\sigma(a)a$ est dans $\faisGm(T)$ inclus dans $\faisGL_{1,A}(T)$.
\end{prop}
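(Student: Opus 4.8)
The plan is to work directly on $T$-points, using the explicit description of $\faisGO_{A,\sigma,f}(T)$ established just above. That description says that membership $a \in \faisGO_{A,\sigma,f}(T)$ forces $a\sigma(a)$ to be a central invertible scalar, i.e. an element $\lambda := a\sigma(a) \in \faisGm(T) \subseteq \faisGL_{1,A}(T)$, where we use that $\faisO_S$ is the centre of the Azumaya algebra $A$ and that $\faisGm = \faisGL_{1,\faisO_S}$ embeds as the central units of $A$. The only remaining task is to prove that the symmetric product $\sigma(a)a$ equals this same $\lambda$, and in particular again lands in $\faisGm(T)$.

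The core of the argument is a one-line computation exploiting the centrality of $\lambda$. Since $a$ is invertible in $A(T)$ and $\lambda$ commutes with every element of $A(T)$, I would conjugate the identity $a\sigma(a)=\lambda$ by $a^{-1}$: this gives $\sigma(a)a = a^{-1}\bigl(a\sigma(a)\bigr)a = a^{-1}\lambda a = \lambda$. Hence $\sigma(a)a = \lambda = a\sigma(a) \in \faisGm(T)$, which is exactly the asserted statement.

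There is no genuine obstacle here: once the preceding description of $\faisGO_{A,\sigma,f}(T)$ is available, everything reduces to the fact that conjugation fixes central elements, so the ``left'' product $\sigma(a)a$ automatically coincides with the ``right'' product $a\sigma(a)$. As a consistency check one may also apply $\sigma$ to $a\sigma(a)=\lambda$ and use $\sigma^2=\id$ together with $\sigma(xy)=\sigma(y)\sigma(x)$ to recover $\sigma(\lambda)=\lambda$ (indeed $\lambda$ is a scalar fixed by the first-kind involution $\sigma$), but this is not needed for the equality above. Because the argument is purely formal and uses only ring-theoretic identities valid in each $A(T)$, it holds for every $T \to S$ with no locality reduction required.
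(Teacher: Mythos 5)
Your proof is correct, and it takes a genuinely different route from the paper's. The paper proves the proposition by reducing étale-locally to the split pair $(\faisM_{2n},\invadj_{2n},f_{2n})$ over an affine base and then invoking the explicit matrix computation carried out in the section on the adjoint group of type $D_n$ (during the Lie-algebra calculation of $\faisGO_{2n}$). You instead stay entirely at the level of $T$-points of a general $(A,\sigma,f)$: starting from the description $\faisGO_{A,\sigma,f}(T)=\{a \in \faisGL_{1,A}(T),\ a\sigma(a)\in\faisGm(T),\ f\circ\int_a=f\}$ established just before the statement, you set $\lambda=a\sigma(a)\in\faisGm(T)$ and observe that $\sigma(a)a=a^{-1}\bigl(a\sigma(a)\bigr)a=a^{-1}\lambda a=\lambda$ because $\lambda$ is central and $a$ is invertible. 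This is a purely formal ring-theoretic identity, so no locality reduction is needed, which is the main thing your approach buys: it is shorter, works uniformly for every $T\to S$, and does not depend on the explicit split computation in the $D_n$ section. The only caveat is that your argument leans on the displayed point-description of $\faisGO_{A,\sigma,f}$, which the paper itself only sketches ("on vérifie sans peine"); but that description is part of the surrounding text and is legitimately available, so there is no gap.
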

\begin{proof}
Cela se vérifie localement pour la topologie étale. On peut donc supposer que $S=\Spec(R)$ et que la paire quadratique est $(\faisM_{2n},\invadj_{2n},f_{2n})$. Le calcul est alors fait dans la partie \ref{dn_adjoint} lors du calcul de l'algèbre de Lie.
\end{proof}

On obtient ainsi un morphisme $\faisGO_{A,\sigma,f} \to \faisGm$ en envoyant $a$ sur $\sigma(a)a$.

\begin{defi} \label{Opaire_defi}
Le faisceau en groupes $\faisorthO_{A,\sigma,f}$ est défini comme le noyau du morphisme $\faisGO_{A,\sigma,f} \to \faisGm$.
\end{defi}
Il est donc représentable par le lemme \ref{produit_fibre_lemm}. 

\begin{defi} \label{Opluspaire_defi}
Le faisceau en groupes $\faisOplus_{A,\sigma,f}$ est défini comme le noyau du morphisme composé 
$$\faisorthO_{A,\sigma,f}\to \faisGO_{A,\sigma,f}\to \faisPGO_{A,\sigma,f}\to \ZZ/2$$
où tous les morphismes sont ceux définis précédemment et le dernier est induit par le foncteur $\ArfFonc$.
\end{defi}

\begin{prop}
Lorsque $(M,q)$ est un module régulier de rang $2n$, on a un isomorphisme canonique $\faisorthO_q \isoto \faisorthO_{\faisEnd_M,\invadj_q,f_q}$, et donc par conséquent $\faisOplus_q \isoto \faisOplus_{\faisEnd_M,\invadj_q,f_q}$.
\end{prop}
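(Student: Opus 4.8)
Le plan est de montrer d'abord que les deux foncteurs en groupes co\"incident comme sous-foncteurs de $\faisGL_M=\faisGL_{1,\faisEnd_M}$, puis d'en d\'eduire l'\'egalit\'e des noyaux d\'efinissant $\faisOplus$. Je commencerais donc par comparer les $T$-points des deux groupes \`a l'int\'erieur de $\faisGL_M(T)$. D'un c\^ot\'e, le lemme \ref{Oqinvolution_lemm} donne, pour $q$ r\'egulier de rang $2n$,
$$\faisorthO_q(T)=\{g \in \faisGL_M(T),\ \invadj_q(g)g =1,\ f_q \circ \int_g = f_q\}.$$
De l'autre c\^ot\'e, la d\'efinition \ref{GO_defi} donne $\faisGO_{\faisEnd_M,\invadj_q,f_q}(T)=\{a \in \faisGL_M(T),\ a\invadj_q(a)\in \faisGm(T),\ f_q\circ \int_a = f_q\}$, et la d\'efinition \ref{Opaire_defi} fait de $\faisorthO_{\faisEnd_M,\invadj_q,f_q}$ le noyau du morphisme $a \mapsto \invadj_q(a)a$ vers $\faisGm$.

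Un point $a$ de $\faisorthO_{\faisEnd_M,\invadj_q,f_q}(T)$ v\'erifie donc $\invadj_q(a)a=1$; comme $a$ est inversible dans $\faisEnd_M(T)$, ceci force $\invadj_q(a)=a^{-1}$, d'o\`u $a\invadj_q(a)=1\in \faisGm(T)$ automatiquement, et la condition d'appartenance \`a $\faisGO$ est redondante. On obtient ainsi
$$\faisorthO_{\faisEnd_M,\invadj_q,f_q}(T)=\{a \in \faisGL_M(T),\ \invadj_q(a)a=1,\ f_q\circ \int_a = f_q\},$$
qui est exactement le sous-ensemble d\'ecrit par le lemme \ref{Oqinvolution_lemm}. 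Ces \'egalit\'es \'etant fonctorielles en $T$, les deux sous-foncteurs de $\faisGL_M$ co\"incident, ce qui fournit l'isomorphisme canonique (en fait l'\'egalit\'e) $\faisorthO_q = \faisorthO_{\faisEnd_M,\invadj_q,f_q}$.

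Pour la seconde assertion, il reste \`a v\'erifier que, \`a travers cette identification, les deux morphismes vers $\ZZ/2$ d\'efinissant les $\faisOplus$ co\"incident. Le groupe $\faisOplus_q$ est le noyau de $\faisAut_q \to \ZZ/2$ induit par $\ArfFonc:\RegQuadn{2n}\to\Etn{2}$ (d\'ef. \ref{Oplus_defi}), tandis que $\faisOplus_{\faisEnd_M,\invadj_q,f_q}$ est le noyau du compos\'e $\faisorthO_{\faisEnd_M,\invadj_q,f_q}\to \faisGO_{\faisEnd_M,\invadj_q,f_q}\to \faisPGO_q \to \ZZ/2$ (d\'ef. \ref{Opluspaire_defi}), dont les deux premi\`eres fl\`eches composent en $a\mapsto \int_a$ et la derni\`ere est induite par $\ArfFonc:\PairesQuadn{2n}\to\Etn{2}$. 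Or le foncteur fibr\'e $q \mapsto (\faisEnd_M,\invadj_q,f_q)$ de $\RegQuadn{2n}$ vers $\PairesQuadn{2n}$, d\'efini apr\`es le corollaire \ref{pairequadformes_coro}, induit pr\'ecis\'ement $g\mapsto \int_g$ sur les automorphismes; et le th\'eor\`eme \ref{CompareClifford_theo} fournit un isomorphisme canonique $\faisCliff_{0,q}\simeq \faisCliff_{0,\faisEnd_M,\invadj_q,f_q}$, donc une identification canonique des centres $Z_{0,q}$ et $Z_{0,\faisEnd_M,\invadj_q,f_q}$. Par cons\'equent $\ArfFonc$ sur $\RegQuadn{2n}$ se factorise canoniquement par ce foncteur suivi de $\ArfFonc$ sur $\PairesQuadn{2n}$, et les deux morphismes vers $\ZZ/2$ sont identiques. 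Leurs noyaux co\"incident, d'o\`u $\faisOplus_q = \faisOplus_{\faisEnd_M,\invadj_q,f_q}$.

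La principale subtilit\'e est cette derni\`ere compatibilit\'e des deux constructions de l'application d'Arf, tout le reste \'etant une v\'erification directe sur les points; elle repose enti\`erement sur l'isomorphisme canonique d'alg\`ebres de Clifford paires du th\'eor\`eme \ref{CompareClifford_theo}, et il faut seulement s'assurer que cet isomorphisme est bien naturel en $q$, de sorte que l'identification des centres commute \`a l'action des automorphismes.
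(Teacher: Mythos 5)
Votre preuve est correcte et suit essentiellement la m\^eme d\'emarche que celle du texte, qui se contente d'invoquer la description de $\faisorthO_q$ donn\'ee par le lemme \ref{Oqinvolution_lemm} et celle des points de $\faisGO$ pour conclure. Vous explicitez en plus la compatibilit\'e des deux invariants d'Arf via le th\'eor\`eme \ref{CompareClifford_theo}, ce que le texte laisse implicite dans le \og donc par cons\'equent \fg; c'est bien la bonne justification.
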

\begin{proof}
Cela découle directement de la description de $\faisorthO_q$ de la proposition \ref{Oqinvolution_lemm} et des points de $\faisGO$ ci-dessus.
\end{proof}

\begin{prop} \label{secOGO_prop}
La suite
$$1 \to \faisorthO_{A,\sigma,f} \to \faisGO_{A,\sigma,f} \to \faisGm \to 1.$$
est une suite exacte de faisceaux \fppf.
\end{prop}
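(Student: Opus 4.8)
Le plan est le suivant. L'exactitude � gauche et au milieu est vraie par construction : par la d�finition \ref{Opaire_defi}, $\faisorthO_{A,\sigma,f}$ est le noyau du morphisme multiplicateur $\mu : \faisGO_{A,\sigma,f} \to \faisGm$, $a \mapsto \sigma(a)a$, de sorte que la suite $1 \to \faisorthO_{A,\sigma,f} \to \faisGO_{A,\sigma,f} \oto{\mu} \faisGm$ est exacte par la notion m�me de noyau sch�matique (remarque \ref{imageinvnoyau_rema}). Tout le contenu de l'�nonc� est donc l'exactitude � droite, c'est-�-dire que $\mu$ est un �pimorphisme de faisceaux \fppf.

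Pour l'�tablir, on remarquerait d'abord qu'�tre un �pimorphisme de faisceaux (\fppf\ ou �tales) est une propri�t� locale sur $S$ pour la topologie �tale, et que toutes les constructions en jeu commutent au changement de base ; on peut donc remplacer $(A,\sigma,f)$ par toute paire qui lui devient isomorphe apr�s un recouvrement �tale. Par le corollaire \ref{pairequadformes_coro}, toute paire quadratique est �tale-localement isomorphe � la paire hyperbolique $(\faisM_{2n},\invadj_{2n},f_{2n})$, ce qui ram�ne au cas d�ploy� ; et dans ce cas il suffit de prouver que $\mu$ est d�j� surjectif sur les $T$-points pour tout $T$.

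Dans le cas d�ploy�, par la description des points de $\faisGO_{2n}$ donn�e apr�s la d�finition \ref{GO_defi}, un point de $\faisGO_{2n}(T)$ est une similitude du module hyperbolique $\hypq_{2n}$ sur $\faisO_T^n \oplus (\faisO_T^n)^\dual$. �tant donn� $\lambda \in \faisGm(T) = \Gamma(T)^\times$, on consid�rerait l'automorphisme $\faisO_T$-lin�aire $g_\lambda : (m,e) \mapsto (\lambda m, e)$, d'inverse $g_{\lambda^{-1}}$. Avec $\hypq_M(m,e) = e(m)$ (d�finition \ref{hyperbolique_defi}), on obtient $\hypq_{2n}(g_\lambda(m,e)) = e(\lambda m) = \lambda\, \hypq_{2n}(m,e)$, donc $g_\lambda$ est une similitude de facteur $\lambda$, d'o� $\sigma(g_\lambda) g_\lambda = \lambda$ par polarisation et non-d�g�n�rescence de $b_q$. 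Il reste � voir que $g_\lambda$ v�rifie $f \circ \int_{g_\lambda} = f$, donc appartient bien � $\faisGO_{2n}(T)$ : c'est le m�me calcul que \eqref{fqintg_eq} dans la preuve du lemme \ref{Oqinvolution_lemm}, en utilisant l'identit� $\int_g \circ \phi_q(m_1 \otimes m_2) = \phi_q(g(m_1) \otimes \sigma(g^{-1})(m_2))$ et $\sigma(g_\lambda^{-1}) = \lambda^{-1} g_\lambda$ : le facteur $\lambda^{-1}$ compense exactement $q(g_\lambda(m)) = \lambda q(m)$, et l'unicit� de $f_q$ de la proposition \ref{deffq_prop} donne $f \circ \int_{g_\lambda} = f$. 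Ainsi $\mu(g_\lambda) = \lambda$, et $\mu$ est surjectif sur les points.

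Comme $T$ et $\lambda$ sont arbitraires, $\mu$ est surjectif sur les points dans le cas d�ploy�, a fortiori un �pimorphisme \fppf\ ; par la r�duction locale ci-dessus, $\mu$ est un �pimorphisme \fppf\ pour toute paire quadratique, ce qui ach�verait la preuve. Le seul point non formel est la v�rification que la similitude de dilatation $g_\lambda$ respecte $f$ (et non seulement la forme bilin�aire sous-jacente) : c'est pr�cis�ment l� que le facteur $\lambda^{-1}$ provenant de $\sigma(g_\lambda^{-1})$ annule le facteur $\lambda$ de $q(g_\lambda(m))$, de sorte que la condition $f \circ \int_{g_\lambda} = f$ est satisfaite sans correction.
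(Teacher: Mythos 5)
Votre preuve est correcte, mais elle suit un chemin diff�rent de celui du texte. La preuve du papier est bien plus courte : apr�s avoir observ�, comme vous, que seule l'�pimorphie � droite est en jeu, elle exploite le fait que cette propri�t� est locale pour la topologie \fppf\ directement sur l'�l�ment � relever : quitte � passer � un recouvrement \fppf, l'�l�ment vis� de $\faisGm$ est un carr� $\lambda^2$, et il est alors atteint par l'homoth�tie $\lambda \id_A$, dont le multiplicateur est $\sigma(\lambda \id_A)\,\lambda\id_A=\lambda^2$ et qui v�rifie trivialement $f\circ \int_{\lambda\id_A}=f$ puisque $\int_{\lambda\id_A}=\id$ ; cet argument op�re directement sur la paire tordue $(A,\sigma,f)$, sans aucun d�ploiement ni calcul. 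Votre route, elle, d�ploie d'abord $(A,\sigma,f)$ �tale-localement en $(\faisM_{2n},\invadj_{2n},f_{2n})$ par le corollaire \ref{pairequadformes_coro}, puis exhibe la similitude de dilatation $g_\lambda:(m,e)\mapsto(\lambda m,e)$ de multiplicateur arbitraire $\lambda$, et v�rifie --- correctement, via l'identit� $\int_g\circ\phi_q(m_1\otimes m_2)=\phi_q\bigl(g(m_1)\otimes\invadj_q(g^{-1})(m_2)\bigr)$, la relation $\invadj_q(g_\lambda^{-1})=\lambda^{-1}g_\lambda$ et l'unicit� de $f_q$ dans la proposition \ref{deffq_prop} --- que $g_\lambda$ respecte $f$ (vous avez raison de souligner que c'est le seul point non formel, et votre compensation des facteurs $\lambda$ et $\lambda^{-1}$ est exactement le calcul \eqref{fqintg_eq}). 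Ce que votre approche ach�te : dans le cas d�ploy�, le multiplicateur est surjectif sur tous les $T$-points (fait propre au degr� pair), de sorte que la suite est en r�alit� exacte d�j� comme suite de faisceaux �tales, conclusion strictement plus forte que l'�nonc� \fppf. Ce que l'approche du papier ach�te : une preuve d'une ligne, uniforme, qui ne demande ni le d�ploiement des paires quadratiques ni la v�rification concernant $f$ --- au prix de ne fournir que l'exactitude \fppf\ (les racines carr�es n'existant que \fppf-localement), ce qui est toutefois exactement ce que l'�nonc� demande.
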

\begin{proof}
Par définition de $\faisorthO_{A,\sigma,f}$, seule l'épimorphie à droite est à montrer, ce qui est une propriété locale pour la topologie \fppf. On peut donc supposer que l'élément de $\faisGm$ à atteindre est un carré $\lambda^2$, et il est alors atteint par $\lambda \id_A$. 
\end{proof}

La composée $\faisorthO_{A,\sigma,f} \to \faisGO_{A,\sigma,f} \to \faisPGO_{A,\sigma,f}$ et l'inclusion $\faismu_2$ dans $\faisorthO_{A,\sigma,f}$ induite par l'inclusion de $\faismu_2$ dans $\faisGL_{1,A}$ (un élément de $\faismu_2$ agit par multiplication sur $A$) fournissent: 
\begin{prop} \label{secOPGO_prop}
La suite 
$$1 \to \faismu_2 \to \faisorthO_{A,\sigma,f} \to \faisPGO_{A,\sigma,f}\to 1$$
est une suite exacte de faisceaux étales. Elle en induit une autre: 
$$1 \to \faismu_2 \to \faisOplus_{A,\sigma,f} \to \faisPGOplus_{A,\sigma,f} \to 1.$$
\end{prop}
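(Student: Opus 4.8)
The sequence
$$1 \to \faismu_2 \to \faisorthO_{A,\sigma,f} \to \faisPGO_{A,\sigma,f}\to 1$$
is an exact sequence of étale sheaves, and it induces the exact sequence
$$1 \to \faismu_2 \to \faisOplus_{A,\sigma,f} \to \faisPGOplus_{A,\sigma,f} \to 1.$$

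Let me sketch a proof.
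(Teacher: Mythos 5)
Your proposal contains no proof: after restating the proposition you write ``Let me sketch a proof'' and stop. There is nothing to evaluate, so the entire argument is missing.

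For the record, here is what has to be supplied, and how the paper does it. The paper's proof is a diagram chase crossing the two exact sequences already established, namely
$1 \to \faisGm \to \faisGO_{A,\sigma,f} \to \faisPGO_{A,\sigma,f} \to 1$ (proposition \ref{secGOPGO_prop}) and
$1 \to \faisorthO_{A,\sigma,f} \to \faisGO_{A,\sigma,f} \oto{\mu} \faisGm \to 1$ (proposition \ref{secOGO_prop}, where $\mu(a)=\sigma(a)a$ is the multiplier), together with the observation that the composite $\faisGm \hookrightarrow \faisGO_{A,\sigma,f} \oto{\mu} \faisGm$ is the squaring map. Concretely you must check two things. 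First, the kernel of $\faisorthO_{A,\sigma,f}\to\faisPGO_{A,\sigma,f}$ is $\faisorthO_{A,\sigma,f}\cap\faisGm$ inside $\faisGO_{A,\sigma,f}$, i.e.\ the scalars $c$ with $\mu(c)=c^2=1$, which is exactly $\faismu_2$. Second, the map is an epimorphism of sheaves: a local point of $\faisPGO_{A,\sigma,f}$ lifts locally to some $a\in\faisGO_{A,\sigma,f}$ by the first sequence, and one then rescales $a$ by a scalar $c$ with $c^2=\mu(a)^{-1}$ to land in $\faisorthO_{A,\sigma,f}$; this is where the squaring identity is used and where the local extraction of a square root must be justified. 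Finally, the second sequence is obtained by restricting to the kernels of the degree-$\ZZ/2$ maps induced by the functor $\ArfFonc$ (definitions \ref{Opluspaire_defi} and \ref{PGOplus_defi}), noting that $\faismu_2$ already lies in $\faisOplus_{A,\sigma,f}$ and that the two $\ZZ/2$-valued maps are compatible. None of these steps appears in your text.
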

\begin{proof}
L'exactitude de la première suite s'obtient par chasse au diagramme, en croisant les suites exactes des propositions \ref{secGOPGO_prop} et \ref{secOGO_prop} et en remarquant que la composée $\faisGm \to \faisGO_{A,\sigma,f} \to \faisGm$ est l'élévation au carré. La seconde en est une conséquence immédiate.
\end{proof}

Le groupe $\faisPGO_{A,\sigma,f}=\faisAut_{A,\sigma,f}$ agit sur les différents groupes $\faisPGO$, $\faisPGOplus$, $\faisGO$, $\faisorthO$ et $\faisOplus$ fabriqués à partir de l'algèbre de Clifford paire, car celle-ci est fonctorielle en la paire quadratique, et les structures définissant ces groupes sont préservés par les morphismes de paires quadratiques. On obtient alors immédiatement:
\begin{prop} \label{PGOetctordus_prop}
Par l'action ci-dessus, le $\faisPGO_{\faisM_n,\invadj_{2n},f_{2n}}$-torseur $\faisIso_{(\faisM_n,\invadj_{2n},f_{2n}),(A,\sigma,f)}$ tord $\faisPGO_{\faisM_n,\invadj_{2n},f_{2n}}$  en $\faisPGO_{A,\sigma,f}$, et de même pour $\faisPGOplus$, $\faisorthO$, $\faisOplus$ et $\faisGO$. Les différents morphismes reliant ces groupes sont également tordus.
\end{prop}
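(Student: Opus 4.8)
The plan is to apply the general torsion machinery of Section \ref{torseurs_sec}, specifically Proposition \ref{auttordus_prop} together with Proposition \ref{foncttors_prop}, uniformly to each of the five groups. The key observation, already stated just before the proposition, is that the even Clifford algebra construction is functorial in the quadratic pair, and that every structure used to cut out $\faisPGO$, $\faisPGOplus$, $\faisGO$, $\faisorthO$ and $\faisOplus$ inside the ambient objects is preserved by morphisms of quadratic pairs. Concretely, write $H=\faisPGO_{\faisM_n,\invadj_{2n},f_{2n}}=\faisAut_{\faisM_n,\invadj_{2n},f_{2n}}$ and $P=\faisIso_{(\faisM_n,\invadj_{2n},f_{2n}),(A,\sigma,f)}$, which is an $H$-torseur by Proposition \ref{PGOtorseurs_prop} (or directly by Proposition \ref{structfonctIsotors_prop}, since $(A,\sigma,f)$ is an �tale form of the hyperbolic pair by Corollary \ref{pairequadformes_coro}).

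First I would treat $\faisPGO$ itself: by definition $\faisPGO_{A,\sigma,f}=\faisAut_{A,\sigma,f}$, and since $(A,\sigma,f)=P\contr{H}(\faisM_n,\invadj_{2n},f_{2n})$, Proposition \ref{auttordus_prop} gives the canonical isomorphism $P\contr{H}H\isoto\faisAut_{P\contr{H}(\faisM_n,\invadj_{2n},f_{2n})}=\faisPGO_{A,\sigma,f}$. This is the base case that anchors everything else. Next, for each of the remaining groups I would invoke the fact that it is obtained from $\faisPGO$ (or from $\faisGL_{1,A}$, for $\faisGO$) by a construction that is \emph{functorial} and compatible with the $H$-action. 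For $\faisPGOplus$ and $\faisOplus$, which are kernels of the map induced by $\ArfFonc$ to $\ZZ/2$ (Definitions \ref{PGOplus_defi} and \ref{Opluspaire_defi}), I would apply Proposition \ref{foncttors_prop}, point \ref{fonctaut_item}, to the functor $\ArfFonc$, exactly as was done in the proof of Proposition \ref{secPGOplus_prop}; the point $\ZZ/2$ is abelian and hence untwisted, so the tordu of the kernel is the kernel of the torded map, which is the desired group. For $\faisGO$ and $\faisorthO$, defined as fibre products / kernels in Definitions \ref{GO_defi} and \ref{Opaire_defi}, I would use that torsion commutes with the formation of kernels and fibre products of faisceaux (as already exploited in Proposition \ref{torsionsec_prop}, whose proof notes that the natural map from the tordu of a kernel to the kernel of the tordus is an isomorphism because it is so locally after trivializing $P$).

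The uniform mechanism, then, is: each defining construction is a functor of fibred categories commuting with the $H$-action, so Proposition \ref{foncttors_prop}\eqref{fonctobjet_item} identifies the tordu of the sub/quotient object, and point \eqref{fonctaut_item} identifies the tordu of the connecting morphisms, all compatibly. The final sentence of the statement, that the various morphisms relating these groups are likewise torded, follows from the naturality in Proposition \ref{foncttors_prop}\eqref{fonctaut_item} and the functoriality of the contracted product (Lemma \ref{assoccontr_lemm} and Proposition \ref{fonctorialiteTors_prop}): a commutative diagram of functors between fibred categories, compatible with the $H$-action, torts into a commutative diagram of the torded faisceaux. I do not expect any serious obstacle here — this is an assembly of results already in place. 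The only mild care needed is bookkeeping: checking that the $H$-action on each intermediate object (e.g.\ $\faisGm$ in the definition of $\faisGO$ and $\faisorthO$) is the correct one and that it is trivial where it must be (as for $\ZZ/2$ and for $\faisGm$ in Proposition \ref{secGOPGO_prop}), so that the untwisted factors really do remain untwisted. This verification is local after trivializing $P$, reducing each assertion to the already-established split (hyperbolic) case.
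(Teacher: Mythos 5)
Votre démonstration est correcte et suit essentiellement la même voie que celle du texte : la preuve du papier renvoie simplement à celle de la proposition \ref{sntordu_prop}, dont le mécanisme (un morphisme de comparaison $P\contr{H}G_h\to G_q$ induit par la fonctorialité de la construction en la paire quadratique, vérifié être un isomorphisme localement après trivialisation de $P$) est exactement ce que votre assemblage des propositions \ref{auttordus_prop}, \ref{foncttors_prop} et \ref{torsionsec_prop} encapsule. La seule différence est de présentation : vous passez par les lemmes généraux déjà établis là où le texte écrit la flèche explicitement sur les points, mais la réduction au cas déployé et l'usage de la fonctorialité sont identiques.
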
 
\begin{proof}
Voir la démonstration de \ref{sntordu_prop} un peu plus loin, qui fonctionne sur le même principe. 
\end{proof}

\subsection{Groupes de Clifford et Spin}

\subsubsection{Cas d'un module quadratique}

Soit $(M,q)$ un module quadratique, $M$ toujours supposé localement libre, et où $q$ est une forme \fppf\ de $\hypq_{2n}$ ou bien de $\hypq_{2n+1}$. Le foncteur fibré algèbre de Clifford $\CliffFonc$ à valeur dans le champ des algèbres $\ZZ/2$-graduées induit un morphisme de faisceaux en groupes $\faisorthO_q \to \faisAut_{\faisCliff_q}$. Ce morphisme est injectif, car $\faisCliff_q$ contient $M$ comme sous-module et il est par définition préservé par tout $\CliffFonc(f)$ avec $f\in \faisorthO_q$. 
On peut également considérer les deux foncteurs qu'il induit
$$\CliffFonc:\Formes{\hypq_{2n}} \to \Azumayan{2^{2n}} \hspace{3ex} \text{et}\hspace{3ex} \CliffFonc_{0}:\Formes{\hypq_{2n+1}} \to \Azumayan{2^{2n}}.$$

Par ailleurs, afin de tenir compte de la graduation dans l'algèbre de Clifford, nous aurons besoin de manipuler des algèbres d'Azumaya et des $\faisO_S$-modules $\ZZ/2$-gradués. On définit donc plusieurs champs: 
\begin{itemize}
\item $\GrAzumayan{2n}$, dont les objets sont des faisceaux en algèbres d'Azumaya de degré constant $2n$, munis en plus d'une $\ZZ/2$-graduation que les morphismes respectent.  
\item $\GrVecn{2n}$, dont les objets sont des $\faisO_S$-modules de rang $2n$ munis d'une $\ZZ/2$-graduation telle que les sous-modules de degré $0$ et $1$ sont tout deux des $\faisO_S$-modules localement libre de rang constant $n$, et les morphismes sont localement homogènes. 
\end{itemize}
Il est immédiat que ce sont des champs en combinant les procédures maintenant habituelles.

Soit $\ZeroUn$ le champ associé au faisceau $(\ZZ/2)_S$, au moyen de l'exemple \ref{champfaisceau_exem}. Chaque fibre n'a donc qu'un objet, et les automorphismes de cet objet sont le faisceau $(\ZZ/2)_S$. Nous disposons d'un foncteur fibré $\DegLocFonc:\GrVecn{2n} \to \ZeroUn$ qui envoie un objet sur le seul disponible, et un morphisme sur son degré local. 
Notons $\GrZeroVecn{2n}$ le produit fibré $\GrVecn{2n} \times_{\ZeroUn} \Final$, qui a donc les mêmes objets que $\GrVecn{2n}$, mais dont les morphismes sont homogènes de degré $0$, \ie préservent la graduation.

Pour toute $\faisO_S$-algèbre graduée, notons $\faisGLh_{1,A}$ le sous-faisceau en groupes de $\faisGL_{1,A}$ formé des éléments localement homogènes.

\begin{defi} \label{groupeClifford_defi}
Le faisceau en groupes de Clifford $\faisGamma_{q}$ est défini comme le produit fibré
$$\xymatrix{
\faisGamma_q \ar[r] \ar[d] \ar@{}[dr]|(.40){\ulcorner} & \faisGLh_{1,\faisCliff_q} \ar[d] \\
\faisO_q \ar@{^(->}[r]^-{\CliffFonc} & \faisAut_{\faisCliff_q} 
}$$ 
où la flèche verticale de droite est l'action par conjugaison.
\end{defi}
\'Etant donné $T$ sur $S$, $g\in \faisGL_{1,\faisCliff_q}(T)$ et $f \in \faisorthO_q(T)$, tels que $\CliffFonc(f)=\int_g$, on a alors que $\int_g$ préserve $M(T)$ dans $\faisCliff_q(T)$. Par ailleurs, si $g \in \faisGL_{1,\faisCliff_q}(T)$ est tel que $\int_g$ préserve $M(T)$, alors $q(g m g^{-1})=(g m g^{-1})^2=g m^{2}g^{-1} = g q(m) g^{-1}=q(m)$. 
Explicitement, sur les points, on a donc
$$\faisGamma_q(S)=\{\alpha \in \faisGLh_{1,\faisCliff_q}(S)\text{ t.q. $\alpha_T\cdot m\cdot \alpha_T^{-1}\in M(T)$ $\forall T$ sur $S$ et $\forall m \in M(T)$} \}.$$ 
\begin{defi} \label{groupeCliffordpair_defi}
Le faisceau en groupes de Clifford pair $\faisSGamma_{q}$ est défini comme l'intersection de $\faisGamma_{q}$ et de $\faisGL_{1,\faisCliff_{0,q}}$ dans $\faisGLh_{1,\faisCliff_q}$.
\end{defi}
Lorsque $q$ est $\hypq_{2n}$ (resp. $\hypq_{2n+1}$), on utilise la notation $\faisGamma_{2n}$ et $\faisSGamma_{2n}$ (resp. $\faisGamma_{2n+1}$ et $\faisSGamma_{2n+1}$).  
Notons que $\faisGamma_q$ et $\faisSGamma_q$ sont tout deux représentables par des schémas affines sur $S$ par le Lemme \ref{produit_fibre_lemm}. 

\begin{prop} \label{SGamOplus_prop}
Lorsque $q$ est une forme étale ou \fppf\ de $\hypq_{2n}$, on a les suites exactes de faisceaux en groupes suivantes, pour les topologies de Zariski, étale, ou \fppf:
\begin{equation}
1 \to \faisGm \to \faisGamma_q \to \faisorthO_q \to 1
\end{equation}
qui induit
\begin{equation} \label{SGamOplus_eq}
1 \to \faisGm \to \faisSGamma_q \to \faisOplus_q \to 1.
\end{equation}
Le faisceau en groupes $\faisSGamma_q$ s'identifie donc au produit fibré
$$\xymatrix{
\faisSGamma_q \ar[r] \ar[d] \ar@{}[dr]|(.40){\ulcorner} & \faisGamma_{q} \ar[d] \\
\faisOplus_q \ar@{^(->}[r] & \faisO_q.  
}$$ 
\end{prop}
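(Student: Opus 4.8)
Le plan est de ramener les deux suites au cas de la forme hyperbolique $\hypq_{2n}$, puis de les y \'etablir \`a l'aide de la structure de l'alg\`ebre de Clifford et du th\'eor\`eme de Cartan-Dieudonn\'e. En effet, $q$ \'etant une forme de $\hypq_{2n}$, le torseur $\faisIso_{\hypq_{2n},q}$ tord simultan\'ement l'alg\`ebre de Clifford $\faisCliff_{\hypq_{2n}}$ en $\faisCliff_q$ (par fonctorialit\'e de $\CliffFonc$ et la proposition \ref{foncttors_prop}), le groupe $\faisorthO_{\hypq_{2n}}$ en $\faisorthO_q$ et $\faisOplus_{\hypq_{2n}}$ en $\faisOplus_q$ (proposition \ref{auttordus_prop}); comme le produit fibr\'e d\'efinissant $\faisGamma_q$ (d\'ef.\ \ref{groupeClifford_defi}) et l'intersection d\'efinissant $\faisSGamma_q$ (d\'ef.\ \ref{groupeCliffordpair_defi}) commutent \`a la torsion, les groupes $\faisGamma_q$ et $\faisSGamma_q$ sont eux aussi tordus de leurs analogues hyperboliques par ce m\^eme torseur. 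La proposition \ref{torsionsec_prop} transporte alors toute suite exacte courte du cas $\hypq_{2n}$ au cas g\'en\'eral, exactement comme dans la preuve de \ref{secOplus_prop}. Il suffit ainsi de traiter $q=\hypq_{2n}$.

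Pour la premi\`ere suite, le morphisme $\faisGamma_q \to \faisorthO_q$ de \ref{groupeClifford_defi} envoie $g$ sur $\int_g|_M$, qui pr\'eserve bien $q$ puisque $q(gmg^{-1})=(gmg^{-1})^2=g\,q(m)\,g^{-1}=q(m)$. Son noyau est form\'e des \'el\'ements homog\`enes inversibles centraux de $\faisCliff_q$; or celle-ci est d'Azumaya de centre $\faisO_S$ (th\'eor\`eme \ref{strucalgcliffpaires_theo}) et $\faisO_S$ est concentr\'e en degr\'e $0$, donc ce noyau est exactement $\faisGm$. Pour l'\'epimorphie, je me place localement: pour $v\in M$ avec $q(v)$ inversible, un calcul direct dans $\faisCliff_q$ donne $\int_v|_M=-\tau_v$ (d\'ef.\ \ref{reflexion_defi}), de sorte que $v\in\faisGamma_q$ s'envoie sur $-\tau_v$ et un produit $v_1\cdots v_k$ sur $(-1)^k\tau_{v_1}\cdots\tau_{v_k}$. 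Par le th\'eor\`eme \ref{Cartan-Dieudonne_theo}, tout \'el\'ement de $\faisorthO_q$ est localement un produit de r\'eflexions; l'image \'etant un sous-groupe contenant $\faisSO_q$ ainsi que l'homoth\'etie $-\id$, on en d\'eduit qu'elle est tout $\faisorthO_q$, d'o\`u la premi\`ere suite exacte.

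La seconde suite s'obtient en restreignant la premi\`ere aux \'el\'ements pairs: le noyau reste $\faisGm$ (les scalaires sont de degr\'e $0$). Le c\oe ur de la preuve, et l'obstacle principal, est l'identification de l'image de $\faisSGamma_q\to\faisorthO_q$ avec $\faisOplus_q$. D'un c\^ot\'e, un \'el\'ement pair $g\in\faisCliff_{0,q}^\times$ fixe par conjugaison le centre $Z_{0,q}$ de $\faisCliff_{0,q}$, donc $\int_g|_M$ agit trivialement sur $Z_{0,q}$ et appartient \`a $\faisOplus_q$ par la d\'efinition \ref{Oplus_defi}. De l'autre, il faut la surjectivit\'e: je pr\'evois d'\'etablir la compatibilit\'e entre la parit\'e (la $\ZZ/2$-graduation) sur $\faisGamma_q$ et l'invariant de Dickson $\faisorthO_q\to\ZZ/2$ d\'efinissant $\faisOplus_q$, le calcul d\'ecisif \'etant que la conjugaison par un vecteur (\'el\'ement impair) change le signe du g\'en\'erateur ``volume'' de $Z_{0,q}$, tandis qu'un \'el\'ement pair le fixe. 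Cette compatibilit\'e admise, tout $h\in\faisOplus_q$ se rel\`eve localement, par la premi\`ere suite, en $g\in\faisGamma_q$ dont la parit\'e vaut l'invariant de Dickson de $h$, donc $0$; ainsi $g\in\faisSGamma_q$, ce qui donne la surjectivit\'e et, du m\^eme coup, l'identification de $\faisSGamma_q$ au produit fibr\'e $\faisGamma_q\times_{\faisorthO_q}\faisOplus_q$.

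La principale difficult\'e sera donc de relier pr\'ecis\'ement l'invariant de Dickson de \ref{Oplus_defi} \`a la parit\'e dans le groupe de Clifford, via l'action sur le centre de l'alg\`ebre de Clifford paire --- c'est l\`a qu'interviendront la structure \ref{strucalgcliffpaires_theo} et la proposition \ref{ArfDet_prop} --- tout en suivant soigneusement les signes (le fait que $v$ induise $-\tau_v$ et non $\tau_v$). Enfin, pour obtenir simultan\'ement l'exactitude pour les topologies de Zariski, \'etale et \fppf, je v\'erifierai l'\'epimorphie sur les anneaux locaux (germes pour Zariski), l'exception $\FF_2$ du th\'eor\`eme \ref{Cartan-Dieudonne_theo} se contournant par passage \`a un hens\'elis\'e strict pour les topologies plus fines, ou, dans le cas d\'eploy\'e, par renvoi aux calculs explicites de \cite[Ch.\ IV]{knus}.
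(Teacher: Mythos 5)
Votre preuve est auto-contenue l\`a o\`u celle du texte se borne \`a se ramener au cas $S$ affine (Zariski-localement) et \`a renvoyer \`a \cite[Ch.~IV, (6.2.2) et (6.2.3)]{knus}; votre plan (noyau $=\faisGm$ par centralit\'e dans l'alg\`ebre d'Azumaya $\faisCliff_q$, surjectivit\'e via $\int_v|_M=-\tau_v$ et Cartan--Dieudonn\'e, puis comparaison entre la graduation de $\faisGamma_q$ et l'invariant de Dickson) est pr\'ecis\'ement le contenu de ces r\'ef\'erences et la strat\'egie est la bonne. Deux points ne tiennent cependant pas en l'\'etat. D'abord, la surjectivit\'e de $\faisGamma_q\to\faisorthO_q$: en rang pair on a $\det(-\id)=(-1)^{2n}=1$, donc $-\id$ appartient \`a $\faisSO_q$ et ``l'image contient $\faisSO_q$ et $-\id$'' n'entra\^ine pas qu'elle vaut $\faisorthO_q$; de plus l'inclusion de $\faisSO_q$ dans l'image n'est pas justifi\'ee (ce que vous obtenez directement, ce sont les produits d'un nombre pair de r\'eflexions, c'est-\`a-dire localement $\faisOplus_q$, qui ne co\"incide avec $\faisSO_q$ qu'en caract\'eristique $\neq 2$). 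L'argument correct est tout proche: pour $f=\tau_{v_1}\cdots\tau_{v_k}$ le produit $(-1)^k f$ est dans l'image, et il suffit de voir que $-\id$ y est aussi, ce qui r\'esulte de $-\id=\int_z|_M$ pour l'\'el\'ement pair inversible explicite $z=\prod_i(e_{2i-1}e_{2i}-e_{2i}e_{2i-1})$, ou du fait que $-\id$, de d\'eterminant $1$, est lui-m\^eme un produit pair de r\'eflexions.

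Ensuite, vous admettez vous-m\^eme (``je pr\'evois d'\'etablir'', ``cette compatibilit\'e admise'') le c{\oe}ur de la seconde suite: la co\"incidence entre la parit\'e dans $\faisGamma_q$ et l'invariant de Dickson $\faisorthO_q\to\ZZ/2$ de la d\'efinition \ref{Oplus_defi}. Le m\'ecanisme esquiss\'e est le bon --- un \'el\'ement pair inversible induit un automorphisme int\'erieur de $\faisCliff_{0,q}$ et fixe donc $Z_{0,q}$ point par point, tandis que la conjugaison par un \'el\'ement impair induit l'automorphisme non trivial de $Z_{0,q}$ --- mais ce dernier point est exactement le calcul de \cite[Ch.~IV, (5.1.2)]{knus} et doit \^etre r\'edig\'e; notez qu'en caract\'eristique $2$ il ne s'agit pas d'un ``changement de signe'' mais de l'\'echange des deux idempotents, soit $z\mapsto\trace(z)-z$. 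Tant que ce calcul n'est pas fait, la surjectivit\'e de $\faisSGamma_q\to\faisOplus_q$ et l'identification au produit fibr\'e restent en suspens. Enfin, pour l'exactitude Zariski, l'exception ($\FF_2$, rang $\leq 4$) du th\'eor\`eme \ref{Cartan-Dieudonne_theo} ne se contourne pas par hens\'elisation stricte, qui n'est pas un recouvrement Zariski: c'est pr\'ecis\'ement l\`a que le renvoi aux calculs explicites de Knus, comme dans la preuve du texte, devient indispensable.
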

\begin{proof}
On peut raisonner Zariski localement sur $S$, et donc supposer $S$ affine. 
Cela découle alors de \cite[Ch. IV, (6.2.2) et (6.2.3)]{knus}, car toute classe dans le groupe de Picard est localement nulle pour la topologie de Zariski (en prenant garde une fois de plus que $\faisOplus$ est noté $\faisSO$ par Knus). 
\end{proof}
Le cas du groupe de Clifford spécial en rang impair sera traité en \ref{secSpinSO_prop}.
\medskip

Décrivons maintenant les torseurs sous le groupe de Clifford $\faisGamma_{2n}$ de la forme hyperbolique de rang pair $2n$. Soit $\EndFonc:(\GrVecn{2n})_\grpd\to \GrAzumayan{2n}$ le foncteur fibré ``endomorphismes", qui à un module gradué $N_{\bullet}$ associe $\faisEnd_{N_\bullet}$ muni de la graduation du degré des éléments homogènes, \ie $\faisEnd_{N_0}$ et $\faisEnd_{N_1}$ sont en degré $0$ alors que $\faisHom_{N_0,N_1}$ et $\faisHom_{N_1,N_0}$ sont en degré $1$. \`A un isomorphisme $f:N_\bullet \to N'_\bullet$, on associe $\int_f: \faisEnd_{N_\bullet} \to \faisEnd_{N'_\bullet}$. Notons que ce dernier respecte automatiquement la graduation si $f$ est localement homogène. 
Soit $\GrDeplCliff{2n}$ le produit fibré de champs
$$
\xymatrix{
\GrDeplCliff{2n} \ar[r] \ar[d] \ar@{}[dr]|(.40){\ulcorner} & (\GrVecn{2^n})_\grpd \ar[d] \\
\Formes{\hypq_{2n}} \ar[r]^{\CliffFonc} & \GrAzumayan{2^n}.
}$$ 
Explicitement, un objet de $(\GrDeplCliff{2n})_T$ est donc un triplet $(q,N_0\oplus N_1,\phi)$ où $q$ est une forme de $(\hypq_{2n})_T$, $N_0$ et $N_1$ sont des $\faisO_T$-module localement libre de rang $2^{n-1}$, et $\phi: \faisEnd_N \isoto \faisCliff_q$ est un isomorphisme de $\faisO_T$-algèbres graduées.
Nous avons donné en \ref{CliffHypPaire_prop} un isomorphisme d'algèbres graduées $\faisCliff_{\hypq_{2n}}\simeq \faisEnd_{(\totExt \faisO_S^n)_\bullet}$, où $(\totExt \faisO_S^n)_0 = \totExtPair \faisO_S^n$ et $\totExt \faisO_S^n)_1=\totExtImpair \faisO_S^n$.  Notons-le $\phi_h$. Cela fournit donc un objet $(\hypq_{2n},(\totExt \faisO_S^n)_\bullet, \phi_h)$ de $(\GrDeplCliff{2n})_S$. 
Un automorphisme de cet objet est une paire $(f,g)$ où $f \in \faisorthO_{\hypq_{2n}}(T)$, $g \in \faisGLh_{(\totExt \faisO_S^{n})_\bullet}(T)$ et $\CliffFonc(f)=\int_g$. D'où $\faisAut_{(\hypq_{2n},(\totExt \faisO_S^n)_\bullet,\phi_h)}=\faisGamma_{2n}$. 
Puisque $\faisGLh_{(\totExt \faisO_S^n)_\bullet} \to \faisAut_{\faisEnd_{(\totExt \faisO_S^n})_\bullet}$ est un épimorphisme de faisceaux (Skolem-Noether gradué dans le cas local \cite[Ch. III, (6.5.1)]{knus}), la proposition \ref{produitformes_prop} donne:

\begin{prop} \label{Gamma2ntors_prop}
Le foncteur fibré qui envoie un objet $(q,N_\bullet,\phi)$ sur $\faisIso_{(\hypq_{2n},(\totExt\faisO_S^n)_\bullet,\phi_h),(q,N_\bullet,\phi)}$ définit une équivalence de champs 
$$\GrDeplCliff{2n}\simeq \Tors{\faisGamma_{2n}}$$
(pour les topologies étales ou \fppf).
\end{prop}

\begin{prop} \label{secSGamGam_prop}
Pour tout module régulier $q$ de rang $2n$, la suite de faisceaux en groupes
$$1 \to \faisSGamma_q \to \faisGamma_q \to \ZZ/2 \to 0$$
est une suite exacte de faisceaux étales, où la flèche $\faisGamma_q \to \ZZ/2$ est le degré local.
\end{prop}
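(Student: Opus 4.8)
The plan is to observe that the sequence is almost formal at its two left-hand terms, and that the entire content lies in the surjectivity on the right. Indeed, by the very definitions \ref{groupeClifford_defi} and \ref{groupeCliffordpair_defi}, $\faisSGamma_q$ is the subsheaf of $\faisGamma_q$ cut out by the condition of being homogeneous of even degree, i.e. the kernel of the local-degree homomorphism $\faisGamma_q \to \ZZ/2$ (this is a morphism of sheaves in groups since the degrees of homogeneous invertible elements add under multiplication). So I would first record that $\faisSGamma_q \to \faisGamma_q$ is a monomorphism whose image is exactly this kernel, which gives exactness at $\faisSGamma_q$ and at $\faisGamma_q$ for free, with no regularity hypothesis needed.

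It then remains to prove that the local degree is an epimorphism of étale sheaves. Since this is an étale-local property on $S$, and since the Clifford algebra commutes with base change (\ref{chgmtBaseCliff_prop}), I would reduce to the case $q=\hypq_{2n}$ using that every regular module of rank $2n$ is an étale form of the hyperbolic one (\ref{formesetaleshyppair_prop}). In the hyperbolic case I would exhibit a \emph{global} odd element of $\faisGamma_{2n}$: take $v=e_1+e_2$, for which $q(v)=1\in\Gamma(S)^\times$, so that $v$ is homogeneous of degree $1$ and invertible in $\faisCliff_{\hypq_{2n}}$ with $v^{-1}=v$. The polarised Clifford relation $vm+mv=b_q(v,m)\cdot 1$, valid for all $m\in M$, then yields $vmv^{-1}=\tfrac{b_q(v,m)}{q(v)}v-m$, which again lies in $M$ (it equals $-\tau_v(m)$, with $\tau_v$ the reflection of \ref{reflexion_defi}). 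Hence conjugation by $v$ stabilises $M$, so $v\in\faisGamma_{2n}(S)$, the orthogonality of the induced map being automatic as noted after \ref{groupeClifford_defi}. Together with the identity, which has degree $0$, this shows that $\faisGamma_{2n}(T)\to(\ZZ/2)(T)$ is surjective for \emph{every} $T$: a section of $\ZZ/2$ corresponds to a clopen decomposition $T=T_0\sqcup T_1$, and one glues the identity over $T_0$ with $v$ over $T_1$. This point-wise surjectivity is a fortiori an epimorphism, which after the reduction gives the epimorphism for arbitrary regular $q$.

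The only genuinely non-formal step is the verification that $\int_v$ preserves $M$, and that is the short Clifford-relation computation above; everything else is bookkeeping about degrees and the localisation principle for epimorphisms, so I do not expect a real obstacle. As an alternative that parallels the earlier proofs, I could avoid the explicit element and instead twist the hyperbolic sequence by the torsor $\faisIso_{\hypq_{2n},q}$ via \ref{torsionsec_prop}, noting that $\ZZ/2$ is abelian and therefore untwisted, exactly as for \ref{secOplus_prop}; but the direct argument is shorter and more transparent here.
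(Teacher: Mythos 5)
Your proof is correct and follows essentially the same route as the paper: the paper likewise observes that by definition of $\faisSGamma_q$ only right-exactness needs proof, reduces to $q=\hypq_{2n}$, and notes that any $m\in M$ with $q(m)$ invertible gives an odd element of $\faisGamma_q$ (you simply make this concrete with $v=e_1+e_2$ and spell out the Clifford-relation check and the clopen gluing). No gap to report.
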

\begin{proof}
Par définition de $\faisSGamma$, seule l'exactitude à droite est à montrer, et on peut supposer le module $q$ hyperbolique. Tout élément $m\in M$ tel que $q(m)$ est inversible est alors en degré $1$.
\end{proof}

Pour décrire les torseurs sous $\faisSGamma_{2n}$, considérons le produit fibré de champs 
$$\DeplCliff{2n}:=\GrDeplCliff{2n} \times_{\ZeroUn} \Final$$
où le foncteur $\GrDeplCliff{2n} \to \ZeroUn$ est la composée de l'oubli $\GrDeplCliff{2n} \to \GrVecn{2n}$ et du foncteur degré local $\DegLocFonc:\GrVecn{2n}\to \ZeroUn$  défini plus haut. Il est facile de voir que $\DeplCliff{2n}$ est équivalent au champ dont les objets sont les mêmes que ceux de $\GrDeplCliff{2n}$, mais dont les morphismes sont des paires $(f,g)$ avec $g$ de degré constant $0$. Il est donc immédiat que les automorphismes de l'objet $(\hypq_{2n},(\totExt \faisO_S^n)_\bullet,\phi_h)$ sont le groupe $\faisSGamma_{2n}$.
\begin{prop} \label{SGammapairtorseurs_prop}
Le foncteur fibré qui envoie un objet $(q,N_\bullet,\phi)$ sur $\faisIso_{(\hypq_{2n},(\totExt \faisO_S^n)_\bullet,\phi_h),(q,N_\bullet,\phi)}$ définit une équivalence de champs 
$$\DeplCliff{2n}\simeq \Tors{\faisSGamma_{2n}}$$
(pour les topologies étales ou \fppf).
\end{prop}
\begin{proof}
C'est encore une fois une application directe de la proposition \ref{produitformes_prop}, à l'aide de la suite exacte de la proposition \ref{secSGamGam_prop}.
\end{proof}

En rang impair, nous obtenons directement les $\faisSGamma$-torseurs. Notons $\hypq_{2n+1}$ la forme $\hypq_{2n}\orth \fq{1}$. On définit le champ $\ZGrAzumaya$ dont les objets sont des $\faisO_S$-algèbres $\ZZ/2$-graduées, de la forme $Z \otimes A$ où $A$ est une $\faisO_S$-algèbre d'Azumaya, et $Z$ est une algèbre graduée de degré $2$, avec $Z_0 =\faisO_S$ comme $\faisO_S$-algèbre, $Z_1$ localement libre comme $\faisO_S$-module et la multiplication $Z_1 \otimes Z_1 \to Z_0=\faisO_S$ étant un isomorphisme de $\faisO_S$-module. La graduation n'est fournie que par $Z$, et $A$ est homogène de degré $0$. Les morphismes sont les morphismes de $\faisO_S$-algèbre $\ZZ/2$-graduée. Il n'est pas difficile de vérifier que c'est un champ (pour la topologie étale ou \fppf), par la proposition \ref{champstruc_prop} puis par descente des propriétés comme ``localement libre''.  
Par le point \ref{multCliff_item} du théorème \ref{CliffordSplit_theo}, on peut voir le foncteur $\CliffFonc$ comme $\Formes{\hypq_{2n+1}} \to \ZGrAzumaya$. De plus, on peut définir un foncteur fibré $(\Vecn{2^n})_{\grpd} \to \ZGrAzumaya$ en envoyant un $\faisO_S$-module localement libre $N$ sur l'algèbre $\ZZ/2$-graduée donnée par $Z^{triv}\otimes\faisEnd_N$, où $Z^{triv}$ est telle que $Z^{triv}_1=\faisO_S$ avec la multiplication usuelle $\faisO_S \otimes \faisO_S \to \faisO_S$; un isomorphisme de $N$ s'envoie sur la conjugaison par cet isomorphisme sur $\faisEnd_N$.
On considère alors le champ $\DeplCliff{2n+1}$ défini comme le produit fibré
$$
\xymatrix{
\DeplCliff{2n+1} \ar[r] \ar[d] \ar@{}[dr]|(.40){\ulcorner} & (\Vecn{2^n})_\grpd \ar[d] \\
\Formes{\hypq_{2n+1}} \ar[r]^{\CliffFonc} & \ZGrAzumaya.
}$$ 
Explicitement, un objet de $(\DeplCliff{2n+1})_T$ est donc un triplet $(q,N,\phi)$ où $q$ est une forme de $(\hypq_{2n+1})_T$, $N$ est un $\faisO_T$-module localement libre de rang $2^n$, et $\phi: Z^{triv}\otimes \faisEnd_N \isoto \faisCliff_{q}$ est un isomorphisme de $\faisO_T$-algèbres graduées.
Par la remarque \ref{Cliffhyp_rema}, on obtient un isomorphisme de $\faisO_S$-algèbres $\phi_h:\faisCliff_{\hypq_{2n+1}}\simeq Z^{triv}\otimes \faisEnd_{\totExt \faisO_S^n}$ qui fournit donc un objet $(\hypq_{2n+1},\totExt \faisO_S^n, \phi_h)$ de $(\DeplCliff{2n+1})_S$, dont on vérifie immédiatement que les automorphismes sont $\faisSGamma_{2n+1}$. 
On obtient donc encore une fois par la proposition \ref{produitformes_prop}:
\begin{prop} \label{Sgammatorseursimpair_prop}
Le foncteur fibré qui envoie un objet $(q,N,\phi)$ sur $\faisIso_{(\hypq_{2n+1},\totExt \faisO_S^n,\phi_h),(q,N,\phi)}$ définit une équivalence de champs 
$$\DeplCliff{2n+1}\simeq \Tors{\faisSGamma_{2n+1}}$$
(pour les topologies étales ou \fppf).
\end{prop}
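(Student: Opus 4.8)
This proposition, like its even-rank counterpart \ref{SGammapairtorseurs_prop} and the other torsor descriptions in this section, is an instance of the general torsion formalism, and I would prove it by a direct appeal to proposition \ref{produitformes_prop} on fibered products of forms. The strategy is to recognize $\DeplCliff{2n+1}$ as a fibered product of stacks whose objects $(q,N,\phi)$ encode a form $q$ of $\hypq_{2n+1}$ together with a trivialization $\phi$ of its Clifford algebra against $Z^{triv}\otimes\faisEnd_N$, and to identify the automorphism group of the distinguished object $(\hypq_{2n+1},\totExt\faisO_S^n,\phi_h)$ with $\faisSGamma_{2n+1}$. Once these two facts are in place, the equivalence $\Formes{(\hypq_{2n+1},\totExt\faisO_S^n,\phi_h)}\isoto\Tors{\faisAut_{(\hypq_{2n+1},\totExt\faisO_S^n,\phi_h)}}$ of proposition \ref{tordusformes_prop} gives the result, provided every object of $\DeplCliff{2n+1}$ is indeed a form of the distinguished one.

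First I would fix the reference object. The isomorphism $\phi_h:\faisCliff_{\hypq_{2n+1}}\isoto Z^{triv}\otimes\faisEnd_{\totExt\faisO_S^n}$ is exactly the content of remark \ref{Cliffhyp_rema}, which rests on theorem \ref{CliffStruc_theo}: in the split case $\hypq_{2n}\orth\fq{1}$ one has $\faisCliff_{0,q}\simeq\faisEnd_{\totExt M}$ and $\faisCliff_{1,q}\simeq\faisEnd_{\totExt M}$ as $\faisCliff_{0,q}$-module, with the central part $Z$ of degree $2$ carrying the $\ZZ/2$-grading. This produces the object $(\hypq_{2n+1},\totExt\faisO_S^n,\phi_h)$ of $(\DeplCliff{2n+1})_S$. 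Next I would compute its automorphisms: an automorphism is a pair $(f,g)$ with $f\in\faisorthO_{\hypq_{2n+1}}$ and $g$ an automorphism of $N=\totExt\faisO_S^n$, such that $\CliffFonc(f)=\int_g$ is compatible with $\phi_h$. By the definition \ref{groupeCliffordpair_defi} of $\faisSGamma_q$ as the intersection of $\faisGamma_q$ with $\faisGL_{1,\faisCliff_{0,q}}$, and by the explicit description of $\faisGamma_q$ on points, such pairs are precisely the elements of $\faisSGamma_{2n+1}$; here one uses that the grading on $Z^{triv}\otimes\faisEnd_N$ is carried solely by $Z^{triv}$, so that an automorphism respecting the graded structure amounts to an element of $\faisGL_{1,\faisCliff_{0,q}}$ normalizing $M$. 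I would record this as the equality $\faisAut_{(\hypq_{2n+1},\totExt\faisO_S^n,\phi_h)}=\faisSGamma_{2n+1}$.

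To conclude that every object is a form of the reference object, I would invoke local triviality for the \'etale or \fppf\ topology: by proposition \ref{semiregformeshyp_prop} every form $q$ of $\hypq_{2n+1}$ is locally isomorphic to $\hypq_{2n+1}$, and by the structure theorem \ref{CliffStruc_theo} together with graded Skolem--Noether in the local case (\cite[Ch. III, (6.5.1)]{knus}), the trivialization $\phi$ can be matched locally with $\phi_h$. This is the ingredient that lets proposition \ref{produitformes_prop} apply: one needs the epimorphism condition on automorphism groups, which follows because $\faisGLh_{1,\faisCliff_q}\to\faisAut_{\faisCliff_q}$ is an epimorphism of sheaves by graded Skolem--Noether, exactly as in the even-rank proof \ref{SGammapairtorseurs_prop}. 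The fibered-product presentation of $\DeplCliff{2n+1}$ over $\ZGrAzumaya$ then furnishes the data $X_1=q$, $X_2=N$, with $F_1=\CliffFonc$ and $F_2$ the functor $N\mapsto Z^{triv}\otimes\faisEnd_N$, and proposition \ref{produitformes_prop} yields the stack equivalence.

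\textbf{Main obstacle.} The genuinely delicate step is the bookkeeping of the grading, which is why the odd case is treated through $\ZGrAzumaya$ rather than $\GrAzumayan{}$. Unlike the even case, where the whole Clifford algebra is Azumaya and the grading sits on an honest $\ZZ/2$-graded endomorphism algebra, in odd rank the grading is concentrated in the central factor $Z$, and the correct target stack must build in the canonical identification $Z_1\simeq\Det M$ and the isomorphism $Z_1\otimes Z_1\isoto Z_0$. I expect the main work to be checking that the distinguished trivialization $\phi_h$ is genuinely $\ZZ/2$-graded with respect to this $Z^{triv}\otimes\faisEnd_{\totExt\faisO_S^n}$ structure, and that the automorphisms computed above respect the grading in the precise sense that recovers $\faisSGamma_{2n+1}$ and not the larger $\faisGamma_{2n+1}$; all of this is local and reduces, via proposition \ref{CliffordSplit_theo} and remark \ref{Cliffhyp_rema}, to the split computation, but it requires care that the graded Skolem--Noether epimorphism is applied to the factor $\faisEnd_N$ and not to the central algebra $Z$, which is not subject to it.
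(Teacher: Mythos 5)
Votre mise en place est correcte et suit la même route que le texte : on identifie l'objet de référence $(\hypq_{2n+1},\totExt\faisO_S^n,\phi_h)$, on vérifie que son groupe d'automorphismes est $\faisSGamma_{2n+1}$, et on applique la proposition \ref{produitformes_prop}. Mais l'étape qui porte tout le poids de la preuve — la condition d'épimorphisme de \ref{produitformes_prop} — est justifiée de façon erronée. Vous écrivez qu'elle découle du fait que $\faisGLh_{1,\faisCliff_q}\to\faisAut_{\faisCliff_q}$ est un épimorphisme par Skolem--Noether gradué, \og exactement comme dans le cas pair \fg. C'est précisément ce qui tombe en défaut en rang impair : un automorphisme de l'algèbre graduée $Z^{triv}\otimes\faisEnd_N$ peut agir par $-1$ sur la partie centrale $Z_1$, et aucun automorphisme intérieur (conjugaison par un élément homogène inversible, ou image d'un élément de $\faisGL_N$ par $g\mapsto \id\otimes\int_g$) ne bouge le centre. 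Le morphisme issu du seul facteur $\Vecn{2^n}$ n'atteint donc que la partie $\faisPGL_N$ des automorphismes gradués, et rate le facteur $\faismu_2$ agissant sur $Z_1$ ; Skolem--Noether gradué ne s'applique pas à $Z^{triv}\otimes\faisEnd_N$, dont le centre n'est pas $\faisO_S$.

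Vous effleurez le problème dans votre dernier paragraphe (\og $Z$ ... is not subject to it \fg) mais vous ne le résolvez pas. La résolution du texte est justement que c'est l'unique endroit où aucun des deux foncteurs du produit fibré n'induit séparément un épimorphisme : il faut la condition conjointe de \ref{produitformes_prop}, le facteur $\faismu_2$ manquant étant fourni par le côté quadratique. Concrètement, pour $\lambda\in\faismu_2(T)$, l'application qui envoie $e$ sur $\lambda e$ et vaut l'identité sur le sous-module sous-jacent à $\hypq_{2n}$ est un élément de $\faisorthO_{\hypq_{2n+1}}(T)$ qui induit $\lambda$ sur $Z_1$ (via la description du générateur $w=xe$ de $Z_1$ dans la preuve de \ref{CliffordSplit_theo}). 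C'est cet ingrédient, absent de votre proposition, qui clôt l'argument ; sans lui l'essentielle surjectivité du foncteur n'est pas établie.
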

\begin{proof}
La seule chose à vérifier est la condition de surjectivité de la proposition \ref{produitformes_prop}. Lorsque $N=\totExt \faisO_S^n$ tout automorphisme de la $\faisO_S$-algèbre $\ZZ/2$-graduée $Z^{triv}\otimes \faisEnd_{N}$ peut se décomposer (par son action sur le centre) en un automorphisme d'algèbre de $\faisEnd_N$ et un automorphisme de $\faisO_S$-module $Z_1=\faisO_S$ de carré trivial (car il doit être compatible à la multiplication $Z_1 \otimes Z_1 \to Z_0$), donc un élément de $\faismu_2$. Or, Zariski localement, tout automorphisme de $\faisEnd_N$ est intérieur (Skolem-Noether, voir \cite[th. 3.6]{ausgol}). De plus, si $\lambda \in \faismu_2(T)$, l'automorphisme $e \mapsto \lambda e$ et l'identité sur le sous-espace sous-jacent au sous-module $\hypq_{2n}$ est bien dans $\faisAut_{\hypq_{2n+1}}$ et induit $\lambda$ sur $Z_1$, par la description du générateur de $Z_1$ fourni dans la preuve du théorème \ref{CliffordSplit_theo}. Donc, tout automorphisme de $Z^{triv} \otimes \faisEnd_N$ provient bien localement de $\faisAut_{\totExt \faisO_S^n} \times \faisAut_{\hypq_{2n+1}}$. 
\end{proof}

Remarquons que la preuve de la proposition précédente est un cas d'application de la proposition \ref{produitformes_prop} où aucun des deux foncteurs définissant le produit fibré n'induit individuellement un épimorphisme de faisceaux sur les automorphismes, il faut utiliser la condition plus générale portant sur les deux simultanément.
\medskip

Passons maintenant à la définition du groupe Spin, et à une description de ses torseurs. Pour cela nous aurons besoin de la norme spinorielle. 

On considère l'involution standard $\invstd_q$ du faisceau en algèbres de Clifford $\faisCliff_q$ d'un module quadratique $q$, forme (étale ou \fppf) de $\hypq_{2n}$ ou $\hypq_{2n+1}$. Alors l'application $\faisGamma_q \to \faisGL_{1,\faisCliff_q}$ définie sur les points par $x \mapsto \invstd_q(x)x$ arrive en fait dans $\faisGm \subset \faisGL_{1,\faisCliff_q}$, et définit un morphisme de faisceaux en groupes. Toutes ces affirmations peuvent se vérifier localement, et sont prouvées dans \cite[Ch. IV, Lemme 6.1.1]{knus} (on vérifie en fait que l'image est dans le centre de $\faisCliff_q$ ainsi que dans $\faisCliff_{0,q}$).

\begin{defi} \label{sn_defi}
Le morphisme $\sn :\faisGamma_q \to \faisGm$ défini ci-dessus est appelé \emph{norme spinorielle}.
\end{defi}

\begin{rema} \label{snsurj_rema}
Pour les formes $\hypq_{2n}$ et $\hypq_{2n+1}$, avec $n\geq 1$, la norme spinorielle est surjective sur les points (donc évidemment un épimorphisme de faisceaux Zariski, étale, \fppf). En effet, un élément $m\in M(T) \subset \faisCliff_q(T)$ est dans $\faisGamma_q(T)$ si et seulement si $q(m)$ est inversible, et auquel cas $\sn(m)=-q(m)$. Les formes $\hypq_{2n}$ et $\hypq_{2n+1}$ représentant trivialement tout élément de $\Gamma(T)^*$, on a le résultat. De plus, la restriction de $\sn$ à $\faisSGamma$ est également surjective: on utilise la classe de $m_1\otimes m_2$ avec $q(m_1)=1$ et $q(m_2)$ l'élément recherché.
\end{rema}

\begin{defi} \label{PinSpin_defi}
Soit $q$ une forme (étale ou \fppf) de $\hypq_{2n}$ ou $\hypq_{2n+1}$.
Le schéma en groupes $\faisPin_q$ est le noyau de la norme spinorielle $\faisGamma_q \to \faisGm$ et le schéma en groupes $\faisSpin_q$ est le noyau de la norme spinorielle $\faisSGamma_q \to \faisGm$. Donc, $\faisSpin_q= \faisPin_q \cap \faisSGamma_q$. (Ils sont bien représentables par des schémas affines sur $S$ par le lemme \ref{produit_fibre_lemm}.) 
\end{defi}
Les points de $\faisSpin_q$ sont donc donnés, pour tout $S$-schéma $T$, par
$$\faisSpin_q(T)=\{\alpha \in \faisSGamma_q(T)\text{ t.q. $\sn(\alpha)=1$}\}.$$
Comme d'habitude, on note $\faisSpin_{2n}$ (resp. $\faisSpin_{2n+1}$) lorsque le module quadratique est hyperbolique et de même pour $\faisPin$.
\medskip

Pour toutes paires de module quadratique $(q,M)$ et $(q',M')$, formes \fppf\ de $h=\hypq_{2n}$ ou bien $h=\hypq_{2n+1}$, on peut considérer l'action $\faisIso_{q,q'} \times \faisCliff_q \to \faisCliff_{q'}$ induite par le foncteur $\faisCliff$. Par construction, pour tout élément $f$ de $\faisIso_{q,q'}$, on a $\CliffFonc(f)(M)\subseteq M'$. On en tire que pour un élément $g$ de $\faisSGamma_{q}$, on a $\CliffFonc(f)(g)$ dans $\faisSGamma_{q'}$.
De plus, $\faisCliff(f)(\invstd_q(x))=\invstd_{q'}(\faisCliff(f)(x))$, où $\invstd_q$ et $\invstd_{q'}$ sont les involutions standard respectives, donc $\sn(\CliffFonc(f)(g))=\sn(g)$ et $\faisSpin_q$ est envoyé dans $\faisSpin_{q'}$ par $\CliffFonc(f)$. Le foncteur $\CliffFonc$ induit donc des action de $\faisorthO_q$ sur $\faisCliff_{q}$, $\faisCliff_{0,q}$, $\faisGamma_q$, $\faisSGamma_q$, $\faisSpin_q$, compatibles aux différentes inclusions les reliant.

\begin{prop} \label{sntordu_prop}
Par l'action ci-dessus, le $\faisorthO_{h}$-torseur $\faisIso_{h,q}$ tord $\faisSGamma_h$ en $\faisSGamma_q$, $\faisSpin_h$ en $\faisSpin_q$, le morphisme $\sn: \faisSGamma_h \to \faisGm$ en $\faisSGamma_q \to \faisGm$ (l'action de $\faisorthO_h$ sur $\faisGm$ est triviale), et les inclusions $\faisSpin_h \to \faisSGamma_h \to \faisGamma_h$ en les inclusions correspondantes pour $q$.  
\end{prop}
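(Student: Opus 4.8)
The plan is to exploit the fact that every object in the statement is produced from $q$ by a construction functorial in $q$ along isomorphisms, namely the action of the Clifford functor $\CliffFonc$ recalled just above. Via Proposition \ref{tordusformes_prop}, the torsor $\faisIso_{h,q}$ is exactly the $\faisorthO_h$-torsor whose twist carries $h$ to $q$, so that $\faisIso_{h,q} \contr{\faisorthO_h} h \simeq q$; the whole point is then that twisting each $h$-construction by $\faisIso_{h,q}$ must reproduce the corresponding $q$-construction. Concretely, I would check everything after \'etale- or \fppf-locally trivializing $\faisIso_{h,q}$, where $q$ becomes canonically $h$ and all the displayed identifications reduce to the identity, and then descend --- this is the same principle announced for Proposition \ref{PGOetctordus_prop}.

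First I would treat the groups. Recall that $\faisGamma_q$ is the fibre product of $\faisGLh_{1,\faisCliff_q}$ and $\faisorthO_q$ over $\faisAut_{\faisCliff_q}$ (Definition \ref{groupeClifford_defi}), and $\faisSGamma_q$ is its intersection with $\faisGL_{1,\faisCliff_{0,q}}$ (Definition \ref{groupeCliffordpair_defi}). I would apply Proposition \ref{foncttors_prop} to the Clifford functor: point \ref{fonctobjet_item} identifies the twist of $\faisCliff_h$ with $\faisCliff_q$, hence (using Proposition \ref{GLtordu_prop} and its evident graded variant) the twists of $\faisGLh_{1,\faisCliff_h}$ and $\faisGL_{1,\faisCliff_{0,h}}$ with their $q$-analogues, while point \ref{fonctaut_item} turns the map $\faisorthO_h \to \faisAut_{\faisCliff_h}$ into $\faisorthO_q \to \faisAut_{\faisCliff_q}$. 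Since twisting commutes with fibre products and with the formation of the relevant subgroups --- the natural map from the twist of a kernel, fibre product, or image to the kernel of the twists is an isomorphism, being so locally, exactly as in Proposition \ref{torsionsec_prop} --- I would conclude that $\faisIso_{h,q}$ twists $\faisGamma_h$ into $\faisGamma_q$ and $\faisSGamma_h$ into $\faisSGamma_q$.

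Next I would handle the spinor norm and the inclusions. The morphism $\sn$ is $x \mapsto \invstd_q(x)\,x$, built from the standard involution $\invstd_q$, and this involution is functorial: as recorded before the statement one has $\CliffFonc(f)\circ \invstd_q = \invstd_{q'}\circ \CliffFonc(f)$ for every isomorphism $f$. Thus $\sn$ is an equivariant morphism with values in $\faisGm$, on which $\faisorthO_h$ acts trivially, and I would invoke Proposition \ref{torsionmorphisme_prop} to see that the twist of $\sn\colon \faisSGamma_h \to \faisGm$ is $\sn\colon \faisSGamma_q \to \faisGm$. The same proposition applied to the equivariant inclusions $\faisSpin_h \to \faisSGamma_h \to \faisGamma_h$ twists them into the inclusions for $q$; in particular, since $\faisSpin_q = \ker(\sn|_{\faisSGamma_q})$ and twisting commutes with kernels, the twist of $\faisSpin_h$ is $\faisSpin_q$.

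The hard part will be the verification underlying every step above: that the defining conditions of $\faisGamma$, $\faisSGamma$ and $\faisSpin$ are genuinely preserved by the $\faisorthO_h$-action, so that the subfunctors twist into one another rather than into something larger. This is however precisely the content of the paragraph preceding the statement --- the Clifford action preserves $M$ inside $\faisCliff_q$, preserves the grading and the even part $\faisCliff_{0,q}$, and commutes with $\invstd_q$ --- and once it is granted, each identification becomes tautological after trivializing $\faisIso_{h,q}$, so that the whole proposition follows by descent.
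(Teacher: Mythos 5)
Your proposal is correct and follows essentially the same route as the paper: the paper defines the comparison map on points of the contracted product by $(f,g)\mapsto \CliffFonc(f)(g)$, checks it is well defined using exactly the properties of $\CliffFonc$ recalled in the paragraph preceding the statement, observes it is locally an isomorphism hence an isomorphism by descent, and then twists the morphisms via Proposition \ref{torsionmorphisme_prop}. Your more systematic packaging through Propositions \ref{foncttors_prop} and \ref{tordusformes_prop} and the compatibility of twisting with fibre products and kernels produces the same map and rests on the same key verification.
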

\begin{proof}
Le procédé est le même dans tous les cas. Par exemple pour $\faisSGamma$, on définit sur les points un morphisme $\faisIso_{h,q} \contr{\faisorthO_h} \faisSGamma_h \to \faisSGamma_q$ en envoyant $(f,g)$ sur $\CliffFonc(f)(g)$, puis on faisceautise. Par les propriétés citées ci-dessus, il est clair que c'est bien défini, et que localement, c'est un isomorphisme. Ensuite, la torsion des différents morphismes se fait par \ref{torsionmorphisme_prop}. 
\end{proof}

Utilisons le foncteur fibré $\faisEnd: (\Vecn{n})_\grpd \to \Azumayan{n}$ pour construire le produit fibré $(\Vecn{n})_\grpd\times_{\Azumayan{n}} (\Vecn{n})_\grpd$ du champ $(\Vecn{n})_\grpd$ avec lui-même.  
On définit un foncteur fibré 
$$(\Vecn{n})_\grpd\times_{\Azumayan{n}} (\Vecn{n})_\grpd \to (\Vecn{n})_\grpd \times (\Vecn{1})_\grpd$$
en envoyant un triplet $(F,G,\psi: \faisEnd_F \isoto \faisEnd_G)$ sur $(G, G^\dual \otimes_{\faisEnd_F} F)$, où $\faisEnd_F$ agit à gauche sur $G$ par $\psi$, et donc à droite sur $G^\dual$. Au niveau des morphismes, cela envoie un couple $(a,b)$ tel que $\phi\circ \int_a =\int_b \circ \phi$, \ie $\int_{\phi(a)}=\int_b$, \ie $b^{-1}\circ \phi(a)$ central, sur $(b, (b^{-1})^\dual \otimes a)$, ce dernier étant bien défini car $b^{-1} \phi(a)$ est central.
Dans l'autre sens, on définit également un foncteur fibré en envoyant un objet $(G,L)$ sur l'objet $(G \otimes L,G, \phi)$ où $\phi: \faisEnd_{F} \simeq \faisEnd_{F \otimes L}$ est l'isomorphisme canonique $f \mapsto f\otimes \id$.
\begin{lemm} \label{endendeq_lemm}
Ces deux foncteurs sont des équivalences de catégories inverses l'une de l'autre.
\end{lemm}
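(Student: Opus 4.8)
The plan is to exhibit the two functors as mutually inverse by producing, for each of the two composites, a canonical natural isomorphism assembled from the standard Morita isomorphisms, and then checking that its components are isomorphisms — a question that is local on $S$ and reduces to elementary computations with free modules. Denote by $\Phi$ the first functor $(F,G,\psi)\mapsto (G, G^\dual \otimes_{\faisEnd_F} F)$ and by $\Psi$ the second, $(G,L)\mapsto (G\otimes L, G, \phi)$ with $\phi$ the canonical isomorphism.

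First I would record the Morita-type canonical isomorphisms that underlie everything: for a locally free $\faisO_S$-module $V$ of finite rank one has $\faisEnd_V \simeq V \otimes_{\faisO_S} V^\dual$ as $\faisO_S$-algebras, together with the evaluation maps $V^\dual \otimes_{\faisEnd_V} V \isoto \faisO_S$ ($\xi \otimes v \mapsto \xi(v)$) and $V \otimes_{\faisO_S} V^\dual \isoto \faisEnd_V$. These are natural in $V$ and commute with base change, the modules being locally free, so both functors are genuine morphisms of $S$-stacks. In particular, in $\Phi$ the second component $L := G^\dual \otimes_{\faisEnd_F} F$ is locally free of rank $1$ (locally $\faisEnd_F \simeq \faisM_n$, $F,G \simeq \faisO_S^n$, and $\faisO_S^n \otimes_{\faisM_n} \faisO_S^n \simeq \faisO_S$), so $\Phi$ does land in $(\Vecn{n})_\grpd \times (\Vecn{1})_\grpd$; and in $\Psi$ the identity $\faisEnd_{G\otimes L} \simeq \faisEnd_G \otimes_{\faisO_S} \faisEnd_L \simeq \faisEnd_G$ supplies the canonical $\phi$, so the image lies in the fibre product.

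For the composite $\Phi\circ\Psi$, starting from $(G,L)$ one obtains $(G,\ G^\dual \otimes_{\faisEnd_{G\otimes L}} (G\otimes L))$, and I would define the natural map to $(G,L)$ by $\xi \otimes (x\otimes \ell)\mapsto \xi(x)\,\ell$, after checking that it is $\faisEnd_{G\otimes L}$-balanced (using that $\faisEnd_{G\otimes L}$ acts on $G$ through $\phi$). For the composite $\Psi\circ\Phi$, starting from $(F,G,\psi)$ with $L=G^\dual \otimes_{\faisEnd_F} F$, I would rewrite $G\otimes L \simeq (G\otimes_{\faisO_S} G^\dual)\otimes_{\faisEnd_F} F \simeq \faisEnd_G \otimes_{\faisEnd_F} F$ by associativity of the tensor product and then use $u\otimes v \mapsto \psi^{-1}(u)(v)$ to map into $F$; this is the first component $a\colon G\otimes L \isoto F$ of the desired isomorphism in the fibre product, the second component being $\id_G$.

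Each candidate map is a surjection between $\faisO_S$-modules that are locally free of equal finite rank (rank $1$, resp. rank $n$), as one sees by trivialising $F$ and $G$, so each is an isomorphism; since $\faisIso$ is a sheaf, this can be tested locally (cf. \ref{reprLocal_prop}). The one point that is not pure bookkeeping, and which I expect to be the main obstacle, is the compatibility condition of Definition \ref{produitfibre_defi} for the isomorphism $(a,\id_G)$ in $\Psi\circ\Phi$, namely $\phi \circ \int_a = \int_{\id_G}\circ\,\psi = \psi$: one must verify that the isomorphism $a$, built out of $\psi$, really does intertwine the canonical $\phi$ with the original $\psi$. This again reduces to a local computation on free modules, but it is where the various left and right $\faisEnd_F$-actions have to be tracked carefully. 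Naturality of all the maps in $(F,G,\psi)$ and in $(G,L)$ is then routine, and the two natural isomorphisms so constructed show that $\Phi$ and $\Psi$ are quasi-inverse equivalences.
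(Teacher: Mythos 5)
Votre démonstration est correcte et suit essentiellement la même voie que celle du texte, qui se contente d'invoquer les isomorphismes de Morita canoniques $G^{\dual}\otimes_{\faisEnd_G} G \simeq \faisO_S$ et $G \otimes G^{\dual}\simeq \faisEnd_G$, vérifiables localement dans le cas trivialisé. Vous explicitez simplement les transformations naturelles et la condition de compatibilité dans le produit fibré que le texte laisse implicites.
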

\begin{proof}
Laissant de côté la partie de l'objet qui ne bouge pas (le $G$), il suffit d'utiliser les isomorphismes canoniques de Morita $G^{\dual} \otimes_{\faisEnd_G} G \simeq \faisO_S$ et $G \otimes G^{\dual} \simeq \faisEnd_G$ qui peuvent d'ailleurs se vérifier localement lorsque $F=\faisO_S^n$. 
\end{proof}
\begin{lemm} \label{GLGLGLGm_lemm}
Pour tout triplet $(F,G,\phi: \faisEnd_F \isoto G)$, l'équivalence de catégories précédente induit sur les groupes d'automorphismes un isomorphisme
$$\faisGL_F \times_{\faisPGL_{\faisEnd_F}} \faisGL_G \isoto \faisGL_G \times \faisGm.$$
Sur les points, il se décrit ainsi: il envoie un couple $(a,b)$ tel que $\phi \circ \int_a = \int_b \circ \phi$, \ie $\int_{\phi(a)}=\int_b$, \ie $b^{-1}\circ \phi(a)$ central, sur le couple $(b,b^{-1}\circ \phi(a))$.
\end{lemm}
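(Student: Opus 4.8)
The plan is to read the isomorphism off directly from the equivalence of Lemma~\ref{endendeq_lemm}, whose effect on objects and on morphisms has already been written down just before that lemma. First I would identify the source group. By Definition~\ref{produitfibre_defi}, an automorphism of the object $(F,G,\phi)$ of $(\Vecn{n})_\grpd\times_{\Azumayan{n}}(\Vecn{n})_\grpd$ is a pair $(a,b)$ with $a\in\faisGL_F$ and $b\in\faisGL_G$ satisfying $\phi\circ\int_a=\int_b\circ\phi$, since the functor $\faisEnd$ sends an isomorphism $f$ to $\int_f$. Conjugating by $\phi$ this reads $\int_a=\int_{\phi^{-1}(b)}$ in $\faisPGL_{\faisEnd_F}=\faisAut_{\faisEnd_F}$, which is precisely the defining condition of the fibre product $\faisGL_F\times_{\faisPGL_{\faisEnd_F}}\faisGL_G$ once one recalls that the canonical map $\faisGL_{1,\faisEnd_F}\to\faisPGL_{\faisEnd_F}$ is $a\mapsto\int_a$ (cf.\ \eqref{secPGLA_eq} and Prop.~\ref{secPGLn_prop}); equivalently, the condition says that $b^{-1}\phi(a)$ is central in $\faisEnd_G$. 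On the target side the functor sends $(F,G,\phi)$ to $(G,L)$ with $L=G^\dual\otimes_{\faisEnd_F}F$ a line bundle, whose automorphism group in the product champ $(\Vecn{n})_\grpd\times(\Vecn{1})_\grpd$ is $\faisGL_G\times\faisGm$.

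Since Lemma~\ref{endendeq_lemm} is an equivalence of groupoids, it induces an isomorphism between these two automorphism groups, and its value on a morphism $(a,b)$ is the pair $(b,(b^{-1})^\dual\otimes a)$ recorded before that lemma. It therefore remains only to recognise the line-bundle automorphism $(b^{-1})^\dual\otimes a$ of $L$ as the scalar $b^{-1}\phi(a)\in\faisGm$.

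This last point is a short pointwise computation, which I expect to be the only place needing care. On points, $(b^{-1})^\dual\otimes a$ sends $\xi\otimes f$ (with $\xi\in G^\dual$, $f\in F$) to $(\xi\circ b^{-1})\otimes a(f)$. Moving the factor $a\in\faisGL_F\subseteq\faisEnd_F$ across the $\faisEnd_F$-balanced tensor product, where the right action of $\faisEnd_F$ on $G^\dual$ is precomposition with $\phi$, rewrites this as $(\xi\circ b^{-1}\circ\phi(a))\otimes f$. Now $s:=b^{-1}\phi(a)$ is central in $\faisEnd_G$ by the first paragraph, hence a scalar, so $b^{-1}\circ\phi(a)=s\cdot\id_G$ and $\xi\circ b^{-1}\circ\phi(a)=s\,\xi$, giving image $s\,(\xi\otimes f)$. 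Thus $(b^{-1})^\dual\otimes a$ is multiplication by $s=b^{-1}\phi(a)$, which is exactly the claimed formula $(a,b)\mapsto(b,b^{-1}\phi(a))$, and the centrality of $s$ is simultaneously what makes the balanced tensor well defined. The main obstacle is thus purely bookkeeping: keeping straight the two $\faisEnd_F$-actions entering $L$ (the left action on $F$ and the $\phi$-twisted right action on $G^\dual$) and invoking centrality at the right step; that the resulting map is a homomorphism then follows formally from the equivalence, or can be verified at once from the explicit formula.
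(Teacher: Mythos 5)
Votre preuve est correcte et suit essentiellement la m�me d�marche que celle du texte : identifier $\faisAut_{(F,G,\phi)}$ au produit fibr�, puis suivre l'�quivalence du lemme \ref{endendeq_lemm} sur les morphismes. Le calcul ponctuel $(\xi\circ b^{-1})\otimes a(f)=(\xi\circ b^{-1}\circ\phi(a))\otimes f=s\,(\xi\otimes f)$, qui explicite le passage de $(b^{-1})^\dual\otimes a$ au scalaire $b^{-1}\phi(a)$ via la relation de balance du produit tensoriel sur $\faisEnd_F$, est exactement le d�tail que le texte laisse au lecteur.
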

\begin{proof}
Tout d'abord, le morphisme $\faisGL_F \to \faisPGL_{\faisEnd_F}$ est bien un épimorphisme de faisceaux par \ref{secPGLn_prop}, donc $\faisAut_{(F,G,\phi)} =\faisGL_F \times_{\faisPGL_{\faisEnd_F}} \faisGL_G$. Ensuite, il suffit de suivre la manière l'équivalence de catégories sur les morphismes.
\end{proof}
En rang pair, définissons un foncteur fibré
$$\DeplCliff{2n} \to \Vecn{2^{2n}} \times_{\Azumayan{2^{2n}}} \Vecn{2^{2n}}$$
par
$$(q,N_\bullet,\phi: \faisCliff_{q}\simeq \faisEnd_{N_\bullet})\mapsto (N_\bullet^{\otimes 2},\faisCliff_{q}, \psi: \faisEnd_{N_\bullet^{\otimes 2}} \simeq \faisEnd_{\faisCliff_{q}})$$
où $\psi$ est donné par la composition $\faisEnd_{N_\bullet^{\otimes 2}} \simeq \faisEnd_{N_\bullet^{\otimes 2}} \simeq \faisCliff_q^{\otimes 2} \simeq \faisEnd_{\faisCliff_q}$, le dernier isomorphisme étant induit par l'involution standard $\invstd_q$: il envoie $a\otimes b$ sur $x \mapsto a\cdot x\cdot \invstd_q(b)$. 
En rang impair, on définit de manière analogue
$$\DeplCliff{2n+1}\to \Vecn{2^{2n}} \times_{\Azumayan{2^{2n}}} \Vecn{2^{2n}}$$ 
par 
$$(q,N,\phi: \faisCliff_{q}\simeq Z\otimes \faisEnd_N)\mapsto (N^{\otimes 2},\faisCliff_{0,q}, \psi: \faisEnd_{N^{\otimes 2}} \simeq \faisEnd_{\faisCliff_{0,q}}),$$ 
où $\psi$ est donné par la composition $\faisEnd_{N^{\otimes 2}} \simeq \faisEnd_{N}^{\otimes 2} \simeq \faisCliff_{0,q}^{\otimes 2} \simeq \faisEnd_{\faisCliff_{0,q}}$, le dernier isomorphisme étant toujours induit par l'involution standard $\invstd_q$.
Considérons alors les foncteurs fibrés 
$$\SnFonc_{2n}:\DeplCliff{2n} \to \Vecn{1} \hspace{3ex}\text{et}\hspace{3ex} \SnFonc_{2n+1}:\DeplCliff{2n+1} \to \Vecn{1}$$
obtenus comme 
$$\DeplCliff{2n} \to (\Vecn{2^{2n}})_\grpd\times_{\Azumayan{2^{2n}}} (\Vecn{2^{2n}})_\grpd \simeq (\Vecn{2^{2n}})_\grpd \times (\Vecn{1})_\grpd \to \Vecn{1}$$
\ie le précédent, suivi de l'équivalence du lemme \ref{endendeq_lemm} et de la projection sur le facteur $\Vecn{1}$. On fait de même avec $\DeplCliff{2n+1}$ au lieu de $\DeplCliff{2n}$.
Explicitement, $\SnFonc_{2n}(q,N_\bullet,\phi)$ est donc un fibré en droites $L$ (explicite) tel que $N_\bullet^{\otimes 2} \simeq \faisCliff_{q}\otimes L$. De même, $\SnFonc_{2n+1}(q,N,\phi)$ est un fibré en droites $L$ tel que $N^{\otimes 2} \simeq \faisCliff_{0,q} \otimes L$. Les objets triviaux $(\hypq_{2n},(\totExt \faisO_S^{n})_\bullet,\phi_h)$ et $(\hypq_{2n+1},\totExt \faisO_S^{n},\phi_h)$ sont envoyés sur le fibré trivial $\faisO_S$. Les foncteurs fibrés $\SnFonc_{2n}$ et $\SnFonc_{2n+1}$ induisent alors des morphismes de faisceaux en groupes 
$$\faisSGamma_{2n} = \faisAut_{(\hypq_{2n},(\totExt \faisO_S^{n})_\bullet,\phi_h)} \to \faisAut_{\faisO_S} = \faisGm.$$
et
$$\faisSGamma_{2n+1} = \faisAut_{(\hypq_{2n+1},\totExt \faisO_S^{n},\phi_h)} \to \faisAut_{\faisO_S} = \faisGm.$$
\begin{prop} \label{normeSpinId_prop}
Ces deux morphismes sont la norme spinorielle $\sn:\faisSGamma_{2n} \to \faisGm$ (resp. $\sn:\faisSGamma_{2n+1} \to \faisGm$) définie en \ref{sn_defi}. 
\end{prop}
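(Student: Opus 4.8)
The plan is to verify the identity directly on points, after reducing the two morphisms to a shape where Lemma~\ref{GLGLGLGm_lemm} applies. Both $\SnFonc_{2n}$ and $\sn$ are morphisms of faisceaux en groupes $\faisSGamma_{2n}\to\faisGm$ (and likewise in odd rank), and it suffices to prove that they agree on $T$-points for every $T$ over $S$. Since the statement concerns only the split groups, I may work throughout with $q=\hypq_{2n}$ (resp. $q=\hypq_{2n+1}$), for which $\faisCliff_q\simeq\faisEnd_{\totExt\faisO_S^n}$ globally via $\phi_h$ by Proposition~\ref{CliffHypPaire_prop}, so that no localisation is needed.

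First I would unwind the description of a $T$-point of $\faisSGamma_{2n}$. Through the equivalence $\faisSGamma_{2n}=\faisAut_{(\hypq_{2n},(\totExt\faisO_S^n)_\bullet,\phi_h)}$ it is a pair $(f,g)$ with $f\in\faisorthO_{\hypq_{2n}}(T)$ and $g\in\faisGL_{1,\faisCliff_{0,q}}(T)$ implementing $\CliffFonc(f)=\int_g$ (here I use $\faisGL_{1,\faisEnd_N}=\faisGL_N$ to read $g$ inside $\faisGL_{1,\faisCliff_q}$). The defining functor of $\DeplCliff{2n}$ sends this automorphism to the pair $(a,b)=(g^{\otimes 2},\CliffFonc(f))$, an automorphism of $(N_\bullet^{\otimes 2},\faisCliff_q,\psi)$ in the fibre product $\Vecn{2^{2n}}\times_{\Azumayan{2^{2n}}}\Vecn{2^{2n}}$, where $b=\int_g$ is viewed as an $\faisO_T$-module automorphism of $\faisCliff_q$. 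By Lemma~\ref{GLGLGLGm_lemm} the equivalence of Lemma~\ref{endendeq_lemm} sends such a pair to $(b,\,b^{-1}\circ\psi(a))$, so the $\faisGm$-component produced by $\SnFonc_{2n}$ is the scalar $b^{-1}\circ\psi(a)$.

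It then remains to compute this scalar. Tracing $a=g^{\otimes 2}$ through $\faisEnd_{N^{\otimes 2}}\simeq\faisEnd_N^{\otimes 2}\simeq\faisCliff_q^{\otimes 2}$ it becomes $g\otimes g$, and the last isomorphism $\faisCliff_q^{\otimes 2}\simeq\faisEnd_{\faisCliff_q}$ — the one twisted by the standard involution, $a'\otimes b'\mapsto(x\mapsto a'\,x\,\invstd_q(b'))$ — yields $\psi(a)=(x\mapsto g\,x\,\invstd_q(g))$. Since $b^{-1}=(x\mapsto g^{-1}x g)$, one gets $b^{-1}\circ\psi(a)=(x\mapsto x\,\invstd_q(g)g)$, which is right multiplication by $\invstd_q(g)g=\sn(g)$; as $\sn(g)\in\faisGm$ is a scalar, this is exactly multiplication by $\sn(g)$. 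Hence the $\faisGm$-component is $\sn(g)$, i.e. $\SnFonc_{2n}$ induces $\sn$. The odd case is identical, reading $g$ inside $\faisGL_{1,\faisCliff_{0,q}}$ and using the isomorphism $\faisCliff_{0,q}^{\otimes 2}\simeq\faisEnd_{\faisCliff_{0,q}}$ twisted by $\invstd_q$.

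The computation is essentially mechanical; the only real care goes into the bookkeeping of the canonical identifications, and in particular into checking that it is genuinely the standard involution $\invstd_q$ (and not the identity or an inverse) that enters the Morita isomorphism $\faisCliff_q^{\otimes 2}\simeq\faisEnd_{\faisCliff_q}$ — this is precisely the feature that makes the factor $\invstd_q(g)$ appear and hence reproduces the spinor norm rather than some other quadratic expression in $g$.
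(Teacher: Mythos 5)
Your proof is correct and follows essentially the same route as the paper's: unwind a point of $\faisSGamma$ as a pair $(f,g)$, push it through the functor to $(g^{\otimes 2},\CliffFonc(f))$, apply Lemma~\ref{GLGLGLGm_lemm}, and compute that the resulting scalar is right multiplication by $\invstd_q(g)g=\sn(g)$. The only cosmetic difference is that you spell out the even case while the paper spells out the odd one, and you suppress the identification $\phi_h$ slightly more aggressively.
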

\begin{proof}
La preuve des deux cas est identique. Faisons le cas impair pour fixer les idées.
On part d'un point de $\faisSGamma_{2n+1}(T)$, qu'on peut décrire comme une paire $(f,g)$ avec $f \in \faisorthO_q(T)$ et $g \in \faisGLh_{\totExt \faisO_S^n}(T)$ tels que $\CliffFonc_0(f) = \phi_h \circ \int_g \circ \phi_h^{-1}=\int_{\phi^{-1}_h(g)}$. Sa norme spinorielle est par définition $\invstd_h(\phi_h^{-1}(g))\phi_h^{-1}(g)$ où $\invstd_h$ est l'involution standard. Par le premier foncteur, on obtient la paire $(g^{\otimes 2}, \CliffFonc_0(f))$. Si $\psi: \faisEnd_{N^{\otimes 2}} \isoto \faisEnd_{\faisCliff_{0,q}}$ est obtenu à partir de $\phi_h$ comme ci-dessus, alors pour obtenir l'élément de $\faisGm$ recherché, on doit par le lemme \ref{GLGLGLGm_lemm} calculer $\CliffFonc_0(f)^{-1}\circ \psi(g^{\otimes 2})$. Or $\psi(g^{\otimes 2})=\big(x \mapsto \phi^{-1}(g)\cdot x\cdot \invstd_h (\phi^{-1}(g))\big)$. En composant par $\CliffFonc_0(f)^{-1}=\int_{\phi^{-1}(g^{-1})}$, on obtient donc
$$ x \mapsto \phi^{-1}(g^{-1})\cdot \phi^{-1}(g) \cdot x\cdot \invstd_h(\phi^{-1}(g))\cdot \phi^{-1}(g)=x\cdot \invstd_h(\phi^{-1}(g))\cdot \phi^{-1}(g)$$
ce qui est bien la multiplication par la norme spinorielle.
\end{proof}

Les torseurs sous les groupes $\faisSGamma_{2n}$ et $\faisSGamma_{2n+1}$ étant compris, ainsi que la norme spinorielle, on obtient facilement une description des torseurs sous les groupes $\faisSpin_{2n}$ et $\faisSpin_{2n+1}$. 

Considérons-donc les champs $\DeplCliffSn{2n}$ et $\DeplCliffSn{2n+1}$ définis comme les produits fibrés
$$\xymatrix{
\DeplCliffSn{2n} \ar[r] \ar[d] & \Final \ar[d] \\
\DeplCliff{2n} \ar[r]^-{\SnFonc_{2n}} & \Vecn{1} & 
}\hspace{7ex}\xymatrix{
\DeplCliffSn{2n+1} \ar[r] \ar[d] & \Final \ar[d] \\
\DeplCliff{2n+1} \ar[r]^-{\SnFonc_{2n+1}} & \Vecn{1} & 
}$$
où le foncteur $\Final \to \Vecn{1}$ qui envoie l'unique objet de chaque fibre sur le fibré trivial $\faisO_S$. 
Explicitement, un objet de $(\DeplCliffSn{2n})_T$ est donc de la forme $(q,N_\bullet,\phi,\psi)$ où 
\begin{itemize}
\item $q$ est un $\faisO_T$-module quadratique régulier de rang $2n$ (une forme de $\hypq_{2n}$); 
\item $N_\bullet$ est un $\faisO_T$-module localement libre dont les deux composantes sont localement libres de rang constant $2^{n-1}$; 
\item $\phi: \CliffFonc_q \isoto \faisEnd_{N_\bullet}$ est un isomorphisme de $\faisO_S$-algèbres graduées;
\item $\psi: L \isoto \faisO_S$ est un isomorphisme de $\faisO_S$-modules, où $L= \SnFonc(q,N_\bullet,\phi)=\faisCliff_{q}^{\dual}\otimes_{\faisEnd_{N_\bullet^{\otimes 2}}} N_\bullet^{\otimes 2}$ est un fibré en droites, qui satisfait à $N_\bullet^{\otimes 2} \simeq \faisCliff_{q} \otimes L$.
\end{itemize}
Notons que les autres données étant fixées la donnée de $\psi$ équivaut à la donnée d'un isomorphisme $\lambda:N_\bullet^{\otimes 2} \simeq \faisCliff_{q}$ tel que $\int_\lambda:\faisEnd_{N_\bullet^{\otimes 2}}\simeq \faisEnd_{\faisCliff_{q}}$ coïncide avec $\faisEnd_{N_\bullet^{\otimes 2}}\simeq \faisEnd_{N_\bullet}^{\otimes 2}\simeq \faisCliff_{q}^{\otimes 2} \simeq \faisEnd_{\faisCliff_{q}}$, ces deux derniers isomorphismes induit par $\phi$ et $\invstd_q$.
Un objet de $(\DeplCliffSn{2n+1})_T$ est lui de la forme $(q,N,\phi,\psi)$ où 
\begin{itemize}
\item $q$ est un $\faisO_T$-module quadratique, forme de $\hypq_{2n+1}$, 
\item $N$ est un $\faisO_T$-module localement libre de rang constant $2^n$, 
\item $\phi: \CliffFonc_q \isoto Z^{triv} \otimes \faisEnd_N$ est un isomorphisme de $\faisO_S$-algèbres graduées
\item $\psi: L \isoto \faisO_S$ est un isomorphisme de $\faisO_S$-modules, où $L= \SnFonc(q,N,\phi)=\faisCliff_{0,q}^{\dual}\otimes_{\faisEnd_{N^{\otimes 2}}} N^{\otimes 2}$ est un fibré en droites, qui satisfait à $N^{\otimes 2} \simeq \faisCliff_{0,q} \otimes L$.
\end{itemize}
De même que pour le rang pair, les autres données étant fixées la donnée de $\psi$ équivaut à la donnée d'un isomorphisme $\lambda:N^{\otimes 2} \simeq \faisCliff_{0,q}$ tel que $\int_\lambda:\faisEnd_{N^{\otimes 2}}\simeq \faisEnd_{\faisCliff_{0,q}}$ coïncide avec $\faisEnd_{N^{\otimes 2}}\simeq \faisEnd_{N}^{\otimes 2}\simeq \faisCliff_{0,q}^{\otimes 2} \simeq \faisEnd_{\faisCliff_{0,q}}$, ces deux derniers isomorphismes induit par $\phi$ et $\invstd_q$.
\medskip

Les foncteurs $\SnFonc_{2n}$ et $\SnFonc_{2n+1}$ induisent la norme spinorielle sur les groupes d'automorphismes des objets triviaux par la proposition \ref{normeSpinId_prop}, et elle est de plus surjective pour ces objets par la remarque \ref{snsurj_rema}. 
Par la proposition \ref{produitformes_prop}, on obtient:
\begin{prop} \label{SpinTorsPair_prop}
Le foncteur fibré qui envoie un objet $(q,N_\bullet,\phi,\psi)$ sur $\faisIso_{(\hypq_{2n},(\totExt \faisO_S^n)_\bullet,\phi_h,\psi_h),(q,N_\bullet,\phi,\psi)}$ définit une équivalence de champs 
$$\DeplCliffSn{2n}\simeq \Tors{\faisSpin_{2n}}$$
(pour les topologies étales ou \fppf).
\end{prop}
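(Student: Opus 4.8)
Le foncteur fibré qui envoie un objet $(q,N_\bullet,\phi,\psi)$ sur $\faisIso_{(\hypq_{2n},(\totExt \faisO_S^n)_\bullet,\phi_h,\psi_h),(q,N_\bullet,\phi,\psi)}$ définit une équivalence de champs $\DeplCliffSn{2n}\simeq \Tors{\faisSpin_{2n}}$ (pour les topologies étale ou \fppf).

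Voilà mon plan.

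Le principe est exactement celui qui a déjà servi pour les propositions \ref{Gamma2ntors_prop}, \ref{SGammapairtorseurs_prop} et \ref{Sgammatorseursimpair_prop}, à savoir une application de la proposition \ref{produitformes_prop} à un produit fibré de champs. Le champ $\DeplCliffSn{2n}$ est par définition le produit fibré $\DeplCliff{2n} \times_{\Vecn{1}} \Final$ le long des foncteurs $\SnFonc_{2n}: \DeplCliff{2n} \to \Vecn{1}$ et $\Final \to \Vecn{1}$ (objet trivial $\faisO_S$). D'abord je vérifierais, comme dans les preuves précédentes, que l'objet de référence $(\hypq_{2n},(\totExt \faisO_S^n)_\bullet,\phi_h,\psi_h)$ a bien pour groupe d'automorphismes le groupe $\faisSpin_{2n}$. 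Le groupe d'automorphismes du produit fibré est, par la proposition \ref{produitformes_prop}, le produit fibré des groupes d'automorphismes: ici $\faisAut_{(\hypq_{2n},(\totExt \faisO_S^n)_\bullet,\phi_h)} \times_{\faisAut_{\faisO_S}} \faisAut_{\text{objet de }\Final}$, soit $\faisSGamma_{2n} \times_{\faisGm} \faisUn$. Par la proposition \ref{normeSpinId_prop}, le morphisme $\faisSGamma_{2n} \to \faisGm$ induit par $\SnFonc_{2n}$ est exactement la norme spinorielle $\sn$, donc ce produit fibré est le noyau de $\sn$, c'est-à-dire $\faisSpin_{2n}$ par la définition \ref{PinSpin_defi}.

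Ensuite vient la vérification de l'hypothèse d'épimorphisme de la proposition \ref{produitformes_prop}. Il suffit de l'obtenir pour l'un des deux foncteurs: ici le foncteur $\SnFonc_{2n}$ induit sur les automorphismes de l'objet trivial la norme spinorielle $\sn: \faisSGamma_{2n} \to \faisGm$, dont on sait par la remarque \ref{snsurj_rema} qu'elle est surjective sur les points (donc un épimorphisme de faisceaux pour les topologies Zariski, étale ou \fppf) pour la forme hyperbolique $\hypq_{2n}$, $n\geq 1$. L'autre foncteur étant $\Final \to \Vecn{1}$, dont le groupe d'automorphismes source est trivial, la condition d'épimorphisme de \ref{produitformes_prop} est bien remplie par le premier facteur seul. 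La proposition \ref{produitformes_prop} fournit alors directement l'équivalence annoncée entre $\Formes{(\hypq_{2n},(\totExt \faisO_S^n)_\bullet,\phi_h,\psi_h)}$ et $\Tors{\faisSpin_{2n}}$; comme par construction du produit fibré tout objet de $\DeplCliffSn{2n}$ est une forme de l'objet trivial (chaque facteur étant lui-même un champ de formes, via les propositions \ref{Gamma2ntors_prop} et \ref{torseursfppfmun_prop} pour la composante $\Vecn{1}$), on obtient l'équivalence de champs voulue.

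Le seul point qui demande un peu d'attention, et que je traiterais donc avec soin, est l'identification explicite du groupe d'automorphismes de l'objet trivial avec $\faisSpin_{2n}$: il faut s'assurer que la flèche de $\faisSGamma_{2n}$ vers $\faisGm$ provenant de la structure de produit fibré coïncide bien avec $\sn$ et non avec un autre morphisme, ce qui est précisément le contenu de la proposition \ref{normeSpinId_prop} déjà établie. Une fois cette coïncidence acquise, tout le reste est formel et repose uniquement sur \ref{produitformes_prop} et \ref{snsurj_rema}, exactement comme les preuves jumelles des propositions précédentes; c'est pourquoi je m'attends à ce que la difficulté principale soit simplement de relier correctement les trois ingrédients (identification du groupe, surjectivité de $\sn$, caractère local des formes) plutôt qu'un quelconque calcul nouveau.
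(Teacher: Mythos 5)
Votre démonstration est correcte et suit exactement la voie du texte: le paragraphe précédant l'énoncé dans l'article invoque précisément la proposition \ref{produitformes_prop} appliquée au produit fibré $\DeplCliff{2n}\times_{\Vecn{1}}\Final$, l'identification du morphisme induit sur les automorphismes avec la norme spinorielle (proposition \ref{normeSpinId_prop}) et sa surjectivité (remarque \ref{snsurj_rema}). Vous explicitez simplement ce que le texte laisse implicite, notamment que $\faisAut_{(\hypq_{2n},(\totExt \faisO_S^n)_\bullet,\phi_h,\psi_h)}=\faisSGamma_{2n}\times_{\faisGm}\faisUn=\ker(\sn)=\faisSpin_{2n}$, ce qui est exact.
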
 
ainsi que:
\begin{prop} \label{SpinTorsImpai_prop}
Le foncteur fibré qui envoie un objet $(q,N,\phi,\psi)$ sur $\faisIso_{(\hypq_{2n+1},\totExt \faisO_S^n,\phi_h,\psi_h),(q,N,\phi,\psi)}$ définit une équivalence de champs 
$$\DeplCliffSn{2n+1}\simeq \Tors{\faisSpin_{2n+1}}$$
(pour les topologies étales ou \fppf).
\end{prop}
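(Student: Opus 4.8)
The statement asserts that the fibered functor sending an object $(q,N,\phi,\psi)$ of $\DeplCliffSn{2n+1}$ to $\faisIso_{(\hypq_{2n+1},\totExt \faisO_S^n,\phi_h,\psi_h),(q,N,\phi,\psi)}$ is an equivalence of stacks onto $\Tors{\faisSpin_{2n+1}}$. Let me sketch a proof.

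\begin{proof}
Le plan est de reproduire exactement le schéma de la preuve du cas pair (proposition \ref{SpinTorsPair_prop}), en appliquant la proposition \ref{produitformes_prop} au produit fibré de champs $\DeplCliffSn{2n+1}=\DeplCliff{2n+1}\times_{\Vecn{1}}\Final$, où le foncteur $\DeplCliff{2n+1}\to \Vecn{1}$ est $\SnFonc_{2n+1}$ et où $\Final \to \Vecn{1}$ envoie l'unique objet de chaque fibre sur le fibré trivial $\faisO_S$. Je note $X_0=(\hypq_{2n+1},\totExt \faisO_S^n,\phi_h,\psi_h)$ l'objet trivial et $X_1=(\hypq_{2n+1},\totExt \faisO_S^n,\phi_h)$ sa projection dans $\DeplCliff{2n+1}$.

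La première étape est d'identifier le groupe d'automorphismes de $X_0$. Par la formule sur les automorphismes d'un produit fibré contenue dans la proposition \ref{produitformes_prop}, on a $\faisAut_{X_0}\simeq \faisAut_{X_1}\times_{\faisAut_{\faisO_S}}\{1\}$, le groupe de droite étant celui de l'unique objet de $\Final$. Or $\faisAut_{X_1}=\faisSGamma_{2n+1}$ par la proposition \ref{Sgammatorseursimpair_prop}, et $\faisAut_{\faisO_S}=\faisGm$ dans $\Vecn{1}$; de plus, par la proposition \ref{normeSpinId_prop}, le morphisme $\faisSGamma_{2n+1}\to \faisGm$ induit par $\SnFonc_{2n+1}$ est précisément la norme spinorielle $\sn$. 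On obtient ainsi $\faisAut_{X_0}=\ker(\sn)=\faisSpin_{2n+1}$ par la définition \ref{PinSpin_defi}.

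La seconde étape est de vérifier l'hypothèse d'épimorphisme de la proposition \ref{produitformes_prop}. La condition suffisante retenue est que $\faisAut_{X_1}\to \faisAut_{\faisO_S}$, c'est-à-dire $\sn:\faisSGamma_{2n+1}\to \faisGm$, soit un épimorphisme de faisceaux : c'est exactement la surjectivité de la norme spinorielle sur les points, déjà établie dans la remarque \ref{snsurj_rema}. La proposition \ref{produitformes_prop} fournit alors une équivalence $\Formes{X_0}\isoto \DeplCliffSn{2n+1}$, après avoir identifié $\DeplCliff{2n+1}=\Formes{X_1}$ (via \ref{Sgammatorseursimpair_prop}, tout objet étant localement isomorphe à l'objet trivial) et $\Vecn{1}=\Formes{\faisO_S}$ (tout module de rang un étant localement trivial). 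Combinée à la proposition \ref{tordusformes_prop}, qui identifie $\Formes{X_0}$ au champ $\Tors{\faisSpin_{2n+1}}$ par le foncteur $X\mapsto \faisIso_{X_0,X}$, on obtient bien que le foncteur de l'énoncé est une équivalence. L'unique point véritablement substantiel, à savoir que $\SnFonc_{2n+1}$ induit la norme spinorielle, ayant été réglé par la proposition \ref{normeSpinId_prop}, tout le reste de l'argument est formel.
\end{proof}
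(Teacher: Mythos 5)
Votre démonstration est correcte et suit exactement la même stratégie que le texte : identification de $\DeplCliffSn{2n+1}$ comme produit fibré au-dessus de $\Vecn{1}$, identification du morphisme induit sur les automorphismes avec la norme spinorielle via la proposition \ref{normeSpinId_prop}, surjectivité par la remarque \ref{snsurj_rema}, puis application des propositions \ref{produitformes_prop} et \ref{tordusformes_prop}. Rien à redire.
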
 

\subsubsection{Cas d'une paire quadratique}

Nous introduisons maintenant le groupe de Clifford (spécial) et le groupe Spin d'une paire quadratique $(A,\sigma,f)$. Encore une fois, ces notions sont développées sur un corps dans \cite[Ch. VI, § 23.2]{bookinv}, et nous nous bornons à vérifier qu'elles passent à une base quelconque.

Pour cela, il faut commencer par définir le bimodule de Clifford d'une paire quadratique. Nous ne reprenons pas la longue construction de \loccit, qui fonctionne essentiellement sans changement sur toute base, à ceci-près que le quotient qui intervient dans la définition doit être considéré comme un faisceau quotient pour la topologie de Zariski (et non uniquement un quotient de préfaisceaux). 

Ce bimodule de Clifford est noté $\faisCBim_{A,\sigma,f}$. Résumons ses propriétés en une proposition.

\begin{prop}
Si $(A,\sigma,f)$ est une paire quadratique, alors
\begin{enumerate}
\item $\faisCBim_{A,\sigma,f}$ est un $\faisO_S$-module localement libre. 
\item Il est muni d'une action à gauche de $A$, d'une action de bimodule (donc à gauche et à droite) de $\faisCliff_{0,A,\sigma,f}$, commutant l'une avec l'autre.
\item Il est muni d'un morphisme injectif de $\faisO_S$-modules $A \to \faisCBim_{A,\sigma,f}$.
\item Lorsque $q$ est un module quadratique régulier de rang $2n$ et $(A,\sigma,f)=(\faisEnd_M,\invadj_q,f_q)$, alors on a un isomorphisme canonique et fonctoriel $\faisCBim_{(\faisEnd_M,\invadj_q,f_q)} \simeq M \otimes \faisCliff_{1,q}$ qui identifie les actions précédentes avec celles que l'on imagine, et le morphisme $\faisEnd_{M} \to \faisCBim_{(\faisEnd_M,\invadj_q,f_q)}$ avec $(\id \otimes i) \circ \phi_q^{-1}: \faisEnd_M \simeq M \otimes M \to M \otimes \faisCliff_{1,q}$ où $\phi_q$ est l'identification $M\otimes M \simeq \faisEnd_M$ définie en section \ref{AlgInv_sec}, et $i:M \to \faisCliff_{1,q}$ est l'inclusion usuelle.
\end{enumerate}
\end{prop}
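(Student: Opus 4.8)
Le plan est de suivre la construction du bimodule de Clifford donn\'ee dans \cite[Ch.~II, \S~8B et Ch.~VI, \S~23.2]{bookinv} et de v\'erifier que chacune des assertions est soit une identit\'e au niveau des pr\'efaisceaux qui survit \`a la faisceautisation Zariski, soit une propri\'et\'e locale qui se teste apr\`es r\'eduction au cas d\'eploy\'e. Comme les quatre \'enonc\'es portent sur des objets construits fonctoriellement \`a partir de $(A,\sigma,f)$, et comme toute paire quadratique est une forme \'etale de $(\faisM_{2n},\invadj_{2n},f_{2n})$ par le corollaire~\ref{pairequadformes_coro}, l'id\'ee directrice est d'\'etablir d'abord la description explicite du point~(4), puis de transporter les autres propri\'et\'es par descente.

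On commence par mettre en place l'action \`a gauche de $A$, l'action de bimodule de $\faisCliff_{0,A,\sigma,f}$, et le morphisme $\faisO_S$-lin\'eaire $A \to \faisCBim_{A,\sigma,f}$ au niveau du pr\'efaisceau d\'efinissant le bimodule, exactement comme dans \loccit. Ces donn\'ees sont fournies par des formules explicites sur l'alg\`ebre tensorielle de $\cA$, compatibles aux relations $J_1(\sigma,f)$ et $J_2(\sigma,f)$ qui d\'ecoupent le bimodule; le fait que les actions \`a gauche et \`a droite commutent, ainsi que l'associativit\'e et l'unitarit\'e, sont des identit\'es entre morphismes de pr\'efaisceaux, et passent donc au faisceautis\'e Zariski. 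Cela \'etablit le contenu structurel des points~(2) et~(3), en laissant de c\^ot\'e l'injectivit\'e dans~(3) et le caract\`ere localement libre de~(1), qui ne se voient pas au niveau des pr\'efaisceaux.

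On traite ensuite le point~(4), l'isomorphisme canonique dans le cas d\'eploy\'e $(A,\sigma,f)=(\faisEnd_M,\invadj_q,f_q)$. En utilisant l'identification $\phi_q:M\otimes M \isoto \faisEnd_M$ de la section~\ref{AlgInv_sec} (voir le lemme~\ref{symAsigma_lemm}) et l'inclusion $i:M\to\faisCliff_{1,q}$, on \'ecrit le morphisme $\faisO_S$-lin\'eaire naturel $M\otimes\faisCliff_{1,q}\to\faisCBim_{(\faisEnd_M,\invadj_q,f_q)}$, et on v\'erifie par un calcul direct sur l'alg\`ebre tensorielle qu'il entrelace les actions de $A$ et de $\faisCliff_{0,q}$. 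Que ce morphisme soit un isomorphisme est une question locale; apr\`es changement de base aux corps r\'esiduels des points ferm\'es, elle se ram\`ene \`a l'\'enonc\'e prouv\'e sur un corps dans \loccit. Il faut pour cela constater au passage que la construction commute au changement de base, ce qui s'obtient comme pour l'alg\`ebre de Clifford dans la proposition~\ref{chgmtBaseCliff_prop}, tous les ingr\'edients \'etant des op\'erations tensorielles sur des modules localement libres.

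Enfin, le point~(1) et l'injectivit\'e de~(3) s'en d\'eduisent: dans le cas d\'eploy\'e, $M\otimes\faisCliff_{1,q}$ est localement libre et le morphisme issu de $\faisEnd_M$ est manifestement injectif gr\^ace \`a~(4); or une paire quadratique quelconque est \'etale-localement de cette forme par le corollaire~\ref{pairequadformes_coro}, et comme le caract\`ere localement libre et l'injectivit\'e d'un morphisme de modules localement libres descendent tous deux pour la topologie \'etale (par~\ref{loclibre_prop}), on conclut en g\'en\'eral. Le principal obstacle que j'anticipe n'est pas un calcul isol\'e mais la gestion soigneuse de la faisceautisation: il faut s'assurer que les formules de \loccit, \'ecrites \`a l'origine pour des modules sur un anneau, passent bien au faisceau quotient Zariski et restent compatibles au changement de base, et que l'isomorphisme canonique de~(4) est suffisamment fonctoriel pour servir de donn\'ee de descente.
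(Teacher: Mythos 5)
Votre proposition est correcte et suit exactement la ligne que le texte lui-m\^eme indique : le papier ne donne d'ailleurs aucune d\'emonstration et se contente de renvoyer \`a la construction de \cite{bookinv}, en pr\'ecisant seulement que le quotient doit \^etre pris comme faisceau quotient pour la topologie de Zariski. Votre r\'edaction explicite ce que cette citation sous-entend (identit\'es au niveau des pr\'efaisceaux pour (2) et (3), r\'eduction au cas d\'eploy\'e puis aux corps r\'esiduels pour (4), descente \'etale via le corollaire \ref{pairequadformes_coro} pour (1) et pour l'injectivit\'e dans (3)), sans s'en \'ecarter.
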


\begin{defi}
Le groupe de Clifford spécial\footnote{appelé groupe de Clifford tout court et noté $\Gamma_{A,\sigma,f}$ dans \cite{bookinv}.} $\faisSGamma_{A,\sigma,f}$ est le sous-groupe de $\faisGL_{1,\faisCliff_{0,A,\sigma,f}}$ qui normalise l'image de $A$ dans $\faisCBim_{A,\sigma,f}$. 
\end{defi}
Il est représentable par un schéma affine sur $S$ par \ref{normalisateur_prop}.

\begin{prop}
Lorsque $q$ est un module quadratique régulier de rang $2n$ et $(A,\sigma,f)=(\faisEnd_M,\invadj_q,f_q)$, alors on a un isomorphisme  
$$\faisSGamma_{\faisEnd_M,\invadj_q,f_q} \simeq \faisSGamma_q$$
induit par l'isomorphisme $\faisCliff_{0,\faisEnd_M,\invadj_q,f_q} \simeq \faisCliff_{0,q}$.
\end{prop}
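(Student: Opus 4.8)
The plan is to show that the claimed isomorphism is the one induced in the obvious way by the algebra isomorphism $\faisCliff_{0,q} \simeq \faisCliff_{0,\faisEnd_M,\invadj_q,f_q}$ of Theorem~\ref{CompareClifford_theo}. By functoriality of $\faisGL_{1,-}$ this yields an isomorphism $\faisGL_{1,\faisCliff_{0,q}} \isoto \faisGL_{1,\faisCliff_{0,\faisEnd_M,\invadj_q,f_q}}$. After this identification, both $\faisSGamma_q$ (Definition~\ref{groupeCliffordpair_defi}) and $\faisSGamma_{\faisEnd_M,\invadj_q,f_q}$ (the definition just preceding the statement) are, by construction, subgroup sheaves of $\faisGL_{1,\faisCliff_{0,q}}$. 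It therefore suffices to check that they have the same $T$-points for every $T\to S$, i.e. that their defining conditions coincide.

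First I would spell out the two conditions. A $T$-point of $\faisSGamma_q$ is an even invertible element $g$ of $\faisCliff_q(T)$ such that conjugation $g\, i(m)\, g^{-1}$ stays in $i(M)$ for every point $m$ of $M$; this is the condition cutting out $\faisGamma_q$ in Definition~\ref{groupeClifford_defi}, intersected with $\faisGL_{1,\faisCliff_{0,q}}$. A $T$-point of $\faisSGamma_{\faisEnd_M,\invadj_q,f_q}$ is an element $g$ of $\faisGL_{1,\faisCliff_{0,\faisEnd_M,\invadj_q,f_q}}(T)$ which normalizes the image of $A=\faisEnd_M$ inside the Clifford bimodule $\faisCBim_{\faisEnd_M,\invadj_q,f_q}$, for the bimodule action of $\faisCliff_{0,\faisEnd_M,\invadj_q,f_q}$.

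The heart of the argument is to translate this second condition using point~(4) of the preceding proposition describing $\faisCBim_{A,\sigma,f}$: the canonical isomorphism $\faisCBim_{\faisEnd_M,\invadj_q,f_q}\simeq M \otimes \faisCliff_{1,q}$ identifies the $\faisCliff_{0,q}$-bimodule action with (left and right) multiplication on the single factor $\faisCliff_{1,q}$, and identifies the image of $A$ with $(\id\otimes i)\circ\phi_q^{-1}(\faisEnd_M)=M\otimes i(M)$. The normalizing condition then becomes
$$g\cdot (M\otimes i(M))\cdot g^{-1} = M\otimes (g\, i(M)\, g^{-1}) = M\otimes i(M),$$
which, since $M\neq 0$, is equivalent to $g\, i(M)\, g^{-1}=i(M)$. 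This is exactly the condition defining $\faisSGamma_q$, so the two subgroup sheaves coincide and the sought isomorphism follows, compatibly with the inclusions into $\faisGL_{1,\faisCliff_{0,q}}$.

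The main obstacle will be the careful verification of point~(4): that in the identification $\faisCBim_{\faisEnd_M,\invadj_q,f_q}\simeq M\otimes\faisCliff_{1,q}$ the bimodule action of the even Clifford algebra is indeed concentrated on the factor $\faisCliff_{1,q}$, and that the requirement of normalizing the image of $A$ reduces precisely to conjugation preserving $i(M)$. This computation is in essence that of \cite[Ch. VI, \S 23.2]{bookinv} over a field; I would check it \'etale-locally, where $M$ is free, using the canonicity (hence base-change compatibility) of all the identifications as well as of the isomorphism of Theorem~\ref{CompareClifford_theo}, and then glue by descent.
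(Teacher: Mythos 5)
Your proof is correct and takes essentially the same route as the paper's: identify the Clifford bimodule with $M\otimes\faisCliff_{1,q}$, with the even Clifford algebra acting only on the second factor, note the evident inclusion, and verify it is an isomorphism locally where $M$ is free. The paper leaves that local verification implicit; your reduction of the normalizing condition to $g\,i(M)\,g^{-1}=i(M)$ is precisely the check it alludes to.
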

\begin{proof}
Cela revient à vérifier que dans ce cas déployé, le normalisateur est bien le groupe de Clifford spécial. Comme le bimodule de Clifford est alors isomorphe à $M \otimes \faisCliff_{1,q}$, avec action à droite et à gauche de $\faisCliff_{0,q}$ sur le deuxième facteur, il y a une inclusion évidente, dont on vérifie facilement qu'elle est un isomorphisme localement lorsque $M$ est libre. 
\end{proof}

Rappelons que $\underline{\sigma}$ est l'involution standard (ou canonique) sur $\faisCliff_{0,A,\sigma,f}$ définie en \ref{AlgCliffPaireQuad_defi}, et qu'elle correspond à l'involution standard $\invstd_q$ sur $\faisCliff_{0,q}$ par l'isomorphisme $\faisCliff_{0,\faisEnd_M,\invadj_q,f_q} \simeq \faisCliff_{0,q}$. 
\begin{lemm}
Alors, pour tout point $g$ de $\faisSGamma_{A,\sigma,f}(T)$ l'élément $\sigma(g)g$ est dans $\faisGm(T)$.
\end{lemm}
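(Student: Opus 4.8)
The plan is to reduce the statement to the already-understood case of a quadratic module, where it is exactly the assertion that the spinor norm is well-defined. Note first that $\sigma(g)$ here must be read as $\underline{\sigma}(g)$, the standard involution on $\faisCliff_{0,A,\sigma,f}$ recalled just above the lemma: indeed $g$ lives in $\faisGL_{1,\faisCliff_{0,A,\sigma,f}}(T)$ and not in $A(T)$, so the involution $\sigma$ on $A$ cannot be applied to it. Writing $u = \underline{\sigma}(g)\,g$, we have a priori $u \in \faisGL_{1,\faisCliff_{0,A,\sigma,f}}(T)$, and what must be shown is that $u$ lies in the subsheaf $\faisGm(T)$ of scalars.

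First I would observe that this is a local question for the \'etale topology. Since $\faisGm \hookrightarrow \faisGL_{1,\faisCliff_{0,A,\sigma,f}}$ is a monomorphism of \'etale sheaves, if $u$ becomes a section of $\faisGm$ after pullback along the members of an \'etale covering $(T_i \to T)_{i}$, then, $\faisGm$ being a sheaf, these local sections glue to a section of $\faisGm$ whose image is $u$, whence $u \in \faisGm(T)$. It therefore suffices to prove the claim after replacing $T$ by the members of a suitable cover.

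Next, by Corollary \ref{pairequadformes_coro} every paire quadratique is \'etale-locally isomorphic to a split one; refining the cover, I may thus assume $(A,\sigma,f) \simeq (\faisEnd_M,\invadj_q,f_q)$ for a regular quadratic module $(M,q)$ of rank $2n$ (for instance $q = \hypq_{2n}$). Such an isomorphism of paires induces an isomorphism $\faisCliff_{0,A,\sigma,f} \simeq \faisCliff_{0,q}$ carrying $\underline{\sigma}$ to $\invstd_q$, and, by the proposition immediately preceding the lemma, an isomorphism $\faisSGamma_{A,\sigma,f} \simeq \faisSGamma_q$. Under these identifications $g$ corresponds to some $g' \in \faisSGamma_q \subseteq \faisGamma_q$ and $u$ to $\invstd_q(g')\,g' = \sn(g')$. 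By the definition of the spinor norm (Definition \ref{sn_defi}, whose well-definedness rests on \cite[Ch. IV, Lemme 6.1.1]{knus}) this element lies in $\faisGm$, so $u$ does too on the cover, and the local-to-global step concludes.

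The only point requiring a little care is the compatibility of the two standard involutions under the splitting isomorphism, namely that $\underline{\sigma}$ is sent to $\invstd_q$; but this is precisely what is recalled in the paragraph preceding the lemma, so no genuine obstacle remains. The whole argument is a matter of transporting the established quadratic-module statement through Corollary \ref{pairequadformes_coro} and the preceding proposition.
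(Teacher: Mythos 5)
Votre preuve est correcte et suit exactement la m\^eme d\'emarche que celle du texte : v\'erification locale pour la topologie \'etale (licite car $\faisGm$ est un faisceau), r\'eduction au cas d'un module quadratique r\'egulier via le corollaire \ref{pairequadformes_coro}, puis identification de $\underline{\sigma}(g)g$ avec la norme spinorielle d\'ej\`a trait\'ee avant la d\'efinition \ref{sn_defi}. Votre remarque pr\'eliminaire, selon laquelle $\sigma(g)$ doit se lire $\underline{\sigma}(g)$, est une clarification bienvenue d'un l\'eger abus de notation de l'\'enonc\'e.
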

\begin{proof}
Cela se vérifie localement pour la topologie étale, on peut donc supposer que $\faisSGamma_{A,\sigma,f}$ est celui d'un module quadratique régulier, auquel cas cela découle de l'explication précédant \ref{sn_defi}. 
\end{proof}

\begin{defi} \label{normespinpairequad_defi}
On appelle \emph{norme spinorielle} $\sn$ le morphisme $\faisSGamma_{A,\sigma,f} \to \faisGm$ ainsi défini.
\end{defi}

\begin{defi}
Le faisceau en groupes $\faisSpin_{A,\sigma,f}$ est défini comme le noyau de la norme spinorielle $\faisSGamma_{A,\sigma,f}\to \faisGm$.
\end{defi}
Il est représentable par un schéma affine sur $S$ par la remarque \ref{imageinvnoyau_rema}.

De même qu'en \ref{PGOetctordus_prop}, le groupe algébrique $\faisPGO_{A,\sigma,f}$ agit sur $\faisSGamma_{A,\sigma,f}$ et sur $\faisSpin_{A,\sigma,f}$ en préservant les morphismes d'inclusion et la norme spinorielle. On obtient donc immédiatement:
\begin{prop} \label{torsionSpin_prop}
Par l'action du groupe algébrique $\faisPGO_{\faisM_{2n},\invadj_{2n},f_{2n}}$, le torseur $\faisIso_{(M_n,\invadj_{2n},f_{2n}),(A,\sigma,f)}$ tord $\faisSGamma_{2n}$ en $\faisSGamma_{A,\sigma,f}$ et de même pour $\faisSpin$.
\end{prop}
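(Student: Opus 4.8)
The plan is to argue exactly as for Propositions \ref{sntordu_prop} and \ref{PGOetctordus_prop}, the point being that $\faisSGamma_{A,\sigma,f}$ and $\faisSpin_{A,\sigma,f}$ are produced by a construction that is functorial in the quadratic pair $(A,\sigma,f)$. Indeed, the even Clifford algebra $\faisCliffPaire_{A,\sigma,f}$, its Clifford bimodule $\faisCBim_{A,\sigma,f}$, the normaliser condition defining $\faisSGamma_{A,\sigma,f}$, the standard involution $\underline{\sigma}$, and hence the spinor norm $\sn$, all depend functorially on $(A,\sigma,f)$, so the assignment $(A,\sigma,f)\mapsto \faisSGamma_{A,\sigma,f}$ (resp. $\faisSpin_{A,\sigma,f}$) is a functor of fibered categories out of $\PairesQuadn{2n}$. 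This is what produces, via \ref{auttordus_prop}, the action of $\faisPGO_{A,\sigma,f}=\faisAut_{A,\sigma,f}$ already described just before the statement; specialising to the split pair yields the action of $\faisPGO_{2n}$ on $\faisSGamma_{2n}$ and $\faisSpin_{2n}$. Writing $P=\faisIso_{(\faisM_{2n},\invadj_{2n},f_{2n}),(A,\sigma,f)}$, which by \ref{pairequadformes_coro} and \ref{tordusformes_prop} is precisely the $\faisPGO_{2n}$-torsor corresponding to the form $(A,\sigma,f)$, the claim is then the instance of \ref{foncttors_prop}, point \ref{fonctobjet_item}, applied to this functor.

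Concretely, first I would make the comparison morphism explicit on points and then sheafify, as in the proof of \ref{sntordu_prop}: send a pair $(\phi,g)$, with $\phi$ a local trivialisation of $P$ (an isomorphism of quadratic pairs from the split one to $(A,\sigma,f)$) and $g\in\faisSGamma_{2n}$, to $\CliffFonc_0(\phi)(g)\in\faisSGamma_{A,\sigma,f}$, where $\CliffFonc_0(\phi)$ is the isomorphism of even Clifford algebras induced by $\phi$. I would check that this is well defined on the contracted product $P\contr{\faisPGO_{2n}}\faisSGamma_{2n}$ (using that an element of $\faisPGO_{2n}$ carries $\faisSGamma_{2n}$ into itself through $\CliffFonc_0$, exactly as recorded before the statement), that it is a homomorphism of group schemes, and finally that it is an isomorphism. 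For the last point it suffices to work locally, where $P$ trivialises: there the map becomes the identity of $\faisSGamma_{2n}$, hence an isomorphism, and so is the global sheafified map.

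Then I would run the identical argument for $\faisSpin$, and transport the remaining structure by \ref{torsionmorphisme_prop}. Since the functorial action of $\faisPGO_{A,\sigma,f}$ preserves the inclusion $\faisSpin\hookrightarrow\faisSGamma$ and the spinor norm, the twists of these morphisms are the corresponding morphisms for $(A,\sigma,f)$; in particular, as $\faisPGO_{2n}$ acts trivially on $\faisGm$, the spinor norm $\sn\colon\faisSGamma_{2n}\to\faisGm$ twists into $\sn\colon\faisSGamma_{A,\sigma,f}\to\faisGm$, and $\faisSpin_{2n}$, being its kernel, twists into $\faisSpin_{A,\sigma,f}$.

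The main obstacle lies not in the twisting formalism, which is routine once functoriality is in place, but in that functoriality itself over an arbitrary base: one must know that the Clifford bimodule $\faisCBim_{A,\sigma,f}$, and therefore $\faisSGamma_{A,\sigma,f}$, $\underline{\sigma}$ and $\sn$, are genuinely functorial, which requires taking the quotient in the construction of $\faisCBim_{A,\sigma,f}$ as a Zariski sheaf quotient rather than a presheaf quotient. Granting this (it is precisely the adaptation of \cite[Ch. VI, \S 23.2]{bookinv} discussed just before the statement), the only genuine verification is the local identification in the split case, which via the isomorphisms $\faisSGamma_{\faisEnd_M,\invadj_q,f_q}\simeq\faisSGamma_q$ reduces to the module-theoretic statement already established in \ref{sntordu_prop}.
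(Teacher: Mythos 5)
Votre argument est correct et suit exactement la m�me voie que le texte : la proposition y est pr�sent�e comme cons�quence imm�diate de la fonctorialit� de la construction $(A,\sigma,f)\mapsto \faisSGamma_{A,\sigma,f}$ (et de $\faisSpin$), le d�tail �tant renvoy� au m�me proc�d� que dans la proposition \ref{sntordu_prop}, que vous reproduisez fid�lement (morphisme de comparaison sur les points, faisceautisation, v�rification locale apr�s trivialisation, puis torsion des morphismes par \ref{torsionmorphisme_prop}). Rien � redire.
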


\subsection{Suites exactes longues de cohomologie}\label{suitecohom_sec}

Notons que le foncteur $\DeplCliff{2n+1} \to \Formes{\hypq_{2n+1}}$ qui oublie tout sauf le module quadratique se factorise par un foncteur $\DeplCliff{2n+1} \to \QuadDetTrivn{2n+1}$, en conservant l'isomorphisme entre la partie impaire $Z_1$ du centre de son algèbre de Clifford (canoniquement isomorphe à $\Det{M}$ par \ref{CliffStruc_theo}, point \ref{centredet_item}) et $\faisO_S$. En d'autres termes, le morphisme naturel $\faisSGamma_{2n+1} \to \faisorthO_{2n+1}$ se factorise par $\faisSO_{2n+1}$, et donc par torsion (Proposition \ref{foncttors_prop}) on obtient de même une factorisation du morphisme $\faisSGamma_q \to \faisorthO_q$ par $\faisSO_q$ pour tout $q$, forme \fppf\ de $\hypq_{2n+1}$.

\begin{prop} \label{secSpinSO_prop}
Pour tout $q$, forme \fppf\ de $\hypq_{2n+1}$, on a le diagramme commutatif de faisceaux \fppf\ aux lignes et aux colonnes exactes 
\begin{equation}\label{diagSpinSOimpair_diag}
\xymatrix{
 & 1 \ar[d] & 1 \ar[d] & 1 \ar[d] \\
1 \ar[r] & \faismu_2 \ar[r] \ar[d] & \faisSpin_{q} \ar[d] \ar[r] & \faisSO_{q} \ar[r] \ar@{=}[d] & 1 \\
1 \ar[r] & \faisGm \ar[r] \ar[d]_{(-)^2} & \faisSGamma_{q}  \ar[r] \ar[d]^{\sn} & \faisSO_{q} \ar[r] \ar[d] & 1 \\
1 \ar[r] & \faisGm \ar@{=}[r] \ar[d] & \faisGm \ar[r] \ar[d] & 1 \\
  & 1  & 1 \\
}
\end{equation}
où les deux morphismes horizontaux vers $\faisSO$ sont ceux définis juste au-dessus. 
De plus, le diagramme pour une forme $q$ est obtenu par torsion par la procédure de la proposition \ref{foncttors_prop} à partir de celui pour $q=\hypq_{2n+1}$.
\end{prop}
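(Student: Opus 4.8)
The plan is to prove everything first for the split form $q=\hypq_{2n+1}$ and then deduce the general case by twisting. The heart of the matter is the middle row $1 \to \faisGm \to \faisSGamma_q \to \faisSO_q \to 1$; once it is exact, the three columns and the bottom row are essentially formal, and the top row is extracted by a short diagram chase. So I would organize the argument as: (i) the middle row, (ii) the columns and the commutativity of the four squares, (iii) the top row, (iv) the twisting assertion.

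For the middle row, the vector representation $\CliffFonc$ furnishes a morphism $\faisSGamma_q \to \faisorthO_q$ which, by the factorization established just above the statement, lands in $\faisSO_q$. Its kernel is $\faisGm$: working fppf-locally (Proposition \ref{semiregformeshyp_prop}) we may assume $q=\hypq_{2n+1}$, where $\faisCliff_{0,q}$ is Azumaya with centre $\faisO_S$ by Theorem \ref{CliffStruc_theo}; an even element centralizing $M \subset \faisCliff_q$ centralizes all of $\faisCliff_q$, hence lies in the even part $Z_0=\faisO_S$ of the centre, i.e. in $\faisGm$ (alternatively \cite{knus}). Exactness at $\faisGm$ is the injectivity of the scalar inclusion, which is clear. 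For surjectivity as fppf sheaves I reduce fppf-locally to $q=\hypq_{2n+1}$ over a local ring; writing it as $\fq 1 \orth \hypq_{2n}$, Proposition \ref{SOCartanDieudonne_prop} shows every section of $\faisSO_q$ is a product of orthogonal reflections $\tau_v$, necessarily even in number by \ref{detrefl_lemm}, and each $\tau_v$ is the image under $\CliffFonc$ (up to a central sign) of $v \in M \subset \faisGamma_q$ with $q(v)$ invertible; an even product of such $v$ lies in $\faisSGamma_q$ and maps to the prescribed element, so $\faisSGamma_q \to \faisSO_q$ is an epimorphism of fppf sheaves.

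The columns are then immediate. The left column $1 \to \faismu_2 \to \faisGm \oto{(-)^2} \faisGm \to 1$ is the Kummer sequence, exact for the fppf topology; moreover for a scalar $\lambda$ the standard involution fixes $\faisO_S$, so $\sn(\lambda)=\invstd_q(\lambda)\lambda=\lambda^2$ by \ref{sn_defi}, which identifies the middle vertical arrow as squaring and gives $\faisGm \cap \faisSpin_q=\faismu_2$. The middle column $1 \to \faisSpin_q \to \faisSGamma_q \oto{\sn} \faisGm \to 1$ is exact by the definition of $\faisSpin_q$ as $\ker(\sn)$ (Definition \ref{PinSpin_defi}) together with the surjectivity of $\sn$ on $\faisSGamma_q$ (Remark \ref{snsurj_rema}); the right column and bottom row are trivially exact. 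Commutativity of the four squares holds because every map into $\faisSGamma_q$ is an inclusion, $\faisSpin_q \to \faisSO_q$ is by definition the restriction of $\faisSGamma_q \to \faisSO_q$, and $\sn|_{\faisGm}=(-)^2$. For the top row $1 \to \faismu_2 \to \faisSpin_q \to \faisSO_q \to 1$, left injectivity and exactness at $\faisSpin_q$ follow from $\faisGm \cap \faisSpin_q=\faismu_2$ and the kernel computation above; for surjectivity, given a local section $g$ of $\faisSO_q$ I lift it fppf-locally to $\tilde g \in \faisSGamma_q$, extract fppf-locally a square root $\mu$ of $\sn(\tilde g)$, and replace $\tilde g$ by $\mu^{-1}\tilde g$, which satisfies $\sn(\mu^{-1}\tilde g)=\mu^{-2}\sn(\tilde g)=1$ and still maps to $g$ because scalars map to the identity in $\faisSO_q$.

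Finally, the twisting assertion follows from Proposition \ref{sntordu_prop}: twisting by the $\faisorthO_h$-torsor $\faisIso_{h,q}$, with $h=\hypq_{2n+1}$, carries $\faisSGamma_h$, $\faisSpin_h$, $\sn$ and the various inclusions into their counterparts for $q$, leaves the abelian groups $\faisGm$ and $\faismu_2$ untwisted, and by Proposition \ref{torsionsec_prop} preserves exactness; hence the whole diagram for $q$ is the twist of the one for $h$, as claimed. The main obstacle is the middle row — proving that the image of $\faisSGamma_q$ is exactly $\faisSO_q$ with kernel $\faisGm$ — which rests on the odd-rank Clifford structure theory of \ref{CliffStruc_theo} and on reducing surjectivity to the split local case via Cartan--Dieudonn\'e.
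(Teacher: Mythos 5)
Your overall strategy is the paper's: Cartan--Dieudonn\'e over a local base for the surjectivity, the structure theorem \ref{CliffStruc_theo} for the kernel computation, surjectivity of the spinor norm for the middle column, and \ref{sntordu_prop} for the twisting. The only organizational difference is that you establish the middle row first and then deduce the epimorphy of $\faisSpin_q\to\faisSO_q$ by an \fppf\ square-root correction, whereas the paper proves the epimorphy of $\faisSpin_q\to\faisSO_q$ directly (normalizing the reflection vectors so that $q(v_i)=-1$, which makes the spinor norm of $v_1\otimes\cdots\otimes v_m$ equal to $1$ for free) and observes that this implies the corresponding statement for $\faisSGamma_q$. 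Your reorganization is harmless in itself; it is exactly the argument the paper uses for the even-rank analogue \ref{secSpinOplus_prop}.

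There is, however, one step that fails: ``a product of orthogonal reflections, necessarily even in number by \ref{detrefl_lemm}''. Each reflection has determinant $-1$, so $\det(f)=1$ only gives $(-1)^m=1$; over a base where $2=0$ this is no constraint at all, since $-1=1$ in $\Gamma(S)$, and Proposition \ref{SOCartanDieudonne_prop} may perfectly well hand you an odd-length decomposition --- indeed in characteristic $2$ the identity itself equals $\tau_e$ for the distinguished vector $e$ carrying $\fq{1}$ in $\hypq_{2n+1}=\fq{1}\orth\hypq_{2n}$. When $m$ is odd, $v_1\otimes\cdots\otimes v_m$ is an \emph{odd} element of the Clifford algebra: it lies in $\faisGamma_q$ but not in $\faisSGamma_q$, so your lift does not land in the group you need. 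The repair is cheap: when $2=0$ one has $\tau_e=\id$ (the polar form of $e$ against everything vanishes), so appending $e$ to the product restores even parity without changing the image; the paper's variant instead keeps both parities in play, normalizes $q(v_i)=-1$, and checks that the lift maps to $(-1)^m f=f$ in either case. But since treating $2=0$ uniformly is precisely what working over an arbitrary base demands --- and is a declared difficulty of this paper --- the parity of $m$ cannot be settled by the determinant argument alone, and this step needs to be patched.
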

\begin{proof}
On montre d'abord le diagramme quand $q=\hypq_{2n+1}$. La première colonne est exacte car tout élément de $\Gamma(S)^{\times}$ est localement un carré pour la topologie \fppf.
La deuxième colonne est exacte par la remarque \ref{snsurj_rema}. 
La deuxième ligne est exacte à gauche. En effet, tout élément de $\faisSGamma$ qui induit l'identité par conjugaison sur $M$ est central dans $\faisCliff_{0}$, et est donc un élément de $\faisGm$ par \ref{CliffStruc_theo}. Sa norme spinorielle est alors son carré, ce qui explique la commutativité du carré en bas à gauche, ainsi que l'exactitude à gauche de la première ligne. Il ne reste qu'à montrer que le morphisme $\faisSpin \to \faisSO$ est un épimorphisme, puisque cela implique bien entendu la même chose pour $\faisSGamma \to \faisSO$. 
Remarquons tout d'abord que localement, pour la topologie Zariski, la proposition \ref{SOCartanDieudonne_prop} assure que tout élément $f$ de $\faisSO_q(S)$ se décompose en produit de réflexions orthogonales $f=\tau_{v_1}\circ \cdots \circ \tau_{v_m}$ par rapport aux vecteurs $v_1,\ldots,v_m$ tels que les $q(v_i)$ sont tous inversibles. Puisque $\tau_{v} = \tau_{\lambda v}$ pour tout $\lambda \in \faisGm(S)$, par extension \fppf, on peut supposer que tous les $v_i$ sont tels que $q(v_i)=-1$. Alors la norme spinorielle de la classe de l'élément $v_1 \otimes \cdots \otimes v_m$ est bien $1$ et il s'envoie bien sur $(-\tau_{v_1})\circ \cdots \circ(-\tau_{v_m})=(-1)^m f$. Soit $m$ est pair, soit $m$ est impair, et puisque $1=\det(f)=(-1)^m$ par le Lemme \ref{detrefl_lemm}, c'est donc que $1=-1$ dans $\faisO_S(S)$. Dans les deux cas, $v_1 \otimes \cdots \otimes v_m$ s'envoie donc sur $f$. 

Enfin, le cas d'un module quadratique général s'obtient par torsion du précédent, en appliquant la proposition \ref{sntordu_prop}, pour les deux colonnes de droite du diagramme. On vérifie sans peine que l'action de $\faisorthO_{\hypq_{2n+1}}$ sur les groupes (abéliens) noyaux $\mu_2$ et $\faisGm$ est triviale et qu'il ne se tordent pas, par conséquent. 
\end{proof}

\begin{prop} \label{secSpinOplus_prop}
Pour tout $q$, forme \fppf\ de $\hypq_{2n}$, on a le diagramme commutatif de faisceaux \fppf\ aux lignes et aux colonnes exactes 
\begin{equation}\label{diagSpinOplus_diag}
\xymatrix{
 & 1 \ar[d] & 1 \ar[d] & 1 \ar[d] \\
1 \ar[r] & \faismu_2 \ar[r] \ar[d] & \faisSpin_{q} \ar[d] \ar[r] & \faisOplus_{q} \ar[r] \ar@{=}[d] & 1 \\
1 \ar[r] & \faisGm \ar[r] \ar[d]_{(-)^2} & \faisSGamma_{q}  \ar[r] \ar[d]^{\sn} & \faisOplus_{q} \ar[r] \ar[d] & 1 \\
1 \ar[r] & \faisGm \ar@{=}[r] \ar[d] & \faisGm \ar[r] \ar[d] & 1 \\
  & 1  & 1 \\
}
\end{equation}
où la ligne du milieu est la suite exacte \eqref{SGamOplus_eq} de la proposition \ref{SGamOplus_prop}.
De plus, le diagramme pour une forme $q$ est obtenu par torsion par la procédure de la proposition \ref{foncttors_prop} à partir de celui pour $q=\hypq_{2n}$.
\end{prop}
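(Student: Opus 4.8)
Le plan est de d\'ecalquer pas \`a pas la d\'emonstration de la proposition \ref{secSpinSO_prop}: on \'etablit d'abord le diagramme pour la forme hyperbolique $q=\hypq_{2n}$, puis on obtient le cas d'une forme \fppf\ quelconque par torsion. L'avantage d\'ecisif sur le cas impair est que la ligne du milieu est exactement la suite \eqref{SGamOplus_eq} de la proposition \ref{SGamOplus_prop}: l'\'epimorphisme d\'elicat $\faisSGamma_q \to \faisOplus_q$ est donc d\'ej\`a acquis, et il ne reste qu'une chasse au diagramme.

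Je commencerais par les colonnes dans le cas $q=\hypq_{2n}$. La premi\`ere colonne $1 \to \faismu_2 \to \faisGm \to \faisGm \to 1$ est exacte puisque $\faismu_2=\ker((-)^2)$ par la d\'efinition \ref{mun_defi} et que l'\'el\'evation au carr\'e est un \'epimorphisme \fppf\ (tout inversible est localement un carr\'e). La deuxi\`eme colonne $1 \to \faisSpin_q \to \faisSGamma_q \oto{\sn} \faisGm \to 1$ est exacte car $\faisSpin_q = \ker(\sn)$ par la d\'efinition \ref{PinSpin_defi} et $\sn$ est surjectif par la remarque \ref{snsurj_rema}. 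La troisi\`eme colonne et la ligne du bas sont triviales, et la commutativit\'e du carr\'e inf\'erieur gauche revient \`a $\sn(\lambda)=\invstd_q(\lambda)\lambda=\lambda^2$ pour $\lambda \in \faisGm$, l'involution standard fixant les scalaires.

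Le point crucial est l'exactitude de la ligne du haut. Pour l'exactitude \`a gauche: un point de $\faisSpin_q$ envoy\'e sur $1$ dans $\faisOplus_q$ appartient, par exactitude de la ligne du milieu, \`a $\faisGm \subseteq \faisSGamma_q$, et comme sa norme spinorielle est triviale il est dans $\ker((-)^2)=\faismu_2$; la r\'eciproque est claire, et l'extension est centrale puisque $\faismu_2 \subseteq \faisGm$ est central dans $\faisSGamma_q$. Pour la surjectivit\'e de $\faisSpin_q \to \faisOplus_q$: \'etant donn\'e $f \in \faisOplus_q(T)$, on le rel\`eve \fppf-localement en $g \in \faisSGamma_q$ gr\^ace \`a la ligne du milieu; alors $\sn(g)\in \faisGm$ est \fppf-localement un carr\'e $\mu^2$, et $g'=\mu^{-1}g$ est un rel\`evement de norme spinorielle $1$, donc un point de $\faisSpin_q$ qui s'envoie encore sur $f$, le scalaire central $\mu^{-1}$ mourant dans $\faisOplus_q$. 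C'est une pure chasse au diagramme: contrairement au cas impair, aucun recours au th\'eor\`eme de Cartan-Dieudonn\'e n'est n\'ecessaire, l'unique \'epimorphisme non formel \'etant celui de la proposition \ref{SGamOplus_prop}.

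Il reste \`a obtenir le diagramme pour une forme $q$ quelconque par torsion, comme dans \ref{secSpinSO_prop}. Le torseur $\faisIso_{\hypq_{2n},q}$ sous $\faisorthO_{\hypq_{2n}}$ tord, par la proposition \ref{sntordu_prop}, les groupes $\faisSGamma_{\hypq_{2n}}$ et $\faisSpin_{\hypq_{2n}}$, la norme spinorielle et les inclusions en leurs analogues pour $q$; les groupes ab\'eliens $\faismu_2$ et $\faisGm$, sur lesquels l'action par automorphismes int\'erieurs est triviale, ne se tordent pas, et le point \ref{fonctaut_item} de la proposition \ref{foncttors_prop} transporte alors le diagramme entier. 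Le seul point demandant un peu d'attention, et donc le principal obstacle, est de confirmer que $\sn$ reste un \'epimorphisme \fppf\ apr\`es torsion --- ce qui se voit en testant la surjectivit\'e \fppf-localement, l\`a o\`u $q$ redevient hyperbolique et o\`u la remarque \ref{snsurj_rema} s'applique --- ainsi que la l\'egitimit\'e de la chasse au diagramme pour des faisceaux en groupes non ab\'eliens, garantie ici par la centralit\'e des noyaux $\faismu_2$ et $\faisGm$.
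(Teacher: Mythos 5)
Your proof is correct and takes essentially the same approach as the paper's: the lower-left square commutes because $\sn$ restricted to scalars is the squaring map, whence $\faismu_2=\faisGm\cap\faisSpin_q$; the epimorphism $\faisSpin_q\to\faisOplus_q$ is obtained by lifting a point \fppf-locally through the middle row (proposition \ref{SGamOplus_prop}) and correcting by an \fppf-local square root of its spinor norm; and the twisted case follows via la proposition \ref{sntordu_prop}, the abelian kernels $\faismu_2$ and $\faisGm$ being untwisted. The only (immaterial) difference is organizational: you establish the diagram for $\hypq_{2n}$ first and then twist, whereas the paper runs the lifting argument directly for an arbitrary form $q$, since la proposition \ref{SGamOplus_prop} is already stated for all forms.
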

\begin{proof}
Ainsi que dans la proposition précédente, on a la commutativité du diagramme en bas à gauche, puis que l'intersection de $\faisGm$ et $\faisSpin_q$ dans $\faisSGamma_q$ est bien $\faismu_2$. Pour vérifier que $\faisSpin_q \to \faisOplus_q$ est un épimorphisme, étant donné qu'on sait déjà que $\faisSGamma_q \to \faisOplus_q$ en est un, il suffit de relever \fppf-localement un point de $\faisOplus_q$, et de le corriger par un élément provenant de $\faisGm$ afin de rendre sa norme spinorielle triviale. Quitte à localiser encore, c'est possible par commutativité du diagramme en bas à gauche, puisque tout élément est \fppf-localement un carré. 

L'affirmation sur les formes tordues s'obtient comme dans la proposition précédente. 
\end{proof}

\section{Connexité}

Traitons maintenant de la connexité géométrique des groupes de nature quadratique introduits précédemment. Rappelons que:

\begin{lemm} \label{Xkbar_lemm}
Soit $X$ un schéma sur un corps $k$ et soit $\bar k$ un extension de $k$ algébriquement close. Si $X_{\bar k}$ est connexe, alors il en est de même de $X$. De plus, si $X$ est localement de type fini sur $k$, alors $X_{\bar k}(\bar k)$ est (très) dense dans l'espace topologique sous-jacent à $X_{\bar k}$, et $X$ est donc connexe si $X_{\bar k}(\bar k)$ l'est. 
\end{lemm}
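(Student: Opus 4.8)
Let $X$ be a scheme over a field $k$ and let $\bar k$ be an algebraically closed extension of $k$. If $X_{\bar k}$ is connected, then so is $X$. Moreover, if $X$ is locally of finite type over $k$, then $X_{\bar k}(\bar k)$ is (very) dense in the underlying topological space of $X_{\bar k}$, and $X$ is therefore connected if $X_{\bar k}(\bar k)$ is.

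Let me sketch how I'd prove this.

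The plan is to split the statement into two independent facts: first, surjectivity of the projection $X_{\bar k} \to X$, which yields the opening assertion by pure topology; second, the very-density of the rational points in $X_{\bar k}$, which upgrades connectedness of the point set to connectedness of the scheme. The two are then chained together.

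First I would treat the projection $p : X_{\bar k} \to X$ obtained by base change along $\Spec(\bar k) \to \Spec(k)$. Since $k \to \bar k$ is a field extension it is faithfully flat, and faithful flatness is stable under base change, so $p$ is faithfully flat; in particular it is surjective. A surjective continuous map carries a connected source onto a connected target, so connectedness of $X_{\bar k}$ immediately yields connectedness of $X$. This settles the first sentence of the lemma, without any finiteness hypothesis.

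Next I would establish the density statement. When $X$ is locally of finite type over $k$, the base change $X_{\bar k}$ is locally of finite type over the algebraically closed field $\bar k$. By the Nullstellensatz such a scheme is a Jacobson scheme whose closed points are exactly its $\bar k$-rational points, and these closed points form a very dense subset: every nonempty open (indeed every nonempty locally closed) subset of $X_{\bar k}$ meets $X_{\bar k}(\bar k)$. This is the standard geometric form of the Nullstellensatz for schemes of finite type over a field, and I would simply invoke it (cf. the discussion of Jacobson schemes in \cite{ega}).

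Finally I would combine the two. Assume $X_{\bar k}(\bar k)$ is connected for the subspace topology, and suppose for contradiction that $X_{\bar k} = U \sqcup V$ with $U,V$ disjoint, nonempty and open. By very-density each of $U$ and $V$ contains a $\bar k$-point, so $U \cap X_{\bar k}(\bar k)$ and $V \cap X_{\bar k}(\bar k)$ are disjoint, nonempty, open subsets covering $X_{\bar k}(\bar k)$, contradicting its connectedness; hence $X_{\bar k}$ is connected, and the first part then gives connectedness of $X$. The only nontrivial ingredient is the very-density of the closed points, which is really the Nullstellensatz in disguise; the surjectivity argument and the final topological deduction are formal. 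I therefore expect no serious obstacle, the single point requiring care being that the density argument uses only that every nonempty open of $X_{\bar k}$ meets the rational points, so that the subspace topology on $X_{\bar k}(\bar k)$ suffices and no separate analysis of its finer structure is needed.
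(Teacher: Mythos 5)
Your proof is correct and follows exactly the route the paper takes: the paper's proof is simply a citation of \cite[Ch. IV, Prop. 4.5.1 et 10.4.9]{ega}, whose two ingredients — descent of connectedness along the surjective projection $X_{\bar k}\to X$, and very density of the closed ($=\bar k$-rational) points of a scheme locally of finite type over an algebraically closed field — are precisely the two facts you prove and then chain together. You have merely supplied the standard arguments behind the cited EGA results rather than invoking them, so there is nothing to correct.
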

\begin{proof}
Voir \cite[Ch. IV, Prop. 4.5.1 et 10.4.9]{ega}.
\end{proof}

Soit $n\geq 1$ et soit $(q,M)$ un $\faisO_S$-module quadratique de rang $n$, forme de $\hypq_{n}$. Soit $X_q$ la quadrique affine sur $S$ donnée sur les points par $X_q(T)=\{v \in M(T) \text{ t.q. }q(v)=1 \}$. De même, soit $U_q$ le sous foncteur de $M$ défini par $U_q(T)=\{ v \in M(T) \text{ t.q. } q(v) \in \Gamma(T)^\times \}$. Alors la quadrique $X_q$ est représentable par un schéma affine localement de type fini sur $S$, fermé dans $U_q$ qui est un ouvert affine sur $S$ dans $M$. En effet, par la proposition \ref{reprLocal_prop}, on se ramène au cas où $q$ est hyperbolique, or $X_{\hypq_{2n}}$ est tirée de $\Spec (\ZZ[x_1,\ldots,x_{2n}]/(1-\sum x_ix_{i+1}))$ et $U_{\hypq_{2n}}$ de $\Spec (\ZZ[x_1,\ldots,x_{2n}][(\sum x_ix_{i+1})^{-1}])$. Le cas de rang impair est analogue. Sur tout corps, on vérifie également aisément que $X_q$ est irréductible puisque défini par un polynôme irréductible sur une clôture algébrique, donc connexe, et $U_q$ est irréductible donc connexe puisqu'ouvert non vide dans $\mathbb{A}^n$ qui est irréductible. Par ailleurs, l'action (fidèle) de $\faisGm$ sur $M$ se restreint en une action sur $U_q$.

\begin{lemm} \label{arfrefl_lemm}
Si $q$ est une forme de $\hypq_{2n}$ ($n\geq 1$) et si $v \in M(S)$ est tel que $q(v)$ est inversible, alors la réflexion orthogonale $\tau_v$ par rapport à $v$ est d'invariant de Arf égal à $1 \in (\ZZ/2)(S)$. 
\end{lemm}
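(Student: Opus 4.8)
L'idée est d'identifier l'invariant de Arf de $\tau_v$ à l'automorphisme du centre $Z_{0,q}$ de $\faisCliff_{0,q}$ induit par la conjugaison par $v$, puis de montrer que cet automorphisme est non trivial. Je commencerais par rappeler les identités de Clifford $vx+xv=b_q(v,x)$ et $v^2=q(v)$ pour $x\in M$, qui donnent $vxv^{-1}=\frac{b_q(v,x)}{q(v)}v-x=-\tau_v(x)$; comme $\CliffFonc(\tau_v)$ est l'automorphisme d'algèbre de $\faisCliff_q$ prolongeant $\tau_v$, il vaut $(-1)^k\int_v$ sur un produit de $k$ vecteurs, donc coïncide avec $\int_v$ sur la partie paire $\faisCliff_{0,q}$, et en particulier sur $Z_{0,q}$. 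Par définition du foncteur $\ArfFonc$, l'invariant de Arf de $\tau_v$ est la classe dans $\faisAut_{Z_{0,q}}=\ZZ/2$ de cet automorphisme; il reste donc à prouver que $\int_v$ agit sur $Z_{0,q}$ par l'automorphisme non trivial.

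C'est une assertion locale: la formation de $\faisCliff_q$, de $Z_{0,q}$ et de l'automorphisme $\int_v$ commute au changement de base (via \ref{chgmtBaseCliff_prop} et \ref{etaledegre2_coro}), et $\ZZ/2$ est un faisceau; après restriction à un recouvrement Zariski trivialisant $M$, il suffit donc de vérifier l'égalité de deux sections de $\ZZ/2$ en chaque point de $S$. En passant au corps résiduel et en utilisant que $(\ZZ/2)(R)\to(\ZZ/2)(\kappa)$ est une bijection pour $R$ local de corps résiduel $\kappa$, on se ramène au cas où $S=\Spec(k)$ est un corps, $q$ une forme régulière de rang $2n$ et $q(v)\in k^\times$.

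En caractéristique différente de $2$, j'invoquerais simplement la proposition \ref{ArfDet_prop}: la composée $\XiFonc\circ\ArfFonc$ induit le déterminant $\faisorthO_q\to\faismu_2$, qui vaut $-1\neq 1$ par le lemme \ref{detrefl_lemm}; comme $\XiFonc$ induit l'isomorphisme $\ZZ/2\isoto\faismu_2$ (remarque \ref{Zmu2_rema}, $2$ inversible), l'invariant de Arf de $\tau_v$ est bien l'élément non trivial $1$. Le cas véritablement délicat est celui de la caractéristique $2$, où \ref{ArfDet_prop} ne dit rien puisque $\ZZ/2\to\faismu_2$ y est constant. Ici je procéderais par un calcul explicite. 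La forme polaire $b_q$ étant alternée non dégénérée, on choisit $w$ tel que $b_q(v,w)=1$; alors $\langle v,w\rangle$ est un plan régulier isomorphe à $[q(v),q(w)]$ et $q=[q(v),q(w)]\perp q'$ avec $q'$ régulière de rang $2n-2$, de sorte que $q=\perp_{i=1}^n[a_i,b_i]$ avec $e_1=v$, $e_2=w$, $a_1=q(v)\in k^\times$. En posant $w_i=e_{2i-1}e_{2i}$, chaque $w_i$ vérifie $w_i^2+w_i+a_ib_i=0$, les $w_i$ commutent deux à deux (en caractéristique $2$ des vecteurs orthogonaux commutent), et une brève vérification montre que $z=\sum_i w_i$ est central et vérifie $z^2+z+\sum_i a_ib_i=0$ (les termes croisés se compensent en caractéristique $2$); ainsi $Z_{0,q}=k[z]$, d'automorphisme non trivial $z\mapsto z+1$. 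Le calcul final est $\int_v(z)=e_1 z e_1^{-1}$: les vecteurs formant $w_i$ pour $i\geq 2$ commutent à $e_1$, d'où $\int_v(w_i)=w_i$, tandis que $\int_v(w_1)=e_1 e_1 e_2 e_1^{-1}=e_2 e_1=1+w_1$; donc $\int_v(z)=z+1$, l'automorphisme non trivial, c'est-à-dire l'élément non nul $1\in\ZZ/2$ recherché.
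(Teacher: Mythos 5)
Votre preuve est correcte, mais elle suit une route sensiblement diff\'erente de celle du texte au point crucial. Le texte ne fait aucun calcul dans l'alg\`ebre de Clifford en caract\'eristique $2$: apr\`es r\'eduction aux fibres g\'eom\'etriques, il utilise la connexit\'e de $U_q$ (\'etablie juste avant via le lemme \ref{Xkbar_lemm}) pour montrer que $v\mapsto \ArfFonc(\tau_v)$ est constante en $v$, se ram\`ene ainsi \`a la seule r\'eflexion $\tau_{e_1-e_2}$ qui est d\'efinie sur $\ZZ$, puis invoque la connexit\'e de $\Spec(\ZZ)$ pour transporter le calcul vers une fibre de caract\'eristique $\neq 2$, o\`u l'on conclut par \ref{ArfDet_prop} et \ref{detrefl_lemm} --- exactement les ingr\'edients de votre propre cas de caract\'eristique $\neq 2$. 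Vous remplacez cet argument de d\'eformation par un calcul direct et uniforme en $v$: l'identification $\CliffFonc(\tau_v)=\int_v$ sur la partie paire (correcte: $vxv^{-1}=-\tau_v(x)$ sur $M$, donc $(-1)^k\int_v$ en degr\'e $k$), puis, en caract\'eristique $2$, la description explicite du centre $Z_{0,q}=k[z]$ avec $z=\sum e_{2i-1}e_{2i}$, $z^2+z=\sum a_ib_i$, et le calcul $\int_v(w_1)=e_2e_1=1+w_1$, $\int_v(w_i)=w_i$ pour $i\geq 2$ --- tout cela est exact (la d\'ecomposition $q=[q(v),q(w)]\perp q'$ existe bien sur un corps puisque $b_q$ est altern\'ee non d\'eg\'en\'er\'ee, et seul $e_1=v$ doit \^etre inversible dans l'alg\`ebre de Clifford). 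Ce que chaque approche apporte: celle du texte est plus courte et recycle la connexit\'e de $U_q$ d\'ej\`a disponible, au prix de reposer sur le rel\`evement \`a $\ZZ$ d'une r\'eflexion particuli\`ere; la v\^otre est autonome, traite un $v$ quelconque sans argument de constance, et red\'emontre au passage la description classique de l'\'el\'ement d'Arf $\sum e_{2i-1}e_{2i}$, ce qui rend le m\'ecanisme de l'invariant plus transparent. Votre r\'eduction pr\'ealable (points et corps r\'esiduels, via l'injectivit\'e sur les idempotents d'un anneau local) est une variante l\'egitime de la r\'eduction du texte aux fibres g\'eom\'etriques.
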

\begin{proof}
Puisque $\ZZ/2$ est localement constant, il suffit de le vérifier sur les fibres géométriques, et donc quand $q\simeq \hypq_{2n}$. On considère alors l'application $U_q \to \faisorthO_q$ définie sur les points par $v \mapsto \tau_v$. Puisque $(U_q)_k$ est connexe par le lemme précédent et $(\ZZ/2)_k$ est constitué de deux points fermés, la composition $(U_q)_k \to (\faisorthO_q)_k \to (\ZZ/2)_k$ est constante. Il suffit donc de monter qu'une réflexion bien précise s'envoie sur $1 \in (\ZZ/2)(k)$. Choisissons la réflexion par rapport à $e_1-e_2$, ou $e_1$ et $e_2$ sont une paire de vecteurs engendrant un plan hyperbolique et tels que $q(e_1)=q(e_2)=0$ et $b_q(e_1,e_2)=1$. Elle provient de $(\hypq_{2n})_\ZZ$ et par connexité de $\ZZ$, l'invariant de Arf est constant dessus, on peut donc se contenter de le calculer pour une fibre de caractéristique différente de $2$, auquel cas $\ZZ/2\simeq \mu_2$ par $i \mapsto (-1)^i$, et le résultat découle alors du lemme \ref{detrefl_lemm}. 
\end{proof}

\begin{lemm} \label{pairerefl_lemm}
Tout produit d'un nombre pair de réflexions est dans la composante connexe du neutre de $\faisorthO_q$.
\end{lemm}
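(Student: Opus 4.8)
Le plan est de réaliser tout produit pair de réflexions comme image d'un point d'un $S$-schéma à fibres géométriques connexes, par un morphisme dont une section évidente vaut l'identité, puis d'exploiter la connexité de $U_q$ sur les fibres établie juste avant. Pour un entier $k$ fixé, je considèrerais le morphisme de $S$-schémas
$$\Phi\colon U_q \times_S \cdots \times_S U_q \longrightarrow \faisorthO_q, \qquad (v_1,\dots,v_{2k}) \longmapsto \tau_{v_1}\circ\cdots\circ\tau_{v_{2k}}$$
à $2k$ facteurs. C'est bien un morphisme de schémas : sur $U_q$ la fonction $q(v)$ est inversible, donc la formule $\tau_v(x)=x-\tfrac{b_q(x,v)}{q(v)}v$ de la définition \ref{reflexion_defi} est régulière en $v$, et $\Phi$ est à valeurs dans $\faisorthO_q$ par construction. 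Comme toute réflexion est une involution, on a $\Phi(v,v,\dots,v)=\id$, de sorte que la section unité se factorise par $\Phi$.

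Je passerais ensuite aux fibres géométriques via le lemme \ref{Xkbar_lemm}. Sur un corps algébriquement clos, $U_q$ est un ouvert non vide de $\mathbb{A}^n$, donc irréductible (voir la discussion précédant le lemme \ref{arfrefl_lemm}), et un produit fini de schémas irréductibles sur un corps algébriquement clos reste irréductible, donc connexe. L'image de $\Phi$ sur une telle fibre est alors connexe et contient $\id$, donc est contenue dans la composante connexe du neutre $(\faisorthO_q)^0$ de cette fibre. Ainsi, l'image ensembliste de $\Phi$ rencontre chaque fibre de $\faisorthO_q \to S$ uniquement dans sa composante neutre.

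Pour conclure, je globaliserais en utilisant que la composante connexe relative du neutre $(\faisorthO_q)^0$ est un sous-schéma ouvert dont la formation commute au changement de base (par les résultats de structure de \SGAtrois\ sur la composante neutre relative, sans recours à la lissité). Étant donné un produit pair $f=\tau_{v_1}\circ\cdots\circ\tau_{v_{2k}}$ défini par des vecteurs $v_i\in M(S)$ comme dans la définition \ref{reflexion_defi}, la section correspondante $f\colon S\to \faisorthO_q$ envoie alors tout point de $S$ dans $(\faisorthO_q)^0$ par l'étape précédente ; l'ouvert $f^{-1}\big((\faisorthO_q)^0\big)$ contenant tous les points de $S$ est donc égal à $S$, et $f$ se factorise par $(\faisorthO_q)^0$.

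Le point le plus délicat sera la gestion précise de la « composante connexe du neutre » sur une base quelconque : il faut disposer du sous-schéma ouvert $(\faisorthO_q)^0$ commutant au changement de base pour transformer l'information fibre à fibre en une factorisation globale de la section. Si l'on souhaite rester élémentaire à ce stade, on peut se contenter d'énoncer l'appartenance fibre par fibre, tout le contenu géométrique étant alors concentré dans la connexité de $U_q\times_S\cdots\times_S U_q$ et dans l'identité $\Phi(v,\dots,v)=\id$.
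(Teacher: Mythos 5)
Votre preuve est correcte et repose au fond sur la m\^eme id\'ee que celle du texte : relier le produit \`a l'identit\'e au moyen d'une famille de r\'eflexions param\'etr\'ee par $U_q$, dont la connexit\'e fibre \`a fibre a \'et\'e \'etablie juste avant. Seule la mise en forme diff\`ere : le texte proc\`ede par r\'ecurrence en se ramenant \`a un produit de deux r\'eflexions (la translation \`a gauche par $\tau_1\cdots\tau_{2m-2}$ permutant hom\'eomorphiquement les composantes connexes), puis relie $\tau_{v_1}\tau_{v_2}$ \`a $\tau_{v_2}\tau_{v_2}=\id$ par le morphisme $v\mapsto \tau_v\tau_{v_2}$ de source une seule copie de $U_q$ ; votre astuce du point diagonal sur $U_q^{2k}$ (l\'egitime, puisque $\tau_v^2=\id$ d\'ecoule de $b_q(v,v)=2q(v)$) \'evite cette r\'ecurrence d'un coup, au prix de l'irr\'eductibilit\'e du produit, valable sur un corps alg\'ebriquement clos. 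Les deux variantes se valent.

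Une r\'eserve cependant sur votre \'etape de globalisation : l'ouverture de la composante neutre relative $(\faisorthO_q)^0$, commutant au changement de base, n'est garantie dans \SGAtrois\ que sous hypoth\`ese de lissit\'e (Exp. VI$_{\mathrm{B}}$, th. 3.10), contrairement \`a ce que vous affirmez ; elle tombe en d\'efaut en g\'en\'eral (d\'ej\`a pour $\faismu_p$ sur $\ZZ_p$, la r\'eunion des composantes neutres des fibres n'est pas ouverte), et pr\'ecis\'ement $\faisorthO_{2n+1}$ n'est pas lisse en caract\'eristique r\'esiduelle $2$ (voir la section \ref{adjointBn_sec}), cas couvert par le lemme puisqu'il sert aussi en rang impair. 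Ce pas est heureusement superflu : le lemme n'est invoqu\'e qu'apr\`es r\'eduction aux fibres g\'eom\'etriques (th\'eor\`emes \ref{Oplusconnexe_theo} et \ref{SOconnexe_theo}), et la preuve du texte se place elle-m\^eme sur celles-ci ; il faut donc s'en tenir \`a votre repli \emph{fibre par fibre}, qui co\"{\i}ncide exactement avec ce que le texte d\'emontre.
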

\begin{proof}
Par récurrence, il suffit de le montrer pour un produit de deux réflexions, puisque tout élément $g$ de $\faisorthO_q$ permute les composantes connexes, donc $\tau_1\cdots\tau_{2m-2}$ et $(\tau_1 \cdots\tau_{2m-2})(\tau_{2m-1}\tau_{2m})$ sont dans la même composante connexe, si $(\tau_{2m-1} \tau_{2m})$ est dans la composante du neutre.
De plus, on peut supposer la base $S$ connexe, et même vérifier la propriété sur les fibres géométriques. Soient $v_1$ et $v_2$ tels que $q(v_1)$ et $q(v_2)$ sont inversibles. Alors l'application $U_q \to \faisorthO_q$ définie sur les points par $v \mapsto \tau_v \tau_{v_2}$ atteint $\tau_{v_1}\tau_{v_2}$ et $\id=\tau_{v_2}\tau_{v_2}$, ce qui montre le résultat par connexité de $U_q$.
\end{proof}

\begin{theo} \label{Oplusconnexe_theo}
Pour toute forme (étale ou \fppf) $q$ de $\hypq_{2n}$, le groupe algébrique $\faisOplus_q$ est à fibres connexes. 
\end{theo}
\begin{proof}
Par le lemme \ref{Xkbar_lemm}, on se limite à prouver la connexité de l'espace $\faisOplus_q(k)$ lorsque $k$ est algébriquement clos. Tout élément de $\faisorthO_q(k)$ est alors un produit de réflexions par le théorème de Cartan-Dieudonné \ref{Cartan-Dieudonne_theo}, et est dans $\faisOplus_q(k)$ si et seulement si ce nombre est pair par le lemme \ref{arfrefl_lemm}. Il résulte alors du lemme \ref{pairerefl_lemm} que $\faisOplus_q$ est alors exactement la composante connexe du neutre de $\faisorthO_q$.
\end{proof}

\begin{theo} \label{SOconnexe_theo}
Pour toute forme (étale ou \fppf) $q$ de $\hypq_{2n+1}$, le groupe algébrique $\faisSO_{q}$ est à fibres connexes.
\end{theo}
\begin{proof}
On se ramène de nouveau à montrer que $\faisSO_q(k)$ est connexe si $k$ est algébriquement clos et $q=\hypq_{2n+1}$. Par le théorème \ref{Cartan-Dieudsemi_theo}, tout élément $g$ de $\faisorthO_q(k)$ est un produit de $m$ réflexions orthogonales. Si la caractéristique de $k$ est différente de $2$, par le lemme \ref{pairerefl_lemm}, on a $g \in \faisSO_q(k)$ si et seulement si $m$ est pair, ce qui prouve que $\faisSO_q$ coïncide avec la composante connexe du neutre, puisque $\faismu_2$ a deux composantes connexes. Si la caractéristique de $k$ est $2$, on remarque que $\id=\tau_e$ où $e$ est une base de $\fq{1}$ dans la décomposition $\hypq_{2n+1}=\hypq_{2n}\perp \fq{1}$. On peut donc rajouter $\tau_e$ à un produit pour se ramener à un nombre pair de réflexions, et tout $\faisSO_q(k)$ est égal à sa composante connexe du neutre.
\end{proof}

\begin{lemm} \label{quotienttopo_lemm}
Soit $k$ un corps algébriquement clos et soient $G$ schéma en groupes localement de type fini sur $k$ et $F$ un sous-schéma en groupes. Si l'inclusion est  quasi-compacte (automatique si $F$ est affine ou fermé dans $G$) et si le quotient \fppf\ est représenté par un schéma $H$, alors $H(k)\simeq G(k)/F(k)$ comme espaces topologiques.
\end{lemm}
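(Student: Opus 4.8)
Le plan est de passer par le morphisme quotient $\pi: G \to H$. Puisque $H$ repr\'esente le quotient \fppf\ $G/F$, ce morphisme fait de $G$ un $F$-torseur pour la topologie \fppf; il est en particulier fid\`element plat et localement de pr\'esentation finie, donc un morphisme ouvert de sch\'emas. Je commencerais par \'etablir la bijection ensembliste $G(k)/F(k) \isoto H(k)$. La surjectivit\'e de $\pi(k): G(k) \to H(k)$ vient de ce que, pour $h \in H(k)$, la fibre $G_h = G \times_H \Spec(k)$ est fid\`element plate sur $\Spec(k)$ donc non vide, et localement de type fini sur le corps alg\'ebriquement clos $k$, donc poss\`ede un point ferm\'e, c'est-\`a-dire un $k$-point relevant $h$. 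L'identification des fibres de $\pi(k)$ aux classes $gF(k)$ r\'esulte de l'isomorphisme de torseur $G \times_H G \isoto G \times_{\Spec(k)} F$ \'evalu\'e sur les $k$-points: $\pi(g_1)=\pi(g_2)$ \'equivaut \`a $g_1^{-1}g_2 \in F(k)$.

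Je munirais ensuite $G(k)$ et $H(k)$ de la topologie induite par celle des sch\'emas (celle des points ferm\'es) et $G(k)/F(k)$ de la topologie quotient. Le morphisme $\pi(k)$ est continu et se factorise en une bijection continue $\bar\pi: G(k)/F(k) \to H(k)$. Comme la projection $q: G(k) \to G(k)/F(k)$ est ouverte, car $q^{-1}(q(U)) = \bigcup_{f \in F(k)} Uf$ est une r\'eunion d'ouverts (les translations \`a droite par les $k$-points de $F$ \'etant des hom\'eomorphismes), et surjective, il suffira pour conclure que $\bar\pi$ est un hom\'eomorphisme de montrer que $\pi(k)$ est un morphisme ouvert.

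Le c\oe ur de la preuve est donc l'ouverture de $\pi(k)$. Il suffit de la tester sur une base d'ouverts, donc de v\'erifier que pour tout ouvert $U$ de $G$ on a l'\'egalit\'e $\pi(U \cap G(k)) = \pi(U) \cap H(k)$, le membre de droite \'etant ouvert dans $H(k)$ puisque $\pi(U)$ l'est dans $H$. L'inclusion directe est imm\'ediate; pour la r\'eciproque, \'etant donn\'e $h \in \pi(U) \cap H(k)$, l'ouvert $U$ rencontre la fibre $G_h$, de sorte que $U \cap G_h$ est un ouvert non vide du sch\'ema $G_h$, localement de type fini sur $k$; il contient donc un point ferm\'e, fournissant un $k$-point $g \in U \cap G(k)$ avec $\pi(g)=h$.

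L'obstacle principal est pr\'ecis\'ement cette derni\`ere \'egalit\'e $\pi(U\cap G(k)) = \pi(U)\cap H(k)$: elle repose de fa\c{c}on essentielle sur la densit\'e des points ferm\'es dans un sch\'ema localement de type fini sur un corps alg\'ebriquement clos et sur la non-vacuit\'e des fibres de $\pi$, elle-m\^eme cons\'equence de la platitude fid\`ele du quotient \fppf. L'hypoth\`ese de quasi-compacit\'e de l'inclusion $F \hookrightarrow G$ intervient pour garantir la pr\'esentation finie de $\pi$, n\'ecessaire \`a l'ouverture du morphisme comme au bon comportement des fibres.
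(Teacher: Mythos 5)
Votre preuve est correcte et suit essentiellement la m\^eme d\'emarche que celle du texte : tout se ram\`ene \`a l'ouverture de l'application $G(k)\to H(k)$, la bijection ensembliste \'etant imm\'ediate. La seule diff\'erence est que vous d\'emontrez cette ouverture \`a la main (via l'ouverture du morphisme plat localement de pr\'esentation finie $\pi$ et la densit\'e des points ferm\'es, \`a corps r\'esiduel $k$, dans les fibres), l\`a o\`u le texte renvoie simplement \`a \cite[Exp. VI, th\'eor\`eme 3.2]{sga3}; votre v\'erification de l'\'egalit\'e $\pi(U\cap G(k))=\pi(U)\cap H(k)$ est pr\'ecis\'ement le contenu d\'el\'egu\'e \`a cette r\'ef\'erence.
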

\begin{proof}
C'est évident en tant qu'ensembles sous-jacents, la seule chose à montrer est que la topologie sur $H(k)$ est bien la topologie quotient, autrement dit que l'application $G(k) \to H(k)$ est ouverte, ce qui découle de \cite[Exp. VI, théorème 3.2]{sga3}. 
\end{proof}

\begin{exo} \label{topologie_exo}
Soit $F$ un sous-groupe d'un groupe topologique $G$. Si $G/F$ est connexe et $F$ est inclus dans une composante connexe de $G$, ce qui est en particulier le cas si $F$ est connexe, alors $G$ connexe. 
\end{exo}

\begin{theo} \label{Spinconnexe_theo}
Pour toute forme (étale ou \fppf) $q$ de $\hypq_{2n}$ ou $\hypq_{2n+1}$, les groupes algébriques $\faisSpin_q$ et $\faisSGamma_q$ sont à fibres connexes.
\end{theo}
\begin{proof}
Comme précédemment, il suffit de montrer que $\faisSpin_q(k)$ (resp. $\faisSGamma_q(k)$) est connexe lorsque $k$ est un corps algébriquement clos et donc $q=\hypq_{2n}$ ou $q=\hypq_{2n+1}$.

En rang impair, on utilise alors la surjection $\faisSpin_{2n+1}(k) \to \faisSO_{2n+1}(k)$ (resp. ou $\faisSGamma_{2n+1}(k) \to \faisSO_{2n+1}(k)$) de noyau $\mu_2(k)$ (resp. $\faisGm$) de la proposition \ref{secSpinSO_prop}. Le lemme \ref{quotienttopo_lemm} et l'exercice \ref{topologie_exo}, achèvent la preuve pour $\faisSGamma_q$, et il suffit de montrer que $\mu_2(k)$ est tout entier dans une composante connexe de $\faisSpin_{2n+1}(k)$. Or ce sous-groupe est $\faismu_2 \cdot 1$ au sein de l'algèbre de Clifford. Si $e_1$ et $e_2$ sont deux vecteurs engendrant un plan hyperbolique, tels que $q(e_1)=q(e_2)=0$ et $b_q(e_1,e_2)=1$, il y a une copie de $\faisGm \subseteq \faisSGamma_{2n+1}$ des éléments de la forme $\alpha e_1\otimes e_2 +\alpha^{-1} e_2 \otimes e_1$, puisque $e_1 \otimes e_2 +e_2 \otimes e_1=1$. Or ils sont de norme spinorielle $1$, par un calcul direct. Cette copie de $\faisGm$ (connexe) est donc dans $\faisSpin_{2n+1}$ et contient $\faismu_2$.  

En rang pair, on procède de même, mais en avec la surjection de la proposition \ref{secSpinOplus_prop}.
\end{proof}

\begin{prop} \label{PGOplusconnexe_prop}
Pour toute forme (étale ou \fppf) $q$ de $\hypq_{2n}$, le groupe algébrique $\faisPGOplus_q$ est à fibres connexes.
\end{prop}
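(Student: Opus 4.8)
Le énoncé final est la proposition \ref{PGOplusconnexe_prop}: pour toute forme (étale ou \fppf) $q$ de $\hypq_{2n}$, le groupe algébrique $\faisPGOplus_q$ est à fibres connexes.

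Regardons les résultats disponibles. On a déjà établi la connexité des fibres de $\faisOplus_q$ (théorème \ref{Oplusconnexe_theo}) et de $\faisSpin_q$ et $\faisSGamma_q$ (théorème \ref{Spinconnexe_theo}). Par ailleurs la proposition \ref{secOPGO_prop} fournit une suite exacte de faisceaux étales
$$1 \to \faismu_2 \to \faisOplus_{A,\sigma,f} \to \faisPGOplus_{A,\sigma,f} \to 1$$
qui, dans le cas $(A,\sigma,f)=(\faisEnd_M,\invadj_q,f_q)$, donne $\faisPGOplus_q$ comme quotient de $\faisOplus_q$ par $\faismu_2$.

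Comment procéder.

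Je pense exploiter le lemme \ref{quotienttopo_lemm} et l'exercice \ref{topologie_exo}, comme dans la preuve du théorème \ref{Spinconnexe_theo}. Voici le plan.

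\begin{proof}
Par le lemme \ref{Xkbar_lemm}, il suffit de montrer que $\faisPGOplus_q(k)$ est connexe lorsque $k$ est un corps algébriquement clos, et on peut donc supposer $q=\hypq_{2n}$. Par la proposition \ref{secOPGO_prop}, on dispose de la suite exacte de faisceaux étales
$$1 \to \faismu_2 \to \faisOplus_q \to \faisPGOplus_q \to 1.$$
Elle identifie $\faisPGOplus_q$ au quotient \fppf\ de $\faisOplus_q$ par le sous-groupe $\faismu_2$. L'inclusion $\faismu_2 \to \faisOplus_q$ est une immersion fermée, donc quasi-compacte, et le quotient est représentable ($\faisPGOplus_q$ est représentable et affine sur $S$ par la remarque \ref{imageinvnoyau_rema}, étant un noyau). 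Le lemme \ref{quotienttopo_lemm} donne alors un homéomorphisme $\faisPGOplus_q(k)\simeq \faisOplus_q(k)/\faismu_2(k)$.

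Or $\faisOplus_q$ est à fibres connexes par le théorème \ref{Oplusconnexe_theo}, donc $\faisOplus_q(k)$ est connexe par le lemme \ref{Xkbar_lemm}. L'image continue surjective d'un connexe étant connexe, le quotient $\faisOplus_q(k)/\faismu_2(k)$ est connexe, et donc $\faisPGOplus_q(k)$ aussi. Par le lemme \ref{Xkbar_lemm}, cela entraîne la connexité de toutes les fibres géométriques de $\faisPGOplus_q$.
\end{proof}

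**Point délicat.**

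La principale chose à vérifier soigneusement est la représentabilité du quotient et l'application du lemme \ref{quotienttopo_lemm}, qui nécessite que l'inclusion $\faismu_2 \hookrightarrow \faisOplus_q$ soit quasi-compacte (satisfaite car $\faismu_2$ est affine) et que le quotient soit représenté par un schéma (ce qui est ici direct puisque $\faisPGOplus_q$ est défini de façon indépendante comme noyau représentable). Une fois ces deux points acquis, l'argument est purement topologique et n'utilise que la connexité déjà prouvée de $\faisOplus_q$ et le fait qu'un quotient continu surjectif d'un espace connexe est connexe; il est d'ailleurs plus simple que dans le cas de $\faisSpin_q$, où il fallait en outre vérifier que le noyau était contenu dans une composante connexe (via l'exercice \ref{topologie_exo}). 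Ici la connexité de l'espace total $\faisOplus_q(k)$ suffit.
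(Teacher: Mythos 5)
Votre preuve est correcte et suit exactement la même stratégie que celle du texte, qui se contente d'invoquer la connexité des fibres de $\faisOplus_q$ (théorème \ref{Oplusconnexe_theo}) et la surjection $\faisOplus_q \to \faisPGOplus_q$ de la proposition \ref{secOPGO_prop}. Vos vérifications supplémentaires (réduction au cas d'un corps algébriquement clos via le lemme \ref{Xkbar_lemm}, homéomorphisme quotient via le lemme \ref{quotienttopo_lemm}) explicitent simplement ce que le texte laisse implicite.
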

\begin{proof}
Cela découle de la connexité des fibres de $\faisOplus_q$ et de la surjection $\faisOplus_q \to \faisPGOplus_q$ de la proposition \ref{secOPGO_prop}.
\end{proof}

\begin{prop} \label{Spconnexe_prop}
Le groupe symplectique $\faisSp_{2n}$ (voir la définition \ref{PGSp_defi} plus bas) est à fibres connexes.
\end{prop}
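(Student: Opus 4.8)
Le plan est de reproduire la stratégie des théorèmes \ref{Oplusconnexe_theo} et \ref{SOconnexe_theo}, en l'adaptant au cas alterné où elle est en fait plus simple, faute de déterminant ou d'invariant de Arf à contrôler. Le groupe $\faisSp_{2n}$ étant affine et de type fini sur $S$ (sous-groupe fermé de $\faisGL_{2n}$ découpé par les équations exprimant la préservation de la forme alternée, exactement comme pour $\faisorthO_q$ en \ref{Oqrepr_prop}), ses fibres géométriques sont localement de type fini, et le lemme \ref{Xkbar_lemm} ramène l'énoncé à la connexité de l'espace $\faisSp_{2n}(k)$ lorsque $k$ est algébriquement clos. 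On note alors $(V,b)$ l'espace $V=k^{2n}$ muni de la forme alternée non dégénérée sous-jacente à $\hypa_{2n}$, de sorte que $\faisSp_{2n}(k)$ est le groupe des isométries de $(V,b)$.

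L'idée est d'exhiber un système de générateurs de $\faisSp_{2n}(k)$ dont chaque élément appartient manifestement à la composante connexe du neutre, en remplaçant les réflexions orthogonales du cas quadratique par les transvections symplectiques
$$T_{v,\lambda}:x \mapsto x+\lambda\, b(x,v)\, v, \qquad v \in V,\ \lambda \in k.$$
Un calcul direct, reposant sur $b(v,v)=0$ et l'antisymétrie de $b$, montre que chaque $T_{v,\lambda}$ préserve $b$, et fournit donc un point de $\faisSp_{2n}(k)$. L'application $(v,\lambda)\mapsto T_{v,\lambda}$ définit de plus un morphisme de $k$-schémas $\mathbb{A}^{2n+1}\to \faisSp_{2n}$, de source irréductible donc connexe, envoyant tout point de la forme $(v,0)$ sur l'identité. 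L'image réciproque par ce morphisme de la composante connexe du neutre $\faisSp_{2n}^0$, qui est ouverte et fermée, est alors ouverte, fermée et non vide dans une source connexe, donc égale à la source entière; toute transvection symplectique est par conséquent un point de $\faisSp_{2n}^0$.

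Il resterait à invoquer le fait que le groupe symplectique d'une forme alternée non dégénérée sur un corps est engendré par ses transvections (théorème de Dieudonné): comme $\faisSp_{2n}^0(k)$ est un sous-groupe contenant toutes ces transvections, on conclurait $\faisSp_{2n}(k)=\faisSp_{2n}^0(k)$, d'où la connexité cherchée. La difficulté principale réside dans cette propriété de génération, qu'il faut assurer en toute caractéristique; le passage préalable au cas d'un corps algébriquement clos (donc infini) en écarte toutefois les pathologies de petite taille liées à $\FF_2$. Si l'on préfère se passer de ce résultat classique, on peut procéder par récurrence sur $n$, dans l'esprit exact des lemmes \ref{quotienttopo_lemm} et de l'exercice \ref{topologie_exo}: par le théorème de Witt, $\faisSp_{2n}$ agit transitivement sur l'ensemble des couples hyperboliques $(e,f)$ tels que $b(e,f)=1$, ensemble qui est connexe (fibré en espaces affines au-dessus des vecteurs non nuls de $V$), et le stabilisateur d'un tel couple s'identifie, par la proposition \ref{stabilisateur_prop}, au groupe symplectique $\faisSp_{2n-2}$ de l'orthogonal du plan engendré; ce dernier étant connexe par hypothèse de récurrence (le cas $\faisSp_2=\faisSL_2$ servant d'initialisation), le lemme \ref{quotienttopo_lemm} et l'exercice \ref{topologie_exo} donnent la connexité de $\faisSp_{2n}$.
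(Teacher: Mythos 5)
Votre démonstration est correcte et suit essentiellement la même voie que celle du texte : génération de $\faisSp_{2n}(k)$ par les transvections symplectiques (Dieudonné), puis connexion de chaque transvection à l'identité en faisant varier $\lambda$ le long de $\faisGa$. Votre inquiétude sur la génération en toute caractéristique est sans objet (le résultat de Dieudonné pour le groupe symplectique vaut sur tout corps, sans les exceptions du cas orthogonal), et la variante par récurrence via le théorème de Witt que vous esquissez est un supplément valable mais non nécessaire.
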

\begin{proof}
La preuve est similaire à celle de la connexité de $\faisOplus$, mais un peu plus simple: sur un corps $k$, tout élément de $\faisSp_{2n}$ est un produit de transvections symplectiques (voir \cite{Dieu55}), c'est-à-dire de transformations de la forme $\tau_{v,\lambda}:x \mapsto x + \lambda \hypa_{2n}(x,v)v$ et réciproquement, toute transvection symplectique est évidement un élément du groupe symplectique. On connecte alors toute transvection symplectique à l'identité en faisant varier $\lambda$. Puisque $\faisGa$ est connexe, cela montre que toute transvection symplectique est dans la composante connexe de l'identité, ainsi que tout élément du groupe symplectique, par récurrence. 
\end{proof}

\section{Groupes semi-simples de type $B_n$} \label{Bn_sec}

\subsection{Groupe déployé adjoint} \label{adjointBn_sec}

Dans cette section, $(M,q)$ est toujours un $\faisO_S$-module quadratique tel que $M$ est localement libre de type fini (voir la section \ref{modulesquadratiques_sec}), de groupe orthogonal noté $\faisorthO_q$ (définition \ref{O_defi}), et de groupe spécial orthogonal noté $\faisSO_q$ (définition \ref{SO_defi}). Nous supposerons de plus que $(M,q)$ est une forme \fppf\ de $\hypq_{2n}$ ou de $\hypq_{2n+1}$, pour un certain $n\geq 1$.

\begin{prop} \label{dimOSO_prop}
Si $m$ est le rang de $M$, alors les fibres géométriques des groupes algébriques $\faisorthO_q$, $\faisSO_q$ et, si $m$ est pair, $\faisOplus_q$ sont de dimension $m(m-1)/2$ (donc $n(2n-1)$ si $m=2n$ ou $n(2n+1)$ si $m=2n+1$).
\end{prop}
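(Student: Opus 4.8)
Le plan est de calculer la dimension des fibres géométriques, ce qui permet de se ramener au cas d'un corps algébriquement clos $k$ par la proposition \ref{reprschemas_prop} et le fait que la dimension d'une fibre ne change pas par extension du corps de base. Puisque les groupes $\faisorthO_q$, $\faisSO_q$ et $\faisOplus_q$ sont des formes \fppf\ de ceux associés au module hyperbolique, il suffit de calculer la dimension pour $q=\hypq_{m}$ sur un corps algébriquement clos, car la dimension est invariante par torsion (le passage à une forme se fait par descente \fppf, et les fibres géométriques des groupes tordus sont isomorphes à celles des groupes déployés après extension à une clôture algébrique). On peut donc supposer $(M,q)=(\hypq_m)_k$.

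D'abord, je calculerais la dimension de l'algèbre de Lie de $\faisorthO_q$. Par le lemme \ref{sous_algebre_lemm}, $\faisLie_{\faisorthO_q}$ s'identifie à une sous-algèbre de Lie de $\faisLie_{\faisGL_M}=\faisM_m$. En utilisant les nombres duaux, un élément de $\faisLie_{\faisorthO_q}(k)$ est un endomorphisme $u$ tel que $\id+tu \in \faisorthO_q(k[t]/t^2)$, c'est-à-dire tel que $q((\id+tu)(x))=q(x)$ pour tout $x$ dans $M_{k[t]/t^2}$. Ceci équivaut, en développant et en utilisant $t^2=0$, à la condition $b_q(x,u(x))=0$ pour tout $x$, donc à une condition d'antisymétrie de $u$ par rapport à la forme polaire $b_q$. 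Le calcul de la dimension de cet espace d'endomorphismes $b_q$-antisymétriques est un exercice d'algèbre linéaire : dans le cas régulier (ou semi-régulier), on obtient exactement $m(m-1)/2$, que ce soit en rang pair ou impair.

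Ensuite, je montrerais que cette dimension coïncide avec la dimension de Krull du groupe, en invoquant la lissité. La proposition \ref{sgcorpslisse_prop} affirme qu'un schéma en groupes de type fini sur un corps est lisse si et seulement si la dimension de son algèbre de Lie égale sa dimension de Krull. Or les fibres géométriques de $\faisorthO_q$ et $\faisSO_q$ sont lisses par la théorie classique sur les corps (ces groupes sont les groupes orthogonaux usuels, qui sont lisses sur un corps algébriquement clos pour des modules réguliers ou semi-réguliers). Ainsi $\dim \faisorthO_q = \dim \faisLie_{\faisorthO_q} = m(m-1)/2$. Pour $\faisSO_q$ et $\faisOplus_q$, ce sont des sous-groupes ouverts (composantes) de $\faisorthO_q$ par les théorèmes \ref{SOconnexe_theo} et \ref{Oplusconnexe_theo} — plus précisément, ils sont le noyau d'un morphisme vers un groupe de dimension nulle ($\faismu_2$ ou $\ZZ/2$) — et ont donc la même dimension que $\faisorthO_q$.

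La principale difficulté attendue réside dans le traitement uniforme de la caractéristique $2$ pour le calcul de la dimension de l'algèbre de Lie : en caractéristique $2$, la forme polaire $b_q$ est alternée et la notion d'antisymétrie se comporte différemment (un endomorphisme $b_q$-antisymétrique n'est pas contraint par la diagonale de la même manière), et en rang impair le radical polaire de $\hypq_{2n+1}$ est non trivial. Il faudra donc vérifier avec soin, en choisissant une base adaptée dans laquelle $\hypq_m$ est mise sous forme standard, que le comptage des paramètres donne bien $m(m-1)/2$ dans tous les cas, notamment en tenant compte du vecteur supplémentaire portant $\fq{1}$ en rang impair. L'essentiel de la preuve consiste précisément en ce calcul explicite, qui reste néanmoins élémentaire une fois la base fixée.
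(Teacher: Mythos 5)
Votre sch\'ema g\'en\'eral (r\'eduction au cas hyperbolique sur un corps alg\'ebriquement clos, puis passage de $\faisorthO_q$ \`a $\faisSO_q$ et $\faisOplus_q$ via des suites exactes \`a but de dimension nulle) co\"incide avec celui du texte. Mais l'\'etape centrale de votre calcul est fausse pr\'ecis\'ement dans le cas que vous signalez comme \og la principale difficult\'e \fg : en rang impair $m=2n+1$ et en caract\'eristique $2$, l'espace des endomorphismes $u$ tels que $(x,y)\mapsto b_q(u(x),y)$ soit altern\'e, c'est-\`a-dire $\faisLie_{\faisorthO_{\hypq_{2n+1}}}(k)$, est de dimension $n(2n+1)+1$ et non $n(2n+1)=m(m-1)/2$. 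Le calcul explicite men\'e en \ref{adjointBn_sec} le montre : l'\'equation $2B_{00}=0$ devient triviale en caract\'eristique $2$ et on perd une \'equation ind\'ependante. Par cons\'equent votre seconde affirmation --- que les fibres g\'eom\'etriques de $\faisorthO_q$ sont lisses --- est \'egalement fausse dans ce cas, et c'est d'ailleurs exactement ce que le texte en d\'eduit : $\faisorthO_{2n+1}$ n'est pas lisse sur $\ZZ$. L'implication \og lisse $\Rightarrow \dim\faisLie=\dim$ \fg\ de \ref{sgcorpslisse_prop} ne peut donc pas \^etre invoqu\'ee pour $\faisorthO_{2n+1}$, et l'\'egalit\'e $\dim\faisLie_{\faisorthO_q}=m(m-1)/2$ sur laquelle repose toute votre preuve est mise en d\'efaut.

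Le texte contourne l'obstacle en calculant directement la dimension de Krull de $\faisorthO_q$ sur un corps alg\'ebriquement clos, en suivant \cite[\S 23.6]{borelag} (argument g\'eom\'etrique qui ne passe pas par l'alg\`ebre de Lie), puis en transf\'erant cette dimension \`a $\faisSO_q$ et $\faisOplus_q$ par la proposition \ref{dimsec_prop} appliqu\'ee aux suites exactes \ref{seSOOimpair_prop}, \ref{seSOOpair_prop} et \ref{secOplus_prop}. Une variante de votre approche resterait viable : calculer l'alg\`ebre de Lie de $\faisSO_{2n+1}$ (la partie de trace nulle de $\faisLie_{\faisorthO_{2n+1}}$), qui est bien de rang $n(2n+1)$ en toute caract\'eristique, utiliser la lissit\'e de ses fibres via \ref{sgcorpslisse_prop}, puis remonter \`a $\faisorthO_{2n+1}$ par \ref{seSOOimpair_prop} et \ref{dimsec_prop} --- mais ce n'est pas l'argument que vous avez \'ecrit, et en l'\'etat votre comptage \og $m(m-1)/2$ dans tous les cas \fg\ est incorrect.
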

\begin{proof}
Sur un corps algébriquement clos, $q=\hypq_{2n}$ ou $q=\hypq_{2n+1}$. On peut alors suivre la preuve de \cite[\S 23.6]{borelag} mutatis mutandis pour $\faisorthO_q$.
On en déduit le résultat pour les autres groupes par la proposition \ref{dimsec_prop} à l'aide des suites exactes des propositions \ref{seSOOimpair_prop}, \ref{seSOOpair_prop} et \ref{secOplus_prop}. 
\end{proof}

Rappelons que le module sous-jacent à $\hypq_{2n+1}$ est $\faisO_S^{2n+1}$, de base canonique notée $(e_0, e_1,\ldots, e_{2n})$, et la structure quadratique est donnée par l'équation 
$$\hypq_{2n+1}(x_0,x_1,\ldots,x_{2n})=x_0^2+x_1 x_2+\cdots+x_{2n-1}x_{2n}.$$ 
\begin{defi} \label{SOimpairsurZ_defi}
Nous noterons $\faisorthO_{2n+1}$ (resp. $\faisSO_{2n+1}$) au lieu de $\faisorthO_{\hypq_{2n+1}}$ (resp. $\faisSO_{\hypq_{2n+1}}$).
\end{defi}
Calculons maintenant l'algèbre de Lie de $\faisorthO_{2n+1}$ comme une sous-algèbre de $\faisLie_{\faisGL_{2n+1}}$, conformément au lemme \ref{sous_algebre_lemm}. 
En considérant la suite exacte
$$\xymatrix{0\ar[r] & \faisLie_{\faisorthO_{2n+1}}(S) \ar[r] & \faisorthO_{2n+1}(I_S)\ar[r]^-p & \faisorthO_{2n+1}(S)\ar[r] & 1}$$
dont chacun des termes s'inclut dans le terme correspondant de la suite \eqref{LieGLn_eq} de $\faisGL_{2n+1}$, on s'aperçoit que $\faisLie_{\faisorthO_{2n+1}}(S)$ est constituée des éléments de la forme $1+f t$ avec $f\in \setM_{2n+1}(\Gamma(S))$ satisfaisant $(b_h)_{T}(f_{T}(v),v)=0$ pour tout $S$-schéma $T$ et tout $v\in \Gamma(T)^{2n+1}$. Ceci est équivalent à dire que le module bilinéaire $b_{h,f}:\faisO_S^{2n+1}\times \faisO_S^{2n+1}\to \faisO_S$ donné par $b_{h,f}(x,y)=b_h(f(x),y)$ est alterné. 

Pour calculer la dimension des fibres géométriques de cette algèbre de Lie, on peut localiser et donc supposer $S$ affine égal à $\Spec(R)$ avec $R$ local. Notons $B$ la matrice de $f_R$ sur la base canonique de $\faisO_S^{2n+1}(R)$ (resp. $C$ celle de $b_{q_R,f_R}$). 

On a alors, pour tout $j=0,\ldots,2n$, les égalités $C_{0j}=2B_{0j}$, $C_{2i-1,j}=B_{2i,j}$ et $C_{2i,j}=B_{2i-1,j}$ pour tout $i=1,\ldots,n$. Il est clair que $C$ est alternée si et seulement si $C_{ii}=0$ et $C_{ij}=-C_{ji}$ pour tous $i\neq j$. On obtient finalement les équations suivantes:
$$
\left\{\begin{array}{rl}2B_{00}=B_{2i,2i-1}=B_{2i-1,2i}=0 & \text{ pour tout $i=1,\ldots, n$} \\
2B_{0,2j}=-B_{2j-1,0} & \text{ pour tout $j=1,\ldots, n$} \\
2B_{0,2j-1}=-B_{2j,0} & \text{ pour tout $j=1,\ldots, n$} \\
B_{2i-1,2j}=-B_{2j-1,2i} & \text{ pour tout $1\leq i<j\leq n$}\\  
B_{2i,2j}=-B_{2j-1,2i-1} & \text{ pour tout $i,j=1,\ldots,n$}\\ 
B_{2i,2j-1}=-B_{2j,2i-1} & \text{ pour tout $1\leq i<j\leq n$.}
\end{array}\right.
$$
Si $R$ est une algèbre sur un corps $k$ de caractéristique différente de $2$, la dimension de $\faisLie_{\faisorthO_{2n+1}}(R)$ est ainsi $(2n+1)^2-(2n^2+3n+1)=(2n+1)\cdot n$. Elle est égale à la dimension du groupe algébrique $(\faisorthO_{2n+1})_k$ qui est par conséquent lisse par la proposition \ref{sgcorpslisse_prop}. Par contre, si $R$ est sur un corps $k$ de caractéristique $2$, on voit qu'on a seulement $2n^2+3n$ équations linéairement indépendantes. L'algèbre de Lie $\faisLie_{\faisorthO_{2n+1}}(k)$ est ainsi de dimension strictement supérieure à la dimension de $(\faisorthO_{2n+1})_k$. On peut donc en conclure que $\faisorthO_{2n+1}$ n'est pas lisse sur $\ZZ$ puisque ses fibres géométriques ne sont pas toutes lisses.

Si par contre on se restreint au noyau du déterminant, la lissité s'améliore. 

L'algèbre de Lie $\faisLie_{\faisSO_{2n+1}}$ est la sous-algèbre de $\faisLie_{\faisorthO_{2n+1}}$ des matrices de trace nulle. On obtient ainsi une description de l'algèbre de Lie de $\faisSO_{2n+1}(R)$ pour tout anneau $R$ comme étant la sous-algèbre de $\setM_{2n+1}(R)$ des matrices $B_{ij}$ satisfaisant les relations
$$
\left\{\begin{array}{rl}B_{00}=B_{2i,2i-1}=B_{2i-1,2i}=0 & \text{ pour tout $i=1,\ldots, n$} \\
2B_{0,2j}=-B_{2j-1,0} & \text{ pour tout $j=1,\ldots, n$} \\
2B_{0,2j-1}=-B_{2j,0} & \text{ pour tout $j=1,\ldots, n$} \\
B_{2i-1,2j}=-B_{2j-1,2i} & \text{ pour tout $1\leq i<j\leq n$}\\  
B_{2i,2j}=-B_{2j-1,2i-1} & \text{ pour tout $i,j=1,\ldots,n$}\\ 
B_{2i,2j-1}=-B_{2j,2i-1} & \text{ pour tout $1\leq i< j\leq n$.}
\end{array}\right.
$$
On trouve alors une base $\faisLie_{\faisSO_{2n+1}}(R)$ donnée par les matrices
$$
\left\{\begin{array}{rl}E_{0,2i}-2E_{2i-1,0} & \text{ pour tout $i=1,\ldots,n$} \\ 
E_{0,2i-1}-2E_{2i,0} & \text{ pour tout $i=1,\ldots,n$}\\
E_{2i-1,2j-1}-E_{2j,2i} & \text{ pour $i,j=1,\ldots,n$}\\
E_{2i-1,2j}-E_{2j-1,2i} & \text{ pour $1\leq i<j\leq n$}\\
E_{2i,2j-1}-E_{2j,2i-1} & \text{ pour $1\leq i<j\leq n$}.
\end{array}\right.
$$
où $E_{i,j}$ dénote comme d'habitude la matrice élémentaire avec un $1$ en ligne $i$ colonne $j$ et zéro ailleurs. Elle est donc de dimension $n(2n+1)$.

\begin{prop}\label{Oplus_ss_prop}
Le groupe $\faisSO_{2n+1}$ est semi-simple.
\end{prop}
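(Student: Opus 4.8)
The plan is to check directly that $\faisSO_{2n+1}$ satisfies the definition of a semi-simple $S$-group scheme recalled in the introduction: smooth and affine over $S$, with connected geometric fibres that are reductive and of trivial radical. Affineness and finite type come for free, since $\faisSO_{2n+1}$ is a closed subgroup scheme of $\faisGL_{2n+1}$ (the remark following Definition \ref{SO_defi}), and connectedness of the geometric fibres is exactly Theorem \ref{SOconnexe_theo}. What remains is to prove smoothness and to identify the geometric fibres with semi-simple groups.

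First I would prove smoothness over $\Spec(\ZZ)$, smoothness over an arbitrary base $S$ then following by base change. The explicit basis of $\faisLie_{\faisSO_{2n+1}}$ exhibited above shows that this Lie algebra is free of rank $n(2n+1)$ over every ring, hence has dimension $n(2n+1)$ over each residue field; the decisive point is that imposing the trace-zero condition (i.e.\ passing from $\faisorthO_{2n+1}$ to the kernel of the determinant) removes precisely the extra linear relation that is lost in characteristic $2$. By Proposition \ref{dimOSO_prop} every geometric fibre has Krull dimension $n(2n+1)$, so the dimension of the Lie algebra equals the dimension of the fibre, and Proposition \ref{sgcorpslisse_prop} gives the smoothness of each geometric fibre. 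These fibres are connected by Theorem \ref{SOconnexe_theo} and of constant dimension, so Proposition \ref{lissite_prop}, applied over the noetherian reduced base $\Spec(\ZZ)$, yields the smoothness of $\faisSO_{2n+1,\ZZ}$, and hence of $\faisSO_{2n+1}$ over any $S$.

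Once smoothness, affineness and fibrewise connectedness are established, reductivity and triviality of the radical are conditions on the geometric fibres alone, so I would reduce to the case of an algebraically closed base field. There $q \simeq \hypq_{2n+1}$ and $\faisSO_{2n+1}$ specialises to the classical special orthogonal group of an odd-dimensional nondegenerate quadratic space, whose semi-simplicity --- in particular its reductivity and the triviality of its radical --- is part of the theory over fields that we assume known. All geometric fibres being connected semi-simple groups of constant dimension, $\faisSO_{2n+1}$ is a semi-simple $S$-group scheme.

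I expect the smoothness step to be the real obstacle, and it is the only place where characteristic $2$ intervenes in an essential way: the computation above shows that $\faisorthO_{2n+1}$ is \emph{not} smooth over $\ZZ$, its Lie algebra gaining a dimension over fields of characteristic $2$, and the whole argument hinges on the fact that the determinant condition restores the expected dimension $n(2n+1)$ uniformly. With that in hand, the passage from smoothness to semi-simplicity is a routine fibrewise reduction to the classical case. One could equally well deduce the triviality of the radical \emph{a posteriori} from the root datum computed later in this section, the roots spanning the full character lattice of the maximal torus.
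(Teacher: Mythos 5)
Votre démonstration est correcte et suit essentiellement le même chemin que celle du texte : lissité sur $\Spec(\ZZ)$ via la comparaison entre la dimension de l'algèbre de Lie (libre de rang $n(2n+1)$, calculée juste avant l'énoncé) et la dimension des fibres (proposition \ref{dimOSO_prop}), combinée à la connexité (théorème \ref{SOconnexe_theo}) et à la proposition \ref{lissite_prop}, puis changement de base et réduction fibre à fibre à la semi-simplicité classique sur un corps algébriquement clos (que le texte référence précisément par \cite[Theorem 25.10]{bookinv}). La seule différence est cosmétique : vous explicitez davantage le rôle de la caractéristique $2$ et mentionnez l'alternative consistant à lire la trivialité du radical sur la donnée radicielle, ce que le texte fait aussi implicitement plus loin.
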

\begin{proof}
On vérifie tout d'abord que $\faisSO_{2n+1}$ est lisse sur $\Spec(\ZZ)$. Au vu de la proposition \ref{lissite_prop}, il suffit de vérifier que les fibres sont connexes, lisses, et de dimension constante. La connexité est prouvée dans le théorème \ref{SOconnexe_theo}. Les fibres sont lisses par la proposition \ref{sgcorpslisse_prop} puisque sur tout corps $k$, sa dimension est bien égale à celle de la fibre $(\faisSO_{2n+1})_k$. Par changement de base, il reste donc lisse sur toute base $S$. Il est de plus affine sur $S$ (voir juste avant \ref{SO_defi}). Pour conclure qu'il est réductif (au sens de \SGAtrois) et même semi-simple, on utilise que les fibres géométriques de $\faisSO_{2n+1}$ sont (semi-)simples par \cite[Theorem 25.10]{bookinv}.
\end{proof}

Pour trouver un tore maximal de $\faisSO_{2n+1}$, on constate que le morphisme défini sur les points par $(\alpha_1,\ldots,\alpha_n)\mapsto (1,\alpha_1,\alpha_1^{-1},\alpha_2,\alpha_2^{-1},\ldots,\alpha_n,\alpha_n^{-1})$ de $\faisGm^n$ vers $\faisDiag_{2n+1}$ a pour image un sous-tore fermé $\faisDiag_{2n+1}$ isomorphe à $\faisGm^n$ et est en fait dans le sous-groupe fermé $\faisSO_{2n+1}$ de $\faisGL_{2n+1}$. On note $\faisSOdiag_{2n+1}$ ce sous-groupe de $\faisSO_{2n+1}$. Par définition,  $\faisSOdiag_{2n+1}$ est un tore déployé de rang $n$ et il est maximal puisque il est maximal sur les fibres géométriques par \cite[\S 23.6]{borelag}. Considérons la base de caractères $\{t_i\vert i=1,\ldots,n\}$ de ce tore donnés par 
$$t_i(1,\alpha_1,\alpha_1^{-1},\alpha_2,\alpha_2^{-1},\ldots,\alpha_n,\alpha_n^{-1})=\alpha_i$$ 
pour $i=1,\ldots,n$. 

Les données d'une base de l'algèbre de Lie comme sous-algèbre de $\faisM_{2n+1}$ et du tore maximal comme un sous-tore du tore maximal usuel de $\faisGL_{2n+1}$ permettent de calculer les racines de $\faisSO_{2n+1}$. Pour ce faire, rappelons tout d'abord la proposition \ref{donneeradicielleGLn_prop} qui montre que le tore des matrices diagonales dans $\faisGL_{2n+1}$ agit sur l'algèbre de Lie via 
$$TE_{ij}T^{-1}=t_i(T)t_j^{-1}(T)E_{ij}.$$
Les espaces propres de la représentation adjointe (et les racines) sont donc les suivants:

\begin{enumerate}
\item Le sous-module de l'algèbre de Lie du tore (les matrices diagonales dans l'algèbre de Lie) de poids trivial. 
\item Le module libre de rang $1$ de base $(E_{0,2i}-2E_{2i-1,0})$ de poids $t_i$ pour $1\leq i\leq n$. 
\item Le module libre de rang $1$ de base $(E_{0,2i-1}-2E_{2i,0})$ de poids $-t_i$ pour $1\leq i\leq n$.
\item Le module libre de rang $1$ de base $(E_{2i-1,2j-1}-E_{2j,2i})$ de poids $t_i-t_j$ pour tout $i,j=1,\ldots,n$ avec $i\neq j$.
\item Le module libre de rang $1$ de base $(E_{2i-1,2j}-E_{2j-1,2i})$ de poids $t_i+t_j$ pour tout $1\leq i<j\leq n$.
\item Le module libre de rang $1$ de base $(E_{2i,2j-1}-E_{2j,2i-1})$ de poids $-t_i-t_j$ pour tout $1\leq i<j\leq n$.
\end{enumerate}

On explicite maintenant pour toute racine $\alpha$ non triviale l'unique morphisme $\faisSOdiag_{2n+1}$-équivariant de groupes algébriques
$$\exp_\alpha:\faisGa\to \faisSO_{2n+1}$$
induisant l'inclusion canonique de l'espace propre associé à $\alpha$ dans son algèbre de Lie (\cite[Exp. XXII, théorème 1.1]{sga3}). On trouve:
\begin{enumerate}
\item Le morphisme $\exp_{t_i}$ défini par 
$\lambda\mapsto Id+\lambda(E_{0,2i}-2E_{2i-1,0}-\lambda E_{2i-1,2i}).$
\item Le morphisme $\exp_{-t_i}$ défini par 
$\lambda\mapsto Id+\lambda(E_{0,2i-1}-2E_{2i,0}-\lambda E_{2i,2i-1}).$
\item Le morphisme $\exp_{t_i-t_j}$ défini par
$\lambda\mapsto Id+\lambda(E_{2i-1,2j-1}-E_{2j,2i}).$
\item Le morphisme $\exp_{t_i+t_j}$ défini par
$\lambda\mapsto Id+\lambda(E_{2i-1,2j}-E_{2j-1,2i}).$
\item Le morphisme $\exp_{-t_i-t_j}$ défini par 
$\lambda\mapsto Id+\lambda(E_{2i,2j-1}-E_{2j,2i-1}).$
\end{enumerate}

\'Etant donnés les racines et les morphismes exponentiels, on calcule les coracines. Associant à chaque racine dans la liste ci-dessus, on trouve dans l'ordre:
\begin{enumerate}
\item Les coracines $2t_i^\vee$ pour tout $1\leq i\leq n$.
\item Les coracines $-2t_i^\vee$ pour tout $1\leq i\leq n$.
\item Les coracines $t_i^\dual-t_j^\dual$ pour tout $i,j=1,\ldots,n$ avec $i\neq j$.
\item Les coracines $t_i^\dual+t_j^\dual$ pour tout $1\leq i<j\leq n$.
\item \label{racines--_item} Les coracines $-t_i^\dual-t_j^\dual$ pour tout $1\leq i<j\leq n$.
\end{enumerate}

Les accouplements sont $(X,Y)\mapsto -XY$ dans le cas \ref{racines--_item}. ci-dessus, et $(X,Y)\mapsto XY$ dans tous les autres cas. 

Nous pouvons ainsi énoncer la proposition suivante:

\begin{prop} \label{donneeradicielleOplus_prop}
La donnée radicielle de $\faisSO_{2n+1}$ relativement au tore maximal déployé $\faisSOdiag_{2n+1}$ est 
\begin{itemize}
\item Le $\ZZ$-module libre $N\simeq \ZZ^n$ des caractères de $\faisSOdiag_{2n+1}$ (dont $t_1,\ldots,t_n$ forment une base); 
\item Le sous-ensemble fini de celui-ci formé des racines $\pm t_i$ pour $1\leq i\leq n$ et $\pm t_i\pm t_j$ pour $1\leq j<i\leq n$;
\item Le $\ZZ$-module libre $N^\dual\simeq\ZZ^n$ des cocaractères (de base duale $t_1^\cdual,\ldots,t_n^\cdual$);
\item Le sous-ensemble fini de celui-ci formé des coracines $\pm 2t_i^\dual$ pour $i=1\leq i\leq n$ et $\pm t_i^\dual\pm t_j^\dual$ pour $1\leq j<i\leq n$.
\end{itemize}
\end{prop}

Finalement, on déduit le résultat suivant des calculs précédents:

\begin{theo}
Pour tout $n\geq 1$, le groupe $\faisSO_{2n+1}$ est déployé, semi-simple et adjoint de type $B_n$.
\end{theo}
\begin{proof}
Le groupe est semi-simple par la proposition \ref{Oplus_ss_prop} et adjoint puisque ses racines engendrent les caractères du tore. On voit que les racines $\{t_1-t_2, t_2-t_3,\ldots,t_{n-1}-t_n,t_n\}$ forment un système de racines simples, et il est facile de vérifier que son diagramme de Dynkin associé est de type $B_n$. 
\end{proof}

\subsection{Groupe déployé simplement connexe}\label{bn_sconnexe}

Soit $(M,q)$ un $\faisO_S$-module quadratique. On dispose de son algèbre de Clifford $\faisCliff_q$ (définition \ref{algCliff_defi}), à laquelle on associe son groupe de Clifford $\faisGamma_{q}$ (définition \ref{groupeClifford_defi}) ainsi que son groupe de Clifford pair $\faisSGamma_{q}$ (définition \ref{groupeCliffordpair_defi}) qui sont donnés sur les $R$-points par 
$$\faisGamma_q(R)=\{g\in \setCliff_{q_R}^\times |\ g(M_R)g^{-1}=M_R\}.$$
et
$$\faisSGamma_q(R)=\{g\in \setCliff_{0,q_R}^\times |\ g(M_R)g^{-1}=M_R\}.$$
On dispose enfin du groupe spinoriel $\faisSpin_q$ associé à $(M,q)$ (définition \ref{PinSpin_defi}) dont les $R$-points sont de la forme
$$\faisSpin_q(R)=\{g\in C_0(M_R,q_R)^\times\vert g(M_R)g^{-1}=M_R\text{ et } g\cdot \invstd(g)=1\}$$
où $\invstd$ est l'involution standard ou canonique. 

\begin{prop} \label{Spindim_prop}
Sur toute fibre géométrique $s$, le groupe algébrique $\faisSpin_{2n}$ (resp. $\faisSpin_{2n+1}$) est de dimension $n(2n-1)$ (resp. $n\cdot(2n+1)$).
\end{prop}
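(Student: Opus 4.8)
The plan is to reduce at once to a geometric fiber and then read off the dimension from the short exact sequences already established, using the dimension formula of Proposition \ref{dimsec_prop}. By definition a geometric fiber $s$ lives over an algebraically closed field $k$, and since the Clifford algebra and all the auxiliary constructions commute with base change (Proposition \ref{chgmtBaseCliff_prop}), the fiber of $\faisSpin_{2n}$ (resp. $\faisSpin_{2n+1}$) over $s$ is canonically the spin group $\faisSpin_{\hypq_{2n}}$ (resp. $\faisSpin_{\hypq_{2n+1}}$) of the split form over $k$. It therefore suffices to compute $\dim \faisSpin_{2n}$ and $\dim \faisSpin_{2n+1}$ over the algebraically closed field $k$.

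To do this I would invoke the short exact sequences of fppf sheaves in groups furnished by Propositions \ref{secSpinSO_prop} and \ref{secSpinOplus_prop}, namely
$$1 \to \faismu_2 \to \faisSpin_{2n+1} \to \faisSO_{2n+1} \to 1$$
in the odd case and
$$1 \to \faismu_2 \to \faisSpin_{2n} \to \faisOplus_{2n} \to 1$$
in the even case. In both cases the surjection $\faisSpin \to \faisSO$ (resp. $\faisSpin \to \faisOplus$) is an fppf epimorphism of sheaves between affine group schemes of finite type over $k$; by the remark following Proposition \ref{dimsec_prop} it is thus a quasi-compact surjective morphism of $k$-group schemes to which the dimension formula applies.

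The key observation is then that $\faismu_2$ is a finite $k$-group scheme, hence of dimension $0$, so Proposition \ref{dimsec_prop} gives $\dim \faisSpin_{2n+1} = \dim \faisSO_{2n+1} + \dim \faismu_2 = \dim \faisSO_{2n+1}$ and likewise $\dim \faisSpin_{2n} = \dim \faisOplus_{2n}$. Finally Proposition \ref{dimOSO_prop} supplies $\dim \faisSO_{2n+1} = n(2n+1)$ and $\dim \faisOplus_{2n} = n(2n-1)$, which yields exactly the claimed dimensions.

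The only point demanding any care, and hence the main (modest) obstacle, is verifying that the hypotheses of Proposition \ref{dimsec_prop} hold, that is, that the map $\faisSpin \to \faisSO$ (resp. $\faisSpin \to \faisOplus$) is a genuine quasi-compact surjective morphism of group schemes rather than merely an epimorphism of sheaves. This is precisely what the remark after that proposition provides, once one knows affineness and finite type, both of which are built into the definitions of these groups. Everything else is a direct substitution, so no new computation is required.
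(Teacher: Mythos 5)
Votre démonstration est correcte et suit essentiellement la même voie que celle du texte : on applique la proposition \ref{dimsec_prop} aux suites exactes des propositions \ref{secSpinOplus_prop} et \ref{secSpinSO_prop}, le noyau $\faismu_2$ étant de dimension nulle, puis on conclut par la proposition \ref{dimOSO_prop}. Les vérifications supplémentaires que vous détaillez (quasi-compacité, surjectivité) sont exactement celles couvertes par la remarque qui suit la proposition \ref{dimsec_prop}.
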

\begin{proof}
On utilise la proposition \ref{dimsec_prop}. La ligne exacte du haut du diagramme de la proposition \ref{secSpinOplus_prop} montre que la dimension est la même que celle de $\faisSO_{2n}$, donnée par la proposition \ref{dimOSO_prop}. On procède de même avec la suite exacte de la proposition \ref{secSpinSO_prop} pour $\faisSpin_{2n+1}$. 
\end{proof}

Supposons maintenant que $q=\hypq_{2n+1}$ et que $S=\Spec(\ZZ)$ et calculons l'algèbre de Lie de $\faisSpin_{2n+1}$. Pour ce faire, on considère la suite exacte \eqref{seLie_eq} 
$$\xymatrix@C=1.5em{0\ar[r] & \faisLie_{\faisSpin_{2n+1}}(R)\ar[r] & \faisSpin_{2n+1}(R[t])\ar[r]^-p & \faisSpin_{2n+1}(R)\ar[r] & 1}$$
pour tout anneau $R$. Un élément de la forme $1+t a$ avec $a\in C_0(M_R,q_R)$ est dans $\faisSpin_{2n+1}(R[t])$ si et seulement si $am-ma\in M$ pour tout $m\in M$ et $a+\invstd(a)=0$. La première condition donne $a=b+\sum_{0\leq i<j\leq 2n}a_{ij}e_i\otimes e_j$ pour des éléments $b,a_{ij}\in R$, et la seconde condition donne $2b+\sum_{i=1}^n a_{2i-1,2i}=0$. Une base de l'algèbre de Lie $\faisLie_{\faisSpin_{2n+1}}(R)$ est ainsi donnée par les éléments suivants:
\begin{itemize}
\item[1.] $1-2e_{2n-1}\otimes e_{2n}$
\item[2.] $e_{2i-1}\otimes e_{2i}-e_{2n-1}\otimes e_{2n}$ pour tout $1\leq i\leq n-1$.
\item[3.] $e_i\otimes e_j$ pour tout $0\leq i<j\leq 2n$ avec $(i,j)\neq (2r-1,2r)$ pour $r=1,\ldots,n$.
\end{itemize}
Cela montre que l'algèbre de Lie est le groupe vectoriel associé à un module libre de rang $n(2n+1)$. On en déduit: 
\begin{prop}\label{Spin_ss_prop}
Le schéma en groupes $\faisSpin_{2n+1}$ est semi-simple.
\end{prop}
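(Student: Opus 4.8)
Le plan est de reproduire la strat\'egie de la proposition \ref{Oplus_ss_prop} (semi-simplicit\'e de $\faisSO_{2n+1}$), en exploitant cette fois le calcul d'alg\`ebre de Lie et la proposition \ref{Spindim_prop} qui pr\'ec\`edent imm\'ediatement. On commence par \'etablir la lissit\'e de $\faisSpin_{2n+1}$ sur $\Spec(\ZZ)$, base qui est bien noeth\'erienne r\'eduite, au moyen du crit\`ere de la proposition \ref{lissite_prop}. Celui-ci requiert que $\faisSpin_{2n+1}$ soit localement de pr\'esentation finie --- ce qui est acquis, puisque c'est un sch\'ema affine de type fini sur $\ZZ$ par la d\'efinition \ref{PinSpin_defi} et le lemme \ref{produit_fibre_lemm} --- et que ses fibres g\'eom\'etriques soient connexes, lisses et de dimension localement constante. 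La connexit\'e des fibres est exactement le th\'eor\`eme \ref{Spinconnexe_theo}, et la constance de la dimension r\'esulte de la proposition \ref{Spindim_prop}, qui donne la valeur $n(2n+1)$ sur chaque fibre g\'eom\'etrique.

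Le c\oe ur de l'argument est la lissit\'e de chaque fibre, que l'on obtient par le crit\`ere sur un corps de la proposition \ref{sgcorpslisse_prop}: il s'agit de comparer la dimension de l'alg\`ebre de Lie et la dimension de Krull du groupe. Or le calcul effectu\'e juste avant l'\'enonc\'e montre que $\faisLie_{\faisSpin_{2n+1}}$ est le groupe vectoriel associ\'e \`a un $\ZZ$-module \emph{libre} de rang $n(2n+1)$; comme l'alg\`ebre de Lie commute au changement de base, sa fibre en tout corps $k$ est de dimension $n(2n+1)$, c'est-\`a-dire exactement la dimension de $(\faisSpin_{2n+1})_k$ fournie par la proposition \ref{Spindim_prop}. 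Le crit\`ere s'applique donc en toute caract\'eristique. C'est l\`a le point le plus d\'elicat: il faut veiller \`a ce que l'\'egalit\'e des dimensions subsiste en caract\'eristique $2$, ce qui est pr\'ecis\'ement garanti par la libert\'e sur $\ZZ$ du module sous-jacent, laquelle emp\^eche tout saut de dimension fibre par fibre.

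Il reste \`a conclure. La proposition \ref{lissite_prop} donne alors la lissit\'e de $\faisSpin_{2n+1}$ sur $\Spec(\ZZ)$, d'o\`u celle de $\faisSpin_{2n+1,S}$ sur une base quelconque $S$ par changement de base; le groupe est par ailleurs affine sur $S$. Puisqu'\^etre semi-simple est une propri\'et\'e locale \`a v\'erifier sur les fibres g\'eom\'etriques, il suffit enfin d'invoquer que celles-ci sont les groupes spinoriels classiques sur les corps alg\'ebriquement clos, qui sont semi-simples (et simplement connexes) de type $B_n$ par la classification sur les corps (par exemple \cite[Theorem 25.10]{bookinv}). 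La donn\'ee radicielle d\'egag\'ee dans les paragraphes pr\'ec\'edents permettra de surcro\^it d'identifier $\faisSpin_{2n+1}$ au groupe semi-simple d\'eploy\'e simplement connexe de type $B_n$.
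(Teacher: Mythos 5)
Votre preuve est correcte et suit essentiellement la même démarche que celle du texte : calcul de l'algèbre de Lie libre de rang $n(2n+1)$, comparaison avec la dimension des fibres (proposition \ref{Spindim_prop}) via le critère \ref{sgcorpslisse_prop}, connexité par le théorème \ref{Spinconnexe_theo}, puis lissité sur $\ZZ$ par \ref{lissite_prop}, changement de base, et semi-simplicité des fibres géométriques par la théorie classique. La seule variante, sans importance, est que vous invoquez directement \cite[Theorem 25.10]{bookinv} là où le texte ramène la semi-simplicité des fibres à celle de $\faisSO_{2n+1}$ via la suite exacte les reliant.
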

\begin{proof}
On sait que les fibres sont connexes (théorème \ref{Spinconnexe_theo}), de dimension constante et semi-simples. Elles sont lisses par la proposition \ref{sgcorpslisse_prop} et les calculs ci-dessus. La proposition \ref{lissite_prop} fournit donc la lissité sur $\Spec(\ZZ)$, puis sur toute autre base par changement de base. Nous savons déjà que $\faisSpin_{2n+1}$ est affine (juste après la définition \ref{PinSpin_defi}). Il est donc réductif et semi-simple au sens de \SGAtrois\ puisqu'à fibres semi-simples (résultat classique qu'on peut obtenir à partir de la semi-simplicité des fibres de $\faisSO_{2n+1}$ et de la suite exacte les reliant). 
\end{proof}

Pour déterminer un tore maximal de $\faisSpin_{2n+1}$, on considère le diagramme de la proposition \ref{secSpinSO_prop} pour la forme $\hypq_{2n+1}$
$$
\xymatrix{
 & 1 \ar[d] & 1 \ar[d] & 1 \ar[d] \\
1 \ar[r] & \faismu_2 \ar[r] \ar[d] & \faisSpin_{2n+1} \ar[d] \ar[r] & \faisSO_{2n+1} \ar[r] \ar@{=}[d] & 1 \\
1 \ar[r] & \faisGm \ar[r] \ar[d]_{(-)^2} & \faisSGamma_{2n+1}  \ar[r] \ar[d] & \faisSO_{2n+1} \ar[r] \ar[d] & 1 \\
1 \ar[r] & \faisGm \ar@{=}[r] \ar[d] & \faisGm \ar[r] \ar[d] & 1 \\
  & 1  & 1 \\
}
$$
dont la seconde ligne, la suite exacte 
$$
\xymatrix{1 \ar[r] & \faisGm \ar[r] & \faisSGamma_{2n+1}  \ar[r]^{\chi}  & \faisSO_{2n+1} \ar[r] & 1 }
$$
nous permet facilement de trouver un tore maximal de $\faisSGamma_{2n+1}$. En effet, soit $\faisSGammadiag_{2n+1}$ la préimage de $\faisSOdiag_{2n+1}$ sous le morphisme $\chi$. Une extension de tores étant un tore, on voit que $\faisSGammadiag_{2n+1}$ est un tore, qui est maximal puisque $\faisSOdiag_{2n+1}$ l'est. On peut expliciter les caractères de $\faisSGammadiag_{2n+1}$, mais on ne se servira que des cocaractères qui décrivent explicitement le tore. On trouve les cocaractères suivants:
$$s_0^\dual:\faisGm\to \faisSGammadiag_{2n+1}$$
défini pour tout schéma $T$ et tout $\alpha\in \Gamma(T)$ par $s_0^\dual(\alpha)=\alpha \cdot 1$ (dans l'algèbre de Clifford)
$$s_1^\dual:\faisGm\to \faisSGammadiag_{2n+1}$$
défini par $s_1^\dual(\alpha)=\alpha e_1\otimes e_2+ e_2\otimes e_1,$
ainsi que pour tout $i=2,\ldots,n$ les morphismes 
$$t_i^\dual:\faisGm\to \faisSGammadiag_{2n+1}$$
où  $t_i^\dual(\alpha)$ est donné  par la formule
{\small$$\alpha e_1\otimes e_2\otimes e_{2i-1}\otimes e_{2i}+e_1\otimes e_2\otimes e_{2i}\otimes e_{2i-1}+ e_2\otimes e_1\otimes e_{2i-1}\otimes e_{2i}+ \alpha^{-1}e_2\otimes e_1\otimes e_{2i}\otimes e_{2i-1}.$$}
Considérons maintenant la suite exacte de groupes de la colonne du milieu dans le diagramme ci-dessus
$$\xymatrix{1\ar[r] & \faisSpin_{2n+1}\ar[r] & \faisSGamma_{2n+1}\ar[r]^-{sn} & \faisGm\ar[r] & 1}$$
où $\sn$ est la norme spinorielle (définition \ref{sn_defi}).
Le noyau de la restriction $\sn:\faisSGammadiag_{2n+1}\to \faisGm$ de la norme spinorielle au tore maximal de $\faisSGamma_{2n+1}$ donne immédiatement un tore maximal de $\faisSpin_{2n+1}$. On le note $\faisSpindiag_{2n+1}$. On a $sn(s_0^\dual(\alpha))=\alpha^2$, $sn(s_1^\dual(\alpha))=\alpha$ et $sn(t_i^\dual(\alpha))=1$ pour tout $i=2,\ldots,n$. Il s'ensuit que le cocaractère $t_1^\dual=2s_1^\dual-s_0^\dual$ (donné par $t_1^\dual(\alpha)=\alpha e_1\otimes e_2+\alpha^{-1} e_2\otimes e_1$) et les cocaractères $t_i^\dual$ ($i=2,\ldots,n$) déterminent $\faisSpindiag_{2n+1}$.

Les caractères de ce tore sont donnés par $t_i(t_j^\dual(\alpha_j))=\delta_{ij}\alpha_j$ pour tout $i,j=1,\ldots,n$.

On calcule maintenant les racines de $\faisSpin_{2n+1}$ par rapport au tore maximal  $\faisSpindiag_{2n+1}$ et les espaces propres dans la représentation adjointe. On trouve pour tout anneau $R$:

\begin{itemize}
\item[1.] L'algèbre de Lie du tore engendré par $R\cdot (1-2e_{2n-1}\otimes e_{2n})$ et par les $R\cdot (e_{2i-1}\otimes e_{2i}-e_{2n-1}\otimes e_{2n})$ pour tout $i=2,\ldots,n$. 
\item[2.] Le sous-module $R\cdot (e_0\otimes e_{1})$ de poids $2t_1+t_2+\ldots+ t_n$.
\item[3.] Le sous-module $R\cdot (e_0\otimes e_2)$ de poids $-2t_1-t_2-\ldots- t_n$.
\item[4.] Le sous-module $R\cdot (e_0\otimes e_{2i-1})$ de poids $t_i$ pour tout $i=2,\ldots,n$.
\item[5.] Le sous-module $R\cdot (e_0\otimes e_{2i})$ de poids $-t_i$ pour tout $i=2,\ldots,n$.
\item[6.] Le sous-module $R\cdot (e_1\otimes e_{2i-1})$ de poids $2t_1+2t_i+\sum_{j=2, j\neq i}^n t_j$ pour tout $i=2\ldots, n$.
\item[7.] Le sous-module $R\cdot (e_2\otimes e_{2i})$ de poids $-2t_1-2t_i-(\sum_{j=2, j\neq i}^n t_j)$ pour tout $i=2,\ldots,n$.
\item[8.] Le sous-module $R\cdot (e_{2i-1}\otimes e_{2j-1})$ de poids $t_i+t_j$ pour tout $2\leq i<j\leq n$.
\item[9.] Le sous-module $R\cdot (e_{2i}\otimes e_{2j})$ de poids $-t_i-t_j$ pour tout $2\leq i<j\leq n$.
\item[10.] Le sous-module $R\cdot (e_1\otimes e_{2i})$ de poids $2t_1+ \sum_{j=2, j\neq i}^n t_j$ pour tout $i=2,\ldots,n$.
\item[11.] Le sous-module $R\cdot (e_2\otimes e_{2i-1})$ de poids $-2t_1- (\sum_{j=2, j\neq i}^n t_j)$ pour tout $i=2,\ldots,n$.
\item[12.] Le sous-module $R\cdot (e_{2i-1}\otimes e_{2j})$ de poids $t_i-t_j$ pour tout $2\leq i<j\leq n$.
\item[13.] Le sous-module $R\cdot (e_{2i}\otimes e_{2j-1})$ de poids $t_j-t_i$ pour tout $2\leq i<j\leq n$.
\end{itemize}

Les morphismes exponentiels sont donnés pour tous les espaces propres par $\lambda\mapsto 1+\lambda(e_i\otimes e_j)$ avec $i<j$. Les racines et les morphismes exponentiels étant connus, il est facile de calculer les coracines. Associant à chaque racine dans la liste ci-dessus sa coracine, on trouve dans l'ordre:

\begin{itemize}
\item[2.] La coracine $t_1^\dual$.
\item[3.] La coracine $-t_1^\dual$.
\item[4.] Les coracines $2t_i^\dual-t_1^\dual$ pour tout $i=2,\ldots,n$.
\item[5.] Les coracines $t_1^\dual-2t_i^\dual$ pour tout $i=2,\ldots,n$..
\item[6.] Les coracines $t_i^\dual$ pour tout $i=2\ldots, n$.
\item[7.] Les coracines $-t_i^\dual$ pour tout $i=2\ldots, n$.
\item[8.] Les coracines $t_i^\dual+t_j^\dual-t_1^\dual$ pour tout $2\leq i<j\leq n$.
\item[9.] Les coracines $t_1^\dual-t_i^\dual-t_j^\dual$ pour tout $2\leq i<j\leq n$.
\item[10.] Les coracines $t_1^\dual-t_i^\dual$ pour tout $i=2,\ldots,n$.
\item[11.] Les coracines $t_i^\dual-t_1^\dual$ pour tout $i=2,\ldots,n$.
\item[12.] Les coracines $t_i^\dual -t_j^\dual$ pour tout $2\leq i<j\leq n$.
\item[13.] Les coracines $t_i^\dual -t_j^\dual$ pour tout $2\leq i<j\leq n$. 
\end{itemize}

Les accouplements sont $(X,Y)\mapsto XY$ dans les cas 2. à 9. et $(X,Y)\mapsto -XY$ dans les cas 10. à 13. 
Les longues listes données ci-dessus permettent de trouver la donné radicielle de $\faisSpin_{2n+1}$.

\begin{prop}\label{donneeradicielleSpin_prop}
La donnée radicielle de $\faisSpin_{2n+1}$ par rapport au tore déployé $\faisSpindiag_{2n+1}$ est 
\begin{itemize}
\item Le $\ZZ$-module libre $N\simeq \ZZ^n$ des caractères de $\faisSpindiag_{2n+1}$ (dont $t_1,\ldots,t_n$ forment une base); 
\item Le sous-ensemble fini de celui-ci formé des racines $\pm t_i$ pour $2\leq i\leq n$, $\pm t_i\pm t_j$ pour $2\leq j<i\leq n$, $\pm (2t_1+t_2+\ldots+ t_n)$, $\pm(2t_1+t_2+\ldots+t_{i-1}+2t_i+t_{i+1}+ \ldots+ t_n)$ pour $i=2,\ldots,n$ et $\pm(2t_1+t_2+\ldots+t_{i-1}+t_{i+1}+ \ldots+ t_n)$ pour $i=2,\ldots,n$;
\item Le $\ZZ$-module libre $N^\dual\simeq\ZZ^n$ des cocaractères (de base duale $t_1^\cdual,\ldots,t_n^\cdual$);
\item Le sous-ensemble fini de celui-ci formé des coracines $\pm t_i^\dual$ pour $i=1,\dots,n$, $\pm (2t_i^\dual-t_1^\dual)$ pour $i=2,\ldots,n$, $\pm (t_i^\dual+t_j^\dual-t_1^\dual)$ pour $2\leq i<j\leq n$ et $\pm(t_i^\dual-t_j^\dual)$ pour $1\leq i<j\leq n$.
\end{itemize}
\end{prop}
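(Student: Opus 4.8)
The plan is to assemble the root datum directly from the explicit computations carried out just above, since everything needed has already been produced: the maximal torus $\faisSpindiag_{2n+1}$ with its cocharacters, a basis of $\faisLie_{\faisSpin_{2n+1}}$, the weight-space decomposition of the adjoint representation, the exponential morphisms, and the coroots. The proof is therefore essentially a matter of collecting these data into the four bullet points and checking that they form a root datum.

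First I would record the character and cocharacter lattices. The torus $\faisSpindiag_{2n+1}$ was constructed as the kernel of the restriction of the spinor norm to $\faisSGammadiag_{2n+1}$, hence is a split torus of rank $n$ admitting $t_1^\dual,\ldots,t_n^\dual$ as a basis of cocharacters; dually, the character group is the free $\ZZ$-module $N\simeq\ZZ^n$ with basis $t_1,\ldots,t_n$, normalized by $t_i(t_j^\dual(\alpha))=\delta_{ij}\alpha$. This gives the first and third bullet points immediately, the cocharacter basis being $t_1^\cdual,\ldots,t_n^\cdual$.

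Next I would read off the roots from the thirteen weight spaces listed above. Cases 4--5, 8--9 and 12--13 yield $\pm t_i$ for $2\leq i\leq n$ and $\pm t_i\pm t_j$ for $2\leq j<i\leq n$, while cases 2--3, 6--7 and 10--11 yield the three families involving $t_1$ with coefficient $2$, namely $\pm(2t_1+t_2+\cdots+t_n)$, the elements $\pm(2t_1+\cdots+2t_i+\cdots+t_n)$, and the elements $\pm(2t_1+\cdots+\widehat{t_i}+\cdots+t_n)$. The only point to verify here is that these thirteen eigenspaces exhaust all nonzero weights, which follows because the displayed basis of $\faisLie_{\faisSpin_{2n+1}}$ is complete---it has $n(2n+1)$ elements, matching the dimension computed in Proposition \ref{Spindim_prop}---and each basis vector outside the Cartan is a weight vector. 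The coroots are then exactly those already obtained by the uniqueness criterion \cite[th. 2.4.1]{D}, taking the sign of the pairing as in Remark \ref{signeAccouplement_rema}, which produces the list $\pm t_i^\dual$, $\pm(2t_i^\dual-t_1^\dual)$, $\pm(t_i^\dual+t_j^\dual-t_1^\dual)$ and $\pm(t_i^\dual-t_j^\dual)$ of the fourth bullet point.

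It then remains to confirm that $(N,\Roots,N^\dual,\Roots^\dual)$ genuinely satisfies the root datum axioms; this holds abstractly because $\faisSpin_{2n+1}$ is reductive and split by Proposition \ref{Spin_ss_prop}, so \cite[Exp. XXII prop. 1.14]{sga3} applies. The main obstacle is the bookkeeping in verifying formula (F) of \loccit\ for the three root families involving $t_1$, where the coefficient $2$ and the sign convention interact; the remaining cases are formally identical to the verifications already performed for $\faisGL_n$ and $\faisSO_{2n+1}$. I would therefore treat one representative root from each family in detail and leave the routine remaining checks to the reader.
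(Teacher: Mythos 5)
Votre démonstration suit exactement la même démarche que le texte : la proposition est dans l'article un simple récapitulatif des listes explicites qui la précèdent (espaces propres, morphismes exponentiels, coracines), et vous assemblez ces mêmes données, en ajoutant seulement la vérification — implicite dans le texte — que le décompte des dimensions garantit l'exhaustivité des treize espaces de poids. C'est correct et conforme à l'argument du papier.
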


Au vu des calculs obtenus, nous pouvons énoncer le résultat suivant:

\begin{theo}
Le groupe $\faisSpin_{2n+1}$ est déployé, semi-simple et simplement connexe de type $B_n$.
\end{theo}

\begin{proof}
Il reste à voir qu'il est simplement connexe de type $B_n$. Le premier point est clair puisque les coracines engendrent les cocaractères. L'ensemble de racines simples $\{2t_1+\sum_{i=3}^n t_i,t_2-t_3,t_3-t_4,\ldots,t_{n-1}-t_n, t_n\}$ montre que le diagramme de Dynkin associé est de type $B_n$.
\end{proof}

On cherche maintenant le centre de $\faisSpin_{2n+1}$. Pour ce faire, rappelons qu'on dispose d'un morphisme de groupes $\chi:\faisSGamma_{2n+1}\to \faisSO_{2n+1}$. La restriction de $\chi$ à $\faisSpin_{2n+1}$ induit un morphisme
$$\faisSpin_{2n+1}\to \faisSO_{2n+1}$$
et un morphisme sur les tores déployés
$$\faisSpindiag_{2n+1}\to \faisSOdiag_{2n+1}$$
dont le noyau est précisément le centre de $\faisSpin_{2n+1}$. Utilisant les descriptions explicites précédentes, on trouve qu'il envoie un élément $(\alpha_1,\ldots,\alpha_n)$ sur $(\alpha_1^2,\alpha_2\alpha_1^2,\ldots,\alpha_n\alpha_1^2)$ (tout deux décomposés sur les bases de cocaractères fournies). Nous avons ainsi démontré la proposition suivante:

\begin{prop} \label{centreSpin_prop}
Le morphisme de groupes algébriques
$$\zeta:\faismu_2\to Z(\faisSpin_{2n+1})$$
défini par $\zeta(\alpha)=\alpha\cdot 1$ (vu dans l'algèbre de Clifford) est un isomorphisme.
\end{prop}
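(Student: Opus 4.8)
Le morphisme $\zeta:\faismu_2\to Z(\faisSpin_{2n+1})$, $\zeta(\alpha)=\alpha\cdot 1$, est un isomorphisme.

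Le plan est de déduire l'énoncé directement de la description explicite du centre fournie par les calculs qui précèdent, en identifiant le noyau du morphisme $\faisSpindiag_{2n+1}\to \faisSOdiag_{2n+1}$ restreint au tore maximal. Je commencerais par rappeler que le centre d'un groupe semi-simple déployé est contenu dans tout tore maximal ; c'est un fait général de \SGAtrois\ (le centre est l'intersection des noyaux des racines, et les racines sont des caractères du tore), et comme $\faisSpin_{2n+1}$ est semi-simple déployé de type $B_n$ par le théorème qui précède, avec tore maximal $\faisSpindiag_{2n+1}$, on peut se ramener à calculer $Z(\faisSpin_{2n+1})=\bigcap_{\alpha}\ker(\alpha)$ à l'intérieur de $\faisSpindiag_{2n+1}$. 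Concrètement, un élément du tore s'écrit $(\alpha_1,\ldots,\alpha_n)$ dans la base des cocaractères $(t_i^\dual)$, et il est central si et seulement s'il est annulé par toutes les racines listées dans la proposition \ref{donneeradicielleSpin_prop}.

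L'étape de calcul consiste alors à imposer l'annulation des racines simples $\{2t_1+\sum_{i=3}^n t_i,\ t_2-t_3,\ \ldots,\ t_{n-1}-t_n,\ t_n\}$, ce qui suffit car ces racines engendrent le réseau des racines. En utilisant $t_i(t_j^\dual(\beta))=\delta_{ij}\beta$, les conditions $t_n=0$ et $t_i-t_{i+1}=0$ pour $2\le i\le n-1$ forcent $\alpha_2=\alpha_3=\cdots=\alpha_n=1$, puis la condition issue de la racine $2t_1+t_2+\cdots+t_n$ donne $\alpha_1^2=1$. Le noyau cherché est donc exactement le sous-groupe des $(\alpha_1,1,\ldots,1)$ avec $\alpha_1^2=1$, c'est-à-dire l'image de $\faismu_2$ par le cocaractère $t_1^\dual$. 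Or je relierais ceci à l'application $\zeta$ de l'énoncé : dans la description explicite du tore $\faisSpindiag_{2n+1}$, le cocaractère $s_0^\dual$ est donné par $\alpha\mapsto \alpha\cdot 1$ dans l'algèbre de Clifford, et l'élément central $\alpha_1=-1$ correspond précisément à l'élément $-1\cdot 1=\zeta(-1)$. On vérifie ainsi que $\zeta$ identifie $\faismu_2$ avec ce noyau, ce qui établit que $\zeta$ est un isomorphisme sur le centre.

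Alternativement, et c'est ce que suggère le texte par sa formulation compacte, on peut invoquer directement le morphisme $\faisSpindiag_{2n+1}\to\faisSOdiag_{2n+1}$ déjà explicité juste avant l'énoncé, qui envoie $(\alpha_1,\ldots,\alpha_n)$ sur $(\alpha_1^2,\alpha_2\alpha_1^2,\ldots,\alpha_n\alpha_1^2)$ : son noyau est visiblement $\{(\alpha_1,\ldots,\alpha_n)\mid \alpha_1^2=1,\ \alpha_i\alpha_1^2=1\}=\{(\pm1,1,\ldots,1)\}$, d'où la même conclusion. La principale difficulté n'est pas calculatoire mais conceptuelle : il faut justifier proprement que le centre coïncide avec le noyau de $\faisSpin_{2n+1}\to\faisSO_{2n+1}$ restreint aux tores, ce qui repose sur le fait que $\faisSpin_{2n+1}\to\faisSO_{2n+1}$ est une isogénie centrale (de noyau central $\faismu_2$, lisible sur le diagramme de la proposition \ref{secSpinSO_prop}) et que le centre d'un groupe semi-simple se lit dans le tore via la donnée radicielle. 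Une fois ce point admis, l'identification de $\zeta(\faismu_2)$ avec ce noyau via le cocaractère $s_0^\dual=\alpha\mapsto\alpha\cdot 1$ est immédiate, et l'injectivité de $\zeta$ résulte de ce que $-1\ne 1$ agit non trivialement, tandis que la surjectivité résulte du calcul du noyau ci-dessus.
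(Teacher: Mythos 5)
Votre proposition est correcte et suit essentiellement la démarche du texte : la variante que vous qualifiez d'\emph{alternative} (calcul du noyau du morphisme explicite $\faisSpindiag_{2n+1}\to\faisSOdiag_{2n+1}$, $(\alpha_1,\ldots,\alpha_n)\mapsto(\alpha_1^2,\alpha_2\alpha_1^2,\ldots,\alpha_n\alpha_1^2)$) est précisément l'argument du papier, et votre premier calcul via l'annulation des racines de la proposition \ref{donneeradicielleSpin_prop} n'en est qu'une reformulation équivalente. Votre remarque sur le point conceptuel (le centre se lit dans le tore maximal et coïncide avec le noyau de l'isogénie centrale vers le groupe adjoint $\faisSO_{2n+1}$) explicite utilement ce que le texte laisse implicite, et l'identification finale $t_1^\dual(\alpha)=\alpha\cdot 1$ pour $\alpha\in\faismu_2$ via $e_1\otimes e_2+e_2\otimes e_1=1$ est exacte.
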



\subsection{Automorphismes}

Puisque le diagramme de Dynkin n'a pas d'automorphisme non trivial, tout automorphisme est intérieur. En d'autres termes, $\faisAut^\gr_{\faisSpin_{2n+1}} =\faisAut^\gr_{\faisSO_{2n+1}}= \faisSO_{2n+1}$.

\subsection{Groupes tordus}

\begin{theo} \label{Bnadjointtordu_theo}
\`A isomorphisme près, les $S$-schémas en groupes réductifs de donnée radicielle déployée identique à celle de $\faisSO_{2n+1}$ ($n\geq 1$), c'est-à-dire de type déployé semi-simple adjoint $B_n$, sont les groupes $\faisSO_{q}$ où $q$ est un $\faisO_S$-module quadratique localement isomorphe à $\hypq_{2n+1}$ pour la topologie étale et de module demi-déterminant trivial de parité de $n$ (c'est-à-dire isomorphe à celui de la forme hyperbolique).
Toutes les formes sont intérieures (et donc strictement intérieures puisque le groupe est adjoint). Pour les formes fortement intérieures, voir le corollaire \ref{Bnadjfortementint_coro}.
\end{theo}
\begin{proof}
Nous aurons besoin du résultat suivant:
\begin{lemm} \label{dettrivSO_lemm}
Par l'équivalence de la proposition \ref{formesmodulesquad_prop} entre $\faisorthO_{2n+1}$-torseurs et formes étales de $\hypq_{2n+1}$, les torseurs provenant de $\faisSO_{2n+1}$ correspondent aux modules quadratiques de module demi-déterminant trivial de parité de $n$. 
\end{lemm}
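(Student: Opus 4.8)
The plan is to deduce the lemma from the short exact sequence
$$1 \to \faisSO_{2n+1} \to \faisorthO_{2n+1} \oto{\det} \faismu_2 \to 1$$
of Proposition \ref{seSOOimpair_prop} together with the functoriality of contracted products. Since $\det$ is an epimorphism of sheaves in odd rank, $\faisSO_{2n+1} = \ker(\det)$ is the fibre product $\faisorthO_{2n+1} \times_{\faismu_2} \faisUn$, so Proposition \ref{prodfibgroupechamp_prop} applies: a torsor $P$ under $\faisorthO_{2n+1}$ lies in the essential image of $\Tors{\faisSO_{2n+1}} \to \Tors{\faisorthO_{2n+1}}$ exactly when its pushforward $P \contr{\faisorthO_{2n+1}} \faismu_2$ along $\det$ is the trivial $\faismu_2$-torsor. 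Equivalently, one reads this off the exactness of the cohomology sequence \eqref{selcohomologie_eq} at $\Het^1(S,\faisorthO_{2n+1})$.

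Next I would transport this criterion through the equivalence $q \mapsto \faisIso_{\hypq_{2n+1},q}$ of Proposition \ref{formesmodulesquad_prop}. The crucial point is that $\det \colon \faisorthO_{2n+1} \to \faismu_2$ is precisely the morphism induced on automorphism sheaves by the half-determinant functor $\qDemiDetFonc \colon \Formes{\hypq_{2n+1}} \to \ModDet$, via the commutative square of fibred functors introduced just before Lemma \ref{detOq_lemm} (recall $\faisAut_{\qDemiDetMod{\hypq_{2n+1}}} = \faismu_2$). Functoriality of twisting (Proposition \ref{foncttors_prop}) therefore identifies the pushforward with the $\faismu_2$-torsor $\faisIso_{\qDemiDetMod{\hypq_{2n+1}},\, \qDemiDetMod{q}}$. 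By Proposition \ref{torseurloc_prop} the latter is trivial if and only if $\qDemiDetMod{q} \simeq \qDemiDetMod{\hypq_{2n+1}}$ in $\ModDet$; so $P$ comes from $\faisSO_{2n+1}$ exactly when the half-determinant module of $q$ is isomorphic to that of the hyperbolic form, which is what is meant by ``trivial of parity $n$''.

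It then remains only to justify the name, i.e. to identify $\qDemiDetMod{\hypq_{2n+1}}$ with the trivial determinant module of parity $n$ of Definition \ref{moddettriv_defi}. For this I would unwind the canonical isomorphism $Z_1 \isoto \Det{M}$ of Theorem \ref{CliffStruc_theo}, under which the pairing $\qDemiDetMorph{\hypq_{2n+1}}$ is the multiplication $Z_1 \otimes Z_1 \to Z_0 = \faisO_S$ of the centre of $\faisCliff_{\hypq_{2n+1}}$, evaluated on the generator $w$ of $Z_1$ from the proof of Theorem \ref{CliffordSplit_theo}, together with the relation $w^2 = 1$ established there.

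The delicate point, and the main obstacle, is controlling the sign $\signe = (-1)^n$ relating $w$ to the canonical generator $e_0 \wedge \cdots \wedge e_{2n}$ of $\Det{M}$: the isomorphism $Z_1 \isoto \Det{M}$ is characterised abstractly in Theorem \ref{CliffStruc_theo} (through the compatibility of Proposition \ref{isoFoncFib_prop} matching the $\faisorthO_{2n+1}$-action on $Z_1$ with the determinant action on $\Det{M}$), not by a closed formula, so the parity cannot be read off directly. Since the isomorphism class of $\qDemiDetMod{\hypq_{2n+1}}$ in $\Het^1(S,\faismu_2)$ is locally constant, I would compute it on a single geometric fibre; over a base where $2$ is invertible the half-determinant becomes the classical discriminant and the square of the product of the $n$ hyperbolic pairs contributes the factor $(-1)^n$, in parallel with the value already obtained for $\qDetMod{\hypq_{2n}}$ in the even case.
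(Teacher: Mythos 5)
Your argument is correct, but it takes a different route from the paper's. The paper disposes of the lemma as a ``quasi-tautologie'': it invokes the already-established equivalence $\QuadDetTrivn{\hypq_{2n+1}}\simeq\Tors{\faisSO_{2n+1}}$ of Proposition \ref{SOtorseursimpairs_prop}, observes that the inclusion $\faisSO_{2n+1}\subset\faisorthO_{2n+1}$ is induced by the forgetful functor $(q,\phi)\mapsto q$ from $\QuadDetTrivn{\hypq_{2n+1}}$ to $\Formes{\hypq_{2n+1}}$, and notes that the essential image of this functor is exactly the set of $q$ admitting a trivialisation $\phi$ of the half-determinant module. You instead re-derive the membership criterion cohomologically from the exact sequence $1\to\faisSO_{2n+1}\to\faisorthO_{2n+1}\to\faismu_2\to 1$ via Proposition \ref{prodfibgroupechamp_prop} (the hypothesis that $\det$ is an epimorphism is indeed supplied by \ref{seSOOimpair_prop}), and then identify the pushforward torsor with $\faisIso_{\qDemiDetMod{\hypq_{2n+1}},\qDemiDetMod{q}}$ by \ref{foncttors_prop}; this is in substance the content of \ref{SOtorseursimpairs_prop} unpacked at the level of torsors rather than cited, so it costs a little more but requires no new ideas. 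Your final step, pinning down that $\qDemiDetMod{\hypq_{2n+1}}$ is the trivial determinant module of parity $n$ in the sense of \ref{moddettriv_defi}, addresses a point the paper simply treats as terminology (the parenthetical in Theorem \ref{Bnadjointtordu_theo} glosses ``trivial de parit\'e de $n$'' as ``isomorphe \`a celui de la forme hyperbolique''); your sketch is reasonable, though the justification for reducing to one fibre should not be that the class in $\Het^1(S,\faismu_2)$ is ``locally constant'' (it is not in general), but rather that the object in question is defined over $\ZZ$, whose spectrum is connected, so the sign $\pm1$ is determined by its value at a single fibre of characteristic $\neq 2$ --- exactly the argument the paper uses for the Arf invariant in \ref{arfrefl_lemm}.
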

\begin{proof}
C'est une quasi-tautologie:
L'inclusion de $\faisSO_{2n+1}$ dans $\faisorthO_{2n+1}$ est induite par le foncteur fibré $\QuadDetTrivn{\hypq_{2n+1}}\to \Formes{\hypq_{2n+1}}$. Un objet  $(q,\phi)$, où $\phi$ trivialise le demi-déterminant de $q$, s'envoie sur $q$ qui est donc de demi-déterminant trivial. Réciproquement, si $q$ est de demi-déterminant trivial, c'est qu'il existe une trivialisation $\phi$, et donc $q$ provient de $(q,\phi)$. 
\end{proof}
Le théorème \ref{Bnadjointtordu_theo} découle alors de la proposition \ref{secSpinSO_prop}.
\end{proof}

\begin{rema} \label{qquelconque_rema}
On n'obtiendrait pas plus de groupes, à isomorphisme près, en tordant $\faisSO_{2n+1}$ par tous les torseurs sous $\faisorthO_{2n+1}$, plutôt que ceux provenant de $\faisSO_{2n+1}$. En effet, ces groupes seraient les $\faisSO_q$ avec $q$ forme étale quelconque de $\hypq_{2n+1}$. Or si $(q,M)$ n'a pas un module demi-déterminant trivial, on a tout de même l'isomorphisme $\qDemiDetMorph{} : \Det{M}^{\otimes 2} \simeq \faisO_S$, par construction du module demi-déterminant. Du coup, si on pose $M'=M \otimes \Det{M}$ et $q': M \otimes \Det{M} \to \faisO_S$ en envoyant $m \otimes l$ sur $q(m) \qDemiDetMorph{q}(l)$, on obtient un nouveau module quadratique $(q',M')$. De plus, $\Det{M'}\simeq \Det{M} \otimes \Det{M}^{\otimes 2n+1}$, donc $\Det{M'}\simeq \faisO_S$ par $\qDemiDetMorph{q}^{n+1}$, ce qui trivialise son module demi-déterminant. Par ailleurs, $\faisorthO_{q'}\simeq \faisorthO_q$, en envoyant $f$ sur $f\otimes \det(f)$. Cela induit un isomorphisme $\faisSO_{q'}\simeq \faisSO_q$. 
\end{rema}

Afin de pouvoir énoncer le théorème sur les formes du groupe simplement connexe, nous devons introduire un invariant supplémentaire. 

\begin{lemm} \label{invariantL_lemm}
Soit $(A,P,\psi)$ une $\faisO_S$-algèbre d'Azumaya $A$ munie d'une trivialisation de sa puissance $m$-ième $\psi:A^{\otimes m}\simeq \faisEnd_P$. Si de plus $A$ est triviale dans le groupe de Brauer, alors pour toute trivialisation $(N,\phi:A \simeq \faisEnd_N)$ de $A$, il existe un fibré en droites $L$ et un isomorphisme $\lambda:N^{\otimes m} \simeq P\otimes L$ tels que $\lambda$ soit un morphisme de $\faisEnd_P$-module à gauche lorsqu'on fait agir $\faisEnd_P$ sur $N^{\otimes m}$ par l'isomorphisme 
$$\faisEnd_{N^{\otimes m}}\simeq \faisEnd_N^{\otimes m} \isoby{\phi^{\otimes m}} A^{\otimes m} \isoby{\psi} \faisEnd_P.$$
De manière équivalente $\int_{\lambda}$ coïncide avec $\psi$ lorsqu'on identifie $\faisEnd_{N}^{\otimes m}\simeq \faisEnd_{N^{\otimes m}}$ et $\faisEnd_P \simeq \faisEnd_{P \otimes L}$ par les isomorphismes canoniques, et $A \simeq \faisEnd_N$ par $\phi$. En d'autres termes, la composition
$$A^{\otimes m}\isoby{\phi^{\otimes m}} \faisEnd_N^{\otimes m} \simeq \faisEnd_{N^{\otimes m}} \isoby{\int_\lambda} \faisEnd_{P\otimes L} \simeq \faisEnd_{P}$$
coïncide avec $\psi$.
De plus, la classe de $L$ dans $\Pic(S)/m$ ne dépend que de $(A,P,\psi)$ (et non pas du choix de $N$, $\phi$ ou $L$). 
\end{lemm}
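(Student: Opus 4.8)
L'outil central de la preuve sera la th\'eorie de Morita pour les alg\`ebres d'Azumaya d\'eploy\'ees, telle qu'elle appara\^\i t apr\`es la proposition \ref{GL1Atorseurs_prop} (\'equivalence $\AVecn{\faisEnd_M^\opp}{1}\isoto\Vecn{n}$) et dans la proposition \ref{GLntors_prop}. Le principe que je compte invoquer est le suivant: pour un $\faisO_S$-module $Q$ localement libre de rang constant $r$, les $\faisEnd_Q$-modules \`a gauche qui sont localement libres de rang $r$ comme $\faisO_S$-modules sont exactement les $Q\otimes L$ avec $L$ inversible, avec $\faisHom_{\faisEnd_Q}(Q\otimes L,Q\otimes L')\simeq \faisHom_{\faisO_S}(L,L')$, et la classe de $L$ est canonique dans $\Pic(S)$ (c'est $\faisHom_{\faisEnd_Q}(Q,Q\otimes L)$). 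Ceci se v\'erifie localement (sur $\faisM_r$, tout module libre de rang $r$ est le module colonne standard, unique \`a isomorphisme pr\`es) puis se globalise, le faisceau d'automorphismes du module standard \'etant $\faisGm$ et $\Het^1(-,\faisGm)=\Pic(S)$.

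Pour l'existence, je combinerais $\phi$ et $\psi$ en un isomorphisme d'alg\`ebres d'Azumaya
$$\Theta:\faisEnd_{N^{\otimes m}}\simeq \faisEnd_N^{\otimes m}\isoby{\phi^{\otimes m}} A^{\otimes m}\isoby{\psi}\faisEnd_P,$$
ce qui force d'abord l'\'egalit\'e des rangs ($N^{\otimes m}$ et $P$ sont tous deux de rang $d^m$, o\`u $d$ est le degr\'e de $A$). En munissant $P$ de sa structure de $\faisEnd_{N^{\otimes m}}$-module \`a travers $\Theta$, on constate qu'il est localement isomorphe au module standard $N^{\otimes m}$, puisque localement tout se d\'eploie. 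Le principe de Morita rappel\'e ci-dessus fournit alors un fibr\'e en droites $L'$ et un isomorphisme de $\faisEnd_{N^{\otimes m}}$-modules $P\simeq N^{\otimes m}\otimes L'$; je poserais $L=(L')^\dual$ et d\'efinirais $\lambda$ comme l'isomorphisme $N^{\otimes m}\simeq P\otimes L$ qui en r\'esulte. Il reste \`a d\'erouler les d\'efinitions: la $\faisEnd_P$-lin\'earit\'e de $\lambda$ (o\`u $\faisEnd_P$ agit sur $N^{\otimes m}$ \`a travers $\Theta$) est exactement la condition $\int_\lambda=\psi$ de l'\'enonc\'e, une fois pos\'ees les identifications canoniques $\faisEnd_N^{\otimes m}\simeq\faisEnd_{N^{\otimes m}}$ et $\faisEnd_P\simeq\faisEnd_{P\otimes L}$.

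Pour l'ind\'ependance de la classe de $L$ modulo $m$, je fixerais $(A,P,\psi)$ et comparerais deux trivialisations $(N,\phi)$ et $(N'',\phi'')$. L'isomorphisme d'alg\`ebres $\phi''\circ\phi^{-1}:\faisEnd_N\simeq\faisEnd_{N''}$ est, de nouveau par Morita, induit par un isomorphisme $N''\simeq N\otimes M_0$ pour un unique fibr\'e en droites $M_0$ (globalement, cela provient de la suite exacte $1\to\faisGm\to\faisGL_n\to\faisPGL_n\to 1$ de la proposition \ref{secPGLn_prop}, l'action par torsion de $\Pic(S)=\Het^1(S,\faisGm)$ \'etant pr\'ecis\'ement le twist par un fibr\'e en droites). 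On en tire $N''^{\otimes m}\simeq N^{\otimes m}\otimes M_0^{\otimes m}$, et en comparant les deux \'ecritures $P\simeq N^{\otimes m}\otimes L^\dual\simeq N''^{\otimes m}\otimes (L'')^\dual$, on obtient $L''\simeq L\otimes M_0^{\otimes m}$, donc $[L]=[L'']$ dans $\Pic(S)/m$. Comme $L$ est canonique dans $\Pic(S)$ une fois $(N,\phi)$ fix\'e, il n'y a aucune autre ambigu\"it\'e. Je m'attends \`a ce que la difficult\'e principale ne soit pas conceptuelle mais r\'eside dans le suivi rigoureux des structures de modules et des conventions (duaux, sens des actions, identifications canoniques), et surtout dans la v\'erification que la seule source de variation est bien la torsion par $M_0^{\otimes m}$ --- c'est cette puissance $m$-i\`eme qui explique que l'invariant vive dans $\Pic(S)/m$ et non dans $\Pic(S)$.
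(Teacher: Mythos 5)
Votre démonstration est correcte et suit essentiellement la même voie que celle du texte : dans les deux cas, l'argument central est l'équivalence de Morita, qui identifie les $\faisEnd_P$-modules localement isomorphes au module standard aux tordus $P\otimes L$ par un fibré en droites, puis la comparaison de deux trivialisations $(N,\phi)$ et $(N'',\phi'')$ via un fibré $M_0$ avec $N''\simeq N\otimes M_0$, d'où la variation de $L$ par $M_0^{\otimes m}$. La seule différence est cosmétique : le texte construit $L$ directement comme $P^\dual\otimes_{\faisEnd_P}N^{\otimes m}$ (en faisant de $N^{\otimes m}$ un $\faisEnd_P$-module), alors que vous munissez $P$ d'une structure de $\faisEnd_{N^{\otimes m}}$-module, ce qui revient au même au dual près.
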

\begin{proof}
On fait de $N^{\otimes m}$ un $\faisEnd_P$-module à gauche. Puis, $P^\dual$ étant un $\faisEnd_P$-module à droite, on considère $L=P^\dual \otimes_{\faisEnd_P} N^{\otimes m}$, dont on vérifie facilement qu'il est un fibré en droites localement lorsque tous les modules sont libres. L'équivalence de Morita $P \otimes P^\dual \simeq \faisEnd_P$ fournit alors l'isomorphisme $\lambda: N^{\otimes m} \simeq P \otimes P^\dual \otimes_{\faisEnd_P} N^{\otimes m} \simeq P \otimes L$. L'exercice de vérifier que les autres conditions sur $\lambda$ dans l'énoncé sont équivalentes est laissé au lecteur.  

Le morphisme $\phi$ et $N$ étant donnés, $(L,\lambda)$ est unique à isomorphisme canonique près puisqu'on peut l'obtenir en tensorisant $P \otimes L$ par $P^{\dual} \otimes_{\faisEnd_P}(-)$ et que $\lambda$ est un morphisme de $\faisEnd_P$-modules.
Si $(N,\phi)$ est changé en $(N',\phi')$, avec pour fibré associé $(L',\lambda')$, alors $\faisEnd_N \simeq \faisEnd_{N'}$ et par le même raisonnement, on montre qu'il existe $K$ fibré en droites tel que $N'\simeq K \otimes N$, d'où un isomorphisme de $\faisEnd_P$-modules à gauche
$$P \otimes L \otimes K^{\otimes m} \simeq N^{\otimes m} \otimes K^{\otimes m} \simeq (N')^{\otimes m} \simeq P \otimes L'$$
et donc $L\otimes K^{\otimes m} \simeq L'$.
\end{proof}

Partant maintenant d'un module quadratique $q$, forme \fppf\ de $\hypq_{2n+1}$. Son algèbre de Clifford paire est munie d'une $2$-trivialisation canonique induite par l'involution standard $\invstd_q$: on a $\faisCliff_{0,q}^{\otimes 2} \simeq \faisCliff_{0,q} \otimes \faisCliff_{0,q}^{\opp} \simeq \faisEnd_{\faisCliff_{0,q}}$. Supposons maintenant qu'elle est triviale dans le groupe de Brauer $\Brauer(S)$. Alors cette algèbre de Clifford est de la forme $\faisEnd_N$ pour un certain $\faisO_S$-module localement libre $N$, et on peut donc lui associer la classe d'un fibré en droites dans $\Pic(S)/2$ par le lemme précédent. Notons cette classe $l_q$. 

\begin{theo} \label{Bnsimplementconnexetordu_theo}
\`A isomorphisme près, les $S$-schémas en groupes réductifs de donnée radicielle déployée identique à celle de $\faisSpin_{2n+1}$ ($n\geq 1$), c'est-à-dire de type déployé semi-simple simplement connexe $B_n$, sont les groupes $\faisSpin_{q}$ où $q$ est un $\faisO_S$-module quadratique localement isomorphe à $\hypq_{2n+1}$ pour la topologie étale, et de module demi-déterminant trivial. Toutes les formes sont intérieures, et les formes strictement intérieures sont les groupes $\faisSpin_q$ avec $q$ d'algèbre de Clifford triviale dans le groupe de Brauer, et dont l'invariant $l_q\in \Pic(S)/2$ est trivial.
\end{theo}
\begin{proof}
Les formes étales de $\faisSpin_{2n+1}$ sont obtenues par torsion sous un torseur sous $\faisSO_{2n+1}=\faisAut_{\faisSpin_{2n+1}}$, situation qui a déjà été examinée pour la première ligne du diagramme \eqref{diagSpinSOimpair_diag}. Toutes les formes sont intérieures puisque $\faisSpin_{2n+1}/\faismu_2=\faisSO_{2n+1}$. Les formes strictement intérieures, donc tordues par un torseur sous $\faisSpin_{2n+1}$ lui-même), s'obtiennent en utilisant la proposition \ref{auttordus_prop} appliquée à la catégorie $\DeplCliffSn{2n+1}$. On trouve donc les groupes d'automorphismes des objets de cette catégorie, qui sont précisément, par construction de ce champ, les groupes $\faisSpin_q$ pour les formes $q$ satisfaisant aux hypothèses de l'énoncé.
\end{proof}

\begin{coro} \label{Bnadjfortementint_coro}
Les formes fortement intérieures de $\faisSO_{2n+1}$, le groupe déployé adjoint de type $B_n$, sont les groupes $\faisSO_q$ où $q$ est d'algèbre de Clifford triviale dans le groupe de Brauer et dont l'invariant $l_q \in Pic(S)/2$ est trivial.
\end{coro}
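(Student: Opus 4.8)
The plan is to deduce the corollary formally from the classification of the strictly inner forms of the simply connected group in Theorem \ref{Bnsimplementconnexetordu_theo}, transported to the adjoint group by the functoriality of twisting along the adjoint projection $\faisSpin_{2n+1} \to \faisSO_{2n+1}$. First I would record that, since $\faisSO_{2n+1}$ is semisimple adjoint of type $B_n$, its simply connected cover is $\faisSpin_{2n+1}$; hence by Definition \ref{intfortint_defi} a strongly inner form of $\faisSO_{2n+1}$ is exactly a twist $P \contr{\faisSpin_{2n+1}} \faisSO_{2n+1}$ where $P$ is a torsor under $\faisSpin_{2n+1}$ acting on $\faisSO_{2n+1}$ through the conjugation morphism $\faisSpin_{2n+1} \to \faisSpin_{2n+1}/\faismu_2 = \faisSO_{2n+1}$. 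The collection of torsors $P$ is thus the same collection that governs the strictly inner forms of $\faisSpin_{2n+1}$ itself, namely $\Tors{\faisSpin_{2n+1}} \simeq \DeplCliffSn{2n+1}$ via Proposition \ref{SpinTorsImpai_prop}.

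Next I would use the factorization of the forgetful functor recalled at the beginning of section \ref{suitecohom_sec}: the functor $\DeplCliffSn{2n+1} \to \Formes{\hypq_{2n+1}}$ factors through $\QuadDetTrivn{2n+1}$, keeping the quadratic module $q$ together with the trivialization of its half-determinant induced from $\phi$ by the canonical identification $Z_1 \simeq \Det{M}$ of Theorem \ref{CliffStruc_theo}. On the automorphism groups of the respective trivial objects this functor induces precisely the adjoint projection $\faisSpin_{2n+1} \to \faisSO_{2n+1}$ of the top row of diagram \eqref{diagSpinSOimpair_diag}, while $\QuadDetTrivn{2n+1} \simeq \Tors{\faisSO_{2n+1}}$ by Proposition \ref{SOtorseursimpairs_prop}.

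Then I would combine these: given $P \in \Tors{\faisSpin_{2n+1}}$ corresponding to $(q,N,\phi,\psi) \in \DeplCliffSn{2n+1}$, pushing $P$ forward along $\faisSpin_{2n+1} \to \faisSO_{2n+1}$ yields the $\faisSO_{2n+1}$-torsor attached to the object $(q,\mathrm{triv})$ of $\QuadDetTrivn{2n+1}$. Applying Proposition \ref{foncttors_prop} to the functor $\DeplCliffSn{2n+1} \to \QuadDetTrivn{2n+1}$ identifies the twist of $\faisSO_{2n+1}$ by $P$ with the automorphism group of $(q,\mathrm{triv})$, that is with $\faisSO_q$. Consequently the strongly inner forms of $\faisSO_{2n+1}$ are exactly the $\faisSO_q$ for which the auxiliary data $(N,\phi,\psi)$ exist; by Theorem \ref{Bnsimplementconnexetordu_theo} this existence is equivalent to $\faisCliff_{0,q}$ being trivial in $\Brauer(S)$ (existence of $N,\phi$) together with $l_q \in \Pic(S)/2$ being trivial (existence of $\psi$), which is the asserted description.

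The hard part will be the bookkeeping of the second step: verifying that the functor $\DeplCliffSn{2n+1} \to \QuadDetTrivn{2n+1}$ genuinely induces the adjoint map $\faisSpin_{2n+1} \to \faisSO_{2n+1}$, so that pushing a $\faisSpin_{2n+1}$-torsor forward and then twisting $\faisSO_{2n+1}$ coincides with twisting directly through conjugation. This is exactly the compatibility recorded by the commuting top square of \eqref{diagSpinSOimpair_diag}, together with the fact used throughout that all the relevant actions factor through the Clifford functor $\CliffFonc$; once this identification is secured, the conclusion follows formally from Propositions \ref{foncttors_prop} and \ref{tordusformes_prop}.
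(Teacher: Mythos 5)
Votre démonstration est correcte et suit exactement la route que le texte sous-entend (le corollaire est laissé sans preuve comme conséquence immédiate du théorème \ref{Bnsimplementconnexetordu_theo}) : les formes fortement intérieures de $\faisSO_{2n+1}$ sont gouvernées par les mêmes $\faisSpin_{2n+1}$-torseurs que les formes strictement intérieures de $\faisSpin_{2n+1}$, et le foncteur $\DeplCliffSn{2n+1}\to\QuadDetTrivn{2n+1}$, qui induit la projection adjointe sur les automorphismes, identifie le tordu correspondant de $\faisSO_{2n+1}$ à $\faisSO_q$ via les propositions \ref{foncttors_prop} et \ref{SOtorseursimpairs_prop}. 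La vérification de compatibilité que vous signalez comme point délicat est précisément celle déjà établie au début de la section \ref{suitecohom_sec} et dans le diagramme \eqref{diagSpinSOimpair_diag}.
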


\begin{rema}
Il n'y a pas d'autre groupe semi-simple de type déployé semi-simple $B_n$ que ceux mentionnés dans les deux théorèmes de cette section, car il n'y a pas d'autre groupe déployé entre $\faisSpin$ et $\faisSO$, le centre du premier étant $\faismu_2$, qui est aussi petit que possible sans être trivial.
\end{rema}

\begin{rema}
De même que dans la remarque \ref{qquelconque_rema}, on n'obtiendrait pas plus de groupes en considérant des modules quadratiques quelconques, formes étales de $\hypq_{2n+1}$, car en définissant $q'$ de la même manière, on aurait $\faisCliff_{0,q'}\simeq \faisCliff_{0,q}$ et $\faisSpin_{q'}\simeq \faisSpin_{q}$. 
\end{rema}

\begin{rema}
On voit au passage, par la description des torseurs sous $\faisSGamma_{2n+1}$, que les groupes $\faisSpin_q$ avec $q$ d'algèbre de Clifford triviale mais pas nécessairement d'invariant $l_q$ trivial, sont ceux tordus par un torseur provenant de $\faisSGamma_{2n+1}$.
\end{rema}


\section{Groupes semi-simples de type $C_n$} \label{Cn_sec}

\subsection{Groupe déployé adjoint}

Soit $n\in \NN$ et $A$ une $\faisO_S$-algèbre d'Azumaya de degré $2n$ (définition \ref{degreAzumaya_defi}) munie d'une involution symplectique $\sigma$ (définition \ref{involutionsymplectique_defi}). 
\begin{defi} \label{PGSp_defi}
Le schéma en groupes \emph{projectif symplectique} $\faisPGSp_{A,\sigma}$ est $\faisAut^{\alginv}_{A,\sigma}$ considéré dans la définition \ref{Autalginv_defi} $\faisAut^{\alginv}_{A,\sigma}$ (représentable par un schéma affine sur $S$ par la proposition \ref{Autalginv_prop}).
\end{defi}
En particulier, soit la $\faisO_S$-algèbre $\faisM_{2n,S}$ et soit le module alterné $\hypa_{2n}$ (définition \ref{hyperbolique_defi}). Ce module induit une involution de première espèce symplectique $\sigma_h$ sur $\faisM_{2n,S}$, et on note $\faisPGSp_{2n,S}=\faisPGSp_{\faisM_{2n,S},\sigma_h}$.

Dans ce contexte, la proposition \ref{Autalginv_prop} nous dit que pour tout $S$-schéma $T$, on a
$$
\faisPGSp_{2n,T}=(\faisPGSp_{2n,S})_T=(\faisPGSp_{2n,\ZZ})_T
$$
et $\faisPGSp_{2n}(R)=\{ \alpha\in \setAut_{R\mathrm{-alg}}(\setM_{2n}(R))\vert \alpha\sigma=\sigma\alpha\}$ pour tout anneau $R$.

Utilisant la suite exacte
$$\xymatrix{0\ar[r] & \faisLie_{\faisPGSp_{2n}}(R)\ar[r] & \faisPGSp_{2n}(R[t])\ar[r]^-p & \faisPGSp_{2n}(R)\ar[r] & 1}$$
on voit facilement que l'algèbre de Lie de $\faisPGSp_{2n}$ est donnée pour tout anneau $R$ par le $R$-module libre 
$$\faisLie_{\faisPGSp_{2n}}(R):=\{\alpha\in M_{2n}(R)\vert \alpha +\sigma(\alpha)\in R\cdot Id\}/R\cdot Id.$$ 
\'Ecrivant $M=(M_{ij})$ pour $1\leq i,j\leq 2n$ avec $M_{ij}\in M_{2n}(R)$, on obtient les égalités 
$$
\left\{\begin{array}{l} M_{22}=\lambda Id- \transp M_{11} \\
 M_{12}= \transp M_{12} \\
 M_{21}= \transp M_{21}
\end{array}\right.
$$
Ceci donne une base de l'algèbre de Lie de la forme
$$
\left\{\begin{array}{rl} E_{ii} & \text{ pour tout $i=1\dots,n$}, \\
E_{ij}-E_{j+n,i+n} & \text{ pour tout $i,j=1,\ldots,n$ avec $i\neq j$}, \\
E_{i,n+j}+E_{j,n+i} & \text{ pour tout $1\leq i<j\leq n$},\\
E_{n+i,j}+E_{n+j,i} & \text{ pour tout $1\leq i<j\leq n$},\\
E_{i,n+i} & \text{ pour tout $1\leq i\leq n$},\\
E_{n+i,i} & \text{ pour tout $1\leq i\leq n$.}
\end{array}\right.
$$
et celle-ci est bien de dimension $n(2n+1)$. En conséquence:

\begin{prop}\label{PGSp_ss_prop}
Pour tout $n\geq 1$, le groupe $\faisPGSp_{2n}$ est semi-simple. 
\end{prop}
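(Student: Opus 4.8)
The plan is to follow verbatim the strategy used for $\faisSO_{2n+1}$ in the proof of Proposition \ref{Oplus_ss_prop}: to show that $\faisPGSp_{2n}$ is affine, smooth over $\Spec(\ZZ)$, and has semi-simple geometric fibres, and then to invoke the structure theory of \SGAtrois. Affineness is immediate, since $\faisPGSp_{2n}=\faisAut^\alginv_{\faisM_{2n},\sigma_h}$ is representable by an affine scheme by Proposition \ref{Autalginv_prop}. The bulk of the work is the smoothness over $\ZZ$, for which I would apply Proposition \ref{lissite_prop}: it suffices to check that the geometric fibres are connected, smooth, and of locally constant dimension. Smoothness over an arbitrary base $S$ then follows by base change, the group being of finite presentation by construction.

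For connectedness, the key tool is the conjugation morphism $\faisSp_{2n}\to \faisPGSp_{2n}$ sending $g$ to $\int_g$, which is well defined because an element $g$ of $\faisSp_{2n}$ induces an automorphism of $\faisM_{2n}$ commuting with $\sigma_h$. Over an algebraically closed field $k$ this morphism is surjective on points: by Skolem--Noether every automorphism of $\faisM_{2n}$ preserving $\sigma_h$ is of the form $\int_g$ for a symplectic similitude $g$, and over $k$ one may divide $g$ by a square root of its similitude factor to land in $\faisSp_{2n}(k)$. Combining the connectedness of the fibres of $\faisSp_{2n}$ (Proposition \ref{Spconnexe_prop}) with Lemma \ref{quotienttopo_lemm} and Lemma \ref{Xkbar_lemm}, I would conclude that the fibres of $\faisPGSp_{2n}$ are connected, exactly as is done for $\faisPGOplus_q$ in Proposition \ref{PGOplusconnexe_prop}.

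For smoothness and constancy of the dimension, I would use the explicit basis of $\faisLie_{\faisPGSp_{2n}}$ computed just above, which is free of rank $n(2n+1)$ over every ring $R$; in particular the dimension of the Lie algebra does not drop in characteristic $2$. On the other hand, applying Proposition \ref{dimsec_prop} to the surjection $\faisSp_{2n}\to \faisPGSp_{2n}$, whose kernel is the central $\faismu_2$ of dimension $0$, gives $\dim(\faisPGSp_{2n})_k=\dim(\faisSp_{2n})_k=n(2n+1)$ on every geometric fibre, a constant. Since the Lie algebra dimension matches the Krull dimension, Proposition \ref{sgcorpslisse_prop} yields smoothness of each fibre, and Proposition \ref{lissite_prop} then gives smoothness over $\ZZ$.

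It remains to upgrade smoothness to semi-simplicity. Being smooth and affine with connected fibres, $\faisPGSp_{2n}$ is reductive as soon as its geometric fibres are, and these fibres are the classical projective symplectic groups, which are semi-simple of type $C_n$ by the structure theory over fields (see \cite{bookinv}). The main obstacle I anticipate is the surjectivity of $\faisSp_{2n}\to \faisPGSp_{2n}$ on geometric points, which underlies both the connectedness argument and the dimension count: it rests on Skolem--Noether together with the extraction of a square root of the similitude factor, a step that is only fppf-local (and not étale-local) in characteristic $2$, so care is needed to phrase it purely on algebraically closed fibres, where every unit is a square.
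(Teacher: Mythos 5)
Votre démonstration suit essentiellement la même voie que celle du texte : connexité des fibres géométriques via $\faisSp_{2n}$ et la proposition \ref{Spconnexe_prop}, égalité entre la dimension de l'algèbre de Lie (calculée juste avant, de rang $n(2n+1)$) et la dimension de Krull des fibres, puis lissité par les propositions \ref{sgcorpslisse_prop} et \ref{lissite_prop}, et enfin semi-simplicité des fibres géométriques par la théorie classique de \cite{bookinv}. Vous explicitez simplement ce que le texte laisse en référence, en déduisant la dimension des fibres de la surjection $\faisSp_{2n}\to\faisPGSp_{2n}$ et de la proposition \ref{dimsec_prop} là où le texte cite directement \cite[VI, \S 23.4]{bookinv}; votre remarque sur l'extraction de la racine carrée du facteur de similitude sur un corps algébriquement clos est correcte et justifie bien ce point.
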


\begin{proof}
Pour tout point géométrique $s\in S$, le groupe $(\faisPGSp_{2n})_s$ est connexe (prop. \ref{Spconnexe_prop}) et de dimension $n(2n+1)$ par \cite[VI, \S23.4]{bookinv}. 
La lissité de $\faisPGSp_{2n}$ découle alors immédiatement des propositions \ref{sgcorpslisse_prop} et \ref{lissite_prop} sur $\Spec(\ZZ)$, puis sur toute autre base par changement de base. Le groupe est semi-simple sur les fibres géométriques par \cite[VI, théorème 25.11]{bookinv}. Il est par conséquent semi-simple.
\end{proof}

Pour trouver un tore maximal de $\faisPGSp_{2n}$, on considère le tore des matrices diagonales dans $\faisGL_{\faisM_{2n,\ZZ}}$ et on calcule son intersection avec $\faisPGSp_{2n}$.  Soit donc $N\in GL(M_{2n}(R))$ une matrice diagonale. On suppose que $N$ est donnée par $N(E_{ij})=\alpha_{ij}E_{ij}$ pour tout $1\leq i,j\leq 2n$. Exprimant les conditions pour que cette matrice soit dans $\faisPGSp_{2n}(R)$, on voit que $\alpha_{ij}=\alpha_{1i}^{-1}\cdot\alpha_{1j}$ pour tout $1\leq i,j\leq 2n$ et que $\alpha_{1,i+n}=\alpha_{1,n+1}\cdot\alpha_{1,i}^{-1}$. Il s'ensuit que l'automorphisme $N$ est uniquement déterminé par les $\alpha_{1,i}$ avec $i=2,\ldots, n+1$. On obtient ainsi un homomorphisme injectif de groupes $(R^\times)^n\to \faisPGSp_{2n}(R)$ en associant à $(\lambda_1,\ldots, \lambda_n)$ l'automorphisme de $M_{2n}(R)$ défini par $\alpha_{1j}=\lambda_{j-1}$ pour tout $j=2,\ldots n+1$ et les relations ci-dessus. On obtient ainsi un morphisme de groupes $\faisGm^n\to \faisPGSp_{2n}$ qui est une immersion fermée par \cite[Exp. IX, corollaire 2.5]{sga3}. On note $\faisPGSpdiag_{2n}$ l'image de $\faisGm^n$ dans $\faisPGSp_{2n}$. C'est évidemment un tore déployé, et on s'aperçoit qu'il est maximal puisque maximal sur les fibres géométriques. Les caractères de ce tore sont donnés par $t_i((\alpha_{ij}))=\alpha_{1,i+1}$ pour tout $i=1,\ldots,n$.

Un calcul immédiat donne les racines de $\faisPGSp_{2n}$ par rapport au tore $\faisPGSpdiag_{2n}$ pour tout anneau $R$:

\begin{itemize}
\item[1.] Le sous-module de l'algèbre de Lie du tore de poids trivial.
\item[2.] Le sous-module $R\cdot (E_{ij}-E_{j+n,i+n})$ de poids $t_{i-1}-t_{j-1}$ pour tout $2\leq i,j\leq n$ avec $i\neq j$.
\item[3.] Le sous-module $R\cdot (E_{i1}-E_{n+1,i})$ de poids $t_{i-1}$ pour tout $i=2,\ldots,n$.
\item[4.] Le sous-module $R\cdot (E_{1i}-E_{i+n,1})$ de poids $-t_{i-1}$ pour tout $i=2,\ldots,n$.
\item[5.] Le sous-module $R\cdot (E_{i,n+j}+E_{j,n+i})$ de poids $t_{i-1}+t_{j-1}-t_n$ pour tout $2\leq i<j\leq n$.
\item[6.] Le sous-module $R\cdot (E_{n+i,j}+E_{n+j,i})$ de poids $t_n-t_{i-1}-t_{j-1}$ pour tout $2\leq i<j\leq n$.
\item[7.] Le sous-module $R\cdot (E_{1,n+i}+E_{i,n+1})$ de poids $t_{i-1}-t_n$ pour tout $i=2,\ldots,n$.
\item[8.] Le sous-module $R\cdot (E_{n+1,i}+E_{n+i,1})$ de poids $t_n-t_{i-1}$ pour tout $i=2,\ldots,n$.
\item[9.] Le sous-module $R\cdot E_{i,n+i}$ de poids $2t_{i-1}-t_n$ pour tout $i=2,\ldots,n$.
\item[10.] Le sous-module $R\cdot E_{n+i,i}$ de poids $t_n-2t_{i-1}$ pour tout $i=2,\ldots,n$.
\item[11.] Le sous-module $R\cdot E_{n+1,1}$ de poids $t_n$.
\item[12.] Le sous-module $R\cdot E_{1,n+1}$ de poids $-t_n$.
\end{itemize}

Pour décrire les morphismes exponentiels $\exp:\faisGa\to \faisPGSp_{2n}$ on donne de fa\c con fonctorielle pour tout anneau $R$ et tout $\lambda\in R$ une matrice $A(\lambda)$ et on définit $\exp(\lambda)$ comme étant la conjugaison par $A(\lambda)$. On trouve:
\begin{itemize}
\item[2.] Le morphisme $\exp_{t_{i-1}-t_{j-1}}:R\cdot (E_{ij}-E_{j+n,i+n})\to \faisPGSp_{2n}(R)$ défini par 
$$\lambda\mapsto Id+\lambda(E_{ij}-E_{j+n,i+n}).$$
\item[3.] Le morphisme $\exp_{t_{i-1}}:R\cdot (E_{i1}-E_{n+1,i})\to \faisPGSp_{2n}(R)$ défini par 
$$\lambda\mapsto Id+\lambda (E_{i1}-E_{n+1,i}).$$
\item[4.] Le morphisme $\exp_{-t_{i-1}}:R\cdot (E_{1i}-E_{i+n,1})\to \faisPGSp_{2n}(R)$ défini par 
$$\lambda\mapsto Id+\lambda (E_{1i}-E_{i+n,1}).$$
\item[5.] Le morphisme $\exp_{t_{i-1}+t_{j-1}-t_n}: R\cdot (E_{i,n+j}+E_{j,n+i})\to \faisPGSp_{2n}(R)$ défini par
$$\lambda\mapsto Id+\lambda (E_{i,n+j}+E_{j,n+i}).$$
\item[6.] Le morphisme $\exp_{t_n-t_{i-1}-t_{j-1}}: R\cdot (E_{n+i,j}+E_{n+j,i})\to \faisPGSp_{2n}(R)$ défini par
$$\lambda\mapsto Id+\lambda (E_{n+i,j}+E_{n+j,i}).$$
\item[7.] Le morphisme $\exp_{t_{i-1}-t_n}: R\cdot (E_{1,n+i}+E_{i,n+1})\to \faisPGSp_{2n}(R)$ défini par
$$\lambda\mapsto Id+\lambda (E_{1,n+i}+E_{i,n+1}).$$
\item[8.] Le morphisme $\exp_{t_n-t_{i-1}}: R\cdot (E_{n+1,i}+E_{n+i,1})\to \faisPGSp_{2n}(R)$ défini par
$$\lambda\mapsto Id+\lambda (E_{n+1,i}+E_{n+i,1}).$$
\item[9.] Le morphisme $\exp_{2t_{i-1}-t_n}: R\cdot E_{i,n+i}\to \faisPGSp_{2n}(R)$ défini par
$$\lambda\mapsto Id+\lambda E_{i,n+i}.$$
\item[10.] Le morphisme $\exp_{t_n-2t_{i-1}}: R\cdot E_{n+i,i}\to \faisPGSp_{2n}(R)$ défini par
$$\lambda\mapsto Id+\lambda E_{n+i,i}.$$
\item[11.] Le morphisme $\exp_{t_n}: R\cdot E_{n+1,1}\to \faisPGSp_{2n}(R)$ défini par
$$\lambda\mapsto Id+\lambda E_{n+1,1}.$$
\item[12.] Le morphisme $\exp_{-t_n}: R\cdot E_{1,n+1}\to \faisPGSp_{2n}(R)$ défini par
$$\lambda\mapsto Id+\lambda E_{1,n+1}.$$
\end{itemize}

Ces données permettent de trouver immédiatement les coracines, selon la liste suivante:
\begin{itemize}
\item[2.] Les coracines $t_{i-1}^\dual-t_{j-1}^\dual$ pour tout $2\leq i,j\leq n$ avec $i\neq j$.
\item[3.] Les coracines $2t_n^\dual+2t_{i-1}^\dual+\sum_{j=2,j\neq i}^n t_{j-1}^\dual$ pour tout $i=2,\ldots,n$.
\item[4.] Les coracines $-2t_n^\dual-2t_{i-1}^\dual-\sum_{j=2,j\neq i}^n t_{j-1}^\dual$ pour tout $i=2,\ldots,n$.
\item[5.] Les coracines $t_{i-1}^\dual+t_{j-1}^\dual$ pour tout $2\leq i<j\leq n$.
\item[6.] Les coracines $-t_{i-1}^\dual-t_{j-1}^\dual$ pour tout $2\leq i<j\leq n$.
\item[7.] Les coracines $-2t_n^\dual-\sum_{j=2,j\neq i}^n t_{j-1}^\dual$ pour tout $i=2,\ldots,n$.
\item[8.] Les coracines $2t_n^\dual+\sum_{j=2,j\neq i}^n t_{j-1}^\dual$ pour tout $i=2,\ldots,n$.
\item[9.] Les coracines $t_{i-1}^\dual$ pour tout $i=2,\ldots,n$.
\item[10.] Les coracines $-t_{i-1}^\dual$ pour tout $i=2,\ldots,n$.
\item[11.] La coracine $2t_n^\dual+\sum_{j=1}^{n-1}t_j^\dual$.
\item[12.] La coracine $-2t_n^\dual-\sum_{j=1}^{n-1}t_j^\dual$.
\end{itemize}

Le résultat suivant est donc démontré:

\begin{prop} \label{donneeradiciellePGSp_prop}
La donnée radicielle de $\faisPGSp_{2n}$ relativement au tore maximal déployé $\faisPGSpdiag_{2n}$ est 
\begin{itemize}
\item Le $\ZZ$-module libre $N\simeq \ZZ^n$ des caractères de $\faisPGSp_{2n}$ (dont $t_1,\ldots,t_n$ forment une base); 
\item Le sous-ensemble fini de celui-ci formé des racines $\pm(t_{i-1}-t_{j-1})$ et des racines $\pm(t_{i-1}+t_{j-1}-t_n)$ pour $2\leq i<j\leq n$, ainsi que $\pm(t_{i-1}-t_n)$ et $\pm(2t_{i-1}-t_n)$ pour $i=2,\ldots,n$ et $\pm t_{i}$ pour $i=1,\ldots n$;
\item Le $\ZZ$-module libre $N^\dual\simeq\ZZ^n$ des cocaractères (de base duale $t_1^\cdual,\ldots,t_n^\cdual$);
\item Le sous-ensemble fini de celui-ci formé des coracines $\pm t_{i-1}^\dual\pm t_{j-1}^\dual$ pour $2\leq j<i\leq n$, ainsi que pour tout $i=2,\ldots,n$ les coracines $\pm t_{i-1}^\dual$, $\pm(2t_n^\dual+2t_{i-1}^\dual+\sum_{j=2,j\neq i}^n t_{j-1}^\dual)$ et $\pm(2t_n^\dual+\sum_{j=2,j\neq i}^n t_{j-1}^\dual)$, et finalement les coracines $\pm(2t_n^\dual+\sum_{j=1}^{n-1}t_j^\dual)$.
\end{itemize}
\end{prop}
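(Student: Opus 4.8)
The plan is to read off the root datum directly from the explicit data assembled in this section: the maximal split torus $\faisPGSpdiag_{2n}$, the computed basis of $\faisLie_{\faisPGSp_{2n}}$, the weight-space decomposition of the adjoint representation, the exponential morphisms, and the coroots. Since $\faisPGSpdiag_{2n}$ is a split torus of rank $n$, its character module is the free $\ZZ$-module $N$ with basis $t_1,\dots,t_n$ and its cocharacter module is $N^\dual$ with dual basis $t_1^\cdual,\dots,t_n^\cdual$; these furnish the first and third items of the statement.

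First I would decompose the Lie algebra under the adjoint action of the torus. Recall from Proposition \ref{donneeradicielleGLn_prop} that the diagonal torus of $\faisGL_{2n}$ acts on the matrix unit $E_{ij}$ through $t_i-t_j$; restricting this action to the subtorus $\faisPGSpdiag_{2n}$ and evaluating on each element of the explicit basis of $\faisLie_{\faisPGSp_{2n}}$ given above yields precisely the weights appearing in items 2--12 of the eigenspace decomposition. By the definition of a root \cite[Exp. XIX, 3.2]{sga3}, the nontrivial weights occurring are the roots, which gives the second item. Each root space is free of rank one, so for every root $\alpha$ there is, by the uniqueness in \cite[Exp. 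XXII, th. 1.1]{sga3}, a unique $\faisPGSpdiag_{2n}$-equivariant morphism $\exp_\alpha:\faisGa\to\faisPGSp_{2n}$ inducing the canonical inclusion of the $\alpha$-eigenspace; as displayed above, each is given by conjugation by $Id+\lambda X_\alpha$, where $X_\alpha$ spans the root space.

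With the roots and exponential morphisms identified, I would compute the coroots by applying the criterion of \cite[th. 2.4.1]{D}, formula (F), being careful to use the sign convention fixed in Remark \ref{signeAccouplement_rema}. Concretely, for each pair of opposite roots one pairs the corresponding root vectors and reads off the one-parameter subgroup $\alpha^\cdual:\faisGm\to\faisPGSpdiag_{2n}$ by uniqueness; expressed in the dual basis $t_1^\cdual,\dots,t_n^\cdual$ this produces the list of coroots, hence the fourth item of the statement.

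The main obstacle is the bookkeeping of the coroots rather than that of the roots. Because $\faisPGSp_{2n}$ is adjoint, its cocharacter lattice is dual to a character lattice spanned by the roots, and the one-parameter subgroups $\alpha^\cdual(\faisGm)$ sit inside $\faisPGSpdiag_{2n}$ in a way that produces the asymmetric expressions such as $2t_n^\cdual+\sum_{j}t_{j-1}^\cdual$; tracking how each such subgroup decomposes in the chosen dual basis, together with the sign in the pairing between opposite root spaces, is where care is required and where an error would most easily arise. Once these explicit computations are checked against the lists above, the four items assemble into the claimed root datum.
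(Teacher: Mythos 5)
Your proposal matches the paper's argument: the proposition is stated there as an immediate consequence of exactly the computations you describe (the explicit Lie algebra basis, the restriction of the diagonal-torus weights of $\faisGL_{2n}$ to $\faisPGSpdiag_{2n}$, the exponential morphisms via \cite[Exp. XXII, th. 1.1]{sga3}, and the coroots via \cite[th. 2.4.1]{D} with the sign convention of Remark \ref{signeAccouplement_rema}). The approach and the supporting references are the same as in the paper.
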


\begin{theo}
Pour tout $n\geq 1$, le groupe $\faisPGSp_{2n}$ est déployé, semi-simple et adjoint de type $C_n$.
\end{theo}

\begin{proof}
Le groupe est semi-simple par la proposition \ref{PGSp_ss_prop} et déployé. Il est adjoint puisque ses racines engendrent les caractères du tore. Enfin, le système de racines simples $\{t_1-t_2,t_2-t_3,\ldots,t_{n-1}-t_n,t_n\}$ montre que le diagramme de Dynkin associé est de type $C_n$.
\end{proof}


\subsection{Groupe déployé simplement connexe}

Soit $n\in\NN$ et $A$ une $\faisO_S$-algèbre d'Azumaya de degré $2n$ munie d'une involution symplectique $\sigma$. On considère la représentation
\[
\rho:\faisGL_{1,A}\to \faisGL_A
\]
donnée par $\rho(a)(m)=\sigma(a)\cdot m\cdot a$ pour tout $T\to S$, tout $a\in A(T)^\times$ et tout $m\in A(T)$. 
\begin{defi}\label{SpsurZ_defi}
Le schéma en groupes \emph{symplectique} $\faisSp_{A,\sigma}$ est défini comme la préimage sous $\rho$ du stabilisateur de $Id$ dans $\faisGL_A$. Le lemme \ref{produit_fibre_lemm} et la proposition \ref{stab_prop} montrent que $\faisSp_{A,\sigma}$ est représentable et qu'il est un sous-groupe fermé de $\faisGL_{1,A}$, donc affine et de type fini sur $S$.
\end{defi}
En particulier, on peut considérer la paire $(\faisM_{2n,S},\sigma_h)$ et on note $\faisSp_{2n,S}$ est le groupe $\faisSp_{\faisM_{2n,S},\sigma_h}$. 
De manière analogue à la section précédente, on voit que pour tout schéma $T$
\[
\faisSp_{2n,T}=(\faisSp_{2n,S})_T=(\faisSp_{2n,\ZZ})_T.
\]
Pour tout anneau $R$, on obtient aussi la description des $R$-points de $\faisSp_{2n}$ via la formule
$$\faisSp_{2n}(R)=\{ M\in \setGL_{2n}(R)\phantom{i}\vert \phantom{i} \transp MHM=H\}$$
où $H$ est la matrice de $a^h_{2n}$ dans la base canonique de $R^{n}\oplus (R^n)^\vee$. 
\medskip

On détermine maintenant son algèbre de Lie comme sous-algèbre de Lie de $\faisLie_{\faisGL_{\faisM_{2n,\ZZ}}}=\alL_{\faisM_{2n,\ZZ}}$ (voir prop. \ref{LieGLn_prop}) en accord avec le lemme \ref{sous_algebre_lemm}. La suite exacte
$$\xymatrix{1\ar[r] & \faisLie_{\faisSp_{2n}}(R)\ar[r] & \faisSp_{2n}(R[t])\ar[r] & \faisSp_{2n}(R)\ar[r] & 1}$$
montre que $1+t M\in \faisLie_{\faisSp_{2n}}(R)$ si et seulement si $\transp MH+HM=0$. \'Ecrivant $M=(M_{ij})$ pour $1\leq i,j\leq 2$ avec $M_{ij}\in M_n(R)$, on obtient les égalités 
$$
\left\{\begin{array}{l} M_{22}=-\transp M_{11} \\
M_{12}=\transp M_{12} \\
M_{21}=\transp M_{21}
\end{array}\right.
$$
Il s'ensuit que $\faisLie_{\faisSp_{2n}}(R)$ est libre de dimension $n^2+n(n+1)/2+n(n+1)/2=n(2n+1)$ sur $R$ avec une base donnée par les matrices
$$
\left\{\begin{array}{rl} E_{ij}-E_{j+n,i+n} & \text{ pour tout $1\leq i,j\leq n$}, \\
E_{i,n+j}+E_{j,n+i} & \text{ pour tout $1\leq i<j\leq n$},\\
E_{n+i,j}+E_{n+j,i} & \text{ pour tout $1\leq i<j\leq n$},\\
E_{i,n+i} & \text{ pour tout $1\leq i\leq n$},\\
E_{n+i,i} & \text{ pour tout $1\leq i\leq n$.}
\end{array}\right.
$$
En conséquence:

\begin{prop}\label{Sp_ss_prop}
Le groupe $\faisSp_{2n}$ est semi-simple sur $\Spec\ZZ$.
\end{prop}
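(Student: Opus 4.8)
The plan is to follow verbatim the template used above for the semisimplicity of $\faisSO_{2n+1}$, $\faisSpin_{2n+1}$ and $\faisPGSp_{2n}$, since $\faisSp_{2n}$ has already been exhibited as an affine group scheme of finite type over $\Spec(\ZZ)$ by Definition \ref{SpsurZ_defi}. The strategy is to establish smoothness over $\Spec(\ZZ)$ through the fibral criterion of Proposition \ref{lissite_prop}, which reduces everything to checking that the geometric fibers are connected, of constant dimension, and smooth; once smoothness is known, it remains only to invoke the classical semisimplicity of the geometric fibers to conclude in the sense of \SGAtrois.

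First I would handle connectedness and dimension. Connectedness of the geometric fibers is exactly Proposition \ref{Spconnexe_prop}. For the dimension, the classical theory over an algebraically closed field gives $\dim (\faisSp_{2n})_s = n(2n+1)$ on every geometric fiber (see \cite[VI, \S23.4]{bookinv}), a value that does not depend on the residue characteristic, so the fibers have constant dimension $n(2n+1)$.

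The crucial step — and the one where the symplectic case behaves more favorably than the orthogonal case of Section \ref{adjointBn_sec} — is smoothness of the fibers, for which I would apply Proposition \ref{sgcorpslisse_prop}: over a field $k$, the group $(\faisSp_{2n})_k$ is smooth precisely when $\dim_k \faisLie_{(\faisSp_{2n})_k}$ equals the Krull dimension $n(2n+1)$. The explicit basis of $\faisLie_{\faisSp_{2n}}(R)$ computed just above shows that this Lie algebra is free of rank $n(2n+1)$ for \emph{every} ring $R$, hence over every residue field. The main point to watch is exactly this uniformity in characteristic $2$: unlike $\faisorthO_{2n+1}$, whose defining relations degenerate when $2=0$ and yield an over-large Lie algebra, the conditions $\transp M H + H M = 0$ cutting out $\faisLie_{\faisSp_{2n}}$ impose the same number of linearly independent relations in all characteristics, so the rank stays equal to $n(2n+1)$ and every geometric fiber is smooth. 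I expect no serious obstacle here beyond carefully confirming this characteristic-$2$ coincidence, which is already borne out by the basis exhibited above.

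With the fibers connected, smooth, and of constant dimension, Proposition \ref{lissite_prop} gives smoothness of $\faisSp_{2n}$ over $\Spec(\ZZ)$; the group is affine by construction (Definition \ref{SpsurZ_defi}). To finish, I would note that the geometric fibers are semisimple by the classical statement \cite[VI, th�or�me 25.11]{bookinv}. Combining smoothness, affineness, and connected semisimple geometric fibers yields that $\faisSp_{2n}$ is reductive and in fact semisimple over $\Spec(\ZZ)$, as claimed.
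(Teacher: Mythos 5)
Your proof is correct and follows essentially the same route as the paper: connected geometric fibers of constant dimension $n(2n+1)$, smoothness of each fiber via the equality with the Lie algebra rank (which the explicit basis shows is $n(2n+1)$ in every characteristic, including $2$), then Proposition \ref{lissite_prop} for smoothness over $\Spec(\ZZ)$ and the classical semisimplicity of the fibers to conclude. The only differences are cosmetic choices of reference for the fiber-level facts (the paper cites \cite[\S 23.3]{borelag} where you cite \cite{bookinv}).
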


\begin{proof}
Pour tout point géométrique $s\in\Spec\ZZ$, le groupe $(\faisSp_{2n})_s$ est connexe de dimension $n\cdot (2n+1)$, et (semi-)simple par \cite[\S 23.3]{borelag}. 
Puisque l'algèbre de Lie sur $s$ est de même dimension, les propositions \ref{sgcorpslisse_prop} et \ref{lissite_prop} montrent que le groupe est lisse, et donc réductif et semi-simple au sens de \SGAtrois.
\end{proof}

Pour trouver un tore maximal de $\faisSp_{2n}$, on considère le tore $\faisSpdiag_{2n}$ donné par les matrices de la forme $diag(\alpha_1,\ldots,\alpha_n,\alpha_1^{-1},\ldots,\alpha_n^{-1})$. C'est un tore maximal puisqu'il est maximal sur les fibres géométriques (\cite[\S 23.3]{borelag}). Les caractères du tore sont donnés par $t_i(diag(\alpha_1,\ldots,\alpha_n,\alpha_1^{-1},\ldots,\alpha_n^{-1}))=\alpha_i$ pour tout $1\leq i\leq n$. Le fait que $\faisSp_{2n}$ soit un sous-groupe de $\faisGL_{2n}$ et que $\faisSpdiag_{2n}$ soit un sous tore du tore maximal des matrices diagonales dans $\faisGL_{2n}$ donne facilement les racines de $\faisSp_{2n}$. On trouve pour tout anneau $R$ la liste suivante:

\begin{itemize}
\item[1.] Le sous-module de l'algèbre de Lie du tore de poids trivial.
\item[2.] Le sous-module $R\cdot (E_{ij}-E_{j+n,i+n})$ de poids $t_i-t_j$ pour tout $1\leq i,j\leq n$ avec $i\neq j$.
\item[3.] Le sous-module $R\cdot (E_{i,n+j}+E_{j,n+i})$ de poids $t_i+t_j$ pour tout $1\leq i<j\leq n$.
\item[4.] Le sous-module $R\cdot (E_{n+i,j}+E_{n+j,i})$ de poids $-t_i-t_j$ pour tout $1\leq i<j\leq n$.
\item[5.] Le sous-module $R\cdot E_{i,n+i}$ de poids $2t_i$ pour tout $1\leq i\leq n$.
\item[6.] Le sous-module $R\cdot E_{n+i,i}$ de poids $-2t_i$ pour tout $1\leq i\leq n$.
\end{itemize}
Pour toute racine $\alpha$, l'unique morphisme de groupes algébriques
$$\exp_\alpha:\faisGa\to \faisSp_{2n}$$
induisant l'inclusion canonique de l'espace propre associé à $\alpha$ dans son algèbre de Lie (\cite[Exp. XXII, théorème 1.1]{sga3}) est comme ci-dessous:
\begin{itemize}
\item[1.] Le morphisme $\exp_{t_i-t_j}:R\cdot (E_{ij}-E_{j+n,i+n})\to \faisSp_{2n}(R)$ défini par 
$$\lambda\mapsto Id+\lambda(E_{ij}-E_{j+n,i+n}).$$
\item[2.] Le morphisme $\exp_{t_i+t_j}:R\cdot (E_{i,n+j}+E_{j,n+i})\to \faisSp_{2n}(R)$ défini par 
$$\lambda\mapsto Id+\lambda(E_{i,n+j}+E_{j,n+i}).$$
\item[3.] Le morphisme $\exp_{-t_i-t_j}: R\cdot (E_{n+i,j}+E_{n+j,i})\to \faisSp_{2n}(R)$ défini par
$$\lambda\mapsto Id+\lambda(E_{n+i,j}+E_{n+j,i}).$$
\item[4.] Le morphisme $\exp_{2t_i}: R\cdot E_{i,n+i}\to \faisSp_{2n}(R)$ défini par
$$\lambda\mapsto Id+\lambda E_{i,n+i}.$$
\item[5.] Le morphisme $\exp_{-2t_i}:R\cdot E_{n+i,i}\to \faisSp_{2n}(R)$ défini par
$$\lambda\mapsto Id+\lambda E_{n+i,i}.$$
\end{itemize}

Ces données nous permettent de calculer facilement les coracines. Dans tous les cas ci-dessous, l'accouplement de la remarque \ref{signeAccouplement_rema} est donné par $(X,Y)\mapsto -XY$. Les coracines associées aux racines décrites ci-dessus sont dans l'ordre:

\begin{itemize}
\item[2.] Les coracines $t_i^\dual-t_j^\dual$ pour tout $1\leq i,j\leq n$ avec $i\neq j$.
\item[3.] Les coracines $t_i^\dual+t_j^\dual$ pour tout $1\leq i<j\leq n$.
\item[4.] Les coracines $-t_i^\dual-t_j^\dual$ pour tout $1\leq i<j\leq n$.
\item[5.] Les coracines $t_i^\dual$ pour tout $1\leq i\leq n$.
\item[6.] Les coracines $-t_i^\dual$ pour tout $1\leq i\leq n$.
\end{itemize}

En résumé la donnée radicielle de $\faisSp_{2n}$ est la suivante:

\begin{prop} \label{donneeradicielleSp_prop}
La donnée radicielle de $\faisSp_{2n}$ relativement au tore maximal déployé $\faisSpdiag_{2n}$ est 
\begin{itemize}
\item Le $\ZZ$-module libre $N\simeq \ZZ^n$ des caractères de $\faisSp_{2n}$ (dont $t_1,\ldots,t_n$ forment une base); 
\item Le sous-ensemble fini de celui-ci formé des racines $\pm2t_i$ pour $1\leq i\leq n$ et $\pm t_i\pm t_j$ pour $1\leq j<i\leq n$;
\item Le $\ZZ$-module libre $N^\dual\simeq\ZZ^n$ des cocaractères (de base duale $t_1^\cdual,\ldots,t_n^\cdual$);
\item Le sous-ensemble fini de celui-ci formé des coracines $\pm t_i^\dual$ pour $i=1\leq i\leq n$ et $\pm t_i^\dual\pm t_j^\dual$ pour $1\leq j<i\leq n$.
\end{itemize}
\end{prop}

Le résultat suivant découle immédiatement de cette proposition:

\begin{theo}
Pour tout $n\geq 1$, le groupe $\faisSp_{2n}$ est déployé, semi-simple et simplement connexe de type $C_n$. 
\end{theo}

\begin{proof}
La proposition \ref{Sp_ss_prop} montre déjà que le groupe est semi-simple. Il est simplement connexe puisque la proposition ci-dessus montre que les coracines engendrent les cocaractères. Il est de type $C_n$ puisque les racines simples $\{t_1-t_2,t_2-t_3,\ldots,t_{n-1}-t_n,2t_n\}$ donnent un diagramme de Dynkin de ce type.
\end{proof}

Pour calculer le centre de $\faisSp_{2n}$, on définit un morphisme de groupes $\chi:\faisSp_{2n}\to \faisPGSp_{2n}$ en associant à $M\in \faisSp_{2n}(R)$ l'automorphisme intérieur $\int_M$ de $M_{2n}(R)$ pour tout anneau $R$. Ce morphisme induit un morphisme $\chi:\faisSpdiag_{2n}\to \faisPGSpdiag_{2n}$ dont le noyau est le centre de $\faisSp_{2n}$. Explicitement, on trouve
$$\chi(diag(\alpha_1,\ldots,\alpha_n,\alpha_1^{-1},\ldots,\alpha_n^{-1}))=(\alpha_{ij})$$
avec $\alpha_{1i}=\alpha_1\cdot\alpha_i^{-1}$ pour $i=2,\ldots,n$ et $\alpha_{1,n+1}=\alpha_1^2$. Au niveau des caractères des tores $\faisSpdiag_{2n}$ et $\faisPGSpdiag_{2n}$, on obtient $t_i\mapsto t_1-t_{i+1}$ pour $i=1,\ldots,n-1$ et $t_n\mapsto 2t_1$. On en déduit que le centre de $\faisSp_{2n}$ est le groupe multiplicatif associé au groupe $\ZZ/2$. Ainsi:

\begin{prop} \label{centreSp_prop}
Le morphisme de groupes algébriques
$$\zeta:\faismu_2\to Z(\faisSp_{2n})$$
défini pour tout anneau $R$ et tout $\alpha\in \faismu_2(R)$ par $\zeta(\alpha)=diag(\alpha,\ldots,\alpha)$ est un isomorphisme.
\end{prop}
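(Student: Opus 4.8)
The plan is to compute $Z(\faisSp_{2n})$ from the root datum already obtained in Proposition \ref{donneeradicielleSp_prop} and then match the answer with the explicit map $\zeta$. Since $\faisSp_{2n}$ is split semisimple with split maximal torus $\faisSpdiag_{2n}$, its center is a diagonalizable subgroup contained in $\faisSpdiag_{2n}$, and by the structure theory of \SGAtrois\ it is the Cartier dual $D(N/\ZZ\Roots)$ of the cokernel of the root lattice $\ZZ\Roots$ (the subgroup generated by the roots) inside the character lattice $N=\ZZ^n$. So the first step is purely a lattice computation, and the second is the verification that $\zeta$ induces the resulting identification.

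For the lattice computation I would observe that the roots $\pm 2t_i$ and $\pm t_i\pm t_j$ generate exactly the sublattice $\ZZ\Roots=\{\sum a_i t_i : \sum a_i\in 2\ZZ\}$ of index $2$ in $N$: the inclusion is clear since every root has even coefficient-sum, and conversely $2t_i=2t_1-2(t_1-t_i)$, $t_i+t_j=2t_1-(t_1-t_i)-(t_1-t_j)$ and $t_i-t_j=(t_1-t_j)-(t_1-t_i)$ recover a generating set of the even-sum lattice. Hence $N/\ZZ\Roots\simeq\ZZ/2$ via the parity of the coefficient-sum, and therefore $Z(\faisSp_{2n})\simeq D(\ZZ/2)=\faismu_2$. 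This is realized concretely by the morphism $\chi$ of the preceding paragraph: its restriction to the tori is dual to the character map $t_i\mapsto t_1-t_{i+1}$ ($1\le i\le n-1$), $t_n\mapsto 2t_1$, whose image is precisely $\ZZ\Roots$, so that $Z(\faisSp_{2n})=\ker(\chi|_{\faisSpdiag_{2n}})$.

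It then remains to check that $\zeta$ is the resulting isomorphism. First $\zeta$ is well defined and central: for $\alpha\in\faismu_2(R)$ one computes $\transp{(\alpha\,\mathrm{Id})}\,H\,(\alpha\,\mathrm{Id})=\alpha^2 H=H$, so $\zeta(\alpha)=\mathrm{diag}(\alpha,\ldots,\alpha)$ lies in $\faisSp_{2n}(R)$, and being scalar it commutes with every matrix, hence lands in the center; it is visibly a monomorphism. Conversely, unwinding $Z=D(N/\ZZ\Roots)$, a point $\mathrm{diag}(\alpha_1,\ldots,\alpha_n,\alpha_1^{-1},\ldots,\alpha_n^{-1})$ of $\faisSpdiag_{2n}$ is central exactly when it is killed by $\ZZ\Roots$, i.e.\ when $\alpha_i=\alpha_j$ for all $i,j$ (from $t_i-t_j$) and $\alpha_i^2=1$ (from $2t_i$); writing $\alpha$ for the common value one has $\alpha^{-1}=\alpha$, so this point is exactly $\zeta(\alpha)$. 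Thus $\zeta$ is an isomorphism onto $Z(\faisSp_{2n})$.

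The main obstacle is conceptual rather than computational: it is the input from \SGAtrois\ that the center of the split semisimple group $\faisSp_{2n}$ is contained in $\faisSpdiag_{2n}$ and equals $D(N/\ZZ\Roots)$ (equivalently, that it is the intersection of the kernels of the roots in the torus, realized here as $\ker(\chi|_{\faisSpdiag_{2n}})$). Once this identification is granted, the index-$2$ lattice computation and the matrix verifications for $\zeta$ are entirely routine.
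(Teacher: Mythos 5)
Votre démonstration est correcte et suit essentiellement la même voie que le texte : identifier le centre comme le noyau commun des racines sur le tore maximal déployé (c'est-à-dire le noyau de la restriction au tore du morphisme $\chi$ vers $\faisPGSp_{2n}$), puis résoudre explicitement les équations $\alpha_i=\alpha_j$ et $\alpha_i^2=1$. Votre calcul de l'indice $[N:\ZZ\Roots]=2$ via le réseau des racines n'est qu'un habillage plus abstrait du même calcul que le texte mène directement sur les caractères de $\chi$, et votre vérification explicite que $\zeta(\alpha)$ appartient bien à $\faisSp_{2n}$ et y est central est un petit complément bienvenu.
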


Identifions maintenant les torseurs sous $\faisSp_{2n}$. Pour ce faire, on considère la $S$-catégorie fibrée $\Sympl{2n}$ dont la fibre en $T$ est la catégorie dont les objets sont les $\faisO_T$-modules alternés (non dégénérés) de rang $2n$ et les morphismes sont les isométries. Il est facile de vérifier que $\Sympl{2n}$ est un champ en groupoïdes pour la topologie étale ou bien \fppf. 

\begin{prop}\label{Sp2ntors_prop}
Le foncteur $(M,b)\mapsto \faisIso_{(\faisO_s^{2n},a^h_{2n}),(M,b)}$ définit une équivalence de catégories fibrées 
\[
\Sympl{2n} \isoto \Tors{\faisSp_{2n}}.
\]
du champ en groupoïdes des modules alternés (non dégénérés) de rang $2n$ vers le champ en groupoïdes des $\faisSp_{2n}$-torseurs pour la topologie étale ou \fppf. 
\end{prop}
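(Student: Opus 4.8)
La strat�gie est exactement celle qui a d�j� servi � tous les r�sultats analogues du texte (par exemple \ref{GLntors_prop}, \ref{Sp2ntors_prop} n'en �tant qu'un cas de plus parmi \ref{formesmodulesquad_prop}, \ref{Oplustorseurs_prop}, etc.): il s'agit d'appliquer l'�quivalence g�n�rale entre formes et torseurs de la proposition \ref{tordusformes_prop}. Le c\oe ur de la preuve consiste donc � identifier $\faisSp_{2n}$ comme le faisceau d'automorphismes de l'objet $(\faisO_S^{2n},\hypa_{2n})$ dans le champ $\Sympl{2n}$, puis � v�rifier que tout objet de $\Sympl{2n}$ est une forme de cet objet standard pour la topologie consid�r�e.

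\emph{Premi�re �tape : le champ.} Je commencerais par rappeler, comme annonc� dans l'�nonc�, que $\Sympl{2n}$ est bien un champ en groupo�des pour les topologies �tale et \fppf. Ceci r�sulte de la proposition \ref{champstruc_prop} appliqu�e � la structure de $\faisO_S$-forme bilin�aire altern�e (exemple \ref{structures_exem}, point \ref{bilineaire_item}), exactement comme pour $\Quad$ ou $\RegQuadn{n}$~; il faut ajouter que la condition d'�tre localement libre de rang $2n$ et non d�g�n�r�e descend, ce qui est imm�diat (pour la non-d�g�n�rescence, le fait d'�tre un isomorphisme $M\to M^\dual$ se teste localement, cf.\ \ref{regulierloca_prop}). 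Le fait que ce soit un champ \emph{en groupo�des} vient de ce que tout morphisme de modules altern�s non d�g�n�r�s de m�me rang est une isom�trie, donc inversible.

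\emph{Deuxi�me �tape : le groupe d'automorphismes.} Le point cl� est l'�galit� de faisceaux $\faisAut^{\Sympl{2n}}_{(\faisO_S^{2n},\hypa_{2n})}=\faisSp_{2n}$. Par d�finition des morphismes dans $\Sympl{2n}$, un $T$-automorphisme de $(\faisO_S^{2n},\hypa_{2n})$ est un �l�ment $M\in\faisGL_{2n}(T)$ pr�servant la forme altern�e $\hypa_{2n}$, c'est-�-dire satisfaisant $\transp M H M=H$ o� $H$ est la matrice de $\hypa_{2n}$. C'est pr�cis�ment la description des $R$-points de $\faisSp_{2n}$ donn�e juste apr�s la d�finition \ref{SpsurZ_defi}. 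Il faut seulement s'assurer que cette identification est compatible au changement de base et fonctorielle, ce qui est routinier puisque les deux faisceaux sont des sous-faisceaux de $\faisGL_{2n}$ d�finis par les m�mes �quations sur les points.

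\emph{Troisi�me �tape et obstacle principal : la locale trivialit�.} Pour conclure par \ref{tordusformes_prop}, il reste � montrer que tout objet $(M,b)$ de $\Sympl{2n}$ est localement isomorphe � $(\faisO_S^{2n},\hypa_{2n})$, autrement dit que tout module altern� non d�g�n�r� de rang $2n$ est une forme de $\hypa_{2n}$, de sorte que $\Sympl{2n}=\Formes{\hypa_{2n}}$. C'est l'�tape qui demande un vrai argument et non une simple traduction~: c'est l'analogue symplectique de \ref{formesetaleshyppair_prop}. La preuve proc�de comme pour les modules quadratiques r�guliers~: on se ram�ne par \ref{reprLocal_prop} au cas d'une base affine locale et d'un module libre, puis on construit une base symplectique par le proc�d� habituel de diagonalisation par blocs hyperboliques (on choisit un vecteur $e_1$, un $f_1$ avec $b(e_1,f_1)=1$ gr�ce � la non-d�g�n�rescence et � la localit� de l'anneau, on passe � l'orthogonal qui reste altern� non d�g�n�r� de rang $2n-2$, et on conclut par r�currence sur $n$). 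Sur un anneau local c'est classique~; le passage au cas d'un voisinage de Zariski de chaque point se fait en remarquant qu'il n'y a qu'un nombre fini de coefficients � inverser, comme dans la preuve de \ref{formesetaleshyppair_prop}. La locale trivialit� pour Zariski entra�ne a fortiori celle pour les topologies �tale et \fppf, plus fines. L'�quivalence cherch�e r�sulte alors directement de \ref{tordusformes_prop} appliqu�e � $X_0=(\faisO_S^{2n},\hypa_{2n})$, le foncteur $(M,b)\mapsto\faisIso_{(\faisO_S^{2n},\hypa_{2n}),(M,b)}$ �tant celui de l'�nonc�.
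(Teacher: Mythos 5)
Votre preuve est correcte et suit essentiellement la m�me d�marche que celle du texte : r�duction � la proposition \ref{tordusformes_prop}, identification de $\faisSp_{2n}$ comme faisceau d'automorphismes de $(\faisO_S^{2n},\hypa_{2n})$, puis trivialit� Zariski-locale obtenue par r�currence en d�tachant des plans hyperboliques $\hypa_2$ sur un anneau local. Les seules diff�rences sont de pr�sentation (vous explicitez la v�rification du caract�re de champ en groupo�des et le calcul du groupe d'automorphismes, que le texte laisse implicites), mais l'argument central est identique.
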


\begin{proof}
Par la proposition \ref{tordusformes_prop}, il suffit de comprendre les formes Zariski ou étale du module alterné $(\faisO_S^{2n},a^h_{2n})$. Faisons-le dans le cas Zariski. Soit donc $(M,b)$ un module alterné de rang $2n$. La proposition \ref{loclibre_prop} montre que $M$ est une forme Zariski de $\faisO_S^{2n}$ et il suffit de voir que Zariski localement $b$ est isométrique à $a^h_{2n}$.  Pour ce faire, on suppose que $S=\Spec(R)$ est affine et on remarque que tout sous-module libre de rang $1$ d'un module symplectique $(M,b)$ est forcément inclus dans son orthogonal puisque une forme alternée sur un module de rang $1$ est triviale. On a donc une décomposition $(M,b)=(M^\prime,b^\prime)\perp a^h_{2}$ avec $(M^\prime,b^\prime)$ alterné. Quitte à localiser $R$, on peut à nouveau supposer que $M^\prime$ a un sous-facteur libre de rang $1$. Par induction, on voit que tout module alterné est localement de la forme $a^h_2\perp\ldots \perp a^h_2$. On conclut en remarquant que les modules alternés $a^h_2\perp\ldots \perp a^h_2$ et $a^h_{2n}$ sont isométriques. 
\end{proof}


\subsection{Automorphismes}

Le diagramme de Dynkin $C_n$ n'a pas d'automorphisme non trivial. Il s'ensuit que tout automorphisme est intérieur et par conséquent
\[
\faisAut^\gr_{\faisSp_{2n}} =\faisAut^\gr_{\faisPGSp_{2n}}= \faisPGSp_{2n}.
\]

\subsection{Groupes tordus}

\begin{theo}\label{Cnadjointtordu_theo}
Les $S$-schémas en groupes de type déployé semi-simple adjoint $C_n$ ($n\geq 1$) sont à isomorphisme près les groupes $\faisPGSp_{A,\sigma}$ où $A$ est une $\faisO_S$-algèbre d'Azumaya de degré $2n$ et $\sigma$ est une involution de première espèce symplectique sur $A$. Toutes les formes sont intérieures (et donc strictement intérieures puisque le groupe est adjoint). Pour les formes fortement intérieures, voir le corollaire \ref{Cnadjfortmentint_coro}.
\end{theo}

\begin{proof}
Au vu de la proposition \ref{auttordus_prop}, on sait que tout torseur $P$ sous $\faisAut^{\alginv}_{\faisM_{2n},\sigma_h}=\faisPGSp_{2n}$ est de la forme $\faisAut_{P\contr {\faisPGSp_{2n}} (\faisM_{2n},\sigma_h)}$. Il suffit donc de démontrer que les formes étales de $(\faisM_{2n},\sigma_h)$ sont précisément les Algèbres d'Azumaya de degré $2n$ munies d'une involution symplectique $\sigma$. Soit donc une telle paire $(A,\sigma)$. Par la proposition \ref{invlocale_prop}, il suffit de démontrer que tout module symplectique $(M,b)$ est étale localement de la forme $(\faisO_S^{2n},a^h_{2n})$. C'est précisément la proposition \ref{Sp2ntors_prop}.
\end{proof}

\begin{theo}\label{Cnsimplementconnexe_theo}
Les $S$-schémas en groupes réductifs de donnée radicielle déployée identique à $\faisSp_{2n}$ (pour $n\geq 1$) sont les groupes $\faisSp_{A,\sigma}$ où $A$ est une $\faisO_S$-algèbre d'Azumaya de degré $2n$ et $\sigma$ est une involution symplectique. Toutes les formes sont intérieures et les formes strictement intérieures sont les groupes $\faisSp_b=\faisSp_{\faisEnd_M,\sigma_b}$ où $M$ est un $\faisO_S$-module localement libre de rang $2n$ muni d'un isomorphisme alterné $b$ et $\sigma_b$ est l'involution associée à $b$.
\end{theo}

\begin{proof}
Les formes étales de $\faisSp_{2n}$ sont obtenues par torsion sous un $\faisPGSp_{2n}=\faisAut_{\faisSp_{2n}}$-torseur. Argumentant comme dans la proposition \ref{auttordus_prop}, on s'aperçoit que le morphisme naturel
\[
P\contr{\faisPGSp_{2n}}\faisSp_{2n}\to \faisSp_{P\contr{\faisPGSp_{2n}} (\faisM_{2n},\sigma_h)}
\]
est un isomorphisme pour tout $\faisPGSp_{2n}$-torseur $P$. La preuve du théorème ci-dessus identifie les algèbres à involution $P\contr{\faisPGSp_{2n}} (\faisM_{2n},\sigma_h)$ avec les algèbres $(A,\sigma)$ de type voulu. De plus, toutes le formes sont intérieures puisque $\faisSp_{2n}/\faismu_2=\faisPGSp_{2n}$. 

Pour les formes strictement intérieures, on utilise la proposition \ref{Sp2ntors_prop} qui identifie les torseurs sous $\faisSp_{2n}$ aux modules alternés $(M,b)$. Lorsqu'on tord $\faisSp_{2n}$ par un tel torseur, on obtient bien un groupe de la forme $\faisSp_{\faisEnd_M,\sigma_b}$. 
\end{proof}

\begin{coro} \label{Cnadjfortmentint_coro}
Les formes fortement intérieures de $\faisPGSp_{2n}$, le groupe adjoint déployé de type $C_n$, sont les $\faisPGSp_b=\faisPGSp_{\faisEnd_M,\sigma_b}$ où $M$ est un $\faisO_S$-module localement libre de rang $2n$ muni d'un isomorphisme alterné $b$, d'involution symplectique associée $\sigma_b$. 
\end{coro}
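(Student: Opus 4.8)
The plan is to deduce this statement directly from the theorem \ref{Cnsimplementconnexe_theo} that immediately precedes it, exactly as the analogous corollaries \ref{Bnadjfortementint_coro} and \ref{adjointAnfortint_coro} are obtained in the earlier sections. By definition \ref{intfortint_defi}, the strongly inner forms (formes fortement int\'erieures) of the split adjoint group $\faisPGSp_{2n}$ are precisely those obtained by twisting $\faisPGSp_{2n}$ by a torsor coming from its simply connected cover, which here is $\faisSp_{2n}$ (the two groups share the same root datum up to the center, and $\faisSp_{2n}$ is the simply connected group of type $C_n$). So the task reduces to identifying, among all $\faisSp_{2n}$-torsors, how the associated twist of $\faisPGSp_{2n}$ looks.

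First I would invoke proposition \ref{Sp2ntors_prop}, which gives an equivalence between the stack $\Sympl{2n}$ of (non-degenerate) alternating modules of rank $2n$ and the stack $\Tors{\faisSp_{2n}}$ of $\faisSp_{2n}$-torsors. Thus every $\faisSp_{2n}$-torsor is of the form $\faisIso_{(\faisO_S^{2n},a^h_{2n}),(M,b)}$ for a suitable alternating module $(M,b)$ with $M$ localement libre de rang $2n$. Next, I would use the push-forward functoriality (proposition \ref{foncttors_prop} together with the equality $\faisAut^\gr_{\faisSp_{2n}}=\faisPGSp_{2n}$ from the Automorphismes subsection) to compute the twist of $\faisPGSp_{2n}$ along such a torsor. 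Concretely, as in the proof of theorem \ref{Cnsimplementconnexe_theo}, twisting $\faisSp_{2n}$ by the torsor attached to $(M,b)$ yields $\faisSp_{\faisEnd_M,\sigma_b}$, where $\sigma_b$ is the involution adjointe to $b$ (d\'efinition \ref{invadj_defi}); pushing this construction through the central quotient $\faisSp_{2n}/\faismu_2=\faisPGSp_{2n}$ (established just before proposition \ref{centreSp_prop}) gives the corresponding twist of $\faisPGSp_{2n}$, namely $\faisPGSp_{\faisEnd_M,\sigma_b}=\faisPGSp_b$.

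I would then assemble these observations: a strongly inner form of $\faisPGSp_{2n}$ is a twist by a torsor coming from $\faisSp_{2n}$, every such torsor corresponds to an alternating module $(M,b)$ via proposition \ref{Sp2ntors_prop}, and the twist itself is $\faisPGSp_{\faisEnd_M,\sigma_b}$ by the compatibility of the twisting procedure with the quotient morphism $\faisSp_{2n}\to\faisPGSp_{2n}$. This yields exactly the description in the statement. The main (and really only) obstacle is the careful bookkeeping of the torsion procedure through the central isogeny, i.e.\ checking that the action of $\faisPGSp_{2n}$ on $\faisSp_{2n}$ by conjugation and its action on $(\faisM_{2n},\sigma_h)$ by involution-algebra automorphisms are compatible with the projection $\faisSp_{2n}\to\faisPGSp_{2n}$; but this is precisely the content already set up in the proof of theorem \ref{Cnsimplementconnexe_theo} and in proposition \ref{foncttors_prop}, so the corollary follows with essentially no further work.
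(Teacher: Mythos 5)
Votre démonstration est correcte et suit exactement la voie que le texte sous-entend : le corollaire est énoncé sans preuve séparée car il découle immédiatement du théorème \ref{Cnsimplementconnexe_theo}, les formes fortement intérieures de $\faisPGSp_{2n}$ étant par définition \ref{intfortint_defi} les tordues par un torseur sous $\faisSp_{2n}$, lesquels correspondent aux modules alternés $(M,b)$ par la proposition \ref{Sp2ntors_prop}, et le produit contracté donne $\faisPGSp_{\faisEnd_M,\sigma_b}$ par la même compatibilité de torsion que dans la preuve du théorème. Rien à ajouter.
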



\section{Groupes semi-simples de type $D_n$} \label{Dn_sec}

\subsection{Groupe déployé adjoint}\label{dn_adjoint}

Soit $(A,\sigma,f)$ une paire quadratique au sens de la définition \ref{pairesquad_defi}. Rappelons que le groupe des automorphismes $\faisPGO_{(A,\sigma,f)}$ d'une telle paire est représentable par un schéma affine sur $S$ par la proposition \ref{PGOrepresentable_prop}. On dispose de plus de l'invariant de Arf
\[
\faisPGO_{(A,\sigma,f)}\to \ZZ/2
\]
dont le noyau est le groupe $\faisPGOplus_{(A,\sigma,f)}$ qui est de ce fait représentable (définition \ref{PGOplus_defi}). On dispose enfin d'une suite exacte de faisceau étales (proposition \ref{secPGOplus_prop})
\[
1\to \faisPGOplus_{(A,\sigma,f)}\to \faisPGO_{(A,\sigma,f)}\to \ZZ/2\to 0
\]
En particulier, pour la paire quadratique $(\faisM_{2n},\invadj_{2n},f_{2n})$ définie avant le corollaire \ref{pairequadformes_coro}, ces schémas en groupes sont notés $\faisPGO_{2n}$ et $\faisPGOplus_{2n}$.
\medskip

A partir de maintenant, nous supposerons $n\geq 2$.

Pour calculer l'algèbre de Lie de $\faisPGOplus_{2n}$, on prend le chemin des écoliers. Rappelons tout d'abord qu'on dispose pour tout schéma $S$, toute $\faisO_S$-algèbre d'Azumaya $A$ et toute involution orthogonale $\sigma$ d'un $\faisO_S$-module localement libre $\faisSym_{A,\sigma}$ et d'un groupe $\faisGO_{(A,\sigma,f)}$ pour toute paire quadratique $(A,\sigma,f)$. En particulier, on obtient ainsi un groupe 
\[
\faisGO_{2n}:=\faisGO_{(\faisM_{2n},\invadj_{2n},f_{2n})}
\]
qui est représentable sur toute base $S$ et donc également sur $\Spec(\ZZ)$.

Fixons maintenant une base $(e_1,\ldots,e_{2n})$ de $\faisO_S^{2n}$ sur laquelle le module hyperbolique $\hypq_{2n}$ est donné par
$$\hypq_{2n}(x_1,\ldots,x_{2n}) = x_1 x_2 +\cdots +x_{2n-1} x_{2n}.$$
\'Ecrivant $\int_A$ pour la conjugaison par une matrice $A$, une description explicite de $\faisGO_{2n}$ est donnée sur tout anneau $R$ par
$$\faisGO_{2n}(R)=\{ A\in GL_{2n}(R)\text{ t.q. $\eta_{2n}\circ\int_A=\int_A\circ\eta_{2n}$ et $f_{2n}\circ \int_A=f_{2n}$}\}.$$
La condition $\eta_{2n}\circ\int_A=\int_A\circ\eta_{2n}$ s'exprime par 
$$H^{-1}\transp (A^{-1}BA)H=A^{-1}H^{-1}\transp B HA$$ 
pour tout $B\in M_{2n}(R)$, où $H=\sum_1^n (E_{i,i+1}+E_{i+1,i})$ est symétrique. Le centre de $M_{2n}(R)$ étant constitué des matrices scalaires, on en déduit que $\eta_{2n}\circ\int_A=\int_A\circ\eta_{2n}$ si et seulement si $\transp (A^{-1})HA^{-1}H^{-1}$ est une matrice scalaire. Ceci nous permet de calculer l'algèbre de Lie $\faisLie_{\faisGO_{2n}}$. Utilisant la suite exacte
$$\xymatrix@C=1.5em{0\ar[r] & \faisLie_{\faisGO_{2n}}(R)\ar[r] & \faisGO_{2n}(R[t])\ar[r]^-p & \faisGO_{2n}(R)\ar[r] & 1,}$$
on voit que pour tout $B\in M_{2n}(R)$ on a $Id+t B\in \faisLie_{\faisGO_{2n}}(R)$ si et seulement si $B+\eta_{2n}(B)\in R\cdot Id$ et $f_{2n}(CB-BC)=0$ pour tout élément symétrique $C\in \mathrm{Sym}(M_{2n}(R),\eta_{2n})$. 
La première condition donne pour $B=(b_{ij})$ les relations
\begin{itemize}
\item[1.] $b_{2j,2i}=-b_{2i-1,2j-1}$ pour tout $1\leq i,j\leq n$ avec $i\neq j$.
\item[2.] $b_{2j-1,2i}=-b_{2i,2j-1}$ pour tout $1\leq i,j\leq n$.
\item[3.] $b_{2j,2i-1}=-b_{2i,2j-1}$ pour tout $1\leq i,j\leq n$.
\item[4.] $b_{2i-1,2i-1}+b_{2i,2i}=b_{11}+b_{22}$ pour tout $2\leq i\leq n$.
\end{itemize}

Pour expliciter un peu ces relations, on donne une base de $\mathrm{Sym}(M_{2n}(R),\eta_{2n})$. Utilisant la description explicite de la matrice $H$, on obtient la base suivante (comme sous-module de $\mathrm{M}_{2n}(R)$)
\begin{itemize}
\item[1.] $E_{2i-1,2i}$ pour tout $i=1,\ldots,n$.
\item[2.] $E_{2i,2i-1}$ pour tout $i=1,\ldots,n$.
\item[3.] $E_{2i-1,2j}+E_{2j-1,2i}$ pour tout $1\leq i<j\leq n$.
\item[4.] $E_{2i-1,2j-1}+E_{2j,2i}$ pour tout $1\leq i\leq j\leq n$.
\item[5.] $E_{2i,2j}+E_{2j-1,2i-1}$ pour tout $1\leq i<j\leq n$.
\item[6.] $E_{2i,2j-1}+E_{2j,2i-1}$ pour tout $1\leq i<j\leq n$.
\end{itemize} 

On voit que la condition $b_{2j-1,2i}=-b_{2i,2j-1}$ pour tout $1\leq i,j\leq n$ ci-dessus donne les relations supplémentaires
\begin{itemize}
\item[5.] $b_{2i,2i-1}=0$ pour tout $1\leq i\leq n$.
\item[6.] $b_{2i-1,2i}=0$ pour tout $1\leq i\leq n$.
\end{itemize}

Ceci montre que $\faisLie_{\faisGO_{2n}}(R)$ est le $R$-module libre dont une base est donnée par les éléments suivants:
\begin{itemize}
\item[1.] $E_{2i-1,2j-1}-E_{2j,2i}$ pour tout $1\leq i,j\leq n$ avec $i\neq j$.
\item[2.] $E_{2i-1,2j}-E_{2j-1,2i}$ pour tout $1\leq i<j\leq n$.
\item[3.] $E_{2i,2j-1}-E_{2j,2i-1}$ pour tout $1\leq i<j\leq n$.
\item[4.] $E_{2i-1,2i-1}-E_{2i,2i}$ pour tout $2\leq i\leq n$.
\item[5.] $E_{11}+\sum_{i=2}^n E_{2i,2i}$.
\item[6.] $E_{22}+\sum_{i=2}^n E_{2i,2i}$.
\end{itemize}

Par ailleurs, la suite exacte de groupes
\[
1\to \faisGm\to  \faisGO_{2n}\to  \faisPGO_{2n}\to  1
\]
de la proposition \ref{secGOPGO_prop} montre que $\faisLie_{\faisPGO_{2n}}$ est le quotient de $\faisLie_{\faisGO_{2n}}$ par $\faisLie_{\faisGm}=\faisGa$. On trouve ainsi une base de $\faisLie_{\faisPGO_{2n}}(R)$ pour tout anneau $R$ formée des éléments (seul le point 4. diffère de la liste ci-dessus):
\begin{itemize}
\item[1.] $E_{2i-1,2j-1}-E_{2j,2i}$ pour tout $1\leq i,j\leq n$ avec $i\neq j$.
\item[2.] $E_{2i-1,2j}-E_{2j-1,2i}$ pour tout $1\leq i<j\leq n$.
\item[3.] $E_{2i,2j-1}-E_{2j,2i-1}$ pour tout $1\leq i<j\leq n$.
\item[4.] $E_{2i-1,2i-1}-E_{2i,2i}$ pour tout $2\leq i\leq n-1$.
\item[5.] $E_{11}+\sum_{i=2}^n E_{2i,2i}$.
\item[6.] $E_{22}+\sum_{i=2}^n E_{2i,2i}$.
\end{itemize}
Elle est donc de dimension $n(2n-1)$.
Par ailleurs, $\faisPGOplus_{2n}$ a la même algèbre de Lie que $\faisPGO_{2n}$ comme le montre la proposition \ref{secPGOplus_prop}. On en déduit immédiatement le résultat suivant:

\begin{prop}\label{PGOplus_ss_prop}
Le groupe $\faisPGOplus_{2n}$ est semi-simple. 
\end{prop}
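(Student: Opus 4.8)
Le plan est de suivre exactement la strat\'egie d\'ej\`a utilis\'ee pour les propositions \ref{Oplus_ss_prop}, \ref{PGSp_ss_prop} et \ref{Sp_ss_prop}: obtenir d'abord la lissit\'e de $\faisPGOplus_{2n}$ sur $\Spec(\ZZ)$ au moyen de la proposition \ref{lissite_prop}, puis en d\'eduire la semi-simplicit\'e \`a partir de celle des fibres g\'eom\'etriques. La lissit\'e sur une base $S$ quelconque suivra alors par changement de base.

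Pour appliquer la proposition \ref{lissite_prop}, je v\'erifierais ses trois hypoth\`eses sur les fibres g\'eom\'etriques. La connexit\'e est fournie directement par la proposition \ref{PGOplusconnexe_prop}. Pour la lissit\'e de chaque fibre et la constance de la dimension, j'utiliserais le crit\`ere \ref{sgcorpslisse_prop}: le calcul explicite effectu\'e ci-dessus montre que l'alg\`ebre de Lie de $\faisPGOplus_{2n}$ (qui co\"incide avec celle de $\faisPGO_{2n}$ par la proposition \ref{secPGOplus_prop}) est, sur tout anneau, libre de rang $n(2n-1)$, sans saut en caract\'eristique $2$ --- contrairement \`a ce qui se produit pour $\faisorthO_{2n+1}$. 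Il reste donc \`a constater que, sur chaque point g\'eom\'etrique $s$, la dimension de Krull de $(\faisPGOplus_{2n})_s$ vaut elle aussi $n(2n-1)$, ce que donne la th\'eorie classique sur les corps alg\'ebriquement clos (voir \cite[VI, \S 23]{bookinv}). Le crit\`ere \ref{sgcorpslisse_prop} assure alors la lissit\'e de toutes les fibres.

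Une fois la lissit\'e acquise, et sachant que $\faisPGOplus_{2n}$ est affine sur $S$ par la proposition \ref{PGOplusrepresentable_prop}, il ne reste qu'\`a invoquer la semi-simplicit\'e des fibres g\'eom\'etriques, qui est un r\'esultat classique sur les corps (voir \cite[VI, th. 25.12]{bookinv}); le groupe est alors r\'eductif et m\^eme semi-simple au sens de \SGAtrois. Le point v\'eritablement d\'elicat sera l'\'egalit\'e entre le rang $n(2n-1)$ de l'alg\`ebre de Lie et la dimension des fibres g\'eom\'etriques: c'est elle qui, via \ref{sgcorpslisse_prop}, force la lissit\'e, et c'est pr\'ecis\'ement pour en disposer en toute caract\'eristique que l'on travaille avec $\faisPGOplus$ plut\^ot qu'avec le groupe orthogonal brut, dont l'alg\`ebre de Lie est trop grosse en caract\'eristique $2$.
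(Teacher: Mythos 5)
Votre preuve est correcte et suit essentiellement la m\^eme d\'emarche que celle du texte: m\^eme calcul de l'alg\`ebre de Lie (identifi\'ee \`a celle de $\faisPGO_{2n}$ via \ref{secPGOplus_prop}), connexit\'e des fibres par \ref{PGOplusconnexe_prop}, crit\`eres \ref{sgcorpslisse_prop} et \ref{lissite_prop}, puis semi-simplicit\'e des fibres g\'eom\'etriques par \cite[VI, th. 25.12]{bookinv}. La seule nuance est que le texte obtient la dimension $n(2n-1)$ des fibres g\'eom\'etriques par la proposition \ref{dimsec_prop} appliqu\'ee \`a la suite exacte de \ref{secOPGO_prop} reliant $\faisOplus_{2n}$ \`a $\faisPGOplus_{2n}$, l\`a o\`u vous invoquez directement la th\'eorie classique sur les corps; les deux sourcages sont l\'egitimes.
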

\begin{proof}
Par la proposition \ref{dimsec_prop}, et en utilisant la suite exacte de la proposition \ref{secOPGO_prop}, on a que sur tout corps $k$, $\faisPGOplus_{2n}$ est bien de dimension identique à celle de $\faisOplus_{2n}$, c'est-à-dire $n(2n-1)$. Puisque c'est bien la dimension de l'algèbre de Lie sur $k$ et qu'il est à fibres connexes par la proposition \ref{PGOplusconnexe_prop}, il suit des propositions \ref{sgcorpslisse_prop} et \ref{lissite_prop} que le groupe $\faisPGOplus$ sur $\ZZ$ est lisse. Par changement de base, il le reste sur tout autre base $S$. Ses fibres géométriques sont simples par \cite[Theorem 25.12, p. 359]{bookinv}. C'est donc bien un groupe algébrique réductif semi-simple au sens de \SGAtrois.
\end{proof}

Pour trouver un tore maximal de $\faisPGOplus_{2n}$, on remarque d'abord qu'un tore est connexe et qu'il suffit par conséquent de trouver un tore maximal de $\faisPGO_{2n}$ pour arriver à nos fins. Pour ce faire, on utilise à nouveau la suite exacte 
\[
1\to \faisGm\to  \faisGO_{2n}\to  \faisPGO_{2n}\to  1.
\]
Si $T$ est un tore maximal de $\faisGO_{2n}$ contenant le centre $\faisGm$ de $\faisGO_{2n}$, alors le quotient est un tore maximal de $\faisPGO_{2n}$. On considère le morphisme
$$\tau:\faisGm^{n+1}\to \faisGO_{2n}$$
défini sur les points en envoyant $(\alpha_0,\alpha_1,\ldots,\alpha_n)$ sur 
$$\tau(\alpha_0,\alpha_1,\ldots,\alpha_n)=\mathrm{diag}(\alpha_0\alpha_1,\alpha_1^{-1},\alpha_0\alpha_2,\alpha_2^{-1},\ldots,\alpha_0\alpha_n,\alpha_n^{-1})$$
(matrice diagonale dans $\faisGL_{2n}$, dont on vérifie qu'elle est bien dans $\faisGO_{2n}$).
Il est clair que $\tau$ est un morphisme de groupes et une immersion fermée par \cite[Exp. IX, corollaire 2.5]{sga3}. On note $\faisGOdiag_{2n}$ ce tore. 

\begin{lemm}\label{tore_maximal_dn_adjoint_lemm}
Le tore $\faisGOdiag_{2n}$ est maximal. 
\end{lemm}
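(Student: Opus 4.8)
Le plan est de ramener la maximalit\'e de $\faisGOdiag_{2n}$ \`a celle de ses fibres g\'eom\'etriques, comme pour les autres tores de ce texte. Un sous-tore d'un $S$-sch\'ema en groupes est en effet maximal au sens de \SGAtrois\ d\`es qu'il l'est sur chaque fibre g\'eom\'etrique; par changement de base, il suffit donc de montrer que $(\faisGOdiag_{2n})_{\bar k}$ est un tore maximal de $(\faisGO_{2n})_{\bar k}$ pour tout corps alg\'ebriquement clos $\bar k$.

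D'abord, j'identifierais $(\faisGOdiag_{2n})_{\bar k}$ au sous-groupe des matrices diagonales de $\faisGO_{2n}$. Une matrice diagonale $D=\mathrm{diag}(d_1,\ldots,d_{2n})$ appartient \`a $\faisGO_{2n}$ si et seulement s'il existe $\lambda$ tel que $\transp D\, H\, D=\lambda H$, o\`u $H=\sum_{i=1}^n(E_{2i-1,2i}+E_{2i,2i-1})$; un calcul direct montre que ceci \'equivaut aux \'egalit\'es $d_{2i-1}d_{2i}=\lambda$ pour tout $i$. En posant $\lambda=\alpha_0$ et $d_{2i}=\alpha_i^{-1}$, donc $d_{2i-1}=\alpha_0\alpha_i$, on retrouve exactement l'image de $\tau$, ce qui fournit l'\'egalit\'e cherch\'ee entre $\faisGOdiag_{2n}$ et le tore diagonal de $\faisGO_{2n}$.

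Ensuite, j'invoquerais la th\'eorie classique sur un corps alg\'ebriquement clos (\cite[\S 23]{borelag}), qui assure que le tore diagonal du groupe de similitudes orthogonales est maximal. De mani\`ere \'equivalente, son rang $n+1$ est le rang maximal possible, de sorte que tout sous-tore de ce rang est n\'ecessairement maximal, \'etant contenu dans un tore maximal de m\^eme rang. On \'evite ainsi de pr\'esupposer le type $D_n$ du groupe, qui ne sera \'etabli que plus loin.

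La principale difficult\'e sera de garantir que cet \'enonc\'e classique reste valable en toute caract\'eristique, et notamment en caract\'eristique $2$. C'est pr\'ecis\'ement ici qu'intervient le choix de d\'efinir $\faisGO_{2n}$ \`a partir de la paire quadratique hyperbolique $(\faisM_{2n},\invadj_{2n},f_{2n})$ (d\'efinition \ref{GO_defi}): ses fibres g\'eom\'etriques sont les groupes de similitudes orthogonales usuels en toute caract\'eristique (voir \cite[\S 23]{bookinv}), pour lesquels le tore diagonal explicit\'e ci-dessus est bien maximal.
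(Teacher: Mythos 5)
Votre strat\'egie g\'en\'erale (r\'eduction aux fibres g\'eom\'etriques, puis appel \`a la th\'eorie classique sur un corps alg\'ebriquement clos) est bien celle du texte, et votre identification de $\faisGOdiag_{2n}$ avec le sous-groupe de \emph{toutes} les matrices diagonales de $\faisGO_{2n}$ est un calcul correct. Mais l'\'etape d\'ecisive --- la maximalit\'e du tore diagonal dans le groupe des similitudes $\faisGO_{2n}$ sur un corps alg\'ebriquement clos --- n'est pas couverte par la r\'ef\'erence que vous invoquez: \cite[\S 23]{borelag} traite $\setorthO_n$, $\setSO_n$ et $\mathrm{Sp}_{2n}$, pas les groupes de similitudes. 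Votre argument de secours (le rang $n+1$ serait le rang maximal possible) est circulaire, puisque d\'eterminer le rang r\'eductif de $\faisGO_{2n}$ revient pr\'ecis\'ement \`a exhiber un tore maximal. Notez enfin qu'\^etre \'egal au sous-groupe de toutes les matrices diagonales n'entra\^ine nullement la maximalit\'e en tant que tore: pour $\setorthO_n$ muni de la forme $\sum x_i^2$ en caract\'eristique diff\'erente de $2$, le sous-groupe diagonal est fini alors que les tores maximaux sont de dimension $\lfloor n/2\rfloor$. Cette identification, \`a elle seule, ne donne donc rien.

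Le texte comble ce point par la suite exacte du multiplicateur $1 \to \faisorthO_{2n} \to \faisGO_{2n} \to \faisGm \to 1$ de la proposition \ref{secOGO_prop}, qui se restreint en une suite exacte de tores $1 \to N \to \faisGOdiag_{2n} \to \faisGm \to 1$, o\`u $N$ est le tore des matrices $\mathrm{diag}(\alpha_1,\alpha_1^{-1},\ldots,\alpha_n,\alpha_n^{-1})$ de $\faisorthO_{2n}$. La maximalit\'e de $N$ dans $\faisorthO_{2n}$ sur les fibres g\'eom\'etriques est, elle, effectivement dans \cite[\S 23.4 et 23.6]{borelag}; un tore de $\faisGO_{2n}$ contenant $\faisGOdiag_{2n}$ rencontre alors $\faisorthO_{2n}$ suivant un sous-groupe dont la composante neutre est un tore contenant $N$, donc \'egale \`a $N$, et sa dimension est major\'ee par $\dim N + 1 = n+1 = \dim \faisGOdiag_{2n}$, d'o\`u l'\'egalit\'e. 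C'est cette r\'eduction \`a $\faisorthO_{2n}$, et non un \'enonc\'e directement citable sur $\faisGO_{2n}$, qui manque \`a votre r\'edaction.
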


\begin{proof}
La proposition \ref{secOGO_prop} montre qu'on dispose d'une suite exacte
$$\xymatrix{1\ar[r] & \faisO_{2n}\ar[r] & \faisGO_{2n}\ar[r]^-\eta & \faisGm\ar[r] & 1}$$
qui induit une suite exacte de tores
$$\xymatrix{1\ar[r] & N\ar[r] & \faisGOdiag_{2n}\ar[r]^-\eta & \faisGm\ar[r] & 1,}$$
où $N$ est le tore des matrices de la forme $\mathrm{diag}(\alpha_1,\alpha_1^{-1},\alpha_2,\alpha_2^{-1},\ldots,\alpha_n,\alpha_n^{-1})$. Ce tore $N$ étant maximal sur toute fibre géométrique par \cite[\S 23.4 et 23.6]{borelag}, on voit que $\faisGOdiag_{2n}$ est lui-aussi maximal.
\end{proof}

On trouve finalement un tore maximal de $\faisPGOplus_{2n}$ en considérant le quotient $\faisPGOplusdiag_{2n}$ de $\faisGOdiag_{2n}$ par le centre $\faisGm$. Explicitement, $\faisPGOplusdiag_{2n}(R)$ est l'ensemble des conjugaisons par les matrices de la forme 
$$\mathrm{diag}(\alpha_0\alpha_1,\alpha_1^{-1},\alpha_0\alpha_2,\alpha_2^{-1},\ldots,\alpha_0\alpha_{n-1},\alpha_{n-1}^{-1},\alpha_0,1)$$ 
avec $\alpha_i\in R^\times$. Les caractères de ce tore admettent une base donnée par 
$$t_i(\mathrm{diag}(\alpha_0\alpha_1,\alpha_1^{-1},\alpha_0\alpha_2,\alpha_2^{-1},\ldots,\alpha_0\alpha_{n-1},\alpha_{n-1}^{-1},\alpha_0,1))=\alpha_i$$
pour tout $i=0,\ldots,n-1$. Un calcul direct donne la liste suivante de racines de $\faisPGOplus_{2n}$ pour tout anneau $R$:
\begin{itemize}
\item[1.] L'algèbre de Lie du tore engendrée par les éléments $E_{11}+\sum_{i=2}^n E_{2i,2i}$, $E_{22}+\sum_{i=2}^n E_{2i,2i}$ et $E_{2i-1,2i-1}-E_{2i,2i}$ pour tout $2\leq i\leq n-1$.
\item[2.] Le sous-module $R\cdot (E_{2i-1,2j-1}-E_{2j,2i})$ de poids $t_i-t_j$ pour tout $1\leq i,j\leq n-1$ avec $i\neq j$.
\item[3.] Le sous-module $R\cdot (E_{2i-1,2n-1}-E_{2n,2i})$ de poids $t_i$ pour tout $i=1,\ldots,n-1$.
\item[4.] Le sous-module $R\cdot (E_{2n-1,2i-1}-E_{2i,2n})$ de poids $-t_i$ pour tout $i=1,\ldots,n-1$.
\item[5.] Le sous-module $R\cdot (E_{2i-1,2j}-E_{2j-1,2i})$ de poids $t_0+t_i+t_j$ pour tout $1\leq i<j\leq n-1$.
\item[6.] Le sous-module $R\cdot (E_{2i,2j-1}-E_{2j,2i-1})$ de poids $-t_0-t_i-t_j$ pour tout $1\leq i<j\leq n-1$.
\item[7.] Le sous-module $R\cdot (E_{2i-1,2n}-E_{2n-1,2i})$ de poids $t_0+t_i$ pour tout $i=1,\ldots,n-1$.
\item[8.] Le sous-module $R\cdot (E_{2i,2n-1}-E_{2n,2i-1})$ de poids $-t_0-t_i$ pour tout $i=1,\ldots,n-1$.
\end{itemize}

Les morphismes exponentiels 
$$\exp_\alpha:\faisGa\to \faisPGOplus_{2n}$$
induisant l'inclusion canonique de l'espace propre associé à $\alpha$ dans son algèbre de Lie (\cite[Exp. XXII, théorème 1.1]{sga3}) sont comme ci-dessous:
\begin{itemize}
\item[2.] Le morphisme $exp_{t_i-t_j}:R\cdot (E_{2i-1,2j-1}-E_{2j,2i})\to \faisPGOplus_{2n}$ défini par 
$$\lambda\mapsto Id+\lambda(E_{2i-1,2j-1}-E_{2j,2i}).$$
\item[3.] Le morphisme $exp_{t_i}:R\cdot (E_{2i-1,2n-1}-E_{2n,2i})\to \faisPGOplus_{2n}$ défini par 
$$\lambda\mapsto Id+\lambda(E_{2i-1,2n-1}-E_{2n,2i}).$$
\item[4.] Le morphisme $exp_{-t_i}:R\cdot (E_{2n-1,2i-1}-E_{2i,2n})\to \faisPGOplus_{2n}$ défini par 
$$\lambda\mapsto Id+\lambda(E_{2n-1,2i-1}-E_{2i,2n}).$$
\item[5.] Le morphisme $exp_{t_0+t_i+t_j}:R\cdot (E_{2i-1,2j}-E_{2j-1,2i})\to \faisPGOplus_{2n}$ défini par 
$$\lambda\mapsto Id+\lambda(E_{2i-1,2j}-E_{2j-1,2i}).$$
\item[6.] Le morphisme $exp_{-t_0-t_i-t_j}:R\cdot (E_{2i,2j-1}-E_{2j,2i-1})\to \faisPGOplus_{2n}$ défini par 
$$\lambda\mapsto Id+\lambda(E_{2i,2j-1}-E_{2j,2i-1}).$$
\item[7.] Le morphisme $exp_{t_0+t_i}:R\cdot (E_{2i-1,2n}-E_{2n-1,2i})\to \faisPGOplus_{2n}$ défini par 
$$\lambda\mapsto Id+\lambda(E_{2i-1,2n}-E_{2n-1,2i}).$$
\item[8.] Le morphisme $exp_{-t_0-t_i}:R\cdot (E_{2i,2n-1}-E_{2n,2i-1})\to \faisPGOplus_{2n}$ défini par 
$$\lambda\mapsto Id+\lambda(E_{2i,2n-1}-E_{2n,2i-1}).$$
\end{itemize}

Utilisant ces morphismes, on trouve facilement la liste de coracines suivante:
\begin{itemize}
\item[2.] Les coracines $t_i^\dual-t_j^\dual$ pour tout $1\leq i,j\leq n-1$ avec $i\neq j$.
\item[3.] Les coracines $2t_i^\dual-2t_0^\dual+\sum_{j=1, j\neq i}^{n-1} t_j^\dual$ pour tout $i=1,\ldots,n-1$.
\item[4.] Les coracines $2t_0^\dual-2t_i^\dual-\sum_{j=1, j\neq i}^{n-1} t_j^\dual$ pour tout $i=1,\ldots,n-1$.
\item[5.] Les coracines $t_i^\dual+t_j^\dual$ pour tout $1\leq i<j\leq n-1$.
\item[6.] Les coracines $-t_i^\dual-t_j^\dual$ pour tout $1\leq i<j\leq n-1$.
\item[7.] Les coracines $2t_0^\dual-\sum_{j=1, j\neq i}^{n-1} t_j^\dual$ pour tout $i=1,\ldots,n-1$.
\item[8.] Les coracines $-2t_0^\dual+\sum_{j=1, j\neq i}^{n-1} t_j^\dual$ pour tout $i=1,\ldots,n-1$.
\end{itemize}

Les accouplements de la remarque \ref{signeAccouplement_rema} sont donnés dans les cas 2. à 4. par $(X,Y)\mapsto -XY$ et dans les cas 5. à 8. par $(X,Y)\mapsto XY$.

Nous avons ainsi démontré le résultat suivant:

\begin{prop}
Pour $n\geq 2$, la donnée radicielle de $\faisPGOplus_{2n}$ par rapport au tore maximal déployé $\faisPGOplusdiag_{2n}$ est
\begin{itemize}
\item Le $\ZZ$-module libre $N\simeq \ZZ^n$ des caractères de $\faisPGOplus_{2n}$ (dont $t_0,\ldots,t_{n-1}$ forment une base); 
\item Le sous-ensemble fini de celui-ci formé des racines $t_i-t_j$ pour $1\leq i,j\leq n-1$ avec $i\neq j$, $\pm t_i$ pour $i=1,\ldots,n-1$, $\pm(t_0+t_i+t_j)$ pour $1\leq i<j\leq n-1$ et $\pm(t_0+t_i)$ pour $i=1,\ldots,n-1$;
\item Le $\ZZ$-module libre $N^\dual\simeq\ZZ^n$ des cocaractères (de base duale $t_0^\cdual,\ldots,t_{n-1}^\cdual$);
\item Le sous-ensemble fini de celui-ci formé des coracines $t_i^\dual-t_j^\dual$ pour $1\leq i,j\leq n-1$ avec $i\neq j$, $\pm(2t_i^\dual-2t_0^\dual+\sum_{j=1, j\neq i}^{n-1} t_j^\dual)$ pour $i=1,\ldots,n-1$, $\pm(t_i^\dual+t_j^\dual)$ pour $1\leq i<j\leq n-1$ et $\pm(2t_0^\dual-\sum_{j=1, j\neq i}^{n-1} t_j^\dual)$ pour tout $i=1,\ldots,n-1$.
\end{itemize}
\end{prop}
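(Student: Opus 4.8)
Le plan est de rassembler les calculs explicites men\'es dans les paragraphes qui pr\'ec\`edent, selon la m\^eme m\'ethode que pour $\faisGL_n$ et $\faisPGL_n$. Le groupe $\faisPGOplus_{2n}$ \'etant semi-simple (proposition \ref{PGOplus_ss_prop}) et le tore $\faisPGOplusdiag_{2n}$ y \'etant maximal et d\'eploy\'e comme quotient du tore maximal $\faisGOdiag_{2n}$ (lemme \ref{tore_maximal_dn_adjoint_lemm}) par le centre $\faisGm$, sa donn\'ee radicielle est bien d\'efinie; les r\'eseaux de caract\`eres et de cocaract\`eres sont ceux annonc\'es, de bases respectives $t_0,\ldots,t_{n-1}$ et $t_0^\dual,\ldots,t_{n-1}^\dual$.

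Je lirais d'abord les racines sur la d\'ecomposition en espaces propres de la repr\'esentation adjointe restreinte \`a $\faisPGOplusdiag_{2n}$, d\'ej\`a explicit\'ee ci-dessus (points 2 \`a 8): chaque sous-module libre de rang $1$ y appara\^it avec son poids, et l'on v\'erifie que la partie de poids trivial est exactement l'alg\`ebre de Lie du tore (point 1), ce qui assure qu'on a bien l'ensemble de toutes les racines. Ensuite, pour chaque racine $\alpha$, le morphisme exponentiel $\exp_\alpha:\faisGa\to \faisPGOplus_{2n}$ est l'unique morphisme $\faisPGOplusdiag_{2n}$-\'equivariant induisant l'inclusion canonique de l'espace propre de $\alpha$ dans l'alg\`ebre de Lie, par \cite[Exp. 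XXII, th. 1.1]{sga3}: ce sont les morphismes donn\'es plus haut. Les racines et les exponentielles \'etant connues, les coracines se d\'eduisent par unicit\'e du crit\`ere-d\'efinition de \cite[th. 2.4.1]{D}, en prenant garde au signe de l'accouplement entre espaces propres de racines oppos\'ees selon la remarque \ref{signeAccouplement_rema} ($(X,Y)\mapsto -XY$ dans les cas 2 \`a 4, $(X,Y)\mapsto XY$ dans les cas 5 \`a 8); il reste \`a v\'erifier \`a la main la formule (F) de \loccit pour chaque famille.

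La principale difficult\'e sera d'identifier correctement les coracines dans le r\'eseau quotient $N^\dual$. Comme le tore de $\faisPGOplus_{2n}$ est un quotient de celui de $\faisGO_{2n}$, les cocaract\`eres s'expriment dans la base duale $t_0^\dual,\ldots,t_{n-1}^\dual$ et l'on voit appara\^itre les coracines ``exotiques'' $\pm(2t_i^\dual-2t_0^\dual+\sum_{j\neq i} t_j^\dual)$ et $\pm(2t_0^\dual-\sum_{j\neq i}t_j^\dual)$, attach\'ees respectivement aux racines $\pm t_i$ et $\pm(t_0+t_i)$. Pour ces deux familles, il faudra d\'eterminer le cocaract\`ere de $\faisPGOplusdiag_{2n}$ associ\'e \`a chaque racine en le relevant d'abord dans $\faisGOdiag_{2n}$ puis en le projetant modulo $\faisGm$, et contr\^oler le signe de l'accouplement dans chaque cas. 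Les familles $t_i-t_j$ et $\pm(t_i+t_j)$ sont en revanche directes et se traitent comme dans les sections pr\'ec\'edentes. Une fois ces v\'erifications men\'ees, la liste obtenue co\"incide avec celle de l'\'enonc\'e.
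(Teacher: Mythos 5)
Votre démarche est exactement celle du texte : la proposition y est précédée des listes explicites d'espaces propres, de morphismes exponentiels et de coracines, et la phrase \og Nous avons ainsi démontré le résultat suivant \fg{} indique que ces calculs constituent la preuve; vous les rassemblez dans le même ordre (racines lues sur la représentation adjointe restreinte au tore quotient, exponentielles identifiées par unicité via \cite[Exp. XXII, th. 1.1]{sga3}, coracines par le critère de \cite[th. 2.4.1]{D} avec les signes de la remarque \ref{signeAccouplement_rema}). Rien à redire, si ce n'est que la rédaction au conditionnel (\og je lirais \fg, \og il faudra \fg) gagnerait à être remplacée par l'exécution effective des vérifications, déjà disponibles dans les listes qui précèdent l'énoncé.
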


\begin{theo}
Pour tout $n\geq 2$, le groupe $\faisPGOplus_{2n}$ est déployé, semi-simple et adjoint de type $D_n$ (avec la convention que $D_2=A_1 \times A_1$). 
\end{theo}

\begin{proof}
Le groupe est semi-simple par la proposition \ref{PGOplus_ss_prop} et déployé. Il est adjoint puisque ses racines engendrent les caractères du tore. Enfin, il est de type $D_n$ comme le montre le système de racines simples $\{t_0+t_{n-1},t_1-t_2,t_2-t_3,\ldots,t_{n-2}-t_{n-1},t_{n-1}\}$ (lorsque $n=2$, il s'agit de $\{t_0+t_1,t_1\}$).
\end{proof}


\subsection{Groupe déployé simplement connexe}

Ce cas est très similaire au cas traité dans la section \ref{bn_sconnexe}. On entrera donc moins dans les détails que dans les autres sections. 

Soit $(M,q)$ un module quadratique régulier, avec $M$ localement libre de rang constant $2n$. On lui associe son schéma en groupes spinoriel $\faisSpin_q$ (définition \ref{PinSpin_defi}) qui est affine sur $S$. En particulier: 
\begin{defi} \label{SpinsurZ_defi}
Si $M=\faisO_S^{2n}$ et $q=\hypq_{2n}$ on note $\faisSpin_{2n}$ le groupe $\faisSpin_{\hypq_{2n}}$. 
\end{defi}
Il est clair que ce groupe est défini sur $\Spec(\ZZ)$ et on l'étudie comme d'habitude sur cette base. Pour tout anneau $R$, on a la description suivante des $R$-points 
$$\faisSpin_{2n}(R)=\{g\in C_{0,h}(R)^\times\vert g(M\otimes R)g^{-1}=M\otimes R\text{ et } g\cdot \invstd(g)=1\}$$
où $\sigma$ est l'involution standard sur l'algèbre de Clifford paire $C_{0,h}$ du module quadratique $\hypq_{2n}$.

Le calcul de la section \ref{bn_sconnexe} montre que pour tout anneau $R$, l'algèbre de Lie $\faisLie_{\faisSpin_{2n}}$ est le $R$-module libre engendré par les éléments
\begin{itemize}
\item[1.] $1-2e_{2n-1}\otimes e_{2n}$
\item[2.] $e_{2i-1}\otimes e_{2i}-e_{2n-1}\otimes e_{2n}$ pour tout $1\leq i\leq n-1$.
\item[3.] $e_i\otimes e_j$ pour tout $1\leq i<j\leq 2n$ avec $(i,j)\neq (2r-1,2r)$ pour $r=1,\ldots,n$.
\end{itemize}

On a donc:

\begin{prop}\label{dn_Spin_ss_prop}
Le groupe $\faisSpin_{2n}$ est semi-simple.
\end{prop}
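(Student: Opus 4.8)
The plan is to follow verbatim the strategy already used for the odd case in Proposition \ref{Spin_ss_prop} and for the adjoint even case in Proposition \ref{PGOplus_ss_prop}: establish smoothness over $\Spec(\ZZ)$ by the fibral criterion of Proposition \ref{lissite_prop}, then deduce semi-simplicity from the semi-simplicity of the geometric fibres. Smoothness over an arbitrary base $S$ then follows by base change, and $\faisSpin_{2n}$ is already known to be affine on $S$ (from the discussion just after Definition \ref{PinSpin_defi}).

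Concretely, I would assemble the three hypotheses of Proposition \ref{lissite_prop} for $\faisSpin_{2n}$ over $\Spec(\ZZ)$. Connectedness of the geometric fibres is exactly Theorem \ref{Spinconnexe_theo}. The constant dimension $n(2n-1)$ of the geometric fibres is provided by Proposition \ref{Spindim_prop}. For fibral smoothness I would invoke Proposition \ref{sgcorpslisse_prop}, which reduces smoothness over a field to the equality of the Krull dimension and the dimension of the Lie algebra. Here the explicit basis of $\faisLie_{\faisSpin_{2n}}(R)$ just exhibited does the job: counting its elements gives $1 + (n-1) + \bigl(\binom{2n}{2}-n\bigr) = 1 + (n-1) + 2n(n-1) = n(2n-1)$, which matches the fibre dimension from Proposition \ref{Spindim_prop}. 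Hence every geometric fibre is smooth, and Proposition \ref{lissite_prop} yields smoothness of $\faisSpin_{2n}$ over $\Spec(\ZZ)$.

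Once lissity and affineness are in hand, $\faisSpin_{2n}$ is reductive in the sense of \SGAtrois\ as soon as its geometric fibres are reductive and connected; semi-simplicity follows from semi-simplicity of those fibres. To obtain the latter I would either cite the classical statement for the split spin group over an algebraically closed field, or, in the spirit of the proof of Proposition \ref{Spin_ss_prop}, derive it from the already-established semi-simplicity of the fibres of $\faisOplus_{2n}$ (Theorem \ref{Oplusconnexe_theo} combined with the known classification) together with the central isogeny of the top row of the diagram \eqref{diagSpinOplus_diag} in Proposition \ref{secSpinOplus_prop}, whose kernel $\faismu_2$ is finite central.

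The only genuine point requiring care—hence the \textbf{main obstacle}—is the dimension bookkeeping for the Lie algebra: one must check that the listed generators are $R$-linearly independent and that the count $n(2n-1)$ holds uniformly in all residual characteristics (in particular in characteristic $2$, where the relation $2b + \sum_i a_{2i-1,2i}=0$ degenerates and one must verify that the chosen basis still spans without introducing extra dimensions). Everything else is a routine transcription of the machinery already set up in the preceding sections, so I would not expect any further difficulty.
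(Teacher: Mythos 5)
Your proposal is correct and follows essentially the same route as the paper, whose own proof simply says the argument is identical to that of Proposition \ref{Spin_ss_prop} mutatis mutandis, citing Proposition \ref{Spindim_prop} for the fibre dimension and Theorem \ref{Spinconnexe_theo} for connectedness; your dimension count $1+(n-1)+\bigl(\binom{2n}{2}-n\bigr)=n(2n-1)$ for the exhibited Lie algebra basis is exactly the verification the paper leaves implicit.
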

\begin{proof}
Le raisonnement est identique à celui de la proposition \ref{Spin_ss_prop}, mutatis mudandis: La dimension des fibres de $\faisSpin_{2n}$ a été calculée dans la proposition \ref{Spindim_prop}, et leur connexité dans le théorème \ref{Spinconnexe_theo}.
\end{proof}

Pour trouver un tore maximal de $\faisSpin_{2n}$, on considère les cocaractères
$$t_1^\dual:\faisGm\to \faisSpin_{2n}$$
défini pour tout anneau $R$ et tout $\alpha_0\in R^\times$ par $t_1^\dual(\alpha_1)=\alpha_1 e_1\otimes e_2+\alpha_1^{-1}e_2\otimes e_1$,
ainsi que pour tout $i=2,\ldots,n$ 
$$t_i^\dual:\faisGm\to \faisSpin_{2n}$$
donnés  par $t_i^\dual(\alpha_i)=$
{\small$$\alpha_ie_1\otimes e_2\otimes e_{2i-1}\otimes e_{2i}+e_1\otimes e_2\otimes e_{2i}\otimes e_{2i-1}+ e_2\otimes e_1\otimes e_{2i-1}\otimes e_{2i}+ \alpha_i^{-1}e_2\otimes e_1\otimes e_{2i}\otimes e_{2i-1}.$$}
Le sous-groupe de $\faisSpin_{2n}$ engendré par ces cocaractères est un tore, qui est maximal par les mêmes arguments que dans la section \ref{bn_sconnexe}. On le note $\faisSpindiag_{2n}$. Les caractères de ce tore sont donnés par $t_i(t_j^\dual(\alpha_j))=\delta_{ij}\alpha_j$ pour tout $1\leq i,j\leq n$.

Les racines sont données pour tout anneau $R$ par la liste suivante:
\begin{itemize}
\item[1.] L'algèbre de Lie du tore engendré par $R\cdot (1-2e_{2n-1}\otimes e_{2n})$ et par les $R\cdot (e_{2i-1}\otimes e_{2i}-e_{2n-1}\otimes e_{2n})$ pour tout $i=1,\ldots,n-1$.
\item[2.] Le sous-module $R\cdot (e_1\otimes e_{2i-1})$ de poids $2t_1+2t_i+\sum_{j=2,j\neq i}^n t_j$ pour tout $i=2,\ldots,n$. 
\item[3.] Le sous-module $R\cdot (e_2\otimes e_{2i})$ de poids $-2t_1-2t_i-\sum_{j=2,j\neq i}^n t_j$ pour tout $i=2,\ldots,n$.  
\item[4.] Le sous-module $R\cdot (e_1\otimes e_{2i})$ de poids $2t_1+\sum_{j=2,j\neq i}^n t_j$ pour tout $i=2,\ldots,n$. 
\item[5.] Le sous-module $R\cdot (e_2\otimes e_{2i-1})$ de poids $-2t_1-\sum_{j=2,j\neq i}^n t_j$ pour tout $i=2,\ldots,n$. 
\item[6.] Le sous-module $R\cdot (e_{2i-1}\otimes e_{2j-1})$ de poids $t_i+t_j$ pour tout $2\leq i<j\leq n$.
\item[7.] Le sous-module $R\cdot (e_{2i}\otimes e_{2j})$ de poids $-t_i-t_j$ pour tout $2\leq i<j\leq n$.
\item[8.] Le sous-module $R\cdot (e_{2i-1}\otimes e_{2j})$ de poids $t_i-t_j$ pour tout $2\leq i<j\leq n$.
\item[9.] Le sous-module $R\cdot (e_{2i}\otimes e_{2j-1})$ de poids $t_j-t_i$ pour tout $2\leq i<j\leq n$.
\end{itemize}

On vérifie immédiatement que les morphismes exponentiels $\faisGa\to \faisSpin_{2n}$ prévus par \cite[Exp. XXII, théorème 1.1]{sga3} sont dans tous les cas de la forme $\lambda\mapsto 1+\lambda e_i\otimes e_j$. Ceci nous permet de calculer les coracines. On trouve pour tout anneau $R$ les coracines énumérée ci-dessous:

\begin{itemize}
\item[2.] Les coracines $t_i^\dual$ pour tout $i=2,\ldots,n$. 
\item[3.] Les coracines $-t_i^\dual$ pour tout $i=2,\ldots,n$. 
\item[4.] Les coracines $t_1^\dual-t_i^\dual$ pour tout $i=2,\ldots,n$.
\item[5.] Les coracines $t_i^\dual-t_1^\dual$ pour tout $i=2,\ldots,n$.
\item[6.] Les coracines $t_i^\dual+t_j^\dual-t_1^\dual$ pour tout $2\leq i<j\leq n$.
\item[7.] Les coracines $t_1^\dual-t_i^\dual-t_j^\dual$ pour tout $2\leq i<j\leq n$.
\item[8.] Les coracines $t_i^\dual-t_j^\dual$ pour tout $2\leq i<j\leq n$.
\item[9.] Les coracines $t_j^\dual-t_i^\dual$ pour tout $2\leq i<j\leq n$.
\end{itemize} 

Dans tous les cas, les accouplements prévus par la remarque \ref{signeAccouplement_rema} sont donnés par $(X,Y)\mapsto XY$. Finalement:

\begin{prop}\label{donneeradicielleSpindn_prop}
La donnée radicielle de $\faisSpin_{2n}$ par rapport au tore maximal déployé $\faisSpindiag_{2n}$ est
\begin{itemize}
\item Le $\ZZ$-module libre $N\simeq \ZZ^n$ des caractères de $\faisSpin_{2n}$ (dont $t_1,\ldots,t_{n}$ forment une base); 
\item Le sous-ensemble fini de celui-ci formé des racines $\pm(2t_1+2t_i+\sum_{j=2,j\neq i}^n t_j)$ pour $i=2,\ldots,n$, $\pm(2t_1+\sum_{j=2,j\neq i}^n t_j)$ pour $i=2,\ldots,n$ et $\pm t_i\pm t_j$ pour $2\leq i<j\leq n$;
\item Le $\ZZ$-module libre $N^\dual\simeq\ZZ^n$ des cocaractères (de base duale $t_1^\cdual,\ldots,t_{n}^\cdual$);
\item Le sous-ensemble fini de celui-ci formé des coracines $\pm t_i^\dual$ pour $i=2,\ldots,n$, $\pm(t_1^\dual-t_i^\dual)$ pour $i=2,\ldots,n$, $\pm(t_i^\dual+t_j^\dual-t_1^\dual)$ pour $2\leq i<j\leq n$ et $\pm(t_i^\dual-t_j^\dual)$ pour $2\leq i<j\leq n$.
\end{itemize}
\end{prop}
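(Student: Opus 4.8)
Le plan est d'assembler la donn�e radicielle � partir des calculs explicites men�s juste au-dessus, en suivant exactement le sch�ma d�j� utilis� dans la section \ref{bn_sconnexe} pour le cas impair. Je commencerais par rappeler la base de l'alg�bre de Lie $\faisLie_{\faisSpin_{2n}}$ obtenue plus haut (ses trois familles de g�n�rateurs dans l'alg�bre de Clifford paire), ainsi que le tore maximal d�ploy� $\faisSpindiag_{2n}$ d�crit par les cocaract�res $t_i^\dual$. Le r�seau des caract�res $N$ est alors libre de base $t_1,\ldots,t_n$, duale des cocaract�res via $t_i(t_j^\dual(\alpha_j))=\delta_{ij}\alpha_j$, ce qui fixe imm�diatement les premier et troisi�me points de l'�nonc�.

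Ensuite, je d�composerais la repr�sentation adjointe restreinte � $\faisSpindiag_{2n}$ en sous-espaces propres: l'espace de poids nul est l'alg�bre de Lie du tore, et les poids non nuls fournissent les racines, c'est-�-dire les neuf familles list�es ci-dessus. Pour chaque poids non nul $\alpha$, j'invoquerais l'unicit� du morphisme exponentiel \cite[Exp. XXII, th. 1.1]{sga3} afin d'identifier $\exp_\alpha:\faisGa\to \faisSpin_{2n}$ sous la forme $\lambda\mapsto 1+\lambda\, e_i\otimes e_j$, dont on v�rifie sans peine qu'il induit l'inclusion canonique du sous-espace propre correspondant dans l'alg�bre de Lie.

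Racines et exponentielles �tant connues, je calculerais enfin les coracines par le crit�re \cite[th. 2.4.1]{D}, en appliquant la formule (F) et l'accouplement entre sous-espaces propres de racines oppos�es (avec la convention de signe de la remarque \ref{signeAccouplement_rema}), qui est ici toujours donn� par $(X,Y)\mapsto XY$. On obtient alors les quatre familles de coracines annonc�es, et donc les deuxi�me et quatri�me points de l'�nonc�.

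Le principal obstacle sera le calcul des poids dans l'alg�bre de Clifford lors de la d�composition, car les �l�ments du tore ne sont pas de simples matrices diagonales mais des produits de g�n�rateurs $e_i\otimes e_j\otimes\cdots$; la conjugaison doit �tre men�e avec soin au moyen des r�gles de multiplication de $\faisCliff_{\hypq_{2n}}$ pour obtenir les bons poids, tout particuli�rement pour les racines faisant intervenir le coefficient $2t_1$, o� le cocaract�re $t_1^\dual$ joue un r�le dissym�trique par rapport aux autres.
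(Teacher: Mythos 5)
Votre proposition suit exactement la d�marche du texte : la proposition est obtenue en assemblant les calculs qui la pr�c�dent (base de l'alg�bre de Lie dans l'alg�bre de Clifford paire, tore maximal d�crit par les cocaract�res $t_i^\dual$, d�composition en espaces propres donnant les racines, morphismes exponentiels $\lambda\mapsto 1+\lambda\, e_i\otimes e_j$ identifi�s par unicit�, puis coracines via \cite[th. 2.4.1]{D} avec l'accouplement $(X,Y)\mapsto XY$). Rien � redire, votre mise en garde sur le calcul des poids par conjugaison dans l'alg�bre de Clifford correspond bien � la seule difficult� r�elle du calcul.
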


\begin{theo}
Pour tout $n\geq 2$, le groupe $\faisSpin_{2n}$ est déployé, semi-simple et simplement connexe de type $D_n$ (avec la convention $D_2=A_1 \times A_1$).
\end{theo}

\begin{proof}
Le groupe est semi-simple par la proposition \ref{dn_Spin_ss_prop} et déployé. Il est semi-simple puisque ses racines engendrent les caractères du tore. Enfin, il est de type $D_n$ comme on s'en convainc en contemplant le système de racines simples $\{2t_1+\sum_{i=3}^n t_i,t_2-t_3,t_3-t_4,\ldots, t_{n-1}-t_n, t_{n-1}+t_n\}$ quand $n\geq 3$ ou $\{2t_1,2t_1+2t_2\}$ quand $n=2$.

\end{proof}

On calcule maintenant le centre de $\faisSpin_{2n}$. Pour ce faire, on dispose d'un diagramme de morphisme de schémas en groupes 
$$
\xymatrix{
\faisSpin_{2n} \ar[r] \ar[dr] & \faisOplus_{2n} \ar[d] \ar[r] & \faisPGOplus_{2n} \ar[d] \\
 & \faisorthO_{2n} \ar[r] & \faisPGO_{2n}
}
$$
constitué de plusieurs flèches naturelles, définies dans les propositions \ref{secSpinOplus_prop} et \ref{secOPGO_prop}.
On dispose ainsi d'un morphisme
$$\chi:\faisSpin_{2n}\to \faisPGOplus_{2n}$$
qui induit une bijection entre ensembles de racines et de coracines, ainsi qu'un morphisme de tores $\chi:\faisSpindiag_{2n}\to \faisPGOplusdiag_{2n}$ dont le noyau est le centre de $\faisSpin_{2n}$. Identifiant les tores à $\faisGm^n$ au moyen des cocaractères, on trouve 
$$\chi(\alpha_1,\ldots,\alpha_n)=(\alpha_n^2,\alpha_1^2\cdot\prod_{i=2}^{n-1}\alpha_i,\alpha_2\alpha_n^{-1},\alpha_3\alpha_n^{-1},\ldots,\alpha_{n-1}\alpha_n^{-1}).$$
Ainsi $(\alpha_1,\ldots,\alpha_n)\in \ker\chi$ si et seulement si $\alpha_i=\alpha_n$ pour tout $i=2,\ldots,n-1$, $\alpha_n^2=1$ et $\alpha_1^2\alpha_n^{n-2}=1$. On a ainsi la proposition suivante:

\begin{prop} \label{centreSpin2n_prop}
Si $n$ est pair, le centre de $\faisSpin_{2n}$ est isomorphe à $\faismu_2\times \faismu_2$. Si $n$ est impair, alors le centre est isomorphe à $\faismu_4$.
\end{prop}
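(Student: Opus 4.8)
The plan is to finish the computation already set up just before the statement. Since $\chi:\faisSpin_{2n}\to\faisPGOplus_{2n}$ induces a bijection between the sets of roots and between the sets of coroots, and $\faisPGOplus_{2n}$ is adjoint, the restriction $\chi:\faisSpindiag_{2n}\to\faisPGOplusdiag_{2n}$ has kernel exactly the center $Z(\faisSpin_{2n})$, as noted in the excerpt. It therefore suffices to describe this kernel, which is a finite diagonalizable subgroup scheme of $\faisSpindiag_{2n}\cong\faisGm^n$ cut out by the explicit equations derived above: $\alpha_i=\alpha_n$ for $2\le i\le n-1$, together with $\alpha_n^2=1$ and $\alpha_1^2\alpha_n^{\,n-2}=1$. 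Because $\faismu_2$ and $\faismu_4$ are not smooth, I will not argue on geometric fibres but instead identify the group schemes through their functors of points on all affine $T=\Spec(R')$ over $S$, which by the Yoneda lemma yields isomorphisms of $S$-group schemes.

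First I would split according to the parity of $n$. When $n$ is even, $n-2$ is even, so $\alpha_n^{\,n-2}=(\alpha_n^2)^{(n-2)/2}=1$ and the last equation collapses to $\alpha_1^2=1$; the remaining free data is the pair $(\alpha_1,\alpha_n)$ with $\alpha_1,\alpha_n\in\faismu_2$, all other coordinates being equal to $\alpha_n$. The assignment $(\alpha_1,\alpha_n)\mapsto(\alpha_1,\alpha_n,\dots,\alpha_n)$ then defines, functorially in $R'$, a group isomorphism $\faismu_2\times\faismu_2\isoto Z(\faisSpin_{2n})$. When $n$ is odd, $n-2$ is odd, so $\alpha_n^2=1$ forces $\alpha_n^{\,n-2}=\alpha_n$, whence $\alpha_n=\alpha_1^{-2}$; substituting into $\alpha_n^2=1$ gives $\alpha_1^4=1$, and conversely every $\alpha\in\faismu_4$ produces a kernel point via $\alpha\mapsto(\alpha,\alpha^{-2},\dots,\alpha^{-2})$. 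This is again a functorial group isomorphism $\faismu_4\isoto Z(\faisSpin_{2n})$.

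The only real care needed is that these identifications hold over an arbitrary base $S$, not merely over a field; this is guaranteed because every group in sight is a closed subscheme of the split torus $\faisSpindiag_{2n}$, so the functorial bijections above, being natural in $R'$ and compatible with multiplication, are isomorphisms of affine $S$-group schemes. Equivalently, one may recast the whole argument through Cartier duality: $Z(\faisSpin_{2n})=D_S(\ZZ^n/Q)$, where $Q\subseteq\ZZ^n$ is the subgroup of the character lattice (with basis $t_1,\dots,t_n$ of Proposition \ref{donneeradicielleSpindn_prop}) generated by the roots, and a direct computation of $\ZZ^n/Q$ gives $\ZZ/4$ for $n$ odd and $\ZZ/2\times\ZZ/2$ for $n$ even, whose Cartier duals are $\faismu_4$ and $\faismu_2\times\faismu_2$. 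I expect no genuine obstacle here: the substance is simply to organize the two parity cases and to insist on the functor-of-points (or Cartier-dual) formulation, so that the non-smoothness of $\faismu_m$ creates no difficulty over a general $S$.
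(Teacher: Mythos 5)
Your proposal is correct and follows essentially the same route as the paper: the paper derives exactly the kernel equations $\alpha_i=\alpha_n$ ($2\le i\le n-1$), $\alpha_n^2=1$, $\alpha_1^2\alpha_n^{n-2}=1$ from the explicit formula for $\chi$ on the cocharacters and then states the proposition as an immediate consequence, leaving the parity analysis implicit. Your explicit case split and the functor-of-points (or Cartier-dual) packaging are just a careful writing-out of that same final step.
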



\subsection{Automorphismes}\label{automorphismes_dn}

On calcule maintenant les automorphismes du groupe $\faisPGOplus_{2n}$, adjoint de type $D_n$. On suppose que $n\neq 1$ ou $4$ (nous ne traiterons pas la trialité ici). 

Pour commencer, remarquons que le groupe $\faisPGO_{2n}$ agit sur $\faisPGOplus_{2n}$ par conjugaison puisque on a une suite exacte de groupes (proposition \ref{secGOPGO_prop})
$$\xymatrix{1 \ar[r] & \faisPGOplus_{2n} \ar[r] & \faisPGO_{2n} \ar[r] & \faisconst \ar[r] & 0}$$
On note $\phi:\faisPGO_{2n}\to \faisAut_{\faisPGOplus_{2n}}$ le morphisme de groupe induit par la conjugaison.

\begin{theo} \label{PGOplusAut_theo}
Pour $n\neq 1$ ou $4$, on a un isomorphisme de suites exactes 
$$\xymatrix{1 \ar[r] & \faisPGOplus_{2n} \ar[r] \ar@{=}[d] & \faisPGO_{2n} \ar[r] \ar[d]^{\simeq}_{\phi} & \faisconst \ar[r] \ar[d]^{\simeq} & 1 \\
1 \ar[r] & \faisPGOplus_{2n} \ar[r] & \faisAut_{\faisPGOplus_{2n}} \ar[r] & \faisAutExt_{\faisPGOplus_{2n}} \ar[r] & 1 }$$
\end{theo}

\begin{proof}
Il suffit d'exhiber, si $R$ est un anneau local, n'importe quel élément de $\faisPGO_{2n}(R)$ qui permute non trivialement les racines simples de $\faisPGOplus_{2n}$ (et qui est alors automatiquement d'invariant de Arf non trivial). Soit par exemple $A=E_{2n-1,2n}+E_{2n,2n-1}+\sum_{i=1}^{2n-2} E_{ii}\in GL_{2n}(R)$. La conjugaison $\int_A$ par $A$ est un automorphisme de $M_{2n}(R)$ préservant l'involution $\eta_{2n}$ et l'homomorphisme $f_{2n}$. Ainsi $\int_A\in \faisPGO_{2n}(R)$. Un calcul direct montre que $\int_A$ préserve le tore $\faisGOdiag_{2n}$ et son quotient $\faisPGOplusdiag_{2n}$ et échange les deux racines simples $t_0+t_{n-1}$ et $t_{n-1}$ de $\faisPGOplus_{2n}$. 
\end{proof}


\subsection{Groupes tordus}
Soit $n\geq 5$ un entier.

\begin{theo} \label{Dnadjointtordus_theo}
\`A isomorphisme près, les $S$-schémas en groupes réductifs de donnée radicielle déployée identique à celle de $\faisPGOplus_{2n}$ ($n\geq 5$), c'est-à-dire de type déployé semi-simple adjoint $D_n$, $n\geq 5$, sont les groupes $\faisPGOplus_{A,\sigma,f}$ où $(A,\sigma,f)$ est une paire quadratique de degré $2n$ au sens de la définition \ref{pairesquad_defi}.
Les formes intérieures sont les groupes $\faisPGO_{A,\sigma,f}$ tels que le centre de l'algèbre de Clifford paire de $(A,\sigma,f)$ est isomorphe à $\faisO_S \times \faisO_S$. Toutes les formes intérieures sont strictement intérieures puisque le groupe est adjoint. Pour les formes fortement intérieures, voir le corollaire \ref{Dnadjfortementint_coro}.
\end{theo}
\begin{proof}
Le groupe des automorphismes de $\faisPGOplus_{2n}$ est $\faisPGO_{2n}$ par le théorème \ref{PGOplusAut_theo}. Pour les formes quelconques, sachant que le champ des $\faisPGO_{2n}$-torseurs est équivalent à $\PairesQuadn{2n}$, on applique donc la proposition \ref{secPGOplus_prop}. Pour les formes intérieures, sachant que le champ des $\faisPGOplus_{2n}$-torseurs est équivalent au champ $\PairesQuadArfTrivn{2n}$, on vérifie que l'image essentielle par le foncteur oubli $\PairesQuadArfTrivn \to \PairesQuadn{2n}$ est évidemment constituée des objets $(A,\sigma,f)$ tels qu'il existe une trivialisation du centre $Z_{0,A,\sigma,f}$, et on applique la proposition \ref{foncttors_prop}, point \ref{fonctobjet_item}.
\end{proof}

Il nous faut maintenant introduire, comme dans le cas $B_n$, un invariant supplémentaire pour les modules quadratiques réguliers d'algèbre de Clifford triviale. La construction est identique au cas $B_n$, mais en utilisant l'algèbre de Clifford toute entière au lieu de sa partie paire. Elle est munie d'une $2$-trivialisation canonique $\faisCliff_q^{\otimes 2} \simeq \faisCliff_q\otimes\faisCliff_q^\opp \simeq \faisEnd_{\faisCliff_q}$ où le premier isomorphisme est induit par l'involution standard $\invstd_q$. Par le lemme \ref{invariantL_lemm}, on obtient donc une classe $l_q$ dans $\Pic(S)/2$.

\begin{theo} \label{Dnsctordus_theo}
\`A isomorphisme près, les $S$-schémas en groupes réductifs de donnée radicielle déployée identique à celle de $\faisSpin_{2n}$ ($n\geq 5$), c'est-à-dire de type déployé semi-simple simplement connexe $D_n$, $n\geq 5$, sont les groupes $\faisSpin_{A,\sigma,f}$ où $(A,\sigma,f)$ est une paire quadratique de degré $2n$ au sens de la définition \ref{pairesquad_defi}.
Les formes intérieures sont les groupes $\faisSpin_{A,\sigma,f}$ tels que le centre de l'algèbre de Clifford paire de $(A,\sigma,f)$ est isomorphe à $\faisO_S \times \faisO_S$. Les formes strictement intérieures sont les groupes $\faisSpin_q$ où $q$ est un module quadratique régulier de rang $2n$, d'algèbre de Clifford paire triviale dans le groupe de Brauer et d'invariant $l_q \in \Pic(S)/2$ trivial.
\end{theo}
\begin{proof}
Les formes quelconques sont des tordues de $\faisSpin_{2n}$ par un torseur sous le groupe d'automorphismes qui est $\faisPGO_{2n}$ comme dans le cas du groupe adjoint. Le résultat découle alors de la proposition \ref{torsionSpin_prop}. Les formes intérieures sont donc celles des torseurs sous le groupe adjoint $\faisPGOplus_{2n}$, et on les obtient donc par image essentielle de $\PairesQuadArfTrivn{2n} \to \PairesQuadn{2n}$. Enfin, les formes strictement intérieures sont celles qui proviennent de $\faisSpin_{2n}$ lui-même, qu'on obtient comme images essentielles par le foncteur $\DeplCliffSn{2n} \to \PairesQuadn{2n}$ qui envoie un objet $((q,M),N,\phi,\psi)$ vers la paire quadratique $(\faisEnd_M,\invadj_q,f_q)$. Par construction de $\DeplCliffSn{2n}$, ce sont bien les $(\faisEnd_M,\invadj_q,f_q)$ pour les $q$ dont l'algèbre de Clifford est triviale dans le groupe de Brauer, et dont l'invariant $l_q$ est trivial.
\end{proof}

\begin{coro} \label{Dnadjfortementint_coro}
Les formes fortement intérieures du groupe $\faisPGOplus_{2n}$, le groupe adjoint déployé de type $B_n$, sont les groupes $\faisPGOplus_q$ (définition \ref{PGOplus_defi}) où $q$ est un module quadratique régulier de rang $2n$, d'algèbre de Clifford paire triviale dans le groupe de Brauer et d'invariant $l_q\in Pic(S)/2$ trivial.
\end{coro}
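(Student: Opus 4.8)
Le corollaire à démontrer décrit les formes fortement intérieures de $\faisPGOplus_{2n}$, groupe adjoint déployé de type $D_n$, comme étant les $\faisPGOplus_q$ pour $q$ un module quadratique régulier de rang $2n$, d'algèbre de Clifford paire triviale dans le groupe de Brauer et d'invariant $l_q\in \Pic(S)/2$ trivial. Le plan consiste à exploiter directement la classification des formes strictement intérieures de $\faisSpin_{2n}$ obtenue dans le théorème \ref{Dnsctordus_theo}, en passant de la catégorie simplement connexe à la catégorie adjointe par le revêtement $\faisSpin_{2n}\to \faisPGOplus_{2n}$. En effet, par la définition \ref{intfortint_defi}, une forme fortement intérieure de $\faisPGOplus_{2n}$ est précisément une forme obtenue en tordant par un torseur sous le revêtement simplement connexe, c'est-à-dire sous $\faisSpin_{2n}=(\faisPGOplus_{2n})_{\simco}$ par la proposition \ref{centreSpin2n_prop} et les résultats de la section précédente.

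\textbf{Étapes principales.} D'abord je rappellerais que, par la définition \ref{intfortint_defi}, les formes fortement intérieures de $\faisPGOplus_{2n}$ sont celles obtenues par torsion par un torseur sous $\faisSpin_{2n}$ agissant sur $\faisPGOplus_{2n}$ via le morphisme $\faisSpin_{2n}\to \faisAut_{\faisPGOplus_{2n}}$ (composé du revêtement $\faisSpin_{2n}\to \faisPGOplus_{2n}$ et de la conjugaison). Ensuite, j'utiliserais le foncteur fibré $\DeplCliffSn{2n} \to \PairesQuadn{2n}$ déjà introduit dans la preuve du théorème \ref{Dnsctordus_theo}, qui envoie un objet $((q,M),N,\phi,\psi)$ sur la paire quadratique $(\faisEnd_M,\invadj_q,f_q)$, et dont on sait que l'image essentielle est constituée des paires provenant de modules quadratiques réguliers $q$ d'algèbre de Clifford paire triviale dans $\Brauer(S)$ et d'invariant $l_q$ trivial. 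Le point clé est alors le suivant : comme le groupe $\faisPGOplus_q$ est précisément le tordu de $\faisPGOplus_{2n}$ par le torseur correspondant à $q$ (par la proposition \ref{PGOetctordus_prop}), et que ce torseur provient de $\faisSpin_{2n}$ exactement lorsque $q$ satisfait les conditions énoncées, la description voulue en découle. Concrètement, je ferais le diagramme : partant d'un $\faisSpin_{2n}$-torseur $P$, on le pousse le long de $\faisSpin_{2n}\to \faisPGOplus_{2n}$ et on applique la proposition \ref{foncttors_prop}, point \ref{fonctaut_item}, au foncteur oubli pour identifier le tordu de $\faisPGOplus_{2n}$ avec $\faisPGOplus_q$.

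\textbf{Difficulté principale.} L'obstacle principal sera de vérifier soigneusement que l'invariant $l_q$ et la condition de trivialité de Brauer de l'algèbre de Clifford \emph{paire} sont bien ceux qui caractérisent les torseurs provenant de $\faisSpin_{2n}$, par opposition à ceux provenant seulement de $\faisSGamma_{2n}$ ou de $\faisPGOplus_{2n}$ tout court. Il faut retracer la chaîne des équivalences de champs (propositions \ref{PGOplustorseurs_prop}, \ref{SpinTorsPair_prop}, \ref{torsionSpin_prop}) et s'assurer que la factorisation par $\DeplCliffSn{2n}$ encode exactement les deux conditions. La construction de $l_q$ via le lemme \ref{invariantL_lemm} appliqué à l'algèbre de Clifford paire, avec sa $2$-trivialisation canonique induite par l'involution standard $\invstd_q$, est la même que celle intervenant dans le théorème \ref{Dnsctordus_theo} ; il suffit donc essentiellement de transporter cette classification le long du revêtement central. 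Le reste est formel : l'énoncé est un corollaire quasi immédiat du théorème \ref{Dnsctordus_theo} une fois l'identification $\faisPGOplus_q = P\contr{\faisSpin_{2n}}\faisPGOplus_{2n}$ établie par la proposition \ref{foncttors_prop}.
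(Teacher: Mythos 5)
Votre démonstration est correcte et suit exactement la voie que le texte sous-entend : le corollaire est laissé sans preuve dans l'article car il découle immédiatement du théorème \ref{Dnsctordus_theo}, les formes fortement intérieures de $\faisPGOplus_{2n}$ étant par la définition \ref{intfortint_defi} les tordues par un torseur sous $\faisSpin_{2n}$, torseurs classifiés par $\DeplCliffSn{2n}$, et la torsion de $\faisPGOplus_{2n}$ par un tel torseur donnant bien $\faisPGOplus_q$ via les propositions \ref{PGOetctordus_prop} et \ref{foncttors_prop}. Votre explicitation de la chaîne d'équivalences est fidèle à l'argument attendu (et vous avez raison de lire « type $D_n$ » là où l'énoncé porte une coquille « $B_n$ »).
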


\begin{rema}
Il n'est pas difficile de voir que dans le cas intérieur, les résultats des théorèmes \ref{Dnadjointtordus_theo} et \ref{Dnsctordus_theo} restent valables quand $n\leq 4$.
\end{rema}

\section{Tables}

Pour les tables \ref{GroupesTordus_table} et \ref{Automorphismes_table}, en type $A_n$, l'involution $(A,\nu)$ est  unitaire avec $A$ de degré $n+1$ sur son centre. En type $B_n$, le module quadratique $q$ est une forme étale de l'hyperbolique de rang $2n+1$. En type $C_n$, l'involution $(A,\sigma)$ est symplectique, avec $A$ de degré $2n$. En type $D_n$, la paire quadratique $(A,\sigma,f)$ est avec $A$ de degré $2n$.

Pour la table \ref{GroupesTordusInterieurs_table}, en type $A_n$, le $\faisO_S$-module localement libre $M$ est de rang $n+1$ et de déterminant trivial. En type $D_n$ (resp. $B_n$), l'algèbre de Clifford de $(A,\sigma,f)$ est de centre trivial $\faisO_S\times \faisO_S$ et le module quadratique $\tilde{q}$ est une forme étale de la forme hyperbolique de rang $2n$ (resp. $2n+1$), dont l'algèbre de Clifford (resp. paire) est triviale dans le groupe de Brauer, et dont l'invariant $l_{\tilde{q}}$ est trivial (voir \ref{Dnsctordus_theo} et \ref{Bnsimplementconnexetordu_theo}). En type $C_n$, l'isomorphisme alterné $b$ est sur un $\faisO_S$-module localement libre de rang $2n$. 

\begin{table}[h!]
\setlength\belowcaptionskip{.5ex}
\caption{Groupes de Chevalley (sur $\ZZ$) simples simplement connexes ou adjoints par type de Dynkin}
\begin{center}
\setlength{\tabcolsep}{0.1ex}
\begin{tabular}[h]{l@{\hspace{1.5ex}}l@{\hspace{1.5ex}}l@{\hspace{1.5ex}}l}
\toprule
type & simplement connexe & adjoint & centre du groupe s.c. \\
\midrule
$A_n$ & $\faisSL_{n+1}$ \eqref{SLn_defi} & $\faisPGL_{n+1}$ \eqref{PGLn_defi} & $\faismu_{n+1}$ \eqref{centreSL_prop} \\
$B_n$ & $\faisSpin_{2n+1}$ \eqref{PinSpin_defi} & $\faisSO_{2n+1}$ \eqref{SOimpairsurZ_defi} & $\faismu_2$ \eqref{centreSpin_prop} \\
$C_n$ & $\faisSp_{2n}$ \eqref{SpsurZ_defi} & $\faisPGSp_{2n}$ \eqref{PGSp_defi} & $\faismu_2$ \eqref{centreSp_prop} \\
$D_n$ & $\faisSpin_{2n}$ \eqref{SpinsurZ_defi} & $\faisPGOplus_{2n}$ \eqref{PGO_defi} & \doublecell{$\faismu_2\times \faismu_2$ si $n$ pair \eqref{centreSpin2n_prop} \\ $\faismu_4$ si $n$ impair (ibid)} \\
\bottomrule
\end{tabular}
\end{center}
\label{groupesChevalley_table}
\end{table}

\begin{table}[h!]
\setlength\belowcaptionskip{0ex}
\caption{Groupes tordus de type déployé simple et simplement connexe ou adjoint}
\begin{center}
\setlength{\tabcolsep}{.1ex}
\begin{tabular}[h]{l@{\hspace{.5ex}}lll}
\toprule
type & simpl. connexe & adjoint & centre du s.c. \\
\midrule
$A_n$ & $\faisSU_{A,\nu}$ \eqref{formesSLn_theo} & $\ker (\faisAut_{A,\nu}\hspace{-.5ex} \to \hspace{-.5ex}\faisAut_{Z(A),\nu})$ \eqref{adjointtordu_prop} & $\faismu_{n+1}$ \\
$B_n$ & $\faisSpin_q$ \eqref{Bnsimplementconnexetordu_theo} & $\faisSO_q$ \eqref{Bnadjointtordu_theo} & $\faismu_2$ \\
$C_n$ & $\faisSp_{A,\sigma}$ \eqref{Cnsimplementconnexe_theo} & $\faisPGSp_{A,\sigma}$ \eqref{Cnadjointtordu_theo} & $\faismu_2$ \\
\doublecell{$D_n$ \\ $\scriptstyle n\neq1,4$} & $\faisSpin_{A,\sigma,f}$ \eqref{Dnsctordus_theo} & $\faisPGOplus_{A,\sigma,f}$ \eqref{Dnadjointtordus_theo} & \doublecell{$\faismu_2\times \faismu_2$ ($n$ pair) \\ $\faismu_4$ ($n$ impair)} \\
\bottomrule
\end{tabular}
\end{center}
\label{GroupesTordus_table}
\end{table}

\begin{table}[h!]
\setlength\belowcaptionskip{.5ex}
\caption{Automorphismes des groupes simples adjoints ou simplement connexes}
\begin{center}
\begin{tabular}[h]{llll}
\toprule
type & déployé & général & extérieurs \\
\midrule
$A_n$ & $\faisAut_{(\faisM_n\times \faisM_n),\tau}$ \eqref{PGLAut_theo} & $\faisAut_{A,\nu}$ & $\ZZ/2$ si $n>1$ \\
$B_n$ & $\faisSO_{2n+1}$ & $\faisSO_q$ & $0$ \\
$C_n$ & $\faisPGSp_{2n}$ & $\faisPGSp_{A,\sigma}$ & $0$ \\
$D_{n}$ $\scriptstyle n\neq 1,4$ & $\faisPGO_{2n}$ \eqref{PGOplusAut_theo} & $\faisPGO_{A,\sigma,f}$ & $\ZZ/2$ \\
\bottomrule
\end{tabular}
\end{center}
\label{Automorphismes_table}
\end{table}

\begin{table}[h!]
\setlength\belowcaptionskip{.5ex}
\caption{Groupes tordus intérieurs et fortement intérieurs de type déployé simple simplement connexe ou adjoint}
\begin{center}
\begin{tabular}{lllll}
\toprule
type & \multicolumn{2}{c}{simplement connexe} & \multicolumn{2}{c}{adjoint} \\ 
 & int. & fort. int. & int. & fort. int. \\
\midrule
$A_n$ & $\faisSL_{1,A}$ & $\faisSL_M$ & $\faisPGL_A$ & $\faisPGL_M$ \\
$B_n$ & $\faisSpin_q$ & $\faisSpin_{\tilde{q}}$ & $\faisSO_q$ & $\faisSO_{\tilde{q}}$ \\
$C_n$ & $\faisSp_{A,\sigma}$ & $\faisSp_b$ & $\faisPGSp_{A,\sigma}$ & $\faisPGSp_{b}$ \\
$D_n$ & $\faisSpin_{A,\sigma,f}$ & $\faisSpin_{\tilde{q}}$ & $\faisPGOplus_{A,\sigma,f}$ & $\faisPGOplus_{\tilde{q}}$ \\
\bottomrule
\end{tabular}
\end{center}
\label{GroupesTordusInterieurs_table}
\end{table}

\clearpage

\begin{table}[h!]
\setlength\belowcaptionskip{.5ex}
\caption{Champ équivalent au champ $\Tors{G}$ des torseurs sous le groupe $G$}
\begin{center}
\begin{tabular}{llll}
\toprule
$G$ & Champ & Topologie & \\
\midrule
$\faismu_n$ & $\nTriv{n}$ & \fppf & \eqref{torseursfppfmun_prop} \\
$\faismu_2$ & $\nTriv{2}=\ModDet$ & \fppf & \eqref{moddet_defi} \\
$\mathcal{S}_n$ & $\Etn{n}$ & ét ou \fppf & \eqref{SntorsEtn_prop} \\
$\faisGL_n$ & $\Vecn{n}$ & ét ou \fppf & \eqref{GLntors_prop} \\
$\faisGL_{1,A}$ & $\AVecn{A^{\opp}}{1}$ & ét ou \fppf & \eqref{GL1Atorseurs_prop} \\
$\faisSL_{n}$ & $\Vectrivdetn{n}$ & ét ou \fppf & \eqref{tordusformes_prop} \\
$\faisPGL_n$ & $\Azumayan{n}$ & ét ou \fppf & \eqref{PGLAtorseurs_prop} \\
$\faisPGL_A$ & $\Azumayan{n}$ & ét ou \fppf & \eqref{PGLAtorseurs_prop} \\
$\faisO_{2n}$ & $\RegQuadn{2n}$ & ét ou \fppf & \eqref{formesmodulesquad_prop} \\ 
$\faisO_{2n+1}$ & $\Formes{\hypq_{2n+1}}$ & ét ou \fppf & \eqref{formesmodulesquad_prop} \\ 
$\faisSO_{2n+1}$ & $\QuadDetTrivn{2n+1}$ & ét ou \fppf & \eqref{SOtorseursimpairs_prop} \\ 
$\faisOplus_{2n}$ & $\RegQuadArfTrivn{2n}$ & ét ou \fppf & \eqref{Oplustorseurs_prop} \\
$\faisGamma_{2n}$ & $\GrDeplCliff{2n}$ & ét ou \fppf & \eqref{Gamma2ntors_prop} \\
$\faisSGamma_{2n}$ & $\DeplCliff{2n}$ & ét ou \fppf & \eqref{SGammapairtorseurs_prop} \\
$\faisPGO_{2n}$ & $\PairesQuadn{2n}$ & ét ou \fppf & \eqref{PGOtorseurs_prop} \\
$\faisPGO_{A,\sigma,f}$ & $\PairesQuadn{2n}$ & ét ou \fppf & \eqref{PGOtorseurs_prop} \\
$\faisPGOplus_{2n}$ & $\PairesQuadArfTrivn{2n}$ & ét ou \fppf & \eqref{PGOplustorseurs_prop} \\
$\faisSp_{2n}$ & $\Sympl{2n}$ & ét ou \fppf & \eqref{Sp2ntors_prop} \\
$\faisSpin_{2n}$ & $\DeplCliffSn{2n}$ & ét ou \fppf & \eqref{SpinTorsImpai_prop} \\
$\faisSpin_{2n+1}$ & $\DeplCliffSn{2n+1}$ & ét ou \fppf & \eqref{SpinTorsPair_prop} \\
\bottomrule
\end{tabular}
\end{center}
\label{Champs_table}
\end{table}

\newpage

\def\refname{Références}


\begin{thebibliography}{rammmm}

\bibitem[SGA3]{sga3} M.\,Demazure, A.\,Grothendieck, 
{\em Schémas en groupes} I, II, III, 
Lecture Notes in Math {\bf 151, 152, 153}, Springer-Verlag, New York, 1970,
et réédition  :  Documents Mathématiques {\bf 7, 8},  Société Mathématique de France, 2003.  

\bibitem[D]{D} M.\,Demazure,
{\em Schémas en groupes réductifs},  
Bull. Soc. Math. France {\bf 93} (1965), 369--413.

\bibitem[EGA]{ega}
A.\,Grothendieck, 
{\em Eléments de Géométrie Algébrique (rédigés avec la collaboration de Jean Dieudonné)}, 
Publ.\,Math.\,IHES {\bf 4, 8, 11, 17, 20, 24, 28, 32}, 1960--1967.  

\bibitem[Gir]{gir}
J.\,Giraud, 
\emph{Cohomologie non abélienne},
Die Grundlehren der mathematischen Wissenschaften {\bf 179}, 
Springer-Verlag, Berlin, 1971.

\bibitem[Knus]{knus}
M-A.\,Knus, 
{\em Quadratic and hermitian forms over rings}, 
Grundlehren der mathematischen Wissenschaften {\bf 294},
Springer--Verlag, New York, 1991.

\bibitem[KMRT]{bookinv}
M-A.\,Knus, A.\,Merkurjev, M.\,Rost,  J-P.\,Tignol,
{\em The book of involutions},
AMS Colloq. Publ. {\bf 44}, Providence, 1998.

\bibitem{ausgol}
M.\,Auslander, O.\,Goldman,
{\em The {B}rauer group of a commutative ring},
Trans. Amer. Math. Soc. {\bf 97} (1960),
367--409. 

\bibitem{borelag}
A.\,Borel,
{\em Linear algebraic groups}  
(2nd ed.) Springer-Verlag, Berlin, 1991.

\bibitem{dg}
M.\,Demazure, P.\,Gabriel, 
{\em Groupes algébriques},
Masson, Paris, 1970.

\bibitem{Dieu55}
J.\,Dieudonn{\'e},
{\em Sur les g\'en\'erateurs des groupes classiques},
Summa Brasil. Math. 3, 1955.

\bibitem{sga1}
A.\,Grothendieck,
{\em Séminaire  de géométrie algébrique I}, 
Lecture Notes in Mathematics {\bf 224}, Springer-Verlag, 1971,
et réédition : Documents Mathématiques {\bf 3}, 
Société Mathématique de France, 2011.  

\bibitem{brauer1}
A.\,Grothendieck,
{\em Le groupe de Brauer I: Algèbres d'Azumaya et interprétations diverses},
in {\em Dix Exposés sur la Cohomologie des Schémas},
North-Holland, Amsterdam, 1968.

\bibitem{brauer3}
A.\,Grothendieck,
{\em Le groupe de Brauer III: Exemples et compléments},
in {\em Dix Exposés sur la Cohomologie des Schémas},
North-Holland, Amsterdam, 1968.

\bibitem{Kneser02}
M.\,Kneser,
{\em Quadratische {F}ormen},
Springer-Verlag, Berlin, 2002.

\bibitem{kn-oj}
M.-A.\,Knus, M.\,Ojanguren,
{\em Th{é}orie de la descente et alg{è}bres d'{A}zumaya},
Lecture Notes in Math. {\bf 389}, Springer-Verlag, Berlin, 1974.

\bibitem{McL}
S.\,MacLane,
{\em Categories for the working mathematician},
Graduate Texts in Math. {\bf 5}, second edition, Springer-Verlag, New York, 1998 

\end{thebibliography}
\end{document}